\newcommand{\relative}{{}\xspace}  
\numberwithin{section}{chapter}
\numberwithin{equation}{section}
\DeclareMathAlphabet{\pazocal}{OMS}{zplm}{m}{n}
\newcommand{\Ib}{\pazocal{I}}
\theoremstyle{plain}
\newtheorem{theorem}       [subsection]    	{Theorem} 
\newtheorem{thm}		[subsection]		{Theorem}
\newtheorem{lemma}  [subsection]		{Lemma}
\newtheorem{lem}    [subsection]		{Lemma}
\newtheorem{cor}    [subsection]		{Corollary}
\newtheorem{prop}   [subsection]		{Proposition}
\theoremstyle{definition}
\newtheorem{dfn}    	[subsection]		{Definition}
\newtheorem{ex}     	[subsection]		{Example}
\newtheorem{example}	[subsection]		{Example}
\newtheorem{cond2}     [subsection]{Condition}
\theoremstyle{remark}
\newtheorem{remark}      [subsection]		{Remark}
\newtheorem{rem}      [subsection]		{Remark}
\newtheorem{con}     	[subsection]		{Construction}
\newcommand{\labeleq}[2]{\addtocounter{subsection}{1}\begin{equation*}\tag{\thesubsection}\label{#1} {#2}\end{equation*}}
\let\realequation\equation
\def\equation{\setcounter{equation}{\arabic{subsection}}%
   \refstepcounter{subsection}%
   \realequation}
\newcommand{\untwist}{\tau}
\newcommand{\N}             	{\ensuremath{\mathbb{N}}}
\newcommand{\R}             	{\ensuremath{\mathbb{R}}}
\newcommand{\T}            	{\ensuremath{\mathbb{T}}}
\newcommand{\Z}             {\ensuremath{\mathbb{Z}}}
\newcommand{\Q}             {\ensuremath{\mathbb{Q}}}
\newcommand{\C}             {\ensuremath{\mathbb{C}}}
\newcommand{\G}{{\ensuremath{G}}}
\newcommand{\h}{{\ensuremath{H}}}
\newcommand{\subgroup}{{\ensuremath{H}}}
\newcommand{\Or}[1]{{\bf {O}}_{#1}}
\newcommand{\obindelta}[1]{[{#1}]}
\newcommand{\Gor}[1]{{\bf {G}}_{#1}}
\newcommand{\B}          {\ensuremath{{\bf{B}}}}
\newcommand{\catOr}          {\ensuremath{{\bf{O}}}}
\newcommand{\catDelta}{\ensuremath{{\bf{\Delta}}}}
\newcommand{\inj}        {{\ensuremath{{\rm-inj}}}}
\newcommand{\cof}        {{\ensuremath{{\rm-cof}}}}
\newcommand{\cell}        {{\ensuremath{{\rm-cell}}}}
\newcommand{\catA}          {\ensuremath{\mathcal A}}
\newcommand{\catB}          {\ensuremath{\mathcal B}}
\newcommand{\catC}          {\ensuremath{\mathcal C}}
\newcommand{\catD}          {\ensuremath{\mathcal D}}
\newcommand{\catset}        {\mathrm{Set}} 
\newcommand{\catFin}        {\mathrm{Fin}} 
\newcommand{\catcAlg}{\mathrm{cAlg}}
\newcommand{\catRAlg}       {R_{\mathrm{Alg}}} 
\newcommand{\catRcalg}      {R_{\mathrm{CAlg}}} 
\newcommand{\catRmod}       {R_{\mathrm{Mod}}} 
\newcommand{\catTop}        {\ensuremath{{\catT\!\textit{op}}}}          
\newcommand{\catT}						{{{\bf{\CMcal{T}}}}}
\newcommand{\catU}						{{{\bf{\CMcal{U}}}}}
\newcommand{\catGT}{{\ensuremath{\G\catT}}}
\newcommand{\catGspaces}[1]{{\ensuremath{#1\catT}}}
\newcommand{\catHT}{{\ensuremath{\subgroup\catT}}}
\newcommand{\catTG}{{{\ensuremath\catT}_\G}}
\newcommand{\catO}			  		{{\ensuremath{{\CMcal{O}}}}}
\newcommand{\catOJ}		  		{{\ensuremath{{\CMcal{O}}_{J}}}}
\newcommand{\catOj}		  		{{\ensuremath{{\CMcal{O}}_{J}}}}
\newcommand{\catOE}{\catJE}
\newcommand{\catOT}					{{\ensuremath{\bf{\CMcal{OT}}}}}
\newcommand{\catOS}					{{\bf{O}\CMcal{T}}} 
\newcommand{\catOGS}{\catOS\!\!_G}
\newcommand{\catGOS}					{{\ensuremath{\G\catOS}}}
\newcommand{\catM}           {\ensuremath{\mathcal M}}
\newcommand{\catGC}          {\ensuremath{\G{\mathcal C}}}
\newcommand{\catCG}          {\ensuremath{\mathcal C}_{\G}}
\newcommand{\catGset}			  {\ensuremath{{{\G}\catset}}}
\DeclareMathOperator{\FF}      	 		  {{\CMcal{F}}}
\DeclareMathOperator{\GG}							{{\CMcal{G}}}
\DeclareMathOperator{\FM}							{{\ensuremath{\mathbb{M}}}}
\newcommand{\Comm}{{\ensuremath{\mathbb{E}}}}
\newcommand{\Ass}{{\ensuremath{\mathbb{A}}}}
\newcommand{\Fr}	[2]{{\ensuremath{\CMcal{F}^{#1}_{#2}}}}
\newcommand{\Gr}	[2]{{\ensuremath{\CMcal{G}^{#1}_{#2}}}}
\newcommand{\I}              {\ensuremath{\mathbb{I}}}
\newcommand{\II}              {\ensuremath{\mathbb{J}}}
\newcommand{\Sp}             {\ensuremath{\mathbb{S}}}
\renewcommand{\smash}{{\ensuremath{\wedge}}}
\newcommand{\dsi}     [2]    {{#2}^{\!\oplus{{#1}}}}
\newcommand{\pri}     [2]    {\prod_{#1}\!#2}
\newcommand{\faktor}[2]{{#1}/{#2}}
\newcommand{\PhiNH}{\Phi^N}
\newcommand{\pullback}      {\ar@{}[dr]|(.3)*{\txt{\LARGE$\lrcorner$}}}
\newcommand{\pushout}       {\ar@{}[ul]|(.3)*{\txt{\LARGE$\ulcorner$}}}
\newcommand 				{\unskew}{\entrymodifiers={!!<0pt,.6ex>+}}
\DeclareMathOperator*{\colim}         {colim}
\DeclareMathOperator*{\hocolim}         {hocolim}
\DeclareMathOperator*{\holim}         {holim}
\DeclareMathOperator{\op}         {op}
\DeclareMathOperator{\sk}         {sk}
\DeclareMathOperator{\id}         	 {id}
\DeclareMathOperator{\proj}         	 {pr}
\DeclareMathOperator{\Cyl}           {Cyl}
\DeclareMathOperator{\inc}         	 {inc}	
\DeclareMathOperator{\Stab}          {Stab}
\DeclareMathOperator{\Ob}						 {Ob}	
\DeclareMathOperator{\ev}      	 		 {ev}
\DeclareMathOperator{\twist}   	 		 {twist}
\DeclareMathOperator{\Map}           {{\bf Map}}
\DeclareMathOperator{\THH}           {{T\mkern-1mu H\mkern-1mu H}}
\DeclareMathOperator{\TC}           {{T\mkern-1mu C}}
\DeclareMathOperator{\Hom}           {{\bf Hom}}
\newenvironment{indentpar}[1]%
{\begin{list}{}%
         {\setlength{\leftmargin}{#1}}%
         \item[]%
}
{\end{list}}
\newcommand{\catI}{\ensuremath{{\CMcal{I}}}}  					
\newcommand{\catL}{\ensuremath{{\CMcal{L}}}}  	
\newcommand{\catLr}{{{\bf{L}}}} 
\newcommand{\fml}{\ensuremath{{\mathscr{F}}}}  	
\newcommand{\gml}{\mathscr{G}}
\newcommand{\hml}{\ensuremath{{\mathscr{H}}}}  	
\newcommand{\nml}{\ensuremath{{\mathscr{F}[N]}}}
\newcommand{\All}{\ensuremath{{\mathscr{A}\hspace{-1pt}\ell\ell}}}
\newcommand{\AI}{\ensuremath{{\mathscr{A\hspace{-4pt}I}}}}
\newcommand{\AIplusV}[1]{\AI\hspace{-4pt}_+\hspace{-2pt}^{#1}}
\newcommand{\AIplus}{\AI\hspace{-4pt}_+}
\newcommand{\Qll}{\ensuremath{{\mathscr{Q}}}}
\newcommand{\catTri}{\ensuremath{{\star}}}
\newcommand{\pr}     [1]    {\prod\limits_{#1}}
\newcommand{\ds}     [2]    {{#2}^{\!\oplus{{#1}}}}
\newcommand{\fev}[2]{\ensuremath{{\rm \ev}^{#1}_{#2}}}
\newcommand{\catJJ}{\ensuremath{{\catO_{\!\J}}}}
\newcommand{\catJE}{\ensuremath{{\catO_{\!\E}}}}
\newcommand{\Lan}   		 [2] {{\rm{Lan}}_{#1}{#2}}
\newcommand{\catW}{\ensuremath{{\bf\mathcal{W}}}}
\newcommand{\X}                {X}   
\newcommand{\J}								{J}
 \newcommand				{\E}								{E}
 \newcommand{\Fix}            {\ensuremath{\mathrm{Fix}}}
\newcommand{\Aut}            {\ensuremath{\mathrm{Aut}}}
 \newcommand{\End}            {\ensuremath{\mathcal{E}}}
\newcommand{\FU}{\ensuremath{\mathbb{U}}} 
\newcommand{\FP}             {\ensuremath{\mathbb{P}}}
\newcommand{\Ind}            {\ensuremath{\mathrm{Ind}}}	
\newcommand{\D}{{\bf{D}}}
\newcommand{\reg}            {\ensuremath{\mathrm{reg}}}
\newcommand{\catV}{\ensuremath{{\bf\mathcal{V}}}}
\newcommand{\xto}{\xrightarrow}
\newcommand{\cC}{\mathcal C} 
\newcommand{\bo}{\mathcal I} 
\newcommand{\Fin}{\mathrm{Fin}} 
\newcommand{\Ens}{\mathrm{Set}} 
\newcommand{\skFin}{\overline{\Fin}}
\newcommand{\bN}{\mathbf N}
\newcommand{\bn}{\mathbf n}
\newcommand{\bm}{\mathbf m}
\newcommand{\comC}{\mathbf{Com}_{\cC}}
\newcommand{\internhom}{\hom_{\catOS}}
\newcommand{\PP}{b}
\newcommand{\cat}{\mathrm{Cat}}
\newcommand{\universeU}{\mathcal U}
\newcommand{\eg}{e.g.,\xspace}
\newcommand{\ie}{i.e.,\xspace}
\newcommand{\wrt}{with respect to\xspace}
\newcommand{\smsh}{\smash}
\newcommand{\defas}{:=}
\newcommand{\naively}{free\xspace}
\newcommand{\aml}{\mathscr{A}}
\newcommand{\bml}{\mathscr{B}}
\newcommand{\GL}{\mathrm{GL}}
\newcommand{\hur}{Hurewicz\xspace}
\newcommand{\orfi}{orthogonally final\xspace}
\newcommand{\siap}{has a simplicial approximation\xspace}
\newcommand{\sa}[1]{\mathrm{sa}_{#1}}
\newcommand{\SpGreg}{\Sp_\reg}
\newcommand{\Indreg}{\Ind_\reg}
\title{Equivariant Structure on Smash Powers}
\author{Morten Brun \\
  Department of Mathematics, University of Bergen, 5008 Bergen, Norway\\
  \href{mailto:morten.brun@math.uib.no}{morten.brun@math.uib.no}
\and
Bj\o rn Ian Dundas \\
Department of Mathematics, University of Bergen, 5008 Bergen, Norway\\
  \href{mailto:dundas@math.uib.no}{dundas@math.uib.no}
\and 
Martin Stolz\\
\href{mailto:stolz.bonn@googlemail.com}{stolz.bonn@googlemail.com}
}
\begin{document}

\maketitle
 \fancyhead{}
 \fancyhead[RO]{\rightmark}
 \fancyhead[LE]{\leftmark}
 \fancyfoot{}
 \fancyfoot[LE,RO]{\thepage}

\tableofcontents
\clearpage
%
\chapter*{Preface}

\section*{Introduction}

Let $G$ be a compact Lie group.  Below we prove theorems demonstrating to what extent commutative $G$-ring spectra have functorial ``diagonals'' and how these results about equivariant $G$-ring spectra interact.  Why should the reader care?

Stable homotopy occupies a sweet spot between geometry, combinatorics and algebra.  This is particularly apparent when taking symmetries into account.
Spectra have equivariant features originating in geometry and in algebra, and playing these features against each other has proven to be extremely fruitful, also when answering questions that themselves originate purely in either geometry or algebra.

The smash product of spectra is historically a latecomer and there was rightly much enthusiasm when it finally was constructed on the point-set level in the 1990's.  In many functorial models it is a surprisingly uncomplicated construction, but any way you look at it, the smash product has combinatorial roots encapsulating symmetries of the indexing category, which at the very least includes finite sets.  This becomes particularly exciting when coupled with a commutative (or $E_\infty$) algebra structure.  Some consequences of this structure were well known in classical algebra, but appeared as separate entities not obviously having any connections.

An instructive example can be seen as follows.  If $X$ is a set, you can consider the action of the group $C_2$ of order two swapping the two factors in the product $X\times X$.  The diagonal $\Delta\colon X\to X\times X$ is $C_2$-equivariant (with the trivial action on $X$), with image the fixed points $\Delta X=\{(x,x)\mid x\in X\}$.  Considering the associated free abelian group $\Z[X\times X]\cong\Z[X]\otimes\Z[X]$ we see that there are more fixed points than just the diagonal ones; any element $\sum_{(x,y)}a_{x,y}(x,y)$ with $a_{x,y}=a_{y,x}$ is a fixed point, and the fixed points may be identified with free group on the orbits $\Z[X\times X/C_2]$.  However, for an arbitrary abelian group there need not exist any linear ``diagonal homomorphism'' $M\to M\otimes M$.

 In the dual world of algebraic geometry, the analogy is the $C_2$-action on the tensor product $A\otimes A$ of a commutative ring $A$ and the diagonal is given by the multiplication $A\otimes A\to A$.  The dual of the fixed points is the orbits in the category of commutative rings and a quick check reveals that the multiplication is exactly the map to the orbits.  However, the also fixed points for commutative rings are important in many situations, for instance, a divided power structure is given by a certain lift of the multiplication to maps $(A^{\otimes n})^{\Sigma_n}\to A$. 

For spectra $E$ there does not exist a natural ``diagonal'' $E\to E\smsh E$.  Just as for free abelian groups, diagonals do exist for suspension spectra $\Sigma^\infty X$, but in contrast with the algebraic situation, the diagonal for spaces has significant remnants in spectra whose consequences are felt widely.  This text is all about this phenomenon in a guise which we have called the ``geometric diagonal''.  

The fixed points of the natural action of the group $C_2$ of order two on $E\smsh E$ contains a lot of information, both of an algebraic and of a combinatorial nature.  For instance, there is an important map 
$$(E\smsh E)^{C_2}\to E$$
(we hide all fibrant and cofibrant replacements in this introduction), which we in this introduction will call the {\em restriction map}.  

What follows is in many ways an enormous elaboration on the intricacies of this map.    The restriction map is {\bf not} some sort of multiplication map: $E$ was not required to have a ring structure, but if $E$ {\em does} have a ring structure, the restriction map is behind a very important construction in commutative algebra.  The algebraic shadow of this map is closely connected with what is known as Witt-vectors which appears prominently in algebraic number theory and in lifts of structure from finite to infinite characteristic.

As a concrete example, if $E=H\Z/2$ is the mod-$2$ Eilenberg-Mac~Lane spectrum, then $E\smsh E$ represents the ``dual Steenrod algebra'' and on path components the map $(E\smsh E)^{C_2}\to E$ is the surjection $\Z/4\to\Z/2$.  Furthermore, the fixed points of the $2^n$-fold smash $(E^{\smsh 2^n})^{C_{2^n}}$ has path components $\Z/2^n$: letting $n$ go to infinity we get a tower converging to the $2$-adic numbers and have gone from characteristic $2$ to infinite characteristic.  At the next level, the fixed points of the Klein group $C_2\times C_2$ on the fourfold smash of $E=H\Z/2$  goes further.  Not only does the divisibility by $2$ increase, but the $C_2\times C_2$-fixed points also capture a shadow of Bott periodicity --- and so leave the realm of algebra and see a bit more of the complexity of the sphere spectrum.  

This is the beginning of a phenomenon in stable homotopy theory, transcending the notion of characteristic into an entire ``chromatic tower'', with the sphere spectrum sitting at the top. What has not always been so clear is the strong equivariant nature of chromatic homotopy theory  -- in the above toy example reflected in that the complexity (in this case the rank) of the symmetries corresponds to the height of the chromatic information.  

In hindsight, the success of B\"okstedt, Hsiang and Madsen's topological cyclic homology, $\TC$, partially relies on this phenomenon for the case of cyclic groups.  They build their theory on B\"okstedt's topological Hochschild homology, $\THH$, which was an exceedingly clever way to encode smash powers functorially -- even before the smash product existed.  More precisely, $\THH(A)$ is the realization of a simplicial spectrum, which in degree $q$ has as many smash factors of $A$ as there are simplices in the standard simplicial circle, and then uses the structure of the ring spectrum $A$ to make face and degeneracy maps.  Now, the cyclic action on all the smash powers ensure that $\THH$ is a $\T^1$-spectrum and the restriction maps ensure that one has  control over the equivariant structure with respect to the finite cyclic groups.

What B\"okstedt, Hesselholt and Madsen observe, first for $A=H\Z/p$ and $A=H\Z$ and later for more complex situation (with crowning achievement being the case of local number fields in \cite{HMlocal}) is that the fixed points of $\THH(A)$ conspire to shift the chromatic behavior and they see this in connection with the Lichtenbaum-Quillen conjecture for algebraic K-theory.  Later, Ausoni and Rognes state this more generally in the form of their {\em red-shift conjecture} and take it one step further.  

The red-shift conjecture has proven to be elusive, partially because it predicts a behavior for algebraic K-theory which was not underpinned by even a conjectural understanding of what underlying mechanism might be responsible.  This lack of foundations has been somewhat ameliorated, but we have still a long way to go.  The point of the current text is that we feel that we are closer to understanding the corresponding behavior in stable equivariant homotopy theory (with the link given by variants of the cyclotomic trace), and that the phenomenon is more likely to be understood at this level.

Another example where the equivariant structure of smash powers has turned out to capture crucial information can be found in Hill, Hopkins and Ravenel's solution of the Kervaire invariant one problem \cite{HHR}, where they had to study a certain four-fold smash power with a twisted action.  The reader will notice some convergent evolution regarding the relation of \cite{HHR} and the current text, whose first version appeared in fully independent form in the last author's 2011 PhD thesis.  The subsequent lengthy editing process has not disrupted this independence, but by now we must probably concede priority.  We will point out a few differences and similarities later on.

We propose that a fuller understanding of the functorial properties of smash powers, also indexed by spaces, is needed. Smash powers can be indexed functorially over sets, $S\mapsto\bigwedge_SA$, precisely when $A$ is a commutative ring spectrum, and in that case we can even index over arbitrary spaces (ignoring the equivariant structure, modelling $\THH$ as a smash power $\bigwedge_{\T^1}A$ (aka ``tensor'', $\T^1\otimes A$) is old and appears already in \cite{MSV}; the reason $\THH$ can be applied to non-commutative ring spectra is that the circle $\T^1$ has a cyclic model).  Some steps towards an equivariant understanding were taken in \cite{BCD} and \cite{CDD}, but these were based on B\"okstedt's (brilliant) hack for representing smash products, which has several technical disadvantages when trying to extract more delicate structure.  

The present text aims at understanding the full equivariant structure for smash powers over free $G$-spaces, for $G$ a compact Lie group, directly from its definition. This approach has the great advantage of allowing categorical access to the construction.  However, from a homotopical algebra point of view it is well known that {\em commutative} ring spectra are quite elusive, especially in an equivariant context.  We deal with this by keeping the categorical foundations smaller and -- we believe -- more transparent than what is usual, but provide it with a model structure with an inordinately liberal view on cofibrancy.  This results in a delicate navigation between the Scylla of destroying homotopy invariance and the Charybdis of not having sufficient power to perform induction arguments.   It is of course Quillen-equivalent to the usual stable model structure \cite{MM} and in view of the relationship with Shipley's ``convenient'' structure we call this model structure the $\Sp$-model structure.

The nontrivial extension we propose as a prototypical proponent of red-shift is as follows: let $\alpha\colon\T^n\to\T^n$ be an isogeny of the $n$-dimensional torus, \ie a surjective group homomorphism $\T^n\to\T^n$ with finite kernel.  For a commutative ring spectrum $A$ (for instance $A=H\Z/2$) we construct a ``restriction map''
 $$\xymatrix{(\bigwedge_{\T^n}A)^{\ker(\alpha)}\ar[r]&\bigwedge_{\T^n}A }$$
of commutative $\T^n$ ring spectra which is compatible as $\alpha$ varies and which has been shown to exhibit extensions sufficiently  intricate to explain some of the phenomena predicted by red-shift (see \eg \cite{Veen}).  Actually, this generalizes to a class of compact Lie groups $G$ with a finite normal subgroup $N$ being ``\orfi and having simplicial approximations'', resulting in a map
$$\xymatrix{(\bigwedge_{X}A)^{N}\ar[r]&\bigwedge_{X_N}A }$$
from the fixed points of the smash indexed by a free cofibrant $G$-space $X$ to the smash indexed over the orbits $X_N$ (the target could also be described as the $N$-orbits of the $X$-fold smash in the category of commutative ring spectra).  The ``cyclotomic structure'' promoted in much of the literature is nothing but the fact that if $\alpha$ is an isogeny then $\T^n\cong\T^n/{\ker{\alpha}}$.

One way to explain this state of affairs is that there is a well understood ``isotropy separation'' fiber sequence
$$\xymatrix{(\bigwedge_{X}A)_{hN}\ar[r]^{\text{norm}}&(\bigwedge_{X}A)^{N}\ar[r]&\Phi^N\bigwedge_{X}A},$$
where $\Phi^N$ signifies the so-called geometric fixed point construction.

  An important part of our contribution is to provide a \emph{natural} ``geometric diagonal'' $\Delta_X$ between $\bigwedge_{X_N}A$ and $\Phi^N\bigwedge_{X}A $ and show that the geometric diagonal is an isomorphism 
$$\xymatrix{{\Delta_X}\colon\bigwedge_{X_N}A\ar[r]^\cong&\Phi^N\bigwedge_{X}A}$$ 
for all $\Sp$-cofibrant commutative ring spectra $A$ (see Theorem~\ref{generalliegroup}).

Being $\Sp$-cofibrant is a very weak requirement (and, of course, all stable homotopy classes contain $\Sp$-cofibrant ones) --  much weaker than the cofibrancy conditions usually considered in the literature.  We restrict ourselves to free (cofibrant) $G$-spaces $X$ partially out of convenience and partially out of necessity.  It is not hard to construct maps of $G$-spaces where the geometric diagonal screws things badly up; however things are well behaved if the map is what is called {\em isovariant} which is automatically satisfied in the free setting.  This was discovered by the second author in a late revision, and we have refrained from incorporating this generalization of our results.

Another way of looking at the geometric diagonal is obtained by observing that the source $\bigwedge_{X_N}A$ is naturally isomorphic to the $N$-orbits of $\bigwedge_{X}A$ in the category of commutative orthogonal ring spectra (coproduct powers preserve colimits), and so exposes the geometric $N$-fixed point functor as an orbit construction.

Regardless, the fact that the geometric diagonal is an \emph{isomorphism} (as opposed to some sort of weak equivalence) is a very handy technical fact, allowing us a functorial inverse  automatically inheriting a trail of good properties on the nose.

The full naturality of the geometric diagonal is a key ingredient: it is not to hard writing up a candidate for a ``geometric diagonal'', but the book-keeping and the choice of appropriate cells to do the induction over are non-trivial.  For instance, if you let $G$ be finite and $X$ a \emph{fixed} finite free $G$-set, we can dispense of the ring structure on $A$.  Then, for sufficiently ``free'' orthogonal spectra $A$, just by looking at the definitions we find that a trivial application of the Yoneda Lemma shows that $\bigwedge_{X_N}A$ and $\Phi^N\bigwedge_{X}A $ are isomorphic - a fact that has been known for a long time and is here recorded as Proposition~\ref{geomfree}.  However, Definition~\ref{definitionofgeomfixedpoints} exposes this isomorphism as a particular case of a map which we use significant time on developing the naturality of with respect to $G$, $A$ and -- in case $A$ is a commutative ring spectrum -- $X$.  Even so, there are still non-trivial hurdles to be overcome in order to pass from finite discrete $G$ and $X$ to the most general situation.

Part of what makes this delicate is that none of the previously understood model structures on equivariant orthogonal spectra do much of a job in this setting and we need the full force of our new structure.

In his thesis, Kro \cite{Kro} studied the non-commutative case for $X$ the circle, in which case the classical stable structure is adequate.  Another difference between the commutative and associative case is that -- as we've already commented on -- in the commutative setting the full functoriality of our construction in $G$ and $X$ is a feature which requires a lot of careful attention.  In the non-commutative setting this is neither true nor essential, but in our setup it is indispensable.

\section*{Organization}
In Chapter~\ref{app.equivarianthtpy} we collect the results about unstable equivariant
spaces that we are going to need. In particular we recall Illman's
Triangulation Theorem and, inspired by Shipley \cite{Shconv}, we provide 
mixed model structures on \(G\)-spaces for \(G\) a compact Lie group
 depending on pairs of families of subgroups of \(G\).
   
In Chapter~\ref{ch:eos} we present mixed model structures on the category of
equivariant orthogonal \(G\)-spectra. We follow the by now standard way of passing
from so called level model structures on orthogonal spectra to stable
model structures in Chapter~\ref{ch:ss}. By focusing more on semi-free orthogonal spectra than
on free ones we gain flexibility in the choice of level model
structures.

We observe that there is a bijection between the set of
isomorphism classes of \(n\)-dimensional orthogonal representation of \(G\) and
the set of conjugacy classes of subgroups \(P\) of \(G \times \Or
{\R^n}\) with the property that the projection to the first factor
induces an embedding of \(P\) in \(G\). This observation leads us to
work with compatible  families of subgroups of the groups \(G \times \Or
{\R^n}\) instead of universes of \(G\)-representations, which in turn
leads to level model structures on orthogonal \(G\)-spectra different from the
ones usually obtained from universes.
 
In Chapter~\ref{ch:filtering} we review some properties of fixed
point spectra and introduced the concept of an \emph{orthogonally final subgroups}. We prove that the cases we are interested in are all orthogonally final, but embarrassingly enough we don't know of any examples that are \emph{not} orthogonally final. The importance of a subgroup $N\subseteq G$ being orthogonally final is that the geometric $N$-fixed points preserve both induction and restriction, as explained in Proposition~\ref{3810} and Proposition~\ref{geomfixedrestrict}.  

In Chapter~\ref{ch:finsmashcom} we study the equivariant properties of \emph{finite} smash powers (categorical
tensors) of a commutative orthogonal ring-spectrum $A$ indexed on a space $X$. 
 The main result of this chapter,
Theorem~\ref{relativeversion}, gives a detailed equivariant description of the cells present in smash powers. It is both
used in the 
construction of model structures on the category of commutative
orthogonal \(G\)-ring-spectra and in the identification of geometric
fixed points of  a smash-power as a smash-power~\ref{geomScofib}.
Here our choice of model structure is seen to be crucial in the sense that we retain full equivariant control over the smash powers.  We end the chapter by giving the foundations necessary for moving from the finite case to more general topological spaces.

In the final Chapter~\ref{ch:smashpow} 
we start by exploring how the functoriality of the smash powers implies that many of our results for finite smash powers continue to hold when the indexation is over spaces.  Most importantly for the equivariant study of iterated topological Hochschild homology, for $A$ an $\Sp$-cofibrant commutative orthogonal ring spectrum, Theorem~\ref{gocompact} states that the geometric diagonal $\bigwedge_{X_N}A\to\Phi^N\bigwedge_{X}A$ is an isomorphism when $N$ is the kernel of an isogeny of a torus $\T^n$ and $X$ is a free cofibrant $\T^n$-space.  This is generalized to other compact Lie groups in Theorem~\ref{generalliegroup}.  The question regarding what properties of the sphere spectrum are used to prove these results is addressed in Section~\ref{thm:geodiagoverR} where a parallel result for algebras over a commutative orthogonal ring spectrum $R$ with certain properties is discussed.

The Appendix~\ref{ch:cat} is used to fix some notation, recall some 
facts about topological model
categories 
and provide a result about assembling model structures. This
result is used in the construction of level model structures on the
category of orthogonal \(G\)-spectra. 

\section*{Notation}
We use the following notation:
\begin{enumerate}
\item $\N=\{0,1,\dots\}$, $\Z=\{\dots,-1,0,1,2\dots\}$, $\Q$, $\R$ and $\C$ are the sets of natural, integer, rational, real and complex numbers, with the usual algebraic structure.\index{N@$\N$}\index{Z@$\Z$}\index{Q@$\Q$}\index{R@$\R$}\index{C@$\C$}
\item For $n\geq 0$, $\R^n$ is \(n\)-dimensional Euclidean space (with the dot product). For $1\leq i\leq n$, $e_i\in\R^n$ forms the standard basis. We choose \(\R^{m+n}\) together with the canonical inclusions of \(R^m\) onto the first coordinates and of \(\R^n\) as the last coordinates 
as the direct sum \(\R^m \oplus \R^n\).
\item The phrase ``let \(V\) be a Euclidean space'' means ``let \(V = \R^n\) for some \(n \ge 0\)''.
\item The one-point compactification \(S^V\)\index{SV@$S^V$, $V$-sphere} of a real inner product
  space $V$ with its induced action by the orthogonal group $\Or{V}$
  is denoted $S^V$, and is referred to as the {\em
    $V$-sphere}\index{Vsphere@"$V$-sphere}. It is pointed at the added point $\infty$. Given \(x \in V\) we
  consider \(x\) as an element \(x \in S^V\) through the inclusion of
  \(V\) in its one-point compactification \(S^V\). Given \(x,y \in
  S^V\) we allow ourselves to write \(x + y\) with the convention that
  \(\infty + x = \infty = x + \infty\).  Whenever the phrase ``$x\in S^W$ is in the orthogonal complement of $V$ in $W$'' is applied to $V=W$, it will mean ``$x\in\{0,\infty\}$''.
\item The category $\catU$\index{U@$\catU$, compactly generated weak Hausdorff spaces} is the category of compactly generated weak
  Hausdorff spaces and continuous functions (see \eg \cite{St}), and $\catT$\index{T@$\catT=*/\catU$} is the category of based spaces in $\catU$. Unless we explicitly state otherwise, a \emph{space} is an object in $\catT$ and maps between spaces are assumed to be continuous (and base point preserving).  If $X\in\catU$ is an unbased space, then $X_+\in\catT$ is the space obtained by adding a disjoint base point.
\item When we use the symbol \(G\) to denote a group it will always be a compact Lie group. Subgroups of compact Lie groups are always assumed to be closed.  

\end{enumerate}

\thispagestyle{empty}
\newpage\thispagestyle{empty}
\thispagestyle{empty}

\newcommand{\setsminus}{-}
\chapter{Unstable equivariant homotopy theory} \label{app.equivarianthtpy}
In this section we will recall results from (unstable) equivariant
homotopy theory. We begin with a recollection on model structures on
$\G$-spaces and will continue with some consequences of the results of
Illman \cite{Ill}. We work in the pointed setting $\catT$ (the
category of based, compactly generated, weak Hausdorff
spaces) as this is the more important
case for us, but all results could be stated in the unbased category
$\catU$ as well. We refer to the objects of \(\catT\) as spaces.
\section{\G -Spaces}
\label{G-spaces_section}
Let $\G$ be a compact Lie group, and let $\catGT$ be the category of
spaces with a continuous action of $\G$. Considering \(G\) as a one
object category, \(\catGT\)\index{GT@$\catGT$, category of $G$-spaces} is the category of continuous functors
from \(G\) to \(\catT\). 
  \begin{dfn}\index{genuine cofibration}\label{defineIG}
    The set
$$I_G\index{IG@\(I_G\)}= \{(S^{n-1} \times G/H)_+ \subseteq (D^{n} \times G/H)_+\mid n
\ge 0, H\subseteq G\text{ closed subgroup}\}$$ 
is called the set of {\em generating genuine cofibrations}.
An {\em $I_G$-cell complex} is a (possibly transfinite) composition of coproducts of pushouts of maps in $I_G$.  
A map of \(G\)-spaces is
a {\em genuine cofibration}\index{genuine!cofibration}\index{cofibration!genuine} if it is a retract of an \(I_G\)-cell
complex.  A $G$-space $Z$ is {\em genuinely cofibrant}\index{genuinely cofibration}\index{cofibrant!genuine} if the map from the one-point set is a genuine cofibration.
  \end{dfn}
The genuine cofibrations will be the cofibrations in the model structure of Definition~\ref{genuineGTstructure}.  There is a competing structure, occasionally called the {\em Borel structure}\index{Borel model structure}; there the cofibrations are the genuine cofibrations where only the trivial subgroup $0=H\subseteq G$ is allowed for the cells.  We may in the future refer to these as {\em free cofibrations of $G$-spaces}.\index{free cofibrant $G$-space} 
Limits and colimits in $\catGT$ are formed in $\catT$ and then given
the induced $\G$-action. We say that \(G\) {\em acts freely}\index{act freely}\index{free action} on a $G$-space
\(Z\) if the stabilizer group $\Stab_z=\{g\in G\,|\,gz=z \}$\index{Stabz@$\Stab_z$, stabilizer subgroup}  of every point $z\in Z$ except the base--point is trivial (since we are considering based spaces, the base point will always be a fixed point).
\begin{dfn}\label{fixPointDef}
Given a surjective group homomorphism \(p \colon G \to G'\)
  with kernel \(N\), the continuous functor $p^* \colon G'\catT
  \rightarrow \catGT$ equipping a \(G'\) space with the $\G$-action
  through \(p\) has both a left and a right adjoint.
The left adjoint \[(-)_N\colon \catGT \rightarrow G'\catT,\]
assigns to a $\G$-space $X$ its \(N\)-\emph{orbit space}\index{orbit
  space} $X_N$, and the right adjoint  
\[(-)^N\colon \catGT \rightarrow G'\catT,\]
assigns the subspace $X^N$ of $X$ of \emph{$N$-fixed points}\index{fixed point space}.
\end{dfn}
The fact that they are adjoints, implies in particular that $(-)_N$
preserves colimits and $(-)^N$ preserves limits, but even more is
true (cf.~\cite[III.1.6]{MM}, or \cite[Proposition 1.2]{Malkiewich}): 
\begin{lemma}\label{fixedprop}
\label{H-fixed}
In the situation of Definition~\ref{fixPointDef} the functor $(-)^N$ preserves coproducts, pushouts of diagrams one leg of which is a closed inclusion, and colimits along sequences of closed inclusions. For $X$ and $Y$ in $\catGT$, we have $(X \smash Y)^N = X^N \smash Y^N$.
\end{lemma}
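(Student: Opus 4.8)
The plan is to verify each clause first on underlying sets and then match topologies, relying on two elementary preliminaries. \emph{(a)} For every $X$ in $\catGT$ the fixed set $X^\G=\bigcap_{g\in\G}\{x\in X: gx=x\}$ is a closed subspace of $X$: for a map $u$ from a compact Hausdorff space $K$, the maps $u$ and $g\cdot u$ both factor through the compact Hausdorff subspace $u(K)\cup (g\cdot u)(K)$, so their equalizer is closed in $K$; hence each $\{x:gx=x\}$ is closed, and so is their intersection. \emph{(b)} If $A\hookrightarrow X$ is a closed $\G$-equivariant inclusion, then $A^\G=A\cap X^\G$ is closed in $X^\G$, and its topology is the same whether formed inside $A$ or inside $X^\G$. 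Note that a right adjoint argument does \emph{not} suffice here, since the statements concern colimits.

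I would handle the three colimit assertions in one stroke. The common feature is that each of these colimits decomposes into $\G$-invariant pieces: in a wedge $\bigvee_i X_i$ every summand is a clopen $\G$-subspace; in a pushout $B\cup_A C$ along a closed inclusion $A\hookrightarrow C$ every point lies either in $B$ or in the $\G$-invariant open complement $C\setminus A$; in a sequential colimit $\colim_n X_n$ along closed inclusions every point already lies in some $\G$-subspace $X_n$. Consequently a point of the colimit is $\G$-fixed if and only if it is $\G$-fixed in the stage where it lives, which yields the set-level identities $(\bigvee_i X_i)^\G=\bigvee_i X_i^\G$, $(B\cup_A C)^\G=B^\G\cup_{A^\G}C^\G$, and $(\colim_n X_n)^\G=\colim_n X_n^\G$ (for the pushout one also uses (b) to see that $A^\G\hookrightarrow C^\G$ is a closed inclusion, so the right-hand pushout makes sense). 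For the topologies: by (a) the fixed set is closed in the ambient space, so a subset of it is closed exactly when it is closed in the colimit; feeding this through the defining closedness condition of the colimit topology, and using (b) on each piece, shows that this is in turn equivalent to being closed in the corresponding colimit on the right-hand side.

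For the smash product I would reduce to the pushout case. Since $(-)^\G$ is a right adjoint it preserves the categorical product, so $(X\times Y)^\G=X^\G\times Y^\G$. Now $X\smash Y=(X\times Y)/(X\vee Y)$ is the pushout of $\ast\leftarrow X\vee Y\to X\times Y$, in which $X\vee Y\hookrightarrow X\times Y$ is a closed $\G$-inclusion, and $X\vee Y$ is itself the pushout of $X\leftarrow\ast\to Y$ along the closed inclusion of the basepoint. Applying the pushout case twice gives $(X\vee Y)^\G=X^\G\vee Y^\G$ and then $(X\smash Y)^\G=\ast\cup_{X^\G\vee Y^\G}(X^\G\times Y^\G)=X^\G\smash Y^\G$.

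The only real difficulty is point-set bookkeeping: one must make sure that the $k$-ification built into products and colimits in $\catU$ does not disturb any of the identifications above. This is harmless, since finite products, wedges, pushouts along closed inclusions, sequential colimits along closed inclusions, and closed subspaces of compactly generated weak Hausdorff spaces are all again compactly generated weak Hausdorff and are computed without any extra $k$-ification; but it is exactly the place where the standing hypotheses on $\catU$ enter and where some care is needed.
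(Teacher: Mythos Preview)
The paper does not supply its own proof of this lemma; it simply records the statement and cites \cite[III.1.6]{MM} and \cite[Proposition 1.2]{Malkiewich}. So there is nothing in the paper to compare your argument against beyond those references, which carry out essentially the same kind of direct point-set verification you give. Your proposal is correct: the closedness of $X^\G$ in $X$, the decomposition of each of the three colimits into $\G$-invariant pieces, and the reduction of the smash product to the product and a pushout along a closed inclusion are all sound, and your closing remark that these particular colimits and closed subspaces need no extra $k$-ification in $\catU$ is exactly the point-set input that makes the topology match.
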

Recall that 
a \hur cofibration
is a map \(i \colon A \to X\) in
\(\catGT\) so that the canonical map \(Mi = X \coprod_{A} A \wedge I_+
\to X \wedge I_+\)\index{Mi@$Mi = X \coprod_{A} A \wedge I_+$, mapping cylinder} has a retract.
The following technicality proves to be helpful in several places:
\begin{lemma}\label{fixprescof}
In the situation of Definition~\ref{fixPointDef} the fixed point functors $(-)^N$ preserve Hurewicz cofibrations.
\end{lemma}
\begin{proof}
  By Lemma~\ref{H-fixed} there are homeomorphisms \((X \wedge I_+)^N \cong
  X^N \wedge I_+\) and \((Mi)^N \cong M(i^N)\), so that \((Mi)^N \to
  (X \wedge I_+)^N\) has a retraction.
\end{proof}

Given a subgroup \(H\) of \(G\), the left adjoint of the restriction $i^*\colon \catGT \rightarrow \catHT$ is given by inducing up: 
\begin{dfn}\label{inducingup}
Let \(H\) be a closed subgroup of \(G\). For an $H$-Space $Y \in
\catHT$ the smash product $\G_+ \smash Y$ has 
an action of $G \times H$, given by the formula
\((g,h)(a \wedge y) = gah^{-1} \wedge hy\) for \((g,h) \in
G \times H\) and \(a \wedge y \in G_+ \smash Y\). The \emph{induced $G$-space}\index{induced $G$-space} is the $H$-orbit $G$-space \[G_+ \smash_H Y=(G_+\smsh Y)_H.\]\end{dfn}
While the inducing up functor is generally not symmetric monoidal, there is an important compatibility property with the smash product of spaces which one checks by inspection:
\begin{lemma}\label{inducingsmashcomp}
 If $X$ is an $H$-space and $Y$ a $G$-space there is a natural $G$-equivariant homeomorphism
\[(G_+ \smash_H X)\smash Y \cong G_+ \smash_H (X \smash i^*Y),\qquad([g,x],y) \leftrightarrow [g,(x,g^{-1}y)].\]
\end{lemma}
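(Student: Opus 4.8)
The plan is to exhibit the two indicated assignments explicitly and check, in turn, that they are well defined, mutually inverse, continuous, $G$-equivariant and natural; the statement is precisely the ``inspection'' it refers to. Throughout I use the conventions of Definition~\ref{inducingup}: $\G_+\wedge_H Z=(\G_+\wedge Z)_H$ is the quotient of $\G_+\wedge Z$ by the $H$-action $h\cdot(a\wedge z)=(ah^{-1})\wedge(hz)$, so its points are classes $[a,z]$ subject to $[a,z]=[ah^{-1},hz]$ for $h\in H$, the residual $G$-action is $g'\cdot[a,z]=[g'a,z]$, and $H$ acts diagonally on $X\wedge i^*Y$.

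First I would set $\Phi([g,x],y):=[g,(x,g^{-1}y)]$ and $\Psi([g,(x,y)]):=([g,x],gy)$ and verify well-definedness. Replacing the representative $[g,x]$ of a class in $\G_+\wedge_H X$ by $[gh,h^{-1}x]$ yields
\[ [gh,(h^{-1}x,(gh)^{-1}y)]=[gh,h^{-1}\cdot(x,g^{-1}y)]=[g,(x,g^{-1}y)], \]
and replacing $[g,(x,y)]$ by $[gh,(h^{-1}x,h^{-1}y)]$ yields $([gh,h^{-1}x],ghh^{-1}y)=([g,x],gy)$; basepoints are visibly preserved. A one-line computation gives $\Psi\Phi=\id$ and $\Phi\Psi=\id$, since $\Psi([g,(x,g^{-1}y)])=([g,x],gg^{-1}y)=([g,x],y)$ and $\Phi(([g,x],gy))=[g,(x,g^{-1}gy)]=[g,(x,y)]$. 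For $G$-equivariance, using $(g'g)^{-1}g'=g^{-1}$ one gets $\Phi(g'\cdot([g,x],y))=\Phi(([g'g,x],g'y))=[g'g,(x,g^{-1}y)]=g'\cdot\Phi([g,x],y)$.

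To upgrade the bijection $\Phi$ to a homeomorphism I would use that $-\wedge Y\colon\catT\to\catT$ is a left adjoint, hence preserves colimits; in particular the map $(\G_+\wedge X)\wedge Y\to(\G_+\wedge_H X)\wedge Y$ obtained by smashing the orbit projection with $Y$ is again a quotient map. Composing the associativity homeomorphism with the continuous ``shearing'' self-map $(g,x,y)\mapsto(g,x,g^{-1}y)$ of $\G_+\wedge X\wedge Y$ and then with the orbit projection onto $\G_+\wedge_H(X\wedge i^*Y)$ produces a continuous map which by the computation above is constant on the fibres of that quotient map, so it factors through $\Phi$; thus $\Phi$ is continuous, and the symmetric argument (shearing in the other direction) shows $\Psi$ is continuous. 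Naturality in $X$ along $H$-maps and in $Y$ along $G$-maps is then immediate from the formulas, since both sides are evident composites of functorial constructions.

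There is no genuine obstacle here; the only point worth a moment's thought is that passing from a continuous bijection to a homeomorphism relies on $-\wedge Y$ preserving quotient maps in the convenient category $\catT$, which is why the shearing map must be produced before passing to orbits. Conceptually, the lemma is the parametrized form of the shearing isomorphism $\G_+\wedge_H i^*Y\cong(\G/H)_+\wedge Y$ (recovered by taking $X=S^0$), the factor $g^{-1}$ recording how the $G$-action on $Y$ is untwisted inside the induced space.
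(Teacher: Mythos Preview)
Your proof is correct and is precisely the ``inspection'' the paper alludes to: the paper gives no proof beyond the remark that one checks this by inspection, and your explicit verification of well-definedness, mutual inverses, $G$-equivariance, continuity via the colimit-preservation of $-\wedge Y$, and naturality is exactly what that inspection amounts to.
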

\begin{lemma}\label{lem:inducedfixpoints}
  Let $N\subseteq H\subseteq G$ be inclusions of closed subgroups with $N$ normal in $G$.  If $X$ is an $H$-space then the natural map
$$G/N_+\smsh_{H/N}X^N\to[G_+\smsh_H X]^N$$
adjoint to the $N$-fixed points of the $H$-map $X\to G_+\smsh_HX$ sending $x\in X$ to $[1,x]$ is an isomorphism.
\end{lemma}
\begin{proof}
  Sending $[g,x]\in[G_+\smsh_HX]^N$ to $[[g],x]\in G/N_+\smsh_{H/N}X^N$ is a well defined inverse: since $[g,x]$ is an $N$-fixed point, $N\subseteq H$ and $N$ normal, we have for any $n\in N$ that 
$$[g,x]=[(gng^{-1})g,x]=[gn,x]=[g,nx]$$ 
and so $x$ is an $N$-fixed point.
\end{proof}

\subsubsection{Orbits and Fixed Points for semi-direct Products}\label{subsectsemidirprod}
Actions of groups that are semi-direct products
appear in various places throughout our work.
We will recall a few elementary properties, before investigating the more complicated interactions of the orbit and fixed point functors that play a role in computing the fixed points of smash powers. We restate the definition, in order to fix some notation:
\begin{dfn}\label{defsemidirectprod}
 Let $(G,\cdot,e)$ be a compact Lie group acting on another
 compact Lie group $(O,\odot,E)\in \catU$ via a group homomorphism $\phi\colon G
 \rightarrow \Aut(O)$. 
 The \emph{semi-direct product $G \ltimes (O,\phi)$}\index{semi-direct product}\index{GOphi@$G \ltimes (O,\phi)$, the semi-direct product} is the
 product space $G \times O$ equipped with the multiplication defined
 by 
 \begin{eqnarray*}
   (G \ltimes (O,\phi))\times (G \ltimes (O,\phi))& \rightarrow &(G \ltimes (O,\phi))\\
   (g,A),(h,B)&\mapsto&(g\cdot h,A\odot \phi(g)(B)).
 \end{eqnarray*}
 When \(\phi\) is implicit in the context we write \(G \ltimes O\)\index{GO@$G \ltimes O$, the semi-direct product}
 instead of \(G \ltimes (O,\phi)\).
\end{dfn}
For the rest of this section \(G\) and \(O\) are compact Lie groups
with a group homomorphism \(\phi \colon G \to \Aut(O)\).
\begin{remark}\label{semidirectactionrem}
  Specifying an action of the semi-direct product \(G \ltimes O\) on a space
  \(Z\) is the same as a giving actions of \(G\) and \(O\) on \(Z\)
  such that the map \(O \to \catT(Z,Z)\) defining the action of \(O\) on
  \(Z\) is a
  \(G\)-map.   
\end{remark}
\begin{remark}\label{shearingsemi}
  If \(G\) acts on \(O\) through inner automorphisms, that is, if
  there is a homomorphism \(\psi \colon A \to O\) such that
  \(\phi(g)(A) = \psi(g) \odot A \odot \psi(g^{-1})\), then we write \(G
  \ltimes_{\psi} O\)\index{GpsiO@$G \ltimes_{\psi} O$, the semi-direct product for inner actions} instead of \(G \ltimes (O,\phi)\). In this
  situation then there is
  an isomorphism \(G \ltimes_{\psi} O  \to G \times O\) of topological
  groups taking \((g,A)\) to
  \((g,A \odot \psi(g))\). 
\end{remark}
It is usually cumbersome to explicitly write the signs
``$\cdot$'' and ``$\odot$'' for the multiplications of $G$ and $O$, so we often omit them. The following properties are elementary:
\begin{lemma}
\begin{enumerate}
 \item Mapping $A \in O$ to $(e,A) \in G \ltimes O$ embeds $O$ as a (closed) normal subgroup.
\item Mapping $g\in G$ to $(g,E)\in G \ltimes O$ embeds $G$ as a closed subgroup.
\item For $A \in O$ and $g\in G$, the following elements of the semi-direct product are equal:
\[(e,\phi(g)(A)) = (g,E)(e,A)(g,E)^{-1}\]
\item $G \times \{E\}$ is normal in $G \ltimes O$ if and only if $\phi$ is
  trivial. In this situation the semi-direct product is actually the
  direct product. 
\item The projection $\proj_1\colon G \ltimes O \rightarrow G$ to the first factor is a group homomorphism.
\end{enumerate}
\end{lemma}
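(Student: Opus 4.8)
The plan is to verify all five items by unwinding the multiplication rule of Definition \ref{defsemidirectprod}; nothing here goes beyond bookkeeping once one records the inverse formula $(g,A)^{-1} = (g^{-1}, \phi(g^{-1})(A^{-1}))$, which is checked by multiplying out and using $\phi(e) = \id_O$ together with the fact that each $\phi(g)$ is a group automorphism, so that it fixes $E$. In particular $(e,A)^{-1} = (e, A^{-1})$ and $(g,E)^{-1} = (g^{-1}, E)$, and these two special cases are all the inverses that occur below.

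First I would dispose of (v), which is immediate: the first coordinate of $(g,A)(h,B)$ is $gh$, so the continuous map $\proj_1$ is a group homomorphism. For (i) and (ii), the identity $(e,A)(e,B) = (e, A \odot \phi(e)(B)) = (e, A \odot B)$ exhibits $A \mapsto (e,A)$ as an injective continuous homomorphism with closed image $\{e\} \times O$, while $\phi(g)(E) = E$ gives $(g,E)(h,E) = (gh,E)$, so $g \mapsto (g,E)$ is a closed embedding of groups onto $G \times \{E\}$. Item (iii) is then a one-line computation: $(g,E)(e,A) = (g, \phi(g)(A))$, and multiplying on the right by $(g,E)^{-1} = (g^{-1},E)$ yields $(e,\, \phi(g)(A) \odot \phi(g)(E)) = (e, \phi(g)(A))$. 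Running the same computation with an arbitrary first factor $(g,B)$ in place of $(g,E)$ shows that $(g,B)(e,A)(g,B)^{-1} = (e,\, B \odot \phi(g)(A) \odot B^{-1})$ still has first coordinate $e$, hence lies in the image of $O$; this proves that image is normal and completes (i).

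For (iv), one implication is trivial: if $\phi$ is trivial the product rule collapses to $(g,A)(h,B) = (gh, A\odot B)$, so $G\ltimes O$ is literally the direct product and $G \times \{E\}$ is normal in it. For the converse I would use that every element factors as $(h,A) = (h,E)\,(e,\phi(h^{-1})(A))$, so that the image of $G$ is normal precisely when it is stable under conjugation by elements of the two special forms $(h,E)$ and $(e,A)$; conjugation by $(h,E)$ always sends $(g,E)$ to $(hgh^{-1},E)$, whereas conjugation by $(e,A)$ sends $(g,E)$ to $(g,\, A \odot \phi(g)(A^{-1}))$, and this lands in $G\times\{E\}$ for all $g$ and $A$ exactly when $\phi(g)(A) = A$ for all such $g$ and $A$, i.e. when $\phi$ is trivial. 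The only point that needs a moment's thought is this reduction of normality to the two families of conjugators, which is legitimate because conjugation by a product is the composite of the two conjugations and every element of $G\ltimes O$ is such a product; beyond that the whole argument is purely computational and presents no real obstacle.
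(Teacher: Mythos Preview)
Your proof is correct. The paper does not actually supply a proof of this lemma; it merely introduces it with the phrase ``The following properties are elementary'' and moves on, so there is nothing to compare against beyond noting that you have filled in the details the authors left to the reader. Every computation you give checks out, including the inverse formula, the normality argument for the image of $O$, and the reduction in (iv) to conjugation by the two generating families.
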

Motivated by the first two points in the lemma, we identify \(A \in
O\) with \((e,A) \in G \ltimes O\) and \(g \in G\) with \((g,E) \in G
\ltimes O\). Under this identification, the third point of the lemma reads $gAg^{-1}=\phi(g)(A)$.
\begin{lemma}\label{secfactinj}
  Let \(Z\) be an \(G \ltimes O\)-space. Given \(z \in Z\), the \(O\)-orbit
  \(Oz\) is free if and only if the composition
  \begin{displaymath}
    \Stab_z \subseteq G \ltimes O \xto {\proj_1} G
  \end{displaymath}
   from the stabilizer subgroup with respect to the $G\ltimes O$-action is injective.
\end{lemma}
\begin{proof}
  The \(O\)-orbit \(Oz\) is free if and only if the stabilizer
  subgroup \(\Stab_z^O\) of \(O\) is trivial. Under the embedding of
  \(O\) in \(G \ltimes O\), this stabilizer subgroup corresponds to
  the kernel
  \(\Stab_z \cap (\{e\} \ltimes O) = \Stab_z \cap \proj_1^{-1}(e)\) of the
  composite homomorphism  in question.
\end{proof}
For any space $Z$ with an action of the semi-direct product, its orbit space $Z_O$ inherits an action of $G$.
We want to investigate in how far taking such $O$-orbits commutes with taking fixed points with respect to subgroups of $G$.
\begin{prop}\label{fact}
  Let \(Z\) be an \(G \ltimes O\)-space.
  Assume that $O$ acts freely on $Z$.\\
Then the canonical map from the quotient of the fixed points into the fixed points of the quotient \labeleq{factmap}{({Z^{G}})_{O^{G}} \rightarrow \left({Z}_{O}\right)^{G}} is injective.\\
\end{prop}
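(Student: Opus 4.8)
The plan is to write the canonical map of the statement out explicitly and then use freeness of the \(O\)-action to promote an arbitrary element of \(O\) that translates one \(G\)-fixed point to another into an element of \(O^G\).

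First I would unwind the map. Since \(O\) is a closed normal subgroup of the semi-direct product and the projection \(\proj_1\) identifies \(G\ltimes O/O\) with \(G\), the orbit space \(Z_O\) carries a residual \(G\)-action given by \(g\cdot[z]_O=[gz]_O\). If \(z\in Z^G\) then \([z]_O\) is \(G\)-fixed, because \(gz=z\in Oz\); and \(z\mapsto[z]_O\) is constant on \(O^G\)-orbits since \(O^G\subseteq O\). This uses implicitly that \(O^G\) acts on \(Z^G\), which follows from the relation \(gAg^{-1}=\phi(g)(A)\) in \(G\ltimes O\): for \(A\in O^G\), \(z\in Z^G\) and \(g\in G\) one gets \(g(Az)=\phi(g)(A)(gz)=\phi(g)(A)z=Az\), so \(Az\in Z^G\). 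Altogether this produces the based map \((Z^G)_{O^G}\to(Z_O)^G\), \([z]_{O^G}\mapsto[z]_O\).

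For injectivity, take \(z,z'\in Z^G\) with \([z]_O=[z']_O\). If \(z\) is the basepoint then so is every point of its \(O\)-orbit, hence \(z'=z\) and there is nothing to show; so assume \(z\), and hence \(z'\), is not the basepoint. Choose \(A\in O\) with \(z'=Az\). By freeness of the \(O\)-action away from the basepoint such an \(A\) is unique: \(A_1z=A_2z\) forces \(A_1^{-1}A_2\in\Stab_z^O=\{e\}\). Now for any \(g\in G\), using \(gz'=z'\), \(gz=z\) and \(gAg^{-1}=\phi(g)(A)\), we compute \(z'=gz'=g(Az)=\phi(g)(A)(gz)=\phi(g)(A)z\). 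Uniqueness of \(A\) forces \(\phi(g)(A)=A\) for all \(g\in G\), \ie \(A\in O^G\); hence \(z'=Az\) already represents the same class as \(z\) in \((Z^G)_{O^G}\), and the map is injective.

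There is no serious obstacle here. The only points needing care are getting the canonical map right — checking that \(O^G\) acts on \(Z^G\), that the map descends to \(O^G\)-orbits, and that it lands in the \(G\)-fixed points — and treating the basepoint separately, where freeness is unavailable. The heart of the matter, a fixed-point sharpening of Lemma \ref{secfactinj}, is that freeness makes the element of \(O\) carrying one \(G\)-fixed point to another unique, so that \(G\)-equivariance of those two points forces that element to be \(\phi\)-invariant.
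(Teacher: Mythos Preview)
Your proof is correct and follows essentially the same approach as the paper's: both pick \(A\in O\) with one \(G\)-fixed point equal to \(A\) times the other, compute \(g(Az)=\phi(g)(A)\,z\), and use freeness of the \(O\)-action to conclude \(A=\phi(g)(A)\) for all \(g\), hence \(A\in O^G\). Your version is simply more explicit about the well-definedness of the canonical map and the basepoint case.
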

\begin{proof}
Assume that two non-base points $[z_1]$ and $[z_2]$ in $({Z^G})_{O^G}$
map to the same element in the target, \ie $z_1 = A z_2$, for some $A$
in $O$. Then for any $g$ in $G$ we have that 
\[Az_2 = z_1 = gz_1 = g( A z_2) = (\phi(g) A) (g z_2) = (\phi(g) A) z_2.\]
Since the $O$-action on $Z$ was free, this implies that $A =
\phi(g) A$ for all $g \in G$. Thus $A \in O^G$ and $[z_1]= [z_2]$.
\end{proof}
Surjectivity can not be guaranteed in this generality, as the following example illustrates:

\begin{ex}\label{factmapnon}
  Let $Z=S(\C)_+$ where \(S(\C)\)\index{SC@$S(\C)$, the unit circle}
  is the unit circle in the complex
plane. Let $O = \Z/4$ and $G = \Z/2$ such that the action of the 
non trivial element in $G$ maps an element of $O$ to its inverse. In
particular, $G \ltimes O$ is the dihedral group $D_4$\index{D4@$D_4$,
  dihedral group}.  Then $O$ acts freely on $S(\C)$ through rotations by $90$ degrees, $G$ acts by complex conjugation, and one checks that the actions compatibly fit together into an action of $D_4$. We take a closer look at source and target of the map \({({Z^{G}})_{O^{G}} \rightarrow \left({Z}_{O}\right)^{G}}\).

On one hand, $O^G$ is the subgroup of self-inverse elements $\Z/2 \subset
\Z/4$, \ie generated by the rotation by $180$ degrees.
The $G$-fixed
point space $(S(\C))^G=\{-1,1\}$ consists of two points which are in the same
$O^G$-orbit, \ie the source of \({({Z^{G}})_{O^{G}} \rightarrow
  \left({Z}_{O}\right)^{G}}\) contains only one non-base point.

On
the other hand, taking orbits first, we see that ${Z}_{O}$ is
isomorphic to $S(\C)$, with the action of $G$ again given by complex
conjugation, \ie target of \({({Z^{G}})_{O^{G}} \rightarrow
  \left({Z}_{O}\right)^{G}}\) consists of two non-base points, such
that the map can not be surjective. 
\end{ex}
The intuition behind the failure in surjectivity is that there are
``diagonal'' copies of $G$ in $G \ltimes O$, and points with isotropy
type of such a diagonal copy contribute to the target of
\({({Z^{G}})_{O^{G}} \rightarrow \left({Z}_{O}\right)^{G}}\) but not to the source. Motivated by this, we will give
a sufficient condition for the surjectivity of
\({({Z^{G}})_{O^{G}} \rightarrow \left({Z}_{O}\right)^{G}}\). First, consider the simple case of only one $G
\ltimes O$-orbit, and take a closer look at the target space: 
\begin{lemma}
Let \(Z\) be a \(G \ltimes O\)-space consisting  of a
single $O$-free $G \ltimes O$-orbit, or in other words (by Lemma~\ref{secfactinj}), $Z= \faktor{G \ltimes O}{P}_+$
for some closed subgroup $P$ of $G \ltimes O$ with the projection
$\proj_1: P \rightarrow G$ to the first factor injective. Then
$({Z}_{O})^G$ contains at most two element, and it contains two elements if and only if the projection $\proj_1: P \rightarrow G$ is an isomorphism.
\end{lemma}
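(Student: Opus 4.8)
The plan is to compute the $O$-orbit space $Z_O$ explicitly as a homogeneous $G$-space, and then apply the standard description of the $G$-fixed points of such a space; the hypothesis on $\proj_1|_P$ then translates directly into the two assertions.

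First I would use that the inclusion $O \hookrightarrow G \ltimes O$, $A \mapsto (e,A)$, realizes $O$ as a closed normal subgroup with quotient map $\proj_1 \colon G \ltimes O \to G$ (this is contained in the elementary lemma preceding the statement). Writing $Z = (G \ltimes O)/P$ with its left $(G \ltimes O)$-action and right $P$-action, we have $Z_O = O \backslash (G \ltimes O)/P$. A short computation with the semi-direct product multiplication shows that two cosets $(g_1,A_1)P$ and $(g_2,A_2)P$ lie in the same $O$-orbit if and only if $g_1^{-1}g_2 \in \proj_1(P)$: the left $O$-action changes only the $O$-coordinate, by left translation, and since each $\phi(g)$ is an automorphism of $O$, after absorbing the $O$-coordinate into $P$ the only surviving constraint is the one in $G$. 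Hence $(g,A)P \mapsto g\,\proj_1(P)$ induces a $G$-equivariant homeomorphism $Z_O \cong G/H'$ with $H' := \proj_1(P)$ (if one works in the based setting, the disjoint basepoint is simply carried along and is not counted among the elements in the sense of the statement).

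Next I would analyse $(G/H')^G$. A coset $g_0 H' \in G/H'$ is fixed by every element of $G$ precisely when $g_0^{-1} g g_0 \in H'$ for all $g \in G$; since $g \mapsto g_0^{-1} g g_0$ is a bijection of $G$, this forces $H' = G$. Therefore $(G/H')^G$ is empty unless $H' = G$, and when $H' = G$ the space $G/H'$ consists of a single ($G$-fixed) point, so $(G/H')^G$ has exactly one element. This already gives that $(Z_O)^G$ has at most one element. Finally, $(Z_O)^G$ is non-empty if and only if $\proj_1(P) = G$, which, combined with the standing hypothesis that $\proj_1 \colon P \to G$ is injective, is exactly the statement that $\proj_1 \colon P \to G$ is an isomorphism.

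The only real obstacle is the bookkeeping in the first step: keeping straight the order of multiplication in $G \ltimes O$, the distinction between the left $(G \ltimes O)$-action and the right $P$-action on the double coset space, and the behaviour of the basepoint. These are routine, and the remainder of the argument is purely formal.
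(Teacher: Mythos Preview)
Your proof is correct and follows essentially the same route as the paper: both identify $Z_O$ with $G/\proj_1(P)$ as a $G$-space (you spell out the double-coset computation, while the paper simply invokes that $O$ is normal with quotient $G$) and then observe that $(G/H')^G$ is a single point if $H'=G$ and empty otherwise. There is nothing to add.
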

\begin{proof}
 Since the projection $\proj_1: P \rightarrow G$ is
 injective we get an isomorphism of $G$-spaces ${Z}_{O} \cong
 \faktor{G}{\proj_1{P}}_+$ and so has exactly
 two $G$-fixed point if and only if $\proj_1(P)=G$, and one
 \(G\)-fixed point otherwise.
\end{proof}On the other hand, the following elementary fact gives a characterization of the source:
\begin{lem}
Let $H$ be a topological group with subgroups $P$ and $G$. Then the space $\left(\faktor{H}{P}_+\right)^G$ is non-trivial, if and only if $G$ is subconjugate to $P$.\\
More precisely, $\left(\faktor{H}{P}\right)^G$ is a quotient of the subspace of all those elements $h \in H$, that conjugate $G$ into $P$.
\end{lem}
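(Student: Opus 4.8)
The plan is to describe $\left(\faktor{H}{P}\right)^G$ completely by a direct coset computation. Let the subgroup $G \subseteq H$ act on $\faktor{H}{P}$ by left translation. A coset $hP$ is $G$-fixed exactly when $ghP = hP$ for every $g \in G$, \ie when $h^{-1}gh \in P$ for all $g \in G$; this says precisely that $h^{-1}Gh \subseteq P$, equivalently $G \subseteq hPh^{-1}$. Hence, writing $T = \{\, h \in H : h^{-1}Gh \subseteq P \,\}$ for the transporter of $G$ into $P$ and $q \colon H \to \faktor{H}{P}$ for the quotient map, one has $T = q^{-1}\!\left(\left(\faktor{H}{P}\right)^G\right)$, and $q$ restricts to a surjection $q|_T \colon T \to \left(\faktor{H}{P}\right)^G$.

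For the first assertion, $\left(\faktor{H}{P}\right)^G$ is non-empty if and only if $T$ is non-empty, and $T \neq \emptyset$ means exactly that some conjugate $h^{-1}Gh$ of $G$ lies inside $P$, which is the definition of $G$ being subconjugate to $P$.

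For the ``more precisely'' clause we upgrade the surjection $q|_T$ to a quotient map. In a topological group the projection $q \colon H \to \faktor{H}{P}$ onto a coset space is open. The restriction of any open map $f$ to the preimage $f^{-1}(B)$ of a subset $B$ of the target is again open onto $B$: for $V$ open in $H$ one checks $f\!\left(f^{-1}(B) \cap V\right) = B \cap f(V)$, which is open in $B$. Thus $q|_T \colon T \to \left(\faktor{H}{P}\right)^G$ is an open continuous surjection, hence a quotient map; so $\left(\faktor{H}{P}\right)^G$ is the quotient of the subspace $T \subseteq H$ by the relation $h \sim h'$ iff $hP = h'P$.

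The core of the argument is a one-line coset manipulation; the only point worth a second glance is the topological bookkeeping in the last paragraph, namely that restricting the open projection $q$ to the (not necessarily open) transporter set $T$ still yields a quotient map onto the fixed-point subspace — but that is a general fact about open maps, so no genuine obstacle arises.
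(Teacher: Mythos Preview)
Your proof is correct and takes the same approach as the paper: the core is the identical coset computation that $hP$ is $G$-fixed iff $h^{-1}gh \in P$ for all $g \in G$. The paper's proof stops there, treating the ``more precisely'' clause purely set-theoretically, whereas you go on to verify that the surjection $T \to (H/P)^G$ is a topological quotient by restricting the open projection; this extra care is sound but not something the paper bothers with.
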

\begin{proof}
 Let $hP$ be a point in the orbit space. Then $hP$ is $G$-fixed, if and only if for all $g\in G$ we have $ghP = hP$, or equivalently $h^{-1}gh \in P$.
\end{proof}
\begin{prop}\label{crucialcommutefixandorbits}
Let $Z$ be a genuinely cofibrant $G \ltimes O$-space. Suppose \(O\) acts
freely on \(Z\). Then the map
\({({Z^{G}})_{O^{G}} \rightarrow \left({Z}_{O}\right)^{G}}\) is an isomorphism if
for every stabilizer subgroup $P$ of an orbit appearing in the
cell-decomposition of $Z$ the following two statements are logically
equivalent:
\begin{enumerate}
\item the
projection to the first factor induces an isomorphism
$\proj_1(P)\cong G$
\item $G \ltimes \{E\} \subset G \ltimes O$ is
subconjugate to $P$. 
\end{enumerate}
\end{prop}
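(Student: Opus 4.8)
Injectivity of the map \ref{factmap} is already contained in Proposition~\ref{fact}, and that argument uses only that $O$ acts freely; so the entire content of the statement is the surjectivity (equivalently, given injectivity, the bijectivity) of \ref{factmap}. The plan is to reduce to the case where $Z$ is a single $O$-free $(G\ltimes O)$-orbit and then read off the conclusion from Proposition~\ref{fact} together with the two lemmas immediately preceding the proposition. As a first step I would reduce to $Z$ being a relative $I_{G\ltimes O}$-cell complex on the basepoint: both $Z\mapsto(Z^G)_{O^G}$ and $Z\mapsto(Z_O)^G$ are functors, \ref{factmap} is a natural transformation, and a retract of an isomorphism is an isomorphism, so the general genuinely cofibrant case follows from the cell case. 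Write $Z=\colim_n Z_n$ for the skeletal filtration, each $Z_{n+1}$ a pushout of a coproduct of generating cofibrations $(S^{n_i-1}\times(G\ltimes O)/P_i)_+\to(D^{n_i}\times(G\ltimes O)/P_i)_+$; by Lemma~\ref{secfactinj} the freeness of $O$ forces each $\proj_1\colon P_i\to G$ to be injective.

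Next I would push both sides of \ref{factmap} through this filtration. On the source: $(-)^G$ preserves the coproducts, the pushouts along closed inclusions and the colimits along sequences of closed inclusions occurring here (Lemma~\ref{H-fixed}), and then $(-)_{O^G}$, being a left adjoint, preserves all colimits. On the target: $(-)_O$ preserves all colimits, so $Z_O$ is a $G$-cell complex with orbit types $G/\proj_1(P_i)$, to which $(-)^G$ may again be applied stagewise by Lemma~\ref{H-fixed}. Hence \ref{factmap} is, compatibly in $n$, a morphism of two (transfinite) cellular filtrations, and it suffices to check it on one cell. Using $(X_+\wedge W)^G\cong X_+\wedge W^G$ for $X$ with trivial action (Lemma~\ref{H-fixed}) and the evident identity $(X_+\wedge W)_{O^G}\cong X_+\wedge W_{O^G}$, both the cell $(D^{n}\times(G\ltimes O)/P_i)_+$ and its boundary reduce to the single orbit $((G\ltimes O)/P_i)_+$. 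So we are left to prove that \ref{factmap} is an isomorphism when $Z=((G\ltimes O)/P)_+$ with $P$ one of the cell stabilizers, so that $\proj_1\colon P\to G$ is injective.

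In this one-orbit case, $Z_O\cong G/\proj_1(P)$, so by the first of the two lemmas above the target $(Z_O)^G$ has at most one point beyond the basepoint, and it has such a point precisely when $\proj_1(P)=G$. By Proposition~\ref{fact} the map \ref{factmap} is an injective, basepoint-preserving map, hence an isomorphism as soon as its source and target are simultaneously trivial or simultaneously non-trivial. But $(Z^G)_{O^G}$ is non-trivial iff $Z^G=(((G\ltimes O)/P)^G)_+$ is, and by the second lemma above (applied to the subgroups $G\ltimes\{E\}$ and $P$ of $H=G\ltimes O$) this holds iff $G\ltimes\{E\}$ is subconjugate to $P$. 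Thus \ref{factmap} is an isomorphism for this orbit exactly when ``$\proj_1(P)\cong G$'' and ``$G\ltimes\{E\}$ is subconjugate to $P$'' are equivalent, which is precisely the hypothesis (i)$\Leftrightarrow$(ii). (Note that (ii)$\Rightarrow$(i) is automatic: if $h\in G\ltimes O$ conjugates $G\ltimes\{E\}$ into $P$, projecting to the first factor gives $\proj_1(P)\supseteq\proj_1(h^{-1}(G\ltimes\{E\})h)=G$; so the assumption really only asks for (i)$\Rightarrow$(ii).)

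The one-orbit case is immediate once the preceding lemmas and Proposition~\ref{fact} are available, so the only genuine labor is the reduction step: checking that $(-)^G$ and $(-)_O$ turn the chosen cell presentation of $Z$ into honest cellular filtrations of $(Z^G)_{O^G}$ and $(Z_O)^G$ on which \ref{factmap} can be compared stage by stage, and matching up the cells on the two sides. This colimit bookkeeping — keeping careful track of which of the two composite functors preserves which class of colimits, and of the basepoints and smash factors — is where I expect the main (if routine) effort to lie.
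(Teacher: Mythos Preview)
Your proposal is correct and follows essentially the same approach as the paper: reduce to single orbits via the cell structure (using that fixed points and orbits preserve the cell-complex construction, i.e.\ Lemma~\ref{fixedprop}), then invoke the two preceding lemmas together with Proposition~\ref{fact} on a single orbit. The paper's own proof is a one-sentence sketch of exactly this, while you have carefully unpacked the colimit bookkeeping and the single-orbit analysis; your added remark that (ii)$\Rightarrow$(i) is automatic is correct and not made explicit in the paper.
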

\begin{proof}
 Note that both taking fixed points and taking orbits preserves the cell-complex construction by Lemma~\ref{fixedprop}. Hence the natural map \({({Z^{G}})_{O^{G}} \rightarrow \left({Z}_{O}\right)^{G}}\) induces a natural isomorphism of cell diagrams, hence an isomorphism on the transfinite composition.
\end{proof}
\begin{cor}\label{factisochar}
Suppose \(G \ltimes O\) has the property that for every
subgroup \(P\) of \(G \ltimes O\) the
projection to the first factor induces an isomorphism
$\proj_1(P)\cong G$ if and only if $G \ltimes \{E\} \subset G \ltimes O$ is
subconjugate to $P$. 
Then for any genuinely cofibrant $G \ltimes O$-space the natural map \({({Z^{G}})_{O^{G}} \rightarrow \left({Z}_{O}\right)^{G}}\) is an isomorphism.
\end{cor}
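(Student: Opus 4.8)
The plan is to obtain the statement as a direct consequence of Proposition \ref{crucialcommutefixandorbits}. A genuinely cofibrant $G\ltimes O$-space $Z$ is by definition a retract of a relative $I_{G\ltimes O}$-cell complex, so every orbit $(G\ltimes O)/P$ occurring as a cell type is indexed by a subgroup $P\subseteq G\ltimes O$. The standing assumption is precisely that for \emph{every} subgroup $P$ of $G\ltimes O$ the two conditions
\begin{enumerate}
\item $\proj_1$ induces an isomorphism $\proj_1(P)\cong G$, and
\item $G\ltimes\{E\}$ is subconjugate to $P$,
\end{enumerate}
are equivalent; in particular this holds for every stabilizer subgroup appearing in a cell decomposition of $Z$, which is exactly the hypothesis needed to invoke Proposition \ref{crucialcommutefixandorbits}. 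That proposition then yields that the natural map $({Z^{G}})_{O^{G}}\to(Z_{O})^{G}$ is an isomorphism, and since $(-)^{G}$, $(-)_{O}$ and $(-)_{O^{G}}$ all preserve retracts, the conclusion passes from the cell complex to $Z$ itself.

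To make the reduction transparent I would recall the mechanism of Proposition \ref{crucialcommutefixandorbits}. By Lemma \ref{fixedprop} the fixed-point functor $(-)^{G}$ commutes with coproducts, with pushouts one leg of which is a closed inclusion, and with sequential colimits of closed inclusions, while the orbit functors commute with all colimits; hence the natural transformation $\bigl((-)^{G}\bigr)_{O^{G}}\Rightarrow\bigl((-)_{O}\bigr)^{G}$ is pinned down on a cell complex by its values on the single orbits $(G\ltimes O)/P$ (the spheres and disks are trivially acting and so pass through all three functors). On such an orbit one has $((G\ltimes O)/P)_{O}\cong G/\proj_1(P)$ as $G$-spaces, whose $G$-fixed points form a one-point set when $\proj_1(P)=G$ and are empty otherwise; on the source side $((G\ltimes O)/P)^{G}$ is non-empty exactly when $G\ltimes\{E\}$ is subconjugate to $P$, and by Proposition \ref{fact} the source $\bigl(((G\ltimes O)/P)^{G}\bigr)_{O^{G}}$ injects into the target and is therefore also at most one point. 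So the map is a bijection on each orbit precisely when (i) and (ii) are equivalent, and assembling the cells finishes the proof.

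The only substantive ingredient — and the step I expect to carry the weight — is this orbit-wise comparison living inside Proposition \ref{crucialcommutefixandorbits}: the identification of $((G\ltimes O)/P)_{O}$ with $G/\proj_1(P)$ and, above all, the use of the injectivity from Proposition \ref{fact} to force the source down to at most one point on each cell, which is what converts the purely group-theoretic equivalence of (i) and (ii) into the bijectivity of the map. Everything else — commuting the three functors past the cell attachments via Lemma \ref{fixedprop}, and handling the retract — is formal, so the corollary really is just the ``uniform in $P$'' repackaging of the proposition.
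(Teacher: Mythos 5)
Your proposal is correct and matches the paper's (implicit) argument: the corollary is stated without a separate proof precisely because its hypothesis, holding for \emph{all} subgroups $P$ of $G\ltimes O$, specializes to the hypothesis of Proposition \ref{crucialcommutefixandorbits} for the stabilizers occurring in any cell decomposition, and retracts are handled formally. Your expanded orbit-wise comparison is exactly the content of the two lemmas preceding Proposition \ref{crucialcommutefixandorbits} and of its proof via Lemma \ref{fixedprop}, so this is the same route, just written out in more detail.
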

The main example we want to apply Corollary~\ref{factisochar} to is
the following: 
\begin{ex}\label{mainex}
Let \(V\) be a Euclidean space with automorphism group \(\Or V\),
$G$ a discrete group, $X$ a discrete free $G$-space and 
\(\varphi\colon G \to \Or V\) a representation of \(G\) (in which case it is customary to abuse language and say that ``$V$ is an orthogonal $G$-representation''). 
Then \(G\) acts on \(\Or V\) through the composition 
\(G \xto \varphi \Or V \to \Aut(\Or V)\) of
\(\varphi\) and the conjugation homomorphism letting \(\Or V\) act on
itself by inner automorphisms. If $Q\subseteq\Or V$ is a $G$-invariant subgroup, let  $O=\pri{X}{Q}$, with $G$ acting by conjugation: \ie \(g \in G\) acts on
\(\{M_x\}_{x \in X} \in O\) by taking it to 
\(\{\varphi(g)M_{g^{-1}x}\varphi(g^{-1})\}_{x\in X}\). The
conditions of Corollary~\ref{factisochar} are satisfied by the
semi-direct product $G \ltimes \pri{X}{Q}$:
\\ 

\noindent Indeed, consider a subgroup $P\subseteq G \ltimes \pri{X}{Q}$ with $\proj_1(P) = G$
and let 
\begin{eqnarray*}
  \psi\colon\,\,\,\, G&\rightarrow&P \subset G
  \ltimes  \prod_{X}{Q} \\g &\mapsto &(g,\{A^g_x\}_{x\in X})
\end{eqnarray*}
 be the inverse of $\proj_1$. 
Given $(g,\{A_x^g\}) \in
P$, the fact that $\psi$ is a group homomorphism, amounts to 
 the formula 
\labeleq{formulas}{A^g_x \varphi(g)A^h_{g^{-1}x} \varphi(g^{-1}) =
  A^{gh}_x\,\,\,\,\,\,\,\,\,\,\forall g,h \in G,\,\,x\in X} 
Now chose a system of representatives $R$ for the $G$-orbits in
$X$. Let $B=\{B_x\} \in  \pri{X}{Q}$ be the element given by 
\[B_x = B_{hr} \defas{} \varphi(h) A_r^{h^{-1}} \varphi(h^{-1}),\] 
where $x=hr$ is the unique presentation of $x$ with $h \in G$ and $r
\in R$. 
The formal definition of multiplication in 
{$G \ltimes \pri{X}{Q}$}
gives: 
\begin{displaymath}
  (e,\{B_x\})(g,\{A_x^g\}) = (g,\{B_xA_x^g\})
\end{displaymath}
and
\begin{displaymath}
  (g,E)(e,\{B_x\}) = (g,\{\varphi(g)B_{g^{-1}x} \varphi(g^{-1})\}).
\end{displaymath}
If \(x = hr\), then \(g^{-1}x = g^{-1}h r\) and \(B_{g^{-1}x} =
\varphi(g^{-1}h)A_r^{h^{-1}g}\varphi(h^{-1}g)\). By equation~\ref{formulas} we
have \(A_r^{h^{-1}g} = A_r^{h^{-1}}\varphi(h^{-1})A^g_{hr}\varphi(h)\) so
\begin{displaymath}
  \varphi(g)B_{g^{-1}x} \varphi(g^{-1}) =
  \varphi(h)A_r^{h^{-1}g}\varphi(h^{-1})   =
  \varphi(h)A_r^{h^{-1}}\varphi(h^{-1})A^g_{hr}   = 
  B_x A^g_x.
\end{displaymath}
Hence $P$ is subconjugate to $G \ltimes \{E\}$ via $B\in O$. Because
\(\proj_1\) is surjective this implies that \(P\) is conjugate to \(G \ltimes \{E\}\) and therefore taking $O$-orbits commutes with taking $G$-fixed points.
\end{ex}
\begin{prop}\label{regularfix}
{Let $G$ be a discrete group, $X$ a free discrete $G$-space, $V$
a finite dimensional $G$-representation and $Q\subseteq\Or V$ a $G$-invariant subgroup. Let as in Example~\ref{mainex}
$O=\pri{X}{Q}$, with $G$ acting by conjugation. }

For every \(O\)-free genuinely cofibrant $G \ltimes O$-space $Z$ and for every
subgroup \(H\) of \(G\), taking $O$-orbits commutes with taking
$H$-fixed points in the sense that the canonical map
\[{({Z^{H}})_{O^{H}}} \rightarrow \left[{Z}_{O}\right]^{H},\]
is an isomorphism.
\end{prop}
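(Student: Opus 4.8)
The plan is to reduce Proposition~\ref{regularfix} to Corollary~\ref{factisochar} by restricting to the subgroup $H$ and checking that the hypothesis of the corollary is preserved under this restriction. The key point is that the semi-direct product structure behaves well with respect to passing to subgroups of $G$: if we restrict the $G$-action on $O = \prod_X \Or V$ to $H$, then $X$ is still a free discrete $H$-space (a free $G$-space is automatically a free $H$-space), and $V$ restricts to a finite dimensional $H$-representation via $\varphi|_H \colon H \to \Or V$. However, $O$ itself does \emph{not} decompose as $\prod_Y \Or V$ for an $H$-set $Y$ in the obvious naive way unless we are careful; rather, as an $H$-group, $O = \prod_{x \in X} \Or V$ with $H$ acting by $h \cdot \{M_x\} = \{\varphi(h) M_{h^{-1}x}\varphi(h^{-1})\}$, and since $X$ decomposes as a disjoint union of free $H$-orbits $H/\{e\}$, we have $O \cong \prod_{y \in X/H} \prod_{H} \Or V$ as an $H$-group, which is exactly of the form $\prod_{X'} \Or V$ for the free $H$-set $X' = X$ (viewed as an $H$-set). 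So the setup of Example~\ref{mainex} applies verbatim with $(G,X,V)$ replaced by $(H, i^*X, i^*V)$.

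First I would make precise that an $O$-free genuinely cofibrant $G \ltimes O$-space $Z$, when restricted along the inclusion $H \ltimes O \hookrightarrow G \ltimes O$, is still $O$-free (freeness of the $O$-action does not involve $G$ at all) and still genuinely cofibrant as an $H \ltimes O$-space. The latter is the one genuinely topological point: restriction of a relative $I_{G \ltimes O}$-cell complex along a subgroup inclusion is built out of cells $(G \ltimes O)/P_+ \wedge D^n$, and as an $H \ltimes O$-space such a cell is $(H\ltimes O)$-homeomorphic to a coproduct of cells of the form $(H \ltimes O)/P'_+ \wedge D^n$ where the $P'$ run over representatives of the $H\ltimes O$-double cosets inside $(G\ltimes O)/P$; in particular restriction takes genuine cofibrations to genuine cofibrations. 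Moreover each such stabilizer $P'$ is (conjugate in $H \ltimes O$ to) a subgroup of $G \ltimes O$ of the form $P \cap (H \ltimes O)$ up to conjugacy, hence still has injective projection to $H$ since $O$ still acts freely.

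Next I would invoke Example~\ref{mainex} for the triple $(H, i^*X, i^*V)$: it shows that the semi-direct product $H \ltimes \prod_{X} \Or V$ satisfies the hypothesis of Corollary~\ref{factisochar}, namely that for every subgroup $P'$ of $H \ltimes O$, the projection $\proj_1$ induces an isomorphism $\proj_1(P') \cong H$ if and only if $H \ltimes \{E\}$ is subconjugate to $P'$. Applying Corollary~\ref{factisochar} to the genuinely cofibrant $O$-free $H \ltimes O$-space $i^*Z$ then yields that the canonical map $(({i^*Z})^{H})_{O^{H}} \to ((i^*Z)_{O})^{H}$ is an isomorphism. Finally I would observe that this map is, by construction, exactly the map $({Z^H})_{O^H} \to (Z_O)^H$ appearing in the statement, since the $H$-fixed points and $O$-orbits of $Z$ are computed on the underlying spaces and do not depend on remembering the full $G$-action.

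The main obstacle is the genuine-cofibrancy bookkeeping in the restriction step: one needs that restricting a $G \ltimes O$-cell complex along $H \ltimes O$ gives an $H \ltimes O$-cell complex, and that the stabilizers occurring are again $O$-free (so that Example~\ref{mainex}'s hypothesis applies to them). This is a standard fact about induced cells and double cosets, but it is the one place where some care is required; everything else is a direct translation of the already-established Example~\ref{mainex} and Corollary~\ref{factisochar} to the restricted data $(H, i^*X, i^*V)$.
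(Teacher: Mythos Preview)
Your proposal is correct and follows essentially the same approach as the paper: restrict to $H$, note that $X$ remains a free $H$-set so Example~\ref{mainex} applies to $(H, i^*X, i^*V)$, and invoke Corollary~\ref{factisochar}. The only difference is in the justification of the restriction-preserves-cofibrancy step: the paper simply cites Corollary~\ref{illsubgroup} (Illman's theorem), whereas you argue it directly via a double-coset decomposition, which works here because $G$ is discrete so $(G\ltimes O)/P$ decomposes as a disjoint union of $H\ltimes O$-orbits indexed by the discrete set $H\backslash G/\proj_1(P)$.
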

\begin{proof}
Note that for subgroups $H \subset G$, any free $G$-set is a free
$H$-set, and any genuinely cofibrant 
{$G \ltimes O$-space
is also genuinely cofibrant as a $H \ltimes O$-space}
 by
Corollary~\ref{illsubgroup}. We can then apply Corollary~\ref{factisochar} with
the help of Example~\ref{mainex} for all choices of $H$. 
\end{proof}
\section{Illman's Triangulation Theorem}\label{illtriangsect}
In several places we will need to check that certain spaces are genuinely cofibrant in the sense of Definition~\ref{defineIG}.
By the general theory
for model categories it 
will usually suffice to understand the class of $I_\G$-cell complexes
in $\catGT$ (cf.~Definition~\ref{defineIG} and~\ref{cellularMaps}). For the rest
of the section, we restrict to  
the case of $\G$ a compact Lie group. For convenience, we will recall
the statements and the relevant definition from \cite{Ill} before we
give some corollaries. 
\begin{dfn}
\label{G-isotropy}
Let $X$ be a $\G$-space. Given an orbit $[x] \in X_\G$, define the 
\emph{$\G$-isotropy type} of $[x]$ as the conjugacy class of the stabilizer subgroup $\Stab_x=\{g\in G\,|\,gx=x\}$ of $\G$.

Since the stabilizer subgroups of elements in the same orbit are all conjugate, the $G$-isotropy type indeed only depends on the element $[x] \in X_\G$. 
\end{dfn}
The following results concern \(I_G\)-cell complexes in the sense of
Definition~\ref{defineIG} and~\ref{cellularMaps}.
\begin{theorem}[{\cite[5.5, 6.1,
    7.1]{Ill}}] \label{illmantriang}\label{illcomplex}\label{illtriangmfd} 
Let $X$ be space with an action of a compact Lie group \(G\) and a
triangulation $t\colon K \rightarrow X_G$ of the orbit space $X_G$,
such that the $\G$-isotropy type is constant on open simplices, \ie
for each open simplex $\mathring{s}$ of $K$ the $\G$-isotropy type is
constant on $t(\mathring{s}) \subset X_G$. Then $X$ admits a structure
of a $\G$-equivariant $CW$-complex. In particular $X$ is an $I_\G$-cell complex.

Furthermore, if $M$ is a smooth $\G$-manifold (with or without boundary), then the orbit space $M_G$ {\em does} admit a triangulation such that the $\G$-isotropy type is constant on open simplices, and consequently $M$ admits a structure of a $\G$-equivariant $CW$-complex.
\end{theorem}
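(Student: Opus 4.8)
This is Illman's theorem, so the plan is to recall the shape of his argument \cite{Ill} rather than to reprove it, indicating the structural steps and where the real work lies. For the first assertion I would induct over the skeleta of $K$. Write $\pi\colon X\to X_G$ for the orbit projection, let $K^{(n)}$ be the $n$-skeleton, and set $X_n=\pi^{-1}(t(K^{(n)}))$. The inductive claim is that $X_n$ carries a $G$-CW structure in which $\pi^{-1}(t(s))$ is a $G$-subcomplex for every simplex $s$ with $\dim s\le n$. The inductive step uses two facts about compact Lie group actions. First, since the $G$-isotropy type is constant, say equal to $(H_s)$, on the open simplex $\mathring s$, the slice (tube) theorem shows that $\pi^{-1}(t(\mathring s))\to t(\mathring s)$ is a fiber bundle with fiber the single orbit $G/H_s$ and structure group the Weyl group $N_G(H_s)/H_s$. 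Second, as $\mathring s$ is contractible this bundle is $G$-trivial, and choosing a collar of $\partial s$ in $s$ one can arrange the trivialization to be compatible, over the collar, with the bundle already present over $\pi^{-1}(t(\partial s))\subseteq X_{n-1}$. Then $\pi^{-1}(t(s))$ is obtained from $\pi^{-1}(t(\partial s))$ by attaching one equivariant cell $G/H_s\times(D^n,S^{n-1})$; gluing these over all $n$-simplices yields $X_n$, and the colimit over $n$ is $X$ with a $G$-CW structure. Each attaching step is a pushout of a map in $I_\G$ (after adding a disjoint basepoint), so $X$ is in particular an $I_\G$-cell complex.

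The step I expect to be the main obstacle is the second one: choosing the trivializations of the orbit bundles over the different simplices coherently along the faces where the isotropy type genuinely jumps. Organising these collars and trivializations so that they match up across all faces is precisely the bookkeeping that forms the technical core of \cite{Ill}, and it relies on $K$ being a simplicial complex so that faces and their collars can be nested compatibly.

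For the second assertion one must exhibit a triangulation of $M_G$ subordinate to the orbit-type stratification. Here the differentiable slice theorem gives every orbit $Gx$ a linear tube $G\times_{G_x}V_x$, so locally $M$ has only finitely many orbit types and $M_G$ is locally the orbit space of an orthogonal representation. Illman triangulates $M_G$ by induction over the (locally finite, subconjugacy-ordered) set of orbit types: the strata with largest isotropy are images of the smooth fixed-point submanifolds and are triangulable by the classical Cairns--Whitehead theorems, and one extends over each tube $G\times_{G_x}V_x$ using that the orbit space of a linear representation is the cone on the orbit space of its unit sphere, which carries strictly larger isotropy groups and is hence triangulated by the inductive hypothesis. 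Patching these local triangulations, with subdivision and general-position arguments as usual, produces the triangulation of $M_G$; feeding it into the first assertion equips $M$ with a $\G$-CW structure, and the case of a manifold with boundary is handled the same way.
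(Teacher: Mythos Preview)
The paper does not prove this theorem at all: it is stated with the citation \cite[5.5, 6.1, 7.1]{Ill} and used as a black box, so there is no ``paper's own proof'' to compare your proposal against. Your sketch is a reasonable high-level outline of the strategy in Illman's papers, and you correctly flag that the real work lies in making the trivializations cohere across faces where the isotropy type jumps. One point worth noting is that Illman's actual mechanism is not quite ``trivialize the orbit bundle over each simplex and glue'': he builds the equivariant CW structure from a notion of \emph{equivariant simplex} (standard simplices equipped with a chain of subgroups governing the isotropy on each face), and the inductive construction proceeds by showing that the preimage of a closed simplex is such an equivariant simplex. This is close in spirit to what you describe but organizes the coherence problem differently, and it is why the paper later remarks (just after Definition~\ref{indregsemif}) that Illman's equivariant simplices do not directly yield simplicial sets in the usual way.
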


\begin{remark}\label{spheresareCW}
  In particular, for every representation \(V\) of a compact Lie group \(G\), its one-point compactification \(S^V\) is a \(G\)-CW-complex.    
\end{remark}
\begin{cor}\label{illsubgroup}
Let $\G$ be a compact Lie group and $\h$ a closed subgroup. Then \(G\) is an \(H\)-CW complex and 
an $I_G$-cell complex is an $I_H$-cell complex.
\end{cor}
\begin{proof}
By induction on the cell structure, and since smashing with a space
preserves colimits, it suffices to show that for any closed subgroup
$K \subset \G$ the orbit space $\faktor{\G}{K}$ is an $I_\h$-cell
complex. However, $\faktor{\G}{K}$ is a smooth \(G\)-manifold, and
thus also a smooth \(H\)-manifold. Theorem~\ref{illtriangmfd} now implies that 
$\faktor{\G}{K}$ is an \(I_H\)-cell complex.
\end{proof}
The following result is elementary:
\begin{lemma}\label{isotropytypeofcof}
  Let \(Y\) be a \(G\)-retract of an \(I_G\)-cell complex \(X\). The cells of \(X\) are of the form
  \(D^n \times G/L\), where \(L\) is the isotropy group of an element of
  \(X\). Conversely, if \(L\) is the isotropy group of an element of
  \(Y\), then there is at least one cell of \(X\) isomorphic to a cell
  of the form \(D^n \times G/L\).
\end{lemma}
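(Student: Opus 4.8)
Write $L=\Stab_x$ for the stabilizer subgroup of an element $x$. The first assertion is close to the definition of $I_G$: each cell attached in building $X$ is one of the maps $(S^{n-1}\times G/H)_+\to(D^n\times G/H)_+$, so the cell has the form $D^n\times G/H$ for some $n\ge 1$ and some closed $H\le G$, and for $(d,gH)\in\mathring{D}^n\times G/H$ the isotropy group is $gHg^{-1}$, conjugate to $H$. To know that $H$ is \emph{actually} realized as an isotropy group of an element of $X$, I would use the standard fact that the characteristic map of a cell restricts to a $G$-homeomorphism of the open cell $\mathring{D}^n\times G/H$ onto its image in $X$. I would isolate this as an auxiliary claim: in an $I_G$-cell complex $X$ every point lies in the interior of a unique cell, and its $G$-isotropy type is the conjugacy class of the subgroup $H$ indexing that cell. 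The proof is the usual transfinite induction along the cell filtration $*=X_0\subseteq X_1\subseteq\cdots\subseteq\colim_\alpha X_\alpha=X$: at a limit ordinal $\lambda$ the stage $X_\lambda=\colim_{\beta<\lambda}X_\beta$ is a colimit along closed inclusions and contributes no new points, while $X_{\beta+1}$ is the pushout of $X_\beta$ along the closed inclusion $\coprod_j(S^{n_j-1}\times G/H_j)_+\hookrightarrow\coprod_j(D^{n_j}\times G/H_j)_+$, so $X_{\beta+1}\setminus X_\beta$ is $G$-homeomorphic to $\coprod_j\mathring{D}^{n_j}\times G/H_j$. Granting the claim, the first assertion is immediate.

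For the converse I would exploit the retraction. Write $i\colon Y\to X$ for the inclusion and $r\colon X\to Y$ for the retraction, so $r\circ i=\id_Y$, and fix a non-basepoint $y\in Y$ with $\Stab_y=L$. Since $i$ and $r$ are $G$-maps, $L=\Stab_y\subseteq\Stab_{i(y)}\subseteq\Stab_{r(i(y))}=\Stab_y=L$, hence $\Stab_{i(y)}=L$ exactly. Applying the auxiliary claim to $i(y)\in X$: it lies in the interior of some cell $D^n\times G/H$, and every point of $\mathring{D}^n\times G/H$ has isotropy group conjugate to $H$; therefore $H$ is conjugate to $L$, and that cell is $G$-isomorphic to $D^n\times G/L$. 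This is exactly the assertion.

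The only genuine work is the auxiliary claim, and within it the point that for a pushout along the closed inclusion $(S^{n-1}\times G/H)_+\hookrightarrow(D^n\times G/H)_+$ the set-theoretic complement of the old stage is carried $G$-homeomorphically onto the disjoint union of open cells; this uses that $\catT$ is well behaved for such pushouts and for transfinite colimits along closed inclusions, the kind of stability already in play around Lemma~\ref{H-fixed}. Everything else — the reformulation of the definition for the first assertion, and the stabilizer squeeze $L\subseteq\Stab_{i(y)}\subseteq L$ for the converse — is routine. (The basepoint, which has isotropy $G$, is the degenerate case: one either reads ``element'' as ``non-basepoint element'', or regards $*$ as a $0$-cell so that $G/G$ sits among the cell orbits.)
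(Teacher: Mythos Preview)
The paper gives no proof: it simply introduces the lemma with ``The following result is elementary'' and moves on. Your argument is correct and supplies exactly the kind of elementary verification the paper elides --- the open-cell homeomorphism via transfinite induction along the cell filtration for the first assertion, and the stabilizer squeeze $L\subseteq\Stab_{i(y)}\subseteq L$ for the converse --- so there is nothing to compare against and nothing to correct.
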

\begin{cor}\label{illproducts}
Let $\h$ and $K$ closed subgroups of $\G$. The product
$\faktor{\G}{\h} \times \faktor{\G}{K}$ is again an $I_\G$-cell
complex, and the only orbit types that appear are of the form $\faktor{\G}{L}$, with $L$ subconjugate to both $\h$ and $K$.
\end{cor}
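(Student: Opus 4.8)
The strategy is to recognize $\faktor{\G}{\h} \times \faktor{\G}{K}$ (with the diagonal $\G$-action) as a smooth $\G$-manifold and apply Illman's triangulation theorem, then read off the orbit types from the isotropy groups of points together with Lemma \ref{isotropytypeofcof}.

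\textbf{Step 1: $\G$-CW structure.} Each of $\faktor{\G}{\h}$ and $\faktor{\G}{K}$ is a closed smooth $\G$-manifold, since $\h$ and $K$ are closed subgroups of the compact Lie group $\G$. Hence the product $\faktor{\G}{\h} \times \faktor{\G}{K}$, equipped with the diagonal action, is again a smooth $\G$-manifold (without boundary). By Theorem \ref{illtriangmfd} its orbit space admits a triangulation on whose open simplices the $\G$-isotropy type is constant, and consequently $\faktor{\G}{\h} \times \faktor{\G}{K}$ carries the structure of a $\G$-equivariant $CW$-complex; in particular it is an $I_\G$-cell complex.

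\textbf{Step 2: identifying the orbit types.} The stabilizer of a point $(g_1\h, g_2 K) \in \faktor{\G}{\h} \times \faktor{\G}{K}$ is $g_1 \h g_1^{-1} \cap g_2 K g_2^{-1}$, which is contained in the conjugate $g_1\h g_1^{-1}$ of $\h$ and in the conjugate $g_2 K g_2^{-1}$ of $K$; thus every isotropy group $L$ occurring is subconjugate to both $\h$ and $K$. By Lemma \ref{isotropytypeofcof}, every cell of the $I_\G$-cell complex built in Step 1 has the form $D^n \times \faktor{\G}{L}$ with $L$ an isotropy group of some element of the space, so the only orbit types appearing are of the asserted form $\faktor{\G}{L}$ with $L$ subconjugate to both $\h$ and $K$.

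\textbf{Main obstacle.} None of the steps is deep: the only point requiring a little care is the bookkeeping in Step 2, namely that the orbit types appearing in \emph{cells} coincide (via Lemma \ref{isotropytypeofcof}) with the isotropy types of \emph{points}, so that the elementary stabilizer computation $\Stab_{(g_1\h,g_2K)} = g_1\h g_1^{-1} \cap g_2 K g_2^{-1}$ suffices to pin down all orbit types. Everything else — that a product of smooth $\G$-manifolds is a smooth $\G$-manifold, and the invocation of Theorem \ref{illtriangmfd} — is routine.
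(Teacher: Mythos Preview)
Your proof is correct and follows essentially the same approach as the paper: both use Illman's triangulation theorem to obtain the $I_\G$-cell structure and then compute stabilizers pointwise, invoking Lemma~\ref{isotropytypeofcof} to identify the orbit types appearing in the cells. The only cosmetic difference is in Step~1: where you apply Theorem~\ref{illtriangmfd} directly to the product (as a smooth $\G$-manifold under the diagonal action), the paper instead identifies $\faktor{\G}{\h}\times\faktor{\G}{K}$ with the single $(\G\times\G)$-orbit $\faktor{(\G\times\G)}{(\h\times K)}$ and then restricts along the diagonal inclusion $\G\hookrightarrow\G\times\G$ via Corollary~\ref{illsubgroup}---but that corollary is itself proved by an application of Theorem~\ref{illtriangmfd}, so the two routes amount to the same thing.
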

\begin{proof} 
Note that $\faktor{\G}{\h} \times \faktor{\G}{K}$ is isomorphic to $\faktor{(\G \times \G)}{(\h \times K)}$ and embed $\G$ into $\G \times \G$ as the diagonal (closed) subgroup, so that Corollary~\ref{illsubgroup} gives that $\faktor{\G}{\h} \times \faktor{\G}{K}$ is an $I_G$-cell complex. 
For the statement about orbit types, we check what kind of stabilizer subgroups can appear in the product $\faktor{\G}{\h} \times \faktor{\G}{K}$ and use Lemma~\ref{isotropytypeofcof}. In fact, if $L$ is the stabilizer of $([g_1],[g_2])$, then we have that $Lg_1 \subset g_1\h$ or equivalently that $L$ is subconjugate to $\h$. The analogous argument for $K$ finishes the proof.
\end{proof}

\section{Mixed Model Structures}

In this section \(G\) is a compact Lie group.  When referring to an unspecified model structure on $\catT$, the default is the Quillen model structure with Serre fibrations and weak equivalences. 
We want to introduce model structures on the category of $G$-spaces where the weak equivalences are defined to be the maps that remain weak equivalences after forming fixed point spaces with respect to subgroups in a given family of subgroups of $G$, {\em but} where the cofibrations are determined by some {\em other} family of subgroups of $G$.  The fibrations will have to find a compromise and we refer informally to this situation as a {\em mixed model structure}.

\begin{dfn}\label{subsectfamilies}
  A set $\aml$ of closed subgroups of $G$ is a (closed) \emph{family}\index{closed family}\index{family} if it is closed under taking subgroups and conjugates. 
\end{dfn}

It will be convenient to have some homotopical properties of Hurewicz
cofibrations (\ref{dfn:h-cof}) at hand when working with topological model categories.
The following lemma sums up what we will use in the $G$-equivariant context
(a map $X\to Y$ of $G$-spaces is said to be an $\aml$-equivalence if for all $H\in\aml$ the map of fixed points $X^H\to Y^H$ is a weak equivalence of spaces, cf.~Definition~\ref{def:amlmodel} below): 
\begin{lemma} Let $\aml$ be a family of subgroups of $G$. \label{propsfml}
\begin{enumerate}
\item sequential colimits of $\aml$-equivalences along Hurewicz cofibrations are $\aml$-equivalences
\item pushouts of $\aml$-equivalences along Hurewicz cofibrations between well based $G$-spaces are $\aml$-equivalences
\item the cube lemma~\ref{gen.cube} holds for $\aml$-equivalences and Hurewicz cofibrations between well-based $G$-spaces
\item the cube lemma holds for $G$-homotopy equivalences and Hurewicz cofibrations.
\end{enumerate}
\end{lemma}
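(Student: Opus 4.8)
The plan is to prove each of the four statements of Lemma \ref{propsfml} by reducing to the corresponding non-equivariant statement about spaces, applied level-by-level at fixed points, and then invoking Lemma \ref{fixprescof} to control the cofibration hypotheses.

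First I would set up the basic reduction. A map $f$ in $G\catT$ is an $\aml$-equivalence precisely when $f^H$ is a weak equivalence for all $H \in \aml$; so for statements (i)--(iii) it suffices to check the corresponding fact, in the unbased or based category $\catT$, for the diagrams obtained by applying $(-)^H$. The key point is that $(-)^H$ commutes with the relevant colimits: by Lemma \ref{fixedprop} (= Lemma \ref{H-fixed}) the functor $(-)^H$ preserves coproducts, pushouts one leg of which is a closed inclusion, and sequential colimits along closed inclusions; and by Lemma \ref{fixprescof} it carries $h$-cofibrations to $h$-cofibrations. Since an $h$-cofibration in $\catT$ is in particular a closed inclusion (for well-based spaces, a cofibration in the Strøm/classical sense), applying $(-)^H$ to any of the colimit diagrams appearing in (i)--(iii) yields a diagram of the same shape in $\catT$ whose structure maps are $h$-cofibrations and whose comparison maps are weak equivalences.

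With that reduction in place, statement (i) follows from the standard fact that in $\catT$ a sequential colimit of weak equivalences along $h$-cofibrations is a weak equivalence (gluing/telescope argument); statement (ii) follows from the classical gluing lemma for pushouts along cofibrations between well-based spaces; and statement (iii) follows from the classical cube lemma in $\catT$ for $h$-cofibrations between well-based spaces, both applied at each fixed-point level $H \in \aml$. For statement (iv), which concerns $G$-homotopy equivalences rather than $\aml$-equivalences, the cleanest route is to observe that $G$-homotopy equivalences are preserved by all the constructions in sight (pushouts along $h$-cofibrations, and the cube construction) directly at the equivariant level, since a $G$-homotopy equivalence restricts to an $H$-homotopy equivalence and $h$-cofibrations in $G\catT$ are honest cofibrations in the $G$-equivariant sense; one then cites the cube lemma \ref{gen.cube} in its homotopy-equivalence form (which holds in any topological model category, or directly by the mapping-cylinder replacement argument) applied in $G\catT$.

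The main obstacle is bookkeeping rather than mathematics: one must be careful that the spaces appearing are well-based so that the non-equivariant gluing and cube lemmas apply, and that passing to $H$-fixed points does not destroy the well-basedness hypothesis — this is why the well-based assumption is carried through the statements (ii)--(iii), and one checks that $(-)^H$ of a well-based $G$-space is well-based since $(-)^H$ preserves the basepoint cofibration by Lemma \ref{fixprescof}. Once that point is made, each of the four items is a one-line invocation of the corresponding classical result, so I would simply state the reduction carefully and cite the cube lemma \ref{gen.cube} and the standard facts about $h$-cofibrations in $\catT$ for the rest.
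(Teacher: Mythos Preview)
Your proposal is correct and follows essentially the same route as the paper's proof, which is a one-line reduction: ``All points follow directly from the analogous statement about weak equivalences in $\catT$ (\ref{hcofcubespaces}), and Lemma \ref{fixedprop}.'' You have simply unpacked this more carefully, and your explicit invocation of Lemma \ref{fixprescof} (that $(-)^H$ preserves $h$-cofibrations) and the observation that well-basedness survives passage to fixed points are exactly the bookkeeping the paper leaves implicit. Your handling of (iv) directly in $G\catT$ is a mild variant---the paper lumps it under the same citation---but both are valid and amount to the same argument.
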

\begin{proof}
All points follow directly from the analogous statement about weak
equivalences in $\catT$ summarized in Corollary~\ref{hcofcubespaces}, and Lemma~\ref{fixedprop}. 
\end{proof}
 
\begin{dfn}
  If $\aml$ is a family of subgroups of $G$, then $\aml\catT$ is the functor category from the {\em orbit category} associated with $\aml$, which we by abuse of notation also call $\aml$:
  \ie if $K,H$ are in the family $\aml$, then the space of morphisms from $K$ to $H$ is  
  $$\aml(K,H) = G\catT(G/H_+,G/K_+)$$
  (so that the endomorphisms $\aml(H,H)$ can be identified with the Weyl group $W_{\!G}H=N_GH/H$ and $H\mapsto G/H_+$ gives an equivalence to a full subcategory of $\catGT$).
\end{dfn}

Note that for the family $\{e\}$ containing only the trivial group we have a natural identification $\{e\}\catT=G\catT$ and we write $\mathrm{ev}_e\colon\aml\catT\to G\catT$ for the map induced by the inclusion $\{e\}\subseteq\aml$ sending $X\in\aml\catT$ to the $G$-space $\mathrm{ev}_eX=X(e)$.

For a general subgroup $H\subseteq G$, the inclusion of the full category containing only $H$ induces a map $\aml\catT\to W_GH\catT$, but we will use $\mathrm{ev}_H\colon\aml\catT\to\catT$ generically to refer to the composite with map forgetting the $W_GH$-action

Consider the evaluations
$\xymatrix{\catT&\ar[l]_{\mathrm{ev}_H}
  \aml\catT\ar[r]^{\mathrm{ev}_e}&G\catT}$ for varying $H\in\aml$ and their left and right adjoints, some of which are given names in the following diagram
$$\xymatrix{\catT\ar@<2ex>[rr]^{\Fr{\aml}{H}}\ar@<-2ex>[rr]_{}&&
  \ar[ll]_{\mathrm{ev}_H}\aml\catT\ar[rr]^{\mathrm{ev}_e}&&G\catT,\ar@<2ex>[ll]_{\mathrm{fix}}\ar@<-2ex>[ll]
  }$$
  where for convenience we notice that $\Fr{\aml}{H}Y(K)=(G/H)^K_+\smsh Y=\aml(H,K)\smsh Y$ and $(\mathrm{fix}Z)(K)=Z^K$.

  \begin{dfn}\label{def:amlmodel}
    The \emph{projective model structure}\index{projective model structure}\index{model structure!projective} on $\aml\catT$ is the model structure fetched (\eg by Theorem~\ref{enrhirsch}) via the pairs $(\Fr{\aml}{H},\mathrm{ev}_H)_{H\in\aml}$, \ie where a map $X\to Y$ in $\aml\catT$ is a weak equivalence (fibration) if for all $H\in \aml$ the map $X(H)\to Y(H)$ is a weak equivalence (Serre fibration) and the generating (acyclic) cofibrations are the maps $\Fr{\aml}{H}i=\aml(H,-)\smsh i$ for $i$ a generating (acyclic) cofibration in $\catT$.

    The \emph{$\aml$-model structure}\index{Amodel structure@$\aml$-model structure on $\catGT$}\index{model structure!$\aml$-} on $\catGT$ is likewise the model structure obtained via the adjoint pair $$\xymatrix{\catT\ar@<1ex>[rr]^{\Fr{}{H}=\mathrm{ev}_H\Fr{\aml}{H}}&&G\catT\ar@<1ex>[ll]^{\mathrm{fix}^H}}$$
  for $H\in\aml$,
  \ie where a map $X\to Y$ of $G$-spaces is an $\aml$-equivalence ($\aml$-fibration) if for all $H\in\aml$ the map $X^H\to Y^H$ is a weak equivalence (Serre fibration).  The sets of generating (acyclic) $\aml$-cofibrations are
  \index{IA@\(I_{\aml}\)} \index{JA@\(J_{\aml}\)} 
\begin{align*}
  I_{\aml}&=\{G/H_+ \smsh i\mid H\in\aml, i\text{ a generating cofibration for }\catT\}\\
  J_{\aml}&=\{G/H_+ \smsh j\mid H\in\aml, j\text{ a generating acyclic cofibration for }\catT\},
\end{align*}
  
  \end{dfn}

  For the rest of this section we fix an inclusion \(i\colon \bml \subseteq \aml\) of families of subgroups of \(G\) and consider the induced functor $i^*\colon\aml\catT\to\bml\catT$ and the left and right adjoints
  $$\xymatrix{
  \aml\catT\ar[rr]^{i^*}&&\bml\catT.\ar@<2ex>[ll]\ar@<-2ex>[ll]_L
  }$$
We use this adjunction to ``mix'' $\aml$- and $\bml$-model structures, which we subsequently pull back to $\catGT$ via the following observation (the second part is essentially Elmendorf's theorem which follows since the generating sets of (acyclic) cofibrations are preserved, a version of which can be found in \cite[Theorem III.1.8]{MM}).
\begin{lemma}
The induced functor \(i^*\colon \aml\catT \to \bml\catT\)  is a right Quillen functor with respect to the {\em projective model structures}\index{projective level model structure}\index{model structure!projective level}\index{level!projective model structure}.

 The fixed point functor $\mathrm{fix}\colon G\catT\to\aml\catT$ is a right Quillen equivalence from the $\aml$-model structure to the projective model structure.
\end{lemma}

The following annoying nomenclature is too common to avoid. 
\begin{dfn}\label{genuineGTstructure}
   The {\em genuine model structure}\index{genuine!model structure}\index{model structure!genuine} on \(\catGT\) is the \(\All\)-model
   structure, where is \(\All\) the family of {\em all} closed subgroups of \(G\).
   
 Also, $I_{\All}$ coincides with $I_G$ of Definition~\ref{defineIG}.
\end{dfn}
In particular, a map $f\colon X\to Y$ in $G\catT$ is a genuine equivalence (resp.~genuine fibration) if for all closed subgroups $H\subseteq G$ the map of fixed points $f^H\colon X^H\to Y^H$ is a weak equivalence (resp.~fibration).

It is not uncommon to hear ``na\"\i ve'' or ``Borel'' when people are talking about the model structure on $\catGT$ with respect to the trivial family $\{e\}$ (or in other words, the projective structure on $\catGT$ considered as a functor category).  In that case, $I_{\{e\}}$ will give rise to the $G$-free cofibrations.

\begin{dfn}\label{univFspace}
  The cofibrant replacement of \(S^0\) in the \(\aml\)-model
  structure is denoted \(E\aml_+\).\index{EA@\(E\aml_+\), cofibrant replacement of \(S^0\)}  For the trivial family, the free $G$-space $E\{e\}_+$ is usually denoted $EG_+$.
\end{dfn}


Now that we have model structures given by the families $\bml$ and $\aml$ separately it's time to start mixing them.  We do this first in the orbit category set-up and then translate it to $G$-spaces.

\begin{dfn}\label{def:sH}\label{definethesetS}
  For \(H \in \aml \setsminus \bml\), we let \(l_H \colon C_H \to
i^*\aml(H,-) = \aml(H,i(-))\) be the cofibrant replacement of
\(i^* \aml(H,-)\) in the projective level model structure on
\(\bml\catT\), and we let \(\lambda_H \colon LC_H \to \aml(H,-)\)
be the adjoint morphism of \(l_H\) in \(\aml\catT\). Using the
mapping cylinder like in the proof of Ken Brown's Lemma \cite[Lemma 7.7.1]{Hir}, we factor  
\(\lambda_H\) as the composition 
$$\xymatrix{LC_H\,\, \ar@{>->}[r]^{s_H}& 
M_H 
\ar[r]^-{r_H}&\aml(H,-)}$$ 
of a cofibration \(s_H\)\index{sH@$s_H$} and the
left inverse \(r_H\)\index{rH@$r_H$} of an acyclic cofibration in \(\aml\catT\).
We call the $\aml$-cofibration $s_H$ the \emph{$(\bml,\aml)$-localizer}\index{BAlocalizer@$(\bml,\aml)$-localizer} associated with $H$, and for the rest of the section we let $S$ denote the set of $(\bml,\aml)$-localizers:
$$S=\{s_H\mid H\in\aml\setsminus\bml\}.$$
\end{dfn}
\begin{remark}
  In the interesting case $\bml\subseteq\aml=\All$ this can be seen quite concretely in $\catGT$: for a closed subgroup $H\subseteq G$ the cofibrant replacement $l_H$ of $\aml(H,i-)$ corresponds to the $\bml$-cofibrant replacement
  $$(G\times_HE\bml)_+\to G/H_+$$
  which we factor as a genuine cofibration -- the ``$(\bml,\All)$-localizer'' -- followed by a retraction of an acyclic genuine cofibration.
\end{remark}
 
\begin{dfn}
A map $f$ in $\aml\catT$ is called a {\em \(\bml\)-equivalence} if $i^*f$ is a weak equivalence in $\bml\catT$.
  A map \(g \colon z \to w\) in
  \(\aml\catT\) is an {\em \(S\)-fibration}\index{Sfibration@\(S\)-fibration}\index{fibration!\(S\)-} if it is a fibration in
  \(\aml\catT\) with the property that for every $(\bml,\aml)$-localizer \(s \colon
  a \to b\) of \(S\), the square 
  \begin{displaymath}
    \xymatrix{
      \aml\catT(b,z) \ar[r]^{s_*} \ar[d]_{g^*} & 
      \aml\catT(a,z) \ar[d]^{g_*}\\
      \aml\catT(b,w) \ar[r]^{s^*} 
       & \aml\catT(a,w)
    }
  \end{displaymath}
  is a homotopy pullback square in \(\catT\).
\end{dfn}
Note that every $(\bml,\aml)$-localizer
is a
  \(\bml\)-equivalence.
\begin{prop}
  \label{result_about_S-fibrations}
  If \(g \colon z \to w\) is both a \(\bml\)-equivalence in
  \(\aml\catT\) and an \(S\)-fibration, then it is a weak
  equivalence in the projective level model structure on
  \(\aml\catT\). 
\end{prop}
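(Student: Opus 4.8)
The plan is to check that $g$ is an objectwise weak equivalence, since by construction of the projective level model structure on $\aml\catT$ its weak equivalences are exactly the maps $f$ for which the value $f(K)$ is a weak equivalence of spaces for every object $K$ of $\aml$. For $K \in \bml$ this is free: $g$ is a $\bml$-equivalence, so $i^*g$ is a weak equivalence in the projective level model structure on $\bml\catT$, which just says that $g(K)$ is a weak equivalence for every $K \in \bml$. All the content is therefore in the objects $H \in \aml \setminus \bml$, and the device is to reroute evaluation at $H$ through the objects $LC_H$ and $M_H$. By the enriched Yoneda lemma there is an isomorphism $z(H) \cong \aml\catT\bigl(\aml(H,-),z\bigr)$ natural in $z$, so $g(H)$ is identified with $\aml\catT\bigl(\aml(H,-),g\bigr)$, and this is what I will prove to be a weak equivalence.

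First I would show that $\aml\catT(LC_H,g)$ is an acyclic fibration of spaces. The adjunction $(L,i^*)$ gives a natural identification $\aml\catT(LC_H,-) \cong \bml\catT(C_H,i^*(-))$, so $\aml\catT(LC_H,g)$ is $\bml\catT(C_H,i^*g)$. Now $i^*g$ is a fibration in $\bml\catT$ because $g$ is a fibration and $i^*$ is right Quillen, and it is a weak equivalence in $\bml\catT$ because $g$ is a $\bml$-equivalence; hence $i^*g$ is an acyclic fibration in $\bml\catT$. Since $C_H$ is cofibrant and $\bml\catT$ is a topological model category, the pushout--product axiom makes $\bml\catT(C_H,-)$ a right Quillen functor, so it carries $i^*g$ to an acyclic fibration of spaces. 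Next I would feed the member $s_H\colon LC_H \to M_H$ of $S$ into the $S$-fibration hypothesis: the square with top row $\aml\catT(M_H,z)\to\aml\catT(M_H,w)$, bottom row $\aml\catT(LC_H,z)\to\aml\catT(LC_H,w)$ and vertical maps induced by $s_H$ is a homotopy pullback in $\catT$. Its bottom row is $\aml\catT(LC_H,g)$, a weak equivalence by the previous step, and a homotopy pullback square whose bottom edge is a weak equivalence has its top edge a weak equivalence too; thus $\aml\catT(M_H,g)\colon\aml\catT(M_H,z)\to\aml\catT(M_H,w)$ is a weak equivalence of spaces. Finally, $r_H$ is by construction the left inverse of an acyclic cofibration $j_H\colon\aml(H,-)\to M_H$ in $\aml\catT$, so $r_H j_H=\id$; applying the contravariant functors $\aml\catT(-,z)$ and $\aml\catT(-,w)$ turns this equation into a retraction and exhibits $\aml\catT\bigl(\aml(H,-),g\bigr)\cong g(H)$ as a retract, in the arrow category of $\catT$, of $\aml\catT(M_H,g)$. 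Since weak equivalences of spaces are closed under retracts, $g(H)$ is a weak equivalence, which finishes the verification.

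The delicate point — and the reason for passing through the mapping cylinder rather than comparing directly — is that no fibrancy hypothesis on $z$ or $w$ is available: the naive comparison $z(H)\to\aml\catT(M_H,z)$ induced by $r_H$ would only be a weak equivalence if $z$ were fibrant, whereas the retract argument via $j_H$ costs nothing. Likewise, the two mapping-space inputs used above, namely that $\bml\catT(C_H,-)$ preserves the acyclic fibration $i^*g$, and that a homotopy pullback square with a weak equivalence along the bottom has a weak equivalence along the top, both hold on the nose for acyclic fibrations and so require no fibrant replacement either. So the main obstacle is not a hard estimate but the care needed to arrange all three of these observations so that each one applies literally.
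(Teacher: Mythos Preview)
Your argument is correct and essentially the same as the paper's: both reduce to showing $g(H)$ is a weak equivalence for $H\in\aml\setminus\bml$, identify $\aml\catT(LC_H,g)\cong\bml\catT(C_H,i^*g)$ as an acyclic fibration via the $(L,i^*)$ adjunction, and invoke the homotopy pullback square for $s_H$. The only variation is in the final step, where the paper concatenates a second homotopy pullback square (using that $r_H$ is left inverse to an acyclic cofibration) to compare $\aml\catT(\aml(H,-),g)$ with $\aml\catT(LC_H,g)$ directly, whereas you pass through $\aml\catT(M_H,g)$ and then use a retract argument; your version is perfectly valid, though your fibrancy worry is in fact moot since every object of $\aml\catT$ is fibrant in the projective level model structure.
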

\begin{proof}
  It suffices to show that if \(H \in \aml \setsminus \bml\), then the
  upper horizontal maps in the diagram
  \begin{displaymath}
    \xymatrix{
      \aml\catT(\aml(H,-),z) \ar[r]^{g_*} \ar[d]_{\cong} & 
      \aml\catT(\aml(H,-),w)  \ar[d]^{\cong}\\
      z(H) \ar[r]^{g} 
       & w(H),
    }
  \end{displaymath}
  where the vertical maps are Yoneda isomorphisms, is a weak equivalence in
  \(\catT\). Since \(g\) is a fibration in \(\aml\catT\) and a
  \(\bml\)-equivalence, the map \(i^*g\) is an acyclic fibration in
  \(\bml\catT\), so \(\aml\catT(LC_H,g) \cong \bml\catT(C_H,i^*g)\)
  is an acyclic fibration in \(\catT\).

  Since \(g\) is an \(S\)-fibration, for every \(s_H \colon LC_H \to
  M_H\) in \(S\), the square 
  \begin{displaymath}
    \xymatrix{
      \aml\catT(M_H,z) \ar[rr]^{\aml\catT(s_H,z)} \ar[d]_{\aml\catT(M_H,g)} &&
      \aml\catT(LC_H,z) \ar[d]^{\aml\catT(LC_H,g)} \\
      \aml\catT(M_H,w) \ar[rr]^{\aml\catT(s_H,w)} &&
      \aml\catT(LC_H,w)
    }
  \end{displaymath}
  is a homotopy pullback. Since \(r_H\) is the left inverse of an acyclic
  cofibration in \(\aml\catT\), also the square
  \begin{displaymath}
    \xymatrix{
      \aml\catT(\aml(H,-),z) \ar[rr]^{\aml\catT(\lambda_H,z)} \ar[d]_{\aml\catT(\aml\catT(H,-),g)} &&
      \aml\catT(LC_H,z) \ar[d]^{\aml\catT(LC_H,g)} \\
      \aml\catT(\aml(H,-),w) \ar[rr]^{\aml\catT(\lambda_H,w)} &&
      \aml\catT(LC_H,w)
    }
  \end{displaymath}
  is a homotopy pullback. Since \(i^* g\) is an acyclic fibration, the
  right hand vertical map \(\aml\catT(LC_H,g) \cong
  \bml\catT(C_H,i^*g)\) a weak equivalence. Thus also the left hand
  vertical map \(\aml\catT(\aml(H,-),g) \cong g(H)\) is a weak
  equivalence in \(\catT\).  
\end{proof}
\begin{dfn}\label{def:B-model}
  The {\em \(\bml\)-model structure}\index{Bmodel structure@\(\bml\)-model structure on $\aml\catT$}\index{model structure!\(\bml\)} on \(\aml\catT\) is the
  Bousfield localization of the projective level model structure on
  \(\aml\catT\) with respect to the set \(S\) of $(\bml,\aml)$-localizers of
  Definition~\ref{definethesetS}. We write \(J_S\) for 
  the generating set of acyclic cofibrations given by the union of the set  
  of generating acyclic cofibrations for the projective level model
  structure on \(\aml\catT\) and the set consisting of
  pushout products (see Definition~\ref{pushoutproduct}) of the
  form \(i \Box s\) for \(i \in I\) a generating cofibration for
  \(\catT\) and \(s \in S\). 
\end{dfn}
  Note that the cofibrations for the \(\bml\)-model structure on \(\aml\catT\)
  are the original cofibrations of the projective model structure on
  \(\aml\catT\) and that this model 
  structure is proper and cellular (right properness follows since if  $f\colon X\to Y$ is a $\bml$-fibration in $\aml\catT$, then its restriction $i^*f\in\bml\catT$ is a fibration, guaranteeing that base change along $f$ preserves $\bml$-equivalences).  In a chart, we have that for $f\colon X\to Y\in\aml\catT$ the following implications hold
  $$\xymatrixrowsep{.5\baselineskip}\xymatrix{
    i^*f\text{ is a we}&\Leftrightarrow &f\text{ is a $\bml$-equivalence}&\Leftarrow&f\text{ is a we}\\
    i^*f\text{ is a fib}&\Leftarrow&f\text{ is a $\bml$-fibration}&\Rightarrow&f\text{ is a fib}\\
    i^*f\text{ is a cof}&\Rightarrow&f\text{ is a $\bml$-cofibration}&\Leftrightarrow&f\text{ is a cof}
    }$$

  Finally, as before, we transport this structure along the adjoint pair 
  $$\xymatrix{\aml\catT\ar@<.5ex>[r]^{\mathrm{ev}_e} &G\catT\ar@<.5ex>[l]^{\mathrm{fix}},}\qquad \mathrm{ev}_eY=Y(e), \quad \mathrm{fix}X(H)=X^H$$
  to our desired mixed model structure on $G$-spaces.
  
\begin{dfn}\label{definemixedgenacyccof}
  For \(\bml \subseteq \aml\) an inclusion of non-empty families of subgroups of
  \(G\), let $K=\mathrm{ev_e}S$ be the evaluation at $e$ of the set $S$ of $(\bml,\aml)$-localizers and let 
  $$J_{\bml,\aml}\index{JBA@\(J_{\bml,\aml}\)}=J_{\aml}\cup\{i\Box k\mid k\in K, i\text{ a generating cofibration for }\catT\},
  $$
  whose elements we refer to as generating acyclic $(\bml,\aml)$-cofibrations.
\end{dfn}

\begin{thm}
\label{GFmixedmodelstr}\label{defineivandjv}
  Let \(\bml \subseteq \aml\) be non-empty families of subgroups of
  \(G\). The set \(I_{\aml}\) of Definition~\ref{def:amlmodel} is a set of generating cofibrations and the set
  \(J_{\bml,\aml}\) of Definition~\ref{definemixedgenacyccof} is a set of generating acyclic cofibrations for
  a proper cellular monoidal model structure on \(\catGT\).  

In this structure, a map $f\colon X\to Y$ in $G\catT$ is 
\begin{enumerate}
\item a weak equivalence if it is a $\bml$-equivalence, \ie if the induced map on $H$-fixed points $f^H\colon X^H\to Y^H$ is a weak equivalence for all $H\in\bml$, 
\item a cofibration if it is an $I_\aml$-cofibration and 
\item a fibration if it has the right lifting property with respect to maps that are both a weak equivalence and a cofibration.
\end{enumerate}
\end{thm}
\begin{dfn}
  \label{bamixedmodelstr}
  The {\em \((\bml,\aml)\)-model structure}\index{BA-model structure@\((\bml,\aml)\)-model structure}\index{model structure!\((\bml,\aml)\)} on the category
  \(G\catT\) of pointed \(G\)-spaces is the model structure specified
  in Theorem~\ref{GFmixedmodelstr}.
\end{dfn}
 Again we have the implications for $f\colon X\to Y\in \catGT$
  $$\xymatrixrowsep{.5\baselineskip}\xymatrix{
    f\text{ is a $\bml$-eq}&\Leftrightarrow &f\text{ is a $(\bml,\aml)$-equivalence}&\Leftarrow&f\text{ is an $\aml$-eq}\\
     f\text{ is a $\bml$-fib}&\Leftarrow&f\text{ is a $(\bml,\aml)$-fibration}&\Rightarrow&f\text{ is an $\aml$-fib}\\
    f\text{ is a $\bml$-cof}&\Rightarrow&f\text{ is a $(\bml,\aml)$-cofibration}&\Leftrightarrow&f\text{ is an $\aml$-cof}
    }$$
In the particular case where \(\bml
= \aml\), the sets $J_{(\aml,\aml)}$ and $J_\aml$ are equal, and the
$\aml$- and $(\aml,\aml)$-structures coincide.
\begin{lem}\label{mixedchangeofgroups}
  Let \(\bml \subseteq \aml\) be non-empty families of subgroups of a
  compact Lie group \(G\) and let \(i \colon H \to G\) be the
  inclusion of a subgroup. If we let \(i^*\bml\) be the family of
  subgroups \(P \cap H\) of \(H\) obtained by intersecting members
  \(P\) of \(\bml\) with \(H\) and likewise let \(i^*\aml\) be the
  family of intersections members of \(\aml\) with \(H\), then the
  forgetful functor \(i_H^* \colon G\catT \to H\catT\) is a right
  Quillen functor of mixed model structures with respect to
  \((\bml,\aml)\) and \((i^*\bml,i^*\aml)\) respectively. 
\end{lem}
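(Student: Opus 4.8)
Here is how I would approach the statement.

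\medskip

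The plan is to show that the left adjoint of \(i_H^*\), namely the induction functor \(\Ind_H^G = G_+ \wedge_H (-)\colon H\catT \to G\catT\) of Definition~\ref{inducingup}, is a left Quillen functor; this is equivalent to the assertion. Since both mixed model structures are cofibrantly generated, by the standard recognition criterion for Quillen pairs between cofibrantly generated model categories it suffices to check that \(\Ind_H^G\) carries the generating cofibrations \(I_{i^*\aml}\) of the \((i^*\bml,i^*\aml)\)-model structure on \(H\catT\) to cofibrations of the \((\bml,\aml)\)-model structure on \(G\catT\), and the generating acyclic cofibrations \(J_{i^*\bml,i^*\aml}\) to acyclic cofibrations. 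First I would dispose of the bookkeeping on families: \(i^*\bml \subseteq i^*\aml\) really are non-empty families of closed subgroups of \(H\) — closure under \(H\)-conjugation uses \(h^{-1}(P\cap H)h = (h^{-1}Ph)\cap H\) together with conjugation-invariance of \(\bml\) and \(\aml\), and closure under subgroups is automatic since a subgroup of \(P\cap H\) already lies in the subgroup-closed family and hence equals its own intersection with \(H\) — and moreover every member of \(i^*\aml\) is a subgroup of a member of \(\aml\), so lies in \(\aml\), and likewise \(i^*\bml \subseteq \bml\). In particular the \((i^*\bml,i^*\aml)\)-model structure on \(H\catT\) exists by Theorem~\ref{GFmixedmodelstr}.

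\medskip

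The generators of the form \((f \times H/Q)_+\), with \(f\) a generating (acyclic) cofibration of \(\catT\) and \(Q \in i^*\aml\), are easy: Lemma~\ref{inducingsmashcomp}, applied with the second variable carrying the trivial \(G\)-action, together with \(G_+ \wedge_H (H/Q)_+ \cong (G/Q)_+\), yields a natural isomorphism \(\Ind_H^G\bigl((f \times H/Q)_+\bigr) \cong (f \times G/Q)_+\); since \(Q \in i^*\aml \subseteq \aml\) this is itself a generating (acyclic) cofibration of the \((\bml,\aml)\)-model structure, hence in particular an (acyclic) cofibration. This handles all of \(I_{i^*\aml}\) and the part of \(J_{i^*\bml,i^*\aml}\) coming from \(J_{i^*\aml}\).

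\medskip

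The substantive part is the remaining family of generating acyclic cofibrations: the maps \(i \Box k_Q\) with \(i \in I\) and \(k_Q = s_Q(H/e)\) for \(Q \in i^*\aml \setsminus i^*\bml\), where \(s_Q\) runs over the set \(S\) of Definition~\ref{definethesetS} formed for the group \(H\) (Definition~\ref{definemixedgenacyccof}). Because \(\Ind_H^G\) preserves pushouts and, by Lemma~\ref{inducingsmashcomp}, commutes with smashing against a fixed trivially-acted space, it commutes with the pushout--product operation \(i \Box (-)\); thus \(\Ind_H^G(i \Box k_Q) \cong i \Box \Ind_H^G(k_Q)\), and since the \((\bml,\aml)\)-model structure on \(G\catT\) is topological and \(i \in I\), it suffices to prove that \(\Ind_H^G(k_Q)\) is an acyclic cofibration in \(G\catT\). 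Now \(k_Q = \Lambda_H(s_Q)\) (writing \(\Lambda_H(Y) = Y(H/e)\) as in the proof of Theorem~\ref{GFmixedmodelstr}); by the construction of \(S\) and left Quillenness of \(\Lambda_H\) this \(k_Q\) is an acyclic cofibration of the \((i^*\bml,i^*\aml)\)-structure, and, by projective cofibrancy of the chosen replacements \(LC_Q\) and \(M_Q\), it is an \(h\)-cofibration between well based \(H\)-spaces that are retracts of \(H\)-CW-complexes whose cells are orbits \(H/L\) with \(L \in i^*\aml\). Applying \(\Ind_H^G\): it preserves \(h\)-cofibrations (hence sends well based \(H\)-spaces to well based \(G\)-spaces) and carries such \(H\)-CW-complexes to \(G\)-CW-complexes (the cell \(D^n \times H/L\) becomes \(D^n \times G/L\), a \(G\)-CW-complex by Theorem~\ref{illmantriang} since \(L \in i^*\aml \subseteq \aml\)); hence \(\Ind_H^G(k_Q)\) is again an \(h\)-cofibration between well based \(G\)-spaces, in particular a cofibration for the \((\bml,\aml)\)-structure.

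\medskip

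It remains to see that \(\Ind_H^G(k_Q)\) is a \(\bml\)-equivalence, \ie that \(\bigl(\Ind_H^G k_Q\bigr)^P\) is a weak equivalence for every \(P \in \bml\); this is the step I expect to be the main obstacle. The idea is to use the cellular description of the \(P\)-fixed points of an induced space: compatibly with the cellular filtration and along \(h\)-cofibrations (Lemma~\ref{fixedprop}, Lemma~\ref{fixprescof}), \(\bigl(\Ind_H^G Z\bigr)^P\) is controlled by the fixed-point spaces of \(Z\) under subgroups that are conjugates of \(P\) subconjugate to \(H\) — equivalently, subgroups of the form \(g^{-1}Pg \cap H\) for \(g \in G\) — and the key point is that each such subgroup lies in \(i^*\bml\) whenever \(P \in \bml\), because \(\bml\) is closed under conjugation. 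Since \(k_Q\) is an \(i^*\bml\)-equivalence, \((k_Q)^M\) is a weak equivalence for every such \(M\), and because the spaces involved are all well based these reassemble (Lemma~\ref{propsfml}, Lemma~\ref{fixedprop}) to a weak equivalence \(\bigl(\Ind_H^G k_Q\bigr)^P\). Thus \(\Ind_H^G(k_Q)\), and hence also \(\Ind_H^G(i \Box k_Q) \cong i \Box \Ind_H^G(k_Q)\), is an acyclic cofibration; having checked \(\Ind_H^G\) on both generating sets, we conclude that \(\Ind_H^G\) is left Quillen, hence \(i_H^*\) is right Quillen. Everything outside that last step — identifying the left adjoint, the reduction to generators, and the manipulations with pushout--products and Lemma~\ref{inducingsmashcomp} — is purely formal; the work lies entirely in making precise the fixed-point decomposition of the induced complexes and the claim that only the isotropy groups \(g^{-1}Pg\cap H\) occur.
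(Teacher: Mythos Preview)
The paper states Lemma~\ref{mixedchangeofgroups} without proof, so there is nothing to compare against directly. Your argument is correct and is exactly the kind of verification the authors presumably had in mind; it also parallels the technique they use later in the proof of Proposition~\ref{mixedisorcat}.

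A few remarks on streamlining and on the one step you flag as the main obstacle. First, once you have shown that \(\Ind_H^G\) carries \(I_{i^*\aml}\) into \(I_{\aml}\), it automatically takes all \((i^*\bml,i^*\aml)\)-cofibrations to \((\bml,\aml)\)-cofibrations; so the cofibration half of ``\(\Ind_H^G(k_Q)\) is an acyclic cofibration'' needs no separate cell-by-cell discussion. Second, since \(\bml\) is closed under subgroups, \(i^*\bml\) is simply the set of members of \(\bml\) contained in \(H\); this makes the bookkeeping on families shorter.

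For the weak-equivalence step, your intuition is right but the cleanest way to make it precise is exactly the device from the proof of Proposition~\ref{mixedisorcat}: use Illman's Theorem~\ref{illtriangmfd} to give \(G\) a \(P \times H^{\op}\)-CW structure, so that the orbits occurring are double cosets \(PgH\); then Lemma~\ref{doublecosettwist} identifies the \(P\)-fixed points of the corresponding piece of \(G_+ \wedge_H A\) as either a point (when \(g^{-1}Pg \not\subseteq H\)) or \(A^{g^{-1}Pg}\) (when \(g^{-1}Pg \subseteq H\)), and in the latter case \(g^{-1}Pg \in i^*\bml\) because \(\bml\) is conjugation-closed. A cell induction via Lemma~\ref{fixedprop} and the cube lemma (Lemma~\ref{propsfml}) then shows \((\Ind_H^G k_Q)^P\) is a weak equivalence. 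This is more robust than the informal ``fibre over \((G/H)^P\)'' picture, which is delicate when \(G/H\) is not discrete.
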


\chapter{Equivariant Orthogonal Spectra}
\label{ch:eos}
In this chapter we set up a basic framework for equivariant orthogonal spectra.  Our presentation is guided by the needs of the commutative ring setting.  This explains why we use the ``semi-free'' spectra as our building blocks.  Since we want to end up in a situation which is Quillen equivalent to the usual stable structure, this results in the partial decoupling in the ``mixed model structures'' of the cofibration structure and the levelwise equivalences.   
In Chapter~\ref{ch:ss} we use this structure to obtain a convenient stable model structure on orthogonal spectra. 

Readers accustomed to the classical setup will also notice that the usual annoying worry about ``universes'' is downplayed in our presentation.  There is one fixed underlying category and to determine the model structures we focus on families of subgroups of the product of our Lie group and an orthogonal group.  These families naturally control our choice of model structure and are partially responsible for the mentioned decoupling of cofibrations and level equivalences.

\section{Orthogonal spectra as functors}\label{sect.orthspecfun}

In this section we give a short review of one definition of orthogonal spectra as functors. Later, when we work with model structures, this functoriality is convenient. An orthogonal spectrum is defined to be a functor from a \(\catT\)-category \(\catOr\) to spaces.

The term ``inner product space'' in the following is shorthand for ``finite dimensional real vector space with a non-degenerate inner product'' and the terms ``linear isometric embedding'' and ``isometry'' will be used interchangeably.  If $W$ is an inner product space, then $S^W$ is the one-point compactification of $W$, pointed at $\infty$.

\begin{dfn}\label{thecategoryoflinearembeddings}
  The \(\catU\)-category \(\catL\)\index{L@\(\catL\), linear isometric embeddings} of linear isometric embeddings has as objects the class of inner product spaces, and as morphism spaces the spaces of isometries. 
\end{dfn}
The following is essentially \cite[Definition I.5.9]{MM}.
\begin{dfn}\label{dfn.catObig}  
  The \(\catT\)-category \(\catO\)\index{O@\(\catO\)} has as objects the class of inner product spaces. Given inner product spaces \(V\) and \(W\), the morphism space \(\catO(V,W)\) is the subspace of \(\catL(V,W)_+ \wedge S^W\) consisting of points of the form \(f \wedge x\) for \(f \colon V \to W\) a linear isometric embedding and \(x\) contained in the subspace of \(S^W\) given by the one-point compactification of the orthogonal complement of \(f(V)\) in \(W\). The composition \((f,x) \circ (g,y)\) of two non-trivial morphisms \(f \wedge x \in \catO(V,W)\) and \(g \wedge y \in \catO(U,V)\) is \(fg \wedge (x + fy)\in\catO(U,W)\).

  The category \(\catO\) has a symmetric monoidal product
  \begin{eqnarray*}
    \catO(V,W) \times \catO(V',W') & \xto \oplus &\catO(V \oplus V',W \oplus W') \\
    (f \wedge x ,f' \wedge x') & \mapsto &(f \oplus f') \wedge (x+x').
  \end{eqnarray*}
\end{dfn}

For $V,W$ are inner product spaces of equal dimension the canonical inclusion $\catL(V,W)_+\subseteq \catO(V,W)$ is an isomorphism and if $\dim V>\dim W$ then $\catO(V,W)=*$.

As mentioned, our setup is largely independent of the classical notion of universes.  On a technical level, the untwisting map we now define and state some properties of translates between aspects of the cofibrations and weak equivalences.
\begin{dfn}\label{untwistingmap}
  The {\em untwisting map}\index{untwisting map}
  \begin{displaymath}
    \tau_{V,W} \colon \catO(V,W) \wedge S^V \to \catL(V,W)_+ \wedge S^W 
  \end{displaymath}
  takes \(f \wedge x \wedge y \in \catO(V,W) \wedge S^V\), where \(f \in \catL(V,W)\), \(x\) is in the orthogonal complement of \(f(V)\) in \(W\) and \(y \in V\), to \(f \wedge (x + f(y)) \in \catL(V,W)_+ \wedge S^W\).     
\end{dfn}
\begin{lemma}\label{catoruntwisting}
 The untwisting map
\[\untwist_{V,W} \colon \catO(V,W) \wedge S^V \xto \cong \catL(V,W)_+ \wedge S^W\] 
  is a homeomorphism. 
\end{lemma}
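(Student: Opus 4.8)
The plan is to show that $\tau_{V,W}$ is a continuous bijection between compact Hausdorff spaces and then to invoke the elementary fact that any such map is a homeomorphism. First I would reduce to the case that $V$ and $W$ are finite dimensional; then $\catL(V,W)$ is compact --- it is empty, a single point, or a Stiefel manifold of orthonormal frames --- so that $\catL(V,W)_+ \wedge S^W$ is compact Hausdorff, and since closed subspaces and smash products of compact Hausdorff spaces remain compact Hausdorff, so are $\catO(V,W)$ (a closed subspace of $\catL(V,W)_+ \wedge S^W$) and $\catO(V,W) \wedge S^V$. Continuity of $\tau_{V,W}$ is immediate from its description as the map on fiberwise one-point compactifications induced by the fiberwise-linear, globally continuous map $(f,x,y) \mapsto (f,\,x + f(y))$ defined on the bundle $\{(f,x,y) : f \in \catL(V,W),\ x \perp f(V)\ \text{in}\ W,\ y \in V\}$ over $\catL(V,W)$.

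It then remains to produce a set-theoretic inverse, and the trick I would use is to describe orthogonal projection onto $f(V)$ by means of a \emph{fixed} orthonormal basis of $V$, rather than a discontinuously varying basis of $f(V)$. So I would fix an orthonormal basis $e_1,\dots,e_m$ of $V$; for each $f \in \catL(V,W)$ the vectors $f(e_1),\dots,f(e_m)$ form an orthonormal basis of $f(V)$, whence the orthogonal projection $p_f \colon W \to f(V) \subseteq W$ is $p_f(z) = \sum_{i=1}^m \langle z, f(e_i)\rangle\, f(e_i)$ and $f^{-1}(p_f(z)) = \sum_{i=1}^m \langle z, f(e_i)\rangle\, e_i$. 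I would then define $\sigma_{V,W} \colon \catL(V,W)_+ \wedge S^W \to \catO(V,W) \wedge S^V$ by sending $f \wedge z$ with $z \in W$ to $\bigl(f \wedge (z - p_f(z))\bigr) \wedge f^{-1}(p_f(z))$ and $f \wedge \infty$ to the basepoint; this is well defined because $z - p_f(z)$ lies in the orthogonal complement of $f(V)$. A short direct computation --- using that $p_f(x + f(y)) = f(y)$ when $x \perp f(V)$ and $y \in V$ --- then gives $\sigma_{V,W}\circ\tau_{V,W} = \id$ and $\tau_{V,W}\circ\sigma_{V,W} = \id$, so that $\tau_{V,W}$ is a continuous bijection of compact Hausdorff spaces and hence a homeomorphism.

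The one point demanding genuine care is the bookkeeping with the quotient and smash-product topologies: one must be confident that $\catO(V,W)$ really is the fiberwise one-point compactification (Thom space) of the orthogonal-complement bundle $\gamma^{\perp}$ over $\catL(V,W)$, and that $\tau_{V,W}$ is precisely the map of Thom spaces induced by the vector-bundle isomorphism $\gamma^{\perp} \oplus \underline{V} \cong \underline{W}$ which is the identity on $\gamma^{\perp}$ and is $f \colon V \to f(V)$ on $\underline{V}$; once that identification is granted the statement becomes formal, being the Thom-space construction applied to an isomorphism of vector bundles over a compact base. The compact Hausdorff shortcut is chiefly a device for avoiding a direct continuity check on $\sigma_{V,W}$ --- though that too is routine: the displayed formulas for $p_f(z)$ and $f^{-1}(p_f(z))$ are patently continuous in $(f,z)$, and since $f$ is isometric the Pythagorean identity gives $|z|^2 = |z - p_f(z)|^2 + |f^{-1}(p_f(z))|^2$, so that as $|z| \to \infty$ at least one of the two coordinates of $\sigma_{V,W}(f \wedge z)$ tends to infinity, uniformly in $f$, and the value approaches the basepoint.
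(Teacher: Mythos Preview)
Your argument is correct. The paper itself gives no proof of this lemma at all; it is stated and then the text moves directly to the next lemma, so there is nothing to compare against on the paper's side. Your approach---writing down the explicit inverse via orthogonal projection and then either checking continuity directly or invoking compactness---is the standard one, and your closing remark that $\tau_{V,W}$ is the Thom-space map of the bundle isomorphism $\gamma^{\perp}\oplus\underline{V}\cong\underline{W}$ over the Stiefel manifold $\catL(V,W)$ is exactly the conceptual reason the result is typically left without proof. One small remark: in this paper an ``inner product space'' is always finite-dimensional (see the Notation section and Definition~\ref{dfn.catO}), so your reduction step is vacuous, not merely easy.
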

\begin{lemma}\label{magicaboutuntwisting}
  For all morphisms \(\varphi \colon V'\to V\in \catL\) and \(\psi \colon W \to W'\in\catO\) the following diagram commutes:
  \[
    \xymatrix{
      \catO(V,W) \wedge S^V \ar[r]^-{\untwist_{V,W}} \ar[d]^-{\psi_*} & 
      \catL(V,W) \wedge S^W \ar[r]^-{\varphi^*} &
      \catL(V',W) \wedge S^W \ar[r]^-{\untwist_{V',W}^{-1}} &
      \catO(V',W) \wedge S^{V'} \ar[d]^-{\psi_*} \\ 
      \catO(V,W') \wedge S^V \ar[r]^-{\untwist_{V,W'}} & 
      \catL(V,W') \wedge S^{W'} \ar[r]^-{\varphi^*} &
      \catL(V',W') \wedge S^{W'} \ar[r]^-{\untwist_{V',W'}^{-1}} &
      \catO(V',W') \wedge S^{V'}.
      }
  \]
\end{lemma}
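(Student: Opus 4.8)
The plan is to reduce the commutativity to the naturality of the untwisting map in its target variable, together with the tautological fact that precomposition and postcomposition of linear isometric embeddings commute.

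Fix a Euclidean space $V$. Both $W \mapsto \catO(V,W)\wedge S^V$ and $W \mapsto \catL(V,W)_+\wedge S^W$ are covariant functors on $\catO$: a morphism $\psi = h\wedge u \in \catO(W,W')$ acts on the first by $\psi_*\wedge\id_{S^V}$ (postcomposition) and on the second by the map $\psi_\#$ sending $g\wedge w$ to $hg\wedge(h(w)+u)$ — that is, postcomposition with $h$ on the linear-isometry coordinate together with the structure map $S^W\to S^{W'}$ of the sphere functor. (If $\psi$ is merely a morphism of $\catL$, take $u=0$; functoriality of $\psi\mapsto\psi_\#$ is immediate from the composition law of $\catO$.) The first thing I would verify is that $\untwist$ is natural for this data, i.e.\ $\untwist_{V,W'}\circ(\psi_*\wedge\id_{S^V}) = \psi_\#\circ\untwist_{V,W}$: evaluating both sides on a point $f\wedge x\wedge y$ and using the composition formula $(h\wedge u)\circ(f\wedge x) = hf\wedge(u+h(x))$ from Definition~\ref{dfn.catObig}, each side equals $hf\wedge(h(x)+h(f(y))+u)$.

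Granting this, the lemma is a short chase. Write $\Theta_W \defas \untwist_{V',W}^{-1}\circ\varphi^*\circ\untwist_{V,W}$ for the top horizontal composite (and $\Theta_{W'}$ for the bottom one), so the claim is $\psi_*\circ\Theta_W = \Theta_{W'}\circ\psi_*$, where each $\psi_*$ silently carries the identity on the sphere coordinate. Since every $\untwist$ is a homeomorphism by Lemma~\ref{catoruntwisting}, the naturality identity with $V'$ in place of $V$ rearranges to $\psi_*\circ\untwist_{V',W}^{-1} = \untwist_{V',W'}^{-1}\circ\psi_\#$. Since $\varphi^*$ is precomposition by $\varphi$ on the linear-isometry coordinate and the identity on the sphere coordinate, it commutes with $\psi_\#$ (pre- and postcomposition commute by associativity, and the two operations act on independent coordinates): $\psi_\#\circ\varphi^* = \varphi^*\circ\psi_\#$. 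Finally the naturality identity in its original form reads $\psi_\#\circ\untwist_{V,W} = \untwist_{V,W'}\circ\psi_*$. Chaining these three identities turns $\psi_*\circ\Theta_W = \psi_*\circ\untwist_{V',W}^{-1}\circ\varphi^*\circ\untwist_{V,W}$ successively into $\untwist_{V',W'}^{-1}\circ\psi_\#\circ\varphi^*\circ\untwist_{V,W}$, then $\untwist_{V',W'}^{-1}\circ\varphi^*\circ\psi_\#\circ\untwist_{V,W}$, and then $\untwist_{V',W'}^{-1}\circ\varphi^*\circ\untwist_{V,W'}\circ\psi_* = \Theta_{W'}\circ\psi_*$, which is the assertion.

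I do not expect a genuine obstacle. The one point needing care is the naturality verification above, i.e.\ keeping straight how a morphism $\psi = h\wedge u$ of $\catO$ acts simultaneously on the $\catL$-coordinate and on the sphere coordinate of $\catL(V,W)_+\wedge S^W$; after that everything is formal. Alternatively one may dispense with functors and just compute both composites around the square on a point $f\wedge x\wedge y$; there the only subtlety is an explicit description of $\untwist_{V',W}^{-1}$, which requires splitting $V$ as the image of $\varphi$ together with its orthogonal complement (equivalently, using the adjoint of the isometric embedding $\varphi$), after which the two resulting expressions visibly agree.
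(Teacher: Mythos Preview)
The paper states this lemma without proof; it is used (to justify that the assignment $l\colon (G\catLr)^{\op}\wedge\catOr\to G\catT$ is a functor) but never argued. Your argument is correct: the essential content is the naturality of $\untwist$ in the target variable, which you verify by the one-line computation $\untwist_{V,W'}\bigl((h\wedge u)\circ(f\wedge x)\wedge y\bigr)=hf\wedge(u+h(x)+hf(y))=\psi_\#\bigl(\untwist_{V,W}(f\wedge x\wedge y)\bigr)$, and after that the chase through $\psi_\#\circ\varphi^*=\varphi^*\circ\psi_\#$ is formal. The direct pointwise alternative you sketch at the end is in effect what the paper writes down later when it records the explicit formula $l(f\wedge(g,x)\wedge(h,y)\wedge z)=(ghf,w)\wedge v$ with $ghf(v)+w=x+gy+ghz$.

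One cosmetic remark: as printed, the right-hand column of the diagram has $S^V$ where it must read $S^{V'}$, since $\untwist_{V',W}^{-1}$ lands in $\catO(V',W)\wedge S^{V'}$. Your argument tacitly works with the corrected version, which is clearly the intended one.
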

\begin{dfn}\label{dfn.catO}\label{dfn.catL}
  The symmetric monoidal category $\catOr$\index{O@$\catOr$} is the full subcategory of
  \(\catO\) with objects the Euclidean spaces \(\R^n\) for \(n \in \N\) and, likewise, the symmetric monoidal category $\catLr$\index{L@$\catLr$} is the full
  subcategory of \(\catL\) with objects the Euclidean spaces \(\R^n\) for \(n \in \N\).
  Given an object \(V\) of \(\catOr\), we write
  \(\Or V=\catL(V,V)\) for its automorphism group, so that $(\Or V)_+=\Or{}(V,V)$.

  If $f:V\to W$ is an isometry we get an injective group homomorphism $\Or f:\Or V\to \Or W$ such that $f_*=f^*\Or f:\Or V \to \catL(V,W)$.  Explicitly, if $A\in\Or V$, $v\in V$ and $w$ in the orthogonal complement of $f$ in $W$, we have that $\Or f(A)(f(v)+w)=f(Av)+w$.  In this way $V\mapsto\Or V$ becomes a functor from the category of isometries to Lie groups.
\end{dfn}
\begin{remark}
  The standard basis gives an isomorphism \(\Or {\R^n} \cong
  \Or n\) between the group of isometric automorphisms of \(\R^n\) and
  the group \(\Or n\) of orthogonal \(n \times n\)-matrices.  
  For
  \(m \le n\), the morphism space \(\catLr(\R^m,\R^n)\) is homeomorphic
  to the factor group \(\Or {\R^n}/\Or {\R^{n-m}}\).
\end{remark}

\begin{dfn}\label{thecategoryoforthogonalspectra}
  An {\em orthogonal spectrum}\index{orthogonal!spectrum} \(X\) is a \(\catT\)-functor \(X \colon \catOr \to \catT\). Morphisms of orthogonal spectra are natural transformations. We write \(\catOS\)\index{OT@\(\catOS\), orthogonal spectra} for the category of orthogonal spectra.
\end{dfn}
Given an orthogonal spectrum \(X\) and an object \(V\) of \(\catOr\), we write \(X_V\)\index{XV@\(X_V\)} for the value of the functor \(X\) at \(V\). Let us stress that in this paper, an orthogonal spectrum is only defined on the inner product spaces \(\R^n\) with the dot product as inner product. Choosing an equivalence \(\catO\to \catOr\) of categories we can extend an orthogonal spectrum to a functor defined on \(\catO\). A more canonical way of extending orthogonal spectra to \(\catO\) is by letting \(X(V) = \catL(\R^n,V)_+ \wedge_{\Or{\R^n}} X_{\R^n}\) when \(V\) is an \(n\)-dimensional inner product space.  We have chosen only to evaluate orthogonal spectra on coordinate spaces since this leads us to be more explicit about morphisms in certain constructions. 

\begin{example}[The sphere spectrum]\label{thespherespectrum}
  The {\em sphere spectrum}\index{sphere!spectrum} \(\Sp \colon \catOr \to \catT\) is the representable functor \(\Sp = \catOr(0,-)\)\index{S@$\Sp$, sphere spectrum} with \(\Sp_V = \catOr(0,V) \cong S^V\).
\end{example}

Since the category \(\catOS\) is a category of \(\catT\)-functors into \(\catT\) it is enriched and tensored and cotensored over \(\catT\). The tensor \(K \wedge X\) of a space \(K\) and an orthogonal spectrum \(X\) is the functor given as the composition
\[
  \catOr \xrightarrow X \catT \xrightarrow{K \wedge -} \catT.
\]
The cotensor \(F(K,X)\) is the  
functor given as the composition
\[
  \catOr \xrightarrow X \catT \xrightarrow{F(K, -)} \catT,
\]
where \(F(K,-)\) takes a space \(L\) to the space \(F(K,L)\) of functions from \(K\) to \(L\). Applying the adjunction isomorphism for the smash product and function space objectwise we obtain adjunction isomorphisms
\[
  \catT(K, \catOS(X,Y)) \cong \catOS(K \wedge X,Y) \cong \catOS(X,F(K,Y))
\]  
that are natural in the space \(K\) and in the orthogonal spectra \(X\) and \(Y\).
\begin{example}
  The tensor \(K \wedge \Sp\) of a space \(K\) with the sphere
  spectrum called is the \emph{suspension spectrum}\index{suspension spectrum} of \(K\).
  The Yoneda homeomorphism \(\catOS(\Sp,X) \cong X_0\) and the above adjunction isomorphism for the tensor give a natural homeomorphism
\[
  \catT(K,X_0) \cong \catT(K,\catOS(\Sp,X)) \cong \catOS(K \wedge \Sp,X).
\]
Thus the functor 
\(\Sigma^{\infty} \defas - \wedge \Sp \colon \catT \to \catOS\)\index{Sigmainfty@$\Sigma^\infty$, suspension spectrum functor} 
is left adjoint to the evaluation functor \(\ev_0 \colon \catOS \to \catT\) with \(\ev_0(X) = X_0\)\index{ev0@$\ev_0$}.
\end{example}

Since the category of orthogonal spectra is the category of \(\catT\)-functors from a symmetric monoidal \(\catT\)-category to \(\catT\), it can be equipped with a closed symmetric monoidal structure.
This is discussed in detail in $\cite[\S 21]{MMSS}$ based on the more general statement of \cite[\S 3,4]{D}. 

Let \(X\) and \(Y\) be orthogonal spectra. Their \emph{external smash product}\index{external smash product, $\bar\smash$}\index{smash product!external, $\bar\smash$} is the functor
\begin{eqnarray*}X \bar\smash Y\colon\,\,\, \catOr\times\catOr& \rightarrow& \catT.\\
(V,W) &\mapsto & X_V \smash Y_{W}
\end{eqnarray*}
The \emph{(internal) smash product}\index{smash product!(internal), $\smash$} of \(X\) and \(Y\) is the $\catT$-enriched left Kan extension 
\[X \smash Y \defas{} \Lan{\oplus}{(X \bar\smash Y) \colon \catOr
  \rightarrow \catT}\]
of \(X \bar \smash Y\) along the monoidal product \(\oplus \colon
\catOr \times \catOr \to \catOr\).
In particular, by \cite[4.25]{Kel}, we can write the smash product as the coend
\[(X \smash Y)_U = \int^{(V,W)\in\catOr\times\catOr}\catOr(V\oplus
W,U)\smash X_V \smash Y_{W}.\]
The internal hom-object \(\internhom(X,Y)\)\index{homot@$\internhom$} is given by
\[\internhom(X,Y)_V = \catOS(X,Y(V \oplus -)).\]  
The fact that the definition given here indeed gives a closed symmetric monoidal structure on \(\catOS\) can be checked by applying the enriched Kan-extension to the coherence diagrams for $\catOr$, using the fact that the Kan-extension is natural in all its inputs, together with the fact that $\catT$ itself is closed symmetric monoidal.

\section{Equivariant Orthogonal Spectra}
\label{sec-equivariantorthogonal}

In this section we construct model structures on orthogonal spectra with action of a compact Lie group \(G\). For the rest of this chapter \(G\) will be a fixed but arbitrary compact Lie group.

\begin{dfn}\label{naiveorthgspec}
  An {\em orthogonal \(G\)-spectrum}\index{orthogonal!$G$-spectrum} \(X\) is an orthogonal spectrum with continuous action of \(G\), that is, with a continuous monoid homomorphism \(G \to \catOS(X,X)\). Morphisms of orthogonal \(G\)-spectra are required to respect the action of \(G\). We write \(\catGOS\)\index{GOS@\(\catGOS\), orthogonal $G$-spectra} for the category of orthogonal \(G\)-spectra.
\end{dfn}

Similar ways of looking at \(\G\)-spectra have come up before, for example in the context of \(\Gamma\)-spaces \cite{Shimikawa} and quite generally in \cite{DORI}. 

\begin{ex}[The Sphere \(G\)-Spectrum]\label{trivactiononspheresp}\index{sphere!G-spectrum@$G$-spectrum}
  Since the automorphism group of \(0\) in \(\catOr\) is trivial, \(G\) can only act trivially on the sphere spectrum \(\Sp = \catOr(0,-)\). 
\end{ex}
\begin{ex}[Free orthogonal \(G\)-spectra]\label{freeGorthsp}
  Let \(K\) be a pointed \(G\)-space. Given an object \(W\) of
  \(\catOr\) and a homomorphism \(\varphi \colon G \to \Or W\), the {\em
    free orthogonal \(G\)-spectrum}\index{free orthogonal \(G\)-spectrum} on \(\varphi\) and \(K\) is the
  orthogonal \(G\)-spectrum
  \(\Fr{}W K = \catOr(W,-) \wedge K\),\index{FWK@$\Fr{}W K$, free orthogonal \(G\)-spectrum} where \(G\) acts diagonally on 
  \((\Fr{}W K)_V = \catOr(W,V) \wedge K\). 
\end{ex}

\begin{ex}
  \label{ex:freewithreresentationindex}
  By definition our orthogonal $G$-spectra are continuous functors $X$ from $\Or{}$ to $G\catT$.  Hence, if $K$ is a topological group acting on $V=\R^n$, then $X_V$ will have a $G\times K$-action (it is a $K$-space in $G\catT$).  Especially when $K=G$ this invites to abuse of notation since we can consider $X_V$ as a $G$-space via the diagonal homomorphism $G\to G\times G$.  This is unfortunately occasionally confusing and we apologize for following the tradition in this respect but try to be clear on the matter.  Later on we'll have some use for this perspective allowing us to systematically evaluate at $G$-representations, but for now we limit ourselves to the following cases.
  \begin{itemize}
  \item Recall from Example~\ref{trivactiononspheresp} that \(G\) must act trivially on the sphere spectrum \(\Sp\). However, given an orthogonal $G$-representation $V$ (\ie a pair consisting of a real inner product space $V$ and a homomorphism \(\varphi \colon G \to \Or V\)), then the \(G\)-space \(\Sp_V= S^V\)
  is the one-point compactification \(S^V\) of the representation \(V\).
\item More generally, the free functor of Definition~\ref{freeGorthsp} extends as follows.  Let \(K\) be a pointed \(G\)-space and let $V$ and $W$ be orthogonal $G$-representations.  Then the \(G\)-space \((\Fr{}W K)_V = \catOr(W,V)
  \wedge K\) has diagonal \(G\)-action of the \(G \times G\)-action obtained
  from the action of \(G\) on 
  \(K\) and the action of \(G\) on \(\catOr(W,V)\)
  by conjugation.  In Lemma~\ref{lem:twistandshout} below, we will see that this construction can be replaced by ``semi-free'' spectra indexed on Euclidean spaces (\ie there is no longer any need for an action on the indexing spaces).
  \end{itemize}
\end{ex}

\begin{remark}Considering \(G\) as a topological category with one object, the category \(\catGOS\) is the category of  \(\catT\)-functors from \(G_+\) to \(\catOS\). Since the category \(\catOS\) is enriched, tensored and cotensored over \(\catT\), the category \(\catGOS\) is enriched, tensored and cotensored over \(G\catT\). Moreover, since \(\catOS\) is closed symmetric monoidal, so is \(\catGOS\).

  Explicitly, the enrichment $\catOGS$\index{OTG@$\catOGS$} of $\catOS$ in \(G\catT\) is given by considering the space \(\catOS(X,Y)\) of morphisms between two orthogonal \(G\)-spectra as a \(G\)-space $\catOGS(X,Y)$ of morphisms in a $G\catT$-category $\catOGS$ by letting \(G\) act by conjugation, that is, \(g \in G\) takes \(f \in \catOS(X,Y)\) to the composition
\[
  X \xto{g^{-1}} X \xto f Y \xto g Y.
\]
The \(G\)-action on the smash product $X \smash Y$ is through the
diagonal embedding of \(G\) in \(G \times G\). The internal function
spectrum \(\hom_{\catGOS}(X,Y)\)\index{homot@$\hom_{\catGOS}$} is given by the underlying internal function spectrum
\(\internhom(X,Y)\) with \(G\)-action by conjugation as above.

For \(X\) an orthogonal spectrum, the adjunction isomorphisms for the tensor \(\smash\) and for the suspension spectrum $\Sigma^\infty X$ give natural isomorphisms
\[\catT(\G_+, \catOS(X,X))\cong \catOS(\G_+ \smash X, X) \cong \catOS(\Sigma^\infty \G_+ \smash X, X).\]
  The action map $\mu\colon G_+ \rightarrow \catOS(X,X)$ is adjoint
  to a map $\bar\mu\colon \Sigma^\infty \G_+ \smash X \rightarrow
  X$. The fact that $\mu$ is a map of monoids exactly translates to
  $X$ being a module over the orthogonal ring spectrum
  $\Sp_{[G]}\defas{}\Sigma^\infty G_+$ via $\bar\mu$. Then morphisms
  in $\catOGS$ correspond to \(\Sp\)-module morphisms between
  $\Sp_{[G]}$-modules, whereas morphisms in $\catGOS$ correspond to
  \(\Sp_{[G]}\)-module maps. 
\end{remark}

 By naturality the \(\catT\)-isomorphisms for tensors and cotensors in \(\catOS\) over \(\catT\) can be considered as $\catGT$-natural isomorphisms
\[\catTG(D,\catOGS(X,Y)) \cong \catOGS(D \smash X, Y) \cong \catOGS(X, F(D,Y))\]
for tensors $\smash$ and cotensors $F$ in \(\catOGS\) over \(\catT_G\). Analogously to Example~\ref{GTtensor}, taking the $\G$-fixed points of the spaces above yields
\[\catGT(D,\catOGS(X,Y)) \cong \catGOS(D \smash X, Y) \cong \catGOS(X, F(D,Y)).\]

\begin{dfn}
  Let \(V\) be a Euclidean space and let \(Y \in \catGOS\) be an orthogonal
  \(G\)-spectrum.
  \begin{enumerate}
  \item The restriction of \(Y\) to
    the full subcategory of $\catO$ with the single object \(V\) (which we identify with $(\Or V)_+$) is \(Y_V \in G\Or{V}\catT\),
    and we consider it as an object \(Y_V\) of \((G \times \Or V)\catT\).
  \item If \(\varphi \colon G \to \Or V\) is a continuous homomorphism,
    then through the homomorphism \(i_{\varphi} \colon G \to G \times \Or V\)
    with \(i_\varphi(g) = (g , \varphi(g))\) we obtain the \(G\)-space
    \(i_\varphi^* Y_V\). If the action $\phi$ is clear from the context we omit \(i_\varphi^*\) from the
    notation and refer to \(i_\varphi^* Y_V\) as {\em the \(G\)-space \(Y_V\)}.
  \end{enumerate}
\end{dfn}

\begin{remark}
  Given any universe \(\catU\) of \(G\)-representations in the sense of \cite[Definition II.1.1]{MM}, the category of orthogonal \(G\)-spectra considered here is equivalent to the category in 
\cite[Definition II.2.1]{MM} of 
spectra defined on representations isomorphic to finite dimensional \(G\)-invariant subspaces of \(\catU\), as explained by Mandell and May in \cite[Theorem V.1.5]{MM}. 
\end{remark}

\section{Semi-Free Equivariant Spectra} \label{subsecteqsemifree}

\begin{dfn}\label{semifreespectra}
  Let \(V\) be an object of \(\catOr\). The functor \(\Gr{} V \colon
  \Or V \catT \to \catOS\) is the \(\catT\)-functor taking  a pointed
  \(\Or V\)-space \(K\) to the {\em semi-free orthogonal spectrum} \(\Gr{}{V} K\) with 
\[(\Gr{}V K)_W = \catOr(V,W) \wedge_{\Or V} K\] \index{GVK@$\Gr{}V K$, semi-free orthogonal spectrum}
and with functoriality in \(W\) induced by composition in \(\catOr\).
\end{dfn}
\begin{remark}
  \label{remark:freevssemifree}
  The natural isomorphism
$$\Gr{}V({\Or V}_+\smsh C)=\catOr(V,-)\smsh_{\Or V}({\Or V}_+\smsh C)\cong \catOr(V,-)\smsh C=\Fr{} V C$$
for $C$ any space gives a correspondence between semi-free spectra and the free spectra from Example~\ref{freeGorthsp}.

More generally, if $V$ is a Euclidean space, $U$ an inner product space, $P\subseteq\catO_V$ a subgroup and $M$ a $P$-space, composition induces an isomorphism
$$\gamma\colon\Gr{}V(\catO(U,V)\smsh_PM)=\catOr(V,-)\smsh_{\Or V}(\catO(U,V)\smsh_PM)\cong \catO(U,-)\smsh_PM,$$
and in particular, when $U=V$ an isomorphism
$$\gamma\colon\Gr{}V({\Or V}_+\smsh_PM)\cong \catOr(V,-)\smsh_PM,$$
however, the naturality of the latter isomorphism is not as one is prone to guess in concrete situations.
To spell this out, assume $\alpha\colon U'\to U$ is an isometry, $P$ a subgroup of $\catO_{U}$ and let $P^\alpha=\alpha^{-1} P\alpha\subseteq\catO_{U'}$.  If $M$ is a $P$-space, $M'$ is a $P^\alpha$-space and $\beta\colon M\to M'$ satisfies $\beta(p\cdot m)=(\alpha^{-1}p\alpha)\cdot\beta(m)$, then
$$
\xymatrix{
 \Gr{}V(\catO(U,V)\smsh_PM)\ar[d]^{\Gr{}V{(\alpha^*\smsh\beta)}}_\cong\ar@{=}[r]&
   \catOr(V,-)\smsh_{\Or V}(\catO(U,V)\smsh_PM)\ar[d]_\cong^{\id\smsh(\alpha^*\smsh\beta)}\ar[r]^-\gamma_-\cong&
   \catO(U,-)\smsh_PM\ar[d]_\cong^{\alpha^*\smsh\beta}\\
  \Gr{}V(\catO(U',V)\smsh_{P^\alpha}M')\ar@{=}[r]&
  \catOr(V,-)\smsh_{\Or V}(\catO(U',V)\smsh_{P^\alpha}M')\ar[r]^-\gamma_-\cong&\catO(U',-)\smsh_{P^\alpha}M'
}
$$
commutes.  When $V=U$ and $\alpha$ is an isomorphism it is known to cause distress that the $V$ in $\Gr{}V$ is not touched by $\alpha$ whereas the $U$ in $\catO(U,-)$ is.  It is also part of the conspiration towards indexing by trivial representations only.
\end{remark}

The following result holds since \(\Gr{}V\) is an explicit construction of the left Kan extension of \(K \colon \Or V \to \catT\) along the inclusion \(\catOr_V \to \catOr\).
\begin{lemma}\label{lemma:semifreeadjoint}
  The \(\catT\)-functor \(\Gr{} V \colon \Or V \catT \to \catOS\) is left adjoint to the evaluation \(\catT\)-functor \(\ev'_V \colon \catOS \to  \Or V \catT\)\index{evv@$\ev'_V$} taking an orthogonal spectrum \(X\) to the \(\Or V\)-space \(X_V\).
\end{lemma}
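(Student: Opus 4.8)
The plan is to recognize $\ev'_V$ as a restriction functor along a full $\catT$-subcategory inclusion and $\Gr{}{V}$ as the associated enriched left Kan extension; since an enriched left Kan extension is automatically an enriched left adjoint to restriction, this gives the statement, and — as the paper already signals just before the lemma — the only real work is matching up the two descriptions.

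First I would make the identification precise. Let $j\colon \catOr_V \hookrightarrow \catOr$ be the inclusion of the full $\catT$-subcategory of $\catOr$ on the single object $V$. Since every isometric self-embedding of the finite-dimensional space $V$ is an isometric isomorphism, the orthogonal complement appearing in the description of morphism spaces in \ref{dfn.catObig} is $0$, and $\catOr(V,V) = (\Or V)_+$, the group $\Or V$ with a disjoint basepoint adjoined and composition its multiplication. Hence $\catOr_V$ is precisely the one-object $\catT$-category $\Or V$, so that a $\catT$-functor out of $\catOr_V$ is exactly a based $\Or V$-space and $\Or V\catT$ is its functor category. Under this identification $\ev'_V$ is the restriction functor $j^*$: it sends an orthogonal spectrum $X$ to the $\Or V$-space $X_V$, with $\Or V = \catOr(V,V)$ acting through the structure maps of $X$.

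Next I would invoke the standard enriched Kan-extension formalism (Kelly \cite[\S 4]{Kel}, already used in this paper for the smash product). Because $\catT$ is cocomplete, the restriction functor $j^*$ along any $\catT$-functor $j$ into a small $\catT$-category admits a left adjoint, the $\catT$-enriched left Kan extension $\Lan{j}{(-)}$, computed objectwise by a coend. In our situation $\catOr_V$ has a single object, so the coend is exactly the coequalizer defining the balanced smash product over the monoid $\catOr(V,V) = (\Or V)_+$:
\[
  (\Lan{j}{K})(W) = \int^{a \in \catOr_V}\catOr(j a, W)\wedge K a = \catOr(V,W)\wedge_{\Or V}K = (\Gr{}{V}K)_W .
\]
One then checks that the functoriality in $W$ inherited by the coend — namely postcomposition in $\catOr$ — coincides with the functoriality in Definition~\ref{semifreespectra}, so that $\Gr{}{V}$ is a model for $\Lan{j}{(-)}$. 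Consequently $\Gr{}{V}$ is left adjoint to $\ev'_V = j^*$, and since enriched left Kan extensions are enriched left adjoints the adjunction is one of $\catT$-categories, not merely of underlying categories.

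For a hands-on alternative I would exhibit the unit and counit. The unit $\eta_K\colon K \to (\Gr{}{V}K)_V$ is the canonical isomorphism $K \cong (\Or V)_+ \wedge_{\Or V} K$ coming from $\catOr(V,V) = (\Or V)_+$; the counit $\varepsilon_X\colon \Gr{}{V}(X_V) \to X$ has $W$-component the structure map $\catOr(V,W)\wedge_{\Or V} X_V \to X_W$ of the orthogonal spectrum $X$. That $\varepsilon_X$ is a well-defined morphism of orthogonal spectra, and that the two triangle identities hold, both reduce to the functoriality of $X$ and are immediate from these descriptions. Equivalently one produces the natural bijection $\catOS(\Gr{}{V}K,X)\cong\Or V\catT(K,X_V)$ directly: by the universal property of the coequalizer defining $-\wedge_{\Or V}-$ together with enriched naturality, a morphism $\Gr{}{V}K\to X$ is determined by its component at $W=V$, an $\Or V$-map $K\to X_V$; conversely an $\Or V$-map $f\colon K\to X_V$ extends via $\catOr(V,W)\wedge K \xto{\id\wedge f}\catOr(V,W)\wedge X_V \to X_W$, which is $\Or V$-balanced and hence factors through $(\Gr{}{V}K)_W$. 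The only place demanding a little care — but presenting no real obstacle — is keeping the left and right $\Or V$-actions straight and verifying that the coend formula reproduces exactly the $W$-functoriality of $\Gr{}{V}$; I expect that bookkeeping, rather than any genuine difficulty, to be the hardest part.
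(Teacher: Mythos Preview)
Your proposal is correct and follows exactly the approach the paper indicates: the paper's entire justification is the one-line remark preceding the lemma, that $\Gr{}{V}$ is an explicit construction of the enriched left Kan extension along the inclusion $\catOr_V \to \catOr$, and you have simply unpacked that remark (identifying $\catOr_V$ with the one-object $\catT$-category $\Or V$ via $\catOr(V,V)\cong(\Or V)_+$, and computing the coend). The additional unit/counit description you give is more than the paper provides but is consistent with it.
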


Let \(\catOr'\)\index{O@$\catOr'$} be the subcategory of \(\catOr\) with \(\catOr'(V,V) =
\Or V\) and with \(\catOr'(V,W)\) equal to the one point space for \(V
\ne W\). The restriction
functor \(\ev' \colon \catOS \to \catOr'\catT\) has a left adjoint
\(\Gr{}{}\) with \(\Gr{}{}Y = \bigvee_{V} \Gr{} V Y_V\) for \(Y
\in \catOr'\catT\). Given an orthogonal spectrum \(X\), we write \(c_X
\colon \Gr{}{} \ev' X \to X\) for the counit of this adjunction.

The following lemma reduces questions regarding functors preserving coequalizers to semi-free spectra.
\begin{lem}\label{coequalizeroffree}
  For every orthogonal spectrum \(X\), the diagram
  \begin{displaymath}
    \xymatrix{
      \Gr{}{} \ev' \Gr{}{} \ev' X \ar@<-.9ex>[r]_-{\Gr{}{}\ev' c_X}
      \ar@<.9ex>[r]^-{c_{\Gr{}{}\ev' X} } &
      \Gr{}{} \ev' X \ar[r]^-{c_X} & X
    }
  \end{displaymath}
  is a coequalizer diagram in \(\catOS\).
\end{lem}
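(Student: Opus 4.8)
The plan is to evaluate the diagram with the functor \(\ev'\colon \catOS\to\catOr'\catT\), recognise the result as the standard split coequalizer resolving an algebra over a monad, and then transport the conclusion back along \(\ev'\). Write \(T = \ev'\Gr{}{}\) for the monad on \(\catOr'\catT\) associated to the adjunction \(\Gr{}{}\dashv\ev'\) of the preceding discussion, with unit \(\eta\) and multiplication \(\mu\); recall that \(\mu\) is obtained by applying \(\ev'\) to the counit \(c\). For any orthogonal spectrum \(X\) the object \(\ev'X\) is canonically a \(T\)-algebra, with structure map \(\lambda_X = \ev'(c_X)\colon T\ev'X = \ev'\Gr{}{}\ev'X\to \ev'X\). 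The key bookkeeping step is to check that \(\ev'\) carries the two parallel arrows of the lemma to the two face maps of the bar resolution of this algebra, namely \(\ev'(\Gr{}{}\ev'c_X) = T\lambda_X\) and \(\ev'(c_{\Gr{}{}\ev'X}) = \mu_{\ev'X}\), the second of these holding by the very definition of \(\mu\). Hence \(\ev'\) applied to the whole diagram is \(T^2\ev'X\rightrightarrows T\ev'X\to \ev'X\), which is a split coequalizer with splittings \(\eta_{\ev'X}\) and \(\eta_{T\ev'X}\); in particular it is an absolute coequalizer in \(\catOr'\catT\).

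Next I would record two formal properties of \(\ev'\). First, it is conservative: a natural transformation of \(\catT\)-functors \(\catOr\to\catT\) all of whose components are isomorphisms is an isomorphism, and \(\catOr\) and \(\catOr'\) have the same objects. Second, \(\ev'\) preserves all colimits — even though it is a right adjoint — because colimits in both \(\catOS\) and \(\catOr'\catT\) are computed objectwise and the inclusion \(\catOr'\hookrightarrow\catOr\) is the identity on objects, so \(\ev'\) merely restricts the objectwise data (equivalently, \(\ev'\) also has a right adjoint, given by right Kan extension).

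Finally I would conclude as follows. Form the coequalizer \(q\colon\Gr{}{}\ev'X\to Q\) of the pair \((\Gr{}{}\ev'c_X,\, c_{\Gr{}{}\ev'X})\) inside the cocomplete category \(\catOS\). By naturality of \(c\) applied to the morphism \(c_X\colon\Gr{}{}\ev'X\to X\), the map \(c_X\) coequalizes this pair, so there is a unique \(\bar c\colon Q\to X\) with \(\bar c\circ q = c_X\). Applying \(\ev'\) and using that it preserves the coequalizer just formed, \(\ev'Q\) is a colimit of \(\ev'\) of the pair; but by the first paragraph \(\ev'X\), with map \(\ev'(c_X)\), is also such a colimit, so the comparison \(\ev'\bar c\) is an isomorphism, whence \(\bar c\) is an isomorphism because \(\ev'\) is conservative. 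Therefore \(c_X\) exhibits \(X\) as the coequalizer of \((\Gr{}{}\ev'c_X,\, c_{\Gr{}{}\ev'X})\), which is the assertion of the lemma. One could instead unwind everything to a levelwise statement — essentially the co‑Yoneda identity \(X_W \cong \int^{V}\catOr(V,W)\wedge X_V\) — but tracking the \(\Or V\)-coinvariants by hand there is fiddlier than the monadic argument; I expect the only genuinely delicate points above to be the two identifications in the first paragraph and the colimit‑preservation claim for the right adjoint \(\ev'\), with everything else purely formal.
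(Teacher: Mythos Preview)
Your proposal is correct and follows exactly the same idea as the paper's proof, which consists of the single sentence ``After applying \(\ev'\) the diagram becomes a split coequalizer diagram in \(\catOr'\catT\).'' You have simply unpacked this terse statement: identifying the split coequalizer as the bar resolution of the \(T\)-algebra \(\ev'X\), and making explicit the standard argument (conservativity of \(\ev'\) plus preservation of colimits, since colimits in both functor categories are computed objectwise) that lifts the coequalizer back to \(\catOS\).
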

\begin{proof}
  After applying \(\ev'\) the diagram becomes a split coequalizer
  diagram in \(\catOr' \catT\). 
\end{proof}

One of the most important properties of semi-free spectra is that it is easy to calculate their smash products with other spectra and in particular with each other. The following proposition makes this precise, and generalizes Lemma \cite[1.8]{MMSS}. It is analogous to Lemma \cite[I.4.5-6]{Sub} in the case of symmetric spectra. 
\begin{prop}\label{semi-free-smash}\label{Grandsymmetry}\label{Grtwisttwo}
  Let \(V\) and \(W\) be Euclidean spaces, $K$ an $\Or{V}$-space and $L$ an $\Or{W}$-space.
  Then the dual Yoneda lemma induces an isomorphism
  $$\catOr(V\oplus W,-)\smsh_{\Or V\times\Or W}K\smsh L\cong \int^{V',W'\in\catOr}\catOr(V'\oplus W',-)\smsh\catOr(V,V')\smsh_{\Or V}K\smsh\catOr(W,W')\smsh_{\Or W}L$$
    (the inverse is given by composition) and hence - using the coend formula for the smash and the isomorphism $\gamma$ of Remark~\ref{remark:freevssemifree} - a natural isomorphism
\[y\colon\Gr{}{V \oplus W}({\Or{V \oplus W}}_+ \smash_{\Or{V}\times\Or{W}} K \smash L) \xto \cong
  \Gr{}{V}K \smash \Gr{}{W}L.\]
If $\tau_{V,W}\colon V\oplus W\cong W\oplus V$, $\tau_{K,L}\colon K\smsh L\cong L\smsh K$ and $\tau_{\Gr{}VK,\Gr{}WL}\colon \Gr{}VK\smsh\Gr{}WL\cong \Gr{}WL\smsh \Gr{}VK$\index{tau@$\tau$, twist isomorphism}  are the permutations, then
$$\xymatrix{
  \Gr{}{V \oplus V}({\Or{V \oplus V}}_+ \smash_{\Or{V}\times\Or{V}} K \smash K)  \ar[r]^-y_-\cong\ar[d]^{\Gr{}{V\oplus V}(\tau_{V,V}^*\smsh\tau_{K,K})}_\cong&
  \Gr{}{V}K \smash \Gr{}{V}K\ar[d]^{\tau_{\Gr{}VK,\Gr{}VK}}_\cong\\
  \Gr{}{V \oplus V}({\Or{V \oplus V}}_+ \smash_{\Or{V}\times\Or{V}} K \smash K)  \ar[r]^-y_-\cong&
  \Gr{}{V}K \smash \Gr{}{V}K
}$$
commutes.

\end{prop}
\begin{proof}
  The only thing that needs explanation is the commutativity of the diagram, which follows by setting $V=W$ and $K=L$ in the less useful diagram
$$\xymatrix{
  \Gr{}{V \oplus V}(\catOr(V\oplus W,V\oplus V) \smash_{\Or{V}\times\Or{W}} K \smash L)
  \ar[r]^-y_-\cong\ar[d]^{\Gr{}{V\oplus V}(\tau_{V,W}^*\smsh\tau_{K,L})}_\cong&
  \Gr{}{V}K \smash \Gr{}{W}L\ar[d]^{\tau_{\Gr{}VK,\Gr{}WL}}_\cong\\
  \Gr{}{V \oplus V}(\catOr(W\oplus V,V\oplus V)\smash_{\Or{W}\times\Or{V}} L \smash K)  \ar[r]^-y_-\cong&
  \Gr{}{W}L \smash \Gr{}{V}K.
}$$
  The commutativity of this diagram follows by the functoriality of $\gamma$ explained in Remark~\ref{remark:freevssemifree} (with $V$ replaced by $V\oplus V$, $\alpha$ by $\tau_{V,W}$, $\beta$ by $\tau_{K,L}$ and $P$ by $\Or V\times\Or W$) and the fact that
  $$\xymatrix{
    \catOr(V\oplus W,-)\smsh_{\Or V\times\Or W}K\smsh L\ar[rr]^-{\text{dual Yoneda}}_-\cong
    \ar[d]^{\tau_{V,W}^*\smsh\tau_{K,L}}_\cong&& \Gr{}VK\smsh\Gr{}WL\ar[d]^{\tau_{\Gr{}VK,\Gr{}WL}}_\cong\\
    \catOr(W\oplus V,-)\smsh_{\Or W\times\Or V}L\smsh K\ar[rr]^-{\text{dual Yoneda}}_-\cong&& \Gr{}WL\smsh\Gr{}VK
  }$$
  commutes which is seen explicitly by inspection upon replacing the smash with their coend formulations.
\end{proof}
\begin{remark}
  Note that the left vertical map in the diagram in Lemma~\ref{Grtwisttwo} is of the form
\(\Gr{}{V \oplus V}(\tau^* \wedge \tau)\): the two copies of \(V\) in
\(\Gr{}{V \oplus V}\) are {\em not} permuted.
\end{remark}

Let \(X\) be a finite discrete \(G\)-set, let \(V\) be a Euclidean
space and let \(K\) be an \(\Or V\)-space. Then \(G\) acts on
\(K^{\wedge X}\) and on \(\prod_x \Or V\) by permuting factors. The
group \(G \times \Or {V^{\oplus X}}\) acts on the space \({\Or
{V^{\oplus X}}}_+ \wedge_{\prod_x \Or V} K^{\wedge X}\) by letting
\(G\) act on \(\Or {V^{\oplus X}}\) by right multiplication of
permutation of summands and letting \(\Or {V^{\oplus X}}\) act on
itself by left multiplication. Considering \(V^{\oplus X}\) as an
Euclidean space, we obtain an orthogonal \(G\)-spectrum 
\(\Gr{}{V^{\oplus X}} ({\Or
{V^{\oplus X}}}_+ \wedge_{\prod_x \Or V} K^{\wedge X})\). Notice that
\(G\) acts trivially on the Euclidean space \(V^{\oplus X}\) in 
\(\Gr{}{V^{\oplus X}}\).

Since permutations of a finite set \(X\) are generated by transpositions,
iterated application of Corollary~\ref{Grtwisttwo} gives:
\begin{prop}\label{semifreesmashpower}
 Let $X$ be a finite discrete $G$-set, \(V\) a Euclidean space and
 \(K\) an \(\Or V\)-space. Then the dual Yoneda lemma as in Proposition~\ref{semi-free-smash} induces a natural isomorphism of $G$-spectra 
\[
y\colon\Gr{}{V^{\oplus X}} ({{\Or {V^{\oplus X}}}_+}
  \wedge_{\prod_X \Or V} K^{\smash X})
  \xto \cong
(\Gr{}{V}K)^{\smash X} 
,\]
where $G$ acts on  $K^{\smsh X}$ and
$(\Gr{}{V}K)^{\smash X}$ by permuting factors and on $\Or{V^{\oplus X}}$ by precomposition with the permutation of summands in $V^{\oplus X}$, but the $G$-action on the index in $\Gr{}{V^{\oplus X}}$ is trivial.
\end{prop}
\begin{prop}\label{semi-free-smash-with-gen}
Let \(E\) be an orthogonal spectrum and let \(K\) be an \(\Or{V}\)-space. The structure map
\(
  K \smash E_{W} \cong (\Gr{}{V}K)_V \smash E_{W} \to (\Gr{}{V}K \smash E)_{V \oplus W}  
\)
for the smash-product induces a natural isomorphism  
\[{\Or{V \oplus W}}_+ \smash_{\Or{V}\times \Or{W}} K \smash E_{W} \cong (\Gr{}{V}K \smash E)_{V \oplus W} \]
of \(\Or{V \oplus W}\)-spaces. If the dimension of \(V'\) is smaller than the dimension of \(V\), then 
\((\Gr{}{V}K \smash E)_{V'}\) is a one-point space.
\end{prop}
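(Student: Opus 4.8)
The plan is to deduce both assertions from Proposition~\ref{semi-free-smash} by reducing to the case in which \(X\) is semi-free. For the first statement I would begin by noting that both sides are colimit-preserving functors of \(X\) and that the map in the statement is a natural transformation between them. The functor \(X \mapsto {\Or{V\oplus W}}_+ \smash_{\Or V \times \Or W}(K \smash X_W)\) preserves colimits because colimits of orthogonal spectra are computed objectwise, so \(X \mapsto X_W\) preserves them, while \(C \mapsto {\Or{V\oplus W}}_+ \smash_{\Or V \times \Or W}(K \smash C)\) preserves colimits of pointed spaces. The functor \(X \mapsto (\Gr{}{V}K \smash X)_{V\oplus W}\) preserves colimits because \(\Gr{}{V}K \smash (-)\) has the right adjoint \(\internhom(\Gr{}{V}K,-)\) and evaluation at \(V\oplus W\) again preserves colimits. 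The canonical map \((\Gr{}{V}K)_V \smash X_W \to (\Gr{}{V}K \smash X)_{V\oplus W}\) is natural in \(X\) and \(\Or V \times \Or W\)-equivariant for the block-diagonal subgroup \(\Or V \times \Or W \subseteq \Or{V\oplus W}\) acting on the target, so it descends to a natural transformation from the first functor to the second. Now by Lemma~\ref{coequalizeroffree} together with \(\Gr{}{}Y \cong \coprod_{V'} \Gr{}{V'} Y_{V'}\), every orthogonal spectrum is a coequalizer of a diagram whose terms are coproducts of semi-free spectra; since both functors preserve all colimits, it suffices to prove the first statement when \(X = \Gr{}{W'}L\) for an object \(W'\) of \(\catOr\) and \(L \in \Or{W'}\catT\).

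For \(X = \Gr{}{W'}L\), Proposition~\ref{semi-free-smash} gives a natural isomorphism \(\Gr{}{V}K \smash \Gr{}{W'}L \cong \Gr{}{V\oplus W'}({\Or{V\oplus W'}}_+ \smash_{\Or V \times \Or{W'}} K \smash L)\); evaluating at \(V\oplus W\) and cancelling the induced \(\Or{V\oplus W'}\)-action turns the right-hand side into \(\catOr(V\oplus W', V\oplus W) \smash_{\Or V \times \Or{W'}} K \smash L\). Using the Thom-space description \(\catOr(A,B) \cong {\Or B}_+ \smash_{\Or{B \ominus A}} S^{B \ominus A}\) of the morphism spaces (cf. Definition~\ref{dfn.catObig}) one identifies \(\catOr(V\oplus W', V\oplus W) \cong {\Or{V\oplus W}}_+ \smash_{\Or W} \catOr(W',W)\), compatibly with the \(\Or{V\oplus W}\)-action by postcomposition, the residual \(\Or V\)-action by precomposition with \(\alpha \oplus \id\), and the \(\Or W\)-action on \(\catOr(W',W)\) by postcomposition. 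Feeding this in yields \({\Or{V\oplus W}}_+ \smash_{\Or V \times \Or W}(K \smash (\catOr(W',W) \smash_{\Or{W'}} L)) = {\Or{V\oplus W}}_+ \smash_{\Or V \times \Or W}(K \smash (\Gr{}{W'}L)_W)\), which is the left-hand side. It then remains to check that the composite isomorphism produced in this way is inverse to the map induced by the structure map; this is a diagram chase with the coend presentation \((X \smash Y)_U = \int^{(V_1,W_1)} \catOr(V_1 \oplus W_1, U) \smash X_{V_1} \smash Y_{W_1}\) and the density isomorphism \(\int^{W_1} \catOr(W_1,W) \smash X_{W_1} \cong X_W\). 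I expect this last compatibility to be the main obstacle: one must keep careful track of the various group actions, and, as the remark following Corollary~\ref{Grtwisttwo} illustrates, such bookkeeping of which copies of a summand get permuted can be subtle.

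For the second statement I would argue in the same style. The class of orthogonal spectra \(X\) for which \((\Gr{}{V}K \smash X)_{V'}\) is a one-point space is closed under all colimits, since \((\Gr{}{V}K \smash (-))_{V'}\) preserves colimits and a colimit of one-point pointed spaces is a one-point space. By Proposition~\ref{semi-free-smash} this class contains every semi-free spectrum: \(\Gr{}{V}K \smash \Gr{}{W'}L\) is semi-free on \(V \oplus W'\), and for any \(\Or A\)-space \(M\) the space \((\Gr{}{A}M)_{V'} = \catOr(A,V') \smash_{\Or A} M\) is a one-point space as soon as \(\dim V' < \dim A\), because then \(\catOr(A,V')\) is itself a one-point space. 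Since \(\dim(V \oplus W') = \dim V + \dim W' \ge \dim V > \dim V'\) this applies to \(A = V \oplus W'\), and as every orthogonal spectrum is a colimit of semi-free spectra by Lemma~\ref{coequalizeroffree} the claim follows. Alternatively, one reads it off the coend formula directly: a summand \(\catOr(V_1 \oplus W_1, V') \smash (\Gr{}{V}K)_{V_1} \smash X_{W_1}\) is non-trivial only if \(\dim V_1 + \dim W_1 \le \dim V'\) and \(\dim V \le \dim V_1\), which cannot both hold when \(\dim V' < \dim V\).
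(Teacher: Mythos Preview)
Your proposal is correct and follows essentially the same approach as the paper: reduce to the semi-free case via Lemma~\ref{coequalizeroffree} using that both sides commute with colimits, then invoke Proposition~\ref{semi-free-smash} together with the identification \({\Or{V\oplus W}}_+ \smash_{\Or W} \catOr(W',W) \cong \catOr(V\oplus W', V\oplus W)\). You are in fact more careful than the paper, which does not explicitly verify that the resulting isomorphism is the one induced by the structure map and does not separately address the second statement.
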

\begin{proof}
  Considered as functors of \(E\) both sides commutes with
  colimits. By Lemma~\ref{coequalizeroffree} we can represent \(E\) as a
  colimit of semi-free spectra so we can reduce to the case where \(E =
  \Gr{}{U}L\) is semi-free. In this case the asserted isomorphism is a
  consequence of Proposition~\ref{semi-free-smash} since 
\(
  {\Or{V \oplus W}}_+ \smash_{\Or{W}} \catOr (U,W)\)
is isomorphic to \(\catOr (V\oplus U,V\oplus W)
\).
\end{proof}

Often actions will occur through
the semi-direct product \(G \ltimes_{\varphi} \Or V\) of the inner
automorphism of a representation \(\varphi \colon G \to \Or
V\). Recall that \(G \ltimes_{\varphi} \Or
V\) has \(G \times \Or V\) as underlying set and multiplication given
by the rule
\begin{displaymath}
  (g,\alpha )(h, \beta) = (gh, \alpha \varphi(g) \beta \varphi(g^{-1})).
\end{displaymath}
\begin{dfn}\label{twistedsemifree}
  Let \(V\) be a Euclidean space, let \(\varphi \colon G \to \Or V\)
  be a
  group homomorphism and let \(f_\varphi \colon G \times \Or V \to G
  \ltimes_{\varphi} \Or V\) be 
  the isomorphism \(f_\varphi(g,\alpha) = (g,\alpha\varphi(g^{-1}))\).
 For $K$ a $G\ltimes_{\varphi} \Or V$-space we define the
  semi-free orthogonal \(G\)-spectrum \(\Gr{\varphi}V K\)\index{GphiVK@$\Gr{\varphi}V K$, semi-free orthogonal $G$-spectrum} by
  \begin{displaymath}
    \Gr{\varphi}V K = \Gr{}V f^*_\varphi K.
  \end{displaymath}
\end{dfn}
Note that there is an isomorphism of \(G \times \Or V\)-spaces of the form
\begin{displaymath}
  f^*_\varphi K \cong \catOr(V,V) \wedge_{\Or V} K, \quad k \mapsto \id_V
  \wedge k,
\end{displaymath}
where \(G \times \Or V\) acts on \(\catOr(V,V) \wedge_{\Or V} K\) by
the rule \((g,\alpha)\cdot (\gamma \wedge k) = \alpha \gamma
\varphi(g^{-1}) \wedge gk\) (where we have written $gk$ for $(g,1)\cdot k$).
\begin{lemma}\label{groupsemifree}
Let \(V\) be a Euclidean space and let \(K\) be a \(G \times \Or
V\)-space. The unit of adjunction $K\cong \fev{'}{V}\Gr{}{V}K$ gives an isomorphism
$$\catOS(\Gr{}{V}K,\Gr{}{V}K)\cong \catGspaces{\Or{V}}(K,\fev{'}{V}\Gr{}{V}K)\cong \catGspaces{\Or{V}}(K,K)
$$
of topological monoids. Since \(G\) acts on \(K\), this isomorphism
provides an action of \(G\) on \(\Gr{}V K\). The levelwise orbit spectrum
$\left[\Gr{}{V}K\right]_G$ is naturally isomorphic to $\Gr{}{V}(K_G)$.
\end{lemma}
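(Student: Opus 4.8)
\emph{The plan} is to treat the three claims in turn: the isomorphism of topological monoids, the resulting orthogonal $G$-spectrum structure on $\Gr{}{V}K$, and the identification of the levelwise orbit spectrum. For the monoid isomorphism I would start from the adjunction $\Gr{}{V}\dashv\ev'_V$ established just above, which supplies a homeomorphism $\catOS(\Gr{}{V}K,\Gr{}{V}K)\cong\Or{V}\catT(K,\ev'_V\Gr{}{V}K)$ sending $f$ to $\ev'_V(f)\circ\eta_K$, with $\eta$ the unit. Multiplicativity of this homeomorphism is the usual statement that the endomorphism monoid of a free object is computed by the adjunction; it follows from functoriality of $\Gr{}{V}$ and the triangle identities, which I would spell out. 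For the second isomorphism I would observe that $\ev'_V\Gr{}{V}K=(\Gr{}{V}K)_V=\catOr(V,V)\wedge_{\Or V}K={\Or V}_+\wedge_{\Or V}K$ and that the unit $\eta_K\colon K\to\ev'_V\Gr{}{V}K$, $k\mapsto\id_V\wedge k$, is an isomorphism of $\Or V$-spaces with inverse $\alpha\wedge k\mapsto\alpha k$ (well defined by the balancing relation); postcomposition with $\eta_K^{-1}$ then gives a homeomorphism $\Or{V}\catT(K,\ev'_V\Gr{}{V}K)\cong\Or{V}\catT(K,K)$ compatible with the monoid structures, and the composite is the asserted isomorphism.

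\emph{For the $G$-action}, I would use that the $\Or V$- and $G$-actions on the $G\times\Or V$-space $K$ commute, so every $g\in G$ acts on $K$ by a morphism in $\Or{V}\catT$; this gives a continuous monoid homomorphism $G\to\Or{V}\catT(K,K)$, and transporting it along the isomorphism above produces a continuous monoid homomorphism $G\to\catOS(\Gr{}{V}K,\Gr{}{V}K)$, that is, an orthogonal $G$-spectrum structure on $\Gr{}{V}K$ in the sense of Definition~\ref{naiveorthgspec}. I would then unwind this, using the triangle identity $\delta_{\Gr{}{V}K}\circ\Gr{}{V}(\eta_K)=\id$, to see that the endomorphism of $\Gr{}{V}K$ attached to $g$ is just $\Gr{}{V}(g)$, so that on level $W$ the element $g$ acts by $\id_{\catOr(V,W)}\wedge_{\Or V}g$; equivalently, $\Gr{}{V}K$ with this $G$-action is the value at $K$ of the promoted $G\catT$-enriched functor $\Gr{}{V}$.

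\emph{For the orbit spectrum}, with the levelwise description of the action in hand I would compute $([\Gr{}{V}K]_G)_W=(\catOr(V,W)\wedge_{\Or V}K)_G$; since $(-)_G$ is a left adjoint, hence preserves colimits, and $G$ acts trivially on $\catOr(V,W)$ and on $\Or V$, this is canonically isomorphic to $\catOr(V,W)\wedge_{\Or V}(K_G)=(\Gr{}{V}(K_G))_W$, naturally in $W$, and assembling over $W$ gives the isomorphism $[\Gr{}{V}K]_G\cong\Gr{}{V}(K_G)$ of orthogonal spectra. A slicker alternative I might use instead is the chain of natural bijections
\[\catOS([\Gr{}{V}K]_G,Y)\cong\catGOS(\Gr{}{V}K,\mathrm{triv}\,Y)\cong G\Or{V}\catT(K,\mathrm{triv}\,\ev'_V Y)\cong\Or{V}\catT(K_G,\ev'_V Y)\cong\catOS(\Gr{}{V}(K_G),Y),\]
using the equivariant adjunction $\Gr{}{V}\dashv\ev'_V$, the fact that $\ev'_V$ commutes with trivial-action functors, and the orbit/trivial-action adjunctions, followed by the Yoneda lemma.

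\emph{The only step that is not pure bookkeeping} with the adjunction $\Gr{}{V}\dashv\ev'_V$ is the identification of the $G$-action on $\Gr{}{V}K$ with the levelwise map $\id\wedge_{\Or V}g$ (equivalently with $\Gr{}{V}(g)$); once this is in place, the computation of the orbit spectrum is immediate from the fact that $(-)_G$ preserves colimits, and the multiplicativity and continuity statements are routine.
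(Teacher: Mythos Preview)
Your proposal is correct and follows essentially the same approach as the paper: the paper's proof is extremely terse, simply invoking the unit isomorphism $K\cong\ev'_V\Gr{}{V}K$ for the monoid statement and then computing the orbits levelwise via $[\catO(V,W)\smash_{\Or{V}}K]_G \cong [\catO(V,W)\smash K]_{\Or{V}\times G} \cong \catO(V,W)\smash_{\Or{V}}(K_G)$, which is precisely your levelwise argument with the ``commuting of $G$- and $\Or V$-orbits'' step made explicit. Your additional unwinding of the $G$-action as $\Gr{}{V}(g)$ and the alternative Yoneda argument go beyond what the paper records, but everything is in the same spirit.
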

\begin{proof}
To check the statement about orbits we note that on levels $W$ we get the isomorphisms
\[[\Gr{}{V}K_W]_G = [\catO(V,W)\smash_{\Or{V}}K]_G \cong [\catO(V,W)\smash K]_{\Or{V}\times G} \cong \catO(V,W)\smash_{\Or{V}}(K_G).\]
\end{proof}

\section{Families of Representations}\label{subsectuniverses}
Traditionally a universe for a compact Lie group $G$ is used to define the various stable
model structures of orthogonal \(G\)-spectra. However we will find it
convenient to have the flexibility of choosing compatible universes
consisting of an indexing category of \(H\)-representations for each
subgroup \(H\) of \(G\). The \(G\)-typical families of representations
introduced in this section form a way to encode this.  Also, this strategy helps remove some of the traditional complications due to a dependence on a choice of universe and the need to change it.
 
Let $H$ be a subgroup of \(G\) and consider a representation of $H$, that is, a Euclidean space $V$ together with a homomorphism $\varphi \colon H \to \Or V$. As explained below, the subgroup \(H\) and the homomorphism \(\varphi\) can be recovered from the subgroup \(P_V\) of \(G \times \Or V\) consisting of pairs \((h,\varphi(h))\) for \(h\) in \(H\).

Given a Euclidean space \(V\) and a subgroup \(P\) of \(G \times \Or V\), let \(\proj_1 \colon P \to G\) be the composition of  the inclusion \(P \subseteq G \times \Or V\) and the projection \(G\times \Or V \to G\).
\begin{dfn}\label{representationgivenbyP}
  Let \(V\) be a Euclidean space and let \(P\subseteq G \times \Or V\) be a subgroup such that \(\proj_1 \colon P \to G\) is injective with image \(H = \proj_1(P)\). The \(H\)-representation \(V(P) = (V,\varphi)\)\index{VP@$V(P)$, representation associated to $P$} has underlying vector space \(V\) and \(H\)-action \(\varphi \colon H \to \Or V\) given as the composition
\[
  H \xrightarrow{\proj_1^{-1}} P \subseteq G \times \Or V \xrightarrow{\proj_2} \Or V.
\]
\end{dfn}
We will typically refer to this situation as an {\em $H$-representation of the form $V(P)$}.  In particular, when referring to a $G$-representation of the form $V(P)$ it is understood that the projection $\proj_1\colon P\to G$ is an isomorphism.

The following lemma is trivial but important.
  \begin{lemma}
    \label{lem:twistandshout}
    Let $H\subseteq G$ be a closed subgroup.  Let $V$ be a $G$-representation with underlying space $\R^n$ and action
 $\varphi\colon G\to\Or V$.  Consider $G/H\times\Or V$ as a $G\times\Or{\R{}^n}$-space with $(g_1,A_1)\in G\times\Or{\R{}^n}$ acting by sending $(gH,A)\in G/H\times\Or V$ to
$(g_1g,A_1A\varphi(g_1^{-1}))$.

Let $P=\Gamma_{\varphi|_H}=\{(h,\varphi(h))\in H\times\Or{\R{}^n}\}$ be the graph of $\varphi|_H$ considered as a subgroup of $G\times\Or{\R{}^n}$.  Then the map 
\labeleq{ovpgrep}{
  (G\times\Or{\R{}^n})/\Gamma_{\varphi|_H}\to G/H\times\Or V,\qquad (g,A)\Gamma_{\varphi|_H}\, \mapsto \,(gH,A\varphi(g^{-1}))} 
is an isomorphism of $G\times\Or{\R{}^n}$-spaces.
The map 
$$\catOr(V,-)\smsh G/H_+\to \catOr(\R{}^n,-)\smsh_{\Or{\R{}^n}}(G/H\times\Or V)_+$$
sending $(f,gH)$ to $(f,(gH,1))$ is an isomorphism of $G$-spectra,
where $\gamma\in G$ acts on the source by sending $(f,gH)$ to $(f\varphi(\gamma^{-1}),\gamma gH)$ and on the target by sending $(f,gH,A)=(fA,gH,1)$ to $(f,\gamma gH,A\varphi(\gamma^{-1}))$.

Hence the free spectrum $\Fr{}V(G/H_+)$ of Example~\ref{ex:freewithreresentationindex}
indexed by the $G$-representation $(V,\varphi)$ is isomorphic to the semi-free spectrum $\Gr{}{\R{}^n}(((G\times\Or{\R{}^n})/\Gamma_{\varphi|_H})_+)$
$$\Fr{}V(G/H_+)\cong \Gr{}{\R{}^n}(((G\times\Or{\R{}^n})/\Gamma_{\varphi|_H})_+).
$$
  \end{lemma}
  The reader should notice that as a consequence the free spectrum $\Fr{}V(G/H_+)$ only depends on $\varphi$'s restriction to $H$.
  
\begin{dfn}\label{dfn:AI}\label{dfn:Qll}
  If $V$ is a Euclidean space, let $\AI^V$\index{AI@$\AI$} be the set of closed subgroups \(P\) of \(G \times \Or V\) with the property that \(\proj_1 \colon P \to G\) is injective.

  The subset  $\Qll^V\subset\AI^V$\index{QV@$\Qll^V$} consists of the subgroups  $Q\subseteq G\times \Or V$ such that there is a subgroup $P\subseteq G\times \Or V$ with $Q\subseteq P$ having the property that that the projection $P\subseteq G\times \Or V\to G$ is an isomorphism (``$P$ is a $G$-representation'').
\end{dfn}
  We abbreviate $\AI=(\AI^V)_{V}$ and $\Qll=(\Qll)_V$ when $V$ varies over Euclidean spaces.
The name ``$\AI$'' is chosen as a mnemonic for ``all injective'' and ``$\Qll$'' since it is the basis for the so-called $q$-model structure.

\begin{lemma}
  For each inner product space \(V\), the assignment \(P \mapsto V(P)\) is a bijection from $\AI^V$ to the set of representations of closed subgroups of \(G\) with \(V\) as underlying vector space.
\end{lemma}
\begin{proof}
  Let \(B\) be the set of pairs \((H,\varphi)\) where \(H\) is a subgroup of \(G\) and \(\varphi \colon H \to \Or V\) is a continuous homomorphism. There is an obvious bijection between \(B\) and the set of representations \((V,\varphi)\) of any subgroup of \(G\) with underlying vector space \(V\).

  Let \(A\) be the set of closed subgroups \(P\) of \(G \times \Or V\) with the property that \(\proj_1 \colon P \to G\) is injective.
  Given \((H,\varphi) \in B\), the inclusion \(i \colon H \to G\) together with \(\varphi \colon H \to \Or V\) gives an injective homomorphism \((i,\varphi) \colon H \to G \times \Or V\), and thus the pair \((H,\varphi)\) yields a closed subgroup, namely the image \(P = (i,\varphi)(H)\) of \(G \times \Or V\) with the property that \(\proj_1 \colon P \to G\) is injective. This gives a function \(f \colon B \to A\).

Conversely, if \(P \in A\), we obtain a pair \((H,\varphi) \in B\) by letting \(H = \proj_1(P)\) and letting \(\varphi\) be the composition of \(\proj_1^{-1} \colon H \to P\) and the homomorphism \(P \to \Or V\) obtained from the projection \(G \times \Or V \xrightarrow{\proj_2} \Or V\). This gives a function \(g \colon A \to B\), and \(f\) and \(g\) are inverse bijections.
\end{proof}

\begin{dfn}\label{dfn:sumofrepns}
  Given Euclidean spaces \(V\) and \(W\) and subgroups \(P \subseteq G
  \times \Or V\) and \(Q \subseteq G \times \Or W\) in $\AI$, we define
  \(P \oplus Q \subseteq G \times \Or {V \oplus W}\) to be the pullback in
$$\xymatrix{P\oplus Q\ar[d]\ar[rrr]^{\subseteq}&&&G\times\Or{V\oplus W}\ar[d]^{\text{diagonal}\times\id}\\
P\times Q\ar[r]^-{\subseteq}&G\times\Or V\times G\times\Or W\ar[rr]^-{\text{shuffle and }\oplus}&&G\times G\times\Or{V\oplus W}.}
$$
\end{dfn}
Expressed in terms of the associated representations: \(V(P \oplus Q)\) is the
representation \(V(P) \oplus V(Q)\) of the intersection \(\proj_1(P) \cap \proj_1(Q)\).

\begin{dfn}\label{gtypicalrepresentations}
  A {\em \(G\)-typical family of representations}\index{Gtypicalfamily@$G$-typical family} consists  of a sequence \(\hml = (\hml^V)_{V}\)\index{H@$\hml$, a $G$-typical
    family} indexed over the Euclidean spaces, such that for all Euclidean spaces \(V\) and \(W\):
  \begin{enumerate}
  \item \(\hml^V\) is a family of subgroups of \(G \times \Or V\)
  \item $\hml^V\subseteq\AI^V$.
  \item There is a non-zero \(U\) such that \(\hml^U\) contains \(G
    \times \{\id_U\}\).
  \item For every \(P\) in \(\hml^V\), there exists a \(Q \in \AI^W\)
    and an
  \(R \in \hml^{V \oplus W}\) with \(\proj_1(Q) = \proj_1(P)\) and
  \(\proj_1(R) = G\) such that 
  \(P \oplus Q = (\proj_1(P) \times \Or {V \oplus W}) \cap R\).
\end{enumerate}
\end{dfn}
In the above definition, condition (i) allows us to use standard
methods from equi\-variant homotopy theory. Condition (ii) gives a
firm connection to representations of subgroups of \(G\): let $H$ be the image of the injection $\proj_1\colon P\to G$. Using the inverse of the induced isomorphism $P\cong H$ we get a representation $\phi\colon H\cong P\subseteq G\times\Or V\to \Or V$ (where the last map is the second projection) and $P$ is simply the graph 
$$P=\Gamma\phi=\{(h,\phi(h))\mid h\in H\}\subseteq G\times\Or V.$$ 

Condition
(iii) is needed for the stable model structure on orthogonal
\(G\)-spectra to be stable in the sense of model categories
\cite[Definition 7.1.1]{H}.

Condition (iv) is of a more technical
nature related to cofinality of index categories in homotopy colimits
used in a construction of fibrant replacement, see in particular Remark~\ref{rem:compMM} and the proof of Corollary~\ref{cor:acyfiblevelvsstable}.  In terms of representations it says that if $P$ is the graph of the representation $\psi\colon H\to\Or V$, then there exists a representation $\phi\colon H\to\Or W$ such that the sum $\psi\oplus\phi\colon H\to\Or{V\oplus W}$ is the restriction of a representation $G\to\Or{V\oplus W}$ (whose graph is) in $\hml^{V\oplus W}$

Given \(g \in G\) and a subgroup \(H\) of \(G\) we write \(c_g \colon H \to gHg^{-1}\)\index{cg@$c_g$, conjugation isomorphism} for the conjugation isomorphism with \(c_g(h) = ghg^{-1}\) (and $c_{g^{-1}}$ for its inverse). 

\begin{prop}\label{Gtypicalrestrictions}
  Let \(\hml\) be a \(G\)-typical family of representations. The set
  \(\catV\) of isomorphism classes of representations of the form
  \(V(P)\) for \(P\) in \(\hml\) is closed under conjugation and
  restriction, that is, if \(g \in G\) and \(i \colon K \to gHg^{-1}\)
  is an inclusion of subgroups of \(G\) and if an \(H\)-representation
  \(W\) is in \(\catV\), then the \(K\)-representation
  \(i^*{c_{g^{-1}}^*W}\) is in \(\catV\). 
\end{prop}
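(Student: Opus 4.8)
The plan is to verify the two closure properties separately, in each case tracking how the subgroup $P \subseteq G \times \Or V$ corresponding to an $H$-representation $W = V(P)$ transforms. Recall that, by the bijection established above, it suffices to produce, for each operation on representations, a subgroup of $G \times \Or V$ lying in $\hml$ whose associated representation is the desired one. Since $\hml^V$ is a \emph{family} of subgroups of $G \times \Or V$ (Definition~\ref{gtypicalrepresentations}(i)), it is closed under conjugation in $G \times \Or V$ and under passage to subgroups; these are exactly the two facts I will exploit.

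First I would treat conjugation. Suppose $W$ is an $H$-representation with $W \cong V(P)$, $P \in \hml^V$, so $H = \proj_1(P)$ and the action $\varphi\colon H \to \Or V$ is $\proj_2 \circ \proj_1^{-1}$. For $g \in G$, the conjugated representation $(c_g^{-1})^* W$ is the $gHg^{-1}$-representation on the same underlying space with action $h' \mapsto \varphi(g^{-1}h'g)$. I claim this is $V(P')$ where $P' = (c_g \times \id_{\Or V})(P)$, \ie $P'$ is obtained by conjugating $P$ by the element $(g, \id_V) \in G \times \Or V$. Indeed $\proj_1(P') = gHg^{-1}$, the map $\proj_1$ restricted to $P'$ is still injective, and chasing the definition of $V(P')$ gives precisely the action $h' \mapsto \varphi(g^{-1}h'g)$. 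Since $P' $ is a conjugate of $P$ by an element of $G \times \Or V$ and $\hml^V$ is a family, $P' \in \hml^V$, so $(c_g^{-1})^*W \in \catV$.

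Next I would treat restriction. Suppose $W$ is an $H$-representation in $\catV$, say $W \cong V(P)$ with $P \in \hml^V$ and $H = \proj_1(P)$, and let $i \colon K \to H$ be the inclusion of a subgroup (after the conjugation step it suffices to restrict to subgroups of $H$ itself, replacing $gHg^{-1}$ by $H$). Set $P_K = \proj_1^{-1}(K) \subseteq P$, the preimage of $K$ under the injective homomorphism $\proj_1 \colon P \to G$. Then $P_K$ is a closed subgroup of $G \times \Or V$, $\proj_1|_{P_K}$ is injective with image $K$, and unwinding the definition of $V(P_K)$ shows its $K$-action is $\varphi \circ i$, \ie $V(P_K) = i^*W$. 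Since $P_K \subseteq P$ and $P \in \hml^V$, and $\hml^V$ is closed under subgroups, $P_K \in \hml^V$; hence $i^*W \in \catV$. Combining the two steps, $i^*(c_g^{-1})^*W \in \catV$, which is the assertion.

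The only mildly delicate point — and the step I would expect to require the most care in writing out — is the compatibility of the two constructions, \ie checking that conjugating first by $(g,\id_V)$ and then taking the subgroup preimage produces exactly the subgroup whose associated representation is $i^*(c_g^{-1})^*W$ for the given $i \colon K \to gHg^{-1}$; this is a routine but slightly fiddly diagram-chase through the definition of $V(P)$, the conjugation isomorphisms $c_g$, and the identifications $\proj_1^{-1}$. Everything else is immediate from the fact that $\hml^V$ is a family, so no new input about the $G$-typical structure (conditions (iii)--(v)) is needed here.
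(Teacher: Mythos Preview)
Your proposal is correct and follows essentially the same approach as the paper: conjugate $P$ by $(g,\id_V)$ to handle conjugation, then pass to the subgroup $\proj_1^{-1}(K)$ to handle restriction, invoking in each step only that $\hml^V$ is a family. The paper's proof is terser but makes exactly these two moves in the same order.
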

\begin{proof}
  Choose \(P \in \hml\) with an isomorphism \(W \cong V(P)\). The
  representation \(c_{g^{-1}}^* W\) of \(gHg^{-1}\) is then
  isomorphic to \(c_{g^{-1}}^* V(P) \cong V(c_g(P))\), where
  \(c_g(P)\) consists of the elements of the form
  \((g,\id)p(g^{-1},\id)\) for \(p \in P\). The
  subgroup \(c_gP\) of \(G \times \Or V\) is in \(\hml\) because
  \(\hml\) is closed under conjugation. 
  Since \(i \colon K \to gHg^{-1}\) is an inclusion of subgroups of
  \(G\), we have \(i^*V(c_gP) = V(Q)\) for the subgroup \(Q =
  \proj_1^{-1}(K)\) of \(c_gP\).  
\end{proof}
 Note that if \(P \in \hml\) has \(\proj_1(P)=H\) and if \(Q =
 (g,\alpha)P(g,\alpha)^{-1}\) for some \(\alpha \in \Or V\) and \(g
 \in G\), then \(\alpha\) is an isomorphism \(\alpha\colon V(P) \to
 c_g^*V(Q)\) of \(H\)-representations.
 
 \begin{dfn}\label{dfn:retractofrepn}
  Consider a $P \in\AI^V$ and a $Q \in\AI^W$.  We say that $Q$ 
is a
\emph{retract}\index{retract of elements in $G$-typical families} of $P$ if there is an isometry $f\colon W\to V$ such that
 
\[\xymatrix{
    &P\ar[r]^{\subseteq}\ar[d]^\cong&G \times \Or V\ar[d]^{{\proj}_1}\ar[r]^{{\proj}_2}&\Or V\ar[d]_{f^*}\\
K\ar[r]^{\subseteq}&H\ar[r]^{\subseteq}&G&\catL(W,V)\\
Q\ar[rr]^{\subseteq}\ar[u]^\cong&&G \times \Or W\ar[u]_{{\proj}_1}\ar[r]^{{\proj}_2}&\Or W\ar[u]^{f_*}}\]
commutes (start in $K$ and end up in $\catL(W,V)$).
\end{dfn}
\begin{remark}
  This is an instance where classical terminology is a bit easier to digest.  If we think of the $P$ and $Q$ above as representations $\phi\colon H\to \Or V$ and $\psi\colon K\to\Or W$, we see that $(W,\psi)$ is a ``retract'' of $(V,\phi)$ if and only if $K$ is a subgroup of $H$ and there is a $K$-linear isometry $f\colon W\to V$, with $K$ acting on $V$ by restricting $\phi$ to $K$, \ie the diagram
  $$\xymatrix{H\ar[r]^\phi&\Or V\ar[d]_{f^*}\\
    &\catL(W,V)\\
    K\ar[uu]^{\subseteq}\ar[r]^\psi&\Or W\ar[u]^{f_*}
  }$$
  commutes.
\end{remark}


In the next example, if $z$ is a real or complex number of length $1$ we write $z\cdot$ for the element in $\Or\R$ or $\Or\C$ given by multiplication by $z$
\begin{ex}
  Let $G=\T^1$ and $Q=\{(1,1\cdot),(-1,-1\cdot)\}\subseteq \T^1\times \Or {\R}$, corresponding to the sign representation $C_2\to\Or\R$. The fact that the sign representation on $\R$ extends to a unitary representation of $\T^1$ on $\C$ shows that $Q$ is a retract of an element in $\Qll^{\C}$.  In more detail, let $f\colon\R\subseteq\C$ be the standard inclusion and
  $P=\{(z,z\cdot)\mid z\in \T^1\}\subseteq G\times\Or\C$.  Then
\[\xymatrix{&P\ar[r]^{\subseteq}\ar[d]^\cong&G \times \Or\C\ar[d]\ar[r]&\Or \C\ar[d]_{f^*}\\
C_2\ar[r]^{\subseteq}&G\ar[r]^{=}&G&\catL(\R,\C)\\
Q\ar[rr]^{\subseteq}\ar[u]^\cong&&G \times \Or\R\ar[u]\ar[r]&\Or \R\ar[u]^{f_*}}\]
commutes ($\pm1\in C_2$ is sent to $\pm f$ either way), showing that $Q$ is a retract of $P\in\Qll^\C$.  

On the other hand, since for any $g\in \catL(\R,V)$ the homomorphism $\Or g\colon \Or\R\to\Or V$ is an isomorphism of path components, 
we get that 
\[\xymatrix{&P\ar[r]^{\subseteq}\ar[d]^\cong&G \times \Or V\ar[d]\ar[r]&\Or V\\
K\ar[r]^{\subseteq}&H\ar[r]^{\subseteq}&G&\\
 Q\ar[rr]^{\subseteq}\ar[u]^\cong&&G \times \Or \R\ar[u]\ar[r]&\Or \R\ar[uu]^{\Or g}}\]
commutes for no connected $P$.
\end{ex}

\begin{dfn}\label{saturatedgtypicalrepresentations}
  We say that \(G\)-typical family of
    representations \(\hml\) is {\em {closed under retracts}}\index{closed under retracts} if it has the
  property that if $P\in\hml^V$ and $Q\in\AI^W$ is a retract of $P$ (for some inner product spaces $V$ and $W$), then $Q$ is in $\hml^W$. 
\end{dfn}
Obviously,  $\AI$ is closed under retracts.

\begin{dfn}\label{def:closureoffamily}
  Let $\hml$ be a $G$-typical family of representations.  
  The {\em closure}\index{closure!of typical families} \(\overline \hml\)  of \(\hml\) is the subset of $\AI$ consisting of retracts of elements in $\hml$.
\end{dfn}

\begin{lemma}
  The closure $\overline\hml$ of a $G$-typical family of representations $\hml$ is the smallest closed  $G$-typical family of representations containing $\hml$.
\end{lemma}
\begin{proof}
  Since retracts of retracts are retracts, we only need to show that $\overline\hml$ is a $G$-typical family of representations.   In Definition~\ref{gtypicalrepresentations}, as applied to $\overline\hml$, condition (i)-(iii) are immediate.  As to condition (iv), assume (the graph of) the representation $\phi'\colon H'\to \Or{V'}$ is a retract of an element in $\hml$, \ie assume there is an isometry $f\colon V'\to V$ and a representation $\phi\colon H\to\Or V$ (whose graph is) in $\hml^V$ such that $H'\subseteq H$ and such that $f$ is $H'$-linear.  Then, by condition~(iv) for $\hml$ there is a representation $\psi\colon H\to\Or W$ and a representation $\alpha\colon G\to \Or{V\oplus W}$ in $\hml^{V\oplus W}\subseteq\overline\hml^{V\oplus W}$ so that $\alpha|_H=\phi\oplus\psi$.  Since $f$ is $H'$-linear we thus get that $f$ composed with the inclusion $V\subseteq V\oplus W$ is $H'$-linear.
\end{proof}

\begin{lemma}\label{lem:closedretracthmlisbig}
  If a \(G\)-typical family of representations \(\hml\) is closed under retracts, then $\hml^0$ consists of all subgroups of $G\times\Or 0$.
\end{lemma}
\begin{proof}
  By Definition~\ref{gtypicalrepresentations} (iii), there is a Euclidean space $U$ such that $G\times\{\id_U\}\in\hml^U$.  The zero representation is a retract of $U$, and hence $G\times\{\id_0\}$ (and all its subgroups) is in $\hml^0$.
\end{proof}

\begin{example}\label{ex:familiesofGreps}
  \begin{enumerate}
  \item For an inner product space $V$, recall from  Definition~\ref{dfn:AI} the ``All Injective'' family $\AI^V$ consisting of all subgroups \(P\) of \(G \times \Or V\) with \(\proj_1 \colon P \to G\) injective.  Letting $V$ vary, the $\AI^V$ assemble to the maximal \(G\)-typical family 
$\AI=\{\AI^V\}_V$\index{AI@$\AI$}.

Every representation of any subgroup of \(G\) is in this family. This \(G\)-typical family of representations is {closed under retracts}.
\item The {\em positive} part $\AIplus$\index{AIplus@$\AIplus$} of $\AI$ has $\AIplusV 0=\emptyset$ and $\AIplusV V=\AI^V$ for $V\neq 0$.  

The positive part is not closed under retracts, but is the family we will use in conjunction with the ``convenient'' $\Sp$-model structure on commutative orthogonal ring spectra.
\item 
  Let $V$ be an inner product space.  Recall from Definition~\ref{dfn:Qll} that an element in $\Qll^V$ is a subgroup  $Q\subseteq G\times \Or V$ such that there is a subgroup $P\subseteq G\times \Or V$ with $Q\subseteq P$ and such that the projection $P\subseteq G\times \Or V\to G$ is an isomorphism.

In other words, the family $\Qll^V$ consists of the $P$ in $\AI^V$ such that $P$ is the graph $\Gamma_{\varphi|_H}$ of the restriction of a homomorphism $\varphi\colon G\to\Or V$ to a closed subgroup $H\subseteq G$.

Note that $\Qll$ generally is smaller than $\AI$: If for instance $G=\Z/4\Z$ and $V=\R$ (so that $\Or V=\{\pm1\}$), we can let $P=\{(0,1),(2,-1)\}\in\AI^V$ be the graph of the nontrivial action of $2\Z/4\Z$ on $\R$ which can not be extended to $\Z/4\Z$.

This is the family which gives rise to the model structures that typically are called ``$q$-model structures'' ($q$ for Quillen). The content of Lemma~\ref{lem:twistandshout} is that the cofibrations we can build out of free spectra of the form $\Fr{}V(G/H_+)$ (with $V$ the representation corresponding to $\varphi$) correspond exactly to the ones we build out of semi-free spectra indexed over trivial representations of $(G\times\catO_{\mathbf R^n})/P$ for $P\in\Qll^{\mathbf R^n}$.
\item The {\em positive} part $\Qll_+$\index{qplus@$\Qll_+$} of $\Qll$ consists of the graphs $\Gamma_{\varphi|_H}\in\Qll^V$ such that there is a $0\neq v\in V$ with  $v=\varphi(g)v$  for all $g\in G$.

Again, the contents of Lemma~\ref{lem:twistandshout} is -- in a language to be established -- that the ``positive $q$-cofibrations are $\Sp$-cofibration''.  The conditions for the positive part $\Qll_+$ are chosen for compatibility with the classical setup.  Note that for $G=\T^1$ and $P$ the graph of the rotation action of $\T^1$ on $\C$, we have that $\C^{\T^1}=0$, so that $P$ is in $\Qll\cap\AIplus$, but not in $\Qll_+$.
\item
    The argument we gave for $\AI$ also shows that there is a
    minimal \(G\)-typical family of representations which is closed
    under retracts consisting of all subgroups of \(G \cong G \times
    \{\id\}\) considered as subgroups of \(G \times \Or V\). The
    representations in this \(G\)-typical family of representations
    are the trivial representations. 
  \end{enumerate}
\end{example}

Crucial to our applications, by \cite[III.4.5]{BtD85} we have that $\AI$ is the closure of the family $\Qll$:
\begin{lemma}
  Let $G$ be a compact Lie group and $Q\in\AI^W$.  Then $Q$ is an retract of an element of $\Qll$.  Hence, $\overline{\Qll}=\AI$. 
\end{lemma}

\section{Level Model Structures}\label{sec:levelmodel}
In this section \(\hml\) is a fixed but arbitrary \(G\)-typical family
of representations. Given \(V\) in \(\catOr\), we consider the
\(\hml^V\)-model structure (as in Definition~\ref{def:amlmodel}
on \((G \times \Or V) \catT\).
\begin{dfn}\label{hmlmonoidal}
An {\em \(\hml\)-model structure}\index{H model structure@$\hml$-model structure}\index{model structure!$\hml$-} \(\catM\)
consists of 
a sequence \(\catM = (\catM_V)_{V \in \catLr}\) of cofibrantly generated model structures, where each \(\catM_V\) is a \(G\catT\)-model structure on the category \((G \times \Or V) \catT\), considering \((G \times \Or V) \catT\) as enriched and tensored over \(G\catT\) via the isomorphism \((G \times \Or V) \catT \cong \Or V(G \catT)\) together with choices $I_V$ and $J_V$ of sets of generating cofibrations and acyclic cofibrations of $\catM_V$. This data
  is required to satisfy the following four conditions:
  \begin{enumerate}
  \item All cofibrations in \(\catM_V\) are Hurewicz cofibrations (\ref{dfn:h-cof}).
  \item Given Euclidean spaces \(V\) and \(W\) and a generating acyclic cofibration \(j \in J_V\) the map \((\Gr{}{V}j)_W\) is a weak equivalence in \(\catM_W\).
  \item The class of weak equivalences in \(\catM_V\) is the class of \(\hml^V\)-equivalences.
  \item For every \(V \in \catOr\), the class of cofibrations in the 
    \(\hml^V\)-model structure on \((G \times \Or V) \catT\)
    is contained in the class of cofibrations in \(\catM_V\).
  \end{enumerate}
\end{dfn}
In the above definition condition (i) is of a technical kind
used to ensure that homotopy groups have associated cofibration
sequences. Condition (ii) is needed to obtain a level
\(\catM\)-model structure on the category of orthogonal spectra.
Condition (iv) is used to identify the stably fibrant
orthogonal spectra as a kind of \(\Omega\)-spectra. Together with
the other conditions, condition
(iii) determines the stable \(\catM\)-model structure on
orthogonal spectra. Condition (iv) implies that every fibration
\(f\) in
\(\catM_V\) is an \(\hml^V\)-fibration in the sense that \(f^P\)
is a fibration for every \(P\) in \(\hml\).

Below we use the pushout product \(\square\) from Definition~\ref{pushoutproduct}.
    
\begin{dfn}\label{generalizedpushoutproductaxiom}
  Let \(\catM\) be an \(\hml\)-model structure. With \(I_V\) the set of generating cofibrations for \(\catM_V\), we say that \(\catM\) {\em satisfies the pushout product axiom}\index{pushout product!axiom} if it is so that given \(i \in I_V\) and \(j \in I_W\), the map \({\Or {V \oplus W}}_+ \wedge_{\Or V \times \Or W} i \square j\) is a cofibration in \(\catM_{V \oplus W}\), and if in addition one of the former maps is a weak equivalence, so is the latter. 
\end{dfn}

\begin{dfn}\label{classdfnlevel}
Let \(\catM\) be an \(\hml\)-model structure and let $f\colon{} X \rightarrow Y$ be a morphism of orthogonal $G$-spectra.
\begin{indentpar}{0cm}
\begin{enumerate}
\item $f$ is a {\em level \(\catM\)-equivalence}\index{level!\(\catM\)-equivalence, fibration, cofibration} if $f_V$ is an \(\catM_V\)-equivalence for all \(V\) in \(\catLr\).
\item $f$ is a \emph{level \(\catM\)-fibration} if  $f_V$ is a fibration in \(\catM_V\) for all $V$ in \(\catLr\).
\item $f$ is an \emph{\(\catM\)-cofibration} if it satisfies the left lifting property with respect to all maps that are both level \(\catM\)-equivalences and level \(\catM\)-fibrations.
\end{enumerate}
\end{indentpar}
\end{dfn}
\begin{dfn}\label{generatorsforMorthogonalspectra}
  Given an \(\hml\)-model structure \(\catM\), we write 
  $$\Gr{}{}I_{\catM} = \bigcup_V \Gr{}{}I_V\index{GIM@$\Gr{}{}I_{\catM}$}
  \quad\text{and}\quad
    \Gr{}{}J_{\catM} = \bigcup_V \Gr{}{}J_V,\index{GJM@$\Gr{}{}J_{\catM}$}
  $$
  where $\Gr{}{}I_{V}$\index{GIV@$\Gr{}{}I_{V}$} is the set of all maps of the form $\Gr{}{V}i$ for $i$ in the set $I_{V}$ of generating cofibrations for \(\catM_V\) and similarly for $J_V$ the set of generating acyclic cofibrations for \(\catM_V\).\index{GJV@$\Gr{}{}J_{V}$}
\end{dfn}
\begin{thm}\label{catMlevel}
If \(\catM\) is an \(\hml\)-model structure, then the \(\catM\)-cofibrations, level \(\catM\)-equivalences and level \(\catM\)-fibrations give a left proper cofibrantly generated \(G\)-topological  model structure on the category $\catGOS$ of orthogonal \(G\)-spectra. The set \(\Gr{}{} I_{\catM}\) is a set of generating cofibrations, and the set \(\Gr{}{} J_{\catM}\) is a set of generating acyclic cofibrations for this model structure.
\end{thm}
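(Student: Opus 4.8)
We obtain the level $\catM$-model structure by transfer (Kan's recognition theorem, \cite[Theorem 11.3.2]{Hir}) along the adjunction $\Gr{}{} \dashv \ev'$ between $\catGOS$ and the product category $\prod_{V \in \catLr}(G \times \Or V)\catT$, where $\ev'$ sends an orthogonal $G$-spectrum $X$ to the sequence $(X_V)_V$ and its left adjoint is $\Gr{}{}Y \cong \coprod_V \Gr{}{V}Y_V$, as recalled before \ref{coequalizeroffree}. On the product category we put the product model structure whose $V$-th factor is $\catM_V$; it is cofibrantly generated, with generating cofibrations the maps $i \in I_V$ placed in the $V$-th coordinate (identities elsewhere) and generating acyclic cofibrations the maps $j \in J_V$ placed likewise. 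Applying $\Gr{}{}$ takes these sets to $\Gr{}{}I_{\catM}$ and $\Gr{}{}J_{\catM}$ of \ref{generatorsforMorthogonalspectra}. To run the transfer we must verify two conditions: that $\Gr{}{}I_{\catM}$ and $\Gr{}{}J_{\catM}$ permit the small object argument, and that every relative $\Gr{}{}J_{\catM}$-cell complex is a level $\catM$-equivalence.

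Smallness is routine, since colimits in $\catGOS$ are computed levelwise, $\ev_W$ preserves filtered colimits, and the domains of the maps in $I_V$, $J_V$ are small; moreover, by condition (i) of \ref{hmlmonoidal} and the fact that $\catOr(V,W)\wedge_{\Or V}(-)$ preserves Hurewicz cofibrations, relative $\Gr{}{}I_{\catM}$- and $\Gr{}{}J_{\catM}$-cell complexes are level Hurewicz cofibrations. The substantive point is acyclicity. So let $X_0 \to X_\infty$ be a relative $\Gr{}{}J_{\catM}$-cell complex, that is, a transfinite composite of pushouts of coproducts of maps $\Gr{}{V}j$ with $j \in J_V$, and fix $W \in \catLr$. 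Since $\ev_W$ preserves all these colimits, $(X_0)_W \to (X_\infty)_W$ is the corresponding transfinite composite of pushouts of coproducts of the maps $(\Gr{}{V}j)_W \cong \catOr(V,W)\wedge_{\Or V} j$. Each such map is an $\hml^W$-equivalence by condition (ii) of \ref{hmlmonoidal}; it is moreover a Hurewicz cofibration of well-based $G$-spaces, since $j$ is one by condition (i) and $\catOr(V,W)\wedge_{\Or V}(-)$ preserves Hurewicz cofibrations and well-basedness ($\catOr(V,W)$ being $\Or V$-free away from the basepoint). By Lemma \ref{propsfml}(i)--(ii) it follows that $(X_0)_W \to (X_\infty)_W$ is an $\hml^W$-equivalence, which by condition (iii) is precisely an $\catM_W$-equivalence. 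As $W$ was arbitrary, $X_0 \to X_\infty$ is a level $\catM$-equivalence, as required.

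The transfer theorem now produces a cofibrantly generated model structure on $\catGOS$ with $\Gr{}{}I_{\catM}$ a set of generating cofibrations, $\Gr{}{}J_{\catM}$ a set of generating acyclic cofibrations, the level $\catM$-equivalences as weak equivalences, the level $\catM$-fibrations as fibrations, and as cofibrations exactly the maps with the left lifting property against level acyclic fibrations, that is, the $\catM$-cofibrations of \ref{classdfnlevel}. For left properness, observe that an $\catM$-cofibration is a retract of a $\Gr{}{}I_{\catM}$-cell complex, hence a level Hurewicz cofibration (condition (i) and the behaviour of $\catOr(V,W)\wedge_{\Or V}(-)$ again); a pushout of a level $\catM$-equivalence along it becomes, after evaluation at each $W$, a pushout of an $\hml^W$-equivalence along a Hurewicz cofibration, hence again an $\hml^W$-equivalence by the gluing Lemma \ref{propsfml}(ii), so the pushout is a level $\catM$-equivalence. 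Finally, for the $G$-topological ($G\catT$-enriched) structure: since each $\catM_V$ is a $G\catT$-model structure and the tensoring of $\catGOS$ over $G\catT$ is computed levelwise, the pushout--product axiom need only be checked on generators, where, as $\Gr{}{V}$ is a $G\catT$-functor, $(\Gr{}{V}i)\Box k \cong \Gr{}{V}(i\Box k)$ for $k$ an (acyclic) cofibration of $G\catT$, and $i\Box k$ is an (acyclic) cofibration in $\catM_V$. The main obstacle is the acyclicity step, and inside it the check that the maps $(\Gr{}{V}j)_W$ are Hurewicz cofibrations of well-based $G$-spaces so that Lemma \ref{propsfml} applies; the rest is bookkeeping with the transfer machinery.
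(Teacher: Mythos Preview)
Your proof is correct and follows essentially the same route as the paper's. The paper packages the transfer argument into its Assembling Theorem~\ref{puzzling} and Proposition~\ref{assemblenriched}, which are precisely the statements that a family $\{\catM_d\}$ of cofibrantly generated model structures on the endomorphism categories $[\End_d,\catC]$ assembles to a model structure on $[\catD,\catC]$ with generators $\Gr{}{}I$, $\Gr{}{}J$, provided $\Gr{}{}J\text{-cell}\subset\catW$; you instead run the transfer by hand along $\Gr{}{}\dashv\ev'$. The substantive verification is the same in both: condition~(ii) of \ref{hmlmonoidal} gives that each $(\Gr{}{V}j)_W$ is an $\hml^W$-equivalence, and condition~(i) together with the closure of $h$-cofibrations under $\catOr(V,W)\wedge_{\Or V}(-)$ lets one pass to cell complexes; left properness and the enriched structure come from (i), (iii), and the levelwise nature of tensors exactly as you say.

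One small remark on the citation in your acyclicity step: Lemma~\ref{propsfml}(ii) is the cobase change lemma (weak equivalence on one leg, $h$-cofibration on the other), which is not literally the shape you need when attaching a cell $(\Gr{}{V}j)_W$ that is itself both an $h$-cofibration and an $\hml^W$-equivalence along an arbitrary attaching map. The clean way to get ``cobase change of an acyclic $h$-cofibration along any map is a weak equivalence'' is either axiom~(iii) of a category of cofibrant objects (implicit in \ref{rem_cofcat} and \ref{propsfml}), or the cube lemma \ref{propsfml}(iii) applied to the comparison of the given pushout with the trivial pushout along $\id_S$. The paper's own proof is silent on this closure step, so this is not a divergence from the paper but a place where both arguments lean on the same standard fact.
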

We postpone the proof till the end of the section.

\begin{dfn}
  The model structure of Theorem~\ref{catMlevel} is the {\em level \(\catM\)-model structure}\index{level! \(\catM\)-model structure}\index{model structure!level \(\catM\)-} on \(\catGOS\).
\end{dfn}

\begin{thm}
  If an \(\hml\)-model structure \(\catM\) satisfies the
  pushout product axiom, and \(\hml^0\) contains all subgroups of
  \(G \times \Or 0\), then  the  level \(\catM\)-model structure on \(\catGOS\) is monoidal.
\end{thm}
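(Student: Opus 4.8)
The plan is to reduce the monoidality of the level $\catM$-model structure to the pushout-product axiom for the generating (acyclic) cofibrations, which is Definition \ref{generalizedpushoutproductaxiom}, together with the explicit description of the smash product of semi-free spectra in Proposition \ref{semi-free-smash}. Recall that a cofibrantly generated model structure on a closed symmetric monoidal category is monoidal precisely when (a) the unit axiom holds and (b) the pushout-product of two generating cofibrations is a cofibration, and the pushout-product of a generating cofibration with a generating acyclic cofibration is an acyclic cofibration. By Theorem \ref{catMlevel} the generating cofibrations for the level $\catM$-model structure are the maps $\Gr{}{V}i$ with $i \in I_V$, and the generating acyclic cofibrations are the $\Gr{}{V}j$ with $j \in J_V$, ranging over all Euclidean spaces $V$.

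First I would compute the pushout-product $\Gr{}{V}i \square \Gr{}{W}k$ for $i \in I_V$ and $k \in I_W$. Since $\Gr{}{V}$ is a left adjoint it preserves colimits, so by Proposition \ref{semi-free-smash} (in its $G$-equivariant form, valid by the remark following Proposition \ref{semi-free-smash-with-gen} that these isomorphisms promote to semi-free orthogonal $G$-spectra) we have a natural isomorphism
\[
\Gr{}{V}A \wedge \Gr{}{W}B \cong \Gr{}{V \oplus W}\bigl({\Or{V \oplus W}}_+ \wedge_{\Or V \times \Or W} A \wedge B\bigr).
\]
Because $\Gr{}{V \oplus W}$ preserves pushouts, the pushout-product $\Gr{}{V}i \square \Gr{}{W}k$ is isomorphic to $\Gr{}{V \oplus W}$ applied to the map ${\Or{V \oplus W}}_+ \wedge_{\Or V \times \Or W}(i \square k)$, where the inner pushout-product is formed in $(G \times \Or V)\catT \wedge (G \times \Or W)\catT$ mapped into $(G \times \Or{V \oplus W})\catT$ in the evident way. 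By the pushout-product axiom for $\catM$ (Definition \ref{generalizedpushoutproductaxiom}), this map ${\Or{V \oplus W}}_+ \wedge_{\Or V \times \Or W}(i \square k)$ is a cofibration in $\catM_{V \oplus W}$, hence lies in $I_{V \oplus W}\cof$; applying the left Quillen functor $\Gr{}{V \oplus W}$ (left adjoint to evaluation) sends it into $\Gr{}{} I_{\catM}\cof$, the class of $\catM$-cofibrations. If in addition $i$ or $k$ is a weak equivalence, the pushout-product axiom tells us ${\Or{V \oplus W}}_+ \wedge_{\Or V \times \Or W}(i \square k)$ is an acyclic cofibration in $\catM_{V \oplus W}$; but I should not conclude directly from level acyclicity — instead I use that $\Gr{}{V \oplus W}$ of an acyclic cofibration in $\catM_{V \oplus W}$ is again a stable-free generating acyclic cofibration, so it is a level acyclic $\catM$-cofibration, using part (ii) of Definition \ref{hmlmonoidal} to see that its value at every level $U$ is a weak equivalence in $\catM_U$.

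Next comes the unit axiom: the unit of the smash product is the sphere spectrum $\Sp = \Gr{}{0}S^0 = \catOr(0,-)$, and since by hypothesis $\hml^0$ contains all subgroups of $G \times \Or 0 = G$, the model structure $\catM_0$ on $(G \times \Or 0)\catT = G\catT$ is the genuine model structure, in which $S^0$ is already cofibrant; hence $\Sp$ is cofibrant in the level $\catM$-model structure and smashing with $\Sp$ is the identity, so the unit axiom holds trivially. The main obstacle is bookkeeping: one must check that the isomorphism of Proposition \ref{semi-free-smash} is genuinely compatible with the $G$-actions involved (the conjugation action on $\catOr(V,W)$ versus the permutation/diagonal actions), and that the pushout-product of generators as formed in $\catGOS$ matches, under that isomorphism, the image under $\Gr{}{V\oplus W}$ of the pushout-product of generators as formed in the category $\Or V(G\catT)$-side — that is, that ``$\Gr{}{}$ of the inner pushout-product'' equals ``the pushout-product of $\Gr{}{}$'s''. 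This is a formal consequence of $\Gr{}{V\oplus W}$ commuting with the relevant pushouts and the naturality of the isomorphism in Proposition \ref{semi-free-smash}, but it is where care is needed. Finally, to go from generators to all (acyclic) cofibrations I invoke the standard closure argument (\eg \cite[Corollary 4.2.5]{H}): the pushout-product of an $I$-cofibration with a $J$-cofibration is an $(I\square J)$-cofibration, so once the statement holds on generators it holds on all cofibrations.
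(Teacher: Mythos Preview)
Your approach is essentially the same as the paper's: both reduce the pushout-product axiom on $\catGOS$ to the natural isomorphism
\[
\Gr{}{V}i \,\square\, \Gr{}{W}j \;\cong\; \Gr{}{V\oplus W}\bigl({\Or{V\oplus W}}_+ \wedge_{\Or V\times\Or W} i\,\square\, j\bigr)
\]
coming from Proposition~\ref{semi-free-smash}, and then invoke Definition~\ref{generalizedpushoutproductaxiom}; both handle the unit axiom by observing that the hypothesis on $\hml^0$ forces $S^0$ to be cofibrant in $\catM_0$, so $\Sp = \Gr{}{0}S^0$ is cofibrant. One small imprecision: you assert that $\catM_0$ \emph{is} the genuine model structure on $G\catT$, but the definition only guarantees (via part~(iv) of Definition~\ref{hmlmonoidal}) that the genuine cofibrations are \emph{contained in} the $\catM_0$-cofibrations --- which is all you need for $S^0$ to be cofibrant.
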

\begin{proof}
  Since \(\hml^0\) contains all subgroups of
  \(G \times \Or 0\) and the zero sphere \(S^0\) is cofibrant in
  \(\catM_0\), the unit \(\Sp = \Gr{} 0 S^0\) for the monoidal product
  is cofibrant. Thus we only need to verify the pushout product axiom.
  It is a direct consequence of the natural
  isomorphism \[\Gr{}{V}i \square \Gr{}{W}j = \Gr{}{V \oplus W} ({\Or
    {V \oplus W}}_+ \wedge_{\Or V \times  \Or W} i \square j)\]
  and Definition~\ref{generalizedpushoutproductaxiom}.
\end{proof}

We also will need the following result.
\begin{lemma}
Any orthogonal $G$-spectrum is small with respect to levelwise inclusions, in particular with respect to all Hurewicz-cofibrations or levelwise Hurewicz-cofibrations.
\end{lemma}
\begin{proof} 
Let $X$ be an orthogonal spectrum, then by \cite[2.4.1]{H} for
each $n$, there exists a cardinal $\kappa_n$ such that both
$X_n$ and all $\catO(n,m)\smash X$ are $\kappa_n$-small with respect
to inclusions.
Let $\kappa$ be a cardinal with $\kappa > \kappa_n$ for $n \geq 0$ and
$\kappa > \aleph_0$. 
We claim that $X$ is $\kappa$-small. So let \[Y^0 \rightarrow Y^1 \rightarrow \ldots \rightarrow Y^\beta \rightarrow \ldots\] be a $\lambda$-sequence of levelwise inclusions for some $\kappa$-filtered ordinal $\lambda$. In particular $\lambda$ is $\kappa_n$-filtered for $n \geq 0$. We want to show that the map
\[\colim_{\beta < \lambda}\catOS(X,Y^\beta) \rightarrow \catOS(X,\colim_{\beta<\lambda} Y^\beta)\] is an isomorphism. For injectivity consider two elements on the left represented by $f\colon{} X \rightarrow Y^\alpha$ and $g: X \rightarrow Y^\beta$ that are mapped to the same morphism $h: X \rightarrow Y$ on the right side, where $Y$ denotes the colimit. That implies that for each $n$, $f_n$ and $g_n$ induce the same map $X_n \rightarrow Y_n$, which implies that there is a $\gamma_n<\kappa_n$ such that the composites $X_n \stackrel{f}{\rightarrow} Y^\alpha_n \rightarrow Y^{\gamma_n}_n$ and $X_n \stackrel{g}{\rightarrow} Y^\beta_n \rightarrow Y^{\gamma_n}_n$ are equal. Hence for $\gamma \defas{} \sup \gamma_n < \lambda$, $f$ and $g$ already induced the same element $X \rightarrow Y^\gamma$ as desired. For surjectivity let $f\colon{} X \rightarrow Y$ be a map to the colimit. As before we get an ordinal $\gamma$, such that all the maps $f_n: X_n \rightarrow Y_n$ and $f_m \circ \sigma, \sigma \circ f_n : \catO(n,m)\smash X_n \rightarrow Y_m$ factor through $Y^\gamma$. Then for each pair $(n,m)$, there is a cardinal $\gamma < \delta_{n,m}< \lambda$ such that $\sigma \circ f_n$ and $f_m \circ \sigma$ are factor through the same map $f_{n,m}: \catO(n,m)\smash X_n \rightarrow Y^{\delta_{n,m}}_m$ by the argument for injectivity discussed before. Hence there is a factorization of maps of spectra $X\rightarrow Y^{\delta} \rightarrow Y$ for $\delta \defas{} \sup \delta_{n,m} < \lambda$ as desired.
\end{proof}
\begin{rem}\label{hcofsmalrem}
Hence any set $I$ of maps in $G\catOS$ that consists of cofibrations has the property that domains of maps in $I$ are small with respect to $I$-cell. The same holds for $\catA$ a category with a faithful forgetful functor to $G\catOS$ and $I$ a set of maps in $\catA$ satisfying the cofibration hypothesis~\ref{cofhyp}.
\end{rem}
{}

\begin{proof}[Proof (of Theorem~\ref{catMlevel}).]
We use the Assembling Theorem~\ref{puzzling}. Proposition~\ref{assemblenriched} shows that the resulting structure is $G$-topological. 
To apply Theorem~\ref{puzzling} we need to know that the maps in $\Gr{}{}J_{V}$ are actually level equivalences. However, if \(j \in J_V\), then by part (ii) of Definition~\ref{hmlmonoidal} \((\Gr{}{V}j)_W\) is a weak equivalence in \(\catM_W\).

In order to see that the level \(\catM\)-model structure on \(\catGOS\) is left proper we note that (i) of Definition~\ref{hmlmonoidal} implies that if \(f \colon A \to X\) is a level \(\catM\)-cofibration in \(\catGOS\), then \(f_V \colon A_V \to X_V\) is a Hurewicz cofibration for all \(V\). Now left properness is a consequence of part (iii) of Definition~\ref{hmlmonoidal}.
\end{proof}

\section{Mixing Pairs}
\label{sec:mixparis}

We now introduce the concept of a {\em mixing pair},
where we allow the family $\gml$ controlling the cofibrations to be different from a possibly smaller family $\hml$ controlling the weak equivalences (leaving to the fibrations to deal with the inevitable family conflict).  This will turn out to be advantageous at a certain point in our discussion of fixed points under different groups, because it will allow us to keep the old cofibrations even if we have to change the family controlling the weak equivalences.

\begin{dfn}\label{mixing}
  A {\em \(G\)-mixing pair}\index{Gmixing pair@$G$-mixing pair} \((\hml,\gml)\) consists of
  \begin{itemize}
  \item a sequence
  \(\gml = (\gml^V)_{V \in \catLr}\) of families \(\gml^V\) of subgroups of \(G \times \Or V\) and
\item a \(G\)-typical family of representations (c.f.~Definition~\ref{gtypicalrepresentations}) \(\hml\)
  \end{itemize}
 satisfying the following conditions
  for all Euclidean spaces \(V\) and \(W\):
  \begin{enumerate}
  \item \(\hml^V\subseteq\gml^V\)
  \item If $P\in\AI^V$ and $Q\in\AI^W$ with $\proj_1P=\proj_1Q$ is such that the sum (as in Definition~\ref{dfn:sumofrepns}) $P\oplus Q$ is in $\hml^{V\oplus W}$, then $P\in\hml^V$ or $P\notin\gml^V$.
  \item \(\gml\) is closed under sum in the sense that if \(P \in \gml^V\) and \(Q \in \gml^W\), then the isotropy groups of elements of the \(G \times \Or{V\oplus W}\)-space  
\[{\Or {V \oplus W}} \times _{\Or V \times \Or W} ((G \times \Or V)/P \times (G \times \Or W)/ Q)\]
  are in \(\gml^{V \oplus W}\). Here the group \(G\) acts on \((G \times \Or V)/P \times (G \times \Or W)/ Q\) through the diagonal embedding \(G \to G \times G\).
  \end{enumerate}
\end{dfn}
Note that condition~(ii) is \emph{some} sort of ``closure under summands'' in that it says that summands of things in $\hml$ are either in $\hml$ -- or they are not even in $\gml$.  This condition is crucial in Lemma~\ref{mixedisorcat} which provides us with what will be called the level mixed $(\hml,\gml)$-model structure.  We know of no instances where the simpler (but more restrictive) demand \(\overline \hml^V \cap \gml^V=\hml^V\) could not be used.
\begin{example}
  For any $G$-typical family \(\hml\) we get a \(G\)-mixing pair \((\overline\hml,\All)\), where $\overline\hml$ is the closure of $\hml$ as in Definition~\ref{def:closureoffamily}.
\end{example}

\begin{example}\label{ex:qmix}
  For every \(G\)-typical family \(\hml\) of representations satisfying the condition~\ref{mixing} (iii), \((\hml,\hml)\) is a \(G\)-mixing pair. 
\end{example}

It is a consequence of (iii) that \(\gml\) is closed under direct sum, in
the sense that if \(P\) and \(Q\) are in \(\gml\), then so is \(P
\oplus Q\) (Definition~\ref{dfn:sumofrepns}).  Concretely, the condition is perhaps best understood by seeing it in explicit examples as in the proof of the following lemma.

\begin{lemma}
  All the families $\AI$, $\AI_+$, $\Qll$ and $\Qll_+$ of Example~\ref{ex:familiesofGreps} satisfy condition~\ref{mixing} (iii) and so give rise to $G$-mixing pairs of the form $(\hml,\hml)$.
\end{lemma}
\begin{proof}
  We start with the family $\AI$.
  Let $P\in\AI^V$ and $Q\in\AI^W$.  Every element of the \(G \times \Or{V\oplus W}\)-space  
\[\Or {V \oplus W} \times _{\Or V \times \Or W} ((G \times \Or V)/P \times (G \times \Or W)/ Q)\]
is represented by an element of the form \(x =
(\alpha,[g_V,I_V],[g_W,I_W])\) for some \(g_V,g_W \in G\) and \(\alpha
\in \Or {V \oplus W}\). If \((g,\beta)\) is in the isotropy group \((G
\times \Or {V \oplus W})_x\) of \(x\) for some \(g \in G\) and \(\beta
\in \Or {V \oplus W}\), then \(\beta\) is of the form \(\beta =
\alpha(\beta_V \oplus \beta_W)\alpha^{-1}\), and \((g_V^{-1}gg_V, \beta_V) \in P\) and \((g_W^{-1}gg_W,
\beta_W) \in Q\). 

If $(g,\beta)$ lies in the kernel of the projection $\proj_1 \colon    (G \times \Or {V \oplus W})_x \to G$, this means that $g=1$, and so $(1,\beta_V)\in P$ and $(1,\beta_W)\in Q$ which, since $P,Q\in\AI$, means that $\beta_V=I_V$ and $\beta_W=I_W$ and so $\beta=I_{V\oplus W}$.  Consequently  \(\proj_1 \colon    (G \times \Or {V \oplus W})_x \to G\)
is injective and $(G \times \Or {V \oplus W})_x\in\AI^{V\oplus W}$.

For the family $\Qll$ we must do some extra work.
If also $\proj_1\colon P\to G$ and $\proj_1\colon Q\to G$ are isomorphisms, we get that given $g\in G$ there are unique $\beta_V$ and $\beta_W$ such that $(g,\alpha(\beta_V\oplus\beta_W)\alpha^{-1})\in(G\times\Or{V\oplus W})_x$, and so \(\proj_1 \colon    (G \times \Or {V \oplus W})_x \to G\) is an isomorphism.  If $P'\subseteq P$ and $Q'\subseteq Q$ then the isotropy group of an element $\Or {V \oplus W} \times _{\Or V \times \Or W} ((G \times \Or V)/P' \times (G \times \Or W)/ Q')$ of the $G\times\Or{V\oplus W}$-action injects into the isotropy subgroups of the corresponding element of $\Or {V \oplus W} \times _{\Or V \times \Or W} ((G \times \Or V)/P \times (G \times \Or W)/ Q)$.  Hence, if $P'\in\Qll^V$ and $Q'\in\Qll^W$, then the isotropy subgroups of the   $G\times\Or{V\oplus W}$-action on $\Or {V \oplus W} \times _{\Or V \times \Or W} ((G \times \Or V)/P' \times (G \times \Or W)/ Q')$ are in $\Qll^{V\oplus W}$.

The proofs for $\AI_+$ and $\Qll_+$ are similar.
\end{proof}

Given a \(G\)-typical family of representations \(\hml\) and a sequence \(\gml = (\gml^V)_{V \in \catLr}\) of families of subgroups of \(G \times \Or V\) with \(\hml^V\) contained in \(\gml^V\) for all \(V \in \catLr\), we will consider the \((\hml^V,\gml^V)\)-model structure on \((G \times \Or V)\catT\) from Theorem~\ref{GFmixedmodelstr}.  
Explicitly, Theorem~\ref{GFmixedmodelstr} gives that for a \(G\)-mixing pair
\((\hml,\gml)\), the \((\hml^V,\gml^V)\)-model structure on \(G \times\Or V\)-spaces is cofibrantly generated, with set of
generating cofibrations
\begin{displaymath}
  I_{\gml^V} = \{ (i \times (G \times \Or V)/P)_+ \, \mid \,
  i \in I, \, P \in \gml^V \}\index{IGV@\(I_{\gml^V}\)}
\end{displaymath}
 and set of generating acyclic cofibrations 
\begin{displaymath}
  J_{\hml^V,\gml^V} = \{
  (j \times (G \times \Or V)/P)_+ \, \mid \,
  j \in J, \, P \in \gml^V \} \cup K\index{JHVGV@\(J_{\hml^V,\gml^V}\)}
\end{displaymath}
with
\begin{displaymath}
  K = \{ i \Box k_P \, \mid \, i \in I, \,
  k_P = s_P(G/e) \text{ for \(P \in \gml^V, P\notin \hml^V\)}\}
\end{displaymath}
where $s_P$ is is the $(\hml^V,\gml^V)$-localizer of Definition~\ref{definethesetS}.
\newcommand{\IV}{I_{\gml^V}}
\newcommand{\IW}{I_{\gml^W}}
\newcommand{\JV}{J_{\hml^V,\gml^V}}
\newcommand{\JW}{I_{\hml^W,\gml^W}}
Recall that
a map \(f\colon X \to Y\) of orthogonal spectra is a level
\(\hml\)-equivalence if and only if for each Euclidean space \(V\),
the map \(f_V\) is an 
\(\hml^V\)-equivalence. We can formulate this in terms of the genuine
model structure on \(\catGT\) of Definition~\ref{genuineGTstructure}. by noting that
\(f_V\) is an \(\hml^V\)-equivalence if and only if for every \(P \in
\hml^V\), the map \(f_V\) is a genuine equivalence of 
\(P\)-spaces. Writing \(H = \proj_1(P)\), the map \(f_V\) is a genuine
equivalence of \(P\)-spaces if and only if the map \(f_{V(P)}\) is an
genuine equivalence of \(H\)-spaces.  

  \begin{dfn}
  If $(\hml,\gml)$ is a $G$-mixing pair, the \emph{level $(\hml,\gml)$-model structure}\index{level $(\hml,\gml)$-model structure} on orthogonal $G$-spectra is the one given by the Assembling Theorem~\ref{puzzling} from the 
  \((\hml^{\R^n},\gml^{\R^n})\)-model structures 
  from Theorem~\ref{GFmixedmodelstr}.  
\end{dfn}
The level $(\hml,\gml)$-model structure is cofibrantly generated by the sets
$$I_{\gml}=\bigcup_n\Gr{}{\R^n}I_{\gml^{\R^n}},\qquad J_{\hml,\gml}=\bigcup_n\Gr{}{\R^n}J_{\hml^{\R^n},\gml^{\R^n}}.$$
\label{ex:levelqmodel}\index{level $q$-model structure}\index{model structure!level $q$-}

\begin{example}
  The mixing pair $(\Qll,\Qll)$  will give rise to the {\em level $q$-model structure} and $(\Qll_+,\Qll_+)$ to the {\em positive level $q$-model structure}).\index{positive level $q$-model structure}\index{model structure!positive level $q$-} 

Note that, in view of Lemma~\ref{lem:twistandshout}, the set 
$I_\Qll
$ (see Definition~\ref{mixedchangeofgroups}) of generating $q$-cofibrations could without changing the model structure be replaced with the set consisting of the inclusions 
$$\Fr{}V(G/H_+\smsh (S^{n-1}_+\subseteq D^n_+))$$ of {\em free} spectra, with $V$ an orthogonal $G$-representation and $H\subseteq G$ a closed subgroup.  Likewise for the positive level $q$-structure, leaving out the $G$-representations $V$ with only trivial fixed points.
\end{example}
\begin{remark}
  The (positive) $\Qll$-model structure is widely used,
  but has the disadvantage that the natural model structure it induces on commutative ring $G$-spectra has the annoying property that the forgetful functor to orthogonal $G$-spectra does not preserve cofibrations.

  This is one of the reasons for considering the following mixing pair,
  ultimately giving rise to a good stable structure on commutative orthogonal ring $G$-spectra.
\end{remark}

\begin{example}[Positive mixing pair]\label{positivemixingpair}
  The {\em positive mixing pair}\index{positive!mixing pair}\index{AIplus@$\AIplus$}\index{Allplus@$\All_+$} $(\AIplus,\All_+)$ for \(G\) is defined as follows: 
  For
  \(V \ne 0\), the family \(\All_+^V\) consists of all closed subgroups of \(G
  \times \Or {V}\), and \(\AIplusV V\) consists of the closed subgroups \(P\) of
  \(G \times \Or{V}\) with the property that \(\proj_1 \colon P \to G\) is
  injective. Finally, \(\All_+^0=\AIplusV 0=\emptyset\).
\end{example}
There is a maximal or ``nonpositive'' variant $(\AI,\All)$\index{AI@$\AI$}\index{All@$\All$} agreeing with $(\AIplusV V,\All_+^V)$ for all $V\neq0$, but with $\AI^0=\All^0$ consisting of all closed subgroups of $G\cong G\times\Or 0$.  This example is occasionally useful for comparison, but will play no significant role in what follows.

Given a subgroup \(P\) of a group \(A\) and an element \(a\) of \(A\), the conjugation homomorphism \(c_a \colon P \to aPa^{-1}\) is defined by \(c_a(p) = apa^{-1}\).
\begin{lemma}\label{doublecosettwist}
  Let \(P\) and \(B\) be subgroups of a compact Lie group \(A\), let \(a \in A\) and let \(X\) be a \(B\)-space. If \(a^{-1}Pa\) is contained in \(B\), then the map \(f \colon c_{a^{-1}}^* X \to PaB_+\wedge_B X\) with \(f(x) = [a \wedge x]\) is an isomorphism of \(P\)-spaces. If \(a^{-1}Pa\) is not contained in \(B\), then the space of \(P\)-fixed points of \(PaB_+\wedge_B X\) is the one-point space. In particular, the \(P\)-fixed subspace of \(A_+ \wedge _B X\) is the one point space unless \(P\) is subconjugate to \(B\).
\end{lemma}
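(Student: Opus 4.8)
The plan is to reduce everything to the orbit decomposition $PaB/B$ and the classical fact (the second displayed lemma of the ``On the other hand'' discussion above) that for subgroups $P, B$ of a topological group $A$, the fixed space $(A/B)^P$ records exactly which cosets $aB$ satisfy $a^{-1}Pa \subseteq B$. First I would treat the structure of $A_+ \wedge_B X$ as a $P$-space: decomposing the left $P$-action on $A/B$ into orbits, the space $A_+ \wedge_B X$ breaks up (as a $P$-space) as a wedge indexed over the $(P,B)$-double cosets $PaB$, with the summand over $PaB$ being $PaB_+ \wedge_B X$. By Lemma~\ref{fixedprop} the functor $(-)^P$ preserves coproducts (and the basepoint), so $(A_+\wedge_B X)^P$ is the wedge of the $(PaB_+\wedge_B X)^P$; hence it suffices to analyse a single double coset, which is exactly the first two assertions of the lemma.

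For a single double coset $PaB$: if $a^{-1}Pa \subseteq B$, I would verify directly that $f\colon c_{a^{-1}}^*X \to PaB_+\wedge_B X$, $f(x) = [a\wedge x]$, is a well-defined, continuous, $P$-equivariant bijection with continuous inverse. Equivariance is the computation $p\cdot[a\wedge x] = [pa\wedge x] = [a\wedge (a^{-1}pa)x]$, and $a^{-1}pa \in B$ acts on $X$ as prescribed by $c_{a^{-1}}$, so $f(p\cdot x) = p\cdot f(x)$ in the sense of the conjugated action; bijectivity follows because every element of $PaB_+\wedge_B X$ is represented by some $pa\wedge x = a\wedge (a^{-1}pa)x$, and two such represent the same point iff the corresponding $X$-coordinates agree after the $B$-action, which translates to injectivity of $f$. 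Continuity of the inverse uses that $PaB\to P/(P\cap aBa^{-1})$ is a quotient map of compact Hausdorff spaces together with the local-section/slice structure; since we only need a homeomorphism of $P$-spaces and $P$ is a compact Lie group, this is routine. If instead $a^{-1}Pa \not\subseteq B$, then $(PaB/B)^P = \emptyset$ by the cited elementary lemma, so no non-basepoint of $PaB_+\wedge_B X$ can be $P$-fixed (a $P$-fixed point would project to a $P$-fixed point of $PaB/B$), whence $(PaB_+\wedge_B X)^P$ is the one-point space.

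Finally, the ``in particular'' clause: a non-basepoint $P$-fixed point of $A_+\wedge_B X$ lies in some summand $PaB_+\wedge_B X$ and forces $a^{-1}Pa\subseteq B$, i.e.\ $P$ is subconjugate to $B$; conversely this is only a necessary condition, and the statement only claims the fixed space is trivial when $P$ is \emph{not} subconjugate to $B$, which is immediate from the per-double-coset analysis. The main obstacle I anticipate is purely point-set: checking that $f$ and its candidate inverse are genuinely continuous in both directions (as opposed to merely a continuous bijection), which requires knowing $PaB \to P/(P\cap aBa^{-1})$ admits local sections; this is standard for compact Lie groups but is the one spot where one must invoke more than formal nonsense. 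Everything else is bookkeeping with Lemma~\ref{fixedprop} and the orbit decomposition.
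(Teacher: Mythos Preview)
Your proposal is correct and follows essentially the same route as the paper: analyse a single double coset, check equivariance of $f$ by the computation $p\cdot[a\wedge x]=[a\wedge(a^{-1}pa)x]$, handle the case $a^{-1}Pa\not\subseteq B$ via $(PaB/B)^P=\emptyset$, and deduce the ``in particular'' from the double-coset decomposition of $A$.

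The one place where you work harder than necessary is the continuity of the inverse. You invoke local sections and the slice theorem for the quotient $PaB\to P/(P\cap aBa^{-1})$; the paper instead simply observes that left multiplication by $a^{-1}$ gives a continuous map $PaB_+\wedge_B X \to a^{-1}PaB_+\wedge_B X = B_+\wedge_B X \cong X$, which is manifestly the inverse of $f$. This sidesteps the point-set worry you flagged as the main obstacle.
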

\begin{proof}
  If \((PaB/B)^P\) is non-empty, then \(paB = aB\) for all \(p \in P\), so \(a^{-1}Pa \subseteq B\). Thus, if \(a^{-1}Pa\) is not contained in \(B\), then the space of \(P\)-fixed points of \(PaB_+\wedge_B X\) is the one-point space. Now suppose \(a^{-1}Pa\) is contained in \(B\). Then 
\[
  f(a^{-1}pax) = [a \wedge a^{-1}pax] = [aa^{-1}pa \wedge x] = [pa \wedge x] = p[a \wedge x] = pf(x),
\]
so \(f\) is a \(P\)-map. Multiplication with \(a^{-1}\) gives an inverse 
\[
  PaB_+ \wedge_B X \to c_{a^{-1}}^*(a^{-1}PaB_+\wedge_B X) = c_{a^{-1}}^*(B_+ \wedge_B X) \cong c_{a^{-1}}^* X
\]
to \(f\). The statement about the \(P\)-fixed subspace of \(A_+ \wedge _B X\) now follows from the fact that \(A\) is the disjoint union of subspaces of the form \(PaB\).
\end{proof}

\begin{prop}\label{mixedisorcat}
  Let \((\hml,\gml)\) be a \(G\)-mixing pair. For each
  Euclidean space \(V\) consider the \(\catM_V=(\hml^V,\gml^V)\)-model structures on \((G \times \Or
  V)\catT\) of Theorem~\ref{defineivandjv}. The collection \(\catM\) of model structures \(\{\catM_V\}_{V}\), where $V$ varies over Euclidean spaces, forms an \(\hml\)-model
  structure in the sense of Definition~\ref{hmlmonoidal} with the property 
  that if \(i \in\IV\) and \(j \in\IW\), then \({\Or{V\oplus
      W}}_+\smash_{\Or{V}\times\Or{W}} i \Box j\) is a cofibration in
  \(\catM_{V \oplus W}\). 
  
If \(\hml\) is {closed under retracts}, then \(\catM\)
  satisfies the \(\hml\)-pushout product axiom in the sense of Definition~\ref{generalizedpushoutproductaxiom}, and \(\Sp\) is cofibrant
  in the level \(\catM\)-model structure on \(\catGOS\). Letting
  \(I_{\catM_V} = \IV{}\) and \(J_{\catM_V} = \JV\)
  as in Theorem~\ref{defineivandjv}, 
  the sets \(\Gr{}{}I_{\catM}\) and
  \(\Gr{}{}J_{\catM}\) are sets of generating
  cofibrations and generating acyclic cofibrations respectively for
  the level \(\catM\)-model structure.
\end{prop}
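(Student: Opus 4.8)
\section*{Proof proposal}

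The plan is to verify that \(\catM=(\catM_V)_{V\in\catLr}\) satisfies the four conditions of Definition~\ref{hmlmonoidal}, and then to read off the remaining assertions from results already at hand. Conditions (i), (iii) and (iv) are formal. By Theorem~\ref{GFmixedmodelstr} the cofibrations of the \((\hml^V,\gml^V)\)-model structure \(\catM_V\) are the retracts of relative \(I_{\gml^V}\)-cell complexes, and each generating cofibration \((i\times(G\times\Or V)/P)_+\) is a Hurewicz cofibration, so every cofibration in \(\catM_V\) is one; this is (i). The weak equivalences of \(\catM_V\) are by definition the \(\hml^V\)-equivalences, which is (iii). Since \(\hml^V\subseteq\gml^V\) by Definition~\ref{mixing}(i), every generating cofibration of the \((\hml^V,\hml^V)\)-model structure is among the generating cofibrations of \(\catM_V\), so the cofibration class of the former is contained in that of the latter, which is (iv).

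The substance is condition (ii): for \(j\in J_V\) one must show \((\Gr{}{V}j)_W\) is an \(\hml^W\)-equivalence for every \(W\). When \(\dim W<\dim V\) the space \((\Gr{}{V}K)_W\) is a one-point space and there is nothing to check, so assume \(\dim W\ge\dim V\) and write \(W\cong V\oplus W'\). Applying Proposition~\ref{semi-free-smash-with-gen} with the sphere spectrum gives a natural isomorphism of \((G\times\Or W)\)-spaces \((\Gr{}{V}K)_W\cong (G\times\Or W)_+\wedge_{G\times\Or V\times\Or{W'}}(K\wedge S^{W'})\). For \(Q\in\hml^W\), the double-coset decomposition of Lemma~\ref{doublecosettwist} identifies \(((\Gr{}{V}K)_W)^Q\), up to one-point summands on which any map is the identity, with a coproduct of spaces \(K^{R}\wedge(S^{W'})^{R'}\), where \(R\) is the image in \(G\times\Or V\) and \(R'\) the image in \(\Or{W'}\) of a conjugate of \(Q\) that happens to lie in \(G\times\Or V\times\Or{W'}\). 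For the first type of generator, \(j=(j'\times(G\times\Or V)/P)_+\) with \(j'\) a generating acyclic cofibration of spaces, \(j\) is a \((G\times\Or V)\)-homotopy equivalence, hence \((\Gr{}{V}j)_W\) is a \((G\times\Or W)\)-homotopy equivalence, in particular an \(\hml^W\)-equivalence. For the localisation-type generators \(j=i\square k_P\) with \(P\in\gml^V\setsminus\hml^V\), using that \(\Gr{}{V}\) preserves colimits and tensors together with the pushout-product properties of Hurewicz cofibrations from Lemma~\ref{propsfml}, one reduces to showing \((\Gr{}{V}k_P)_W\) is an \(\hml^W\)-equivalence; since \(k_P\) is an \(\hml^V\)-equivalence whose source and target are assembled from orbits \((G\times\Or V)/P''\), and the subgroups \(R\) occurring in the decomposition are, by Proposition~\ref{Gtypicalrestrictions}, controlled by the representations coming from \(\hml\) (stability under conjugation and restriction), each \((k_P)^{R}\) is a weak equivalence on the relevant summand.

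Once \(\catM\) is known to be an \(\hml\)-model structure, the mixed pushout-product statement — that \({\Or{V\oplus W}}_+\wedge_{\Or V\times\Or W}i\square j\) is a cofibration in \(\catM_{V\oplus W}\) for \(i\in I_V\), \(j\in I_W\) — follows from the natural isomorphism \(\Gr{}{V}i\square\Gr{}{W}j\cong\Gr{}{V\oplus W}({\Or{V\oplus W}}_+\wedge_{\Or V\times\Or W}i\square j)\) together with the sum-closure axioms Definition~\ref{gtypicalrepresentations}(iv) and Definition~\ref{mixing}(iii), which, combined with Illman's theorem~\ref{illcomplex} and Lemma~\ref{isotropytypeofcof}, present the left-hand side as an \(I_{\gml^{V\oplus W}}\)-cell complex. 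When \(\hml\) is closed under retracts, the remaining (acyclic) half of the \(\hml\)-pushout-product axiom of Definition~\ref{generalizedpushoutproductaxiom} follows from the same double-coset analysis as in condition (ii), so \(\catM\) satisfies the full axiom; moreover retract-closure forces \(\hml^0\) to contain every subgroup of \(G\times\Or 0=G\), so \(S^0\) is cofibrant in \(\catM_0\) and hence \(\Sp=\Gr{}{0}S^0\) is cofibrant in the level \(\catM\)-model structure, exactly as in the proof that that structure is monoidal.

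Finally, Theorem~\ref{catMlevel} applied to the \(\hml\)-model structure \(\catM\) shows that \(\Gr{}{}I_{\catM}\) and \(\Gr{}{}J_{\catM}\) are sets of generating cofibrations and generating acyclic cofibrations for its level model structure, and unwinding Definitions~\ref{defineivandjv} and~\ref{generatorsforMorthogonalspectra} identifies these with the sets in the statement. The main obstacle throughout is condition (ii) for the localisation-type generating acyclic cofibrations \(i\square k_P\): because \(k_P\) is only an \(\hml^V\)-equivalence and not a homotopy equivalence, one must genuinely control the fixed-point spaces of \(\Gr{}{V}k_P\) level by level, and it is here that the double-coset decomposition of Lemma~\ref{doublecosettwist} and the restriction- and conjugation-stability of the representations arising from \(\hml\) (Proposition~\ref{Gtypicalrestrictions}) carry the argument.
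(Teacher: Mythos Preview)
Your overall approach matches the paper's: conditions (i), (iii), (iv) are formal, condition (ii) is handled via the double-coset decomposition of Lemma~\ref{doublecosettwist}, the cofibration half of the pushout-product uses sum-closure and Illman's theorem, and cofibrancy of \(\Sp\) comes from retract-closure forcing \(\hml^0\) to be the full family. The gap is in your treatment of condition (ii) for the localisation generators.

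After the double-coset reduction you need, for \(Q\in\hml^W\) subconjugated into \(G\times\Or{V}\times\Or{W'}\), that \(j^{R}\) is a weak equivalence where \(R\) is the image of (a conjugate of) \(Q\) in \(G\times\Or V\). You appeal to Proposition~\ref{Gtypicalrestrictions}, but that proposition gives stability of the representations in \(\hml\) under \emph{restriction to subgroups of \(G\)} and conjugation; it says nothing about passing to sub\emph{representations}. Here \(V(R)\) is a direct summand of the \(\proj_1(Q)\)-representation \(V(Q)\), so all one can conclude is \(R\in\overline{\hml}^{\,V}\), the closure under retracts, and in general \(R\notin\hml^V\). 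Since \(k_P\) is only an \(\hml^V\)-equivalence, \((k_P)^{R}\) need not be a weak equivalence on those summands, and your argument stalls.

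The missing ingredient is precisely axiom (ii) of a \(G\)-mixing pair, \(\overline{\hml}^{\,V}\cap\gml^V\subseteq\hml^V\), which you never invoke. The paper runs the dichotomy: if \(R\in\hml^V\) then \(j^{R}\) is a weak equivalence because \(j\) is an \(\hml^V\)-equivalence; if \(R\in\overline{\hml}^{\,V}\setminus\hml^V\) then by axiom (ii) \(R\notin\gml^V\), and since the source and target of \(j\) are cofibrant in the \((\hml^V,\gml^V)\)-model structure their \(R\)-fixed points are one-point spaces, so \(j^{R}\) is trivially a weak equivalence. This is also why the paper can treat all generators in \(J_V\) uniformly rather than splitting into two cases: the only facts used are that \(j\) is an \(\hml^V\)-equivalence and that its source and target are \((\hml^V,\gml^V)\)-cofibrant.
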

\begin{proof}
  By design, the weak equivalences in \(\catM_V\) are the \(\hml^V\)-equivalences. The generating cofibrations for the  \((\hml^V,\gml^V)\)-model structure are Hurewicz cofibrations, and thus so are all cofibrations, and (i) of Definition~\ref{hmlmonoidal} holds.

  We need to work harder to verify part (ii) of Definition~\ref{hmlmonoidal}, that is, that the maps in $\Gr{}{}J$ are actually \(\hml\)-level equivalences. So let 
$j\colon X \rightarrow Y$ be a generating acyclic \((\hml^V,\gml^V)\)-cofibration. Then \((\Gr{}{V}j)_W\) is the map
\[\catOr(V,W)\smash_{\Or{V}} j \colon \catOr(V,W)\smash_{\Or{V}}X \rightarrow \catOr(V,W)\smash_{\Or{V}}Y.\] 
We have to show that this is an \(\hml^W\)-equivalence. Suppose there exists an isometric embedding \(\varphi \colon V \to W\). Otherwise \(\catOr(V,W)\) is the one-point space and \((\Gr{}{V}j)_W\) is obviously a weak equivalence. Let \(A = G \times \Or W\) and \(B = G \times \Or{\varphi^\perp} \times \Or {\varphi(V)}\).
Now, $\varphi$ provides us with a natural (in $X$) isomorphism $\catOr(V,W)\wedge_{\Or V}X\cong A_+\smsh_B(S^{\varphi^\perp}\smsh X)$, and so we consider the $P$ fixed points of 
\[
  A_+ \wedge_{B} (S^{\varphi^{\perp}} \wedge X) 
  \xrightarrow{A_+ \wedge_B (\id \wedge j)}
  A_+ \wedge_{B} (S^{\varphi^{\perp}} \wedge Y)
\]
for $P\in\hml^W$.
Lemma~\ref{doublecosettwist} says that the \(P\)-fixed points of the above spaces consist of just the base point unless \(P\) is subconjugate to \(B\).

Consider \(A\) as a \(P \times B^{\op}\)-space with \((p,b) \in P \times B^{\op}\) acting on \(a \in A\) by the rule \((p,b)a = pab\). By Illman's Theorem~\ref{illtriangmfd}, the space \(A\) is an \(P \times B^{\op}\) CW-complex. By Lemma~\ref{isotropytypeofcof} the cells of this \(P \times B^{\op}\) CW-complex are isomorphic to products \(PaB \times D^n\) of an orbit in \(A\) and a disc. Lemma~\ref{doublecosettwist} would imply that \(((PaB \times D^n) {\wedge_B} (\id \wedge j))^P\) and \(((PaB \times S^{n-1}) {\wedge_B} (\id \wedge j))^P\) are weak equivalences if we knew that \((\id \wedge j)^{a^{-1}Pa}\) were a weak equivalence, whenever \(a^{-1}Pa \subseteq B\). An induction on the cells using the Cube Lemma \cite[Lemma 5.2.6]{H} would then give that \((A_+ \wedge_B (\id \wedge j))^P\) is a weak equivalence. 

In order to finish the verification of part (ii) of Definition~\ref{hmlmonoidal} it thus suffices to show that if \(P \in \hml^W\) is a subgroup of \(B\), then \((\id \wedge j)^{P}\) is a weak equivalence. Let \(P_1\) be the image of \(P\) under the projection 
\[
  B = G \times \Or{\varphi^\perp} \times \Or {\varphi(V)} \to G \times \Or 
  {\varphi(V)} \cong G \times \Or {V},
\]
 where the rightmost isomorphism is by conjugation by $\varphi$.
Then by Lemma~\ref{fixedprop} we have \((\id \wedge j)^{P} = \id \wedge j^{P_1}\), and since \(X\) and \(Y\) are cofibrant in the mixed \((\hml^V,\gml^V)\)-model structure, it suffices to show that \(j^{P_1}\) is a weak equivalence.

Now, the composite $P\subseteq G \times \Or{\varphi^\perp}\to G$ (where the last map is the first projection) is injective and factors over the projection $P\to P_1$.
Hence, the projection $P\to P_1$ is actually an isomorphism, and has the same image, say $H\subseteq G$.
If $\alpha\colon H\to\Or{\varphi^\perp}$ and $\beta\colon H\to\Or{\varphi(V)}$ are representations so that 
$P=\{(h,\alpha(h),\beta(h)\mid h\in H\}$ is the graph, 
then necessarily $P_1=\{(h,\beta'(h)\,|\, h\in H\}$, where $\beta'\colon H\to\Or V$ is the conjugate of $\beta$ by $\varphi$.
Hence, $P_1$ is a retract of $P$ and by condition~(ii) of Definition~\ref{mixing} we have that either $P_1\in\hml^V$ or $P_1\notin\gml^V$.
If \(P_1 \in \hml^V\),  then \(j^{P_1}\) is a weak equivalence in \(\catT\). If \(P_1\) is not in \(\gml^V\), then, since both source and target of \(j\) are cofibrant in the mixed \((\hml^V,\gml^V)\)-model structure, this implies that both source and target of \(j^{P_1}\) are the one-point space. In particular \(j^{P_1}\) is a weak equivalence.

Next we show that if \(i \in \IV{}\) and \(j \in \IW\), then \({\Or{V\oplus W}}_+\smash_{\Or{V}\times\Or{W}} i \Box j\) is a cofibration.
Let 
\[i = (\left[S^{n-1}\rightarrow D^n\right] \times {\faktor{(G \times \Or V)}{P}})_+ \quad \text{ and } \quad j = (\left[S^{m-1}\rightarrow D^m\right] \times {\faktor{(G \times \Or W)}{Q}})_+\]
be maps in $\IV{}$ and $\IW$ respectively. Then \({\Or{V\oplus W}}_+\smash_{\Or{V}\times\Or{W}} i \Box j\) is isomorphic to:
\[(\left[S^{n+m-1}\rightarrow D^{n+m}\right] \times {\Or{V\oplus W}} \times_{\Or{V}\times\Or{W}}{\faktor{(G \times \Or {V})}{P}} \times {\faktor{(G \times \Or W)}{Q}})_+\,.\]
Since as a left adjoint, taking the product with a space preserves colimits, it suffices by Illman's Theorem~\ref{illtriangmfd} 
to note that in part (iii) of Definition~\ref{mixing} we require that 
the isotropy groups of the smooth \(G \times \Or {V \oplus W}\) manifold
\[
  {\Or {V \oplus W}} \times_{\Or V \times \Or W} ((G \times \Or V)/P \times (G \times \Or W)/Q) 
\]
are in \(\gml^{V \oplus W}\). 

Suppose that \(\hml\) is {closed under retracts}. We will show that if \(X_V\) is a cofibrant object in \(\catM_V\) and \(f\) is an \(\hml^W\)-equivalence of cofibrant objects of \(\catM_W\), then the map \({\Or {V \oplus W}}_+ \wedge_{\Or V \times \Or W} X_V \wedge f\) is a weak equivalence in \(\catM_{V \oplus W}\). Since the generating acyclic cofibrations for \(\catM_W\) are weak equivalences  of cofibrant objects and source and target of the generating cofibrations are cofibrant, this will imply the statement about acyclic cofibrations in the pushout product axiom.

Let \(A = G \times \Or {V \oplus W}\) and let \(B = G \times \Or V \times \Or W\). Given \(P \in \hml^{V \oplus W}\) it suffices by cell induction to show that \((PaB_+ \wedge_B (X_V \wedge f))^P\) is a weak equivalence. As above, we may without loss of generality assume that \(a^{-1}Pa\) is contained in \(B\). 
Let \(P_1\) be the image of \(a^{-1}Pa\) under the projection 
\[
  B = G \times \Or{V} \times \Or {W} \to G \times \Or {V}
\]
and let \(P_2\) be the image of \(a^{-1}Pa\) under the projection 
\[
  B = G \times \Or{V} \times \Or {W} \to G \times \Or {W}.
\]
By Lemma~\ref{doublecosettwist} \((PaB_+ \wedge_B (X_V \wedge f))^P\) can be identified with \(X_V^{P_1} \wedge f^{P_2}\). Since \(X\) is cofibrant in \(\catM\), the space \(X_V^{P_1}\) is cofibrant, and likewise the \(P_2\)-fixed points of the source and target of \(f\) are cofibrant. Thus it suffices to show that \(f^{P_2}\) is a weak equivalence. However, since \(V(P_2)\) is a subrepresentation of \(V(P)\) and \(\hml\) is {closed under retracts}, we have that \(P_2 \in \hml^W\), and thus \(f^{P_2}\) is a weak equivalence because \(f\) is an \(\hml^W\)-equivalence.

Finally, we verify that if \(\hml\) is {closed under retracts}, then \(\Sp = \Gr{} 0 S^0\) is cofibrant. For this it is enough to know that \(S^0\) is cofibrant in \(\catM_0\). 
By Lemma~\ref{lem:closedretracthmlisbig}
$\hml^0$ contains $G\times\{\id_0\}$.
 Since \(\hml^0 \subseteq \gml^0\) this implies that \(S^0\) is cofibrant in \(\catM_0\).
\end{proof}

\label{comebackwasworkinghereoct23}
\begin{dfn}\label{classdfnlevelsaysMartin}
Let \((\hml,\gml)\) be a \(G\)-mixing pair. The  
model structure on \(\catGOS\) 
in Proposition~\ref{mixedisorcat} is the {\em level mixed \((\hml,\gml)\)-model structure.}\index{model structure!level mixed \((\hml,\gml)\)-}\index{level!mixed \((\hml,\gml)\)-model structure} 
We say that  a morphism of orthogonal $G$-spectra $f\colon{} X \rightarrow Y$ is
\begin{indentpar}{0cm}
\begin{enumerate}
\item  a \emph{level \((\hml,\gml)\)-fibration}\index{level!mixed \((\hml,\gml)\)-fibration, cofibration} if for each \(V\) in \(\catLr\) the map $f_V : X_V \rightarrow Y_V$ is a fibration in the \((\hml^V,\gml^V)\)-model structure on \((G\times \Or V)\catT\).
\item  an \emph{\((\hml,\gml)\)-cofibration} if $f$ satisfies the left lifting property with respect to all maps that are both level \(\hml\)-equivalences and level \((\hml,\gml)\)-fibrations.
\end{enumerate}
\end{indentpar}
The classes of generating cofibrations and generating acyclic cofibrations
for the level mixed $\hml,\gml)$-model structure in Proposition~\ref{mixedisorcat} are referred to as   \(\Gr{}{}I_{\gml}\) and $\Gr{}{}J_{\hml,\gml}$.
\end{dfn}

  \begin{ex}
    \label{ex:level}\index{level $q$-structure}\index{level $\Sp$-structure}\index{positive level $q$-structure}\index{qstructurelevel@$q$-structure, level}\index{Sstructure@$\Sp$-structure, level}
    The mixed model structure with $(\hml,\gml)=(\AIplus,\All_+)$ of Example~\ref{positivemixingpair} is called the (positive) {\em level $\Sp$-structure}. The mixed model structure with $(\hml,\gml)=(\Qll,\Qll)$ of Example~\ref{ex:qmix} is called the {\em level $q$-structure} and the one with $(\hml,\gml)=(\Qll_+,\Qll_+)$ is called the positive {\em level $q$-structure}.  
  \end{ex}

  \begin{lemma}\label{lem:indepofuniv}
    The classes of level $(\hml,\gml)$-cofibrations and level $(\hml,\gml)$-acyclic fibrations do not depend on $\hml$.
  \end{lemma}

In hindsight it may be a bit disconcerting that we in Proposition~\ref{mixedisorcat} claimed that properties of $\hml$ can imply that $\Sp$ is cofibrant.  However, since $\hml\subseteq\gml$, knowledge of $\hml$ {\em can} be used to get knowledge about $\gml$; in this case what we needed was that $\gml^0$ was maximal.

\begin{lem}\label{understandcofprojsphere}
  Let \(P \in \hml^V\) with \(\proj_1(P) = H\) and let \(C\) be a cofibrant replacement of the sphere spectrum \(\Sp\) in the \((\hml,\gml)\)-model structure. The \(H\)-space \(C_{V(P)}\) is \(H\)-homotopy equivalent to the representation sphere \(S^{V(P)}\).
\end{lem}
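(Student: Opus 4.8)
The plan is to turn the weak equivalence $C\to\Sp$ into data that can be evaluated and restricted. First I would record that a cofibrant replacement $C\to\Sp$ is a level acyclic fibration — acyclic fibrations are unchanged under left Bousfield localization, so this holds whether one means the level or the stable $(\hml,\gml)$-model structure — hence in particular a level $\hml$-equivalence, so that for every Euclidean space $W$ the map $C_W\to\Sp_W=S^W$ is an $\hml^W$-equivalence of $(G\times\Or W)$-spaces; that is, $(C_W)^Q\to(S^W)^Q$ is a weak equivalence for every $Q\in\hml^W$. I would then write $\varphi\colon H\to\Or V$ for the homomorphism with $V(P)=(V,\varphi)$, so $P=i_\varphi(H)$ for the embedding $i_\varphi\colon H\to G\times\Or V$, $i_\varphi(h)=(h,\varphi(h))$, and note that $C_{V(P)}$ and $S^{V(P)}$ are by definition the restrictions of $C_V$ and $S^V$ along $i_\varphi$, so there is a canonical $H$-map $f\colon C_{V(P)}\to S^{V(P)}$.

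The first real step is to show that $f$ is an $H$-weak equivalence, that is, that $f^K$ is a weak equivalence for every closed subgroup $K\le H$. Here I would use that $C_{V(P)}^K=C_V^{P_K}$ and $(S^{V(P)})^K=(S^V)^{P_K}$, where $P_K:=i_\varphi(K)=\{(k,\varphi(k)):k\in K\}$ is a subgroup of $P$. Since $\hml^V$ is a family it is closed under passage to subgroups, so $P\in\hml^V$ forces $P_K\in\hml^V$, whence $f^K=(C_V\to S^V)^{P_K}$ is a weak equivalence because $C_V\to S^V$ is an $\hml^V$-equivalence. This is the only place the structure of $\hml$ enters, and it is essentially immediate.

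The second step is to upgrade the $H$-weak equivalence $f$ to an $H$-homotopy equivalence, for which by the equivariant Whitehead theorem it is enough that source and target have the $H$-homotopy type of $H$-CW-complexes. The target $S^{V(P)}$ is an $H$-CW-complex by Remark~\ref{spheresareCW}. For the source I would use that $C$, being cofibrant, is a retract of a $\Gr{}{}I_{\catM}$-cell complex; since evaluation at $V$ is computed levelwise in the underlying functor category it commutes with the colimits involved, so $C_V$ is a retract of a cell complex whose cells are the $(G\times\Or V)$-spaces $(\Gr{}{W}i)_V$ for $i\in I_W=I_{\gml^W}$, each of which has the form $(\left[S^{n-1}\to D^n\right])_+\wedge\bigl(\catOr(W,V)\wedge_{\Or W}((G\times\Or W)/Q)_+\bigr)$ with $Q\in\gml^W$ and is therefore a relative $(G\times\Or V)$-CW-inclusion by Illman's Triangulation Theorem~\ref{illtriangmfd}. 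Hence $C_V$ is a retract of a $(G\times\Or V)$-CW-complex, and restricting along $i_\varphi$ and invoking Corollary~\ref{illsubgroup} I would conclude that $C_{V(P)}$ is genuinely cofibrant as an $H$-space, hence of the $H$-homotopy type of an $H$-CW-complex; equivariant Whitehead applied to $f$ then finishes the argument.

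The only point I expect to require any care is this last one — chasing the cell structure of $C$ through evaluation at $V$ and through Illman's Triangulation Theorem to see that the levels of a cofibrant orthogonal $G$-spectrum restrict to $H$-spaces of the $H$-CW homotopy type. If the construction of the level $\catM$-model structure already records that the cells $(\Gr{}{W}i)_V$ are $(G\times\Or V)$-CW-inclusions, this step reduces to a citation; the remainder is pure bookkeeping, the key observation being simply that a family is closed under subgroups.
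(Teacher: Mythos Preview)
Your proof is correct and follows essentially the same route as the paper: restrict the level $\hml^V$-equivalence $C_V\to S^V$ along the subgroup $P$ (equivalently along $i_\varphi\colon H\hookrightarrow G\times\Or V$), use that $\hml^V$ is a family to see this is a genuine equivariant weak equivalence, and then invoke Whitehead once both sides are known to be equivariant CW. The paper compresses all of this into three sentences, working with $P$-spaces throughout and transferring to $H$ only at the end via the isomorphism $\proj_1\colon P\to H$, and it asserts without comment that $C_V$ and $S^V$ are ``cofibrant $P$-spaces''; your version unpacks this last point through the cell structure and Illman's theorem, which is exactly what the paper is tacitly relying on.
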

\begin{proof}
  The map \(C_V \to S^V\) is an \(\hml^V\)-equivalence. Since \(P \in \hml^V\), it is a weak equivalence of cofibrant \(P\)-spaces. Thus it is a \(P\)-homotopy equivalence. Finally, \(H\) acts through the isomorphism \(\proj_1 \colon P \to H\).
\end{proof}

Note that since the mixed \((\hml^V,\gml^V)\)-model structures are left proper and cellular in the sense of Hirschhorn \cite[Definition 12.1.1]{Hir}, the level \((\hml,\gml)\)-model structure is also left proper and cellular. The following theorem is a recollection of results in this section.
\begin{thm}
  Let \((\hml,\gml)\) be a \(G\)-mixing pair. The level 
\((\hml,\gml)\)-model structure on \(\catGOS\) is a left proper and cellular 
\(G\)-topological model structure. The weak equivalences in this model structure are the level \(\hml\)-equivalences. The fibrations are the level \((\hml,\gml)\)-fibrations and the cofibrations are the \((\hml,\gml)\)-cofibrations. If \(\hml\) is {closed under retracts}, then the level \((\hml,\gml)\)-model structure on \(\catGOS\) is monoidal.
\end{thm}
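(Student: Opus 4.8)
The plan is to assemble the results already established in this section; the statement is essentially a bookkeeping corollary of Proposition~\ref{mixedisorcat}, Theorem~\ref{catMlevel}, and the monoidality theorem for level \(\catM\)-model structures. First I would invoke Proposition~\ref{mixedisorcat}: for the \(G\)-mixing pair \((\hml,\gml)\), the mixed \((\hml^V,\gml^V)\)-model structures \(\catM_V\) on \((G \times \Or V)\catT\) assemble into an \(\hml\)-model structure \(\catM\) in the sense of Definition~\ref{hmlmonoidal}, and \(\Gr{}{}I_{\catM}\), \(\Gr{}{}J_{\catM}\) are sets of generating cofibrations and generating acyclic cofibrations for the associated level \(\catM\)-model structure on \(\catGOS\). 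Applying Theorem~\ref{catMlevel} to this \(\catM\) then produces the level \(\catM\)-model structure itself, which is cofibrantly generated, left proper, and \(G\)-topological; by definition this is exactly the level mixed \((\hml,\gml)\)-model structure.

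Next I would match up the three classes of maps with the concretely defined notions. The weak equivalences of the level \(\catM\)-model structure are the level \(\catM\)-equivalences, and since the weak equivalences of \(\catM_V\) are the \(\hml^V\)-equivalences (part (iii) of Definition~\ref{hmlmonoidal}, verified inside Proposition~\ref{mixedisorcat}), these are precisely the level \(\hml\)-equivalences. Similarly the level \(\catM\)-fibrations are the maps \(f\) with each \(f_V\) a fibration in \(\catM_V\), i.e. in the \((\hml^V,\gml^V)\)-model structure on \((G\times\Or V)\catT\), which is the definition of a level \((\hml,\gml)\)-fibration; and the cofibrations, being those maps with the left lifting property against the level acyclic fibrations, coincide with the \((\hml,\gml)\)-cofibrations. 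For cellularity I would cite the observation made just before the statement: each mixed \((\hml^V,\gml^V)\)-model structure is left proper and cellular (being a Bousfield localization of a cellular model structure, cf.~Theorem~\ref{GFmixedmodelstr}), and these properties are inherited by the assembled level structure.

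Finally, for the monoidality claim, suppose \(\hml\) is closed under retracts. Then Proposition~\ref{mixedisorcat} gives that \(\catM\) satisfies the \(\hml\)-pushout-product axiom and that the unit \(\Sp = \Gr{}0 S^0\) is cofibrant in the level \(\catM\)-model structure; moreover its proof shows that in this case \(\hml^0\) consists of \emph{all} subgroups of \(G\times\Or 0\). Hence the hypotheses of the monoidality theorem for level \(\catM\)-model structures are satisfied, and the level mixed \((\hml,\gml)\)-model structure is monoidal. I do not expect a genuine obstacle in any of this: the substantive work — verifying the \(\hml\)-model structure axioms, the pushout-product axiom, and the \(\hml^0\)-condition — has already been carried out in Proposition~\ref{mixedisorcat}, so the only care needed here is the routine identification of the abstract classes of the level \(\catM\)-model structure with the \((\hml,\gml)\)-notions spelled out in the definitions.
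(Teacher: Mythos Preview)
Your proposal is correct and matches the paper's approach exactly: the paper explicitly introduces this theorem as ``a recollection of results in this section'' and gives no separate proof, relying on Proposition~\ref{mixedisorcat}, Theorem~\ref{catMlevel}, the monoidality theorem, and the remark immediately preceding the statement about inheritance of left properness and cellularity from the mixed \((\hml^V,\gml^V)\)-structures.
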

We have seen that \(f\) is a level \(\hml\)-equivalence if and only if for all \(V\) in \(\catLr\) and all \(P \in \hml^V\), the map \(f_{V(P)}\) is an \(\fml^P\)-equivalence of \(H = \proj_1(P)\)-spaces for the family \(\fml^P = \{\proj_1(Q)\cap \proj_1(P) \, | \, Q \in \hml^V \}\) of subgroups of \(H\).

  Note that if \((\hml,\gml)\) and \((\hml,\gml')\) are \(G\)-mixing pairs with \(\gml \subseteq\gml'\), then the identity functor is a left Quillen functor from the level \((\hml,\gml')\)-model structure to the level \((\hml,\gml)\)-model structure on \(\catGOS\). In fact this is the left Quillen functor in a Quillen equivalence.

\newcommand{\iH}{i}
  Given a \(G\)-mixing pair \((\hml,\gml)\) and a subgroup \(H\) of \(G\) with inclusion homomorphism \(\iH \colon H  \to G\) we let \((\iH^*\hml,\iH^*\gml)\) be the \(H\)-mixing pair with \(\iH^* \hml^V\) consisting of the subgroups of \(H \times \Or V\) obtained by intersecting members of \(\hml^V\) with \(H \times \Or V\). Similarly \(\iH^* \gml^V\) consists of the subgroups of \(H \times \Or V\) obtained by intersecting members of \(\gml^V\) with \(H \times \Or V\).

\begin{lemma}\label{levelchangeofgroups}
  Given a \(G\)-mixing pair \((\hml,\gml)\) and a subgroup \(H\) of \(G\) with inclusion homomorphism \(\iH \colon H  \to G\) the 
  functor \(\iH^* \colon G\catOS \to H\catOS\) is 
  a right Quillen functor
  of level mixed model structures with respect to \((\hml,\gml)\) and \((\iH^*\hml, \iH^*\gml)\) respectively.  Hence, the adjoint functor $G_+\smsh_H-$ is a left Quillen functor, and in particular, $G_+\smsh_H-$ takes $\Gr{}{}I_{i^*\gml}$-cofibrations  to $\Gr{}{}I_{\gml}$-cofibrations.
\end{lemma}
\begin{proof}
  This is a direct consequence of Lemma~\ref{mixedchangeofgroups}.
\end{proof}

\chapter{Stable Structures}
\label{ch:ss}
In Chapter~\ref{ch:eos} we constructed mixed level model structures for orthogonal spectra, and in this chapter we localize to obtain the promised $\Sp$-model structure.  The $\Sp$-equivalences are the stable equivalences detected by stable homotopy groups and the $\Sp$-cofibrations we already know from the positive level $\Sp$-structure of Example~\ref{ex:level}.  This should be compared with the Quillen equivalent, but more classical $q$-model structure developed by Mandell and May.  

When we eventually move on to looking at the equivariant structure of smash powers, the $\Sp$-model structure will be much more convenient. Still, it is useful to compare the two model structures since the $q$-cofibrant objects are built out of {\em free} $G$-spectra only and hence are in some respects much better behaved.  Also, since all $\Sp$-fibrant spectra are $q$-fibrant, all results about the latter hold for the former.

We end the chapter with some brief remarks about algebraic structure, but we postpone the most interesting case, namely case of commutative orthogonal $G$-ring spectra, since this relies on some results about smash products fitting later in our narrative.

\section{Stable Equivalences  
}\label{stable_equivalences_ny}
In this section we work with a fixed \(\hml\)--model structure \(\catM\) as in Theorem~\ref{catMlevel}, satisfying the pushout product axiom.

\begin{dfn}\label{def:lambda}
  Given a Euclidean space \(V\) and a subgroup \(P\) of \(G \times \Or
V\), we define the \(G \times \Or V\)-space
\begin{displaymath}
  \widetilde S^P \defas (G \times \Or V)_+ \wedge_P S^V.\index{SP@$\widetilde S^P$}
\end{displaymath}
Here \(P\) acts on \(S^V\) through the projection to \(\Or V\).
Let 
\[\lambda_P \colon \Gr{} V \widetilde S^P \to \Sp\]\index{lambdap@$\lambda_P$}
be the adjoint to action map
\begin{displaymath}
  \widetilde S^P = (G \times \Or V)_+ \wedge_P S^V \to  S^V =  \Sp_V.
\end{displaymath}
Given an orthogonal \(G\)-spectrum \(X\), we let
\begin{displaymath}
  \lambda_P^X \colon \Gr{} V  \widetilde S^P \wedge X  \to X \index{lambdapX@$\lambda_P^X$}
\end{displaymath}
be the composition of \(\lambda_P  \wedge X\) and the structure
isomorphism \(  \Sp  \wedge X \cong X\). 
\end{dfn}
Note that for \(X\) of the
form \(X = \Gr {} W C\) for \(C\) a \(G \times \Or W\)-space, the map
\(\lambda_P^X\) is the composition of the isomorphism 
\[\Gr{} V
\widetilde S^P \wedge \Gr {} W C \cong \Gr{}{V \oplus W}({\Or{V \oplus W}}_+\wedge_{\Or V \times \Or W}
  \widetilde S^P \wedge C)\] 
of Proposition~\ref{semi-free-smash}
and a map of the form
\begin{displaymath}
  \Gr{}{V \oplus W}({\Or{V \oplus W}}_+\wedge_{\Or V \times \Or W}
  \widetilde S^P \wedge C) \to \Gr{}{W} (C).
\end{displaymath}

Using a shear map, we obtain an isomorphism of \((G \times \Or V)\)-spaces
\begin{displaymath}
  \widetilde S^P = (G \times \Or V)_+ \wedge _P S^V \cong 
  (G \times \Or V)/P_+ \wedge  S^V, \quad (g,A) \wedge x \mapsto (g,A)
  \wedge Ax
\end{displaymath}
where \(G \times \Or V\) acts on \(S^V\) 
through the projection to $\Or V$.

\begin{remark}
  In the situation where the projection \(\proj_1 \colon P \to G\) is an
isomorphism, we can interpret the \(G \times \Or V\)-space
\(\widetilde S^P\) as follows:
let \(\varphi \colon G \to \Or V\) be the composition of the inverse
of \(\proj_1 \colon P \to G\), the inclusion of \(P\) in \(G \times \Or
V\) and the projection \(\proj_2 \colon G \times \Or V \to \Or V\).
Let \(\Or {V(P)} \) be the space \(\Or {V} \) with \(G \times \Or
V\) acting via multiplication by elements of \(\Or V\) from the left
and action from the right by the inverse of the 
elements of \(G\).
There is an isomorphism \((G \times
\Or V)_+ \wedge_P S^V \cong {\Or {V(P)}}_+ \wedge S^{V(P)}\),
of \(G \times \Or V\)-spaces taking an element of \((G \times
\Or V)_+ \wedge_P S^V\) represented by \((g,\alpha)\wedge x \in (G \times
\Or V)_+ \wedge S^V\) to \(\alpha \varphi(g^{-1}) \wedge \varphi(g)x
\in \Or V \wedge S^V\). In Example~\ref{freeGorthsp} the
orthogonal \(G\)-spectrum \(\Gr {} V \widetilde S^P\) is denoted
\(\Fr{} V S^{V(P)}\) 
and in \cite{MM}
it is denoted \(F_{V(P)}
S^{V(P)}\), or \(F_W S^W\) for \(W\) any representation of \(G\).
\end{remark}

\begin{dfn}\label{pshiftedvloop}
  Let \(P\) be a subgroup of \(G \times \Or V\) and let \(X\) be an orthogonal \(G\)-spectrum.
  \begin{enumerate}
  \item The {\em negative \(P\)-shifted \(V\)-loop spectrum}\index{negative \(P\)-shifted \(V\)-loop spectrum} of \(X\) is the orthogonal \(G\)-spectrum
    \begin{displaymath}
      R_{P} X \defas \internhom(\Gr {} V \widetilde S^P,X)\index{RPX@$R_{P} X,\, R_{+P}$}
    \end{displaymath}
  \item The {\em positive \(P\)-shifted \(V\)-loop spectrum}\index{positive!\(P\)-shifted \(V\)-loop spectrum} of \(X\) is the orthogonal \(G\)-spectrum
    \begin{displaymath}
      R_{+P} X \defas \Gr {} V \widetilde S^P \wedge X
    \end{displaymath}
  \end{enumerate}
\end{dfn}

\begin{dfn}\label{momegaspectrum}
  An orthogonal \(G\)-spectrum \(X\) is an {\em
    \(\catM\)-\(\Omega\)-spectrum}\index{MOmega spectrum@\(\catM\)-\(\Omega\)-spectrum} if it is fibrant in the 
  level \(\catM\)-model structure and for every subgroup \(P\) of \(G
  \times \Or V\) in \(\hml^V\) with \(\proj_1(P) = G\), the map
  \(\internhom(\lambda_P,X) \colon 
  X \to R_P X\) induced by \(\lambda_P \colon
  \Gr{} V \widetilde S^P \to \Sp\) is a level \(\catM\)-equivalence. 
\end{dfn}

\begin{dfn}\label{mstableequivalence}
  A morphism \(f \colon X \to Y\) of cofibrant \(G\)-orthogonal
  spectra in the level \(\catM\)-model structure is an {\em
    \(\catM\)-stable equivalence}\index{M-stable equivalence@\(\catM\)-stable equivalence}\index{equivalence!\(\catM\)-stable} if for every 
  \(\catM\)-\(\Omega\) spectrum \(E\), the map \(\catGOS(f,E) \colon
  \catGOS(Y,E) \to 
  \catGOS(X,E)\) is an weak equivalence in \(\catT\). More generally,
  a morphism of arbitrary orthogonal \(G\)-spectra is an
  \(\catM\)-stable equivalence if the induced map of cofibrant
  replacements is an \(\catM\)-stable equivalence.
\end{dfn}
The proof of \cite[8.11]{MMSS} gives the following result. 
\begin{lem}\label{stablebetweenomegaislevel}
  Every \(\catM\)-stable equivalence between \(\catM\)-\(\Omega\)-spectra is a
  level \(\catM\)-equivalence.
\end{lem}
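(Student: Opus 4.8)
The plan is to reduce the statement to a short Yoneda-type argument in the homotopy category of the level \(\catM\)-model structure, parallel to the proof of \cite[8.11]{MMSS}. Write \(\mathrm{Ho}_\ell(\catGOS)\) for the homotopy category of the level \(\catM\)-model structure of Theorem \ref{catMlevel}, with hom-sets \([-,-]_\ell\), and recall that a morphism of orthogonal \(G\)-spectra is a level \(\catM\)-equivalence if and only if it becomes an isomorphism in \(\mathrm{Ho}_\ell(\catGOS)\). Let \(f \colon X \to Y\) be an \(\catM\)-stable equivalence between \(\catM\)-\(\Omega\)-spectra. Choosing cofibrant replacements \(q_X \colon QX \to X\) and \(q_Y \colon QY \to Y\) and a lift \(Qf \colon QX \to QY\) with \(q_Y \circ Qf = f \circ q_X\), the definition of \(\catM\)-stable equivalence says that \(\catGOS(Qf,E)\) is a weak equivalence in \(\catT\) for every \(\catM\)-\(\Omega\)-spectrum \(E\); since \(q_X\) and \(q_Y\) are level \(\catM\)-equivalences, by two-out-of-three it suffices to show that \(Qf\) is a level \(\catM\)-equivalence.

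The key observation is that \(X\) and \(Y\) are themselves admissible test objects: an \(\catM\)-\(\Omega\)-spectrum is by definition fibrant in the level \(\catM\)-model structure, so taking \(E = X\) and \(E = Y\) we get that \(\catGOS(Qf,X)\) and \(\catGOS(Qf,Y)\) are weak equivalences of spaces. Because the level \(\catM\)-model structure is \(G\)-topological (Theorem \ref{catMlevel}), \(QX\) and \(QY\) are cofibrant, and \(X\) and \(Y\) are fibrant, passing to \(\pi_0\) shows that
\[
  (Qf)^* \colon [QY,X]_\ell \to [QX,X]_\ell \qquad\text{and}\qquad (Qf)^* \colon [QY,Y]_\ell \to [QX,Y]_\ell
\]
are bijections. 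Using that \(q_X\) and \(q_Y\) are isomorphisms in \(\mathrm{Ho}_\ell(\catGOS)\) we may identify \(Qf\) with \(f\) and rewrite these as: \(f^* \colon [Y,X]_\ell \to [X,X]_\ell\) and \(f^* \colon [Y,Y]_\ell \to [X,Y]_\ell\) are bijections.

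Now I would run the Yoneda argument. From the first bijection there is a unique \(g \in [Y,X]_\ell\) with \(g \circ f = \id_X\), so \(f\) has a left inverse in \(\mathrm{Ho}_\ell(\catGOS)\). Applying the second \(f^*\) to the two elements \(f \circ g\) and \(\id_Y\) of \([Y,Y]_\ell\) gives \((f \circ g) \circ f = f \circ (g \circ f) = f = \id_Y \circ f\), and injectivity of the second \(f^*\) forces \(f \circ g = \id_Y\). Hence \(f\) is an isomorphism in \(\mathrm{Ho}_\ell(\catGOS)\), i.e. a level \(\catM\)-equivalence, and therefore so is \(Qf\).

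The only real work is the bookkeeping in the middle step: confirming that \(\pi_0\) of the enriched morphism space \(\catGOS(A,E)\) computes \([A,E]_\ell\) when \(A\) is cofibrant and \(E\) is level \(\catM\)-fibrant, which follows from the level structure being \(G\)-topological, together with the harmless but essential point that the definition of \(\catM\)-stable equivalence quantifies over \emph{all} \(\catM\)-\(\Omega\)-spectra, so in particular over \(X\) and \(Y\) themselves. Once these points are in place the argument is purely formal, and I expect no serious obstacle beyond this hygiene — which is exactly why it can be imported from the proof of \cite[8.11]{MMSS}.
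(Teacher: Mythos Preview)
Your argument is correct and is precisely the Yoneda-type argument of \cite[8.11]{MMSS} that the paper cites as its proof; the paper does not spell out any details beyond that reference, so your write-up is a faithful unpacking of exactly the intended proof.
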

The proof of {\cite[III.3.4.]{MM}} gives the following result. 
\begin{lem}\label{levelFislevelH}
  Let \(\fml\) be the \(G\)-typical family of representations
  consisting of all trivial $G$-representations in $\hml$.
  A morphism between \(\catM\)-\(\Omega\)-spectra is a level
  \(\hml\)-equivalence if and only if it is a level \(\fml\)-equivalence.
\end{lem}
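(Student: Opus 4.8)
We follow the proof of \cite[III.3.4]{MM}. One implication is formal: since $\fml$ is the minimal $G$-typical family of representations, $\fml^V\subseteq\hml^V$ for every Euclidean space $V$ (every $G$-typical family contains the trivial representations, by conditions (ii), (iii) and (v) of Definition \ref{gtypicalrepresentations}), so a level $\hml$-equivalence is automatically a level $\fml$-equivalence, with no reference to the $\Omega$-spectrum hypothesis. Thus the content of the lemma is the converse, so fix a level $\fml$-equivalence $f\colon X\to Y$ between $\catM$-$\Omega$-spectra. By the characterisation of level $\hml$-equivalences recalled above, it is enough to prove that $f_{V(P)}$ is an $\fml^P$-equivalence of $\proj_1(P)$-spaces for each Euclidean $V$ and each $P\in\hml^V$.

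The plan is first to reduce this to a statement about ordinary fixed-point spaces and then to run an induction over the subgroup lattice. Restricting along $i_H\colon H\to G$, where $H=\proj_1(P)$, and using Lemma \ref{levelchangeofgroups} — together with the routine observation that restriction of groups preserves the property of being an $\catM$-$\Omega$-spectrum (for the restricted structure) and the property of being a level $\fml$-equivalence — we may assume $H=G$, so that $V(P)$ is a genuine $G$-representation $W$, and we must show that $f_W^K$ is a weak equivalence for every subgroup $K$ of $G$ in the family $\fml^P$. For proper $K$ this is the same assertion applied with $K$ in place of $G$, so we may induct downward over the subgroups of $G$ and treat only the case $K=G$.

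For the main step, fix a $G$-representation $W$ occurring as $V(R)$ for some $R\in\hml$ with $\proj_1(R)=G$. By Definition \ref{pshiftedvloop} the $\catM$-$\Omega$-spectrum condition supplies level $\hml$-equivalences $X_U\xrightarrow{\ \sim\ }(R_R X)_U\cong\Omega^{W}X_{U\oplus W}$, compatibly in $X$ and $Y$; combining these with the sum-closure of $\hml$ from Definition \ref{gtypicalrepresentations}(v) one rewrites the fixed-point space $X_W^G$, up to weak equivalence and a shift, in terms of the values of $X$ on \emph{trivial} representations. The passage from the genuine representation sphere to trivial data is carried out through the isotropy-separation cofibre sequences $S(W')_+\to S^0\to S^{W'}$ for the orthogonal complements $W'=(W^G)^{\perp}$ that appear, applying fixed points and the Cube Lemma exactly as in the proof of Proposition \ref{mixedisorcat}, the $S(W')_+$-terms — which involve only fixed points for proper subgroups — being absorbed by the inductive hypothesis. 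Since $f$ is a level $\fml$-equivalence it induces weak equivalences on all of this trivial-representation data, hence a weak equivalence $X_W^G\to Y_W^G$, which closes the induction. The delicate point, and the place where the argument of \cite[III.3.4]{MM} genuinely has to be reproduced rather than replaced by a soft formal principle, is exactly this rewriting: the $\catM$-$\Omega$-spectrum condition only provides loop-space relations along representations drawn from $\hml$, and one must check that this still pins down the genuine fixed-point homotopy type of each $X_W$ in terms of the $\fml$-level information.
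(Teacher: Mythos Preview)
Your overall strategy matches the paper's: both simply defer to \cite[III.3.4]{MM}, and your sketch of the substantive direction (level $\fml$-equivalence $\Rightarrow$ level $\hml$-equivalence via restriction, induction on subgroups, the $\Omega$-spectrum structure maps, and isotropy separation) is faithful to that argument and identifies the right ingredients.

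There is, however, a genuine slip in your ``formal'' direction. You assert that $\fml^V\subseteq\hml^V$ for every $V$, appealing to conditions (ii), (iii), (v) of Definition~\ref{gtypicalrepresentations}. Those conditions guarantee that $\hml$ contains \emph{some} trivial $G$-representation and is closed under direct sums, but they do \emph{not} force $H\times\{\id_V\}\in\hml^V$ for every $V$. The positive mixing pair of Example~\ref{positivemixingpair} has $\hml^0=\emptyset$, while $\fml^0$ contains every subgroup of $G$; so $\fml^0\not\subseteq\hml^0$. Worse, for positive $\catM$-$\Omega$-spectra the level-$0$ space is unconstrained (both the level-fibrancy condition and the $\Omega$-spectrum condition are vacuous at $V=0$ since $\hml^0=\gml^0=\emptyset$), so one can produce a map between such spectra that is a level $\hml$-equivalence but not a level $\fml$-equivalence. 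Hence this direction is not formal and, as literally stated, can fail.

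This does not affect the paper's applications: the only place the lemma is invoked is for the implication ``level $\fml$-equivalence $\Rightarrow$ level $\hml$-equivalence'', which is exactly the direction you sketch correctly. But you should drop the claim that the other implication needs no $\Omega$-spectrum hypothesis, and either note the caveat at levels where $\fml^V\not\subseteq\hml^V$ or restrict the ``only if'' statement accordingly.
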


\begin{prop}\label{manystableone}
  If \(P\) is in \(\hml\) with \(\proj_1 \colon P \to G\) an
  isomorphism, then \(\lambda_P^X\) 
  is an \(\catM\)-stable equivalence for every cofibrant spectrum \(X\)
  in the level \(\catM\)-model structure.
\end{prop}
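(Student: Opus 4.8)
The plan is to verify directly that \(\lambda_P^X\) is an \(\catM\)-stable equivalence, that is, that for every \(\catM\)-\(\Omega\)-spectrum \(E\) the map
\[
  \catGOS(\lambda_P^X,E)\colon \catGOS(X,E)\longrightarrow \catGOS(\Gr{}V\widetilde S^P\wedge X,E)
\]
is a weak equivalence in \(\catT\). This is the \emph{strict} form of the definition, which applies because \(\lambda_P^X\) is a morphism of cofibrant spectra: \(X\) is cofibrant by hypothesis, and \(\Gr{}V\widetilde S^P\wedge X\) is cofibrant as well since \(\Gr{}V\widetilde S^P\) is cofibrant (established below) and the level \(\catM\)-model structure satisfies the pushout-product axiom. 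The key move is to rewrite the displayed map via the closed symmetric monoidal adjunction on \(\catGOS\): because \(\lambda_P^X\) is \(\lambda_P\wedge\id_X\) followed by the unit isomorphism \(\Sp\wedge X\cong X\), the adjunction \(\catGOS(A\wedge X,E)\cong\catGOS(X,\hom_{\catGOS}(A,E))\) identifies \(\catGOS(\lambda_P^X,E)\) — up to that unit isomorphism — with \(\catGOS(X,\internhom(\lambda_P,E))\), where \(\internhom(\lambda_P,E)\colon E\to R_P E\) is precisely the map occurring in the definition of an \(\catM\)-\(\Omega\)-spectrum (cf.\ \ref{pshiftedvloop}).

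I would then feed in two facts. First, since \(\proj_1\colon P\to G\) is an isomorphism we have \(\proj_1(P)=G\), so the hypothesis that \(E\) is an \(\catM\)-\(\Omega\)-spectrum says exactly that \(\internhom(\lambda_P,E)\colon E\to R_P E\) is a level \(\catM\)-equivalence. Second, since \(X\) is cofibrant, \(\catGOS(X,-)\colon\catGOS\to\catT\) is a right Quillen functor for the level \(\catM\)-model structure (its left adjoint \(-\wedge X\) preserves cofibrations and acyclic cofibrations, as smashing a cofibration of based spaces with the cofibrant spectrum \(X\) gives an \(\catM\)-cofibration), so by Ken Brown's lemma it sends level \(\catM\)-equivalences between level \(\catM\)-fibrant objects to weak equivalences in \(\catT\). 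As \(E\) is level \(\catM\)-fibrant by assumption, the only remaining point is that \(R_P E\) is level \(\catM\)-fibrant.

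Establishing this, together with the cofibrancy of \(\Gr{}V\widetilde S^P\) used above, is the main obstacle, and both reduce to analysing \(\widetilde S^P\). Viewing \(S^V\) as a \(P\)-space through \(P\subseteq G\times\Or V\), Illman's Triangulation Theorem \ref{illtriangmfd} makes \(S^V\) a \(G\times\Or V\)-CW complex and Corollary \ref{illsubgroup} makes it a \(P\)-CW complex, so \(\widetilde S^P=(G\times\Or V)_+\wedge_P S^V\) is obtained by inducing this \(P\)-CW structure up and is therefore a \(G\times\Or V\)-CW complex all of whose cells have orbit type subconjugate to \(P\); since \(\hml^V\) is a family containing \(P\), those orbit types all lie in \(\hml^V\), so \(\widetilde S^P\) is cofibrant in \(\catM_V\) by condition (iv) of Definition \ref{hmlmonoidal}. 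The functor \(\Gr{}V\) is left Quillen — it is left adjoint to \(\ev'_V\), which sends level fibrations and level acyclic fibrations to fibrations and acyclic fibrations in \(\catM_V\) — so \(\Gr{}V\widetilde S^P\) is cofibrant in the level \(\catM\)-model structure. Hence \(-\wedge\Gr{}V\widetilde S^P\) is a left Quillen endofunctor by the pushout-product axiom, and its right adjoint \(R_P=\internhom(\Gr{}V\widetilde S^P,-)\) preserves level \(\catM\)-fibrant objects; in particular \(R_P E\) is level \(\catM\)-fibrant. Assembling the pieces, \(\catGOS(\lambda_P^X,E)\cong\catGOS(X,\internhom(\lambda_P,E))\) is a weak equivalence in \(\catT\) for every \(\catM\)-\(\Omega\)-spectrum \(E\), which is the assertion. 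The one genuinely delicate step is seeing that \(R_P\) is right Quillen: this is not formal from its definition as an internal hom, but rests on the explicit CW structure of \(\widetilde S^P\) together with the pushout-product axiom for the level \(\catM\)-model structure.
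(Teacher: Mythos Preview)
Your proof is correct and follows essentially the same approach as the paper: rewrite \(\catGOS(\lambda_P^X,E)\) via the closed monoidal adjunction as \(\catGOS(X,\internhom(\lambda_P,E))\), use that \(\internhom(\lambda_P,E)\) is a level equivalence between level fibrant objects, and conclude by cofibrancy of \(X\). You supply considerably more detail than the paper's three-line proof---in particular, you explicitly justify that \(R_PE\) is level fibrant and that \(\widetilde S^P\) is cofibrant in \(\catM_V\), points the paper either asserts without comment here or establishes separately (the latter is Lemma~\ref{comhmlhml} and its corollary).
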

\begin{proof}
  We have to show that for all \(\catM\)-\(\Omega\)-spectra \(E\) the
  map
  \[\catGOS(\lambda^X_P,E) = \catGOS(X,\internhom(\lambda_P,E))\]
  is a weak equivalence. However \(\internhom(\lambda_P,E)\) is a level
  equivalence of  level fibrant objects. Since \(X\) is
  cofibrant, the map \(\catGOS(X,\internhom(\lambda_P,E))\) is a weak equivalence.
\end{proof}

\begin{dfn}\label{ploopandsuspension}
  Let \(V\) be a Euclidean space, let \(P\) be a subgroup of \(G \times \Or V\) such that \(\proj_1 \colon
  P \to G\) is an isomorphism and let \(X\) be an orthogonal \(G\)-spectrum.
  \begin{enumerate}
  \item The {\em \(P\)-loop spectrum}\index{Ploop spectrum@$P$-loop spectrum} of \(X\) is the orthogonal \(G\)-spectrum
    \begin{displaymath}
      \Omega^P X \defas \internhom(\Gr {} 0 S^{V(P)},X).\index{OmegaP@$\Omega^P$}
    \end{displaymath}
  \item The {\em \(P\)-suspension spectrum} \index{Psuspension spectrum@$P$-suspension spectrum}of \(X\) is the orthogonal \(G\)-spectrum
    \begin{displaymath}
      \Sigma^P X \defas \Gr {} 0 S^{V(P)} \wedge X.\index{SigmaP@$\Sigma^P$}
    \end{displaymath}
  \end{enumerate}
\end{dfn}
Note that the category of orthogonal \(G\)-spectra is enriched over
the category of \(G\)-spaces, and that \(\Sigma^P X = S^{V(P)}\otimes
X\) is the tensor of \(S^{V(P)}\) and \(X\), and that \(\Omega^P X\)
is the cotensor of \(S^{V(P)}\) and \(X\). Since 
the
\(\catM\)-level 
model structures on the category
of orthogonal \(G\)-spectra 
is a 
\(G\catT\)-model structure
we can
conclude that 
\((\Sigma^P,\Omega^P)\) is a
Quillen adjoint pair of endofunctors of the category of orthogonal
\(G\)-spectra. 
\begin{lem}\label{charstabeq}
  A map \(f \colon X \to Y\) of orthogonal
\(G\)-spectra is an \(\catM\)-stable equivalence if and only
if for every \(\catM\)-\(\Omega\) spectrum \(E\), the induced map \(f^*
\colon [Y,E] \to [X,E]\) of morphism sets in the level
\(\catM\)-homotopy category is a bijection.
\end{lem}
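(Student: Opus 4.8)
The plan is to reduce to the case where $X$ and $Y$ are cofibrant in the level $\catM$-model structure, identify $[Y,E]$ with $\pi_0\catGOS(Y,E)$ for every $\catM$-$\Omega$-spectrum $E$, and then bootstrap from $\pi_0$ to all homotopy groups using that the class of $\catM$-$\Omega$-spectra is closed under the loop functors $F(S^n,-)$. For the reduction, both conditions are insensitive to cofibrant replacement: choosing a cofibrant replacement $\tilde f\colon\tilde X\to\tilde Y$ of $f$ fitting into a commuting square with level $\catM$-equivalences $\tilde X\to X$, $\tilde Y\to Y$, the map $f$ is an $\catM$-stable equivalence if and only if $\tilde f$ is by definition, while a level $\catM$-equivalence becomes an isomorphism in the level $\catM$-homotopy category, so $[\tilde X,E]\to[X,E]$ and $[\tilde Y,E]\to[Y,E]$ are bijections and $f^*$ is a bijection if and only if $\tilde f^*$ is. Thus we may assume $X$ and $Y$ cofibrant.

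Since the level $\catM$-model structure is a $G\catT$-model structure (Theorem~\ref{catMlevel}) and an $\catM$-$\Omega$-spectrum $E$ is in particular level $\catM$-fibrant, for cofibrant $X$ the mapping space $\catGOS(X,E)$ is a model for the derived mapping space, so $[X,E]=\pi_0\catGOS(X,E)$ and $f^*$ is the effect of $\catGOS(f,E)$ on $\pi_0$. The forward implication is then immediate, a weak equivalence of spaces being in particular a $\pi_0$-isomorphism.

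For the converse the crux --- and what I expect to be the main obstacle --- is that the class of $\catM$-$\Omega$-spectra is closed under the cotensor $F(S^n,-)$ with the trivially acted upon based sphere $S^n$, and that $\internhom(Y,E)$ is again an $\catM$-$\Omega$-spectrum when $E$ is one and $Y$ is cofibrant. Level $\catM$-fibrancy of $F(S^n,E)$ holds because $F(S^n,-)$ is right Quillen for the level $\catM$-model structure ($S^n$ being cofibrant in $G\catT$); the $\Omega$-spectrum condition follows from the natural isomorphism $\internhom(\Gr{}V\widetilde S^P,F(S^n,E))\cong F(S^n,\internhom(\Gr{}V\widetilde S^P,E))=F(S^n,R_PE)$, under which $\internhom(\lambda_P,F(S^n,E))$ corresponds to $F(S^n,\internhom(\lambda_P,E))$, which is a level $\catM$-equivalence by Ken Brown's lemma \cite[Lemma~7.7.1]{Hir} since $F(S^n,-)$ is right Quillen and $E$, $R_PE=\internhom(\Gr{}V\widetilde S^P,E)$ are level $\catM$-fibrant (the latter because $\internhom(-,E)$ is homotopically well-behaved in its first variable on cofibrant objects, passing to a cofibrant replacement of $\Gr{}V\widetilde S^P$ if needed). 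The same argument with $\internhom(Y,-)$ in place of $F(S^n,-)$ handles $\internhom(Y,E)$.

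Granting this, since $F(S^n,E)$ is level $\catM$-fibrant and $Y$ is cofibrant, the enrichment isomorphism $\catGOS(Y,F(S^n,E))\cong F(S^n,\catGOS(Y,E))$ identifies $[Y,F(S^n,E)]=\pi_0\catGOS(Y,F(S^n,E))$ with $\pi_n\catGOS(Y,E)$ (basepoint the zero morphism), and likewise for $X$; so applying the hypothesis to the $\catM$-$\Omega$-spectra $F(S^n,E)$ shows $\catGOS(f,E)$ induces a bijection on $\pi_n$ at the basepoint for all $n\ge 0$. Finally, $\catGOS(Y,E)=\ev_0\internhom(Y,E)$ and $\catGOS(X,E)=\ev_0\internhom(X,E)$ are zeroth spaces of $\catM$-$\Omega$-spectra, hence infinite loop spaces; so all their path components are homotopy equivalent, and a map of such spaces inducing a $\pi_n$-isomorphism at the basepoint for every $n$ is a weak equivalence. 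Therefore $\catGOS(f,E)$ is a weak equivalence in $\catT$ for every $\catM$-$\Omega$-spectrum $E$, \ie $f$ is an $\catM$-stable equivalence.
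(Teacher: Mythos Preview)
Your approach is essentially correct and genuinely different from the paper's, but there is one real gap in the final step.

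The paper does not work with $\catGOS(f,E)$ directly. Instead it proves the stronger statement that $\internhom(f,E')$ is a \emph{level} $\catM$-equivalence: for each source or target $C$ of a generating level cofibration one has $[C,\internhom(f,E')]\cong[f,\internhom(C,E')]$, and $\internhom(C,E')$ is an $\catM$-$\Omega$-spectrum since $C$ is cofibrant, so the hypothesis applies. Running over all such $C$ probes every $\pi_n(\internhom(f,E')_V^P)$, giving the level equivalence; evaluating at level $0$ and taking $G$-fixed points then yields the desired weak equivalence of $\catGOS(f,E')$. This sidesteps any basepoint issue because the test objects $C$ already detect all homotopy groups.

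Your route---replacing $E$ by $F(S^n,E)$ to get $\pi_n$ at the basepoint, then invoking the infinite loop structure---is valid, but your last sentence is not: it is \emph{false} that a map between infinite loop spaces which is a $\pi_n$-isomorphism at the basepoint is automatically a weak equivalence. (Take $A=B=\Z\times S^1$ and $g(n,z)=(n,z^{n+1})$: this is a $\pi_*$-isomorphism on the identity component and a $\pi_0$-bijection, yet kills $\pi_1$ on the component $\{-1\}\times S^1$.) What you need is that $\catGOS(f,E)$ is itself an infinite loop \emph{map}, which it is, being the $G$-fixed $0$th level of the map $\internhom(f,E)$ of $\catM$-$\Omega$-spectra; the $H$-structure then transports the basepoint $\pi_n$-isomorphism to every component. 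You should state this explicitly. (Minor: $\catGOS(Y,E)$ is $(\ev_0\internhom(Y,E))^G$, not $\ev_0\internhom(Y,E)$.)
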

\begin{proof}
  One direction is easy since for \(\catM\)-level cofibrant \(Y\) and
  fibrant \(E\), we have that \([Y,E]\) is the set of components of
  \(\catGOS(Y,E)\). 
  Conversely, suppose that for every \(\catM\)-\(\Omega\) spectrum \(E\), the
  induced map \(f^* \colon [Y,E] \to [X,E]\) of morphisms sets in the
  level
  \(\catM\)-homotopy category is a bijection. Since \(\catM\)-level
  equivalences are \(\catM\)-stable equivalences we may without loss
  of generality assume that \(X\) and \(Y\) are cofibrant in the level
  \(\catM\)-model structure. Given an
  \(\catM\)-\(\Omega\) spectrum \(E'\) we show that the map
  \begin{displaymath}
    \internhom(f,E') \colon \internhom(Y,E') \to \internhom(X,E')
  \end{displaymath}
  is a level equivalence. Since \(X\) and \(Y\) and cofibrant, this
  implies by evaluating at level \(0\) and taking
  \(G\)-fixed points that 
  \begin{displaymath}
    \catGOS(f,E') \colon \catGOS(Y,E') \to \catGOS(X,E')
  \end{displaymath}
  is a weak equivalence. In order to show that \(\internhom(f,E')\) is a level
  equivalence, it suffices to show that for every source or target
  \(C\) of
  a generating cofibration for the level \(\catM\)-model structure,
  the morphism \([C,\internhom(f,E')]\) in the level \(\catM\)-homotopy
  category is an isomorphism. However, this map is isomorphic to the
  adjoint \([f,\internhom(C,E')]\), and since \(C\) is cofibrant, the spectrum
  \(\internhom(C,E')\) is an \(\catM\)-\(\Omega\)-spectrum.
\end{proof}

\begin{lem}\label{comhmlhml}
  Let \(V\) be a Euclidean space. For every member \(P\) of \(\hml^V\) the \(G \times \Or V\)-space \((G \times \Or V/P)_+\) is cofibrant in the \(\catM_V\)-model structure.
\end{lem}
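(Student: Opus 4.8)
The statement is essentially immediate from part~(iv) of Definition~\ref{hmlmonoidal}, which demands that every cofibration of the $(\hml^V,\hml^V)$-model structure on $(G\times\Or V)\catT$ (Theorem~\ref{GFmixedmodelstr}) be a cofibration of $\catM_V$. So it suffices to prove that $((G\times\Or V)/P)_+$ is cofibrant in the $(\hml^V,\hml^V)$-model structure, whose generating cofibrations are the maps $(i\times(G\times\Or V)/Q)_+$ with $i\in I$ a generating cofibration of $\catT$ and $Q\in\hml^V$.

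The plan is then to exhibit $((G\times\Or V)/P)_+$ as a cell complex for this generating set. Since $P\in\hml^V$, the quickest route is to observe that the zero-dimensional generating cofibration of $\catT$, namely $(S^{-1})_+\to(D^0)_+$, i.e.\ $\ast\to S^0$, produces the element $\ast\to((G\times\Or V)/P)_+$ of the above generating set; hence $((G\times\Or V)/P)_+$ is a single-cell cell complex, so cofibrant in the $(\hml^V,\hml^V)$-model structure, and therefore, by Definition~\ref{hmlmonoidal}(iv), cofibrant in $\catM_V$.

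Should one prefer an argument not relying on the zero-dimensional cell, Illman's Triangulation Theorem gives the same conclusion: the homogeneous space $(G\times\Or V)/P$ is a smooth $(G\times\Or V)$-manifold, so by Theorem~\ref{illtriangmfd} it carries a $(G\times\Or V)$-CW structure, and by Lemma~\ref{isotropytypeofcof} every cell has the form $D^n\times(G\times\Or V)/L$ with $L$ the isotropy group of a point of $(G\times\Or V)/P$, i.e.\ a conjugate of $P$; as the family $\hml^V$ is closed under conjugation, each such $L$ lies in $\hml^V$, so this is a cell complex for the generating cofibrations of the $(\hml^V,\hml^V)$-model structure. There is no genuinely difficult point in the proof; the only thing to watch is that the orbit types appearing in the cell decomposition of $(G\times\Or V)/P$ are constrained to conjugates of $P$, all of which belong to $\hml^V$.
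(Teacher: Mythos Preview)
Your proof is correct and follows the same approach as the paper: reduce to cofibrancy in the $(\hml^V,\hml^V)$-model structure via Definition~\ref{hmlmonoidal}(iv), then observe that the orbit types occurring in $(G\times\Or V)/P$ are conjugates of $P$ and hence lie in the family $\hml^V$. Your single-cell argument is in fact the cleanest way to see this, and your Illman-based alternative is exactly the paper's (terser) reasoning spelled out.
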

\begin{proof}
  Since the isotropy groups of the \(G \times \Or V\)-CW-complex \((G
  \times \Or V/P)_+\) are subconjugate to \(P\) it is cofibrant in the
  mixed \((\hml^V,\hml^V)\)-model structure. By part (iv) of
  Definition~\ref{hmlmonoidal} this implies that it is cofibrant in the
  \(\catM_V\)-model structure.
\end{proof}
\begin{cor}
  If \(P \in \hml^V\), then \(\widetilde S^P\) is cofibrant in
  \(\catM_V\). 
\end{cor}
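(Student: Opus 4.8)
The plan is to produce $\widetilde{S}^P$ directly as a cell complex built from the generating cofibrations of $\catM_V$. Recall from Proposition~\ref{mixedisorcat} and Definition~\ref{defineivandjv} that $\catM_V$ is the $(\hml^V,\gml^V)$-model structure on $(G\times\Or V)\catT$, whose generating cofibrations $I_V=I_{\gml^V}$ are the maps $(S^{n-1}\to D^n)\times(G\times\Or V)/P'_+$ with $P'\in\gml^V$. So it is enough to exhibit $\widetilde{S}^P$ as a space obtained from the basepoint by successively attaching cells $(D^n\times(G\times\Or V)/L)_+$ along $(S^{n-1}\times(G\times\Or V)/L)_+$ with $L\in\gml^V$.

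I would do this in two steps. Since $P\in\hml^V$, the projection $\proj_1\colon P\to G$ is injective; put $H=\proj_1(P)$ and let $\varphi\colon H\to\Or V$ be the associated representation, so $V(P)=(V,\varphi)$. In $\widetilde{S}^P=(G\times\Or V)_+\wedge_P S^V$ the subgroup $P$ acts on $S^V$ through $\proj_2\colon P\to\Or V$, so under the isomorphism $\proj_1\colon P\cong H$ the $P$-space $S^V$ is identified with the representation sphere $S^{V(P)}$. By Remark~\ref{spheresareCW} together with Theorem~\ref{illmantriang}, $S^{V(P)}$ is an $I_H$-cell complex, and transporting along $\proj_1$ shows that $S^V$ is an $I_P$-cell complex, built from the basepoint by attaching cells $(D^n\times P/L)_+$ along $(S^{n-1}\times P/L)_+$ with $L$ a closed subgroup of $P$. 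Now apply the induction functor $(G\times\Or V)_+\wedge_P(-)$ from Definition~\ref{inducingup}. Being a left adjoint, it preserves these colimits and fixes the basepoint; and by Lemma~\ref{inducingsmashcomp}, applied with $(D^n)_+$ and $(S^{n-1})_+$ carrying trivial $P$-action, it sends the cell $(D^n\times P/L)_+\cong(D^n)_+\wedge(P/L)_+$ to $(D^n)_+\wedge(G\times\Or V)/L_+\cong(D^n\times(G\times\Or V)/L)_+$, and likewise for the boundary sphere. Hence $\widetilde{S}^P$ is built from the basepoint by attaching cells $(D^n\times(G\times\Or V)/L)_+$ along $(S^{n-1}\times(G\times\Or V)/L)_+$ with $L$ ranging over closed subgroups of $P$.

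Finally, each such $L$ lies in $\gml^V$: by Definition~\ref{mixing} we have $P\in\hml^V\subseteq\gml^V$, and $\gml^V$ is a family, hence closed under passage to subgroups. Thus $\widetilde{S}^P$ is an $I_{\gml^V}$-cell complex and therefore cofibrant in $\catM_V$. The one step demanding care is the second one, namely checking that applying $(G\times\Or V)_+\wedge_P(-)$ to the based cell attachments defining the $I_P$-cell structure on $S^V$ yields exactly the asserted based $(G\times\Or V)$-cells and attaching maps; this is precisely what the smash-compatibility of inducing up in Lemma~\ref{inducingsmashcomp} guarantees. As an alternative one could instead combine the shear isomorphism $\widetilde{S}^P\cong(G\times\Or V)/P_+\wedge S^V$ with Lemma~\ref{comhmlhml} and a $(G\times\Or V)$-equivariant version of Corollary~\ref{illproducts}, which controls the orbit types appearing in $(G\times\Or V)/P\times S^V$ and shows they are all subconjugate to $P$, hence in $\gml^V$.
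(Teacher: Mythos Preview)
Your induction argument is correct and is essentially the approach the paper has in mind: produce a $(G\times\Or V)$-CW structure on $\widetilde S^P$ whose isotropy groups are all subconjugate to $P$, and conclude cofibrancy. The one genuine slip is in the framing. At this point in the paper, $\catM$ is an \emph{arbitrary} $\hml$-model structure (Definition~\ref{hmlmonoidal}), not necessarily one coming from a mixing pair $(\hml,\gml)$; Proposition~\ref{mixedisorcat} only shows that mixing pairs provide examples of $\hml$-model structures, not that every $\catM_V$ has generating cofibrations $I_{\gml^V}$. So your appeal to $I_{\gml^V}$ and to $\gml^V$ being a family is not available in the stated generality.

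The fix is immediate and in fact makes the argument match the proof of Lemma~\ref{comhmlhml} exactly. Since each $L$ occurring in your cell decomposition is a closed subgroup of $P\in\hml^V$ and $\hml^V$ is itself a family, you already have $L\in\hml^V$. Hence $\widetilde S^P$ is an $I_{\hml^V}$-cell complex, i.e.\ cofibrant in the $(\hml^V,\hml^V)$-model structure, and then part~(iv) of Definition~\ref{hmlmonoidal} gives cofibrancy in $\catM_V$. This is precisely the route taken in the lemma the corollary is meant to follow from; your alternative via the shear isomorphism and an isotropy computation in $(G\times\Or V)/P_+\wedge S^V$ would land in the same place.
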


\begin{dfn}\label{shiftedspectra}
  Let \(P\) be a subgroup of \(G \times \Or V\) such that \(\proj_1 \colon
  P \to G\) is an isomorphism and let \(X\) be an orthogonal \(G\)-spectrum.
  \begin{enumerate}
  \item The {\em negative \(P\)-shift}\index{negative \(P\)-shift} of \(X\) is the orthogonal \(G\)-spectrum
    \begin{displaymath}
      s_{-P} X \defas \internhom(\Gr {} V(G \times \Or V/P)_+,X)\index{sp@$s_{-P},\, s_{+P}$}
    \end{displaymath}
  \item The {\em positive \(P\)-shift}\index{negative \(P\)-shift} of \(X\) is the orthogonal \(G\)-spectrum
    \begin{displaymath}
      s_{+P} X\defas \Gr {} V(G \times \Or V/P)_+ \wedge X
    \end{displaymath}
  \end{enumerate}
\end{dfn}
\begin{lem}\label{twistedorthogonalgroups}
  Let \(P\) be a subgroup of \(G \times \Or V\) such that \(\proj_1
  \colon P \to G\) is an isomorphism and let \(X\) be an orthogonal
  \(G\)-spectrum.
  \begin{enumerate}
  \item   There are isomorphisms
    \[
    s_{-P} X \cong \internhom(\Gr {} V {{\Or {V(P)}}_+},X) 
    \qquad \text{and} \qquad
    s_{+P} X \cong \Gr {} V {{\Or {V(P)}}_+} \wedge X.
    \]
    In particular \((s_{-P}X)_W \cong X_{W \oplus V(P)}\).
  \item  The \(P\)-shifted \(V\)-loop spectrum \(R_P X\)
    (cf. Definition~\ref{pshiftedvloop}) 
    is naturally \(G\) 
    isomorphic to both \(\Omega^P s_{-P} X\) and \(s_{-P} \Omega^P X\).
  \end{enumerate}
\end{lem}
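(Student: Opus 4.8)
The plan is to deduce part (i) from the $G \times \Or V$-space isomorphism \eqref{ovpgrep} by pushing it through the $G\catT$-functor $\Gr{}{V}$, and then to obtain part (ii) by feeding part (i) together with the shear-map description $\widetilde S^P \cong {\Or{V(P)}}_+ \wedge S^{V(P)}$ into the closed symmetric monoidal structure on $\catGOS$; nothing deeper than formal nonsense should be needed beyond this.

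For part (i): since $\proj_1 \colon P \to G$ is an isomorphism, \eqref{ovpgrep} gives an isomorphism $(G \times \Or V)/P_+ \cong {\Or{V(P)}}_+$ of $G \times \Or V$-spaces, i.e.\ of $\Or V$-spaces with $G$-action. Applying the $G\catT$-functor $\Gr{}{V}$ yields $\Gr{}{V}(G \times \Or V/P)_+ \cong \Gr{}{V}{\Or{V(P)}}_+$ in $\catGOS$, hence, applying $\internhom(-,X)$ and $- \wedge X$, the two asserted isomorphisms $s_{-P}X \cong \internhom(\Gr{}{V}{\Or{V(P)}}_+,X)$ and $s_{+P}X \cong \Gr{}{V}{\Or{V(P)}}_+ \wedge X$. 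For the level formula I would unwind $(s_{-P}X)_W = \internhom(\Gr{}{V}{\Or{V(P)}}_+,X)_W = \catOS(\Gr{}{V}{\Or{V(P)}}_+, X(W \oplus -))$ and use the adjunction $\Gr{}{V} \dashv \ev'_V$ to rewrite this as $\Or V\catT({\Or{V(P)}}_+, X_{W \oplus V})$; since ${\Or{V(P)}}_+$ is free on one point as an $\Or V$-space, this space of $\Or V$-maps is $X_{W \oplus V}$, and tracking the right-translation-by-$\varphi(g^{-1})$ part of the $G$-action on ${\Or{V(P)}}_+$ identifies the resulting $G$-space with $X_{W \oplus V(P)}$.

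For part (ii): first recall $\Gr{}{0}S^{V(P)} = \Sigma^\infty S^{V(P)} = S^{V(P)} \wedge \Sp$, so that $\Omega^P(-) = \internhom(S^{V(P)} \wedge \Sp,-)$. Next use the isomorphism $\widetilde S^P \cong {\Or{V(P)}}_+ \wedge S^{V(P)}$ of $G \times \Or V$-spaces recorded before Definition \ref{pshiftedvloop}, in which $\Or V$ acts only on the first smash factor and $G$ diagonally; this exhibits $\widetilde S^P$ as the tensor, in $\Or V(G\catT) = (G\times\Or V)\catT$, of the $G$-space $S^{V(P)}$ with the $\Or V$-space-with-$G$-action ${\Or{V(P)}}_+$, so the $G\catT$-enriched left adjoint $\Gr{}{V}$ carries it to $\Gr{}{V}\widetilde S^P \cong S^{V(P)} \wedge \Gr{}{V}{\Or{V(P)}}_+ \cong (S^{V(P)} \wedge \Sp) \wedge \Gr{}{V}{\Or{V(P)}}_+$. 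Plugging this into $R_P X = \internhom(\Gr{}{V}\widetilde S^P,X)$ and using the adjunction isomorphism $\internhom(A \wedge B, X) \cong \internhom(B,\internhom(A,X))$ together with the symmetry of $\wedge$ gives
\[
R_P X \cong \internhom\bigl(\Gr{}{V}{\Or{V(P)}}_+, \internhom(S^{V(P)} \wedge \Sp,X)\bigr) = \internhom(\Gr{}{V}{\Or{V(P)}}_+, \Omega^P X) = s_{-P}\Omega^P X
\]
by part (i), and likewise
\[
R_P X \cong \internhom\bigl(S^{V(P)} \wedge \Sp, \internhom(\Gr{}{V}{\Or{V(P)}}_+,X)\bigr) = \Omega^P\, \internhom(\Gr{}{V}{\Or{V(P)}}_+,X) = \Omega^P s_{-P}X .
\]
Each isomorphism used is $G$-equivariant because the internal function spectrum in $\catGOS$ is the underlying one with the conjugation action and because $\Gr{}{V}$, the tensors, and the monoidal adjunctions are all $G\catT$-functorial, so these are isomorphisms in $\catGOS$.

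The hard part will not be the spectrum-level statements, which are essentially forced by the closed symmetric monoidal structure and the free/semifree adjunctions, but the bookkeeping of $G$-actions: the ``in particular'' level identification in (i) and the $G$-equivariance and naturality claims in (ii) require keeping careful track of the twisted $\varphi$-actions carried by ${\Or{V(P)}}_+$ and $S^{V(P)}$, so one must be disciplined about which copy of $\Or V$ or of $G$ is acting on which factor at each step.
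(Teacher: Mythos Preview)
Your proposal is correct and follows essentially the same approach as the paper: part (i) from the isomorphism \eqref{ovpgrep} pushed through $\Gr{}{V}$, and part (ii) from the splitting $\widetilde S^P \cong {\Or{V(P)}}_+ \wedge S^{V(P)} \cong S^{V(P)} \wedge {\Or{V(P)}}_+$ fed into the closed monoidal structure. The paper's proof is extremely terse (and in fact contains a self-reference typo, citing the lemma in its own proof where it presumably means part (i)); your write-up supplies exactly the details the paper leaves implicit, including the level computation for $(s_{-P}X)_W$.
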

\begin{proof}
  Part (i) is a consequence of the isomorphism \(G \times \Or V/P
  \cong \Or {V(P)}\) from Equation~\ref{ovpgrep}. Part (ii) is a direct
  consequence of the
  isomorphisms 
  \begin{displaymath}
    \widetilde S^P \cong 
    S^{V(P)} \wedge {\Or {V(P)}}_+
    \cong
    {\Or {V(P)}}_+ \wedge S^{V(P)}.
  \end{displaymath}
\end{proof}
\begin{lem}\label{RPperserveslevelhml}
  For every \(G\)-typical family of representations \(\hml\) and every
  subgroup \(P\) of \(G \times \Or V\) in \(\hml\) with \(\proj_1(P) = G\), the functor \(R_{P}\) preserves level \(\hml\)-equivalences.
\end{lem}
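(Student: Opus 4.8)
The plan is to reduce the statement to an equivariant cell induction on the representation sphere $S^{V(P)}$. Since $\proj_1(P)=G$, the pair $P$ determines a genuine $G$-representation $V(P)$, so $S^{V(P)}$ is a compact $G$-manifold. First I would combine Lemma~\ref{twistedorthogonalgroups} (which identifies $R_P$ with both $s_{-P}\Omega^P$ and $\Omega^P s_{-P}$, and gives the level formula $(s_{-P}Y)_W\cong Y_{W\oplus V(P)}$) with the identity $\Omega^P Y=\internhom(\Gr{}{0}S^{V(P)},Y)$ and the adjunction $\catOS(\Gr{}{0}K,Z)\cong\catT(K,Z_0)$ to obtain, for every Euclidean space $W$, a natural isomorphism of $G\times\Or W$-spaces
\[
  (R_P X)_W \;\cong\; F\!\left(S^{V(P)},\, X_{W\oplus V(P)}\right),
\]
where $\Or W$ acts through the $W$-summand of the second factor only, $G$ acts by conjugation (via $\varphi_P\colon G\to\Or{V(P)}$ on the sphere), and $X_{W\oplus V(P)}$ denotes $X$ evaluated at the $G$-representation $W\oplus V(P)$, regarded as a $G\times\Or W$-space in the evident way. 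This is where the hypothesis $\proj_1(P)=G$ enters.

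Next I would fix $W$ and $Q\in\hml^W$ and analyse $Q$-fixed points. Write $H=\proj_1(Q)$, and let $j\colon G\times\Or W\to G\times\Or{W\oplus V}$ be the embedding $j(g,\alpha)=(g,\alpha\oplus\varphi_P(g))$; it carries $Q$ isomorphically onto the subgroup $\widetilde Q=Q\oplus P_H$ of $G\times\Or{W\oplus V}$, where $P_H=\proj_1^{-1}(H)\subseteq P$. The key structural point is that $P_H$ belongs to $\hml^V$ because $\hml^V$ is closed under subgroups, and hence $\widetilde Q=Q\oplus P_H$ belongs to $\hml^{W\oplus V}$ because $\hml$ is closed under direct sums -- a consequence of condition~(v) of Definition~\ref{gtypicalrepresentations}. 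Under the isomorphism above, $((R_P X)_W)^Q$ is identified with $\Map_Q(S^{V(P)},X_{W\oplus V(P)})$, where $Q$ acts on $S^{V(P)}$ through $\varphi_P\circ\proj_1$ and on $X_{W\oplus V(P)}$ through $j$.

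Now I would run the cell induction. By Illman's Theorem (Remark~\ref{spheresareCW}) $S^{V(P)}$ is a finite $Q$-CW complex, hence built from cells $Q/K_+\wedge D^n$ with $K$ a subgroup of $Q$; thus $j(K)$ is a subgroup of $\widetilde Q$ and so lies in $\hml^{W\oplus V}$ as well. For a level $\hml$-equivalence $f\colon X\to Y$ this gives that $f_{W\oplus V}^{\,j(K)}$ is a weak equivalence in $\catT$, and since
\[
  \Map_Q(Q/K_+\wedge D^n_+, Z)\cong F\!\left(D^n_+, Z^{j(K)}\right),\qquad
  \Map_Q(Q/K_+\wedge S^{n-1}_+, Z)\cong F\!\left(S^{n-1}_+, Z^{j(K)}\right),
\]
applying $\Map_Q(-,-)$ to the skeletal filtration of $S^{V(P)}$ presents $\Map_Q(S^{V(P)},f_{W\oplus V(P)})$ as a finite iterated pullback, along Hurewicz fibrations, of maps that are weak equivalences; by right properness of $\catT$ it is itself a weak equivalence. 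Since $W$ and $Q\in\hml^W$ were arbitrary, $R_P f$ is a level $\hml$-equivalence.

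The main obstacle is the bookkeeping in the first two steps: keeping track of how the $G$- and orthogonal-group actions on $S^{V(P)}$ and on the shifted levels $X_{W\oplus V}$ combine under the successive identifications of $R_P$ with $s_{-P}\Omega^P$ and $\Omega^P s_{-P}$, so that the subgroup of $G\times\Or{W\oplus V}$ governing $((R_P X)_W)^Q$ comes out to be exactly $Q\oplus P_H$. Once that is pinned down, the closure axioms of a $G$-typical family and the standard equivariant cell induction finish the argument; note that no cofibrancy hypotheses on $X$ or $Y$ are required, since $F(D^n_+,-)$ and $F(S^{n-1}_+,-)$ preserve weak equivalences between arbitrary spaces and the pullbacks occur along Hurewicz fibrations.
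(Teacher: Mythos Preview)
Your argument is correct. It reaches the goal by a different route from the paper. The paper's proof is one sentence: in the level $(\hml,\hml)$-model structure on $G\catOS$ the spectrum $\Gr{}V\widetilde S^P$ is cofibrant (this is the corollary after Lemma~\ref{comhmlhml}) and every object is fibrant, so $R_P=\internhom(\Gr{}V\widetilde S^P,-)$, being right adjoint to smashing with a cofibrant object, preserves weak equivalences between fibrant objects and hence all level $\hml$-equivalences. What makes that one line work is that $\Gr{}V\widetilde S^P\wedge(-)$ is left Quillen in the $(\hml,\hml)$-structure; unwinding this comes down to axiom~(iv) of Definition~\ref{gtypicalrepresentations}, which forces the relevant isotropy groups in level $V\oplus W$ to lie in $\hml^{V\oplus W}$.

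Your approach unpacks exactly this content by hand. You identify $(R_PX)_W$ as an equivariant mapping space out of $S^{V(P)}$, use Illman's theorem to put a $Q$-CW structure on the sphere, and reduce to checking that $f^{\,j(K)}_{W\oplus V}$ is a weak equivalence for every cell stabilizer $K\subseteq Q$. The step $j(K)\subseteq Q\oplus P_H\in\hml^{W\oplus V}$ (via closure under sums and subgroups) is precisely where the axioms of a $G$-typical family enter, playing the same role that axiom~(iv) plays implicitly in the model-categorical argument. Your version is longer but entirely elementary and self-contained; the paper's is much shorter but leans on the monoidal behaviour of the level model structure established earlier in the chapter.
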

\begin{proof}
  This is a consequence of the fact that in the level \((\hml,\hml)\)-model structure \(\Gr {} V \widetilde S^P\) is cofibrant and every object is fibrant.
\end{proof}

\begin{prop}\label{lambdastarstableeq}
  Let \(X\) be a  level \(\catM\)-fibrant orthogonal
  \(G\)-spectrum. If \(P \in \hml\) with \(\proj_1(P) = G\), then the
  map \(i = \internhom(\lambda_P,X) \colon  X \to R_P X\) is an
  \(\catM\)-stable equivalence. 
\end{prop}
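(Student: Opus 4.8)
The plan is to use the criterion of Lemma~\ref{charstabeq}. Abbreviate $F = \Gr{}{V}\widetilde S^P$, so that $R_P = \internhom(F,-)$, $R_{+P}(-) = F\wedge-$, and $i = \internhom(\lambda_P,-)$ is a natural transformation $\mathrm{id}\Rightarrow R_P$; then it suffices to show that for every $\catM$-$\Omega$-spectrum $E$ the map $i^*\colon[R_PX,E]\to[X,E]$ of morphism sets in the level $\catM$-homotopy category is a bijection. Note that since $P\in\hml^V$ the projection $\proj_1\colon P\to G$ is injective, so the hypothesis $\proj_1(P)=G$ makes it an isomorphism; hence Proposition~\ref{manystableone} applies, giving that $\lambda_P^C\colon R_{+P}C\to C$ is an $\catM$-stable equivalence for every cofibrant $C$, with adjoint (under $(R_{+P},R_P)$) the map $i_C\colon C\to R_PC$. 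Also, $R_P$ preserves level $\catM$-equivalences by Lemma~\ref{RPperserveslevelhml}, so $R_P$ and the natural transformation $i$ both descend to the level $\catM$-homotopy category.

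Next I would produce the homotopy inverse of $i^*$. For an $\catM$-$\Omega$-spectrum $E$, the map $i_E\colon E\to R_PE$ is a level $\catM$-equivalence — this is the defining property of an $\catM$-$\Omega$-spectrum, precisely for subgroups $P$ with $\proj_1(P)=G$ — hence is invertible in the level $\catM$-homotopy category; let $r_E$ denote its inverse and put $\Psi[g]=[\,r_E\circ R_P(g)\,]$ for $g\colon X\to E$. Naturality of $i$ at $g$ gives $R_P(g)\circ i_X=i_E\circ g$, so $i^*\Psi[g]=[\,r_E\circ i_E\circ g\,]=[g]$; thus $i^*$ is surjective with section $\Psi$, and one checks similarly that $\Psi$ is injective. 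Hence the bijectivity of $i^*$ is equivalent to $\Psi\circ i^*=\mathrm{id}$, which — after rewriting $[h]=[\,r_E\circ R_P(h)\circ i_{R_PX}\,]$ using naturality of $i$ at a map $h\colon R_PX\to E$ — reduces to showing that the two maps $R_P(i_X)$ and $i_{R_PX}$ from $R_PX$ to $R_PR_PX$ become equal after composition with $R_P(h)$, for every $h$ into an $\catM$-$\Omega$-spectrum.

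The heart of the matter, and the step I expect to be the main obstacle, is this last comparison. Using the associativity and symmetry coherence for the closed symmetric monoidal structure on orthogonal $G$-spectra, $R_PR_PX\cong\internhom(F\wedge F,X)$, and $R_P(i_X)$ and $i_{R_PX}$ are identified with the maps induced by $\mathrm{id}_F\wedge\lambda_P$ and $\lambda_P\wedge\mathrm{id}_F$ respectively; since $\lambda_P\wedge\mathrm{id}_F$ equals $(\mathrm{id}_F\wedge\lambda_P)\circ\tau$ up to the unit isomorphism, the two maps differ exactly by the self-map of $\internhom(F\wedge F,X)$ induced by the twist $\tau$ of $F\wedge F$, which by Corollary~\ref{Grtwisttwo} is $\Gr{}{V\oplus V}(t\wedge\tau)$ and in particular does not permute the two Euclidean coordinates. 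I would finish by showing this twist acts trivially after mapping into an $\catM$-$\Omega$-spectrum: $\lambda_P\colon F\to\Sp$ is an $\catM$-stable equivalence (Proposition~\ref{manystableone} applied to a cofibrant replacement of $\Sp$, together with the fact that $F\wedge(-)$ is suitably homotopical, cf.~the explicit formula of Proposition~\ref{semi-free-smash-with-gen}), so $\lambda_P\wedge\lambda_P\colon F\wedge F\to\Sp\wedge\Sp\cong\Sp$ is a stable equivalence intertwining the twist of $F\wedge F$ with the identity twist of $\Sp\wedge\Sp$; hence the twist of $F\wedge F$ is stably homotopic to the identity, which — since $\catM$-$\Omega$-spectra detect stable homotopy — gives $R_P(h)\circ R_P(i_X)\simeq R_P(h)\circ i_{R_PX}$ for every such $h$. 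This yields $\Psi\circ i^*=\mathrm{id}$, so $i^*$ is a bijection and $i$ is an $\catM$-stable equivalence. Apart from the twist argument, the proof is formal manipulation with the adjunction $(R_{+P},R_P)$, naturality of $i$, and the coherence isomorphisms.
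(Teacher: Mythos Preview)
Your overall strategy coincides with the paper's: both reduce to Lemma~\ref{charstabeq}, exploit that $i_E\colon E\to R_PE$ is a level $\catM$-equivalence for $\Omega$-spectra $E$, and hinge on comparing the two maps $R_P(i_X)$ and $i_{R_PX}$ from $R_PX$ to $R_P^2X$. The paper packages this as a commutative square-with-diagonal in the level homotopy category; the commutativity of the lower triangle is exactly your reduction, and you are right that it is the non-formal step.

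Your resolution of that step, however, does not go through. You argue that the twist $\tau$ on $F\wedge F$ is \emph{stably} homotopic to the identity and then invoke ``$\Omega$-spectra detect stable homotopy''. But you need $R_P(i_X)$ and $i_{R_PX}$ to agree in the \emph{level} homotopy category (or at least after composing into $\Omega$-spectra), and the passage from ``$\tau$ is stably trivial'' to ``$\internhom(\tau,X)$ is trivial'' requires $\internhom(-,X)$ to send stably equal maps to stably equal maps. Since $X$ is only level fibrant, not an $\Omega$-spectrum, this fails; the prototype instance $\internhom(\lambda_P,X)=i_X$ being a stable equivalence is precisely the proposition you are proving, so the argument is circular. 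Note also that $\tau$ is \emph{not} level-homotopic to the identity in general: by Corollary~\ref{Grtwisttwo} it involves right multiplication by the block swap in $\Or{V\oplus V}$, which has determinant $-1$.

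The correct fix, which the paper spells out in the proof of Proposition~\ref{QisOmega} (and tacitly uses here), avoids $\tau$ entirely. The maps $R_P(i_X)$ and $i_{R_PX}$ are $\internhom(c_1,X)$ and $\internhom(c_2,X)$ for the two ``collapse one factor'' maps $c_1,c_2\colon F\wedge F\to F$; these are the images under the continuous functor $\widetilde\lambda$ of the two coordinate inclusions $V(P)\hookrightarrow V(P)\oplus V(P)$. By Schur's Lemma the space $G\catLr(V(P),V(P)\oplus V(P))$ is path-connected, so $c_1\simeq c_2$ by an honest homotopy, hence $R_P(i_X)\simeq i_{R_PX}$ in the level homotopy category. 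With that in hand both triangles commute, giving $R_P\colon[X,E]\to[R_PX,R_PE]$ a left and a right inverse, so $i^*$ is a bijection.
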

\begin{proof}
  If \(E\) is an \(\catM\)-\(\Omega\)-spectrum, that is, \(i \colon E
  \to R_PE\) is a level \(\catM\)-equivalence, then so is and
  \(R_PE\). Let \(X\) be an \(\catM\)-level fibrant orthogonal
  \(G\)-spectrum. 
  There is a commutative diagram of morphism sets in the
  level \(\catM\)-homotopy category:
  \begin{displaymath} 
    \xymatrix{ 
      [X,E]
      \ar[dr]^-{R_P} \ar[r]^-{i_*} & 
      [X,R_PE] \\ 
      [R_PX,E] \ar[u]_{i^*} \ar[r]^-{i_*} & \ar[u]_{i^*}
      [R_PX,R_PE], 
    }
  \end{displaymath} 
  where both of the functions labeled \(i_*\) are
  bijections. It follows that 
  \[R_P \colon [X,E] \to
  [R_PX, R_PE]\] 
  is a
  bijection.  Hence 
  \[i^* \colon [R_PX,E] \to [X,E]\] 
  is a
  bijection. Now we apply Lemma~\ref{charstabeq}.
\end{proof}
\begin{cor}\label{lambdastarstableeqcor}
 If \(P\) is in \(\hml^V\) and \(\proj_1 \colon P \to G\) is an
 isomorphism, then for every orthogonal \(G\)-spectrum 
 \(X\), the map \(i = \internhom(\lambda_P,X) \colon X \to R_P X\) is an
 \(\catM\)-stable equivalence. 
\end{cor}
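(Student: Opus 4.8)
The plan is to bootstrap Proposition~\ref{lambdastarstableeq}, which already establishes the statement for level $\catM$-fibrant $X$, to an arbitrary orthogonal $G$-spectrum by means of a level fibrant replacement together with the two-out-of-three property of $\catM$-stable equivalences. Observe first that since $\proj_1 \colon P \to G$ is assumed to be an isomorphism, we have in particular $P \in \hml$ with $\proj_1(P) = G$, which is exactly the hypothesis needed to invoke both Proposition~\ref{lambdastarstableeq} and Lemma~\ref{RPperserveslevelhml}.

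First I would choose a level $\catM$-equivalence $\rho_X \colon X \to X'$ with $X'$ level $\catM$-fibrant, for instance by factoring $X \to \ast$ in the level $\catM$-model structure as a level $\catM$-acyclic cofibration followed by a level $\catM$-fibration. Because $\internhom(\lambda_P,-)$ is a natural transformation from the identity functor to $R_P = \internhom(\Gr{}V\widetilde S^P,-)$, naturality produces a commutative square
\[
  \xymatrix{
    X \ar[r]^-{i} \ar[d]_-{\rho_X} & R_P X \ar[d]^-{R_P\rho_X} \\
    X' \ar[r]^-{i'} & R_P X',
  }
\]
where $i = \internhom(\lambda_P,X)$ and $i' = \internhom(\lambda_P,X')$. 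Now $\rho_X$ is a level $\catM$-equivalence, hence an $\catM$-stable equivalence; the map $i'$ is an $\catM$-stable equivalence by Proposition~\ref{lambdastarstableeq} applied to the level $\catM$-fibrant spectrum $X'$; and $R_P\rho_X$ is again a level $\catM$-equivalence by Lemma~\ref{RPperserveslevelhml}, hence also an $\catM$-stable equivalence.

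To conclude I would invoke two-out-of-three: by Lemma~\ref{charstabeq} a morphism of orthogonal $G$-spectra is an $\catM$-stable equivalence exactly when it induces a bijection $[-,E] \to [-,E]$ for every $\catM$-$\Omega$-spectrum $E$, and since bijections of sets are closed under composition and satisfy two-out-of-three, the three $\catM$-stable equivalences $\rho_X$, $i'$ and $R_P\rho_X$ in the commuting square force $i$ to be an $\catM$-stable equivalence as well. The only point that needs a word of justification is the claim that $R_P\rho_X$ is a level $\catM$-equivalence: Lemma~\ref{RPperserveslevelhml} is phrased for level $\hml$-equivalences, but by part (iii) of Definition~\ref{hmlmonoidal} the weak equivalences of each $\catM_V$ are precisely the $\hml^V$-equivalences, so a level $\catM$-equivalence is the same thing as a level $\hml$-equivalence and the lemma applies verbatim. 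Everything else is a formal diagram chase, so I do not anticipate any genuine obstacle; the content of the corollary is entirely carried by Proposition~\ref{lambdastarstableeq}.
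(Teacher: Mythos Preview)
Your proposal is correct and is essentially identical to the paper's proof: choose a level $\catM$-fibrant replacement $X\to Y$, apply Proposition~\ref{lambdastarstableeq} to $Y$, use Lemma~\ref{RPperserveslevelhml} to see that $R_PX\to R_PY$ is a level $\hml$-equivalence, and conclude by two-out-of-three in the resulting commutative square. Your added remark that level $\catM$-equivalences coincide with level $\hml$-equivalences via Definition~\ref{hmlmonoidal}(iii) is exactly the identification the paper uses implicitly.
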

\begin{proof}
  Let \(X \to Y\) be a level \(\catM\)-equivalence with \(Y\) level
\(\catM\)-fibrant. Thus \(X \to Y\) is a level \(\hml\)-equivalence,
and by Lemma~\ref{RPperserveslevelhml} the morphism \(R_PX \to R_P Y\) is
also a level \(\hml\)-equivalence. Since level \(\hml\)-equivalences
are stable \(\catM\)-equivalences the result is a consequence of \(Y
\to R_P Y\) being an \(\catM\)-stable equivalence by
Proposition~\ref{lambdastarstableeq} and the commutative square
  \begin{displaymath}
    \xymatrix{
      X \ar[r] \ar[d]_{\simeq} & R_PX \ar[d]_{\simeq} \\
      Y \ar[r]^-{\sim} & R_PY.
    }
  \end{displaymath}
\end{proof}

\section{Homotopy Groups 
}\label{homotopygroupsofspectra}

We fix a \(G\)-typical family of representations
\(\hml\) and an \(\hml\)-model structure \(\catM\). 
The following paragraphs paves the way for Definition~\ref{def:tildelambda} of the  \(\catT\)-functor
\begin{displaymath}
  \widetilde \lambda_G \colon (G\catLr)^{\op} \to G \catOS \cong \catOr G \catT\index{lambdaG@$\widetilde \lambda_G$}
\end{displaymath}
whose value at an object \(V\) of \(G\catLr\) is the functor
\(\widetilde \lambda_G(V) \colon \catOr \to G\catT\) taking an
Euclidean space \(W\)  to the \(G\)-space \(\widetilde \lambda_G(V)_W = \catOr(V,W) \wedge S^V\) with \(G\) acting diagonally on \(\catOr(V,W)\) and \(S^V\).

Recall from Definition~\ref{untwistingmap} the untwisting \(G\)-isomorphism
\begin{displaymath}
  \tau_{V,W} \colon \catOr(V,W) \wedge S^V \to \catLr(V,W) \wedge S^W
\end{displaymath}
taking an element in \(\catOr(V,W) \wedge S^V\) of the form \((f \colon V \to W, w \in f(V)^{\perp}, v \in V)\) to the element \((f, w + f(v))\) in \(\catLr(V,W) \wedge S^W\). The functoriality of \(\widetilde \lambda_G\) is described via a \(\catT\)-functor \(l \colon (G\catLr)^{\op} \wedge \catOr \to G\catT\) defined on objects by \(l(V,W) = \widetilde \lambda_G(V)_W = \catOr(V,W) \wedge S^V\). The map
\begin{displaymath}
  l \colon (G\catLr)^{\op}(V,V') \wedge \catOr(W,W') \to G\catT(\catOr(V,W) \wedge S^V,\catOr(V',W') \wedge S^{V'})\index{l@$l$}
\end{displaymath}
is adjoint to the \(G\)-map
\begin{displaymath}
  G\catLr(V',V) \wedge \catOr(W,W') \wedge \catOr(V,W) \wedge S^V \to \catOr(V',W') \wedge S^{V'}
\end{displaymath}
given as the composition
\begin{align*}
  G\catLr(V',V) \wedge \catOr(W,W') \wedge \catOr(V,W) \wedge S^V &\to
  G\catLr(V',V) \wedge \catOr(V,W') \wedge S^V \\
  &\to
  G\catLr(V',V) \wedge \catLr(V,W') \wedge S^{W'} \\
  &\to
  \catLr(V',W')  \wedge S^{W'} \\
  &\to
  \catOr(V',W') \wedge S^{V'},
\end{align*}
where the first map is induced by composition in \(\catOr\), the
second map is induced by the untwisting map \(\tau_{V,W'}\), the third
map is induced by composition in \(\catLr\) and the last map is the
inverse of \(\tau_{V',W'}\). Given
\begin{displaymath}
  f \wedge (g,x)\wedge (h,y) \wedge z \in G\catLr(V',V) \wedge
  \catOr(W,W') \wedge \catOr(V,W) \wedge S^V 
\end{displaymath}
with \(f \colon V' \to V\), \(g \colon W \to W'\) and \(h \colon V \to
W\)  embeddings and with \(x \in W'\) in the orthogonal complement of
\(g(W)\), \(y \in 
W\) in the orthogonal complement of \(h(V)\) and \(z \in V\) we have
\begin{displaymath}
  l(f \wedge (g,x)\wedge (h,y) \wedge z) = (ghf,w) \wedge v,
\end{displaymath}
where \(v \in V'\) and \(w \in W'\) are uniquely determined by
requiring \(ghf(v) + w = x + gy + ghz\).
It is a consequence of Lemma~\ref{magicaboutuntwisting} that this defines a functor \(l \colon (G\catLr)^{\op} \wedge \catOr \to G\catT\).
\begin{dfn}\label{def:tildelambda}
  We write 
$$\widetilde \lambda =\widetilde \lambda_G\colon (G\catLr)^{\op} \to G\catOS\index{lambda@$\widetilde\lambda$}$$
for the functor adjoint to the \(l \colon (G\catLr)^{\op} \wedge \catOr \to G\catT\) defined in the preceding paragraph.
\end{dfn}

This functor is closely related to the functor $\lambda_P \colon \Gr{} V \widetilde S^P \to \Sp$ of Definition~\ref{def:lambda}.
\begin{lem}
  If \(P \in \hml^V\) has \(\proj_1P = G\), then \(\lambda_P = \widetilde \lambda_G(0 \to V(P))\).
\end{lem}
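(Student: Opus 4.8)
The plan is to compare the two morphisms through the semi-free adjunction. Both $\lambda_P$ and $\widetilde\lambda(0\to V(P))$ are morphisms of orthogonal $G$-spectra out of $\Gr{}V\widetilde S^P$ and into $\Sp$: on the left this is the definition of $\lambda_P$, while on the right $\widetilde\lambda(0)$ is canonically $\Sp$ (since $\widetilde\lambda(0)_W=\catOr(0,W)\wedge S^0\cong\catOr(0,W)=\Sp_W$) and $\widetilde\lambda(V(P))=\Fr{}V S^{V(P)}$, which by Example~\ref{freeGorthsp} and the discussion preceding the definition of $\lambda_P$ is canonically $\Gr{}V\widetilde S^P$ --- here the hypothesis $\proj_1 P=G$ enters, via the isomorphism $\widetilde S^P\cong{\Or{V(P)}}_+\wedge S^{V(P)}$ and \ref{ovpgrep}. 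Since $\Gr{}V$ is left adjoint to $\ev'_V$ and this promotes to the $G$-equivariant setting, a morphism $\Gr{}V\widetilde S^P\to\Sp$ is determined by its adjoint $(G\times\Or V)$-map $\widetilde S^P\to\Sp_V=S^V$, so it suffices to prove that the two adjoints coincide.

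By definition the adjoint of $\lambda_P$ is the action map $\widetilde S^P=(G\times\Or V)_+\wedge_P S^V\to S^V$, $(g,A)\wedge x\mapsto Ax$. For $\widetilde\lambda(0\to V(P))$, note first that $0\to V(P)$ is the unique isometric embedding $\iota\colon 0\to V(P)$ and that it is $G$-equivariant because $G$ acts trivially on $0$. Unwinding the definition of $\widetilde\lambda$ through the bifunctor $l$, the map $\widetilde\lambda(0\to V(P))$ at level $W$ is $l(\iota\wedge\id_W)\colon\catOr(V(P),W)\wedge S^{V(P)}\to\catOr(0,W)\wedge S^0$; substituting $f=\iota$ and $(g,x)=(\id_W,0)$ into the explicit formula $l(f\wedge(g,x)\wedge(h,y)\wedge z)=(ghf,w)\wedge v$ with $ghf(v)+w=x+gy+ghz$ --- and noting that $ghf=h\circ\iota$ is the zero embedding $0\to W$ while $v$ ranges over the unique point of $S^0$ --- one reads off that $l(\iota\wedge\id_W)$ sends $(h,y)\wedge z$ to the element $y+h(z)$ of $\catOr(0,W)\wedge S^0\cong\Sp_W$; that is, it is the untwisting homeomorphism $\tau_{V(P),W}$ followed by collapsing the $\catLr(V(P),W)$-coordinate. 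Evaluating at $W=V$, where $h$ is an isometry $V(P)\cong V$ so that the complement vector $y$ vanishes, this becomes $h\wedge z\mapsto h(z)$; composing with the unit $\widetilde S^P\to(\Gr{}V\widetilde S^P)_V$ of the semi-free adjunction and chasing the shear isomorphisms (together with $\widetilde S^P\cong{\Or{V(P)}}_+\wedge S^{V(P)}$ and \ref{ovpgrep}) recovers exactly $(g,A)\wedge x\mapsto Ax$. Hence the two adjoints agree, and $\lambda_P=\widetilde\lambda(0\to V(P))$.

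The only real work is bookkeeping: one must track which tensor factor carries the trivial $G$-action and which the $\varphi$-twisted one, follow the chain of shear isomorphisms $\widetilde S^P\cong(G\times\Or V)/P_+\wedge S^V\cong{\Or{V(P)}}_+\wedge S^{V(P)}$ and the identification $\Gr{}V\widetilde S^P\cong\Fr{}V S^{V(P)}=\widetilde\lambda(V(P))$, and check that the normalization in the definition of $l$ --- phrased through the untwisting maps $\tau$ --- is compatible with the bare ``action map'' through which $\lambda_P$ is defined; this is where I expect the main (if only notational) friction. No genuinely new idea is needed beyond matching definitions. Alternatively one can skip the adjoints entirely and compare the two morphisms level by level, using the formula for $l(\iota\wedge\id_W)$ above together with the description of the structure maps $S^V\to S^W$, $v\mapsto x+f(v)$, of $\Sp$ associated to $f\wedge x\in\catOr(V,W)$.
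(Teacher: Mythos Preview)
Your proof is correct. The paper states this lemma without proof, treating it as immediate from the definitions of $\lambda_P$ and $\widetilde\lambda$; you have written out the verification that the authors left implicit. Your strategy---reduce via the semi-free adjunction to comparing the adjoint $(G\times\Or V)$-maps $\widetilde S^P\to S^V$, then compute the level-$V$ component of $\widetilde\lambda(0\to V(P))$ from the explicit formula for $l$ and trace it back through the shear isomorphism $\widetilde S^P\cong{\Or{V(P)}}_+\wedge S^{V(P)}$---is exactly the right way to unpack this, and your computation $(h,y)\wedge z\mapsto y+h(z)$ followed by $(g,A)\wedge x\mapsto A\varphi(g^{-1})\cdot\varphi(g)x=Ax$ is accurate.

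One stylistic remark: your third paragraph reads as commentary on the proof rather than proof itself (``The only real work is bookkeeping\dots this is where I expect the main (if only notational) friction''). In a finished write-up you would either absorb the chase of shear isomorphisms into the second paragraph or drop the hedging; as it stands the actual chain $(g,A)\wedge x\mapsto A\varphi(g^{-1})\wedge\varphi(g)x\mapsto Ax$ is buried. But mathematically there is nothing missing.
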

We say that an element \(P\) in \(\hml^V\) 
is {\em \(\hml\)-irreducible}\index{irreducible!$\hml$} if \(\proj_1 \colon P \to G\) is an
isomorphism and \(P\) is not of the form \(Q \oplus R\)
for \(Q \in \hml^W\) and \(R \in \hml^{W'}\) with both \(W\) and
\(W'\) non-zero Euclidean spaces.  Observe that if \(V\) is a non-zero
Euclidean space, then every member of \(\hml^V\) is a direct sum of
\(\hml\)-irreducible elements of \(\hml\). 

Let \(B\) be a set containing one representative for each conjugacy 
class of \(\hml\)-irreducible elements of \(\hml\).  
We write \(B^*\) for the free monoid on the set
\(B\). Given two words \(\PP = P_1\dots P_m\) and \(\PP' = P'_1 \dots P'_n\) in
\(B^*\), we write \(\PP \le \PP'\) if there is an increasing sequence \(i_1
< \dots < i_m\) such that \(P_j = P'_{i_j}\) for \(j = 1,\dots , m\),
that is, if \(\PP\) can be obtained from \(\PP'\) by removing some letters. This is a partial order, and we consider \(B^* = (B^*,\le)\) as a category.

The morphisms \(\lambda_{\PP}\) for \(\PP \in B\)
give us a functor
\(\lambda_B^* \colon (B^*)^{\op} \to \catGOS\) taking \(\PP = P_1\dots
P_m\) to \(\lambda_B(\PP) = \bigwedge_{i = 1}^m \Gr {} {V(P_i)}
\widetilde S^{P_i}\). Consider the free commutative monoid \(\N\{B\}\)
as a filtered partially ordered set with \(\sum n_{\PP} \PP \le \sum m_{\PP} \PP\)
if and only if \(n_{\PP} \le m_{\PP}\) for all \(\PP \in B\). Choosing a total
order on \(B\), there is an order-preserving function \(\N\{B\} \to
B^*\) taking \(n_1 P_1 + \dots n_r P_r\) with \(P_1 < \dots < P_r\) to
\(P_1^{n_1} \dots P_r^{n_r}\).  
Given \(\PP = n_1 P_1 + \dots n_r P_r \in \N\{B\}\) with \(P_1 < \dots <
P_r\) we define \(V(\PP) = V(P_1)^{\oplus n_1} \oplus \dots\oplus
V(P_r)^{\oplus n_r}\), and we define \(\Omega^{\PP} =
(\Omega^{P_r})^{\circ n_r} \circ \dots \circ (\Omega^{P_1})^{\circ n_1}\)
\begin{dfn}\label{structuremapsforhomotopygroups}
  The functor \(\lambda_B \colon \N\{B\}^{\op} \to \catGOS\)\index{lambdaB@$\lambda_B$} is the composition of the functors \(\N\{B\}^{\op} \to (B^*)^{\op}\) and \(\lambda_B^* \colon (B^*)^{\op} \to \catGOS\).
\end{dfn}
\begin{dfn}
  Let $X$ be an othogonal $G$-spectrum.  If $V$ is a Euclidean space we consider the homotopy colimit of \(G\)-spaces
 $$(Q_{\hml}X)_V=\hocolim _{\PP \in \N\{B\}} \internhom(\lambda_B(\PP), X)_V\cong
    \hocolim_{\PP \in \N\{B\}} \Omega^{\PP} X_{V(\PP) \oplus V} 
$$
and let \(Q_{\hml}X\)\index{QHX@$Q_{\hml}X$} denote the resulting  orthogonal \(G\)-spectrum.
  
  The spectrum \(QX\)\index{QX@$QX$} is the fibrant replacement of \(Q_{\hml}X\) in
  the level \(\catM\)-model structure.
\end{dfn}

\begin{lem}\label{Qstablesame}
Inclusion of \(X \cong \internhom(\lambda_B(0),X)\) in the homotopy colimit defining \(QX\) gives a natural map \(X \to QX\) which is an \(\catM\)-stable equivalence.
\end{lem}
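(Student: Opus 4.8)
The plan is to reduce the statement to the non-fibrant model $Q_{\hml}X$, to recognise the transition maps of the homotopy colimit defining it as instances of the maps $\internhom(\lambda_P,-)$ treated in Corollary~\ref{lambdastarstableeqcor}, and then to transport the resulting stable equivalences through the colimit. First I would observe that $QX$ is by definition the level $\catM$-fibrant replacement of $Q_{\hml}X$, so $Q_{\hml}X\to QX$ is a level $\catM$-equivalence, hence a level $\hml$-equivalence, hence an $\catM$-stable equivalence. By the two-out-of-three property of $\catM$-stable equivalences it then suffices to show that the canonical map
\[
  X\cong\internhom(\lambda_B(0),X)\longrightarrow Q_{\hml}X=\hocolim_{\PP\in\N\{B\}}\internhom(\lambda_B(\PP),X),
\]
the inclusion of the term indexed by the initial object $0\in\N\{B\}$ (the homotopy colimit being formed levelwise in $G$-spaces), is an $\catM$-stable equivalence.

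Next I would analyse the transition maps of the diagram $\PP\mapsto\internhom(\lambda_B(\PP),X)$. For $P\in B$ and $\PP\in\N\{B\}$ there is a natural isomorphism $\lambda_B(\PP+P)\cong\lambda_B(\PP)\wedge\Gr{}{V(P)}\widetilde S^{P}$ carrying the structure morphism $\lambda_B(\PP+P)\to\lambda_B(\PP)$ to $\id\wedge\lambda_P$. Applying $\internhom(-,X)$ together with the adjunction $\internhom(A\wedge B,-)\cong\internhom(B,\internhom(A,-))$ for the closed symmetric monoidal category $\catGOS$ identifies the transition map $\internhom(\lambda_B(\PP),X)\to\internhom(\lambda_B(\PP+P),X)$ with $\internhom(\lambda_P,\internhom(\lambda_B(\PP),X))\colon\internhom(\lambda_B(\PP),X)\to R_P\internhom(\lambda_B(\PP),X)$. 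Every $P\in B$ is $\hml$-irreducible, so $\proj_1\colon P\to G$ is an isomorphism, and Corollary~\ref{lambdastarstableeqcor} applies to show this map is an $\catM$-stable equivalence. Since any relation $\PP\le\PP'$ in $\N\{B\}$ factors as a finite composite of such one-generator steps and $\catM$-stable equivalences are closed under composition, every transition map of the diagram, and in particular each map $\internhom(\lambda_B(0),X)\to\internhom(\lambda_B(\PP),X)$, is an $\catM$-stable equivalence.

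Finally, because $0$ is initial in $\N\{B\}$, these maps assemble into a map from the constant diagram at $X$ to $\PP\mapsto\internhom(\lambda_B(\PP),X)$ that is an objectwise $\catM$-stable equivalence, and $\hocolim_{\N\{B\}}$ of the constant diagram is weakly equivalent to $X$ since $\N\{B\}$ has an initial object and hence contractible nerve. It remains to see that the levelwise filtered homotopy colimit sends this objectwise stable equivalence to a stable equivalence. I would argue this via Lemma~\ref{charstabeq}: for an $\catM$-$\Omega$-spectrum $E$, the functor $[-,E]$ on the level $\catM$-homotopy category turns $Q_{\hml}X$ into the derived limit of the system $\PP\mapsto[\internhom(\lambda_B(\PP),X),E]$, whose transition maps, and those of its suspensions, are all bijections by the previous paragraph and Lemma~\ref{charstabeq}; the pro-system is thus essentially constant, every $\lim^{1}$-type obstruction vanishes, and $[Q_{\hml}X,E]\cong[X,E]$ compatibly with the structure map, so Lemma~\ref{charstabeq} yields the claim.

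The step I expect to be the main obstacle is exactly this last one: controlling the interaction of $[-,E]$ (equivalently, of derived mapping spectra into $\catM$-$\Omega$-spectra) with the homotopy colimit over $\N\{B\}$, which need not possess a countable cofinal subset, so that a naive Milnor sequence is not directly available. What makes it go through is that after applying $[-,E]$ all transition maps become isomorphisms, so the higher derived-limit terms vanish for formal reasons; the genuine work is to phrase this cleanly, for instance by reducing to cofinal chains indexed by finite subsets of $B$, or by citing a general statement that filtered homotopy colimits preserve $\catM$-stable equivalences.
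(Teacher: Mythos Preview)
Your proposal is correct and follows essentially the same strategy as the paper: identify the transition maps as instances of $\internhom(\lambda_P,-)$, invoke Corollary~\ref{lambdastarstableeqcor}, and then test against $\catM$-$\Omega$-spectra $E$. The difference is in how the final ``hocolim versus mapping into $E$'' step is executed.

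The paper first takes a projective cofibrant replacement $\widetilde X$ of the $\N\{B\}$-diagram $\PP\mapsto\internhom(\lambda_B(\PP),X)$ and then works with the internal hom $\internhom(-,E)$ rather than with homotopy classes. Because $\widetilde X$ is projectively cofibrant and $E$ is fibrant, one has an identification $\internhom(\hocolim\widetilde X,E)\cong\holim\internhom(\widetilde X,E)$, and the key computation shows that each $\internhom(\widetilde X^{\beta},E)$ is a \emph{level} $\catM$-equivalence, not merely a stable one. The diagram over $\N\{B\}^{\op}$ then maps objectwise-equivalently to the constant diagram at its terminal value $\internhom(\widetilde X^{0},E)$, and since $\N\{B\}$ has contractible nerve this identifies the holim. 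This cleanly sidesteps the $\lim^{1}$ issue you raise: no Milnor sequence or derived-limit spectral sequence is needed.

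Your route through $[-,E]$ and pro-systems also works, for exactly the reason you give (all transition maps become isomorphisms, so higher derived limits vanish), but it is less direct. One caution: your alternative suggestion of arguing that ``filtered hocolim preserves objectwise stable equivalences'' is not yet available at this point in the development---that relies on the $\pi_*$-isomorphism characterization of stable equivalences, which is established only after this lemma via Corollary~\ref{Qcreatesanddetectsequivalences}. Your primary argument via $[-,E]$ avoids this circularity.
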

\begin{proof}
  Let \(\widetilde X\) be a cofibrant
  replacement of the functor \(\PP \mapsto \internhom(\lambda_B(\PP), X)\) in
  the  model structure on the category of \(\N\{B\}\)-diagrams
  in the level \(\catM\)-model structure on \(\catGOS\). In particular,
  given a morphism \(\beta \colon \PP \to \PP'\) in \(\N\{B\}\), the map
  \(\widetilde X^{\beta} \colon \widetilde X^{\PP} \to \widetilde X^{\PP'}\)
  is a map between
  cofibrant objects and \(\hocolim_{\PP \in \N\{B\}}
  \widetilde X^{\PP}\) is a cofibrant replacement of \(QX\).

  Let \(E\) be an
  \(\catM\)-\(\Omega\)-spectrum (Definition~\ref{momegaspectrum}) and let \(\beta \colon \PP \to \PP'\) be a
  morphism in \(\N\{B\}\). By Corollary~\ref{lambdastarstableeqcor} the map \(\widetilde X^{\beta}\) is an 
  \(\catM\)-stable equivalence. Given a Euclidean space \(V\) and
  \(P\) in \(\hml^V\), the \(G\)-orthogonal spectrum \(\Gr{} V (G
  \times \Or V/P_+)\) is cofibrant in the level \(\catM\)-model
  structure, and thus
  \begin{displaymath}
    \internhom(\widetilde X^{\beta},E)_V^P \cong \catGOS(\Gr{} V (G
    \times \Or V/P_+) \wedge \widetilde X^{\beta},E)
  \end{displaymath}
  is a weak equivalence. That is, \(\internhom(\widetilde
  X^{\beta},E)\) is a level \(\catM\)-equivalence. Now, since
  \(\hocolim_{\beta \in \N\{B\}} 
  \widetilde X^{\beta}\) is a cofibrant replacement of \(QX\), the isomorphism
  \begin{displaymath}
    \internhom(\hocolim_{\beta \in \N\{B\}} \widetilde X^{\beta},E) \cong
    \holim_{\beta \in \N\{B\}^{\op}} \internhom(\widetilde X^{\beta},E) 
  \end{displaymath}
  shows that \(X \to \hocolim_{\beta \in \N\{B\}} \widetilde X^{\beta}\) is an
  \(\catM\)-stable equivalence.
\end{proof}

\begin{prop}\label{QisOmega}
  For every orthogonal \(G\)-spectrum \(X\),
  the orthogonal \(G\)-spectrum
  \(QX\) is an \(\catM\)-\(\Omega\)-spectrum. 
\end{prop}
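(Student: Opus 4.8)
The plan is to show that $QX$ is level $\catM$-fibrant and that for every $P \in \hml^V$ with $\proj_1(P) = G$ the map $\internhom(\lambda_P, QX) \colon QX \to R_P QX$ is a level $\catM$-equivalence. Level fibrancy is immediate, since by construction $QX$ is the level $\catM$-fibrant replacement of $Q_\hml X$. For the second condition, first reduce to $\hml$-irreducible $P$: by Lemma \ref{twistedorthogonalgroups} and the observation that every member of $\hml^V$ is a direct sum of $\hml$-irreducible elements, the functor $R_P$ for a general $P = P_1 \oplus \dots \oplus P_k$ factors (up to natural isomorphism) as an iterated composite of the $R_{P_i}$, so it suffices to treat $P \in B$. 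For such $P$ the map $\lambda_P$ is $\widetilde\lambda(0 \to V(P))$ and, since $R_P$ preserves level $\hml$-equivalences by Lemma \ref{RPperserveslevelhml} while level $\hml$-equivalences are detected after level $\catM$-fibrant replacement (Lemma \ref{levelFislevelH}), it is enough to prove that $Q_\hml X \to R_P Q_\hml X$ is a level $\hml$-equivalence.

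The heart of the argument is a cofinality/commutation computation: $R_P = \Omega^P s_{-P}$ by Lemma \ref{twistedorthogonalgroups}, both $\Omega^P$ and the shift $s_{-P}$ commute with the filtered homotopy colimit defining $Q_\hml X$ (finite homotopy limits commute with filtered homotopy colimits of spaces, and the shift is levelwise reindexing), so
\[
  R_P Q_\hml X \;\simeq\; \hocolim_{\PP \in \N\{B\}} \Omega^P s_{-P}\, \internhom(\lambda_B(\PP), X).
\]
Now the natural map $Q_\hml X \to R_P Q_\hml X$ is induced, cofinally, by the map of diagrams sending the index $\PP$ to $\PP + P$ together with the structure map $\internhom(\lambda_B(\PP), X) \to \Omega^P \internhom(\lambda_B(\PP + P), X) \cong R_P \internhom(\lambda_B(\PP), X)$ coming from $\lambda_P^{\,\cdot}$. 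Since $\lambda_P$ is $\hml$-irreducible and the shifting-by-$P$ functor $\N\{B\} \to \N\{B\}$ is cofinal, the induced map on homotopy colimits is an equivalence: in the colimit over $\N\{B\}$ the letter $P$ occurs arbitrarily often, so adjoining one more copy of $P$ does not change the colimit up to equivalence. This is the standard ``$QX$ has become an $\Omega$-spectrum because every desuspension has been forced into the telescope'' argument, carried out in the equivariant setting with the index category $\N\{B\}$.

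The main obstacle is the bookkeeping needed to make the cofinality precise: one must check that the structure maps $\internhom(\lambda_B(\PP), X)_V \cong \Omega^\PP X_{V(\PP)\oplus V}$ assemble into a genuine $\N\{B\}$-diagram compatibly with the $\Omega^P$-shift, that the identification $R_P \internhom(\lambda_B(\PP),X) \cong \internhom(\lambda_B(\PP + P), X)$ is natural in $\PP$ (this uses Proposition \ref{semi-free-smash} and Lemma \ref{twistedorthogonalgroups}(ii)), and that passing to level $\catM$-fibrant replacement preserves the relevant level $\hml$-equivalences (here one invokes that $\Gr{} V \widetilde S^P$ is cofibrant and Lemma \ref{RPperserveslevelhml}). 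A subtlety is that $\Omega^P$ need not preserve all weak equivalences on the nose, so one should either arrange that the diagram $\PP \mapsto \internhom(\lambda_B(\PP),X)$ is objectwise level $\catM$-fibrant before taking the homotopy colimit, or appeal to Lemma \ref{Qstablesame} together with Lemma \ref{stablebetweenomegaislevel} to upgrade a stable equivalence to a level equivalence once one knows the target is already an $\catM$-$\Omega$-spectrum. Everything else — the reduction to irreducibles, the commutation of $R_P$ with filtered homotopy colimits — is routine.
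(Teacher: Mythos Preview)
Your overall strategy matches the paper's: reduce to a cofinality statement on the index category \(\N\{B\}\) and exploit that adding one more copy of \(P\) does not change the filtered homotopy colimit. However, the step you flag as ``bookkeeping'' is precisely where the real work lies, and as written your argument has a gap.

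You claim that the identification \(R_P\,\internhom(\lambda_B(\PP),X)\cong\internhom(\lambda_B(\PP+P),X)\) is natural in \(\PP\) and that, under it, the map \(i_{X^\PP}\) becomes the diagram's structure map \(X^\PP\to X^{\PP+P}\). Neither holds strictly. Unwinding definitions, the identification is a permutation of the smash factors \(\Gr{}{V(P)}\widetilde S^P\) (moving the ``new'' copy of \(P\) from the front into its ordered position), and the two candidate maps \(X^\PP\to X^{\PP+P}\) differ by a nontrivial permutation of the identical \(P\)-factors. Thus the cofinality square you need commutes only up to homotopy, not on the nose. The paper handles exactly this issue: it organizes the comparison into a square with a diagonal ``shift a factor from front to back'' map, observes that one of the two triangles fails to commute strictly, and then invokes Schur's Lemma to see that \(G\catLr(V(P),V(P)\oplus V(P))\) is path connected, so the permutation is homotopic to the identity. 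Because the commutativity is only up to homotopy, the paper is forced to pass to homotopy groups (using compactness of \(S^k\wedge(G\times\Or W/Q)_+\) to move \(\pi_k\) inside the homotopy colimit) and argue at the level of colimits of groups rather than spaces.

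A smaller point: the paper does not reduce to irreducible \(P\) first. Instead it decomposes an arbitrary \(P\) as \(p=\sum p_i\) with \(p_i\in B\) and uses cofinality of \(\N\{P\}\times\N\{B_P\}\hookrightarrow\N\{B\}\) to restrict the colimit to the \(\N\)-indexed subsystem \(n\mapsto X^{np}\). Your reduction to irreducible \(P\) via \(R_{P_1\oplus P_2}\cong R_{P_1}\circ R_{P_2}\) is legitimate, but it does not avoid the Schur's Lemma step; the homotopy-commutativity issue already appears for a single irreducible \(P\).
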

\begin{proof}
  It suffices to show that for every \(P \in \hml\) with
  \(\proj_1\colon P \to G\) an isomorphism the map 
  \[\internhom( 
  \lambda_{P},QX) \colon QX \to R_{P} QX\] 
  is a level
  \(\catM\)-equivalence. Since spheres are compact, the canonical map
  \(R_PQX \to 
  QR_PX\) is a level \(\catM\)-equivalence. Thus it suffices to show
  that the map 
  \[Q\internhom( \lambda_{P},X) \colon QX \to Q
  R_{P} X\] 
  is a level \(\catM\)-equivalence. 
  Given \(b\) in \(\N\{B\}\), we write \(X^b = \internhom(\lambda_B(b),X)\),
  and given a morphism    
  \(\beta\) in \(\N\{B\}\), we let 
  \(X^\beta = \internhom(\lambda_B(\beta),X)\).
  We have to show for every Euclidean space \(W\) the map
  \(i_X \colon X \to R_PX\) induces a \(\hml^W\)-equivalence
  \begin{displaymath} 
    \hocolim_{b \in \N\{B\}}  X^b_W \to 
    \hocolim_{b \in \N\{B\}} (R_PX)^b_W.
  \end{displaymath} 
  Choose an isomorphism \(V(P) \cong \bigoplus_{i = 1}^k V_{p_i}\) for some not
  necessarily distinct \(p_1,\dots,p_k
  \in B\) and let \(p = \sum_{i = 1}^k p_i \in \N\{B\}\). 
  We let \(\N\{P\} \subseteq \N\{B\}\) be the partially ordered
  subset
  consisting of elements of the form \(n p\) for \(n \in \N\), and we
  let \(B_P \subseteq B\) be the complement of \(\{p_1,\dots,p_k\}\) in
  \(B\). The sum in \(\N\{B\}\) induces a cofinal inclusion \(\N\{P\} \times
  \N\{B_P\} \to \N\{B\}\) of partially ordered sets. Therefore it
  suffices to show that the map
  \(i_X \colon X \to R_PX\) induces an \(\hml^W\)-equivalence
  \begin{displaymath} 
    \hocolim_{n \in \N}  X^{np}_W \to 
    \hocolim_{n \in \N} (R_PX)^{np}_W.
  \end{displaymath} 
  We thus have to show that for every \(k
  \ge 0\) and every
  \(Q\) in \(\hml^W\) (and for all choices of base point in
  \((\hocolim_{n \in \N}  X^{np}_{W})^{Q}\)), the homomorphism  
  \begin{displaymath} 
    \pi_k (\hocolim_{n \in \N}  X^{np}_{W})^{Q} \to 
    \pi_k (\hocolim_{n \in \N} (R_PX)^{np}_{W})^{Q}
  \end{displaymath} 
  is an isomorphism. By compactness of \(S^k \wedge (G \times
  \Or {W}/Q)_+\), this is an isomorphism if and only if the homomorphism 
  \begin{displaymath} 
    \colim_{n \in \N} \pi_k ( X^{np}_{W})^{Q} \to 
    \colim_{n \in \N} \pi_k ((R_PX)^{np}_{W})^{Q}
  \end{displaymath}  
  is an isomorphism.
  We consider diagrams of the form
  \begin{displaymath} 
    \xymatrix{  X^{np} \ar[rr]^{i_{ X^{np}}}
      \ar[d]_{(i_X)^{np}} &&  R_P  X^{np} \ar[d]^{R_P( i_X)^{np}}
      \\
      (R_P X)^{np}  \ar[rr]_-{i_{(R_P X)^{np}}}
      \ar[urr]
      && R_P(R_P X)^{np},
    }
  \end{displaymath}
  where the diagonal arrow is induced by the morphism shifting a
  factor of \(\Gr{} V  S^P\) from the front to the back. 
  The
  outer square in the above diagram commutes, as does the upper
  triangle, but the lower triangle does not. However, by Schur's Lemma,
  the space \(G\catLr(V(P),V(P)\oplus V(P))\) is path connected, so
  the lower triangle commutes up
  to homotopy. From the commutative diagram
  \begin{displaymath} 
    \xymatrix{ \pi_k( X^{np}_W)^{Q} \ar[rr]^{i_{ X^{np}}}
      \ar[d]_{(i_X)^{np}} &&  \pi_k(R_P  X^{np}_W)^{Q} \ar[d]^{R_P( i_X)^{np}}
      \\
      \pi_k((R_P X)^{np}_W)^{Q}  \ar[rr]_-{i_{(R_P X)^{np}}}
      \ar[urr]
      && \pi_k(R_P(R_P X)^{np}_W)^{Q},
    }
  \end{displaymath}
  we conclude that the homomorphisms
  \begin{displaymath} 
    \colim_{n \in \N} \pi_k ( X^{np}_{W})^{Q} \to 
    \colim_{n \in \N} \pi_k ((R_PX)^{np}_{W})^{Q}
  \end{displaymath}  
  are isomorphisms. (For all choices of base point in one of the spaces
  \((X^{np}_{W})^{Q}\).)
\end{proof}
\begin{cor}\label{Qcreatesanddetectsequivalences}
  A map \(X \to Y\) of orthogonal \(G\)-spectra is an \(\catM\)-stable equivalence
  if and only if \(QX\to QY\) is a level \(\catM\)-equivalence. 
\end{cor}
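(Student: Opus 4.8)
The plan is to deduce this corollary formally from the three principal results already in hand: the natural map $X \to QX$ is an $\catM$-stable equivalence (Lemma \ref{Qstablesame}), the spectrum $QX$ is always an $\catM$-$\Omega$-spectrum (Proposition \ref{QisOmega}), and an $\catM$-stable equivalence between $\catM$-$\Omega$-spectra is automatically a level $\catM$-equivalence (Lemma \ref{stablebetweenomegaislevel}). The only auxiliary fact I would invoke is that the class of $\catM$-stable equivalences has the two-out-of-three property; this follows from Lemma \ref{charstabeq}, since a map is an $\catM$-stable equivalence exactly when it induces bijections $[Y,E] \to [X,E]$ for every $\catM$-$\Omega$-spectrum $E$, and bijections of sets satisfy two-out-of-three.

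Concretely, given $f \colon X \to Y$, functoriality of $Q$ and of the inclusion into the homotopy colimit defining $QX$ yields a commutative square
\begin{displaymath}
  \xymatrix{
    X \ar[r]^-{f} \ar[d] & Y \ar[d] \\
    QX \ar[r]^-{Qf} & QY
  }
\end{displaymath}
in which the two vertical maps are $\catM$-stable equivalences by Lemma \ref{Qstablesame}. If $f$ is an $\catM$-stable equivalence, then three of the four edges are $\catM$-stable equivalences, so by two-out-of-three $Qf$ is too; since $QX$ and $QY$ are $\catM$-$\Omega$-spectra by Proposition \ref{QisOmega}, Lemma \ref{stablebetweenomegaislevel} upgrades this to $Qf$ being a level $\catM$-equivalence. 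Conversely, if $Qf$ is a level $\catM$-equivalence, then — using that level $\catM$-equivalences are $\catM$-stable equivalences — the bottom edge and the two verticals of the square are $\catM$-stable equivalences, and two-out-of-three forces $f$ to be an $\catM$-stable equivalence.

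I do not expect a genuine obstacle here: the argument is pure bookkeeping once the square is set up. The single point that warrants care is the two-out-of-three property for $\catM$-stable equivalences, which is why I would record it explicitly via Lemma \ref{charstabeq} before assembling the square; everything else is immediate from Lemmas \ref{Qstablesame}, \ref{stablebetweenomegaislevel} and Proposition \ref{QisOmega}.
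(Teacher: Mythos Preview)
Your proof is correct and follows essentially the same approach as the paper: both directions use the naturality square together with Lemma~\ref{Qstablesame}, Proposition~\ref{QisOmega}, and Lemma~\ref{stablebetweenomegaislevel}, with two-out-of-three doing the bookkeeping. The paper's version is just more terse and leaves the square and the two-out-of-three property implicit.
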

\begin{proof}
  If \(X \to Y\) is an \(\catM\)-stable equivalence,
  then by Lemma~\ref{Qstablesame} and Proposition~\ref{QisOmega} the induced map \(QX \to QY\) is a
  stable equivalence of \(\catM\)-\(\Omega\)-spectra,
  and by Lemma~\ref{stablebetweenomegaislevel} it is a level \(\catM\)-equivalence.
  Conversely, if \(QX \to QY\) is a level \(\catM\)-equivalence, 
  then it is also a stable equivalence, and \(X \to Y\) is an \(\catM\)-stable
  equivalence by Lemma~\ref{Qstablesame}.
\end{proof}
\begin{cor}\label{cor:stabdeponclosure}
  All \(\hml\)-model categories \(\catM\) have the same class of
  stable \(\catM\)-equiva\-lences as the \((\overline
  \hml,\overline \hml)\)-model structure,
where $\overline \hml$ is the closure of $\hml$ (c.f.~Definition~\ref{def:closureoffamily}). In particular, the class of
  \(\catM\)-stable equivalences only depends on 
  \(\overline \hml\).
\end{cor}
\begin{proof}
  Let \(\overline B\) be a set containing one representative for each conjugacy 
class of \(\overline \hml\)-irreducible elements of \(\overline
\hml\). Then \(\N\{B\}\) is cofinal in \(\N\{\overline B\}\), so for
every Euclidean space \(V\), the canonical map \(Q_{\hml}X \to Q_{\overline
  \hml}X\) is a level \(\overline \hml\)-equivalence, and these are
both level \(\catM\)-equivalent to \(QX\).
\end{proof}

In view of the above result, we say that a morphism of orthogonal
\(G\)-spectra is an {\em \(\hml\)-stable equivalence}\index{equivalence!\(\hml\)-stable}\index{Hstable equivalence@ \(\hml\)-stable equivalence} if it is a
\(\catM\)-stable equivalence for some (and hence every) \(\hml\)-model
structure \(\catM\).
\begin{cor}\label{Qtakesfibtofib}
  If \(X \to Y \to Z\) is a fibration sequence in the level
  \((\hml,\hml)\)-model structure, then so is \(QX \to QY \to QZ\).
\end{cor}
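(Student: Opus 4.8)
The plan is to trace a fibration sequence through the two basic ingredients of $Q$ — the functors $\internhom(\lambda_B(\PP),-)$ and a filtered homotopy colimit — and then dispose of the level‑fibrant replacement formally. Throughout I take $\catM$ to be the level $(\hml,\hml)$-model structure (which by assumption satisfies the pushout–product axiom, so that $Q$ is defined); the point of this choice is that in it \emph{every} orthogonal $G$-spectrum is level fibrant, since every space in $\catT$ is fibrant and the fibrations of the $(\hml^V,\hml^V)$-model structure on $(G\times\Or V)\catT$ are the $\hml^V$-fibrations. In particular $\catM$ has all objects fibrant and is therefore right proper, $Q_\hml W$ is already level $\catM$-fibrant for every $W$ so we may take $QW = Q_\hml W$, and a fibration sequence $X\to Y\to Z$ means simply that $Y\to Z$ is a level $(\hml,\hml)$-fibration with fibre $X$.

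First I would fix a word $\PP=P_1\cdots P_m\in\N\{B\}$ and show that $\internhom(\lambda_B(\PP),-)$ preserves fibration sequences. By definition $\lambda_B(\PP)=\bigwedge_{i=1}^m\Gr{}{V(P_i)}\widetilde S^{P_i}$, so
\begin{displaymath}
  \internhom(\lambda_B(\PP),-)=\internhom(\Gr{}{V(P_1)}\widetilde S^{P_1},-)\circ\cdots\circ\internhom(\Gr{}{V(P_m)}\widetilde S^{P_m},-)
\end{displaymath}
is an iterated composite of functors of the form $R_{P_i}=\internhom(\Gr{}{V(P_i)}\widetilde S^{P_i},-)$. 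Each $P_i$ is $\hml$-irreducible, so $\widetilde S^{P_i}$ and hence $\Gr{}{V(P_i)}\widetilde S^{P_i}$ is cofibrant in the level $\catM$-model structure (the corollary after Lemma~\ref{comhmlhml}); since that model structure is monoidal, each $R_{P_i}$ is a right Quillen endofunctor of $\catM$. A right Quillen functor preserves fibrations and, being a right adjoint, preserves pullbacks and the terminal object, hence it preserves fibration sequences. Composing, $\internhom(\lambda_B(\PP),-)$ carries $X\to Y\to Z$ to a fibration sequence $\internhom(\lambda_B(\PP),X)\to\internhom(\lambda_B(\PP),Y)\to\internhom(\lambda_B(\PP),Z)$, naturally in $\PP$.

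Next I would pass to the homotopy colimit over the filtered poset $\N\{B\}$ defining $Q_\hml$. Since the level $\catM$-model structure is $G\catT$-enriched and its (homotopy) colimits are formed levelwise, it is enough to evaluate on an Euclidean space $V$ and take $P$-fixed points for $P\in\hml^V$. Fixed points commute with filtered homotopy colimits by Lemma~\ref{fixedprop}, and $P$-fixed points of a fibration sequence of $G$-spaces form a fibration sequence of spaces; since $\pi_*$ commutes with filtered colimits, a filtered homotopy colimit of fibration sequences of spaces is again a fibration sequence, so the canonical map from $Q_\hml X$ to the homotopy fibre of $Q_\hml Y\to Q_\hml Z$ is a level $\catM$-equivalence. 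Finally, using right properness of $\catM$ together with the fact that $Q_\hml Y=QY$ and $Q_\hml Z=QZ$ are already fibrant, I would factor $QY\to QZ$ through a fibration $W\twoheadrightarrow QZ$ with $QY\xrightarrow{\sim}W$; then $W$ is a fibrant replacement of $QY$, the honest fibre of $W\to QZ$ is level $\catM$-equivalent to $Q_\hml X=QX$, and replacing $QY$ by $W$ exhibits $QX\to QY\to QZ$ as a fibration sequence in the level $(\hml,\hml)$-model structure.

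The step I expect to be the main obstacle is the homotopy-colimit argument: one must justify carefully that the filtered homotopy colimit over $\N\{B\}$ commutes both with the fixed-point functors $(-)^P$ and with the formation of homotopy fibres, levelwise in $\catGOS$. The verification that each $R_{P_i}$ is right Quillen and the closing bookkeeping with the right-proper model structure are then routine.
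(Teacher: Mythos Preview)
Your approach is correct and essentially the same as the paper's. The paper simply notes that each $\internhom(\lambda_B(b),-)$ preserves level $(\hml,\hml)$-fibration sequences and then cites \cite[Theorem 14.19]{HirschhornHocolim} to conclude that the filtered homotopy colimit does as well; you spell out both steps by hand (the first via the right-Quillen property of the $R_{P_i}$, the second by evaluating levelwise, passing to $P$-fixed points, and using that $\pi_*$ commutes with filtered colimits), and you make explicit the helpful observation that in the $(\hml,\hml)$-structure every object is already fibrant so $Q=Q_{\hml}$.
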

\begin{proof}
  Let \(b \in \N\{B\}\). If \(X \to Y \to Z\) is a fibration
  sequence in the level 
  \((\hml,\hml)\)-model structure, then so is the sequence
  \(\internhom(\lambda_B(b),X) \to
  \internhom(\lambda_B(b),Y) \to
  \internhom(\lambda_B(b),Z).
  \)
  From 
  \cite[Theorem 14.19]{HirschhornHocolim} we conclude that \(QX \to QY
  \to QZ\) is a fibration
  sequence in the level \((\hml,\hml)\)-model structure.
\end{proof}

\begin{dfn}\label{definitionofhomotopygroups}
  Let \(k\) be an integer, let \(X\) be an orthogonal \(G\)-spectrum
  and let \(H\) be a subgroup of \(G\). The {\em homotopy group}\index{homotopy group!\(\pi_k^H(X,\hml)\)}\index{pikHXH@\(\pi_k^H(X,\hml)\)}
  \(\pi_k^H(X,\hml)\) is defined as the following abelian group  
  \begin{displaymath}
    \pi^H_k(X,\hml) \defas
    \begin{cases}
      \pi_k (QX)_0^H & k \ge 0\\
      \pi_0 (QX)_{\R^k}^H & k < 0,
    \end{cases}
  \end{displaymath}
where the fibrant replacement $Q$ is with respect to the $(\hml,\overline\hml)$-structure.
\end{dfn}
By compactness, there are isomorphisms
\begin{displaymath}
  \pi^H_k(X,\hml) \cong
  \begin{cases}
    \colim_{b \in \N\{B\}} 
    \pi_k(\Omega^{V_b} X_{V_b})^H
    & k \ge 0 \\
    \colim_{b \in \N\{B\}} 
    \pi_0(\Omega^{V_b}X_{V_b \oplus \R^k})^H 
    & k < 0.
  \end{cases}
\end{displaymath}

\begin{prop}\label{prop:stableispistar}
  A map \(f \colon X \to Y\) of orthogonal \(G\)-spectra is an
  \(\hml\)-stable equivalence if and only if the induced homomorphism
  \(\pi^H_k(f,\hml)\) is an isomorphism for every subgroup \(H\) of
  \(G\) and for every integer \(k\).
\end{prop}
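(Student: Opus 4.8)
The statement relates a localization‑theoretic notion (being an \(\hml\)-stable equivalence) to homotopy groups of the \(\Omega\)-spectrum replacement \(QX\); the plan is to run both implications through the functor \(Q\). The backbone is this: by Corollary \ref{Qcreatesanddetectsequivalences}, \(f\) is an \(\hml\)-stable equivalence if and only if \(Qf\colon QX\to QY\) is a level \(\catM\)-equivalence; since \(QX\) and \(QY\) are \(\catM\)-\(\Omega\)-spectra (Proposition \ref{QisOmega}) and the weak equivalences of \(\catM_V\) are the \(\hml^V\)-equivalences (Definition \ref{hmlmonoidal}), a level \(\catM\)-equivalence is the same thing as a level \(\hml\)-equivalence; and by Lemma \ref{levelFislevelH} a morphism of \(\catM\)-\(\Omega\)-spectra is a level \(\hml\)-equivalence precisely when it is a level \(\fml\)-equivalence, \(\fml\) being the family of trivial representations. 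Unwinding what a level \(\fml\)-equivalence is, this says: \(f\) is an \(\hml\)-stable equivalence if and only if for every \(n\ge 0\) and every subgroup \(H\le G\) the map \((Qf)_{\R^n}^H\colon (QX)_{\R^n}^H\to (QY)_{\R^n}^H\) is a weak equivalence in \(\catT\).

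It therefore remains to match this condition with the one on homotopy groups. The implication ``\(\Rightarrow\)'' is immediate: \((Qf)_0^H\) being a weak equivalence gives that \(\pi^H_k(f,\hml)=\pi_k\bigl((Qf)_0^H\bigr)\) is an isomorphism for \(k\ge 0\), and \((Qf)_{\R^{-k}}^H\) being a weak equivalence gives that \(\pi^H_k(f,\hml)=\pi_0\bigl((Qf)_{\R^{-k}}^H\bigr)\) is an isomorphism for \(k<0\). For the converse I prove the naturality statement that for every orthogonal \(G\)-spectrum \(X\), every subgroup \(H\le G\) and all integers \(n\ge 0\), \(j\ge 0\) there is an isomorphism
\[
  \pi_j\bigl((QX)_{\R^n}^H\bigr)\;\cong\;\pi^H_{j-n}(X,\hml)
\]
natural in \(X\). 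Granting this, if \(\pi^H_k(f,\hml)\) is an isomorphism for all \(k\) and \(H\), then \(\pi_j\bigl((Qf)_{\R^n}^H\bigr)\) is an isomorphism for all \(j\ge 0\), \(n\ge 0\), \(H\le G\); since the spaces \((QX)_{\R^n}^H\) and \((QY)_{\R^n}^H\) are (infinite) loop spaces, this forces \((Qf)_{\R^n}^H\) to be a weak equivalence, and we are done by the first paragraph.

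To establish the displayed isomorphism I first invoke the final Corollary of this section to reduce to the case in which \(\hml\) is closed under retracts: this changes neither the class of \(\hml\)-stable equivalences nor — since in either case \(QX\) is a level-\(\catM\)-equivalent \(\catM\)-\(\Omega\)-spectrum — the groups \(\pi^H_\ast(X,\hml)\). Once \(\hml\) is closed under retracts, the subgroup \(G\times\{\mathrm{id}_{\R^m}\}\subseteq G\times\Or{\R^m}\) lies in \(\hml^{\R^m}\) for every \(m\ge 0\), projects isomorphically onto \(G\), and \(\hml^0\) consists of all subgroups of \(G\). Applying the \(\catM\)-\(\Omega\)-spectrum condition for \(QX\) to the subgroup \(P=G\times\{\mathrm{id}_{\R^j}\}\), whose associated representation \(V(P)\) is the trivial representation \(\R^j\), and using Lemma \ref{twistedorthogonalgroups}(ii) to identify \(\bigl(R_P(QX)\bigr)_W\) with \(\Omega^{j}(QX)_{W\oplus\R^j}\) together with Lemma \ref{fixedprop} to commute \(H\)-fixed points past the loop functor, one obtains weak equivalences \((QX)_{\R^{m-j}}^H\simeq\Omega^{j}\bigl((QX)_{\R^m}^H\bigr)\) for all \(0\le j\le m\). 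Taking \(\pi_0\) on both sides and comparing with the definition of \(\pi^H_{\ast}(-,\hml)\) (the case \(k\ge 0\) computed at level \(0\), the case \(k<0\) at level \(\R^{-k}\)) yields the asserted isomorphism, with naturality in \(X\) clear from the construction.

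The main obstacle is this last paragraph: carrying out the reduction to a retract‑closed \(\hml\) in a way that is transparently compatible with the definition of \(\pi^H_\ast\), and then keeping track of the \(G\)- and \(\Or{\R^m}\)-actions in the identification of the levels of \(R_P(QX)\) with loop spaces of the levels of \(QX\), so that the resulting isomorphism \(\pi_j\bigl((QX)_{\R^n}^H\bigr)\cong\pi^H_{j-n}(X,\hml)\) is genuinely natural in \(X\). Everything else is a formal consequence of Corollary \ref{Qcreatesanddetectsequivalences}, Proposition \ref{QisOmega} and Lemma \ref{levelFislevelH}.
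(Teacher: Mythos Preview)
Your argument is correct and follows the same route as the paper: both directions are run through \(Q\) via Corollary~\ref{Qcreatesanddetectsequivalences}, Proposition~\ref{QisOmega}, and the reduction from level \(\hml\)-equivalences to level \(\fml\)-equivalences in Lemma~\ref{levelFislevelH}. The paper's proof is considerably terser---it simply asserts that the hypothesis on \(\pi^H_k(f,\hml)\) forces \(Qf\) to be a level \(\fml\)-equivalence---whereas you actually supply the missing identification \(\pi_j\bigl((QX)_{\R^n}^H\bigr)\cong\pi^H_{j-n}(X,\hml)\) via the \(\Omega\)-spectrum structure, together with the reduction to retract-closed \(\hml\) needed to ensure every trivial \(\R^j\) lies in \(\hml\).

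One small imprecision: ``taking \(\pi_0\) on both sides'' of \((QX)_{\R^{m-j}}^H\simeq\Omega^j\bigl((QX)_{\R^m}^H\bigr)\) only yields the identification for \(0\le j\le m\); for \(j>n\) you should instead apply \(\pi_{j-n}\) to the instance \((QX)_0^H\simeq\Omega^n\bigl((QX)_{\R^n}^H\bigr)\) and use \(\pi^H_{j-n}=\pi_{j-n}\bigl((QX)_0^H\bigr)\). With that adjustment the displayed isomorphism holds for all \(j\ge 0\), and the rest of your argument goes through unchanged.
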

\begin{proof}
  Let \(\fml\) be the \(G\)-typical family of representations
  consisting of the trivial \(G\)-representations in $\hml$. 
  If the induced homomorphism
  \(\pi^H_k(f,\hml)\) is an isomorphism for every subgroup \(H\) of
  \(G\) and for every integer \(k\), then the map \(Qf \colon
  QX \to QY\) a level \(\fml\)-equivalence. Since it is a map of
  \(\catM\)-\(\Omega\)-spectra, Lemma~\ref{levelFislevelH} implies
  that it is a level
  \(\catM\)-equivalence. Thus \(f\) is an \(\catM\)-stable equivalence.

  Conversely, if \(f\) is an \(\catM\)-stable equivalence, then \(Qf\)
  is an level \(\catM\)-equivalence, and also a level
  \(\fml\)-equivalence. Thus the induced homomorphism
  \(\pi^H_k(f,\hml)\) is an isomorphism for every subgroup \(H\) of
  \(G\) and for every integer \(k\).
\end{proof}

\begin{remark}\label{rem:compMM}
  The infinite dimensional \(G\)-representation \(\universeU = \colim_{n \in
  \N} \bigoplus_{b \in B} V^n_b\) is a universe of \(G\)-representations in the sense of Mandell and May who define equivariant stable homotopy groups as follows:
\begin{displaymath}
  \pi_k^H(X,\universeU) =
  \begin{cases}
    \colim_{U \subseteq \universeU} 
    \pi_k(\Omega^{U} X_{U})^H
    & k \ge 0 \\
    \colim_{U \subseteq \universeU} 
    \pi_0(\Omega^{U}X_{U \oplus \R^k})^H 
    & k < 0.
  \end{cases}
\end{displaymath}
Here the colimits are taken over the partially ordered set of finite
dimensional representations in \(\universeU\). Since \(\N\{B\}\) is cofinal
in this partially ordered set, the groups \(\pi^H_k(X,\hml)\) and
\(\pi_k^H(X,\universeU)\) are isomorphic. 

If \(i \colon H \to G\) is the
inclusion of a subgroup, then by part (iv) of Definition~\ref{gtypicalrepresentations}, 
the groups
\(\pi^H_k(X,\hml)\) and \(\pi^H_k(i^*X,i^*\hml)\) are isomorphic. We
could have used the universe \(\universeU\) in the construction of
\(QX\). Working with \(\N\{B\}\) we are keeping our presentation close to the
one of \cite{MMSS}.
\end{remark}

Most of the following crucial result is taken directly from \cite[Theorem 7.4]{MMSS}.
\begin{thm}\label{MMSSthm}
  Let \(\hml\) be a \(G\)-typical family of representations.
  \begin{enumerate}
  \item For every \(G\)-CW complex $A$, the functor $- \wedge A
$ preserves \(\hml\)-stable equivalences of orthogonal \(G\)-spectra.
  \item A morphism $f$ of orthogonal \(G\)-spectra is an
    \(\hml\)-stable equivalence if and only if its suspension $\Sigma
    f$ is an \(\hml\)-stable equivalence. Moreover, the natural map
    $\eta\colon X \rightarrow \Omega^V\Sigma^V X$ is an
    \(\hml\)-stable equivalence for all orthogonal \(G\)-spectra $X$
    for all \(V\) in \(\hml\). 
  \item The homotopy groups of a wedge of orthogonal $G$-spectra are the direct sums of the homotopy groups of the wedge summands, hence a wedge of \(\hml\)-stable equivalences is an \(\hml\)-stable equivalences.
  \item Cobase changes of maps that are \(\hml\)-stable equivalences and levelwise Hurewicz cofibrations are \(\hml\)-stable equivalences.
  \item The generalized cobase change and cube lemmas (\ref{gen.cobch},\ref{gen.cube}) hold for all orthogonal \(G\)-spectra, levelwise Hurewicz cofibrations and \(\hml\)-stable equivalences.
  \item If $X$ is the colimit of a sequence of Hurewicz cofibrations $X_n \rightarrow X_{n+1}$, each of which is an \(\hml\)-stable equivalence, then the map from the initial term $X_0$ into $X$ is an \(\hml\)-stable equivalence.
  \item For every morphism $f\colon X \rightarrow Y$ of orthogonal \(G\)-spectra, there are natural long exact sequences
\[\cdots \rightarrow \pi^{H}_{k}(Ff,\hml) \rightarrow \pi^{H}_{k}(X,\hml)\rightarrow \pi^{H}_{k}(Y,\hml)\rightarrow \pi^{H}_{k-1}(Ff,\hml)
\rightarrow \cdots \]
and
\[\cdots \rightarrow \pi^{H}_{k}(X,\hml) \rightarrow \pi^{H}_{k}(Y,\hml)\rightarrow \pi^{H}_{k}(Cf,\hml)\rightarrow \pi^{H}_{k-1}(X,\hml)\rightarrow \cdots,\]
where $Ff$ and $Cf$ denote the levelwise homotopy fiber and cofiber of
$f$, that is, in each level it is given be the homotopy fiber and
cofiber. The natural map $Ff \rightarrow \Omega Cf$ is 
an \(\hml\)-stable equivalence. 
  \end{enumerate}
\end{thm}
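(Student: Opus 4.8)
The plan is to reduce every assertion to the description of the homotopy groups $\pi^H_k(-,\hml)$ as the filtered colimits $\colim_{b\in\N\{B\}}\pi_k(\Omega^{V_b}(-)_{V_b})^H$ (and its negative-degree variant) recorded above, together with the characterization, also established above, that a map is an $\hml$-stable equivalence precisely when it induces isomorphisms on all of these groups. With this dictionary in place the proof of \cite[Theorem 7.4]{MMSS} transcribes almost word for word, the genuinely equivariant inputs being Lemma~\ref{fixedprop} (the fixed-point functors $(-)^H$ preserve wedges, pushouts along closed inclusions, sequential colimits of closed inclusions, smash products, and more generally the homotopy fibre and homotopy cofibre of a map of well-based $G$-spaces), Lemma~\ref{fixprescof} ($(-)^H$ preserves $h$-cofibrations), the fact that $S^V$ is a $G$-CW complex for every representation $V$ (Remark~\ref{spheresareCW}), and the observation that by condition (iii) of Definition~\ref{gtypicalrepresentations} together with closure under sums the family $\hml$ contains arbitrarily large \emph{trivial} $G$-representations.

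First I would prove (vii), which is the backbone of the rest. Replacing $f$ up to level $\catM$-equivalence, hence up to $\hml$-stable equivalence, I may assume $f$ is simultaneously a level fibration and a levelwise $h$-cofibration, so that $Ff_V$ and $Cf_V$ are the honest fibre and cofibre of the map of well-based $G$-spaces $f_V$. For the fibre the levelwise fibre sequences give long exact sequences of $\pi^H_*$ of $G$-spaces; applying $\Omega^{V_b}$ and $(-)^H$, each of which preserves fibre sequences, and passing to the filtered colimit over $\N\{B\}$ produces the first long exact sequence, since filtered colimits of abelian groups are exact. For the cofibre one shows, exactly as in \cite[\S7]{MMSS}, that the natural comparison $Ff\to\Omega Cf$ from the homotopy fibre to the loops on the homotopy cofibre is an $\hml$-stable equivalence, and then transports the fibre long exact sequence of $\Omega Cf\to X\to Y$ along it. The comparison map is analyzed on $H$-fixed points, where by Lemma~\ref{fixedprop} it is the non-equivariant Blakers--Massey comparison for $f_V^H$; since $\hml$ contains trivial representations $\R^N$ of unbounded $N$, each index $b\in\N\{B\}$ is dominated by indices $b'$ with $V_{b'}=V_b\oplus\R^N$, and looping $N$ times in these trivial directions commutes with $(-)^H$ and drives the connectivity of the comparison to infinity, so it induces isomorphisms on all $\pi^H_k(-,\hml)$.

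Granting (vii), statements (iv) and (vi) are short: for a cobase change $B\to X\cup_A B$ of an $h$-cofibration $A\to X$ the cofibre is $X/A$, the cofibre of $A\to X$, which has trivial $\pi^H_*(-,\hml)$ when $A\to X$ is an $\hml$-stable equivalence, whence so does the cofibre of $B\to X\cup_A B$ and the latter is an $\hml$-stable equivalence by the cofibre long exact sequence; and (vi) holds because $\pi^H_k(-,\hml)$, being a filtered colimit of homotopy groups, commutes with sequential colimits along $h$-cofibrations, using Lemma~\ref{fixprescof} and Lemma~\ref{fixedprop}. Statement (v) is then the formal gluing and cube argument, identical to \ref{gen.cobch} and \ref{gen.cube} in the space-level setting. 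Statement (iii) is immediate from the colimit formula: $\pi_k$ of a wedge of well-based $G$-CW spaces is the direct sum of the $\pi_k$'s (a compact domain lands in a finite subwedge and $(-)^H$ preserves wedges), filtered colimits commute with direct sums, and a wedge of maps each of which is an isomorphism on homotopy groups is again such. For (i) one inducts over the $G$-CW skeleta of $A$: each skeletal inclusion is a levelwise $h$-cofibration whose cofibre is a wedge of spectra of the form $\Sigma^n(X\wedge(G/H)_+)$, so by (iii), (vii) and (vi) it suffices to handle the suspension coordinate, which is (ii), and the case $A=(G/H)_+$, which follows from the natural isomorphism $X\wedge(G/H)_+\cong G_+\wedge_H i_H^*X$ (cf.\ Lemma~\ref{inducingsmashcomp}), the standard identification of $\pi^K_*$ of an induced spectrum with a sum of twisted homotopy groups of $i_H^*X$, and the fact, recorded before the theorem, that $i_H^*$ carries $\hml$-stable equivalences to $i_H^*\hml$-stable equivalences. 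Finally (ii): that $\Sigma f$ is an $\hml$-stable equivalence whenever $f$ is follows from (i) applied to $A=S^1$; the converse, and the claim that $\eta\colon X\to\Omega^V\Sigma^V X$ is an $\hml$-stable equivalence for $V\in\hml$, are cofinality statements for the colimit over $\N\{B\}$, since the irreducible summands of $V$ appear in $B$ and the levels of $\Omega^V\Sigma^V X$ reproduce the colimit after reindexing by the corresponding element of $\N\{B\}$ — here the path-connectedness of $G\catLr(V,V\oplus V)$ (Schur's Lemma), already used in the proof of Proposition~\ref{QisOmega}, guarantees the reindexed maps agree up to homotopy.

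The main obstacle is the cofibre half of (vii): one must upgrade the levelwise Blakers--Massey estimate for $f_V^H$ to an isomorphism statement for the stabilized groups $\pi^H_k(-,\hml)$, and the point that makes this work in the present setting — rather than requiring a complete universe — is precisely that a $G$-typical family of representations contains arbitrarily large trivial representations, so the needed stabilization can be carried out after passing to $H$-fixed points. Everything else is the translation of \cite[\S7]{MMSS} through Lemma~\ref{fixedprop}.
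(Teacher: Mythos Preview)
Your proposal is essentially correct and follows the \cite[Theorem 7.4]{MMSS} strategy that the paper itself defers to for parts (iii)--(vii); you have correctly identified the equivariant inputs needed (Lemmas~\ref{fixedprop} and~\ref{fixprescof}, and the presence of arbitrarily large trivial representations in~$\hml$ via conditions (iii) and (iv) of Definition~\ref{gtypicalrepresentations}). The paper handles (i) by direct citation of \cite[III.3.11]{MM} rather than by your cell-induction, and (ii) by invoking the argument of Proposition~\ref{QisOmega}, but your routes to these are also valid.

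One expository circularity to repair: your forward direction of (ii) appeals to (i) with $A=S^1$, while your proof of (i) appeals to (ii) for the suspension coordinate. Break the loop by first proving directly, from cofinality of the shift in the colimit over $\N\{B\}$ (or from your $\eta$-argument, which is independent of (i)), that $\Sigma$ preserves $\hml$-stable equivalences, and only then run the $G$-CW induction for (i).
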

\begin{proof}
  Part (i) is \cite[Theorem III.3.11]{MM}.
  For part (ii), note that a map \(f\) induced an isomorphism on
  homotopy groups if and only \(\Omega f\) is so. Thus \(f\) is an
  \(\hml\)-stable equivalence if and only if \(\Omega f\) is so. Now,
  the arguments used in the proof of Proposition~\ref{QisOmega} give the
  statements of (ii).
  For the rest of the statements, the arguments in the proof of
  \cite[Theorem 7.4]{MMSS} carry over to our situation.  
\end{proof}
\begin{cor}
  \label{stabeqstabundersusp}
  Let \(V\) be a Euclidean space and let \(P \in \hml^V\). A map \(f \colon X \to
  Y\) is an \(\hml\)-stable equivalence if and only if its
  \(V(P)\)-suspension \(\Sigma^{V(P)} f \colon \Sigma^{V(P)} X \to
  \Sigma^{V(P)} Y\) is 
  an \(\hml\)-stable equivalence.
\end{cor}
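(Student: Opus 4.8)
The plan is to reduce everything to Theorem~\ref{MMSSthm}. The first step is an identification: since \(\Sigma^{V(P)}\) presupposes that \(\proj_1\colon P\to G\) is an isomorphism, \(V(P)\) is a genuine \(G\)-representation, so by Illman's Theorem~\ref{illtriangmfd} (see also Remark~\ref{spheresareCW}) the representation sphere \(S^{V(P)}\) is a based \(G\)-CW complex; and because \(\Gr{}{0}S^{V(P)}=\Sigma^{\infty}S^{V(P)}\) is the unit \(\Sp\) smashed with \(S^{V(P)}\), associativity and unitality of the smash product give a natural isomorphism \(\Sigma^{V(P)}X=\Gr{}{0}S^{V(P)}\wedge X\cong S^{V(P)}\wedge X\), identifying \(\Sigma^{V(P)}\) with the tensor with the \(G\)-CW complex \(S^{V(P)}\).

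Given this, the ``only if'' direction is immediate from Theorem~\ref{MMSSthm}(i): if \(f\) is an \(\hml\)-stable equivalence then so is \(\Sigma^{V(P)}f\cong S^{V(P)}\wedge f\).

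For the ``if'' direction I would exploit that \(\Sigma^{V(P)}\) descends to an auto-equivalence of the \(\hml\)-stable homotopy category, and that auto-equivalences reflect isomorphisms. Concretely: \(\Sigma^{V(P)}\) preserves \(\hml\)-stable equivalences by the previous paragraph, hence descends to an endofunctor \(\overline{\Sigma^{V(P)}}\) of the homotopy category; Theorem~\ref{MMSSthm}(ii) asserts that the unit \(\eta\colon X\to\Omega^{V(P)}\Sigma^{V(P)}X\) is an \(\hml\)-stable equivalence for every \(X\), which says precisely that \(\overline{\Omega^{V(P)}}\circ\overline{\Sigma^{V(P)}}\cong\mathrm{id}\), and since \(S^{V(P)}\) is invertible in the \(\hml\)-stable homotopy category (Theorem~\ref{MMSSthm}(ii) being the key input) the counit is an isomorphism as well, so \(\overline{\Sigma^{V(P)}}\) is an equivalence. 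Therefore, if \(\Sigma^{V(P)}f\) is an \(\hml\)-stable equivalence, i.e.\ an isomorphism in the homotopy category after cofibrant replacement, then so is \(f\). A more hands-on version runs a two-out-of-three argument on the naturality square of \(\eta\): the maps \(\eta_X\) and \(\eta_Y\) are \(\hml\)-stable equivalences by Theorem~\ref{MMSSthm}(ii), and one is reduced to showing that \(\Omega^{V(P)}\) carries the \(\hml\)-stable equivalence \(\Sigma^{V(P)}f\) to an \(\hml\)-stable equivalence; for this one can invoke Lemma~\ref{twistedorthogonalgroups}(ii) together with Corollary~\ref{lambdastarstableeqcor}, which exhibit \(\Omega^{V(P)}\) and \(s_{-P}\) as mutually inverse up to natural \(\hml\)-stable equivalence (so that in particular \(\Omega^{V(P)}\) preserves and reflects \(\hml\)-stable equivalences).

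I expect the only delicate point to be the claim that \(\Sigma^{V(P)}\) (equivalently, \(\Omega^{V(P)}\)) reflects \(\hml\)-stable equivalences, that is, the invertibility of \(S^{V(P)}\) in the \(\hml\)-stable homotopy category together with the bookkeeping needed to pass between the point-set level and the homotopy category; everything else is formal, and the homotopical content is supplied entirely by Theorem~\ref{MMSSthm} (and, for the alternative route, Corollary~\ref{lambdastarstableeqcor} and Lemma~\ref{twistedorthogonalgroups}).
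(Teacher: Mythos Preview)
Your approach is essentially the paper's: the forward direction uses Theorem~\ref{MMSSthm}(i) together with the fact that \(S^{V(P)}\) is a \(G\)-CW complex, and the converse runs the two-out-of-three argument on the naturality square of \(\eta\colon X\to\Omega^{V(P)}\Sigma^{V(P)}X\), reducing to the claim that \(\Omega^{V(P)}\) preserves \(\hml\)-stable equivalences. The paper's proof is equally terse on this last point, simply asserting ``then so is \(\Omega^{V(P)}\Sigma^{V(P)}f\)''.

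Where your proposal goes wrong is in the justification of that claim. You argue that Lemma~\ref{twistedorthogonalgroups}(ii) and Corollary~\ref{lambdastarstableeqcor} exhibit \(\Omega^{V(P)}\) and \(s_{-P}\) as mutually inverse up to natural \(\hml\)-stable equivalence, and conclude that \(\Omega^{V(P)}\) preserves stable equivalences. But what those results give is a natural stable equivalence \(\id\to R_P=\Omega^{P}s_{-P}=s_{-P}\Omega^{P}\); from \(FG\simeq\id\) and \(GF\simeq\id\) at the point-set level (before any homotopy category exists) one \emph{cannot} deduce that \(F\) preserves weak equivalences---two-out-of-three only gives that \(GF\) and \(FG\) do, and every attempt to bootstrap from there is circular. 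Your abstract variant has the same gap: you cannot form \(\overline{\Omega^{V(P)}}\) on the localization until you know \(\Omega^{V(P)}\) preserves stable equivalences.

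The intended justification (implicit in the paper, and visible in the proof sketch of Theorem~\ref{MMSSthm}(ii)) goes through the homotopy-group characterization: stable equivalences are \(\pi_*\)-isomorphisms, and one checks directly, by the same cofinality argument used in Proposition~\ref{QisOmega}, that \(\Omega^{V(P)}\) preserves \(\pi_*\)-isomorphisms. An equivalent route is to observe that \(\Omega^{V(P)}\) commutes with \(Q_{\hml}\) up to level equivalence (compactness of \(S^{V(P)}\)) and preserves level \(\hml\)-equivalences, hence preserves stable equivalences via Corollary~\ref{Qcreatesanddetectsequivalences}. Once this is in place, your hands-on argument (and the paper's) goes through.
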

\begin{proof}
  Note that \(S^{V(P)}\) is a \(G\)-CW-complex. Thus (i) of Theorem~\ref{MMSSthm} implies
  that is \(f\) is
  an \(\hml\)-stable equivalence,
  then so is \(\Sigma^{V(P)} f\).
  Conversely, if \(\Sigma^{V(P)} f\) is
  an \(\hml\)-stable equivalence,
  then so is \(\Omega^{V(P)} \Sigma^{V(P)} f\). 
  Now use (ii) of Theorem~\ref{MMSSthm}.
\end{proof}
\begin{cor}
    Let \(\hml\) be a \(G\)-typical family of representations
    satisfying the pushout product axiom and let
    \(\catM\) be an \(\hml\)-model structure  (see Definition~\ref{classdfnlevel}).
    Consider a pull-back diagram of the form
    \begin{displaymath}
    \begin{CD}
      A @>f>> B \\
      @ViVV @VVjV \\
      X @>>g> Y.
    \end{CD}
    \end{displaymath}
    If \(j\) is a level \(\catM\)-fibration and \(g\) is an \(\hml\)-stable equivalence,
    then \(f\) is an \(\hml\)-stable equivalence.
  \end{cor}
  \begin{proof}
    The map \(Fi \to Fj\) induced by \(f\) from the fiber of \(i\) to the fiber of
    \(j\) is an isomorphism. From Theorem~\ref{MMSSthm} (vii) we obtain a
    morphism of long
    exact fibration sequences of homotopy groups \(\pi_k^H(-,\hml)\)
    for all subgroups \(H\) of \(G\). Applying the five lemma we see
    that the homomorphism \(\pi_k^H(f,\hml) \colon \pi_k^H(A,\hml) \to
    \pi_k^H(B,\hml)\) is
    an isomorphism for all \(H\) and \(k\). Now Proposition~\ref{prop:stableispistar}
    implies that \(f\) is an \(\hml\)-stable equivalence.
  \end{proof}

\section{The Stable Model Structure}
\label{sec:stable-model-struct}

In this section we give the various flavors of equivariant stable structures.  As before, there is considerable freedom in the choice of structure, but for much of what we are going to do, the by far most important instance is the \(\Sp\)-model structure of Definition~\ref{Spmodelstructure}, which builds on the positive mixing pair $(\AIplus,\All_+)$ of Example~\ref{positivemixingpair}.

Let \(\catM\) be an \(\hml\)-model structure, see Definition~\ref{hmlmonoidal} and Theorem~\ref{catMlevel}. 

\begin{rem}
  In the case where $\catM$ is the $(\hml,\gml)$-structure as in Definition~\ref{classdfnlevelsaysMartin} with $\hml$ closed under conjugation, we have by Corollary~\ref{cor:stabdeponclosure} that the stable equivalences of the model structure we give in Theorem~\ref{stablemodelstrfromMMSS} only depend on $\hml$ and by Lemma~\ref{lem:indepofuniv} the cofibrations only depend on $\gml$.  

Classically, the stable model structures were built upon a concept of a {\em universe} of representations.  A choice of universe was needed in order to define the stable equivalences and is mirrored in our choice of $\hml$, but was baked into the very category of spectra under considerations.  However, in our setting, the underlying category of orthogonal $G$-spectra is independent of such choices, and the fact that the cofibrations only depend on $\gml$ can be stated as the cofibrations not being dependent on the ``choice of universe'' and is a novel and very convenient feature of our approach.
\end{rem}

\begin{dfn}
  Let \(f \colon X \to Y\) be a map of orthogonal \(G\)-spectra. We
  say that \(f\) is:
  \begin{enumerate}
  \item a {\em stable \(\catM\)-acyclic cofibration}\index{stable!M-acyclic cofibration@\(\catM\)-acyclic cofibration, fibration} if it is an
    \(\hml\)-stable equivalence and a level
    \(\catM\)-cofibration;
  \item an \(\catM\)-stable fibration if it satisfies the right
    lifting property with respect to stably \(\catM\)-acyclic cofibrations;
  \item a {\em stable \(\catM\)-acyclic fibration} if it is an \(\hml\)-stable
    equivalence and a level \(\catM\)-fibration.
  \end{enumerate}
\end{dfn}

Recall from Definition~\ref{generatorsforMorthogonalspectra} that the
level \(\catM\)-model structure has the set \(\Gr{}{}I_\catM\) of generating
cofibrations and the set \(\Gr{}{}J_\catM\) of generating acyclic cofibrations.
 Also, recall the map $\lambda_P^X$ of Definition~\ref{def:lambda}.
\begin{dfn}\label{stableacyccof}
  We let \(S(\catM)\)\index{SM@\(S(\catM)\)} be the set consisting of the
  morphisms 
  \[\G/H_+ \wedge \lambda_P^X \colon G/H_+ \wedge \Gr{} V \widetilde
  S^P \wedge X \to G/H_+ \wedge X,\] 
  where \(H\) is a subgroup of
  \(G\), where \(P \in \hml^V\) for some \(V \in
  \catOr\) and where \(X = \Gr{} W C\) for \(C\) a cofibrant replacement
  of either a source or a target of a generating cofibration in one of
  the categories \(\catM_W\).
\end{dfn}

\begin{dfn}
  Given \(\lambda\) in \(S(\catM)\), let \(M\lambda\) be the
  mapping cylinder of \(\lambda\). Then \(\lambda\) factors as the
  composite of a level \(\catM\)-cofibration 
  \[k_\lambda \colon \Gr{}{V \oplus W}({\Or{V \oplus W}}_+\wedge_{\Or V \times \Or W}
  \widetilde S^P \wedge C) \to M\lambda\]\index{klambda@$k_\lambda$}
  and a deformation retraction
  \[r_\lambda \colon M\lambda \to \Gr{}{W} (C).\]\index{rlambda@$r_\lambda$}
  Let \(K_\lambda\) be the set of maps of the form \(k_\lambda \Box
  i\), where \(i\) is a generating cofibration
  for \(\catT\). Let \(K_\catM\) be the union of \(\Gr{}{}J_\catM\) and the
  sets \(K_\lambda\) for \(\lambda \in S(\catM)\).
\end{dfn}

\begin{dfn}\label{defineSMfibration}
  An {\em \(S(\catM)\)-fibration}\index{SM-fibration@\(S(\catM)\)-fibration} is a level \(\catM\)-fibration, say
  \(g \colon Z \to W\), of orthogonal \(G\)-spectra with the property
  that for every morphism \(\lambda \colon A \to B\) of \(S(\catM)\),
  the square 
  \begin{displaymath}
    \xymatrix{
      \catGOS(B,Z) \ar[r]^{g_*} \ar[d]_{\lambda^*} & \catGOS(B,W) \ar[d]^{\lambda^*} \\
      \catGOS(A,Z) \ar[r]^{g_*} & \catGOS(A,W)
    }
  \end{displaymath}
  is a homotopy pullback square in \(\catT\).
\end{dfn}

\begin{example}\label{notationofgencofandstablyacycliccof}
  Let us take a look at the \(S(\catM)\)-fibrations in the situation  of Proposition~\ref{mixedisorcat},
  where \(\catM\) is the 
  level \((\hml,\gml)\)-model structure for a \(G\)-mixing pair
  \((\hml,\gml)\). The set \(\Gr {}{} I_\catM=\Gr{}{}I_\gml\) of
  generating cofibrations consists 
  of maps of the form 
  \begin{displaymath}
    \Gr {} V (i \wedge (G \times \Or V)/P_+)
  \end{displaymath}
  for \(i \colon S^{n-1}_+ \to D^n_+\) a generating cofibration for
  \(\catT\) and \(P\) a member of \(\gml^V\). Since we work with model
  categories enriched over \(\catT\), the \(S(\catM)\)-fibrations do 
  not change if replace \(S(\catM)\) by the set of maps of the form
  \(\lambda^{\Gr {} V  C}_{W}\),\footnote{MORTEN 66.  This seems like nonsense/tyoo.  What do you want?} where \(C\) is a transitive \((G
  \times \Or V)\)-spaces 
  of the form \((G \times \Or V)/P_+\) for \(P \in \gml^V\). In this
  situation we write \(K_{\gml,\hml}\) for the set \(K_\catM\) of generating
  stably acyclic cofibrations. 
\end{example}
The arguments proving \cite[Proposition 9.5]{MMSS} give:
\begin{prop}
  A map \(p \colon E \to B\) of orthogonal \(G\)-spectra satisfies the
  right lifting property with respect to \(K_{\catM}\) if and only if it is an \(S(\catM)\)-fibration.
\end{prop}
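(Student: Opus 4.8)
The plan is to follow the template of \cite[Proposition 9.5]{MMSS}, adapting it to the equivariant, enriched-over-$\catT$ setting used here. The statement to prove is an ``if and only if'' between two conditions on a map $p \colon E \to B$: (a) $p$ has the right lifting property with respect to the set $K$ (the union of $\Gr{}{}J$ and the sets $K_\lambda$ for $\lambda \in S(\catM)$), and (b) $p$ is an $S(\catM)$-fibration, i.e.\ $p$ is a level $\catM$-fibration and for every $\lambda \colon A \to B'$ in $S(\catM)$ the canonical square of mapping spaces is a homotopy pullback in $\catT$.

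First I would unpack the ``level $\catM$-fibration'' half. By Theorem \ref{catMlevel} the level $\catM$-model structure on $\catGOS$ has $\Gr{}{}J$ as a set of generating acyclic cofibrations, so $p$ has the right lifting property with respect to $\Gr{}{}J$ if and only if $p$ is a level $\catM$-fibration. Since $K = \Gr{}{}J \cup \bigcup_{\lambda} K_\lambda$, the condition ``$p$ has the RLP with respect to $K$'' already contains ``$p$ is a level $\catM$-fibration'' as the $\Gr{}{}J$-part; the remaining content on both sides is entirely about the $K_\lambda$'s versus the homotopy-pullback condition. So I would reduce, once and for all, to showing: given that $p$ is a level $\catM$-fibration, $p$ has the RLP with respect to all $K_\lambda$ ($\lambda \in S(\catM)$) if and only if each square
\begin{displaymath}
  \xymatrix{
    \catGOS(B',Z) \ar[r]^{p_*} \ar[d]_{\lambda^*} & \catGOS(B',W) \ar[d]^{\lambda^*} \\
    \catGOS(A,Z) \ar[r]^{p_*} & \catGOS(A,W)
  }
\end{displaymath}
is a homotopy pullback (writing $p \colon Z \to W$ in the notation of Definition \ref{defineSMfibration}).

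The key step is the standard ``mapping cylinder'' translation. Recall $\lambda$ factors as $k_\lambda \colon A' \to M\lambda$ followed by the deformation retraction $r_\lambda \colon M\lambda \to B'$, where $A' = \Gr{}{V \oplus W}({\Or{V \oplus W}}_+\wedge_{\Or V \times \Or W} \widetilde S^P \wedge C)$. Because $r_\lambda$ is a homotopy equivalence and the enriched mapping-space functor $\catGOS(-, X)$ sends it to a homotopy equivalence, replacing $\lambda$ by $k_\lambda$ does not change whether the square above is a homotopy pullback. On the lifting side, $K_\lambda$ consists of the pushout-product maps $k_\lambda \Box i$ for $i \in I$ a generating cofibration of $\catT$, and since $\catGOS$ is a $\catT$-model category (Theorem \ref{catMlevel} gives that the level structure is $G$-topological, hence in particular $\catT$-enriched and tensored compatibly), $p$ has the RLP with respect to $\{k_\lambda \Box i : i \in I\}$ if and only if the map of $\catT$-mapping-spaces $\catGOS(M\lambda, Z) \to \catGOS(A', Z) \times_{\catGOS(A',W)} \catGOS(M\lambda, W)$ is an acyclic fibration in $\catT$; since $k_\lambda$ is a level $\catM$-cofibration and $p$ a level $\catM$-fibration this map is already a fibration, so the RLP is equivalent to it being a weak equivalence, which is exactly the homotopy-pullback condition for the $k_\lambda$-square. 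Chasing these equivalences in both directions, over all $\lambda \in S(\catM)$, gives the claimed biconditional.

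The main obstacle is bookkeeping rather than anything conceptually deep: one must be careful that the enriched pushout-product/lifting correspondence is being applied to the set $I$ of generating cofibrations of $\catT$ (so that detecting the RLP against all $k_\lambda \Box i$ genuinely amounts to the mapping-space map being an acyclic fibration, using that $I$ detects acyclic fibrations in $\catT$), and that $k_\lambda$ is genuinely a level $\catM$-cofibration (this is built into the factorization defining $K_\lambda$). A second point requiring care is the reduction in Example \ref{notationofgencofandstablyacycliccof}: because the structures are $\catT$-enriched, it suffices to test against $\lambda_P^X$ for $X = \Gr{}{W}C$ with $C$ a transitive orbit cell rather than an arbitrary (co)fibrant source/target, and one should note this does not change the class of $S(\catM)$-fibrations. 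Modulo these verifications, the proof is the formal argument of \cite[Proposition 9.5]{MMSS} transported verbatim, so I would simply cite that proof for the routine parts and spell out only the enriched lifting/homotopy-pullback equivalence in the form used here.
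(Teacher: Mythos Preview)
Your proposal is correct and follows precisely the approach the paper indicates: the paper's own ``proof'' consists solely of the sentence ``The arguments proving \cite[Proposition 9.5]{MMSS} give'' the result, and you have faithfully unpacked that argument in the present $G$-topological setting. Your identification of the two halves (the $\Gr{}{}J$-part giving the level-fibration condition, and the enriched pushout-product adjunction turning the RLP against $K_\lambda = \{k_\lambda \Box i : i \in I\}$ into the homotopy-pullback condition via Lemma~\ref{adjointpushoutprod}) is exactly the intended content; the only superfluous remark is the aside about Example~\ref{notationofgencofandstablyacycliccof}, which concerns a simplification of $S(\catM)$ in a special case and is not needed for the proposition as stated.
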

\begin{cor}\label{RLPKisOmega}
  The map \(F \to *\) satisfies the right lifting property with
  respect to \(K_{\catM}\) if and only if \(F\) is an
  \(\catM\)-\(\Omega\)-spectrum. 
\end{cor}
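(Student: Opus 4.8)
The plan is to feed the map $F\to *$ into the preceding Proposition---so that having the right lifting property against $K$ becomes being an $S(\catM)$-fibration---and then to exploit that a homotopy pullback square over the one-point space is just a weak equivalence. So the first step: by the preceding Proposition, $F\to *$ has the right lifting property against $K$ if and only if it is an $S(\catM)$-fibration, which by Definition \ref{defineSMfibration} means $F$ is level $\catM$-fibrant and that for every $\lambda\colon A\to B$ in $S(\catM)$ the square of Definition \ref{defineSMfibration} with $(Z,W)=(F,*)$ is a homotopy pullback in $\catT$. Since $\catGOS(B,*)$ and $\catGOS(A,*)$ are both the one-point space, this square is a homotopy pullback precisely when $\catGOS(\lambda,F)\colon\catGOS(B,F)\to\catGOS(A,F)$ is a weak equivalence. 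Hence I must show: $F$ is level $\catM$-fibrant and $\catGOS(\lambda,F)$ is a weak equivalence for every $\lambda\in S(\catM)$ if and only if $F$ is an $\catM$-$\Omega$-spectrum.

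Next I would rewrite $\catGOS(\lambda,F)$ for $\lambda=G/H_+\wedge\lambda_P^X$ with $X=\Gr{}{W}C$. Smashing a $G$-space onto a spectrum commutes with the internal smash product, so $G/H_+\wedge(\Gr{}{V}\widetilde S^P\wedge X)\cong\Gr{}{V}\widetilde S^P\wedge(G/H_+\wedge X)$, and under this isomorphism $G/H_+\wedge\lambda_P^X$ becomes $\lambda_P^{\,G/H_+\wedge X}$, i.e. $\lambda_P\wedge(G/H_+\wedge X)$ followed by the unit isomorphism $\Sp\wedge(-)\cong(-)$. The closed symmetric monoidal adjunction $\catGOS(\lambda_P\wedge Y,F)\cong\catGOS(Y,\internhom(\lambda_P,F))$ then identifies $\catGOS(\lambda,F)$ with the map
\[
\internhom(\lambda_P,F)_*\colon\ \catGOS(G/H_+\wedge\Gr{}{W}C,\ F)\longrightarrow\catGOS(G/H_+\wedge\Gr{}{W}C,\ R_PF)
\]
induced by $\internhom(\lambda_P,F)\colon F\to R_PF$. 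So $F\to *$ has the right lifting property against $K$ iff $F$ is level $\catM$-fibrant and each such $\internhom(\lambda_P,F)_*$ is a weak equivalence.

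For the direction ``$\Rightarrow$'' I would take $H=G$ and, for each $W$ and each $P'\in\hml^W$, the test object $C=(G\times\Or W)/P'_+$ --- a codomain of a generating cofibration of $\catM_W$ since $\hml^W\subseteq\gml^W$ --- for which $\catGOS(\Gr{}{W}(G\times\Or W)/P'_+,\,Y)=Y_W^{P'}$, so the displayed map is exactly $(\internhom(\lambda_P,F))_W^{P'}$. Thus the hypothesis forces $(\internhom(\lambda_P,F))_W^{P'}$ to be a weak equivalence for all $W$ and all $P'\in\hml^W$, i.e. $\internhom(\lambda_P,F)$ is a level $\catM$-equivalence; taking the $P$ with $\proj_1(P)=G$ this says $F$ is an $\catM$-$\Omega$-spectrum. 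For ``$\Leftarrow$'', the spectrum $\Gr{}{V}\widetilde S^P$ is cofibrant in the level $\catM$-model structure (Corollary following Lemma \ref{comhmlhml}), so, $\catM$ satisfying the pushout--product axiom, $R_PF=\internhom(\Gr{}{V}\widetilde S^P,F)$ is level $\catM$-fibrant; and $G/H_+\wedge\Gr{}{W}C=\Gr{}{W}(G/H_+\wedge C)$ is cofibrant in the level $\catM$-model structure because, by Illman's Theorem \ref{illtriangmfd} and closure of the family $\gml^W$ under subgroups and conjugates, $G/H_+\wedge C$ is a $\gml^W$-cell complex and hence cofibrant in $\catM_W$. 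Therefore $\catGOS(G/H_+\wedge\Gr{}{W}C,-)$ is a right Quillen functor, hence sends the level $\catM$-equivalence $\internhom(\lambda_P,F)$ --- which an $\catM$-$\Omega$-spectrum provides for all $P$ with $\proj_1(P)=G$, and these are all members of $\hml^V$ with $V\neq 0$ --- between the level $\catM$-fibrant spectra $F$ and $R_PF$ to a weak equivalence. This gives the claimed equivalence.

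The genuinely substantive input is the preceding Proposition (right lifting against $K$ $\iff$ $S(\catM)$-fibration, proved as in \cite[Proposition 9.5]{MMSS}); after that the argument is the degeneration of a homotopy pullback over a point together with adjunction bookkeeping. The steps I expect to be fiddliest are tracking the conjugation $G$-actions through the adjunction so that $G/H_+\wedge\lambda_P^X$ genuinely becomes $\lambda_P^{\,G/H_+\wedge X}$, and verifying the level $\catM$-fibrancy of $R_PF$; neither is deep, but both need care.
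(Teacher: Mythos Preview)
Your approach is exactly what the paper intends (the paper gives no proof): apply the preceding Proposition, note that a homotopy pullback over a point degenerates to a weak equivalence, and then use the closed monoidal adjunction to rewrite $\catGOS(\lambda,F)$ as $\catGOS(G/H_+\wedge X,\internhom(\lambda_P,F))$. The bookkeeping with $G/H_+\wedge\lambda_P^X\cong\lambda_P^{\,G/H_+\wedge X}$ and the level-fibrancy of $R_PF$ is handled correctly.

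There is one genuine gap in your $\Leftarrow$ direction. The paper's definition of $S(\catM)$ allows $P\in\hml^V$ without any constraint on $\proj_1(P)$; since families are closed under subgroups, $\hml^V$ always contains groups $P$ with $\proj_1(P)$ a proper subgroup of $G$. Your parenthetical ``and these are all members of $\hml^V$ with $V\neq 0$'' is therefore either false or too vague: the $\catM$-$\Omega$-spectrum condition only guarantees that $\internhom(\lambda_P,F)$ is a level $\catM$-equivalence for $P$ with $\proj_1(P)=G$, so as written you have not covered every $\lambda\in S(\catM)$. Either $S(\catM)$ should be read as restricted to such $P$ (this is consistent with how $\lambda_P$ is used everywhere else in the paper, e.g.\ Propositions~\ref{manystableone} and~\ref{lambdastarstableeq}, and with the MMSS template the paper is following), in which case your argument is complete; or one must argue separately that the conditions for general $P\in\hml^V$ are implied by the $\Omega$-spectrum condition together with the $G/H_+$ factors, which you have not done. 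This is arguably an ambiguity in the paper rather than an error in your reasoning, but you should flag it.

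A smaller point in the $\Rightarrow$ direction: the $C$'s allowed in $S(\catM)$ are cofibrant replacements of sources or targets of generating cofibrations of $\catM_W$, which in the general $\catM$ setting need not literally include $(G\times\Or W)/P'_+$. In the mixing-pair case it does (it is the $n=0$ target, already cofibrant by Lemma~\ref{comhmlhml}), so your argument goes through there; for general $\catM$ you should instead argue that testing against all such $C$ (and all $H$) detects level $\catM$-equivalences between level-fibrant spectra, which is the usual cofibrant-generation argument.
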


\begin{cor}\label{cor:acyfiblevelvsstable}
  If \(p \colon E \to B\) is an \(\hml\)-stable equivalence that
  satisfies the right lifting property with respect to \(K_{\catM}\), then
  \(p\) is a both a level \(\hml\)-equivalence and a level
  \(\catM\)-fibration.  
\end{cor}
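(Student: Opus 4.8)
The plan is to treat the two assertions separately. That \(p\) is a level \(\catM\)-fibration is immediate: the set \(K\) contains \(\Gr{}{}J\), the generating acyclic cofibrations of the level \(\catM\)-model structure, so the right lifting property against \(K\) entails the right lifting property against \(\Gr{}{}J\), which is exactly what characterises level \(\catM\)-fibrations. In particular, by the proposition above characterising the maps that lift against \(K\), the map \(p\) is an \(S(\catM)\)-fibration. The real work is to show that \(p\) is a level \(\hml\)-equivalence.

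First I would pass to the strict fibre \(F\) of \(p\) in \(\catGOS\), i.e.\ the levelwise fibre over the basepoint of \(B\). A short diagram chase shows that \(F\to *\) has the right lifting property against \(K\): given \(k\colon A\to B'\) in \(K\) and a map \(A\to F\), the composite \(A\to F\to E\xrightarrow{p}B\) is the constant map to the basepoint, so lifting against \(p\) with the constant map \(B'\to B\) as bottom edge produces \(B'\to E\) whose composite with \(p\) is constant, hence which factors through \(F\). By Corollary \ref{RLPKisOmega}, \(F\) is therefore an \(\catM\)-\(\Omega\)-spectrum. Next I would compute \(\pi^H_*(F,\hml)\). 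Since \(p\) is a level \(\catM\)-fibration, each \(p_V\) is a fibration in \(\catM_V\), hence by part (iv) of Definition \ref{hmlmonoidal} an \(\hml^V\)-fibration, so \(p_V^P\) is a Serre fibration of spaces for every \(P\in\hml^V\); consequently the strict fibre \(F\) is levelwise weakly equivalent to the levelwise homotopy fibre \(Fp\), whence \(\pi^H_*(F,\hml)\cong\pi^H_*(Fp,\hml)\). Applying the long exact sequence of Theorem \ref{MMSSthm}(vii) together with the hypothesis that \(p\) is an \(\hml\)-stable equivalence — so that, by the homotopy-group characterisation of \(\hml\)-stable equivalences, \(\pi^H_k(p,\hml)\) is an isomorphism for all \(H\) and \(k\) — gives \(\pi^H_k(Fp,\hml)=0\), hence \(\pi^H_k(F,\hml)=0\), for all \(H\) and \(k\).

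Now I would convert "vanishing homotopy groups" into "level \(\hml\)-acyclic". Since \(\pi^H_k(F,\hml)=0\) for all \(H,k\), the map \(*\to F\) induces an isomorphism on all \(\pi^H_k(-,\hml)\), so it is an \(\hml\)-stable equivalence; by Corollary \ref{Qcreatesanddetectsequivalences} the map \(*\to QF\) is a level \(\catM\)-equivalence, i.e.\ \(QF\) is level \(\hml\)-acyclic. As both \(F\) and \(QF\) are \(\catM\)-\(\Omega\)-spectra (Proposition \ref{QisOmega}), the \(\catM\)-stable equivalence \(F\to QF\) of Lemma \ref{Qstablesame} is a level \(\catM\)-equivalence by Lemma \ref{stablebetweenomegaislevel}; hence \(F\) itself is level \(\hml\)-acyclic, that is, \((F_V)^P\) is weakly contractible for every Euclidean space \(V\) and every \(P\in\hml^V\).

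Finally, for each such \(V\) and \(P\) the map \(p_V^P\colon E_V^P\to B_V^P\) is a Serre fibration whose fibre \((F_V)^P\) over the basepoint is weakly contractible, so \(p_V^P\) is a weak equivalence by the long exact sequence of the fibration; thus \(p_V\) is an \(\hml^V\)-equivalence for every \(V\), i.e.\ \(p\) is a level \(\hml\)-equivalence. The main obstacle I expect is the middle step: arranging that the strict fibre really is an \(\catM\)-\(\Omega\)-spectrum and then funnelling the vanishing of the stable homotopy groups through the \(Q\)-construction to get level acyclicity; once that is in place the two outer steps are formal.
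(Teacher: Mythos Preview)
Your argument is sound up to and including the conclusion that the fibre \(F\) is level \(\hml\)-acyclic; in particular the ``middle step'' you flagged as the main obstacle is fine. The gap is in the final step, which you describe as formal but which is not. Knowing that a Serre fibration \(p_V^P \colon E_V^P \to B_V^P\) has weakly contractible fibre \emph{over the basepoint} does not imply that \(p_V^P\) is a weak equivalence: the long exact sequence only controls the basepoint components and says nothing about whether \(p_V^P\) is a bijection on \(\pi_0\). Concretely, the inclusion \(\ast \hookrightarrow S^0\) is a Serre fibration with contractible fibre over the basepoint but is not a weak equivalence; more generally, fibres over non-basepoint components of \(B_V^P\) could a priori be empty or disconnected, and nothing in your argument excludes this.

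The paper closes this gap using the \(S(\catM)\)-fibration property of \(p\), which you recorded but never invoked. After reducing via part~(v) of Definition~\ref{gtypicalrepresentations} to the case \(\proj_1(P)=G\), part~(iii) of that definition provides \(Q \in \hml\) with \(V(Q)\) a nonzero trivial \(G\)-representation. Applying the \(S(\catM)\)-fibration condition to the appropriate \(\lambda \in S(\catM)\) yields a homotopy pullback square with top row \(p_{V(P)}\) and bottom row \(\Omega^{V(Q)} p_{V(P)\oplus V(Q)}\). The bottom map, being a map of loop spaces, depends only on basepoint components, where your contractible-fibre argument \emph{does} apply; hence it is a weak \(G\)-equivalence, and by the homotopy pullback so is \(p_{V(P)}\). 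This extra use of the \(S(\catM)\)-fibration structure is exactly what is needed to handle \(\pi_0\).
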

\begin{proof}
  Since \(\Gr{}{} J\) is contained in \(K\), we only need to prove that
  \(p\) is a level \(\hml\)-equivalence. Let \(F = p^{-1}(*)\) be the
  fiber over the base point. Since \(p\) satisfies the right lifting
  property with respect to \(K\), so does the map \(F \to *\), and
  thus by Corollary~\ref{RLPKisOmega} the orthogonal \(G\)-spectrum \(F\) is an
  \(\catM\)-\(\Omega\)-spectrum. Since \(p\) is an \(\hml\)-stable
  equivalence, the corollaries~\ref{Qtakesfibtofib} and~\ref{Qcreatesanddetectsequivalences} imply that \(F \to *\) is also an
  \(\hml\)-stable equivalence. By the level long exact sequences of
  homotopy groups, for each
  \(p_V \colon E_V \to B_V\) and each \(P\) in \(\hml^V\) and each \(k
  \ge 1\), the group homomorphism \(\pi_k^P(p_V)\) is an isomorphism. To
  see that \(\pi_0^P(p_V)\) is a bijection, note that \(\pi_0^P(p_V) =
  \pi_0^{H}(p_{V(P)})\) for a subgroup \(H = \proj_1(P)\) of \(G\).
  
  By part (iv) of Definition~\ref{gtypicalrepresentations} we may assume that
  \(\proj_1(P) = G\). 
  By part (iii) of
  Definition~\ref{gtypicalrepresentations} we may choose a Euclidean space \(W\)
  and 
  \(Q \in \hml^W\) so that \(V(Q)\) is a trivial representation of
  \(G\). The homotopy pullback diagram of Definition~\ref{defineSMfibration} associated to the map \(\lambda =
  \lambda_{Q}^{\Gr{} {\widetilde S^P}}\) is of the form
  \begin{displaymath}
    \xymatrix{
      E_{V(P)} \ar[rr]^{p_{V(P)}}  \ar[d]_{\lambda^*} &&
      \ar[d]^{\lambda^*} B_{V(P)} \\
      \Omega^{V(Q)} E_{V(P) \oplus {V(Q)}} \ar[rr]^{\Omega^{V(Q)} p_{V(P)\oplus {V(Q)}}} &&
      \Omega^{V(Q)} B_{V(P) \oplus {V(Q)}}.
      }
  \end{displaymath}
  The map \(\Omega^{V(Q)} p_{V(P)\oplus {V(Q)}}\) depends only on base point
  components and is a weak equivalence of \(G\)-spaces. Therefore
  \(p_{V(P)}\) is a weak equivalence of \(G\)-spaces. In particular it
  is a weak equivalence of \(H\)-spaces and
  \(\hml^V\)-equivalence as required. 
\end{proof}

Most of the following crucial result is taken directly from \cite[Theorem 7.4]{MMSS}.

\begin{thm}\label{stablemodelstrfromMMSS}\label{Gsmodel}
  The category \(\catGOS\) of orthogonal \(G\)-spectra is a cofibrantly
  generated proper \(G\)-topological model category with respect to
  the \(\hml\)-stable equivalences, \(\catM\)-stable fibrations and
  level \(\catM\)-cofibrations.
\end{thm}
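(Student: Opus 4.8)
The plan is to verify, one by one, that all the standard input for Hovey-style recognition of a cofibrantly generated model structure is in place, mimicking Section~9 of \cite{MMSS}. Take as cofibrations the level \(\catM\)-cofibrations, generated by \(\Gr{}{}I\) of \ref{generatorsforMorthogonalspectra}; as weak equivalences the \(\hml\)-stable equivalences; and as fibrations the \(\catM\)-stable fibrations, i.e.\ the maps with the right lifting property against the set \(K=\Gr{}{}J\cup\bigcup_{\lambda\in S(\catM)}K_\lambda\) of \ref{defineSMfibration}, which by the MMSS argument coincide with the \(S(\catM)\)-fibrations. The first point to record is that every map in \(K\) is a level \(\catM\)-cofibration which is also an \(\hml\)-stable equivalence: for \(\Gr{}{}J\) this is because these are level acyclic cofibrations, and for a map \(k_\lambda\Box i\) one observes that \(r_\lambda\) is a levelwise deformation retraction, hence a level \(\catM\)-equivalence, so that \(k_\lambda\) is a level \(\catM\)-cofibration and (since \(\lambda=\lambda_P^X\) is an \(\hml\)-stable equivalence by \ref{manystableone} together with \ref{lambdastarstableeqcor}, after the usual reduction of a general \(P\in\hml^V\) to the irreducible pieces with \(\proj_1=G\)) also an \(\hml\)-stable equivalence; then \(k_\lambda\Box i\) is a level \(\catM\)-cofibration by the \(G\)-topological structure on the level \(\catM\)-model structure (\ref{catMlevel}), and an \(\hml\)-stable equivalence by \ref{MMSSthm}(i),(iv) and two-out-of-three.

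The technical core — and the step I expect to be the main obstacle — is the identification of the stably \(\catM\)-acyclic fibrations: a map \(p\colon E\to B\) has the right lifting property with respect to \(K\) and is an \(\hml\)-stable equivalence \emph{if and only if} it is both a level \(\hml\)-equivalence and a level \(\catM\)-fibration. The nontrivial implication is exactly the content of the Corollary immediately preceding this theorem, whose proof uses \ref{RLPKisOmega} (so that the fibre over the basepoint is an \(\catM\)-\(\Omega\)-spectrum), \ref{Qtakesfibtofib} and \ref{Qcreatesanddetectsequivalences} (so that this fibre is \(\hml\)-stably trivial), the level long exact sequences of \ref{MMSSthm}(vii), the reduction to \(\proj_1(P)=G\) via part~(v) of \ref{gtypicalrepresentations}, and the homotopy-pullback square of \ref{defineSMfibration} for a suitable \(\lambda\). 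The reverse implication is immediate because the maps of \(K\) are level \(\catM\)-cofibrations.

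Granting this identification, the remaining axioms go through formally. Closure under retracts and two-out-of-three for \(\hml\)-stable equivalences are clear (the latter reducing to level \(\catM\)-equivalences of the \(QX\) via \ref{Qcreatesanddetectsequivalences}). The lifting axiom for cofibrations against stably acyclic fibrations is the definition of level \(\catM\)-cofibration. Both factorisations come from Quillen's small object argument: applied to \(\Gr{}{}I\) it factors a map as a level \(\catM\)-cofibration followed by a level acyclic fibration, which is an \(\catM\)-stable fibration and \(\hml\)-stable equivalence; applied to \(K\) it factors a map as a stably \(\catM\)-acyclic cofibration followed by an \(S(\catM)\)-fibration (both steps permissible since the level \(\catM\)-model structure is cellular, so the domains are small). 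The lifting axiom for stably \(\catM\)-acyclic cofibrations against \(\catM\)-stable fibrations then follows by the retract trick: a stably \(\catM\)-acyclic cofibration \(j\), factored through the small object argument on \(K\) as a \(K\)-cell map followed by an \(S(\catM)\)-fibration which by two-out-of-three is a level acyclic fibration, lifts against that level acyclic fibration and is therefore a retract of the \(K\)-cell map, hence has the left lifting property against all \(\catM\)-stable fibrations.

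Finally, properness and the \(G\)-topological structure. Left properness: cobase change of an \(\hml\)-stable equivalence along a level \(\catM\)-cofibration is an \(\hml\)-stable equivalence by \ref{MMSSthm}(iv), since level \(\catM\)-cofibrations are levelwise \(h\)-cofibrations by \ref{hmlmonoidal}(i). Right properness: base change of an \(\hml\)-stable equivalence along an \(\catM\)-stable fibration is an \(\hml\)-stable equivalence; since \(\catM\)-stable fibrations are in particular level \(\catM\)-fibrations, one reduces via \(Q\) and \ref{Qcreatesanddetectsequivalences} to a statement about the level long exact sequences of homotopy groups \ref{MMSSthm}(vii), exactly as in \cite{MMSS}. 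The \(G\)-topological (pushout-product) axiom is inherited from that of the level \(\catM\)-model structure (\ref{catMlevel}): for a level \(\catM\)-cofibration \(j\) and a cofibration \(i\) of pointed \(G\)-spaces, \(j\Box i\) is a level \(\catM\)-cofibration, and when \(i\) or \(j\) is in addition a weak equivalence, \(j\Box i\) is an \(\hml\)-stable equivalence by the smashing results \ref{MMSSthm}(i),(iv). This assembles all the hypotheses needed to run the MMSS recognition argument and produce the stated model structure.
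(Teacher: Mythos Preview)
Your proposal is correct and follows exactly the route the paper takes: the paper's own proof consists of the single sentence ``can be proved as in Section~9 of \cite{MMSS}'', and what you have written is a faithful outline of that argument, with the relevant preparatory results (\ref{manystableone}, \ref{Qcreatesanddetectsequivalences}, \ref{RLPKisOmega}, the unnamed corollary preceding the theorem, and \ref{MMSSthm}) plugged in at the right places. One small imprecision: your parenthetical justification that each \(\lambda=G/H_+\wedge\lambda_P^X\in S(\catM)\) is an \(\hml\)-stable equivalence via a ``reduction of a general \(P\in\hml^V\) to the irreducible pieces with \(\proj_1=G\)'' is not quite how this goes---\ref{manystableone} as stated needs \(\proj_1\colon P\to G\) to be an isomorphism, and an arbitrary \(P\in\hml^V\) need not satisfy this; the honest argument (as in \cite{MMSS}) shows directly that the \(S(\catM)\)-local objects are the \(\catM\)-\(\Omega\)-spectra and that \(K\)-cell maps are stable equivalences, rather than reducing each individual \(\lambda_P\). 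This does not affect the overall structure of your argument.
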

\begin{rem}
  Actually it is a compactly generated model category, and thus also
  cellular. 
\end{rem}

In the main part of this paper we work with the following
\(\Sp\)-model structure taken from Shipley's paper \cite{Shconv}.
{Recall the positive mixing
  pair \((\hml,\gml)=(\AIplus,\All_+)\) of Example~\ref{positivemixingpair}. For
  \(V \ne 0\), the family \(\All_+^V\) consists of all subgroups of \(G
  \times \Or {V}\), and \(\AIplusV V\) consists of the subgroups \(P\) of
  \(G \times \Or{V}\) with the property that \(\proj_1 \colon P \to G\) is
  injective. When \(V = 0\), the families \(\All_+^V\) and \(\AIplusV V\)
  are empty. }
\begin{dfn}[The \(\Sp\)-model structure]\label{Spmodelstructure}
  The {\em \(\Sp\)-model structure}\index{Smodel structure@\(\Sp\)-model structure}\index{model structure!\(\Sp\)-} on \(\catGOS\) is the model
  category obtained in Theorem~\ref{Gsmodel} from the positive mixing
  pair \((\AIplus,\All_+)\) of Example~\ref{positivemixingpair}.

  We call the cofibrations and fibrations in this model 
  structure \(\Sp\)-cofibrations and \(\Sp\)-fibrations respectively.
  Moreover we use the notation \(\Sp I = \Gr{}{}I_\gml\) and \(\Sp J =
  \Gr{}{}K_{\hml,\gml}\)\index{SI, SJ@$\Sp I,\,\Sp J$} for the sets of generating cofibrations and
  generating acyclic cofibrations respectively for this model structure.

 In the $\Sp$-module structure, we  write {\em stable equivalence}\index{stable!equivalence}\index{equivalence!stable} or equivalently (in view of Proposition~\ref{prop:stableispistar})
  {\em \(\pi_*\)-isomorphism}\index{piisomorphism@\(\pi_*\)-isomorphism}
  instead of \(\hml\)-stable equivalence. 
\end{dfn}

 \begin{remark}\label{rem:newSstucture}
   The $\Sp$-model structure has more cofibrations than what was called
   the ``positive $\Sp$-model structure'' in the last author's thesis.
   There positive dimensional representations with trivial fixed point space
   where excluded when picking out the generating cofibrations.
   In the current indexing scheme (using semi-free spectra indexed by inner product spaces)
   these play no special r\^ole,
   and the ``positive'' aspect (critical for dealing with symmetric powers as, for instance,
   in Lemma~\ref{eqfreeup}) is handled simply by excluding the zero dimensional inner product space.  
  \end{remark}

\begin{lem}
    If \(P\) is a $G$-representation in \(\hml\), then the endofunctors \(s_{+P}\) and
    \(s_{-P}\) of \(\catGOS\) from Lemma~\ref{twistedorthogonalgroups} form
    a Quillen adjoint pair with respect to the stable \(\catM\)-model structure. 
\end{lem}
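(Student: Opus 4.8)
The plan is to show that $s_{+P}$ is a left Quillen functor for the stable $\catM$-model structure. It is already a left adjoint to $s_{-P}$ by the closed symmetric monoidal structure on $\catGOS$, writing $s_{+P}X=\Gr{}V(G\times\Or V/P)_+\wedge X$ and $s_{-P}X=\internhom(\Gr{}V(G\times\Or V/P)_+,X)$ as in \ref{twistedorthogonalgroups}, so left Quillenness suffices. By Theorem \ref{stablemodelstrfromMMSS} the stable model structure is cofibrantly generated, with the same generating cofibrations $\Gr{}{}I_{\catM}$ as the level $\catM$-model structure and with generating acyclic cofibrations $K=\Gr{}{}J_{\catM}\cup\bigcup_{\lambda\in S(\catM)}K_\lambda$. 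Since $s_{+P}$ preserves all colimits, it is enough to check that $s_{+P}$ sends each map of $\Gr{}{}I_{\catM}$ to a cofibration and each map of $K$ to a stably acyclic cofibration.

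For the cofibrations: since $P\in\hml^V$, the single orbit $(G\times\Or V/P)_+$ is cofibrant in $\catM_V$ by Lemma \ref{comhmlhml}, hence $\Gr{}V(G\times\Or V/P)_+$ is cofibrant in the level $\catM$-model structure. The natural isomorphism of Proposition \ref{semi-free-smash} identifies $s_{+P}(\Gr{}{V'}i')$, for $i'\in I_{V'}$, with the image under $\Gr{}{V\oplus V'}$ of a map ${\Or{V\oplus V'}}_+\wedge_{\Or V\times\Or{V'}}((G\times\Or V/P)_+\wedge i')$; viewing $(G\times\Or V/P)_+\wedge i'$ as a pushout-product and invoking the cofibration statement of Proposition \ref{mixedisorcat} together with closure of cofibrations under pushout, transfinite composition and retract shows this is a cofibration in $\catM_{V\oplus V'}$, so its $\Gr{}{V\oplus V'}$-image is a cofibration. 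The point of arguing this way is that it avoids any appeal to monoidality of the level $\catM$-model structure, which may fail, \eg for a positive mixing pair.

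For the acyclic cofibrations I would first show that $s_{+P}$ preserves $\hml$-stable equivalences between spectra that are cofibrant in the level $\catM$-model structure. Combining \ref{twistedorthogonalgroups}, Proposition \ref{semi-free-smash} (with the two factors $0$ and $V$) and the shear isomorphism $\widetilde S^P\cong S^{V(P)}\wedge{\Or{V(P)}}_+$ gives a natural isomorphism $\Gr{}V\widetilde S^P\wedge X\cong\Sigma^P(s_{+P}X)$. Given an $\catM$-stable equivalence $g\colon X\to Y$ of cofibrant spectra, Proposition \ref{manystableone} gives that $\lambda_P^X$ and $\lambda_P^Y$ are $\catM$-stable equivalences, so two-out-of-three applied to the square relating $\lambda_P^X$, $\lambda_P^Y$ and $g$ shows $\Gr{}V\widetilde S^P\wedge g$ — hence $\Sigma^P(s_{+P}g)$ — is an $\catM$-stable equivalence; as $\Sigma^P$ agrees up to isomorphism with the $V(P)$-suspension, Corollary \ref{stabeqstabundersusp} then forces $s_{+P}g$ to be an $\catM$-stable equivalence. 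It remains to observe that every map in $K$ is an $\hml$-stable equivalence (being an acyclic cofibration of the stable structure) between level-cofibrant spectra: indeed the sources and targets of the maps in $J_V$ are cofibrant in $\catM_V$, the spectra $\widetilde S^P$ are cofibrant in $\catM_V$ (the corollary to Lemma \ref{comhmlhml}), and the mapping cylinders $M\lambda$ are cofibrant as pushouts of cofibrations between cofibrant objects, and these properties survive the operations $\Gr{}{}$, tensoring with cofibrant $G$-spaces, and pushout-product with generating cofibrations of $\catT$. Thus $s_{+P}$ sends each map of $K$ to a cofibration (by the previous paragraph) which is an $\hml$-stable equivalence, \ie to a stably acyclic cofibration, so $s_{+P}$ is left Quillen.

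I expect the main obstacle to be precisely this circumvention of monoidality. One cannot simply invoke ``smash with a cofibrant spectrum is left Quillen'', and indeed $s_{+P}$ does \emph{not} preserve level $\hml$-equivalences in general: already on level $V$ one has $(s_{+P}f)_V\cong(G\times\Or V/P)_+\wedge f_0$, which depends on the $0$-th level, and $f_0$ is unconstrained in a positive model structure. So the verification genuinely has to split into a cofibration half, handled by the semi-free smash formulas \ref{semi-free-smash}, \ref{semi-free-smash-with-gen}, \ref{mixedisorcat}, and a stable-equivalence half, handled by the $\lambda_P$-machinery \ref{manystableone}, \ref{lambdastarstableeqcor}, \ref{stabeqstabundersusp}, the two halves being linked by the identification $\Gr{}V\widetilde S^P\wedge(-)\cong\Sigma^P\circ s_{+P}$.
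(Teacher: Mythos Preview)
Your proof is correct, but it takes a longer route than the paper's for the stable-equivalence half. Both arguments begin the same way, reducing to the level picture: you spell out the cofibration-preservation directly (invoking the semi-free smash isomorphism and Proposition~\ref{mixedisorcat}), while the paper simply asserts that $(s_{+P},s_{-P})$ is a Quillen pair for the level $\catM$-model structure. Your care here is justified, since the level structure need not be monoidal, and what you write is essentially the missing verification behind the paper's assertion.

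The real divergence is in handling the extra generating acyclic cofibrations $K_\lambda$. The paper's observation is the one-line identity
\[
  s_{+P}\bigl(\lambda_Q^X\bigr)\;=\;\lambda_Q^{\,s_{+P}X},
\]
which holds by commutativity and associativity of the smash product: $\Gr{}V(G\times\Or V/P)_+\wedge\bigl(\Gr{}{V'}\widetilde S^Q\wedge X\to X\bigr)$ is literally $\lambda_Q^Y$ for $Y=s_{+P}X$. Since $Y$ is level-cofibrant, Proposition~\ref{manystableone} applies immediately and the proof is finished. Your argument instead first proves the stronger statement that $s_{+P}$ preserves all stable equivalences between cofibrant objects, by passing through the identification $\Gr{}V\widetilde S^P\wedge(-)\cong\Sigma^P\circ s_{+P}$, using $\lambda_P$ for the \emph{same} $P$, and then desuspending via Corollary~\ref{stabeqstabundersusp}. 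This is valid (note that $s_{\pm P}$, $\Sigma^P$, and the isomorphism $\widetilde S^P\cong S^{V(P)}\wedge\Or{V(P)}_+$ are all only defined when $\proj_1\colon P\to G$ is an isomorphism, so that hypothesis is implicit here), but it is an unnecessary detour: the paper's identity $s_{+P}\lambda_Q^X=\lambda_Q^{s_{+P}X}$ already gives what you need without ever touching $\Sigma^P$ or the desuspension corollary. Your approach does buy you the intermediate Ken-Brown-type statement, but that is a consequence of left-Quillenness anyway, not an independent gain.
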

\begin{proof}
  The functors \(s_{+P}\) and \(s_{-P}\) form a Quillen adjoint pair
  with respect to the level \(\catM\)-model structure. Therefore it
  suffices to show that \(s_{+P}(\lambda_Q^X)\) is a stable
  equivalence for all elements \(\lambda_Q^X\) of
  \(S(\catM)\). However \(Y = \Gr{} V(G \times \Or V/P)_+ \wedge X\)
  is cofibrant and \(s_{+P}(\lambda_Q^X) = \lambda_Q^Y\), so by
  Proposition~\ref{manystableone} the morphism \(s_{+P}(\lambda_Q^X)\)
  is a stable equivalence. 
\end{proof}
\begin{lem}
  For every cofibrant \(G\)-space \(A\), the endofunctor \(X \mapsto A \wedge X\) is a left Quillen functor on \(\catGOS\) with respect to both the  level \(\catM\)-model structure and the \(\catM\)-stable model structure. 
\end{lem}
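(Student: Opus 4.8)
The plan is to exhibit \(A \wedge -\) as a left adjoint and then verify that it preserves cofibrations and acyclic cofibrations in each of the two model structures. Since \(\catGOS\) is tensored and cotensored over \(G\catT\), the endofunctor \(X \mapsto A \wedge X\) is the tensor with \(A\), and it is left adjoint to the cotensor \(F(A,-)\). So the content is entirely the two preservation properties.

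I would handle both model structures at once using the \(G\)-topological structure. By Theorem \ref{catMlevel} the level \(\catM\)-model structure is \(G\)-topological, and by Theorem \ref{Gsmodel} so is the \(\catM\)-stable model structure; in particular, in each case the tensor--cotensor adjunction between \(G\catT\) (with the genuine model structure of Definition \ref{genuineGTstructure}) and \(\catGOS\) is a Quillen adjunction of two variables, i.e. the pushout-product axiom holds. Since \(A\) is cofibrant, the map \(\iota \colon \ast \to A\) from the one-point \(G\)-space is a cofibration in \(G\catT\). For any map \(j \colon X \to Y\) of orthogonal \(G\)-spectra the pushout-product \(\iota \,\Box\, j\) is canonically isomorphic to \(A \wedge j\), because smashing with the one-point space \(\ast\) produces the trivial orthogonal \(G\)-spectrum and the defining pushout of \(\iota \,\Box\, j\) therefore degenerates to \(A \wedge X\). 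Taking \(j\) to be a cofibration, resp. an acyclic cofibration, of the level \(\catM\)-model structure, the pushout-product axiom shows that \(A \wedge j\) is a cofibration, resp. an acyclic cofibration; hence \(A \wedge -\) is left Quillen for the level \(\catM\)-model structure. The cofibrations of the \(\catM\)-stable model structure are again exactly the level \(\catM\)-cofibrations (Theorem \ref{Gsmodel}), and that structure is also \(G\)-topological, so the identical argument gives that \(A \wedge -\) is left Quillen for the stable structure as well. If the pushout-product axiom is only recorded on generating maps, one extends it to the cofibration \(\iota\) — and thus to an arbitrary cofibrant \(A\) — by cell induction: write \(A\) as a retract of an \(I_{\G}\)-cell complex and use that the left adjoint \(A \wedge -\) commutes with the transfinite colimits building the cell complex, while cofibrations and acyclic cofibrations are closed under pushout, transfinite composition and retracts.

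As a cross-check for the stable case, once \(A \wedge -\) is known to preserve level \(\catM\)-cofibrations (which are the stable cofibrations) it is enough to see that it preserves \(\hml\)-stable equivalences, and this follows from part (i) of Theorem \ref{MMSSthm}: writing \(A\) as a retract of a \(G\)-CW complex \(A'\) (possible since a genuinely cofibrant \(G\)-space is a retract of an \(I_{\G}\)-cell complex, hence of a \(G\)-CW complex) and using the \(G\)-equivariant symmetry isomorphism \(A' \wedge X \cong X \wedge A'\), the functor \(A' \wedge -\) preserves \(\hml\)-stable equivalences, and these are closed under retracts. The only points that need attention are bookkeeping ones — that ``cofibrant \(G\)-space'' means cofibrant in the genuine model structure, and the identification \(\iota \,\Box\, j \cong A \wedge j\) — and I do not expect a genuine obstacle; the statement is a formal consequence of the tensor--cotensor adjunction together with the \(G\)-topological property established in Theorems \ref{catMlevel} and \ref{Gsmodel}.
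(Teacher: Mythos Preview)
Your argument is correct. The paper's proof is essentially the same idea but checked directly on generators: it uses the isomorphisms \(A \wedge \Gr{}V(f) \cong \Gr{}V(A \wedge f)\) (together with the assumption that each \(\catM_V\) is a \(G\catT\)-model structure) for the level case, and \(A \wedge \lambda^C_{P} \cong \lambda^{A \wedge C}_{P}\) together with Proposition~\ref{manystableone} for the additional stable generators. Your proof packages all of this into the single observation \(\iota \,\Box\, j \cong A \wedge j\) for \(\iota \colon * \to A\), appealing to the \(G\)-topological property already recorded in Theorems~\ref{catMlevel} and~\ref{Gsmodel}. The two arguments are the same at different levels of abstraction: yours is cleaner once the enriched machinery is in place, while the paper's makes visible exactly where the stable case leans on Proposition~\ref{manystableone} (which is ultimately what underlies the \(G\)-topological claim for the stable structure). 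Your cross-check via Theorem~\ref{MMSSthm}(i) is a nice independent confirmation but not needed.
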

\begin{proof}
  Use the isomorphisms \(A \wedge \Gr{}V(f) \cong \Gr{}V(A \wedge f)\) and \(A \wedge \lambda^C_{P} \cong \lambda^{A \wedge C}_{P}\) together with Proposition~\ref{manystableone} and the assumption that \(\catM_V\) is a \(\catGT\)-model structure.
\end{proof}
\begin{thm}
  Let \(V\) be a Euclidean space and let \(P \in \hml^V\) with
  \(\proj_1 \colon P \to G\) an isomorphism. The pairs \((s_{+P},s_{-P})\) and \((\Sigma^V,\Omega^V)\) are Quillen equivalences of \(\catGOS\) in the \(\catM\)-stable model structure. 
\end{thm}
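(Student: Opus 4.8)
The plan is to deduce both Quillen equivalences at once, by showing that the total left derived functors $\mathrm{L}\Sigma^P$ and $\mathrm{L}s_{+P}$ are mutually inverse self-equivalences of $\mathrm{Ho}\,\catGOS$ (the homotopy category of the $\catM$-stable model structure); recall that a Quillen adjunction is a Quillen equivalence precisely when its left derived functor is an equivalence of homotopy categories. First I would record that both adjunctions are Quillen pairs for the $\catM$-stable model structure: $(s_{+P},s_{-P})$ by the lemma just above, valid since $P\in\hml$, and $(\Sigma^P,\Omega^P)$ as follows. Writing $W=V(P)$, the functor $\Sigma^P X=\Gr{}{0}S^{W}\wedge X$ is, up to the unit isomorphism $\Sp\wedge X\cong X$, the tensor of the $G$-space $S^{W}$ with $X$; since $S^{W}$ is a $G$-CW-complex by Illman's Triangulation Theorem \ref{illmantriang}, hence cofibrant in $\catGT$, the lemma just above on smashing with a cofibrant $G$-space shows that $\Sigma^P$ is left Quillen, with right adjoint the cotensor $\Omega^P$. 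In particular $\Sigma^P$ and $s_{+P}$, being left Quillen, preserve level $\catM$-cofibrant spectra.

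The core of the argument is a natural identification
\[
\Sigma^P s_{+P} X\;\cong\;\Gr{}{V}\widetilde S^{P}\wedge X\;\cong\;s_{+P}\Sigma^P X ,
\]
compatible on each side with the map $\lambda_P^{X}\colon\Gr{}{V}\widetilde S^{P}\wedge X\to X$ of Definition \ref{pshiftedvloop}, which is by construction $\lambda_P\wedge X$ followed by $\Sp\wedge X\cong X$. To produce it I would start from the shear isomorphism $\widetilde S^{P}\cong{\Or W}_{+}\wedge S^{W}$ of $G\times\Or V$-spaces (available because $\proj_1\colon P\to G$ is an isomorphism), apply $\Gr{}{V}$, and split the result as $\Gr{}{V}{\Or W}_{+}\wedge\Gr{}{0}S^{W}$ using Proposition \ref{semi-free-smash} with second summand $0$ and the triviality of $\Or 0$; Lemma \ref{twistedorthogonalgroups}(i) then identifies $\Gr{}{V}{\Or W}_{+}\wedge X$ with $s_{+P}X$, while $\Gr{}{0}S^{W}\wedge X=\Sigma^P X$, and symmetry and associativity of the smash product rearrange the factors. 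Compatibility with $\lambda_P^{X}$ is checked on spectra of the form $\Gr{}{W'}C$ and extended to all $X$ by Lemma \ref{coequalizeroffree}, since both sides of the identification commute with colimits.

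With this in place, Proposition \ref{manystableone} gives that $\lambda_P^{X}$ is an $\catM$-stable equivalence for every level $\catM$-cofibrant $X$ (here one uses that $\proj_1\colon P\to G$ is an isomorphism). Via the identification above this yields, for all cofibrant $X$, natural $\catM$-stable equivalences $\Sigma^P s_{+P}X\to X$ and $s_{+P}\Sigma^P X\to X$ between cofibrant spectra. Passing to $\mathrm{Ho}\,\catGOS$, and using that left derived functors compose (legitimate because $\Sigma^P$ and $s_{+P}$ preserve cofibrant objects), these descend to natural isomorphisms $\mathrm{L}s_{+P}\circ\mathrm{L}\Sigma^P\cong\id$ and $\mathrm{L}\Sigma^P\circ\mathrm{L}s_{+P}\cong\id$. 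Hence $\mathrm{L}\Sigma^P$ and $\mathrm{L}s_{+P}$ are inverse equivalences of $\mathrm{Ho}\,\catGOS$, so both $(\Sigma^P,\Omega^P)$ and $(s_{+P},s_{-P})$ are Quillen equivalences in the $\catM$-stable model structure.

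The one step I expect to need real care is the natural identification in the second paragraph: verifying that the three spectra $\Sigma^P s_{+P}X$, $\Gr{}{V}\widetilde S^{P}\wedge X$ and $s_{+P}\Sigma^P X$ are identified compatibly with their maps $\lambda_P^{X}$ to $X$, and that every isomorphism used (shear map, Proposition \ref{semi-free-smash}, Lemma \ref{twistedorthogonalgroups}(i)) is natural in $X$; the rest is formal. As a fallback one could instead check the Quillen-equivalence criterion by hand, showing that $\Omega^P$, \resp $s_{-P}$, reflects stable equivalences between fibrant objects and that the derived unit is a stable equivalence, via Corollary \ref{stabeqstabundersusp}, Theorem \ref{MMSSthm}(ii), the identification $R_PX\cong\Omega^P s_{-P}X$ of Lemma \ref{twistedorthogonalgroups}(ii) and Corollary \ref{lambdastarstableeqcor} --- but the derived-functor route above is shorter.
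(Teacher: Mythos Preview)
Your argument is correct. It is essentially the left-adjoint dual of the paper's proof, which instead works on the right-adjoint side: the paper observes that for every \(\catM\)-\(\Omega\)-spectrum \(E\) the map \(E\to R_PE\) is a level equivalence, so the right derived functor of \(R_P\) is naturally isomorphic to the identity; then, invoking Lemma~\ref{twistedorthogonalgroups}(ii), which gives \(R_P\cong\Omega^P s_{-P}\cong s_{-P}\Omega^P\), it concludes that the right derived functors of \(\Omega^P\) and \(s_{-P}\) are mutually inverse equivalences, hence both adjunctions are Quillen equivalences. You instead show that \(\Gr{}{V}\widetilde S^P\wedge(-)\cong\Sigma^P s_{+P}\cong s_{+P}\Sigma^P\) and that \(\lambda_P^X\) is a stable equivalence on cofibrant \(X\), so the left derived functors of \(\Sigma^P\) and \(s_{+P}\) are mutually inverse. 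The two approaches are formally adjoint to each other; your key identification is precisely the statement obtained by passing to left adjoints in Lemma~\ref{twistedorthogonalgroups}(ii), so you could shorten your second paragraph by simply citing that lemma and taking adjoints rather than rebuilding the isomorphism from the shear map and Proposition~\ref{semi-free-smash}. The paper's route is a couple of lines shorter for exactly this reason, but your version has the minor advantage of making the compatibility with \(\lambda_P^X\) explicit.
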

\begin{proof}
  By definition, for every \(\catM\)-\(\Omega\)-spectrum \(E\), the map 
\[\internhom(\lambda_P,E) \colon E \to R_P E\]
is a level equivalence. In particular it is a stable equivalence, and
thus the right derived functor of \(R_P\) is naturally isomorphic to
the identity. Since by Lemma~\ref{twistedorthogonalgroups} \(R_P \cong s_{-P} \Omega^V \cong
\Omega^V s_{-P}\), the right derived functors of \(s_{-P}\) and
\(\Omega^V\) are inverse equivalences of categories. Thus both
\(\Omega^V\) and \(s_{-P}\) are right adjoint functors in Quillen
equivalences. 
\end{proof}
\begin{cor}
  The stable \(\catM\)-model structure is a stable model structure in
  the sense that the homotopy category is a triangulated category
  (cf. \cite[Chapter 7]{H}.)
\end{cor}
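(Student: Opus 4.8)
The plan is to reduce to the standard criterion of \cite[Chapter 7]{H}: the homotopy category of any pointed model category is pre-triangulated, and it becomes triangulated as soon as the derived suspension is a self-equivalence. Since $\catGOS$ with the $\catM$-stable model structure is a pointed model category, it therefore suffices to show that the total left derived functor $\mathbf{L}\Sigma$ of the ordinary suspension $\Sigma = S^1 \wedge -$ is an equivalence of $\mathrm{Ho}(\catGOS)$.

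To get a handle on \emph{some} suspension functor I would use the preceding Theorem: by part (iii) of Definition~\ref{gtypicalrepresentations} there is a non-zero Euclidean space $U$ with $G \times \{\id_U\} \in \hml^U$, and for $P = G \times \{\id_U\}$ the projection $\proj_1 \colon P \to G$ is an isomorphism while $V(P)$ is the \emph{trivial} $G$-representation on $U$. Hence the preceding Theorem shows that $(\Sigma^P,\Omega^P)$ is a Quillen equivalence; and since $V(P)$ is trivial, $\Sigma^P X \cong S^{V(P)} \wedge X = S^d \wedge X$ with $d = \dim U \ge 1$, that is, $\Sigma^P$ is the $d$-fold iterate of $S^1 \wedge -$. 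Because $A \wedge -$ is a left Quillen endofunctor for every cofibrant $G$-space $A$ (by the preceding Lemma) and left Quillen functors compose, we obtain a natural isomorphism between $(\mathbf{L}\Sigma)^{\circ d}$ and $\mathbf{L}\Sigma^P$, so $(\mathbf{L}\Sigma)^{\circ d}$ is a self-equivalence of $\mathrm{Ho}(\catGOS)$.

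It remains to promote ``$(\mathbf{L}\Sigma)^{\circ d}$ is an equivalence'' to ``$\mathbf{L}\Sigma$ is an equivalence'', which is a soft categorical fact: if $S$ is an endofunctor of a category with $S^{\circ d}$ an equivalence for some $d \ge 1$, then $S$ is an equivalence. Indeed, the essential image of $S$ contains that of $S^{\circ d} = S \circ S^{\circ(d-1)}$ and hence is everything, so $S$ is essentially surjective; since $S^{\circ d} = S^{\circ(d-1)} \circ S$ is faithful and the right-hand factor of a faithful composite is faithful, $S$ is faithful (and likewise $S^{\circ(d-1)}$ is faithful when $d\ge 2$; for $d=1$ there is nothing to prove); and $S$ is full because, given $g \colon SX \to SY$, fullness of $S^{\circ d} = S^{\circ(d-1)}\circ S$ yields an $f$ with $S^{\circ(d-1)}(Sf) = S^{\circ(d-1)}(g)$, whence $Sf = g$ by faithfulness of $S^{\circ(d-1)}$. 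Applying this with $S = \mathbf{L}\Sigma$ shows that $\mathbf{L}\Sigma$ is a self-equivalence of $\mathrm{Ho}(\catGOS)$, and \cite[Chapter 7]{H} then supplies the triangulated structure.

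I expect the only delicate point to be the bookkeeping of the second paragraph: verifying that $\Sigma^P$ for this specific $P$ really is the $d$-fold iterate of the ordinary suspension (this is where one uses that $V(P)$ is trivial, together with the identification $\Sigma^P X \cong S^{V(P)}\wedge X$ already recorded above) and that passing to total derived functors is compatible with iterating the left Quillen functor $S^1\wedge -$. Note that we do \emph{not} need $G \times \{e\}$ to lie in $\hml^{\R^1}$; the elementary lemma on endofunctors is precisely what lets us recover invertibility of $S^1 \wedge -$ from invertibility of suspension by a trivial representation of possibly larger dimension.
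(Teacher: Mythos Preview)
Your proof is correct and is precisely the argument the paper leaves implicit: the corollary is stated without proof, relying on the preceding Quillen equivalence theorem together with condition~(iii) of Definition~\ref{gtypicalrepresentations} (which the paper explicitly flags as the condition ``needed for the stable model structure \dots\ to be stable in the sense of model categories''). You have filled in the details carefully, including the point the paper suppresses entirely---namely that condition~(iii) only guarantees that some \emph{iterate} of the ordinary suspension is invertible, and one then needs the elementary endofunctor lemma to conclude that $\mathbf{L}\Sigma$ itself is an equivalence.
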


\section{Smash Products in the Stable Category of Spectra}
\label{sec:bcdcomp}
In this section we provide a chain of natural stable equivalences between our smash powers and the handicrafted smash powers constructed in the spirit of B\"okstedt.  The meaning of ``natural'' is
dependent on the context.
Smash powers of orthogonal spectra are only functorial \wrt isomorphisms.
Smash powers of orthogonal ring spectra are functorial \wrt cyclic orderings and compares with B\"okstedt's original construction, see \eg~\cite{Shthh}, \cite{PS16} and \cite{DMPSW}).
Smash powers of commutative orthogonal ring spectra are functorial \wrt all maps of finite sets and the B\"okstedt type smash powers are given in \cite{BCD} as homotopy colimits over rather involved indexing categories.  Unfortunately \cite{BCD} was phrased in the language of $\Gamma$-spaces (which has the deficiency that not all connective orthogonal commutative ring spectra are modelled by commutative rings in $\Gamma$-spaces,~\cite{Tyler}), and we here compare with the parallel construction performed in orthogonal spectra.  The reader should beware that even if the constructions translates directly into orthogonal spectra, the connectivity offered by $\Gamma$-spaces was used in some of the proofs of the statements in \cite{BCD}, and so the correspondence below only allows us to transport results for the connective case.

We focus on the commutative case and provide natural transformations that have full functoriality.  On the other hand, the proofs that these natural transformations consist of stable equivalences is undertaken at the underlying spectral level via the following lemmas.

\begin{lemma}\label{lem:assemblyiseq}
  Let $L$ be an orthogonal spectrum.  Then for every $V$ the 
  assembly map
  $$\hocolim_{b \in \N\{b\}}\Omega^b(L_{V(b)}\smsh\Sp)\to\{W\mapsto\hocolim_{b \in \N\{b\}}\Omega^bL_{V(b)\oplus W}\}$$ is a level equivalence. 
\end{lemma}
\begin{proof}
  Both sides of the assembly map are homotopy functors (in $L$) and so it is enough to prove the statement for cellular $L$.  Since both functors commute with filtered colimits, it is by induction enough to handle the case where one attaches one cell.  Also, both sides send levelwise cofiber sequences to levelwise fiber sequences, so it is enough to prove the lemma for $L$ a single cell $L=\mathcal F_UK$ in which case the map is an isomorphism.  
\end{proof}
\begin{lemma}\label{lem:smashstab}
  If $A$ and $B$ are cofibrant spectra in the \(\Sp\)-model structure,
  then the map
  \begin{displaymath}
    \xymatrix{
      \hocolim_{(b,b') \in \N\{B\} \times \N\{B\}}
      \Omega^{V(b)\oplus V(b')}(A_{V(b)}\smsh B_{V(b')}\smsh\Sp) \ar[d] \\
      \hocolim_{(b,b') \in \N\{B\} \times \N\{B\}}\Omega^{V(b)\oplus V(b')}((A\smsh B)_{V(b)\oplus V(b')}\smsh\Sp)
    }
  \end{displaymath}
induced by the universal property of the smash product is a level equivalence.  Likewise for multiple smash factors.
\end{lemma}
\begin{proof}
  Since both functors are homotopy functors transforming levelwise cofiber sequences to levelwise fiber sequences it is enough to do the case where $A$ is a cell $A=\mathcal G_{U'}K$.  Likewise we reduce to the case $B=\mathcal G_{V'}L$.  Then
  $$A_U\smsh B_V\smsh\Sp_W=\catOr(U',U)\smsh \catOr(V',V)\smsh_{\Or{U'}\times \Or{V'}}K\smsh L\smsh S^W$$ whereas
  $$(A\smsh B)_{U\oplus V}\smsh\Sp_W=\catOr(U'\oplus V',U\oplus V)\smsh_{\Or{U'}\times \Or{V'}}K\smsh L\smsh S^W$$ and we show that the map from the former to the latter has connectivity $c_{U,V,W}$ such that  $c_{U,V,W}-\dim (U\oplus V\oplus W)$ goes to infinity with $\dim U$, $\dim V$ and $\dim W$.  This can be seeing by using the untwisting map~\ref{untwistingmap}.  We can assume $W=U'\oplus V'\oplus W'$ and under the untwisting map, the map in question is the map of suspensions
  $$\xymatrix{[(\catL(U',U)\times\catL(V',V))_+\smsh_{\Or{U'}\times\Or{V'}}(K\smsh L)]\smsh S^U\smsh S^V\smsh S^{W'}\ar[d]\\
    [\catL(U'\oplus V',U\oplus V)_+\smsh_{\Or{U'}\times\Or{V'}}(K\smsh L)]\smsh S^U\smsh S^V\smsh S^{W'}}
      $$
      having the desired connectivity since the spaces of linear isometries become highly connected with $U$ and $V$ (remember that $U'$ and $V'$ are fixed).
    \end{proof}

    We now come to the construction of the natural chain comparing our construction with that of \cite{BCD}.  We assume that $A$ is an $\Sp$-cofibrant commutative orthogonal ring spectrum so that we will end up with having functoriality with respect to all functions of finite sets, leaving the modifications necessary for the cases of spectra (natural wrt.~isomorphisms) and associative ring spectra (cyclic structure) aside.
    Comparing with \cite{BCD} necessitates adopting some of the machinery developed there, and we allow ourselves to use the notation of the B\"okstedt-type construction of the smash powers from \cite{BCD}.

Let $T$ be a finite set and $A$ an $\Sp$-cofibrant orthogonal ring spectrum.  Let $\Ib$ be the skeletal category of finite sets of the form $\{1,\dots,n\}$ ($n\geq 0$) and injections.  The functor $\mathcal I$ from finite sets to the category of pointed categories assigns to the set $T$ the category $\mathcal I(T)=(\Ib^T)_+$ (the $T$-fold product of $\Ib$ to which a base object is added). 
Let $$\mathcal G^A_T,\mathcal H^A_T,\mathcal M^A_T,\mathcal N^A_T\colon\mathcal I(T)\to \catOS$$ be the functors which to the object $i\in\mathcal I^T$ assigns the orthogonal spectra
\begin{align*}
  \mathcal G^A_T(i)&=\{V\mapsto\catT(\bigwedge_{t\in T}S^{i(t)},\bigwedge_{t\in T}\left[A(\R^{i(t)})\right]\smsh S^V)\}\\
\mathcal H^A_T(i)&=\{V\mapsto\catT(\bigwedge_{t\in T}S^{i(t)},\left[\bigwedge_{t\in T}A\right](\bigoplus_{t\in T}\R^{i(t)})\smsh S^V)\},\\
\mathcal M^A_T(i)&=\{V\mapsto\catT(\bigwedge_{t\in T}S^{i(t)},\left[\bigwedge_{t\in T}A\right](\bigoplus_{t\in T}\R^{i(t)}\oplus V))\},\\
\mathcal N^A_T(i)&=\bigwedge_{t\in T}A 
\end{align*}
(the last functor is constant).

The universal property of $\bigwedge_TA$ induces a natural transformation $\eta^A_T\colon\mathcal G^A_T\to \mathcal H^A_T$, the assembly map gives us a natural transformation $\epsilon^A_T\colon\mathcal H^A_T\to \mathcal M^A_T$ and the adjoint to the identity yields the natural transformation $\iota^A_T\colon\mathcal N^A_T\to \mathcal M^A_T$.  By Lemma~\ref{lem:assemblyiseq} $\iota^A_T$ is an equivalence.

Following the recipe of \cite[4.3]{BCD}, when varying the finite set $T$ the functors $\mathcal G^A_T$, $\mathcal H^A_T$, $\mathcal M^A_T$ and $\mathcal N^A_T$ become left lax transformations $\mathcal G^A$, $\mathcal H^A$, $\mathcal M^A$ and $\mathcal N^A$ from the functor $T\mapsto\mathcal I(T)$ to the constant functor with value the category of orthogonal spectra (c.f.~\cite[4.3]{BCD}) and $\eta^A_T$, $\epsilon^A_T$ and $\iota^A_T$ become modifications $\eta^A$, $\epsilon^A$ and $\iota^A$.

Let us spell this out for the case of $\epsilon^A$, since this is the source of much trouble in competing theories where the ``last face map'' in topological Hochschild homology tends to get disturbed by the suspension coordinate (see \eg \cite{PS16} and \cite{DMPSW}).  For $\phi\colon S\to T$ we must show that the two two-cells
$$\xymatrix{\mathcal I(S)\ar[r]^{\mathcal H^A_S}\ar[d]_{\mathcal I(\phi)}\ar@{}[dr]|{\mathcal H_\phi^A{\Downarrow}}&\catOS\ar@{=}[d]\\
  \mathcal I(T)\ar[r]_{\mathcal H^A_T}\ar@{=}[d]\ar@{}[dr]|{\epsilon_T^A{\Downarrow}}&\catOS\ar@{=}[d]\\
  \mathcal I(T)\ar[r]_{\mathcal M^A_T}&\catOS}
\qquad
\xymatrix{\mathcal I(S)\ar[r]^{\mathcal H^A_S}\ar@{=}[d]\ar@{}[dr]|{\epsilon_S^A{\Downarrow}}&\catOS\ar@{=}[d]\\
  \mathcal I(S)\ar[r]_{\mathcal M^A_S}\ar[d]_{\mathcal I(\phi)}\ar@{}[dr]|{\mathcal M_\phi^A{\Downarrow}}&\catOS\ar@{=}[d]\\
\mathcal I(T)\ar[r]_{\mathcal M^A_T}&\catOS}
$$
are equal.  To check this we need to spell out the ingredients.  Let $i\in\mathcal I(S)$, $W\in\catOr$ and $f\colon\bigwedge_{s\in S}S^{i(s)}\to\left[\bigwedge_{s\in S} A\right](\oplus_{s\in S}\R^{i(s)})\smsh S^W$ be an element in $\mathcal H^A_S(i)_W$.  Then $\mathcal H^A_\phi(i)_W(f)$ is {\em defined} to be the composite
\newcommand{\CDsmsh}[1]{\underset{#1}\bigwedge}
\newcommand{\CDoplus}[1]{\underset{#1}\bigoplus}
$$
\begin{CD}
  \CDsmsh{t\in T}S^{\coprod_{s\in\phi^{-1}(t)}i(s)}@.\left[\CDsmsh{t\in T}A\right](\CDoplus{t\in T}\R^{\coprod_{s\in\phi^{-1}(t)}i(s)})\smsh S^W\\
  @A{\cong}AA @AAA\\
  \CDsmsh{t\in T}\CDsmsh{s\in\phi^{-1}(t)}S^{i(s)}@.\left[\CDsmsh{t\in T}\CDsmsh{s\in\phi^{-1}(t)}A\right](\CDoplus{t\in T}\CDoplus{s\in\phi^{-1}(t)}\R^{i(s)})\smsh S^W\\
@A{\cong}AA @A{\cong}AA\\
\CDsmsh{s\in S}S^{i(s)}@>f>>\left[\CDsmsh{s\in S}A\right](\oplus_{s\in S}\R^{i(s)})\smsh S^W
\end{CD},
$$
where the unlabeled isomorphisms are induced by $\phi$.  The modification $\epsilon^A$ has the effect of post composing with the assembly, so the problem at hand reduces to whether the diagram of assemblies
$$
\begin{CD}
  \left[\CDsmsh{t\in T}A\right](\CDoplus{t\in T}\R^{\coprod_{s\in\phi^{-1}(t)}i(s)})\smsh S^W@>>>\left[\CDsmsh{t\in T}A\right](\CDoplus{t\in T}\R^{\coprod_{s\in\phi^{-1}(t)}i(s)}\oplus W)\\
  @AAA@AAA\\
  \left[\CDsmsh{t\in T}\CDsmsh{s\in\phi^{-1}(t)}A\right](\CDoplus{t\in T}\CDoplus{s\in\phi^{-1}(t)}\R^{i(s)})\smsh S^W@>>>
  \left[\CDsmsh{t\in T}\CDsmsh{s\in\phi^{-1}(t)}A\right](\CDoplus{t\in T}\CDoplus{s\in\phi^{-1}(t)}\R^{i(s)}\oplus W)\\
@A{\cong}AA@A{\cong}AA\\
\left[\CDsmsh{s\in S}A\right](\oplus_{s\in S}\R^{i(s)})\smsh S^W@>>>\left[\CDsmsh{s\in S}A\right](\oplus_{s\in S}\R^{i(s)}\oplus W)
\end{CD}
$$
commutes, which is true since $A$ is a commutative $\Sp$-algebra.

Now, as in \cite{BCD} there is a price to be paid by working with lax transformations and we perform exactly the same operation on the entire string
$$
\begin{CD}
  \mathcal G^A@>{\eta^A}>>\mathcal H^A@>{\epsilon^A}>>\mathcal M^A@<{\iota^A}<<\mathcal N^A,
\end{CD}
$$
\ie in the language of \cite[4.4.2]{BCD} we consider $\hocolim_{{\tilde{\mathcal I}(S)}}\mathcal G^A_S\circ r_S$,
where $r_S$ is the canonical map comparing the Street rectification $\tilde{\mathcal I}(S)$
and $\mathcal I(S)$ and likewise for the other terms using the full functoriality of the homotopy colimit as explained in \cite[4.4.2]{BCD}. We would like to thank Valentin Krasontovitch for pointing out that $r=e^{op} \wedge e$ as defined in \cite[4.4.1]{BCD} does not by itself form a left lax transformation. However, when we compose it with the projection $p_2$, we obtain a left lax transformation.
\newcommand{\we}{\overset{\sim}{\to}}
\newcommand{\ew}{\overset{\sim}{\gets}}
\begin{lemma}
  Let $A$ be a cofibrant orthogonal spectrum.  Then $\hocolim_{\mathcal I(T)} \eta^A_T$ and $\hocolim_{\mathcal I(T)} \epsilon^A_T$ are stable equivalences.
\end{lemma}
\begin{proof}
  Combine Lemma~\ref{lem:assemblyiseq} with Lemma~\ref{lem:smashstab}.
\end{proof}
This means that all the maps in the natural (in $T$) chain
$$\hocolim_{\tilde{\mathcal I}(T)}\mathcal G^A_T\circ r_T\we\hocolim_{\tilde{\mathcal I}(T)}\mathcal H^A_T\circ r_T\we\hocolim_{\tilde{\mathcal I}(T)}\mathcal M^A_T\circ r_T\ew\hocolim_{\tilde{\mathcal I}(T)}\mathcal N^A_T\circ r_T
$$
are weak equivalences, where the left hand term is the model for higher Hochschild homology of \cite{BCD} in the setting of orthogonal spectra.

Ultimately, this means that we can rewrite \cite{BCD} using $\mathcal N^A_T$ everywhere instead of $\mathcal G^A_T$.  However, since $\Ib$ has an initial object (the empty set),  the homotopy colimit -- now applied to the constant functor $\mathcal N^A_T=\bigwedge_{t\in T}A$ -- are redundant.  Consequently we get

\begin{thm}\label{thm:vsbcd}
  If $A$ is a cofibrant commutative orthogonal ring spectrum and $T$ is a finite set there is a chain of natural (in both $A$ and $T$) equivalences between $\bigwedge_TA$ and the construction of \cite{BCD}.
\end{thm}

\section{Ring- and Module Spectra}
\label{sec:eqringspec}
Let us fix a \(G\)-mixing pair \((\hml,\gml)\).
We use Theorem \cite[4.1]{SS} to lift the
\((\hml,\gml)\)-model structure to categories of modules and
algebras. First we need to verify the monoid axiom.

\begin{prop} \label{Gprespistar}
Cofibrant spectra in the \((\hml,\gml)\)-model structure are flat, in the sense that for any \((\hml,\gml)\)-cofibrant spectrum $X$, the functor $X \smash \,-$ preserves stable equivalences.
\end{prop}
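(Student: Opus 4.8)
The plan is to reduce the claim, via a cell-induction argument, to the single case of a generating cofibration, and then use the explicit identification of smash products of semi-free spectra from Proposition~\ref{semi-free-smash} together with the good homotopical behaviour of \(h\)-cofibrations established in Theorem~\ref{MMSSthm}.

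First I would recall that every \((\hml,\gml)\)-cofibrant spectrum \(X\) is a retract of a \(\Gr{}{}I\)-cell complex, hence built from the generating cofibrations \(\Gr{}{V}(i \wedge (G\times\Or V)/P_+)\) with \(i\in I\) and \(P\in\gml^V\) by transfinite composition of cobase changes. Since \(- \wedge Y\) is a left adjoint it preserves colimits, and since the generating cofibrations are levelwise \(h\)-cofibrations the maps in the cell filtration of \(X \wedge Y\) are again levelwise \(h\)-cofibrations (using Lemma~\ref{fixprescof} and the fact that smashing a space with an \(h\)-cofibration of spaces yields an \(h\)-cofibration). By parts (iv) and (vi) of Theorem~\ref{MMSSthm}, \(\hml\)-stable equivalences are closed under cobase change along levelwise \(h\)-cofibrations and under sequential colimits of \(h\)-cofibrations; retracts of \(\hml\)-stable equivalences are \(\hml\)-stable equivalences. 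Therefore it suffices to prove: for each generating cofibration \(c = \Gr{}{V}(i \wedge (G\times\Or V)/P_+)\) and each stable equivalence \(f\colon A\to B\), the map \(c\,\Box\, f\) — or more simply, after a further reduction, the map \(\Gr{}{V}((G\times\Or V)/P_+)\wedge f\) — is a stable equivalence.

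Next, using Proposition~\ref{semi-free-smash} (in its equivariant form, valid since the isomorphism there is natural), smashing with \(\Gr{}{V}((G\times\Or V)/P_+) = \Gr{}{V}\widetilde S^{?}\)-type objects is identified, after shifting, with a functor of the form \(C \mapsto \Gr{}{V\oplus -}(\text{induced-up }C)\) composed with \(-\wedge B\) resp.\ \(-\wedge A\); more precisely \(\Gr{}{V}K \wedge f\) is built from \(f\) by applying the left Quillen functor \(\Gr{}{V\oplus W}({\Or{V\oplus W}}_+\wedge_{\Or V\times\Or W} K \wedge -)\) levelwise. The key point is that \(\Gr{}{V}K \wedge -\) is, up to the isomorphism of Proposition~\ref{semi-free-smash-with-gen}, a shift-type functor which preserves \(h\)-cofibrations and which, on semi-free spectra \(\Gr{}{W}L\), has the effect \(K\) smashed with an orbit-indexed induction; since smashing a cofibrant \(G\)-space with a weak equivalence of \(G\)-spaces is a weak equivalence, and since \((G\times\Or V)/P_+\) is a \((G\times\Or V)\)-CW complex, one checks \(\pi_*^H\)-isomorphy levelwise. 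Equivalently — and this is the cleanest route — one reduces via Lemma~\ref{coequalizeroffree} to \(A,B\) semi-free and invokes part (i) of Theorem~\ref{MMSSthm}, that \(-\wedge D\) preserves \(\hml\)-stable equivalences for every \(G\)-CW complex \(D\), after recognizing the relevant smash as, levelwise, a smash with such a \(D\).

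**The main obstacle** will be handling the twisted \(\Or V\)-actions carefully enough that Proposition~\ref{semi-free-smash} can actually be applied with \(G\)-equivariance: one must keep track of the semi-direct-product actions \(G\ltimes_\varphi \Or V\) and verify that the identifications of smash products of semi-free \(G\)-spectra are \(G\)-equivariant, not merely non-equivariant. A secondary technical point is ensuring the cell-induction hypothesis is set up so that the Cube Lemma (\ref{gen.cube}) and the cobase-change statements of Theorem~\ref{MMSSthm} apply — i.e.\ that all the maps appearing in the filtration are genuinely levelwise \(h\)-cofibrations between well-based \(G\)-spaces, which follows from part (i) of Definition~\ref{hmlmonoidal} but should be stated explicitly. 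Once these are in place, the proof is a formal reduction plus an appeal to part (i) of Theorem~\ref{MMSSthm}.
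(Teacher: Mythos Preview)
Your reduction via cell induction to the case \(X = \Gr{}{V}\bigl((G\times\Or V)/P\bigr)_+\) is the same as the paper's, and it is correct. The gap is in the final step.

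You propose to identify \(\Gr{}{V}\bigl((G\times\Or V)/P\bigr)_+ \wedge -\) as ``a shift-type functor'' or ``levelwise, a smash with a \(G\)-CW complex'' and then invoke part (i) of Theorem~\ref{MMSSthm}. This does not go through as stated. The space \((G\times\Or V)/P\) is a \(G\times\Or V\)-space, not a \(G\)-space, and for a general \(P\in\gml^V\) (where \(\proj_1\colon P\to G\) need not be injective) the smash \(\Gr{}{V}\bigl((G\times\Or V)/P\bigr)_+\wedge Z\) is \emph{not} isomorphic to any shift of \(Z\) smashed with a \(G\)-CW complex. Proposition~\ref{semi-free-smash-with-gen} gives the level \(V\oplus W\) as \(\bigl((G\times\Or{V\oplus W})/P\bigr)_+\wedge_{\Or W} Z_W\), and the \(\Or W\)-orbits genuinely mix the \(G\)- and \(\Or V\)-actions. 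Reducing \(A,B\) to semi-free via Lemma~\ref{coequalizeroffree} does not help either: a stable equivalence \(f\) is not a coequalizer of stable equivalences between semi-free spectra in any useful sense.

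The paper handles this by a direct computation of homotopy groups. After reducing (via cofiber sequences) to showing that \(\pi_*(Z)=0\) implies \(\pi_*(X\wedge Z)=0\), one writes \(\pi_k^H(X\wedge Z)\) as a colimit over \(W\in\N\{B\}\), and then inserts an inner colimit over an auxiliary variable \(U\) to produce a surjection from \(\colim_W \pi_k^H\bigl(\Gor{V\oplus W}/P_+\wedge_{\Or W} s_{-W}Z\bigr)\). Now one decomposes the manifold \(\Gor{V\oplus W}/P\) as a \(G\times\Or W\)-CW complex (Illman): its isotropy groups \(Q\) are subconjugate to \(P_W\in\hml^W\), so \((G\times\Or W)/Q_+\wedge_{\Or W}s_{-W}Z \cong G/H_+\wedge s_{-W}Z\) for \(H=\proj_1(Q)\). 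Since \(s_{-W}Z\) is \(\hml\)-acyclic (via \(R_{P_W}Z\) and Corollary~\ref{lambdastarstableeqcor}), part (i) of Theorem~\ref{MMSSthm} finishes the argument. The ``double colimit'' trick and the Illman decomposition are precisely what your sketch is missing.
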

\begin{proof}
  Since smashing with any spectrum preserves level Hurewicz cofiber
  sequences, and by the long exact sequence for homotopy groups
  (Theorem~\ref{MMSSthm} (vii)),
  it suffices to show that if $Z$ is an orthogonal spectrum with
  $\pi_*(Z)=0$, then also $\pi_*(X\smash Z)= 0$. Since smashing with $Z$
  preserves the cell complex construction, we can further reduce to the
  case where $X$ is either the source or the target of one of the
  generating \((\hml,\gml)\)-cofibrations, \ie $X$ is of the form
  $\Gr{}{V}{\left[{{\scriptsize\faktor{(G \times \Or{V})}{P}}}_+\smash
      S^k_+\right]}$  or $\Gr{}{V}{\left[{{\scriptsize\faktor{(G \times
            \Or{V})}{P}}}_+\smash D^k_+\right]}$ for \(P\) in
  \(\gml^V\). Since, for every space \(K\), the spectrum
  $\Gr{}{V}{\left[{{\scriptsize\faktor{(G \times 
            \Or{V})}{P}}}_+\smash K_+\right]}$ is equal to
  $\Gr{}{V}{\left[{{\scriptsize\faktor{(G \times \Or{V})}{P}}}_+\right]}
  \smash K_+$ and since we know from Theorem~\ref{MMSSthm} (i) that smashing
  with a cofibrant \(G\)-space preserves \(\hml\)-stable equivalences 
  it
  suffices to show that if \(Z\) has trivial homotopy groups, then also
  the spectrum $\Gr{}{V}{\left[{{\scriptsize\faktor{(G \times
            \Or{V})}{P}}}_+\right]} \smash Z$ has trivial homotopy
  groups. 

  In order to simplify notation, we let \(\Gor{V} = G \times \Or{V}\).
  Recall that 
  \begin{eqnarray*}\left(\Gr{}{V}{\left[{{\scriptsize\faktor{\Gor{V}}{P}}}_+\right]} \smash Z\right)_{V \oplus W} &=& {\Or{V \oplus W}}_+\smash_{\Or{V}\times\Or{W}}\left({{\scriptsize\faktor{\Gor{V}}{P}}}_+ \smash Z_W\right)\\
    &\cong&{{\scriptsize\faktor{\Gor{V \oplus W}}{P}}}_+\smash_{\Or{W}} Z_W,
  \end{eqnarray*}
  where the structure maps are the composites: 
  \[\xymatrix{{{{\scriptsize\faktor{\Gor{V \oplus
              W}}{P}}}_+\smash_{\Or{W}} Z_W \smash S^U}\ar^-{\id
      \smash \sigma}[r]&{{{\scriptsize\faktor{\Gor{V \oplus
              W}}{P}}}_+\smash_{\Or{W}} Z_{W \oplus
        U}}\ar^-{p\circ\inc}[d] \\
    &{{{\scriptsize\faktor{\Gor{V \oplus W
              \oplus U}}{P}}}_+\smash_{\Or{W \oplus U}} Z_{W \oplus
        U},}}\] 
  where \(\sigma\) is the structure map of \(Z\), the map \(\inc\) is
  induced by  the standard inclusion \( \Or {V 
    \oplus W} \to \Or {V \oplus W \oplus U}\) and \(p\) is the
  projection from \(\Or{W}\)-orbits to \(\Or {W \oplus U}\)-orbits.

  Recall that \(\{V_b\}_{b \in B}\) is a totally ordered set of
  representatives of 
  \(\hml\)-irreducible representations of \(G\). Let us represent
  elements of the free commutative monoid \(\N\{B\}\) as functions \(f
  \colon B \to \N\) with finite 
  support. We will
  take the liberty to use the symbol
  \(W\) to denote both an element \(W = f \in
  \N\{B\}\) and the \(G\)-representation \(V(f) = \bigoplus_{P \in B}
    V(P)^{\oplus f(P)}\) associated to \(f\).
  The homotopy groups $\pi_k^H(\Gr{}{V}{\left[{{\scriptsize\faktor{\Gor{V}}{P}}}_+\right]} \smash Z)$ are therefore isomorphic to the colimit:
  \begin{displaymath}
    \colim_{W \in \N\{B\}} \pi_k^H( \Omega^{W} (\Gor{V \oplus W}/P_+
    \wedge_{\Or{W}} Z_{W}))^H. 
  \end{displaymath}

  The projection
  \[\xymatrix{ {{{\scriptsize\faktor{\Gor{V \oplus
              W}}{P}}}_+\smash_{\Or{W}} Z_{W \oplus
        U}}\ar^-{p\circ\inc}[r]&{{{\scriptsize\faktor{\Gor{V \oplus W
              \oplus U}}{P}}}_+\smash_{\Or{W \oplus U}} Z_{W \oplus
        U}}}\]
  is the identity when \(U\) is zero, so it
  induces a surjective homomorphism from
  \begin{displaymath}
    \colim_{W \in \N\{B\}} \colim_{U \in \N\{B\}} \pi_k^H( \Omega^{W\oplus U} (\Gor{V \oplus W}/P_+ \wedge_{\Or{W}} Z_{W\oplus U}))^H
  \end{displaymath}
  to
  \begin{displaymath}
    \colim_{W \in \N\{B\}} \pi_k^H( \Omega^W (\Gor{V \oplus W}/P_+ \wedge_{\Or{W}} Z_W))^H.
  \end{displaymath}
  Let us
  write \(s_{-W} Z = s_{-P_W}Z\) for \(P_W \in \hml\) with \(W =
  V(P_W)\).   The group \(G \times \Or W\) acts
  on the orthogonal spectrum \(s_{-W} Z\) since \((s_{-W} Z)_U = Z_{W
    \oplus U}\) and with structure maps inherited from \(Z\). 
  The colimit
  \begin{displaymath}
    \colim_{U \in \N\{B\}} \pi_k^H( \Omega^{W \oplus U} (\Gor{V \oplus W}/P_+ \wedge_{\Or{W}} Z_{W\oplus U}))^H
  \end{displaymath}
  is the homotopy groups of the orthogonal spectrum
  ${{\scriptsize\faktor{\Gor{V \oplus W}}{P}}}_+\smash_{\Or{W}}
  s_{-W}Z$.
  Thus if we show that spectra of this type have trivial homotopy groups, we are done.\\
  Since \(G\) acts on \(W = V(P_W)\) and \(P\) is contained in \(\Or W\), we
  can consider the space
  ${\scriptsize\faktor{\Gor{V \oplus W}}{P}}$ as a \(G \times
  \Or W\)-space. Since \(\Or {V(P_W)} \cong (G \times \Or W)/P_W\),
  the isotropy groups of this \(G \times \Or W\)-space are all
  subconjugate to \(P_W\).  
  Therefore ${\scriptsize\faktor{\Gor{V \oplus W}}{P}}$ has \(G \times
  \Or W\)-cells of the form \(D^p_+ \wedge (G \times \Or W)/Q_+\)
  for \(Q \subseteq P_W\). Writing \(H = \proj_1(Q)\) and
  \(i \colon H \to G\)
  for the inclusion,
  there
  are isomorphisms 
  \[(G \times \Or W)/Q_+ \cong G_+ \wedge_H \Or
  {V(Q)} \cong G/H_+ \wedge \Or {V(P_W)}\]
  of \(G \times \Or W\)-spaces, and there is an isomorphism 
  \[(G \times \Or W)/Q_+
  \wedge_{\Or W} s_{-W} Z \cong G/H_+ \wedge (s_{-W} Z).\]

  Suppose that $*=X_0 \rightarrow X_1 \rightarrow \ldots\rightarrow
  X_l$ is the cellular filtration of ${\scriptsize\faktor{\Gor{V
        \oplus W}}{P}}_+$. 
  Both levelwise smash product with spaces and taking levelwise
  $\Or{W}$-orbits commute with (levelwise) colimits hence with the
  cell complex construction. 
  From the cell structure we get the gluing diagrams of
  spectra of the form: 
  \begin{displaymath}
    \xymatrix{
      {S^{p-1}_+\smash (G \times \Or W)/Q_+ \smash_{\Or{W}} s_{-W}
        Z}\ar[r]\ar_-{}[d]&{X_i\smash_{\Or{W}} s_{-W} Z}\ar[d] \\
      {D^{p}_+\smash (G \times \Or W)/Q_+ \smash_{\Or{W}} s_{-W}
        Z}\ar[r]&{X_{i+1}\smash_{\Or{W}} s_{-W} Z,}\pushout
    }
  \end{displaymath}
  that is, of the form
  \begin{displaymath}
    \xymatrix{
      {S^{p-1}_+\smash {G/H_+} \smash
       (s_{-W} Z)}\ar[r]\ar_-{}[d]&{X_i\smash_{\Or{W}} s_{-W} Z}\ar[d] \\
      {D^{p}_+\smash {G/H_+} \smash (s_{-W} Z)}\ar[r]&{X_{i+1}\smash_{\Or{W}}
        s_{-W} Z.}\pushout
    }
  \end{displaymath}
  Then as $Z$ was \(\hml\)-acyclic, and by Corollary~\ref{lambdastarstableeqcor} so is \(R_{P_W}
  Z \cong \Omega^{P_W} 
  s_{-W} Z\). Since \(\Sigma^{P_W}\) and \(s_{-W} = s_{-P_W}\) commute (i)
  and (ii) of Theorem~\ref{MMSSthm} imply that
  \(s_{-W}Z\) is \(\hml\)-acyclic. Thus the two spectra in the
  left column of the square 
  are \(\hml\)-acyclic by part (i) of Theorem~\ref{MMSSthm}. Assume
  that \({X_i\smash_{\Or{W}} s_{-W}Z}\) is 
    stably \(\hml\)-acyclic, so the top horizontal maps are
  \(\hml\)-stable equivalences, hence so are the bottom maps since the left
  vertical maps are Hurewicz cofibrations. Thus $X_{i+1} \smash s_{-W}Z$ is
  \(\hml\)-acyclic, and it follows by induction that $X_{l} \smash s_{-W}Z$ is
    \(\hml\)-acyclic. 
\end{proof}
Following \cite[Proposition 12.5]{MMSS} we obtain the following crucial result:
\begin{prop}\label{Gprespistar2}
     If \(i \colon A \to X\) is an acyclic cofibration in the
     stable \((\hml,\gml)\)-model structure and \(Y\) is
     any orthogonal \(G\)-spectrum, then the map \(i \wedge Y \colon A
     \wedge Y \to X \wedge Y\) is a stable equivalence. 
\end{prop}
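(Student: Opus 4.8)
\emph{Proof plan.}

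Following \cite[Proposition~12.5]{MMSS}, the strategy is to reduce to the generating stably acyclic cofibrations and then to propagate the conclusion through the cell construction. Note first that $-\wedge Y$ is a left adjoint, with right adjoint $\internhom(Y,-)$, so it preserves colimits and retracts; it also preserves levelwise $h$-cofibrations, since for an $h$-cofibration $f$ the retraction of $Mf\to X\wedge I_{+}$ is carried by $-\wedge Y$ to a retraction of $M(f\wedge Y)\to (X\wedge Y)\wedge I_{+}$ (using that $-\wedge Y$ commutes with the pushout defining $Mf$ and with $-\wedge I_{+}$). The stable $\catM$-model structure is cofibrantly generated with $K=\Gr{}{}J\cup\bigcup_{\lambda}K_{\lambda}$ as a set of generating acyclic cofibrations, so the acyclic cofibration $i$ is a retract of a map $\bar\imath\colon A\to\bar X$ which is a transfinite composite of cobase changes of coproducts of maps in $K$. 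Applying $-\wedge Y$ turns this into a transfinite composite of cobase changes of coproducts of maps of the form $k\wedge Y$ with $k\in K$; hence, once we know that each $k\wedge Y$ is a stable equivalence and a levelwise $h$-cofibration, Theorem~\ref{MMSSthm}(iii), (iv), (vi) (wedges, cobase changes along levelwise $h$-cofibrations, and transfinite composites of levelwise $h$-cofibrations all preserve stable equivalences among such maps) shows that $\bar\imath\wedge Y$ is a stable equivalence, and therefore so is its retract $i\wedge Y$.

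The key point is the following: \emph{if $f\colon A\to X$ is a stable equivalence between spectra that are cofibrant in the level $\catM$-model structure, then $f\wedge Y$ is a stable equivalence for every orthogonal $G$-spectrum $Y$.} To prove this, choose a cofibrant replacement $q\colon Y^{c}\xrightarrow{\ \sim\ }Y$ in the stable model structure and consider the commuting square with horizontal maps $f\wedge Y^{c}$ and $f\wedge Y$ and vertical maps $A\wedge q$ and $X\wedge q$. By Proposition~\ref{Gprespistar} (and symmetry of the smash product) smashing with a level-cofibrant spectrum preserves stable equivalences; hence the two vertical maps are stable equivalences because $A$ and $X$ are cofibrant, and the top map $f\wedge Y^{c}$ is a stable equivalence because $Y^{c}$ is cofibrant. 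Two-out-of-three gives that $f\wedge Y$ is a stable equivalence.

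It remains to check that every $k\in K$ is a stable equivalence, a levelwise $h$-cofibration, and a map of level-$\catM$-cofibrant spectra. The first two hold by construction: the elements of $K$ are, by definition, acyclic cofibrations in the stable $\catM$-model structure, hence stable equivalences and level $\catM$-cofibrations, and level $\catM$-cofibrations are levelwise $h$-cofibrations by Theorem~\ref{catMlevel}. For cofibrancy of sources and targets: the generating acyclic cofibrations $j\in J_{V}$ of the mixed $(\hml^{V},\gml^{V})$-model structure have cofibrant sources and targets (they are built from the cofibrant orbits $(G\times\Or V)/P_{+}$ and the cofibrant objects entering the construction of the maps $k_{P}$), and $\Gr{}{V}$ is left Quillen for the level $\catM$-model structure, so the sources and targets of $\Gr{}{V}j\in\Gr{}{}J$ are cofibrant; for $k_{\lambda}\Box i\in K_{\lambda}$ the source $\Gr{}{V\oplus W}({\Or{V\oplus W}}_{+}\wedge_{\Or V\times\Or W}\widetilde S^{P}\wedge C)$ is cofibrant because $\widetilde S^{P}$ is cofibrant in $\catM_{V}$ (the corollary following Lemma~\ref{comhmlhml}), $C$ is cofibrant, and $\catM$ satisfies the cofibration half of the pushout-product property (Proposition~\ref{mixedisorcat}), the mapping cylinder $M\lambda$ is cofibrant as the cylinder on a map of cofibrant spectra, and forming the pushout-product with the generating $\catT$-cofibration $i$ preserves cofibrancy of sources and targets. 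Applying the key point above to each $k\in K$ completes the argument.

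The main obstacle is the key point, and specifically the use of flatness of cofibrant spectra (Proposition~\ref{Gprespistar}): one must resist arguing that $-\wedge Y^{c}$ is left Quillen for a \emph{monoidal} level model structure, because for the $\Sp$-model structure the family $\hml$ is not closed under retracts and the level model structure need not be monoidal, so everything has to be routed through flatness together with the cofibration half of the pushout-product property. The remaining verifications — cofibrancy of the sources and targets of the generators, and the closure properties of levelwise $h$-cofibrations — are routine given the results already established.
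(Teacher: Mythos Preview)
Your argument is correct, but it takes a longer path than the paper's. The paper bypasses the reduction to generating acyclic cofibrations entirely by a cofiber trick: setting $Z=X/A$, the cofiber of the acyclic cofibration $i$, one has a Hurewicz cofiber sequence $A\wedge Y\to X\wedge Y\to Z\wedge Y$, so by the long exact sequence it suffices to show $Z\wedge Y$ is acyclic. Since $Z$ is the cofiber of a cofibration it is automatically cofibrant, and it is acyclic because $i$ is a stable equivalence. Now the paper applies flatness (Proposition~\ref{Gprespistar}) exactly as in your ``key point'': first use that $Z$ is cofibrant to replace $Y$ by a cofibrant $Y'$, then use that $Y'$ is cofibrant to conclude that $*\wedge Y'\to Z\wedge Y'$ is a stable equivalence.

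The two approaches share the same core idea (your ``key point'' is precisely the paper's two applications of flatness, phrased for a general map between cofibrant objects rather than for $*\to Z$). What your route costs is the verification that every generator $k\in K$ has level-cofibrant source and target; this is true and you handle it correctly, but it requires unwinding the construction of $K$ and invoking the cofibration half of Proposition~\ref{mixedisorcat} and the $G$-topological enrichment. The paper's cofiber trick sidesteps all of this, since cofibrancy of $Z=X/A$ is immediate from $i$ being a cofibration. What your route buys is a reusable lemma (stable equivalences between cofibrant spectra are preserved by smashing with anything), though of course this is also an easy corollary of the paper's proof.
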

\begin{proof}
  Let \(Z = X/A\). There is a Hurewicz cofiber sequence \(A \wedge Y \to X \wedge Y \to Z \wedge Y\). By part (vii) of Theorem~\ref{MMSSthm} it suffices to show that \(Z \wedge Y\) is acyclic. Let \(j \colon Y' \to Y\) be a cofibrant approximation of \(Y\). By Proposition~\ref{Gprespistar}, the map \(Z \wedge j\) is a stable equivalence. Thus, it is enough to show that \(Z \wedge Y\) is acyclic when \(Z\) is acyclic and both \(Z\) and \(Y\) are cofibrant. Now Proposition~\ref{Gprespistar} implies that the map \(* = * \wedge Y \to Z \wedge Y\) is a stable equivalence because \(Y\) is cofibrant and \(* \to Z\) is a stable equivalence.  
\end{proof}
\begin{prop}
  The stable \((\hml,\gml)\)-model structure is monoidal.
\end{prop}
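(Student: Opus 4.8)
The plan is to verify Hovey's two conditions for a monoidal model category (cf.~\cite[Chapter 4]{H}): the pushout--product axiom and the unit axiom. The crucial observation is that the cofibrations of the stable \((\hml,\gml)\)-model structure are exactly the level \(\catM\)-cofibrations, so the ``cofibration part'' of the pushout--product axiom is a statement purely about the level theory, while its ``acyclicity part'' will be supplied by the flatness results \ref{Gprespistar} and \ref{Gprespistar2} proved just above, together with the \(h\)-cofibration calculus of Theorem \ref{MMSSthm}.

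For the cofibration part, since cofibrations are detected on the generating set \(\Gr{}{}I\), it is enough to show that \(\Gr{}{V}i \Box \Gr{}{W}j\) is a level \(\catM\)-cofibration for \(i \in I_V\) and \(j \in I_W\). Here I would use the natural isomorphism
\[
  \Gr{}{V}i \Box \Gr{}{W}j \cong \Gr{}{V\oplus W}\bigl({\Or{V\oplus W}}_+ \wedge_{\Or V \times \Or W} i \Box j\bigr),
\]
the unconditional statement in Proposition \ref{mixedisorcat} that \({\Or{V\oplus W}}_+ \wedge_{\Or V \times \Or W} i \Box j\) is a cofibration in \(\catM_{V\oplus W}\), and the fact that \(\Gr{}{V\oplus W}\) is left Quillen for the level \(\catM\)-model structure. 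Closure of the class of all cofibrations under pushout--product then follows by the standard formal argument.

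For the acyclicity part, let \(i \colon A \to X\) be a stably acyclic cofibration and \(j \colon B \to Y\) a cofibration; \(i \Box j\) is a cofibration by the previous paragraph, so it remains to see it is an \(\hml\)-stable equivalence. Write \(i \Box j \colon P \to X \wedge Y\) with \(P = (X\wedge B) \cup_{A\wedge B} (A\wedge Y)\). The composite \(A\wedge Y \to P \xrightarrow{i\Box j} X\wedge Y\) is \(i \wedge Y\), which is an \(\hml\)-stable equivalence by Proposition \ref{Gprespistar2}, so by two-out-of-three it suffices that \(A\wedge Y \to P\) is an \(\hml\)-stable equivalence. That map is the cobase change of \(i\wedge B \colon A\wedge B \to X\wedge B\) along \(A\wedge j\); by Proposition \ref{Gprespistar2} again \(i\wedge B\) is an \(\hml\)-stable equivalence, and it is a levelwise \(h\)-cofibration because \(i\) is (a level \(\catM\)-cofibration is a levelwise Hurewicz cofibration by Definition \ref{hmlmonoidal}(i), and smashing a levelwise \(h\)-cofibration with an arbitrary orthogonal \(G\)-spectrum again yields a levelwise \(h\)-cofibration). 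Part (iv) of Theorem \ref{MMSSthm} (or the generalized cobase change lemma in part (v)) then finishes this step. I expect the real obstacle to lie precisely here: the bookkeeping with levelwise \(h\)-cofibrations — verifying that smashing a levelwise \(h\)-cofibration with an arbitrary spectrum stays a levelwise \(h\)-cofibration, and that the maps appearing in the pushout square really are of this type — has to be done carefully so that \ref{MMSSthm}(iv) is legitimately applicable. (Reducing instead to generators \(k \in K\) and \(i \in \Gr{}{}I\), handling the \(\Gr{}{}J\)-pieces by the pushout--product axiom for \(\catM\) and the \(K_\lambda\)-pieces by Proposition \ref{manystableone}, is an alternative, but the same \(h\)-cofibration issues reappear.)

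Finally, for the unit axiom: the unit \(\Sp\) is in general not cofibrant (for instance in the positive case \(\hml^0\) is empty), so I would take a cofibrant replacement \(q \colon Q\Sp \to \Sp\), which is an \(\hml\)-stable equivalence. For any cofibrant orthogonal \(G\)-spectrum \(X\), Proposition \ref{Gprespistar} gives that \(X\wedge -\) preserves \(\hml\)-stable equivalences, so \(X\wedge q\) is an \(\hml\)-stable equivalence; using the symmetry isomorphism, \(Q\Sp \wedge X \to \Sp \wedge X \cong X\) is then an \(\hml\)-stable equivalence, which is the unit axiom. Together with the pushout--product axiom, this shows that the stable \((\hml,\gml)\)-model structure is monoidal.
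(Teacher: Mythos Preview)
Your proof is correct, and the cofibration part and the unit axiom are handled essentially as in the paper. The difference is in the acyclicity clause of the pushout--product axiom, where the paper takes a shorter route that avoids exactly the $h$-cofibration bookkeeping you identified as the obstacle.

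The paper observes that the cofiber of \(i \Box j \colon A \wedge Y \cup_{A\wedge X} B\wedge X \to B \wedge Y\) is \((B/A)\wedge(Y/X)\). Since \(j\) is a stably acyclic cofibration, \(Y/X\) is acyclic; since \(B/A\) is cofibrant, Proposition~\ref{Gprespistar} makes \((B/A)\wedge(Y/X)\) acyclic; and then stability (the long exact sequence of Theorem~\ref{MMSSthm}(vii)) gives that \(i\Box j\) is an \(\hml\)-stable equivalence. No cobase change of \(h\)-cofibrations is needed. Your decomposition via \(A\wedge Y \to P \to B\wedge Y\) and Proposition~\ref{Gprespistar2} together with Theorem~\ref{MMSSthm}(iv) also works, but the justification you give for the key step is not quite right as stated: it is not clear that smashing a merely \emph{levelwise} \(h\)-cofibration with an arbitrary spectrum remains one, because the smash product mixes levels. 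The correct repair is to note that a level \(\catM\)-cofibration is an \(h\)-cofibration \emph{of orthogonal \(G\)-spectra} (the level model structure is \(G\)-topological by Theorem~\ref{catMlevel}, so cofibrations have the enriched homotopy extension property), and such \(h\)-cofibrations are preserved by \(-\wedge B\) since that functor commutes with pushouts and with \(-\wedge I_+\); evaluation then shows the result is levelwise an \(h\)-cofibration. The paper's cofiber argument simply sidesteps this issue.
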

\begin{proof}
  It follows from Proposition~\ref{mixedisorcat} that if \(i \colon A
  \to B\) and \(j \colon X \to Y\) are cofibrations, then also \(i
  \Box j\) is a cofibration. Now, suppose that \(j\) is an acyclic
  cofibration. Then the cofiber \(Y/X\) of \(j\) is acyclic, and there
  is a cofibration sequence 
  \begin{displaymath}
    A \wedge Y \cup_{A \wedge X} B \wedge X \xto{i \Box j} B \wedge Y \to B/A \wedge Y/X.
  \end{displaymath}
  By stability it suffices to show that the spectrum \(B/A \wedge Y/X\) is acyclic. However, since \(B/A\) is cofibrant and \(Y/X\) is acyclic, this is a consequence of Proposition~\ref{Gprespistar}. We have now shown that the push-out-product axiom holds in the stable \((\hml,\gml)\)-model structure. The criterion for being a monoidal model category concerning cofibrant replacement of the sphere spectrum is a direct consequence of Proposition~\ref{Gprespistar2}.
\end{proof}

Now \cite[Prop. 4.1]{SS} gives the following:

\begin{thm}\label{eqRmodel}Let $R$ be an orthogonal ring $G$-spectrum.
\begin{enumerate}
\item The category of left $R$-modules is a compactly generated proper
  model category with respect to the \(\hml\)-stable equivalences and
  the underlying \((\hml,\gml)\)-stable fibrations.
\item If $R$ is cofibrant in \(\catGOS\), then the forgetful functor
  from $R$-modules 
  to orthogonal spectra preserves cofibrations. Hence every cofibrant
  $R$-module is cofibrant as an orthogonal \(G\)-spectrum. 
\item Let $R$ be commutative. The model structure of $(i)$ is monoidal
  and satisfies the monoid axiom. 
\item Let $R$ be commutative. The category of $R$-algebras is a
  compactly generated right proper model category with respect to the
  stable \(\hml\)-equivalences and the underlying
  \((\hml,\gml)\)-fibrations.
\item Let $R$ be commutative. Every cofibration of $R$-algebras whose source is cofibrant as an $R$-module is also a cofibration of $R$-modules. In particular, every cofibrant $R$-algebra is cofibrant as an $R$-module.
\end{enumerate}
All the above model structures are Quillen equivalent to the one
obtained from the \((\overline \hml,\overline \hml)\)-model structure,
that is the 
classical one from \cite[III.7.6]{MM} with respect to the
\(G\)-universe
\(\universeU = \colim_{n \in
  \N} \bigoplus_{b \in B} V^n_b\), via the identity functor.
\end{thm}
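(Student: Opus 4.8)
The plan is to deduce Theorem \ref{eqRmodel} from the Schwede--Shipley lifting theorem \cite[Theorem 4.1]{SS}; the work has essentially been done in the preceding results, so the first step is just to collect the hypotheses for the stable \((\hml,\gml)\)-model structure on \(\catGOS\). By Theorem \ref{Gsmodel} and the remark following it this model structure is compactly generated, proper and \(G\)-topological, with the sets denoted \(FI\) and \(K\) as generating cofibrations and generating acyclic cofibrations. The two propositions preceding the theorem say it is monoidal and satisfies the monoid axiom: Proposition \ref{Gprespistar} shows cofibrant spectra are flat, and Proposition \ref{Gprespistar2} upgrades this to the statement that smashing an acyclic cofibration with an arbitrary orthogonal \(G\)-spectrum is a stable equivalence; the monoid axiom in the form demanded by \cite{SS} then follows by the usual cell induction, using parts (iv) and (vi) of Theorem \ref{MMSSthm} to control pushouts along \(h\)-cofibrations and transfinite compositions (cofibrations in each \(\catM_V\) are Hurewicz cofibrations, so these operations stay inside the levelwise \(h\)-cofibrations). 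Feeding these facts into \cite[Theorem 4.1]{SS} yields (i)--(v) essentially verbatim: (i) is the transferred model structure on left \(R\)-modules with generating (acyclic) cofibrations \(R \wedge FI\) and \(R \wedge K\); (ii) is preservation of cofibrations by the forgetful functor for \(\catGOS\)-cofibrant \(R\); (iii) is the monoidal structure and the monoid axiom on \(R\)-modules for commutative \(R\); and (iv), (v) are the model structure on \(R\)-algebras and the comparison of \(R\)-algebra cofibrations with \(R\)-module cofibrations.

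Two points need slightly more than a literal citation. First, the assertion is \emph{compact} generation, not merely cofibrant generation: the sources and targets of the maps in \(FI\) and \(K\) are compact, being built from finite \(G\)-CW complexes by the bounded functors \(\Gr{}{V}\), and both \(R \wedge -\) and the free-\(R\)-algebra functor preserve compactness of these generators. Second, properness: the forgetful functor from \(R\)-modules to \(\catGOS\) creates all small limits and colimits (modules over a monoid in a closed symmetric monoidal category) and detects weak equivalences and fibrations, so left and right properness are inherited from the proper category \(\catGOS\); for \(R\)-algebras only right properness is claimed, because limits of \(R\)-algebras are still created in \(\catGOS\) while pushouts are not, so right properness descends but left properness need not.

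For the final assertion I would argue as follows. By the corollary showing that all \(\hml\)-model structures share one class of stable weak equivalences, together with the identity-functor Quillen equivalences comparing mixed structures (noted after Lemma \ref{levelchangeofgroups}) and the comparison of the stable \((\hml,\hml)\)- and \((\overline\hml,\overline\hml)\)-structures, the stable \((\hml,\gml)\)-model structure on \(\catGOS\) is Quillen equivalent, through identity functors, to the stable \((\overline\hml,\overline\hml)\)-model structure; the latter is the classical Mandell--May structure of \cite[III.7.6]{MM} for the universe \(\universeU = \colim_n \bigoplus_{b \in B} V^n_b\), since the weak equivalences agree (\(\pi^H_k(-,\hml) \cong \pi^H_k(-,\universeU)\) for all \(H\), proved earlier using part (v) of \ref{gtypicalrepresentations}) and the generating cofibrations \(\Gr{}{V}(i \wedge (G \times \Or{V})/P_+)\) with \(P \in \overline\hml^V\) correspond, under the equivalence of categories of \cite{MM}, to the standard generators \(F_W(i \wedge G/H_+)\). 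Because these identity functors are strict symmetric monoidal, a monoid \(R\) is literally the same object on both sides, the transfer of model structures to \(R\)-modules and \(R\)-algebras is functorial in monoidal Quillen equivalences, and weak equivalences of \(R\)-modules (resp. \(R\)-algebras) are detected on underlying spectra; hence the identity functors on those categories are again Quillen equivalences, identifying each model structure in (i)--(iv) with its classical counterpart over \(R\).

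The homotopically substantive input — flatness of cofibrant spectra and the monoid axiom — is already in hand, so the remaining difficulty is organizational: keeping track of which way each identity functor runs, and verifying on the nose the identification of the \((\overline\hml,\overline\hml)\)-structure with \cite[III.7.6]{MM}, in particular that \(\overline\hml\) is the maximal \(G\)-typical family when one starts from the positive mixing pair, so that \(B\) really furnishes the complete \(G\)-universe. That bookkeeping, rather than any new homotopy theory, is where I expect the care to be needed.
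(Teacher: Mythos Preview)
Your approach is correct and is essentially the same as the paper's: the paper simply states ``Now \cite[Prop.~4.1]{SS} gives the following'' and quotes the theorem, relying on the preceding Propositions~\ref{Gprespistar} and~\ref{Gprespistar2} together with the monoidal property for the hypotheses. Your write-up is considerably more detailed than the paper's one-line citation---the discussion of compact generation, properness, and the Quillen equivalence to the classical structure is spelled out rather than left implicit---but the underlying strategy is identical.
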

We will deal with the lift to commutative algebras in a separate section (\ref{eqexttocomring}).

\chapter{Fixed Points}
\label{ch:filtering}

The reason for the choices made in the previous chapters is that we want a structure so that the smash product in spectra has good equivariant properties.  In order to verify this we need to study the fixed points.  These come in two main guises, the categorical fixed points and the geometric fixed points and the interplay between these two is one way to express the properties we want to uncover.
In this chapter we give the basic definitions and explore some of the fundamental properties, laying the foundations for our investigations of smash powers.

In Section~\ref{secfixedpointspectra} we fix some notation regarding fixed point constructions.  
Section~\ref{sec:orfi} explores an important class of inclusions of normal subgroups which we call \orfi. This class was studied by Kro in his thesis and includes all examples of finite index and inclusions of kernels of isogenies of the torus and suffices for our purposes.  
As we will show in Section~\ref{sec:fixandchange}, the \orfi case is important because we then have very good control over what happens when we change the underlying group.  This will turn out to be crucial in our approach when we approximate the torus by means of finite subgroups. 


\section{Fixed Point Spectra  
}
\label{secfixedpointspectra}\label{secfixedpointsofcells}
In this section we consider a fixed
short exact sequence of compact Lie-groups  
\[E: \:\:\:\:\:1 \rightarrow N \rightarrow \G
\stackrel{\epsilon}{\rightarrow} \J \rightarrow 1.\] 

  \begin{dfn}\label{def:catfix}
    The {\em (categorical) $N$-fixed point} functor 
      $$G\catOS \rightarrow J\catOS,\qquad A\mapsto A^N$$
is given by post-composition with the $N$-fixed point functor in $\catT$.
  \end{dfn}
Explicitly, if $A$ is an orthogonal $G$-spectrum and $V$ is a Euclidean space, then $(A^N)_V=(A_V)^N$.

\begin{rem}Note that for subgroups $H$ that are not necessarily
  normal, we can first restrict the $G$-actions to the normalizer
  $N_{\!H}$ of $H$ in $G$ before taking fixed points, to get a functor 
\[G\catOS \rightarrow N_{\!H}\catOS \rightarrow W_{\!H}\catOS,\]
to spectra with actions of the Weyl group $W_{\!H} \defas{} \faktor{N_H}{H}$.
\end{rem}
We now explain an alternative ``geometric'' way of forming \(N\)-fixed points. 
For this it is convenient to change the perspective slightly.  
We have firmly insisted that our spectra are indexed over Euclidean spaces only.  This has huge advantages over the classical setup in that the categorical framework is fixed throughout and we keep this orthodoxy except for a concession for geometric fixed points where temporarily indexing by representations simplify things.  
  \begin{dfn}\index{OGOJOT@$\catO_G$, $\catO_J$, $\catT_G$}
    \label{def:OG}For a compact Lie group $G$ and topological category $\catC$, let $\catC_G$ be the $G\catT$-category whose objects are $G$-objects in $\catC$ and where $G$ acts on the morphism space $\catC_G(V,W)=\catC(V,W)$ by conjugation.  
The $G\catT$-category of  (not necessarily $G$-equivariant) continuous functors $\catC_G\to\catT_G$ is called $\catC_G\catT$.
  \end{dfn}
In particular, we have the examples $\catO_G$ and $\catT_G$.
\begin{lemma}
  Restriction to trivial representations $\catO\subseteq\catO_G$ gives an equivalence between the category $G\catOT$ of orthogonal $G$-spectra and the $G$-fixed category $(\catO_G\catT)^G$ of $\catO_G\catT$; \ie the category of $G$-functors $\catO_G\to\catT_G$.
\end{lemma}

Consequently, under the fixed equivalence given by restriction, we will not distinguish between the category $G\catOT$ of orthogonal $G$-spectra and the category $(\catO_G\catT)^G$ of $G$-functors $\catO_G\to\catT_G$.

\begin{dfn}\label{dfnOE}\index{OE@$\catJE$}
Let $\catJE$ be the $N$-fixed category $(\catO_G)^N$ of $\catO_G$; explicitly, $\catJE$ is the $\J$-category whose objects
are the orthogonal
\(G\)-representations 
and whose morphism spaces are given by the $N$-fixed points \[\catJE(V,W) \defas{} \catO(V,W)^N,\]
 with \(G\) acting by conjugation.
An {\em $\catO_E$-space}\index{OEspace@$\catO_E$-space}\index{space!$\catO_E$-} is a  continuous $J$-functor $\catO_E\to\catT_J$.  
The category of $\catO_E$-spaces \index{EOT@$E\catOT$, $\catO_E$-space} is denoted $E\catO\catT$, as opposed to $\catO_E\catT$ which is the $J\catT$-category consisting of all continuous functors $\catO_E\to\catT_J$ (so that $E\catO\catT=(\catO_E\catT)^J$).
\end{dfn}

  \begin{dfn}\label{def:fixthroughE}\label{def:FixN}
    Post-composing with the $N$-fixed point functor defines a functor
    $$\Fix^N\colon G\catOT\to E\catOT,$$
    or in the enriched setting, a $J\catT$-functor
$$\Fix^N\colon(\catO_G\catT)^N\to\catO_E\catT$$ 
(from the $J$-category of $N$-functors $A\colon\catO_G\to\catT_G$ to to the $J$-category of functors $\catO_E\to\catT_J$).  Explicitly, $\Fix^N(A)_V=[A_V]^N$ with structure map the composite 
\begin{displaymath}
  \catJE(V,W) = \catO_G(V,W)^N \xto{A} \catT_G(A_V,A_W)^N \to \catT_J([A_V]^N,[A_W]^N),
\end{displaymath}
for $V,W$ orthogonal $G$-representations and 
where the last map takes an \(N\)-equivariant map of $G$-spaces to its induced
map of \(N\)-fixed points (which are $J$-spaces).
  \end{dfn}

\begin{dfn}\label{dfnphiphi}
The
\(J\catT\)-functor $\phi\colon \catOE \rightarrow \catOj$ is given on
objects by 
\begin{eqnarray*}
\phi\,\,\,\colon\catJE & \rightarrow & \catJJ\\
V&\mapsto&V^N
\end{eqnarray*}
and on morphisms by
\begin{eqnarray*}
\catOr(V,W)^N&\rightarrow&\catOJ(V^N,W^N).\\
g \wedge x&\mapsto& g^N \wedge x
\end{eqnarray*}
Let $\epsilon^*\colon\catO_J\to\catO_E$ be induced by restriction along $\epsilon\colon G\to J$; it is the identity $\catO_J(V,W)=\catO_E(\epsilon^*V,\epsilon^*W)$ on morphisms.
\end{dfn}

  \begin{remark}\label{notationFP}
    Let $t\colon\catO\to\catOJ$ be the inclusion of Euclidean spaces as trivial $J$-represent\-ations (inducing the equivalence between $\catOJ$-spaces and orthogonal $J$-spectra $t^*\colon\catOJ\catT\simeq J\catOT)$.  
The categorical $N$-fixed point functor of Definition~\ref{def:catfix} factors as 
$$\xymatrix{
(-)^N\colon G\catOT\simeq(\catO_G\catT)^G\ar[r]^-{\Fix^N}& 
(\catO_E\catT)^J
\ar[r]^-{\epsilon^*}&(\catOJ\catT)^J\simeq J\catOT,}$$ 
where $\Fix^N$ is the functor of Definition~\ref{def:fixthroughE} and the map marked $\epsilon^*$ restricts along $\epsilon^*\colon\catO_J\to\catO_E$.  

The geometric fixed point functor which we will define in a moment is the variant where one  uses the left Kan-extension $\FP_\phi\colon\catJE\catT\to\catOJ\catT$ (written out as a coend in Definition~\ref{dfngeomfixed} below) of the map induced by $\phi\colon\catJE\to\catOJ$ instead of $\epsilon^*$ to bridge the gap between $\catJE\catT$ and $\catOJ\catT$.  We'll come back to this point of view in Section~\ref{fixedpointsandlodaysubsect} (where functoriality becomes more ticklish), but for the moment it is convenient to express the geometric fixed points through an explicit formula.
  \end{remark}

\begin{dfn}\label{dfngeomfixed}
The {\em geometric \(N\)-fixed point functor}\index{geometric!fixed point spectrum} 
$$\PhiNH{}\colon G\catOT\to J\catOT$$
is given by  
$$\PhiNH{} A_V =\int^{W \in \catJE} \catOJ(W^N,tV) \wedge (A_W)^N
$$
for $A$ an orthogonal $G$-spectrum and $V$ a Euclidean space with $tV$ the associated trivial $J$-representation.
For $U\in\catJE$, the maps \(\phi \colon \catJE(W,U) \to \catO_J(W^N,U^N)\) induce a map
\begin{displaymath}
  (A_U)^N \cong \int^{W \in \catJE} \catJE(W,U) \wedge (A_W)^N \to
  \int^{W \in \catJE} \catOJ(W^N,U^N) \wedge (A_W)^N.
\end{displaymath}
Restricting to the situation where \(U=tV\) is a trivial
\(G\)-representation, we obtain a natural transformation of functors $G\catOT\to J\catOT$ called the {\em isotropy separation map}\index{isotropy separation map}  
$$\gamma \colon A^N \to
\PhiNH{} A$$ 
from the categorical fixed point functor to the geometric fixed point functor.
\end{dfn}

  \begin{remark}
It is sometimes useful to extend the definition of the geometric fixed points to $\catO_J\catT$, letting \(\PhiNH{} A_U=\int^{W \in \catJE} \catOJ(W^N,U) \wedge A^N_W\) for \(U\) in \(\catOJ\).
  \end{remark}

Before going into more detail, we will list properties of the fixed point functors and study their interaction with free spectra, which form the basis for computations on fixed points of smash powers.

For an orthogonal $G$-representation $V$ and $G$-space $K$, recall the free orthogonal $G$-spectrum $\Fr GV=\catO(V,-)\smsh K$ from Example~\ref{freeGorthsp}.  These free spectra formed the basis for the treatment of Mandell and May and they are particularly nice since they commute with geometric fixed points in the following sense (c.f. \cite[IV.4.5]{MM}).  Since $\catOE(V,W)=\catO(V,W)^N$, the dual Yoneda isomorphism takes the following form 
  \begin{align*}
    \Phi^N\Fr{\G}{V} K&=\int^{W\in\catOE}\catO(W^N,-)\smsh[\catO(V,W)\smsh K]^N \\
    &=\int^{W\in\catOE}\catO(W^N,-)\smsh\catO(V,W)^N\smsh K^N
    &=\int^{W\in\catOE}\catO(W^N,-)\smsh\catOE(V,W)\smsh K^N\\
    &
      \cong\catO(V^N,-)\smsh K^N=\Fr{\J}{V^N}K^N.
  \end{align*}
For reference:
\begin{prop}
\label{geomfree}
For any finite dimensional $\G$-representation $V$ and any $\G$-space $K$, the dual Yoneda map is a natural isomorphism of $J$-spectra
\[\Phi^N\Fr{\G}{V} K\cong \Fr{\J}{V^N}K^N .\]
\end{prop}

\begin{prop}
\label{geomcolim}
 The geometric fixed point functor $\PhiNH{}$ preserves coproducts, push\-outs along Hurewicz cofibrations, sequential colimits along Hurewicz cofibrations and tensors with spaces.
\end{prop}
\begin{proof}
 The functor from \(\catGOS\) to the category of
 \(J\)-functors from \(\catJE\) to \(J\catT\) taking \(A\) to \(W \to
 A^N_W\) preserves these by Lemma~\ref{fixedprop}. So does the continuous prolongation
 functor \(Y \mapsto \int^{W \in \catJE} \catOJ(W^N,(-)^N) \wedge
 Y_W\) from \(\catJE\)-spaces to \(\catOJ\)-spaces.
\end{proof}


Recall the $q$-cofibrations from Example~\ref{ex:levelqmodel}.
 \begin{lemma}
    \label{lem:phipreservesstableeq}
    The geometric $N$-fixed point functor preserves $q$-cofibrations and acyclic $q$-cofibrations.
  \end{lemma}
   \begin{proof}
     Follows by induction from Proposition~\ref{geomfree} and Proposition~\ref{geomcolim}.
   \end{proof}
By its very construction as postcomposition by fixed points and the left Kan extension, the geometric fixed point functor is lax symmetric monoidal.  Explicitly, for orthogonal $G$-spectra $X$ and $Y$ the required  natural transformation of orthogonal $J$-spectra 
\[\alpha_{X,Y}\colon \PhiNH{} X \smash \PhiNH{} Y \rightarrow
\PhiNH{}(X\smash  Y),\]
 is, by the universal property of the smash product, given by the map
$$\smallint^{U_1}\catOJ(U_1^N,V_1)\smsh(X_{U_1})^N\smsh
\smallint^{U_2}\catOJ(U_2^N,V_2)\smsh(X_{U_2})^N\to
\smallint^{U}\catOJ(U^N,V_1\oplus V_2)\smsh((X\smsh Y)_{U})^N
$$
induced by $\oplus$, the canonical isomorphisms $(U_1\oplus U_2)^N\cong U_1^N\oplus U_2^N$ and $(X_{U_1})^N\smsh(Y_{U_2})^N\cong (X_{U_1}\smsh Y_{U_2})^N$ and the structure map of the smash product $X_{U_1}\smsh Y_{U_2}\to (X\smsh Y)_{U_1\oplus U_2}$.  
\begin{prop}
\label{philaxmonoidal}
The geometric fixed point functor $\PhiNH{}$ is lax symmetric monoidal.

For $q$-cofibrant orthogonal $G$-spectra $X$ and $Y$ the lax monoidal structure map
\[\alpha\colon \PhiNH{} X \smash \PhiNH{} Y \rightarrow
\PhiNH{}(X\smash  Y),\]
is an isomorphism of $J$-spectra.   
\end{prop}
\begin{proof}
That $\alpha$ is an isomorphism if $X$ and $Y$ are $q$-cofibrant follows exactly as in~\cite[V.4.7]{MM} from Proposition~\ref{geomfree} and Lemma~\ref{geomcolim}. 
\end{proof}

\section{Orthogonally final subgroups}
\label{sec:orfi}
Changing the ambient compact Lie group $G$ in the definition of geometric $N$-fixed points is a well-known sticky point, classically involving a significant amount of ``changes of universe''.  The main case we are interested in satisfies a property which simplifies these obstacles enormously and we have dubbed ``\orfi''.
This property was originally considered without a name in Kro's thesis.
  \begin{dfn}\label{cond1}
    If $G$ is a compact Lie group and $N$ a normal subgroup, we say that the inclusion $i\colon N\subseteq G$ is {\em\orfi}\index{\orfi} if for any orthogonal $N$-representation $W$ there is an orthogonal $G$-representation $V$  and an \(N\)-linear isometric embedding $W \rightarrow i^*V$
  inducing an isomorphism  $W^N \cong (i^*V)^N$ on $N$-fixed spaces. 
  \end{dfn}

Important examples of \orfi $N\subseteq G$ include the case where $G/N$ is finite and the case of $G$ being the torus, see Lemma~\ref{cond1lem} and Lemma~\ref{toruscond}.  We know of no situation that is not \orfi.

Note that if $N\subseteq G$ is \orfi and $H$ is a subgroup of $G$ containing $N$, then $N\subseteq H$ is also \orfi.  Note also that the condition is purely one of existence, there is no implicit functoriality or continuity attached to the claim that  whenever given $W$ the isometry $W\to i^*V$ exists.

Another noteworthy aspect of the concept is that it is model dependent: it is custom built for the approach where we index our spectra by isometric embeddings.  Other flavors of spectra could conceivably benefit from different perspectives and cover applications that are not \orfi.

The reason for the name \orfi is that it can be rephrased in terms of a certain functor being final.  Namely, if $\catC_N$ (resp.~$\catC_E$) is the category of orthogonal $N$-(resp.~$G$)-representations and $N$-equivariant linear isometries that induce isomorphisms on $N$-fixed points, then $i$ is \orfi if and only if the restriction $$i^*\colon\catC_E\to\catC_N$$ is final.  More is true: 
\begin{lemma}\label{pushoutinO}
  Let $i\colon N\subseteq G$ be \orfi and let $[N]\catO\subseteq N\catO$ be the subcategory containing all objects and whose morphisms are those that become isomorphisms upon taking $N$-fixed points.  Suppose $W_0,W_1,W_2$ are orthogonal $N$-representations, 
$\omega\in N\catO(W_0,W_1)=\catO(W_0,W_1)^N$
and 
$\alpha\in[N]\catO(W_0,W_2)$.  Then there exists an orthogonal $G$-representation $V$ and maps $\beta\in[N]\catO(W_1,i^*V)$ and $\nu\in N\catO(W_2,i^*V)$ such that the diagram
$$\xymatrix{W_0\ar[r]^\omega\ar[d]^{\alpha}&W_1\ar[d]^{\beta}\\
W_2\ar[r]^{\nu}&i^*V}$$
in $N\catO$ commutes.
\end{lemma}
\begin{proof}
Write $\omega=(f_\omega,x_\omega)$ where $f_\omega\colon W_0\to W_1$ is an $N$-linear isometry and $x_\omega$ is a point in the one-point compactification of the orthogonal complement of the image of $f_\omega$.  Likewise for the maps $\alpha$, $\beta$ and $\nu$. Consider the pushout
$$\xymatrix{W_0\ar[r]^{f_\omega}\ar[d]^{f_\alpha}&W_1\ar[d]^{f_{\beta_1}}\\
W_2\ar[r]^{f_{\nu_1}}&P}
$$
of orthogonal $N$-representations.  Since $N\subseteq G$ is \orfi there exists a $G$-representation $V$ and an $N$-linear isometry $f_{\beta_2}\colon P\to i^*V$ such that the map of fixed points $f_{\beta_2}^N\colon P^N\to i^*V^N$ is an isomorphism.  Now, setting $\beta=(f_{\beta_2}f_{\beta_1},f_{\beta_2}f_{\beta_1}x_\omega)$ and $\nu=(f_{\beta_2}f_{\nu_1},f_{\beta_2}f_{\nu_1} x_\alpha)$ we are done.
\end{proof}

\begin{lem}\label{cond1lem}
  Inclusions of subgroups of finite index are \orfi.
\end{lem}

\begin{proof}
  Let $N\subseteq G$ be of finite index.  As observed by Kro \cite[3.8.11]{Kro}, it suffices to look at the
irreducible representations. If \(V\) is the
trivial representation of \(N\), there is nothing to do. If $V$ is
an irreducible and non-trivial representation of \(N\), then the $N$-fixed points of both $V$ and the induced $G$-representation (which is still
finite dimensional) are both of dimension zero, so the induced representation extends $V$ in the desired way.
\end{proof}

\section{Change of Group and Geometric Fixed Points}
\label{sec:fixandchange}
  In the next chapter Theorem~\ref{geomsmashsemifree} will form the basis for
  the identification in Theorem~\ref{geomScofib} of the geometric fixed points of smash powers of
  general $\Sp$-cofibrant spectra. We need one
  more crucial input, namely Kro's observation on the
  interaction of the geometric fixed point functor with induced
  spectra 
  from \cite[3.8.10]{Kro}. Since we will in particular need analogous
  results for the functors restricting to $H$-spectra for $H$ a
  subgroup of $G$ 
  to lift our results to the case of $G$ a compact Lie group, we
  will go into more details in the next section. 

For the rest of the section, let $G$ be a compact Lie-group and $H$ a closed
subgroup of $G$ with inclusion map $i\colon H\rightarrow G$. 
We consider the restriction functor 
\[i^*\colon \catGOS \rightarrow H\catOS,\] 
and its left adjoint induction functor 
\[G_+ \smash_H (-) \colon H\catOS \rightarrow \catGOS.\] 
The following lemma will be helpful later, it is a direct consequence
of Lemma~\ref{inducingsmashcomp}.
\begin{lem}\label{inducingsmashspeccomp}
  For an orthogonal $H$-spectrum $X$ and an orthogonal $G$-spectrum $Y$,
  the identity on \(G_+ \smash X \smash Y\) induces a natural
  isomorphism of \(G\)-spectra
  \[(G_+ \smash_H X) \smash Y \cong G_+ \smash_H ( X \smash i^*Y).\] 
\end{lem}
Reintroducing the normal subgroup $N$, let $j\colon E_0\rightarrow E$ be the injection of short exact sequences of compact Lie groups:
\labeleq{grpsequences}{\xymatrix{{\E_0\colon}\ar_{j}[d]&{1}\ar[r]&{N}\ar@{=}[d]\ar[r]&{\h}\ar[r]\ar_-{i}[d]&{\J_0}\ar_-{i_J}[d]\ar[r]&{1}\\{E\colon}&{1}\ar[r]&{N}\ar[r]&{\G}\ar[r]&{\J}\ar[r]&{1}}} 
($N$ is normal in $G$ and hence in $H$, but $i\colon H\subseteq G$ need not be normal). 
Our notation $\Phi^N$ for the geometric fixed points doesn't distinguish between the case of $H$- and $G$-spectra, which is innocent by type-checking the input and, furthermore, in the \orfi case we will see that it matters even less.  

\begin{dfn}
  \label{def:jOE} The full and faithful functor 
$j^* \colon \catO_{E} \to \catO_{E_0}$
 is given by taking a \(G\)-representation \(V\) to its
restricted \(H\)-representation \(i^*V\) and on morphism spaces it is the
identity \(\catO(V,W)^N = \catO(i^*V,i^*W)^N\) of subspaces of
\(\catO(V,W)\). 
If we let $i_J^*\catO_E$ be the category $\catO_E$ considered as a $J_0$-category through restriction along $i_J\colon J_0\to J$, we may consider $j^*$ as a \(J_0\)-functor 
$$j^* \colon i_J^* \catO_{E} \to \catO_{E_0}.$$
\end{dfn}

\newcommand{\jota}{i}
\begin{dfn}\label{def:inducedspectrum}
Let $Y$ be an $\catO_{E_0}$-space (\ie a $J_0$-functor $\catO_{E_0}\to\catT_{J_0}$), then the \emph{induced $\catO_E$-space $J_+ \smash_{J_0} Y$}\index{induced $\catO_E$-space} is given by sending the orthogonal $G$-representation $V$ to
\[(J_+ \smash_{J_0} Y)_{V} \defas{} J_+ \smash_{J_0} Y_{\jota^* V},\]
and on morphisms given as the composite of the isomorphism from Lemma~\ref{inducingsmashcomp} and the structure map of $Y$: 
\begin{align*}
\catO(V,W)^N\smash (J_+ \smash_{J_0} Y_{\jota^*V})&
\cong J_+\smash_{J_0} (\jota_J^*\catO(V,W)^N\smash Y_{\jota^* V})\\
&=J_+ \smash_{J_0} (\catO(\jota^* V,\jota^* W)^N \smash Y_{\jota^* V})
\rightarrow J_+ \smash_{J_0} Y_{\jota^* W}.
\end{align*}
\end{dfn}

The following Lemma is where demanding that $N\subseteq G$ is \orfi plays a crucial r\^ole: it reinvents the geometric fixed points of an $H$-spectrum in terms of $\catO_E$.  The result depends on some technical observations regarding cofinality which are collected in Section~\ref{sec:coendcof}. 
\begin{lem}\label{condition1lemma}
  Suppose $N\subseteq G$ is \orfi. If $X$ is an orthogonal $H$-spectrum, then the map  of $J_0$-spectra
  $$
    j\colon\int^{V \in\catO_{E}}
    \catO(i^*V^N,-) \wedge X_{i^*V}^N
    \to
    \int^{W \in \catO_{E_0}}
    \catO(W^N,-) \wedge X_{W}^N=\Phi^N X
$$
 induced by $j^*\colon\catO_E\to\catO_{E_0}$ is an isomorphism onto the geometric fixed points.
\end{lem}
\begin{proof}
  If \(F \colon 
  \catO_{E_0}^{\op} \wedge \catO_{E_0} \to
  \catOT\)  is the functor given by \(F(W,W') = \catO(W^N,-)
  \wedge (X_{W'})^N\), then Lemma~\ref{pushoutinO} and Lemma~\ref{fromcond1tocofinal} proves that  \(j^*\colon \catO_{E} \to \catO_{E_0}\) is \(F\)-cofinal (see Definition~\ref{def:Fcofinal}). Thus Proposition~\ref{Fcofinaliso} implies that the map \(j\) is an isomorphism.
\end{proof}
The application of the (unnatural) Lemma~\ref{pushoutinO} in the proof of Lemma~\ref{condition1lemma} totally disregards the $H$-action of the objects in $\catO_{E_0}$ and only focuses on the $N$-action.

\subsubsection{Induced and Restricted Spectra and (Geometric) Fixed Points}
Thanks to Lemma~\ref{condition1lemma}, we are now in a position to show that the geometric fixed points commute both with induction and restriction along similar ideas as in {\cite[3.8]{Kro}} when $N\subseteq G$ is \orfi.
\begin{prop}\label{3810}
Assume that $N\subseteq G$ is \orfi.  If $X$ is an orthogonal $H$-spectrum $X$, then the natural map
\[J_+ \smash_{J_0}(\Phi^NX) \xto{}  \PhiNH{}(G_+
  \smash_H X)\]
is an isomorphism of $J$-spectra.
\end{prop}
\begin{proof}
  We prove that the precomposite with the isomorphism $j$ of Lemma~\ref{condition1lemma} is an isomorphism:
  the left vertical isomorphism is the natural redistribution and the lower horizontal isomorphism is given by  Lemma~\ref{lem:inducedfixpoints}.
  $$\xymatrix{J_+ \wedge _{J_0} \int^{V \in \catO_{E}} i_J^* \catO(V^N,-) \wedge X_{i^*V}^N
    \ar[r]^-{\id\smsh j}_-\cong\ar[d]^\cong&
    J_+ \wedge _{J_0} \Phi^N X\ar[d]\\
    \int^{V \in \catO_{E}}\catO(V^N,-) \wedge (J_+ \wedge _{J_0} X_{i^*V}^N)\ar[r]_-\cong&
    \int^{V \in \catO_{E}}\catO(V^N,-) \wedge (G_+\wedge_H X_{i^*V})^N. 
    }
    $$
    The diagram commutes when the right vertical map is the desired map (with target $\PhiNH{}(G_+
  \smash_H X)$) which consequently is an isomorphism.
\end{proof}
\newcommand{\ione}{i_J}

For orthogonal $G$-spectra, taking categorical $N$-fixed points
commutes with the restriction to $H$-spectra, \ie 
\begin{lemma}
  If $Y$ is an orthogonal $G$-spectrum, then the natural
map \[\ione^*(Y^N) \to (i^*Y)^N\]
is an
isomorphism of $J_0$-spectra.
\end{lemma}

The following result says that when $N\subseteq G$ is \orfi, the same is true for geometric fixed points, a result we will eventually need when passing from finite groups to compact
Lie groups in Theorem~\ref{generalliegroup}.
\begin{prop}\label{geomfixedrestrict}
 Assume $N\subseteq G$ is \orfi. 
Then taking
  geometric $N$-fixed 
 points commutes with the restriction to $H$-spectra. More precisely, if $Y$ is an orthogonal $G$-spectrum, then the functor
\(j^*\colon\catO_E \to \catO_{E_0}\) induces
 a
 natural (in $Y$) isomorphism
 \[\ione^*\PhiNH{} Y \xto \cong
 \Phi^N(i^* Y)\]
 of $J_0$-spectra.
\end{prop}
\begin{proof}
The left hand side is 
$$i_J^* \PhiNH{} Y = 
  i_J^* \int^{V \in \catO_E} \catO(V^N,-) \wedge Y_V^N 
  =\int^{V \in \catO_E}  \catO(i^*V^N,-) \wedge (i^* Y_{V})^N, 
$$
which by means of $j^*$ maps as 
$$\int^{V \in \catO_E}  \catO(i^*V^N,-) \wedge (i^* Y_{V})^N\to
\int^{W \in \catO_{E_0}} \catO(W^N,-) \wedge i^*Y_W^N
  = \Phi^N i^*Y
$$
which is an isomorphism by Lemma~\ref{condition1lemma} with $X=i^*Y$. 
\end{proof}

\chapter{Finite Smash Powers and Commutativity}
\label{ch:finsmashcom}

In this chapter we demonstrate that, under mild cofibrancy conditions, we have tight equivariant control over finite smash powers.  This is in turn fed into the constructions in Chapter~\ref{ch:smashpow} to give invariants of commutative orthogonal ring spectra, generalizing topological cyclic homology and covering homology.

The crowning result of this chapter is Theorem~\ref{geomScofib}, which identifies the so-called geometric fixed points (see Definition~\ref{dfngeomfixed}) of smash powers over a finite free $G$-set with the smash power over the {\em orbits}.  When applied to smash powers of commutative ring spectra this can be rephrased as saying that the geometric fixed points give a concrete model for the {\em homotopy orbits in commutative orthogonal ring spectra}.  The technical underpinnings of this statement are improved on in Chapter~\ref{ch:smashpow} and the result itself reappears as an appropriately natural statement in Theorem~\ref{thm:diagisoisnaturalcom}.  This naturality will in turn be crucial when extending to compact Lie groups.

The correspondence between the the smash powers over orbits and the geometric fixed points over the full smash power is provided by the \emph{geometric diagonal} which we define in Section~\ref{specialcellssubsect}.  We prove that the $\Sp$-model structure of Definition~\ref{Spmodelstructure} provides us with good control over smash powers while cooperating tolerably well with the geometric fixed points.

We apply these constructions to smash powers in Section~\ref{subsectcellularfiltrations} and attain the mentioned results via a careful cellular induction started in Section~\ref{sec:cellfilt}. 

As corollaries of our understanding of smash powers  we also collect some basic but crucial facts pertaining to commutative orthogonal ring spectra in Section~\ref{sec:eqcomconv}.  We end the chapter with some categorical considerations relating to the tensor structure on commutative orthogonal ring spectra in Section~\ref{subsecttensors}.

Throughout this chapter we work with the positive model structure on
orthogonal \(G\)-spectra from Definition~\ref{Spmodelstructure},
and we let 
$(\hml,\gml)=(\AIplus,\All_+)$ 
be the positive mixing pair from Example~\ref{positivemixingpair}.

\section{The Geometric Diagonal and Induced Regular Cells  
}\label{specialcellssubsect}

Unlike the category of spaces, the category of spectra doesn't have a ``diagonal''.  However, there is a very nice substitute, namely the ``geometric diagonal'' which we define now.  This construction will be crucial to our further investigations. 

Smash powers are more ``regular'' than most equivariant spectra in a way that makes it possible to start an induction process towards proving that the geometric diagonal is an isomorphism.  We end this section by studying the ``induced regular cells'' and how the geometric diagonal handles these.  The induction process will be followed up in the next sections.

In this section \(G\) is a finite group, $N\subseteq G$ a normal subgroup and \(X\) is a finite \(G\)-set.
\begin{dfn}\label{definitionofgeomfixedpoints}
  Let \(E\) be an orthogonal spectrum and let \(X\) be a finite
  \(G\)-set with \(N\)-orbits \(X_N\). 
The {\em geometric diagonal  map}
  \begin{displaymath}
   \Delta_XE= \Delta \colon 
    E^{\wedge X_N} \to \PhiNH{} (E^{\wedge X})
  \end{displaymath}
  is defined as follows:  Given vector spaces
  \((V_{[x]})_{[x] \in X_N}\), let 
  \(W = \bigoplus_{x \in X} V_{[x]}\) and \(U = \bigoplus_{[x] \in
    X_N} V_{[x]}\).  The diagonal embedding \(U \to W\) gives an isomorphism \(W^N \xto{\cong} U\). Considering this isomorphism as  an element \(\delta \in \catOJ(W^N,U)\),
  the structure map for the smash product gives a
  morphism
  \begin{displaymath}
    \delta\colon(\bigwedge_{x \in X} E_{V_{[x]}})^N \to \catOJ(W^N,U) \wedge
    \left(E^{\wedge X}_{W}\right)^N.
  \end{displaymath}
  The geometric diagonal $\Delta$ is defined by requiring that the following diagram commutes:
\labeleq{geomdiagdiagr}
  {
    \xymatrix{
      \bigwedge_{[x] \in X_N} E_{V_{[x]}} \ar[r] \ar[d]^-{\cong}&
      E^{\wedge X_N}_{U}  \ar[r]^-{\Delta} &
      (\Phi^N E^{\wedge X})_{U} \\
      (\bigwedge_{x \in X} E_{V_{[x]}})^N \ar[rr]^-{\delta} &&
      \catOJ(W^N,U) \wedge
      \left(E^{\wedge X}_{W}\right)^N,
      \ar[u]
      }
    }
  where the unlabeled arrows are given by the universal property of
  smash-products and by inclusion in the coend defining geometric
  fixed points by considering \(W\) as an object of \(\catOE\).  
\end{dfn}

Direct inspection shows that the geometric diagonal is monoidal in the following sense.
  \begin{lemma}\label{lem:geomdiagismonoidal}
    If $E$ and $F$ are orthogonal spectra and $X$ a finite $G$-set, then the diagram
$$\xymatrix{
E^{\smsh X_N}\smsh F^{\smsh X_N}\ar[rrr]^-{\text{shuffle}}_\cong
\ar[d]^{\Delta_XE\smsh\Delta_XF}&&&
(E\smsh F)^{\smsh X_N}\ar[d]^{\Delta_X(E\smsh F)}\\
\Phi^N(E^{\smsh X})\smsh \Phi^N(F^{\smsh X})\ar[r]^-\alpha&\Phi^N(E^{\smsh X}\smsh F^{\smsh X})\ar[rr]^-{\text{shuffle}}_\cong&&\,\Phi^N((E\smsh F)^{\smsh X}),}
$$
commutes, where the horizontal isomorphisms are the natural shuffle permutations of smash factors.  With repeating spectra entries indexed by a finite set $Y$ this takes the form of the commutative diagram
$$\xymatrix{
(E^{\smsh X_N})^{\smsh Y}\ar[rrr]^-{\text{shuffle}}_\cong
\ar[d]^{(\Delta_XE)^{\smsh Y}}&&&
(E^{\smsh Y})^{\smsh X_N}\ar[d]^{\Delta_X(E^{\smsh Y})}\\
(\Phi^N(E^{\smsh X}))^{\smsh Y}\ar[r]^-\alpha&\Phi^N((E^{\smsh X})^{\smsh Y})\ar[rr]^-{\text{shuffle}}_\cong&&\,\Phi^N((E^{\smsh Y})^{\smsh X}).}
$$
Similarly, the diagram
$$\xymatrix{
(E^{\smsh Y})^{\smsh X_N}\ar[rr]^-{\text{shuffle}}_\cong
\ar[d]^{\Delta_X(E^{\smsh Y})}&&
E^{\smsh (Y\times X)_N}\ar[d]^{\Delta_{Y\times X}E}\\
\Phi^N((E^{\smsh Y})^{\smsh X})\ar[rr]^-{\text{shuffle}}_\cong&&\,\Phi^N(E^{\smsh (Y\times X)})}
$$
commutes.
  \end{lemma}

The class of semi-free $G$-spectra is too big to fully control the geometric fixed point functor. Our studies of the smash powers of semi-free non equivariant spectra has given us a specific example of a class where such control is possible. Now we will define classes of \emph{regular spectra}, and \emph{induced regular spectra} which the smash powers are examples of.
\begin{dfn}\label{def:regularspectra}
  Let \(\varphi \colon G \to \Or{V}\) be a \(G\)-representation and
  let \(P\) be a
  \(G\)-invariant subgroup of \(\Or V\). Given a finite free \(G\)-set
  \(X\), we consider the group \(G \ltimes \prod_X P\), where \(G\)
  acts on the product by the action corresponding to conjugation of
  functions from \(X\) to \(P\).
  Let \(\psi \colon G \to \Or {V^{\oplus X}}\) be the direct sum representation.

Given a \(G\)-representation \(\rho \colon G \to \Or{W}\), we
  say that a \(G \ltimes_{\rho} \Or  W \)-space $K$ (in the notation of Remark~\ref{shearingsemi}) is {\em \(W\)-regular}\index{regular!\(W\)-}\index{Wregular@\(W\)-regular} if such data $(\phi, V, P, X)$ exists and $\rho$  contains the direct sum representation and if $K$ 
  is isomorphic to   
$$(\Or W)_+ \wedge_{\prod_X P} L$$ for a genuinely cofibrant \(G \ltimes \prod_X P\)-space
  \(L\).
  
A semi-free $G$-spectrum is called \emph{regular}\index{regular spectrum} if it is isomorphic to one of
the form  \(\Gr{\rho}{W} K\) (cf. Definition~\ref{twistedsemifree}) for \(\rho \colon G \to \Or{W}\)
a
\(G\)-representation and a \(W\)-regular \(G \ltimes_{\rho} \Or W\)-space $K$. 
\end{dfn}
\begin{rem}\label{freeareregular}
Free spectra are regular, since we can choose \(V = 0\).
\end{rem}
Note that since, in Definition~\ref{def:regularspectra}, the subspace $\prod_X P \subseteq \Or{W}$ is $G$-invariant, we have an inclusion of $N$-fixed subgroups $(\prod_X P)^N \subseteq (\Or{W})^N$ for any subgroup $N$ of $G$.
\begin{rem}\label{cofibrantregular}
  Since all involved functors preserve colimits, and since inducing up preserves genuine cofibrations (the right adjoint is clearly a right Quillen functor), the regular semi-free spectra are $\Sp$-cofibrant and genuine cofibrations induce $\Sp$-cofibrations.
\end{rem}
\begin{ex}
The most important example of a regular semi-free $G$-spectrum is the smash power \[(\Gr{\varphi}{V}K)^{\smash X} \cong \Gr{\psi}{\dsi{X} V}[ {\Or{{\dsi{X}{V}}}}_+ \smash_{\pri{X}\Or{V}} K^{\smash X}],\]
for $X$ a finite free $G$-set (c.f.~Proposition~\ref{semifreesmashpower}). 
\end{ex}

Let
$\rho \colon G \to \Or{W}$ be a $G$-representation.
By Lemma~\ref{lemma:semifreeadjoint} and Definition~\ref{twistedsemifree},
for \(K\) an \(\Or{W} \times G\)-space the
regular semi-free $G$-spectrum
$\Gr{\rho}{W} K = \Gr{}{W} \catOr(W,W) \wedge_{\Or{W}} K$
is determined by its value
\begin{displaymath}
  (\Gr{\rho}{W} K)_V = \catOr(W,V) \wedge_{\Or{W}} K
\end{displaymath}
at \(V \in \catOr\). An element \((g,\alpha)\) of the group \(G \times
\Or{V}\) acts on 
\(\gamma \wedge k \in \catOr(W,V) \wedge_{\Or{W}} K\) by the formula
\begin{displaymath}
  (g,\alpha) \cdot (\gamma \wedge k) = \alpha \gamma \rho(g^{-1})
  \wedge gk.
\end{displaymath}
We write $\bar\rho\colon J\to\Or{W^N}$ for the $J$-representation on the fixed points $W^N$ given by the restriction of $\rho$ to $W^N$.

\begin{prop}\label{geomfixofregfree}
Let $N$ be a normal subgroup of $G$ and let $X$ be a finite free $G$-set. 
Given $G$-representations $\phi\colon G\to \Or V$ and $\rho\colon G\to \Or W$ with $V^{\oplus X}\subseteq W\in\catJE$, if $P\subseteq \Or V$ is a $G$-invariant subgroup and $L$ is a cofibrant $G\ltimes\prod_XP$-space, consider the $W$-regular space $K=(\Or W)_+\wedge_{\prod_XP}L$.  Then the dual Yoneda lemma and Proposition~\ref{regularfix} induce a natural isomorphism 
\[\Gr{\overline \rho}{W^N}((\Or{W^N})_+\wedge_{\Or{W}^N}K^N)\to\PhiNH{}(\Gr{\rho}{W} K).\]
More precisely, the following chain of natural transformations consists of isomorphisms
 \begin{align*}
  \Gr{\overline \rho}{W^N}((\Or{W^N})_+\wedge_{\Or{W}^N}K^N) &=
  \Or{}(W^N,-)\wedge_{\Or{W^N}}((\Or{W^N})_+\wedge_{\Or{W}^N}[(\Or{W})_+\wedge_{\prod_XP}L]^N)\\
&\cong\Or{}(W^N,-)\wedge_{\Or{W}^N}[(\Or{W})_+\wedge_{\prod_XP}L]^N\\
&\gets
\Or{}(W^N,-)\wedge_{\Or{W}^N}[(\Or{W}^N)_+\wedge_{(\prod_XP)^N}L^N]\\
&\cong\Or{}(W^N,-)\wedge_{(\prod_XP)^N}L^N\\
&\underset{\text{Yoneda}}{\overset{\cong}\gets}\smallint^{U\in\catJE}\Or{}(U^N,-)\smsh[\Or{}(W,U)^N\wedge_{(\prod_XP)^N}L^N]\\
&\to\smallint^{U\in\catJE}\Or{}(U^N,-)\smsh[\Or{}(W,U)\wedge_{\prod_XP}L]^N\\
&=\PhiNH{}(\Gr{\rho}{W} K),
 \end{align*}
 where the unlabelled isomorphisms are instances of (associativity and) the  isomorphism $\id\cong \id\smsh_HH_+$ (for $H=\Or{W^N}$ and $H=\Or W^N$), the first and the last arrows are instances of the universality of limits and the middle arrow is the isomorphism given by the dual Yoneda lemma.
\end{prop}
\begin{proof}
The arrows given by universality of limits are isomorphisms by Proposition~\ref{regularfix} as applied to $Z=\Or{}(W,T)\wedge L$ (with $T=W$ and $T=U$).
\end{proof}

When the action $\phi$ is trivial and $W=V^{\oplus X}$ this becomes particularly attractive:
\begin{thm}\label{geomsmashsemifree}
  Let \(V\) be an inner product space, let \(X\) be a discrete set with free action of a finite discrete
  group \(G\) and let $M$ be a genuinely cofibrant $G\times\Or V$-space.
Let \(N\) be a normal subgroup of \(G\) with factor group \(J\) and
  let $E = \Gr{}V(M)$.
  Then the geometric diagonal map (Definition~\ref{definitionofgeomfixedpoints})
  \begin{displaymath}
    \Delta \colon E^{\wedge X_N} \to  \PhiNH{} (E^{\smsh X})
  \end{displaymath}
  is an  isomorphism of  $J$-spectra.
\end{thm}
\begin{proof}
In the setup of Proposition~\ref{geomfixofregfree}, let $W=V^{\oplus X}$, 
$K=(\Or{W})_+\smsh_{\prod_X\Or V}M^{\smsh X}$ and $P=\Or V$.  Using the definitions of the smash product, semi-free spectra, the natural diagonal isomorphisms $V^{\oplus X_N}\cong [V^{\oplus X}]^N=W^N$, $\prod_{X_N}\Or{V}\cong[\prod_X\Or{V}]^N$ and Proposition~\ref{regularfix} as in Proposition~\ref{geomfixofregfree}, we get the vertical isomorphisms in the diagram
$$
\xymatrix{\Gr{}{W^N}((\Or{W^N})_+\wedge_{\Or{W}^N}K^N)\ar[d]_\cong
\ar[rr]^-{\text{Proposition~\ref{geomfixofregfree}}}_\cong&&
\PhiNH{}(\Gr{}{W} K)\ar[d]_\cong\\
\Or{}(V^{\oplus X_N},-)\smsh_{\prod_{X_N}\Or V}M^{\smsh X_N}\ar[rr]&&
\smallint^U\Or{}(U^N,-)\smsh[\Or{}(V^{\oplus X},U)\smsh_{\prod_X\Or V}M^{\smsh X}]^N
 \\
 (\Gr{}VM)^{\smsh X_N}\ar[u]^\cong
\ar[rr]^-{\Delta}&&
 \PhiNH{}(\Gr{}VM)^{\smsh X}.\ar[u]^\cong
}
$$
The middle horizontal map is given by choosing $U=W=V^{\oplus X}$ and the inverse of the diagonal isomorphism $V^{\oplus X_N}\cong U^N$.  The topmost square commutes by design
(for instance, the leftmost top vertical isomorphism is the composite
\
\begin{align*}
  \Gr{}{W^N}((\Or{W^N})_+\wedge_{\Or{W}^N}K^N)
&=\Or{}(W^N,-)\smsh_{\Or{W^N}}(\Or{W^N})_+\smsh_{\Or W^N}[(\Or W)_+\smsh_{\prod_X\Or V}M^{\smsh X}]^N\\
& \cong 
\Or{}(W^N,-)\smsh_{\Or W^N}[(\Or W)_+\smsh_{\prod_X\Or V}M^{\smsh X}]^N \\
& \cong 
\Or{}(W^N,-)\smsh_{\Or W^N}(\Or W)^N_+\smsh_{[\prod_X\Or V]^N}[M^{\smsh X}]^N \\
& \cong \Or{}(V^{\oplus X_N},-)\smsh_{\prod_{X_N}\Or V}M^{\smsh X_N}
\end{align*}
used in Proposition~\ref{geomfixofregfree}, and similarly but simpler for the rightmost map) and the lower square commutes by the definition of the geometric diagonal,~\ref{definitionofgeomfixedpoints}, (evaluate at any $(V_{[x]})_{[x]}$ and use the defining diagram~\ref{geomdiagdiagr}).
\end{proof}
{Let $i\colon H\subseteq G$ be a subgroup containing $N$.
\begin{dfn}\label{indregsemif}
A semi-free $G$-spectrum $E$ is called \emph{induced 
regular}\index{regular!induced \(\hml\)-}\index{induced regular} if there is an isomorphism of $G$-spectra
\[E \cong G_+ \smash_H F,\]
for $H$ a subgroup of $G$ and $F$ a regular semi-free $H$-spectrum.
\end{dfn}
Lemma~\ref{cond1lem} implies that an inclusions $N\subseteq G$ of a normal subgroup in a finite group $G$ is \orfi,
so we get the following characterizations of geometric fixed points for induced regular spectra which is analogous to Kro's Lemma 3.10.8:
\begin{thm}\label{geomfixedofinduced}
  Let $G$ be a finite group and $H$ and $N$ subgroups with $N$ normal.
  Let $\rho\colon G\to\Or W$ be a representation and $K$ a $W$-regular space.
  
  The geometric fixed points of the induced regular semi-free
 $G$-spectrum $G_+ \smash_H \Gr{\rho}{W}K$  
are given by the natural isomorphism:
\[\PhiNH{}(G_+ \smash_H \Gr{\rho}{W}K)
\cong \begin{cases}
{\scriptsize{\faktor{G}{N}}_+\smash_{\scriptsize{\faktor{H}{N}}}
[\Gr{\bar\rho}{W^N}((\Or{W^N})_+\smsh_{\Or W^N}K^N)}]&\text{
    if  } N \subset H\\ \,\,\,\,\,\,\,\,\,* &\text{
    otherwise.}\end{cases}\]
\end{thm}
\begin{proof}
  The case \(N \subseteq H\) is a direct consequence of Theorem~\ref{geomfixofregfree} and
  Proposition~\ref{3810}. For the second part, note that already for $H$-spaces
  $A$ we have $(G_+ \smash_H A)^N \cong *$ if $N$ is not contained
  in $H$, hence \((G_+ \smash_H \Gr{\rho}{W}K)^N_V \cong *\) for all \(V\) (cf.~\cite[3.10.8]{Kro}).
\end{proof}

Since we must consider $G$- and $H$-representations simultaneously,
we allow ourselves to decorate our semi-free spectra when this can aid readability,
so that $\Gr{H}{V}K$\index{GHVK@$\Gr{H}VK$} will signify the semi-free $H$-spectrum where $V$ is an $H$-representation
(the underlying homomorphism $H\to \Or V$ will be suppressed from the notation in these cases).

Induction $G_+\smsh_H-\colon H\catOT\to G\catOT$ is easily analyzable on semi-free spectra: given a subgroup $P\subseteq H\times\Or V$, then
 $$G_+\smsh_H[\catOr(V,-)\smsh_{\Or V}(H\times\Or V)/P_+]
\cong\catOr(V,-)\smsh_{\Or V}(G\times\Or V)/P_+,
 $$
and so induction preserves $\Sp$-cofibrations and we  get the following:
\begin{lem}\label{inducesmashpresscof}
  Let $V$ be an $H$-representation,
  $X$ a finite free $H$-set, $P$ an $H$-invariant subgroup of
  $\Or{V}$, for $Q = \Pi_X P$ and $i$ a genuine cofibration of $H
  \ltimes Q$-spaces, the map 
  \[G_+ \smash_H \Gr{H}{W}[{\Or{W}}_+ \smash_Q i]\]
of $G$-spectra is an $\Sp$-cofibration for every
\(G\)-representation \(W\) containing \(V^{\oplus X}\).
\end{lem}

\begin{dfn}
  Let $\SpGreg$.\index{SGreg@$\SpGreg$} be the class of \Sp-cofibrations
  of the form 
  \[G_+ \smash_H \Gr{H}{W}[{\Or{W}}_+ \smash_Q i]\]
  in Lemma~\ref{inducesmashpresscof}.
\end{dfn}
\begin{rem}\label{stupidremark}
For $\SpGreg$-cell complexes, Theorem~\ref{geomfixedofinduced} allows us to compute the geometric fixed points via a cell induction. This could be used to define a class of cofibrations very much in the spirit of Kro's \emph{induced cells} (\cite[3.4.4]{Kro}) and \emph{orbit cells} (\cite[3.4.6]{Kro}), but more general than both. Since we are not going to construct model structures or even replacement functors for any of these classes, we will not go into this generality. Instead we will focus on the type of cells that will appear in the cell structure for the smash powers.
\end{rem}
Finally we give a name for the cells that we will use in the equivariant filtration theorem:
\begin{dfn}
 An \emph{induced regular cell}\index{induced!regular cell} is an \Sp-cofibration of the form
\[f= G_+ \smash_H i^{\square H}.\]
for $H\subseteq G$ a normal subgroup  and  $i$ a generating $\Sp$-cofibration of orthogonal spectra.
Here $\square$ is the pushout
product construction from Definition~\ref{pushoutproduct}.

 We let
$\Indreg$\index{IndregG@$\Indreg$, the induced regular cells} denote the
\emph{class of all induced regular cells}.  
\end{dfn}
\begin{rem}\label{rem:sourceandtarget}
Both source and target of an induced regular cell are induced regular in the sense of Definition~\ref{indregsemif}, since if $i$ is the map
$$\Gr{}{V}\left({\faktor{\Or{V}}{P}}_+\smash [S^{n-1} \rightarrow D^n]\right),$$
then the dual Yoneda isomorphism as in Proposition~\ref{semi-free-smash} together with the fact that inducing up preserves colimits and the identification of $(S^{n-1}\subseteq D^n)^{\square H}$ with $S^{n|H|-1}\subseteq D^{n|H|}$ we have that $i^{\square H}$ corresponds to the map
\[\Gr{}{\dsi{H}{V}}\left({\faktor{\Or{\dsi{H}{V}}}{\Pi_H P}}_+ \smash[S^{|H|n-1} \rightarrow D^{|H|n}]_+\right), \]
where $|H|$ is the order of $H$. In other words, $i^{\square H}$ is represented by
the inclusion of the boundary sphere of the $H \ltimes \Pi_H P$ space
$D^{|H|n}$, where $\Pi_H P$ acts trivially, and $H$ acts by permuting
coordinates blockwise. 
\end{rem}
}

The geometric fixed points are well behaved for $q$-cofibrant $G$-spectra, but for our results about the geometric diagonal to be homotopically well behaved we'll need to extend this good behavior to the $\Indreg$-cellular case.  
Mandell and May's proof {\cite[V.4.17]{MM}} applies verbatim and we'll indicate only the necessary emendations.

Recall the definition of the
universal $\aml$-space $E\aml$ for a closed family of subgroups of $G$
from Definition~\ref{univFspace}.
\begin{dfn}
  For $N$ a normal subgroup, let $\nml$\index{N@$\nml$, the family of subgroups that do not contain $N$}
  be the family of subgroups of $G$ that do not contain $N$.
  Let $E\nml$ be the universal $\nml$-space, and let $\tilde{E}\nml$\index{EN@$\tilde{E}\nml$} be the homotopy cofiber of the quotient map $E\nml_+ \rightarrow S^0$ that collapses $E\nml$ to the non base point. For orthogonal $G$-spectra $X$, the map $S^0 \rightarrow \tilde{E}\nml$ induces a natural map
  $$\lambda\colon X \rightarrow X\smash\tilde E\nml.$$ \end{dfn}

Let $r\colon\id\we R$ be a $q$-fibrant replacement.
\begin{prop}\label{fundcofindreg}
For $\Indreg$-cellular orthogonal $G$-spectra $Y$, the natural maps
\labeleq{htpytypecor}{\xymatrix{
R(Y \smash \tilde{E}\nml)^N \ar[r]^-\gamma&\Phi^NR(Y \smash \tilde{E}\nml) &
\Phi^N(Y \smash \tilde{E}\nml)\ar[l]_-{\Phi^Nr}&\Phi^N Y\ar[l]^-\cong_-{\Phi^N\lambda}
}
}
where $\gamma$ is the isotropy separation map of Definition~\ref{dfngeomfixed}, are $\pi_*$-isomorphisms of orthogonal $J$-spectra.
\end{prop}
\begin{proof}
  Since $(\tilde E \nml)^H = S^0$ if $H$ contains $N$, and $(\tilde{E}\nml)^H$ is contractible otherwise, the map $\Phi^N\lambda$ of geometric diagonal is an isomorphism.  Since $r$ is a acyclic $q$-cofibration Lemma~\ref{lem:phipreservesstableeq} gives that $\Phi^Nr$ is an acyclic $q$-cofibration.  Hence we are left with analyzing the isotropy separation map.   When $Y$ is $q$-cofibrant this is handled in \cite[V.4.17]{MM}, and the same proof works for $\Indreg$-cellular orthogonal $G$-spectra $Y$ with the following two modifications.

  The cell induction is over the induced regular cells
    \[G_+ \smash_H (\Gr{}{V}(L)^{\smash H}),\]
    (not the free cells $\Fr{}ZA$ as in \cite[V.4.12]{MM}) with $L$ a genuine $\Or V$-cell complex, and, for $J_1 = \faktor{H}{N}$, we must show that the map
    \[p \colon \hocolim\limits_{U^N = W} \catOE(V^{\oplus{H}},U) \rightarrow \catOJ(V^{\oplus J_1},W)\]
    induced by the restriction to the $N$-fixed space $V^{\oplus J_1} \subset V^{\oplus H}$
is a $J_1\ltimes\Pi_{J_1} \Or V$-homotopy equivalence (cf.~Definition~\ref{dfnphiphi}).

From the definition of $\catOE$ (\ref{dfnOE}), recall that
$\catOE(\dsi{H}{V},U) = \catOr(\dsi{H}{V},U)^N$. Note that any
$N$-equivariant isometry has to preserve fixed spaces and
isotypical factors.
Hence, orthogonal decompositions $W \oplus U' \cong U$ and $\dsi{J_1}{V} \oplus
V' \cong \dsi{H}{V}$ where \(N\) acts
trivially on \(W\) and \(U'\) contains
no summands with trivial \(N\)-action, induce an
isomorphism
\[\catOE(\dsi{H}{V},U) \cong \catOJ(\dsi{J_1}{V},W) \times \catOr(V',U')^N.\]
Here \(\catOr(V',U')\) is a sphere bundle over $\catL(V',U')$ whose
dimension depends linearly on the dimension of \(U'\) and the map $p$
is induced from the projection to the first factor. Thus it suffices to prove that $\catL(V',\colim U')^N \rightarrow *$ is a $J_1\ltimes\Pi_{J_1}\Or{V}$-homotopy equivalence. Since $V'$ is the orthogonal complement of $\dsi{J_1}{V}$, the $\Pi_{J_1} \Or{V}$ action is trivial, so \cite[II.1.5]{LMS} gives the desired result. 
    
Lastly, in \cite[V.4.16]{MM}, the reference to \cite[9.3]{LMS} must be extended beyond $G$-CW-complexes to genuine $G$-cell complexes.  In particular there are bijections \[[A, B \smash \tilde{E}\nml]_G \cong [A^N, B \smash \tilde{E}\nml]_G \cong [A^N, B]_G\]
between sets of $G$-homotopy classes for $A$ any representation sphere and $B$ a level of an induced regular spectrum, which is a genuine $G$-cell complex by Lemma~\ref{inducesmashpresscof} and Corollary~\ref{illsubgroup}.
\end{proof}

\begin{rem}\label{correcttype}
  There are alternative definitions for the geometric fixed points of a $G$-spectrum. Classically, one would take the leftmost $J$-spectrum $[R(Y \smash \tilde{E}\nml)]^N$ in the zigzag~\eqref{htpytypecor} as the definition. 
The reason is that we want to use the isotropy separation in the form of the homotopy-cofiber sequence of (non equivariant) orthogonal spectra:
\labeleq{fundamentalcofiber}{\xymatrix{{[R(Y \smash E\nml_+)]^N}\ar[r]&{[R(Y)]^N}\ar[r]&{[R(Y \smash \tilde{E}\nml)]^N}},}
which arises from the defining homotopy-cofiber sequence
$$\xymatrix{E\nml_+\ar[r]& S^0\ar[r]^-\lambda&\tilde{E}\nml}.$$
We saw above that the homotopy groups of $[R(Y \smash \tilde{E}\nml)]^N$ are closely related to the homotopy groups of the geometric fixed points of $Y$, partially justifying us calling $\gamma\colon Y^N\to\Phi^NY$ the isotropy separation map. 
\end{rem}
\begin{dfn}\label{def:Mortenamliso}
  Let $\aml$ be a closed family of normal subgroups of $G$. A morphism $f$ of $G$-spectra is a {\em $\pi_*^\aml$-isomorphism}\index{piAisomorphism@\(\pi_*^\aml\)-isomorphism} if $\pi_*^Hf$ is an isomorphism for all $H\in\aml$. 
\end{dfn}
Note that a morphism $f$ of $G$-$\Omega$-spectra is a $\pi_*^\aml$-isomorphism if and only if $\Fix^H f$ is a non equivariant $\pi_*$-isomorphism for all $H \in \aml$.

\begin{prop}\label{fundcofiber}
Let $\aml$ be a family of subgroups of $G$ and let $X$ and $Y$ be $\Indreg \cup \Fr{}{}I_G$-cellular. Then for a morphism $f \colon X \rightarrow Y$, the following are equivalent:
\begin{enumerate}
\item 
  The map $f$ is a $\pi_*^\aml$-isomorphism.
\item For all $H \in \aml$ the map $\Phi^H f$ is a (non equivariant) $\pi_*$-isomorphism.
\end{enumerate}
\end{prop}
\begin{proof}
  We use induction on the size of the family $\aml$, which is possible
since $G$ is compact (cf.~\cite[1.25.15]{tD}). For the trivial family,
the result is true.

Let $r\colon\id\we R$ be a $q$-fibrant replacement with respect to $G$ and hence with respect to any subgroup of $G$.

We compare the maps induced on the homotopy cofiber sequences for $N \in \aml$:
\labeleq{cofibercomp}{\xymatrix{{[R(X \smash E\nml_+)]^N}\ar[d]\ar[r]&{[R(X)]^N}\ar[d]\ar[r]&{[R(X \smash \tilde{E}\nml)]^N}\ar[d]\\
{[R(Y \smash E\nml_+)]^N}\ar[r]&{[R(Y)]^N}\ar[r]&{[R(Y \smash
  \tilde{E}\nml)]^N}}}
where, as before, $\nml$ is the family of subgroups not containing $N$.

Note that since $\Indreg$ consists of $\Sp$-cofibrations, $X$ and $Y$ are levelwise genuine $G$- hence $N$-complexes. 
Property $(i)$ is equivalent to the second vertical map in \eqref{cofibercomp} being a $\pi_*$-isomorphism for all $N\in\aml$, and Proposition~\ref{fundcofindreg} establishes that $(ii)$ is equivalent to the third vertical map in \eqref{cofibercomp} being a $\pi_*$-isomorphism.  Hence, if we can show that both $(i)$ and $(ii)$ imply that the left vertical map is a $\pi_*$-isomorphism we will be done.

Fix an $N\in\aml$.
For $(i)$ the left map is a $\pi_*$-isomorphism by Lemma~\ref{stablebetweenomegaislevel}. 
For $(ii)$ we use the induction hypothesis.  Since $N\in\aml$, but $N\notin\nml$ the family $\aml\cap\nml$ is strictly smaller than $\aml$ which implies that $f$ is an $\aml \cap \nml$-equivalence.  Furthermore,  $\aml\cap \nml$ contains all proper subgroups of $N$, but not $N$ itself and so, as an $N$-space  $E\nml$ is a universal $\nml_N$-space where $\nml_N$ is the family of proper subgroups of $N$.
 Thus $R(X \smash E\nml_+) \rightarrow R(Y \smash E\nml_+)$ is a levelwise $\nml_N$-equivalence between genuine $\nml_N$-complexes, thus an $N$-homotopy equivalence and the claim follows.
\end{proof}

\section{Cellular Filtrations}\label{sec:cellfilt}
We need some additional control of how smash powers of orthogonal spectra can be assembled.
In Section~\ref{subsectcellularfiltrations} we construct the cellular structures that form the technical heart of our constructions. We generalize Kro's approach from \cite[2.2]{Kro}, correcting some minor mistakes along the way. In particular we drop the assumption that all $\lambda$-sequences are $\N$-sequences in order to be able to attach cells one at a time, and work in general categories. This allows us to apply the theory in a lot of different contexts, cf.~Corollary~\ref{eqfreeup2}, but also for a potential extension of our results to multiplicative norm constructions (cf.~Remark~\ref{multiplicativenormscomment}).

For reference, in this subsection, we list some easily checked facts
about ``cellular 
filtrations'' which hold for any closed symmetric category $\catC$
with all small colimits (most of the facts do not need all this
structure). 
First recall the following definition (\eg \cite[2.1.9]{H}):
\begin{dfn}\label{cellularMaps}
 Let 
$I$ be 
a class of morphisms of $\catC$. Then a morphism $f\colon A
\rightarrow X$ in $\catC$ is \emph{$I$-cellular}, if it is a
transfinite composition of pushouts of coproducts of elements of
$I$. In this case we also say that \(f \colon A \to X\) is an
\(I\)-cell complex.
\end{dfn}
Given morphisms \(f\) and \(g\) in \(\catC\), we write \(f \square g\) for
their pushout product of Definition~\ref{pushoutproduct}.
If \(I\) and \(J\) are sets of morphisms in \(\catC\), we write \(I \square J\)
for the set of morphisms of the form \(f \square g\) for \(f \in I\)
and \(g \in J\).
\begin{thm}\label{cellulartheorem}
Let $(\catC,\smash)$ be closed symmetric monoidal with all small
colimits. Let $I$ and $J$ be sets of morphisms in $\catC$ and let
$f\colon A\rightarrow X$ and $g\colon B\rightarrow Y$ be $I$- and
$J$-cellular, respectively. Then their pushout product $f \square g$
is relative $(I\square J)$-cellular.\\ 
In particular, if $\lambda$ and $\mu$ are partially ordered indexing
sets for cells of $f$ and $g$, respectively, then $\lambda\times \mu$
is a partially ordered indexing set for cells of $f \square g$. 
\end{thm}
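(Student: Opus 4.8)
The plan is to reduce the statement to the standard fact that the pushout--product of generating cofibrations behaves well under the small object argument, but organized so that the bookkeeping of the indexing posets is transparent. First I would recall the definition of a relative $I$-cellular map: $f\colon A \to X$ is built as a (possibly transfinite) composition $A = X_0 \to X_1 \to \dots \to \colim X_\alpha = X$, where each $X_\alpha \to X_{\alpha+1}$ is a pushout of a coproduct of maps in $I$, so that the cells are indexed by a poset $\lambda$ (the elements below a given cell being the cells it is attached along). Likewise $g\colon B \to Y$ is indexed by a poset $\mu$. The goal is to exhibit $f \square g$ as relative $(I\square J)$-cellular with cells indexed by $\lambda \times \mu$ with the product order.

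The key computational input is the exponential/distributivity law for the pushout--product: if $f$ is a pushout of $i \in I$ and $g'$ is a pushout of $j \in J$, then $f \square g'$ is a pushout of $i \square j \in I \square J$; more generally $\square$ turns colimits in each variable into colimits, because $\smash$ is a left adjoint in each variable (closed symmetric monoidal) and pushout is a colimit. So the first genuine step is: fix the $\mu$-cellular structure on $g$ and, by induction over $\mu$ (using that a cellular map is a colimit of its stages and that $- \square g$ preserves the relevant colimits), reduce to the case where $g$ is a single pushout of a map in $J$, i.e. $\mu$ is trivial. Then, symmetrically, do the induction over $\lambda$ with $g$ fixed of that simple form. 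At the bottom of both inductions one has a pushout of $i\in I$ against a pushout of $j \in J$, which by the distributivity law is a pushout of $i \square j$; tracking the attaching data through the two inductions assembles the cells into the poset $\lambda \times \mu$, with a cell $(\ell,m)$ attached along all cells $(\ell',m')$ with $\ell' \le \ell$, $m' \le m$ (and one of the inequalities strict, or the appropriate "domain" face), exactly as in the product order.

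Concretely I would phrase the induction as: write $X = \colim_{\ell} X_\ell$ and $Y = \colim_m Y_m$; then
\[
A \smash Y \cup_{A \smash B} X \smash B \;=\; \colim_{(\ell,m)} \bigl( A \smash Y_m \cup_{X_\ell \smash B}\,\text{etc.}\bigr),
\]
and each one-step inclusion in this $(\lambda\times\mu)$-indexed colimit is, by the distributivity of $\square$ over pushouts and coproducts, a pushout of a coproduct of maps $i \square j$. One should be slightly careful that transfinite composites and the "latching-type" colimits commute appropriately; this is the usual interchange of colimits and uses nothing beyond cocompleteness of $\catC$ and that $\smash$ preserves colimits variablewise.

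The main obstacle is purely organizational rather than mathematical: making the double transfinite induction precise and checking that the resulting indexing set really is the product poset $\lambda \times \mu$ — in particular that the attaching map of the $(\ell,m)$-cell only involves cells $(\ell',m') < (\ell,m)$ — requires care with the definition of "relative cellular" (coproducts of generators at each stage vs.\ one generator at a time) and with the behaviour of $\square$ under coproducts. I would isolate this as a lemma: $- \square -$ sends a pair consisting of a pushout of a coproduct $\coprod i$ and a pushout of a coproduct $\coprod j$ to a pushout of $\coprod_{i,j} (i \square j)$, proved by direct manipulation of pushout squares using that $\smash$ is a left adjoint in each variable. Granting that lemma and the interchange of colimits, the theorem and the statement about indexing posets follow formally. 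I would not grind through the diagram chases here, only indicate that they are the routine verifications alluded to in the phrase "easily checked facts".
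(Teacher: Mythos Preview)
Your plan is correct and rests on the same key lemma the paper isolates (your ``$-\square-$ sends a pushout of $\coprod i$ against a pushout of $\coprod j$ to a pushout of $\coprod_{i,j}(i\square j)$'' is exactly the paper's Lemma~\ref{levelwisesquare}). The organization differs, though. Your sequential double induction---first over $\mu$, then over $\lambda$---most naturally produces the lexicographic order on the cells, and you rightly flag that one must then check the attaching data respects the finer product order. The paper sidesteps this by building the product-indexed filtration directly: with $E_{\alpha,\beta}=A\smash Y_\beta\cup_{A\smash B}X_\alpha\smash B$ (so $E_{\lambda,\mu}$ is the source of $f\square g$), it sets $F_{\alpha,\beta}$ to be the pushout of $E_{\lambda,\mu}\leftarrow E_{\alpha,\beta}\to X_\alpha\smash Y_\beta$ and shows that $\colim_{(\gamma,\delta)<(\alpha,\beta)}F_{\gamma,\delta}\to F_{\alpha,\beta}$ is a cobase change of $f_\alpha\square g_\beta$, hence of $i_\alpha\square j_\beta$ by the key lemma. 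This buys the product order by construction. Your ``concretely'' display is slightly off: what you wrote down is (modulo the typo in the subscript) the colimit presentation of the \emph{source} of $f\square g$, not the filtration of its \emph{target}; the filtration objects $F_{\alpha,\beta}$ are one more pushout away.
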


It seems hard to 
actually find a proof of the above theorem explicitly spelled out in
the literature. 
Since we are going to have to work with such filtrations in more
detail later, we give them below.

\subsubsection{Pushouts and Pushout Products}
\begin{lemma}\label{composepushout}
Consider the following commutative diagram in $\catC$:
$${\xymatrix{{A}\ar[r]\ar[d]&{B}\ar[r]\ar[d]&{C}\ar[d]\\{D}\ar[r]&{P}\ar[r]&{\,Q.}}}
$$
\begin{indentpar}{1cm}
\begin{enumerate}
\item If both the left and the right subsquare of the diagram are pushout diagrams, then so is the outer rectangle.
\item If both the left subsquare and the outer rectangle are pushout diagrams, then so is the right subsquare.
\end{enumerate}
\end{indentpar}
\end{lemma}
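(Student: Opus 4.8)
This is the standard pasting/composition lemma for pushout squares in a cocomplete category; the argument is entirely formal and works by verifying universal properties. The plan is to treat the two parts separately, but both reduce to a diagram chase with cocones.

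\emph{Part (i).} Assume both subsquares are pushouts. To show the outer rectangle is a pushout, I would take an arbitrary cocone on the span $C \leftarrow A \to D$ (i.e.\ a test object $T$ with maps $C \to T$ and $D \to T$ agreeing on $A$) and produce a unique factorization through $Q$. First I would restrict the given map $C \to T$ along $B \to C$ to get a map $B \to T$; together with $D \to T$ this forms a cocone on $B \leftarrow A \to D$ (the two composites from $A$ agree because the original cocone did and the left square commutes), so by the universal property of the left pushout there is a unique $P \to T$. Next I would check that $P \to T$ and $C \to T$ agree on $B$: this holds because $B \to P \to T$ equals $B \to T$ by construction of $P \to T$, and $B \to C \to T$ also equals $B \to T$. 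Hence we get a cocone on $C \leftarrow B \to P$, and the universal property of the right pushout yields the desired map $Q \to T$. Uniqueness follows by running the same two universal properties in sequence: any map $Q \to T$ making the outer rectangle commute restricts to a map $P \to T$ with the required properties, which is forced, and then the map $Q \to T$ itself is forced.

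\emph{Part (ii).} Assume the left square and the outer rectangle are pushouts; show the right square is a pushout. Given a cocone on $C \leftarrow B \to P$ with test object $T$, I would first precompose with $A \to B$ and $D \to P$ to obtain a cocone on $C \leftarrow A \to D$ (using commutativity of the left square to see the two composites from $A$ agree), hence by the outer rectangle a unique map $Q \to T$. Then I must verify this map restricts correctly along $D \to P \to Q$ to the given $P \to T$: this is where one uses that $D \to P$ is (part of) a pushout, so the two maps $P \to T$ (the given one, and the composite $P \to Q \to T$) agree after precomposing with both $A \to B \to \dots$ hmm — more precisely, both maps $P \to T$ agree on $B$ (one by hypothesis on the cocone, one by construction) and on $D$ (by the outer-rectangle factorization), so by the universal property of the left pushout (which presents $P$) they coincide. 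Uniqueness is again immediate from the uniqueness in the outer rectangle's universal property.

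\textbf{Main obstacle.} There is no real obstacle; the only thing requiring care is bookkeeping — making sure at each stage that the maps one feeds into a universal property genuinely form a commuting cocone, which always comes down to invoking commutativity of one of the two subsquares plus the compatibility of the original test cocone. A clean way to organize this, which I would adopt to keep the write-up short, is to phrase everything in terms of the functor $\catC(-,T)\colon \catC^{\op} \to \catset$ and the characterization of a pushout square as one whose image under every such $\catC(-,T)$ is a pullback square of sets; then (i) and (ii) become the (elementary, well-known) pasting lemmas for pullback squares of sets, applied objectwise in $T$. Either presentation is routine and short.
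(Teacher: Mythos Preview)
Your proposal is correct; this is exactly the standard universal-property chase (or equivalently the $\catC(-,T)$-as-pullback-of-sets reformulation) that establishes the pasting lemma for pushouts. The paper itself does not prove this lemma at all --- it is listed among several ``easily checked facts'' in the cellular-filtrations subsection and stated without proof --- so there is no alternative approach to compare against.
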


\begin{lemma}\label{pushoutcube}
Let $\catC$ have all pushouts. Consider a commutative cube in $\catC$, where either the top and bottom faces or the left and right faces are pushouts:
\labeleq{pushoutcubecube}{\xymatrix@!0{
&{A_0}\ar[rr]\ar'[d][dd]\ar[dl]&&{X_0}\ar[dd]\ar[dl]\\
{Y_0}\ar[rr]\ar[dd]&{}\ar@{}[r]&{P_0}\ar[dd]\\
&{A_1}\ar@{}[r]\ar'[r][rr]\ar[dl]&&{X_1}\ar[dl]\\
{Y_1}\ar[rr]&&{P_1}}.}
Then the induced square
$$\xymatrix{X_0{\coprod_{A_0}}A_1\ar[r]\ar[d]&X_1\ar[d]\\P_0{\coprod_{Y_0}}Y_1\ar[r]&P_1}
$$
is again pushout.
 \end{lemma}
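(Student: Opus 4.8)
\textbf{Proof plan for Lemma \ref{pushoutcube}.}

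The plan is to reduce the statement to Lemma \ref{composepushout} by factoring the cube into two pushout squares and applying the pasting law. First I would treat the case where the top and bottom faces \(A_0 \to X_0 \to P_0\) (over \(Y_0\)) and \(A_1 \to X_1 \to P_1\) (over \(Y_1\)) are pushouts; the case where the left and right faces are pushouts is symmetric (swap the roles of the \(X\)-column and the \(Y\)-column). Form the object \(X_0 \coprod_{A_0} A_1\) and observe that the cube provides a canonical map \(X_0 \coprod_{A_0} A_1 \to X_1\), since \(X_1\) receives compatible maps from \(X_0\) and from \(A_1\). Likewise we get \(P_0 \coprod_{Y_0} Y_1 \to P_1\) from the maps \(P_0 \to P_1\) and \(Y_1 \to P_1\). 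The claim is that the resulting square is a pushout.

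The key step is to build the bottom-left object of the target square as an iterated pushout and identify intermediate objects using the hypotheses. Concretely, consider the commutative diagram
\[
  \xymatrix{
    A_0 \ar[r] \ar[d] & X_0 \ar[r] \ar[d] & X_0 \coprod_{A_0} A_1 \ar[d] \\
    Y_0 \ar[r] & P_0 \ar[r] & P_0 \coprod_{Y_0} Y_1.
  }
\]
The left square is the bottom face of the cube (a pushout by hypothesis). The outer rectangle is a pushout: its pushout is \(Y_0 \to X_0 \coprod_{A_0} A_1\) pushed out, and chasing through the colimit-of-colimits identities this equals \(P_0 \coprod_{Y_0} Y_1\) precisely because \(A_0 \to Y_0 \to Y_1\) and the bottom face give \(P_0 = Y_0 \coprod_{A_0} X_0\). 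By part (ii) of Lemma \ref{composepushout}, the right square is a pushout. A parallel argument using the top face of the cube (the pushout \(P_1 = Y_1 \coprod_{A_1} X_1\)) applied to the diagram with rows \(A_1 \to X_1\) and \(X_0 \coprod_{A_0} A_1 \to \bullet\), or more directly a second application of Lemma \ref{composepushout}, shows that the composite
\[
  \xymatrix{
    X_0 \coprod_{A_0} A_1 \ar[r] \ar[d] & X_1 \ar[d] \\
    P_0 \coprod_{Y_0} Y_1 \ar[r] & P_1
  }
\]
is a pushout: we paste the pushout square just obtained with the top face of the cube, rewritten so that its pushout corner is \(P_1\).

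I expect the main obstacle to be purely bookkeeping: carefully choosing the order in which to assemble the iterated pushouts so that at each stage Lemma \ref{composepushout} applies cleanly, and making sure the canonical comparison maps in the target square really are the ones induced by the universal properties (and that the two natural constructions of, say, \(P_1\) as an iterated pushout agree, which is just associativity and commutativity of colimits). No genuinely hard idea is needed beyond the pasting law for pushouts; the only care required is to keep track of which face of the cube is being used as a hypothesis at each step, and to handle the ``left and right faces are pushouts'' case by the evident symmetry interchanging the \(X_i\) and \(Y_i\).
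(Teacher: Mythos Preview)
Your overall strategy---reduce to the pasting law (Lemma~\ref{composepushout})---is correct and is exactly what one expects; the paper in fact states this lemma without proof among the ``easily checked facts''. However, the specific rectangle you chose does not do the job. In your diagram
\[
  \xymatrix{
    A_0 \ar[r] \ar[d] & X_0 \ar[r] \ar[d] & X_0 \coprod_{A_0} A_1 \ar[d] \\
    Y_0 \ar[r] & P_0 \ar[r] & P_0 \coprod_{Y_0} Y_1
  }
\]
the left square is the \emph{top} face (not the bottom), and more importantly the outer rectangle is \emph{not} a pushout in general: computing $Y_0\coprod_{A_0}(X_0\coprod_{A_0}A_1)$ by pasting gives $P_0\coprod_{A_0}A_1$, and this agrees with $P_0\coprod_{Y_0}Y_1$ only if the left face $Y_1\cong Y_0\coprod_{A_0}A_1$ is a pushout, which is not among your hypotheses. (Concretely, in sets take $A_0=A_1=\emptyset$, $X_0=\{x\}$, $Y_0=\{a\}$, $Y_1=\{a,b\}$; then $P_0\coprod_{A_0}A_1=\{x,a\}$ while $P_0\coprod_{Y_0}Y_1=\{x,a,b\}$.)

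The fix is easy: start the rectangle at $A_1$ instead of $A_0$. Consider
\[
  \xymatrix{
    A_1 \ar[r] \ar[d] & X_0\coprod_{A_0}A_1 \ar[r] \ar[d] & X_1 \ar[d] \\
    Y_1 \ar[r] & P_0\coprod_{Y_0}Y_1 \ar[r] & P_1.
  }
\]
The outer rectangle is the bottom face, a pushout by hypothesis. The left square is a pushout because, by two applications of Lemma~\ref{composepushout}(i) (first using the top face to identify $P_0\coprod_{Y_0}Y_1\cong X_0\coprod_{A_0}Y_1$, then pasting $A_0\to A_1$ along $A_0\to X_0$), one computes $Y_1\coprod_{A_1}(X_0\coprod_{A_0}A_1)\cong Y_1\coprod_{A_0}X_0\cong P_0\coprod_{Y_0}Y_1$. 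Now Lemma~\ref{composepushout}(ii) gives the right square as a pushout, which is the claim.
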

\begin{lemma}\label{levelwisesquare}
Consider two pushout squares
\[\xymatrix{{A_b}\ar^-{g_b}[r]\ar[d]&{X_b}\ar[d]&{}&{}&{B_b}\ar^-{h_b}[r]\ar[d]&{Y_b}\ar[d]\\{A_f}\ar_-{g_f}[r]&{X_f}\pushout&{}&{}&{B_f}\ar_-{h_f}[r]&{\,Y_f.}\pushout}\]
Their row-wise pushout product is also a pushout square
$$\xymatrix{
{(A_b\smsh Y_b)\coprod_{A_b\smsh B_b}(X_b\smsh B_b)}\ar[d]\ar^-{g_b\square h_b}[r]&{X_b\smash Y_b}\ar[d]\\
{(A_f\smsh Y_f)\coprod_{A_f\smsh B_f}(X_f\smsh B_f)}\ar_-{g_f\square h_f}[r]&{\,X_f \smash Y_f.}\pushout}
$$
\end{lemma}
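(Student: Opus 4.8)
The statement to prove is Lemma \ref{levelwisesquare}: given two pushout squares indexed by ``basepoint'' $b$ and ``full'' $f$ (thought of as the two ends of a cellular filtration step), the row-wise pushout product is again a pushout square. The plan is to reduce the claim to the known fact that the smash product, being a left adjoint in each variable (the category $\catC$ is closed symmetric monoidal), preserves colimits and hence preserves pushouts. So the first step is to record precisely what the ``row-wise pushout product'' of the two squares is: for each of the two rows we form the pushout-product object $(A\smsh Y)\coprod_{A\smsh B}(X\smsh B)$ together with its map to $X\smsh Y$, and the vertical maps of the resulting square are induced by the vertical maps $A_b\to A_f$, $X_b\to X_f$, $B_b\to B_f$, $Y_b\to Y_f$ by functoriality of the pushout-product construction.

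\textbf{Key steps.} First I would observe that smashing with a fixed object preserves pushouts, so each of the four ``mixed'' squares obtained by smashing one of the two given pushout squares with one of the four objects $A_\bullet, X_\bullet, B_\bullet, Y_\bullet$ (in the appropriate variable) is again a pushout. Concretely, smashing the first square on the right with the map $B_b\to B_f$ (using Lemma \ref{pushoutcube}, the cube version of pushout-composition) yields that the square with corners $A_b\smsh B_b$, $X_b\smsh B_b$, $A_f\smsh B_f$, $X_f\smsh B_f$ and the analogous targets is a pushout cube; similarly for the other combinations. Next I would assemble these: the object $(A_b\smsh Y_b)\coprod_{A_b\smsh B_b}(X_b\smsh B_b)$ is a colimit of a punctured-square diagram, and colimits commute with colimits, so the square in the statement is the outer rectangle obtained by pasting together pushout squares built from the mixed squares above. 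Applying Lemma \ref{composepushout}(i) (pasting of pushouts) and Lemma \ref{pushoutcube} repeatedly to this arrangement gives that the outer square is a pushout. The cleanest way to organize the bookkeeping is to note that the pushout-product of a map $g$ with a map $h$ can be computed as an iterated pushout, and that forming this iterated pushout is a functor in $g$ and $h$ that preserves pushouts in each variable because $\smsh$ does; the statement is then exactly the assertion that this functor applied to the vertical pushout square (in the $g$-variable) and the vertical pushout square (in the $h$-variable) produces a pushout square, which follows by two applications of ``pushouts of pushouts are pushouts''.

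\textbf{Main obstacle.} The mathematical content is light — everything rests on $\smsh$ being a left adjoint in each variable and on the pasting lemmas \ref{composepushout} and \ref{pushoutcube} — so the real difficulty is purely organizational: keeping track of which pushouts are being pasted along which maps, and making sure that the three-dimensional diagram (a cube of cubes, in effect) is drawn so that the relevant faces are visibly pushouts. I expect the argument to be most transparent if phrased via the universal property directly: check that the square in the statement has the universal property of a pushout by unwinding the colimit description of its corner object $(A_b\smsh Y_b)\coprod_{A_b\smsh B_b}(X_b\smsh B_b)$ as an iterated colimit and using that $\smsh$ commutes with these colimits, rather than by explicitly manipulating the cube. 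Either way the proof is short; I would present it as a one-paragraph reduction to Lemma \ref{pushoutcube} applied twice, once in each smash variable.
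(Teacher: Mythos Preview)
Your approach is correct. The paper does not actually give a proof of this lemma: it is listed among the ``easily checked facts'' at the start of Section~\ref{sectcellfiltr}, alongside Lemmas~\ref{composepushout} and~\ref{pushoutcube}, with no argument supplied. Your reduction to the fact that $\smash$ preserves pushouts in each variable (as a left adjoint) together with the pasting lemmas is exactly the intended verification, and your suggestion to phrase it via the universal property rather than by drawing the full cube is the cleaner route.
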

\subsubsection{Cellular Maps}
We will use Lemma~\ref{levelwisesquare} to recognize a \relative cellular structure on the $\square$-product of \relative cellular maps. Recall the following definition (\eg \cite[2.1.9]{H}):
\begin{dfn}
 Let $I$ be 
a class of morphisms of $\catC$. Then a morphism $f\colon A \rightarrow X$ in $\catC$ is \emph{\relative $I$-cellular},\index{relative I-cellular@relative $I$-cellular}
 if it is a transfinite composition of pushouts of coproducts of elements of $I$.
\end{dfn}
\begin{rem}
 Let $f\colon A \rightarrow X$ be a \relative $I$-cellular map, and let $A = X_0 \rightarrow X_1 \rightarrow \ldots$ be a $\lambda$-sequence that exhibits this structure, \ie $\lambda$ an ordinal and for any $\alpha \leq \lambda$ we have pushout diagrams
\[\unskew\xymatrix{{S_\alpha}\ar^-{\sigma_\alpha}[r]\ar_{i_\alpha}[d]&{\colim\limits_{\beta<\alpha} X_\alpha}\ar^-{f_\alpha}[d]\\{D_\alpha}\ar[r]&{X_\alpha,}\pushout}\]
where $i_\alpha$ is a coproduct $\coprod\limits_{c \in C_\alpha}i_c$, with all the maps $i_c$ in $I$, and $C_\alpha$ empty whenever $\alpha$ is a limit ordinal. Then the union of the $C_\alpha$ is partially ordered, with $i_c \in C_\alpha$ smaller than $i_d \in C_\beta$ if and only if $\alpha < \beta$. In this situation, we say that $\bigcup_{\alpha \leq \lambda} C_\alpha$ indexes the attached cells of $f$ in the $\lambda$-sequence.
\end{rem}
The following lemma helps with keeping the ``length'' of the transfinite composition in check when the domains of the morphisms in $I$ are sufficiently small:
\begin{lemma}\label{kappaseq}
 Assume that the domains of the maps in $I$ are $\kappa$-small. For
 every $I$-cell complex $f\colon A \rightarrow X$ there is a
 $\kappa$-sequence of maps exhibiting $f$ as \relative $I$-cellular. 
\end{lemma}
\begin{proof}
Assume that $f$ is the transfinite composition of a $\lambda$-sequence $\{X_\alpha\}_{\alpha \leq \lambda}$ that exhibits the a cellular structure, \ie for $\alpha < \lambda$ there are pushout diagrams
\[\unskew\xymatrix{{S_\alpha}\ar^-{\sigma_\alpha}[r]\ar_{i_\alpha}[d]&{\colim\limits_{\beta<\alpha} X_\alpha}\ar^-{f_\alpha}[d]\\{D_\alpha}\ar[r]&{X_\alpha,}\pushout}\]
such that $i_\alpha$ is the identity of the initial object for $\alpha$ a limit ordinal, and a coproduct of maps in $I$ otherwise. For $\gamma \leq \kappa$, define sets $C^<_\gamma$ and $C_\gamma$ as well as commutative diagrams \labeleq{filterxy}{\unskew\xymatrix{{\colim\limits_{\delta < \gamma}X_\delta}\ar[r]\ar[d]&{\colim\limits_{\delta < \gamma}Y_\delta}\ar[d]\ar[dr]\\{X_\gamma}\ar@(dr,dl)[rr]\ar[r]&{Y_\gamma,}\ar[r]&{X}}}
{\color{white}by}\\
\noindent by transfinite induction: Let $C_0 \defas{} {0}$ and $Y_0 \defas{} X_0 = A$. Continuing, for $\mu$ a limit ordinal let $C_\mu$ be empty.

Otherwise define the set \[ C^<_{\gamma}\defas{} \{\alpha \leq \lambda, \sigma_\alpha\text{   factors through  } Y_{\gamma-1}\}.\]
Furthermore, let $C_{\gamma} \defas{} C^<_{\gamma} \setminus \bigcup_{\delta <\gamma} C^<_\delta$. Finally, define $Y_{\gamma}$ as the pushout
\[\unskew\xymatrix{{\coprod_{\alpha \in C_\gamma}S_\alpha}\ar[d]\ar[r]&{\colim\limits_{\delta < \gamma}Y_\delta}\ar[d]\\{\coprod_{\alpha \in C_\gamma} D_\alpha}\ar[r]&{Y_\gamma}\pushout}\]
Define a map $Y_\gamma \rightarrow X$ on the attached cells $S_\alpha \rightarrow D_\alpha$ by going through the $X_\alpha$. Note that $\sigma_\gamma\colon S_\gamma \rightarrow X$ factors through $\colim_{\delta < \gamma}X_\delta$, hence we get a map $X_\gamma \rightarrow Y_\gamma$ which fits into the diagram~\ref{filterxy}.  Finally, note that since $S_\alpha$ is $\kappa$-small, all attaching maps $\sigma_\alpha$ for $\alpha \leq \lambda$ factor through some $X_\gamma$, hence through $Y_\gamma$. the union $\bigcup_{\gamma \leq \kappa} C_\gamma$ contains all $\alpha \leq \lambda$. Therefore there are canonical maps in both directions between the colimits \[\colim\limits_{\alpha\leq \lambda}X_\alpha \cong \colim\limits_{\gamma \leq \kappa} Y_\gamma,\]
which are isomorphisms by cofinality.
\end{proof}
\begin{rem}\label{partialthuslinearindexing}
Note that attaching cells via coproducts, gives a partial order on the set of cells. Every such partially ordered set can be linearly ordered as in \cite[2.1.11]{H}, which corresponds to giving a $\lambda$-sequence in which the cells are attached one at a time. Lemma~\ref{kappaseq} gives us a much more convenient way to revert this process, than simply forgetting the extra information.
Returning to a closed symmetric monoidal category $(\catC, \smash)$,
observe that the coproduct distributes with the smash product, hence
also with the $\square$-product. We therefore allow ourselves to
switch freely between attaching cells one at a time or in bigger
groups via the coproduct. 
\end{rem}
\begin{proof}[Proof of Theorem~\ref{cellulartheorem}]
We assume without loss of generality (cf.~\ref{partialthuslinearindexing}) that $\lambda$ and $\mu$ are ordinals linearly indexing the cells of $f$ and $g$, respectively. That is for each $\alpha \leq \lambda$ we have a pushout diagram
\[\unskew\xymatrix{{S_\alpha}\ar[d]\ar^-{i_\alpha \in I}[r]&{D_\alpha}\ar[d]\\{\colim\limits_{\gamma < \alpha}X_{\gamma}}\ar_-{f_\alpha}[r]&{X_{\alpha},}\pushout}\]
such that the $\lambda$-sequence $A = X_0 \rightarrow X_\lambda = X$
is the map $f$, and and analogous for $g$. 
Let \(<\) be the partial order on $\lambda \times \mu$ with
$(\gamma,\delta) < (\alpha,\beta)$ if and only if $\gamma < \alpha$
and $\delta < \beta$. Let $E\colon \lambda\times\mu \rightarrow \catC$
be the sequence defined by the pushout diagrams
\[\xymatrix{{A\smash B}\ar[r]\ar[d]&{X_\alpha\smash B}\ar[d]\\{A\smash Y_\beta}\ar[r]&{E_{\alpha,\beta},}\pushout}\]
and note that $E_{\lambda,\mu}$ is the source of $f \square g$. We
claim that the desired filtration is then given by the
$\lambda\times\mu$-sequence $\{F_{\alpha,\beta}\}$ where
$F_{\alpha,\beta}$ is the pushout of the diagram:
\labeleq{defF}{\xymatrix{{E_{\alpha,\beta}}\ar[r]\ar[d]&{X_\alpha
      \smash
      Y_\beta}\ar[d]\\{E_{\lambda,\mu}}\ar[r]&{F_{\alpha,\beta},}\pushout}} 
where $E_{\lambda,\mu}$ is the constant sequence. 
To prove the claim, note that the transformation $E \rightarrow
X_{(-)}\smash Y_{(-)}$ of sequences factors through the sequence $\{P_{\alpha,\beta}\}$,
where $P_{\alpha,\beta}$ is the pushout 
\labeleq{defP}{\unskew\xymatrix{{\colim\limits_{\gamma<\alpha, \delta
        < \beta}X_\gamma \smash Y_\delta}\ar[r]\ar[d]&{\colim 
\limits_{\delta < \beta}X_\alpha \smash
Y_\delta}\ar[d]\\{\colim\limits_{\gamma<\alpha}X_\gamma \smash
Y_\beta}\ar[r]&{P_{\alpha,\beta}}\pushout\ar^-{f_\alpha\square
g_\beta}[r]&{X_\alpha \smash Y_\beta.}}} 
That is, $P_{\alpha,\beta}$ is the source of the map $f_\alpha \square
g_\beta$. We apply the cobase change as in Diagram~\ref{defF} to this
factorization to get the diagram 
\labeleq{cobchcool}{\xymatrix{{E}\ar[r]\ar[d]&{P}\ar^-{f_{(-)}\square
      g_{(-)}}[r]\ar[d]&{X_{(-)}\smash
      Y_{(-)}}\ar[d]\\{E_{\lambda,\mu}}\ar[r]&{P\amalg_E
      E_{\lambda,\mu}}\pushout\ar[r]&{F_{}.}\pushout}} 
Now comparing the colimits pointwise, cofinality lets us identify
$P_{\alpha, \beta}\amalg_{E_{\alpha,\beta}} E_{\lambda,\mu}$ as the
following pushout: 
\[{\unskew\xymatrix{{\colim\limits_{\gamma<\alpha, \delta < \beta}F_{\gamma,\delta}}\ar[r]\ar[d]&{\colim
\limits_{\delta <
  \beta}F_{\alpha,\delta}}\ar[d]\\{\colim\limits_{\gamma<\alpha}F_{\gamma,\beta}}\ar[r]&{P_{\alpha,\beta}\amalg_{E_{\alpha,\beta}}
E_{\lambda,\mu}}\pushout\ar[r]&{F_{\alpha,\beta}.}}}\] 
In particular we have
\[P\amalg_E E_{\lambda,\mu} \cong \colim\limits_{(\gamma,\delta)<(\alpha,\beta)}F_{\alpha,\beta},\] and the map \[\colim\limits_{(\gamma,\delta)<(\alpha,\beta)}F_{\gamma,\delta} \rightarrow F_{\alpha, \beta}\] is a cobase change of $f_\alpha \square g_\beta$. Therefore to show that $F$ is indeed a filtration by $I \square J$-cells, it suffices by Diagram~\ref{cobchcool} to show that $f_\alpha \square g_\beta$ is the attaching of a $I\square J$-cell.
This is a consequence of Lemma~\ref{levelwisesquare}, which implies that there are pushout diagrams
\[\xymatrix{{S_{(\alpha,\beta)}}\ar[d]\ar^-{i_\alpha \square j_\beta \in I\square J}[rrr]&&&{D_{(\alpha,\beta)}}\ar[d]\\{P_{(\alpha,\beta)}}\ar^-{f_\alpha\square g_\beta}[rrr]&&&{X_{\alpha}\smash Y_\beta.}\pushout}\]
Note that as in Remark~\ref{partialthuslinearindexing} we can extend the partial order on $\lambda \times \mu$ to a linear one, finishing the proof.
\end{proof}
\begin{rem}
  In many cases of interest, for example $\catC = \catT$, with $I$ and $J$ the sets of generating (acyclic) cofibrations, we will actually have that $I \square J \subset J(\cell)$, such that the above proposition also gives $f\square g$ the structure of a \relative $J$-cellular map.
\end{rem}
\begin{rem}\label{coliminclu}
 In categories where the maps in $I$, $J$ and $I\square J$ are
 inclusions, the
 intuition behind the filtration in the theorem simplifies
 significantly. In particular the cellular maps then give a
 $\lambda$-sequence of inclusions of subobjects \[A \hookrightarrow
 X_1 \hookrightarrow \ldots \hookrightarrow X,\] and similar for $B
 \hookrightarrow Y$. Note that $f\square g$ is the inclusion \[f
 \square g \colon X\smash B\cup_{A\smash B} A \smash Y = X\smash B
 \cup A\smash Y\hookrightarrow X\smash Y,\] and the filtration given
 by the theorem is through objects\[F_{\alpha,\beta} = X\smash B \cup
 X_\alpha \smash Y_\beta \cup A \smash Y.\] 
\end{rem}
\begin{cor}
 The monoidal product $X \smash Y$ of an $I$-cellular $X$ object with a $J$-cellular object $Y$ is $I \square J$-cellular.
\end{cor}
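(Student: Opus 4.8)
The plan is to obtain the corollary as the degenerate case of Theorem~\ref{cellulartheorem} in which the two relative cellular maps start from the initial object. Recall that an object $X$ of $\catC$ is \emph{$I$-cellular} precisely when the unique map $\emptyset \to X$ out of the initial object is relative $I$-cellular, and similarly $Y$ is \emph{$J$-cellular} when $\emptyset \to Y$ is relative $J$-cellular. (In the pointed setting $\catC = \catT$ relevant to the rest of the paper, the initial object is the one-point space $\ast$.) So I would simply apply Theorem~\ref{cellulartheorem} with $f = (\emptyset \to X)$ and $g = (\emptyset \to Y)$, and then identify the resulting pushout product.

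The one point that needs attention is that the source of $f \square g$ collapses to the initial object. Since $(\catC,\smash)$ is closed symmetric monoidal, the endofunctor $-\smash Z$ is a left adjoint for every object $Z$, hence preserves all small colimits; in particular it preserves the empty colimit, giving $\emptyset \smash Z \cong \emptyset$. Therefore
\[
X \smash \emptyset \,\cup_{\emptyset \smash \emptyset}\, \emptyset \smash Y \;\cong\; \emptyset \cup_{\emptyset} \emptyset \;\cong\; \emptyset,
\]
so $f \square g$ is canonically the map $\emptyset \to X \smash Y$. Theorem~\ref{cellulartheorem} then says this map is relative $(I\square J)$-cellular, which is exactly the assertion that $X \smash Y$ is $(I\square J)$-cellular; moreover, if $\lambda$ and $\mu$ are partially ordered indexing sets for the cells of $X$ and $Y$, the theorem supplies $\lambda \times \mu$ as an indexing set for the cells of $X \smash Y$.

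I expect there to be no real obstacle here beyond this bookkeeping: the entire content is packaged in Theorem~\ref{cellulartheorem}, and the only genuinely new observation is the identification $\emptyset \smash Z \cong \emptyset$, which is precisely where the closedness hypothesis on the monoidal structure enters. Everything else is a direct reading of the theorem in the case $A = B = \emptyset$.
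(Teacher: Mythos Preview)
Your proposal is correct and is exactly the intended derivation: the paper states this corollary without proof, but the very next corollary (\ref{relativecells}) is proved by applying the theorem to maps of the form $\star \to A \to X$ and $\star \to B \to Y$, confirming that the present corollary is meant to be the special case $A = B = \star$ of Theorem~\ref{cellulartheorem}. Your identification $\emptyset \smash Z \cong \emptyset$ via closedness is the only point needing comment, and you handle it correctly.
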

\begin{cor}\label{relativecells}
 In the situation of Theorem~\ref{cellulartheorem} let $K$ is the set
 of maps $I\smash B \cup A \smash J$. Then the map $f \smash g$ is
 \relative $(I\square J)\cup K$-cellular.  In particular, if $A$ and
 $B$ are themselves respectively $I$-cellular and  $J$-cellular,
 {then} $f \smash g$ is $I \square J$-cellular.   
\end{cor}
\begin{proof}
 Apply Theorem~\ref{cellulartheorem} on the maps $\star \rightarrow A
 \rightarrow X$ and $\star \rightarrow B \rightarrow Y$ which are
 respectively $I \cup \{\star\rightarrow A\}$-cellular and $J \cup
 \{\star \rightarrow B\}$-cellular. Note that the indexing of the
 filtrations is shifted, and the new filtration factors through
 $F_{1,1} = A \smash B$. All the later cells are then of type $(I
 \square J)\cup K$. 
\end{proof}

Note that since the $\square$-product is associative,
Theorem~\ref{cellulartheorem} immediately gives specific filtrations
for iterated $\square$-products of maps. The indexing set for the
cells of the iterated $\square$ is always given by the product of the
indexing sets with some (linear) order that is compatible with the
product partial order.

\section{Equivariant Cellular Filtrations of Smash
  Powers}\label{subsectcellularfiltrations}
\newcommand{\lie}{{Q}}

We finally give the equivariant cellular structure for smash powers of orthogonal $G$-spectra.
In this section $G$ is a fixed finite discrete group. 

Analyzing this structure we get that the geometric diagonal is an isomorphism for sufficiently cofibrant spectra.
Specializing and stripping the central result Theorem~\ref{cellular} of all bells and whistles, the important upshot is that if $L$ is $\Sp$-cellular, $G$ finite and $X$ a free $G$-set, then the $X$-fold smash product $L^{\smsh X}$ has a filtration by ``induced regular cells'', \ie as pushouts along maps of the form
$$G_+\smsh_H[\Gr{}{V}(S^{n-1}\times\Or V/P)_+]^{\Box H}\to G_+\smsh_H[\Gr{}{V}(S^{n-1}\times\Or V/P)_+]^{\smsh H}$$
for subgroups $H\subseteq G$ and $P\subseteq \Or V$.  The importance of such a filtration is driven home by Theorem~\ref{geomScofib} which shows that the geometric diagonal in that case becomes an isomorphism
$$L^{\smsh X_N}\cong \Phi^N(L^{\smsh X})$$
for all normal subgroups $N\subseteq G$.

Our intended applications are to smash powers of orthogonal ring spectra with a priori no group $Q$ acting on the incoming spectra. 
However, with our setup where orthogonal $Q$-spectra are just $Q$-object in orthogonal spectra, the difference between the two is taken care of by (enriched) functoriality in the incoming spectra, plus that the generating cofibrations -- of course -- must know about $Q$.  Since there are interesting applications of the statements we prove in this section (for instance in \cite{HHR}), we give the fuller generality as an illustration of the power of the methods.

Also, we are only intending to use the case when the incoming spectrum is $\Sp$-cofibrant, but in case one is interested in other applications we keep track of how smash powers modify the cell structure.
For this purpose, consider a Lie group $\lie$ and a $\lie$-mixing pair $(\hml,\gml)$ (with the base case being when $\lie$ is trivial and $(\gml,\hml)=(\AI_+,\All_+)$).

If $X$ is a finite $G$-set, then
the Lie group $\lie^{X} = \Map(X,\lie)$ has a $G$-action by
precomposition: \(gf(x) = f(g^{-1}x)\) for \(f \colon X \to \lie\),
\(g \in G\) and \(x \in X\).

 Let $L$ be a \(\gml\)-cellular orthogonal
  $\lie$-spectrum. 
For $G$ a finite group and $X$ a finite $G$-set, we
will specify a \(G \ltimes Q^X\)-mixing pair
$$(\hml(G,X),\gml(G,X))$$
and give a  filtration of the map  
\[(\star \rightarrow L)^{\square X} \cong (\star \rightarrow L^{\smash X})
\] 
by \(\gml(G,X)\)-cofibrations using Theorem~\ref{cellulartheorem}. 


Given \(V = \bigoplus_{x \in X} V_x\) and a subgroup \(P_x \subseteq Q
\times \Or{V_x}\) for each \(x \in 
X\), we consider \(P = \prod_{x \in X} P_x\) as a subgroup of \(Q^X
\times \Or V\). We let
\(\hml(G,X)\)\index{HGX@$\hml(G,X)$} be the smallest \(G \ltimes Q^X\)-typical family of
representations so that if every \(P_x\) is
in \(\hml\) and \(H\) is any subgroup of \(G\), then \(H \ltimes P
\subseteq G \ltimes (Q^X \times \Or V) = (G \ltimes Q^X) \times \Or
V\) is a member of \(\hml(G,X)\). (Here we use the trivial action
of \(G\) on \(V\). Later, when some of the factors \(V_x\) are
identical, we will also make use of the non-trivial actions of
subgroups of \(G\) on \(V\) given by permutation of identical
factors.)

Finally, we let \(\gml(G,X)\)\index{GGX@$\gml(G,X)$} be the smallest collection of families of subgroups
of \((G \ltimes Q^X)\times \Or V\) for \(V\) in \(\catOr\) so that
\((\hml(G,X),\gml(G,X))\) is a \(G \ltimes Q^X\)-mixing pair, and so that
if every \(P_x\) is
in \(\gml\) and \(H\) is any subgroup of \(G\), then \(H \ltimes P
\subseteq G \ltimes (Q^X \times \Or V) = (G \ltimes Q^X) \times \Or
V\) is a member of \(\gml(G,X)\). 

When $X$ is $G$-free it will follow from the construction that all the
attaching maps are  \(\gml(G,X)\)-cofibrations between induced
regular \(\gml(G,X)\)-cofibrant $G \ltimes \lie^X$-spectra. 
Our methods are inspired by \cite[3.10.1]{Kro}, where a similar filtration is given for the case that $X = G = C_q$ a finite cyclic group.

\begin{dfn}\label{dfn:smashcells}
  If $\lambda$ is an ordinal, and $X$ a $G$-set, we define a partial
order on the product
$\lambda^{X}$: for $\alpha =\{\alpha_{x}\}_{x\in X},
\beta=\{\beta_{x}\}_{x\in X}\in\lambda^X$ we say that $\beta\le \alpha $ if
for all $x\in X$ we have that 
$\beta_x \le \alpha_x$.
The group $G$ acts on $\lambda^X$ by letting
$(g\alpha)_x=\alpha_{g^{-1}x}$.  This induces a partial ordering on the set
$(\lambda^{X})_G$ of $G$-orbits by declaring for 
\(u,v \in (\lambda^{X})_G\) that $u\le v$ if there exist $\alpha\in u$
and $\beta\in v$ such that $\alpha\leq\beta$.

As in Remark~\ref{coliminclu}, since the maps in 
$\Gr{}{}I_{\gml}$ are levelwise inclusions,
we lose no generality by assuming that a given \relative
$\Gr{}{}I_{\gml}$-cell complex is an inclusion. 

Consider a 
$\lambda$-sequence  $K=L_0 \subseteq L_1 \subseteq \ldots \subseteq L$
of $\lie$-spectra, exhibiting $K\subseteq L$ as \relative
$\Gr{}{}I_{\gml}$-cell complex,
\ie if $a \in \lambda$ is a limit ordinal, then 
\(L_a = \bigcup_{b < a} L_b\), and if
$a \in \lambda$ is not a limit ordinal,
there is an object \(V_a\) of
\(\catOr\), a member \(P_a\) of  \(\gml^{V_a}\) 
and a
pushout diagram $\D_{a}$ of \(\lie\)-spectra 
\[
\xymatrix{
  {\Gr{}{V_a}
    (S^{n_a-1} \times 
    \faktor{(\lie \times \Or{V_a})}{P_a})_+}
  \ar[d]\ar^-{i_a}[r]&
  {\Gr{}{V_a}(D^{n_a}
    \times 
    \faktor{(\lie \times \Or{V_a})}{P_a})_+}
  \ar[d]\\
  {\bigcup\limits_{b <
      a}L_b}\ar[r]&{\,L_{a},}
  \pushout
}
\] 
with the inclusion $i_a$ an element in the set of generating cofibrations 
\(\Gr{}{}I_{\gml}\).
\end{dfn}
\begin{thm} \label{cellular}\label{relativeversion} 
Let $\lie$ be a compact Lie group and let 
$K\subseteq L$ be a \relative \(\gml\)-cellular inclusion of
orthogonal \(\lie\)-spectra with cells indexed by the ordinal $\lambda$. Let $G$ be a finite group and let
$X$ be a finite $G$-set.  Then the smash power
$$K^{\smash X}\subseteq L^{\smash X}$$ is a \relative
\(\Gr{}{}I_{\gml(G,X)}\)-cell complex with
$(\lambda^{X})_G$ indexing the 
\(\Gr{}{}I_{\gml(G,X)}\)-cells in the following sense:

If $\alpha \in \lambda^{X}$ with $G$-orbit \( u = [\alpha]\) and
stabilizer group $G_\alpha\subseteq G$ we let \(L^{\smash X}_{u}\) be the
${G \ltimes \lie^{X}}$-subspectrum of $L^{\smash \X}$ 
defined as the union
$$L^{\smash X}_{u} = \bigcup_{\beta \in u} L^{\smash
  X}_{\beta}$$
with $L^{\smash X}_\beta = \bigwedge_{x \in X}
L_{\beta_x}$
(that is, $L^{\smash X}_{u} = \bigcup_{g\in G}\bigwedge_{x\in X} L_{\alpha_{g^{-1}x}}$).
If \(\alpha_x\) is a limit
ordinal for some \(x \in X\), then 
\({L^{\smash \X}_{u}} = {\bigcup\limits_{v < u}L^{\smash \X}_{v}}\). Otherwise
there is a Euclidean
vector space \(V\), a closed
subgroup \(P\) of ${(G \ltimes \lie^{X}) \times \Or V}$
in \(\gml(G,X)^V\)and a pushout diagram 
of ${G \ltimes \lie^{X}}$-spectra of the form
\labeleq{tagtest}{
\xymatrix{ {\Gr{}{V}
    (S^{n-1} \times 
      \faktor{{((G \ltimes \lie^{X}) \times \Or V)}}{P})_+}
    \ar[r]^-{k_\alpha}_--\subseteq \ar[d]& 
    \Gr{}{V}(D^{n} \times
      \faktor{{((G \ltimes \lie^{X}) \times \Or V)}}{P})_+ \ar[d]\\
  {\bigcup\limits_{v < u}L^{\smash \X}_{v}}\ar[r]^\subseteq& {L^{\smash
      \X}_{u}}.\pushout}
}
Varying $u$, the inclusions induce an isomorphism
\[
  \colim_{u\in (\lambda^{X})_G} L^{\smash X}_u \cong \bigcup_{u\in (\lambda^{X})_G} L^{\smash X}_u = L^{\wedge X}.
\]
In particular, extending the partial order \((\lambda^X)_G\) to a
total order \(\lambda'\), the smash power \(K^{\wedge X} \subseteq
L^{\wedge X}\) is 
a \(\Gr{}{}I_{\gml(G,X)}\)-cell complex indexed by \(\lambda'\). 

 Furthermore:
\begin{enumerate}
\item
The $\square$-product $(K\subseteq L)^{\square X}$ is in this notation
given by $\bigcup_{u}L^{\smsh X}_u\subseteq L^{\smsh X}$, where $u$
varies over the orbits in $(\lambda^X)_G$ whose representatives take
the value $0$ at least once, and both the inclusions 
$K^{\smsh X}\subseteq\bigcup_{u}L^{\smsh X}_u\subseteq L^{\smsh X}$
are \relative \(\Gr{}{}I_{\gml(G,X)}\)-cell complexes.

\item If $X$ is a finite free $G$-set, then the top inclusion $k_\alpha$ of the
pushout diagram~\ref{tagtest} of $G \ltimes \lie^X$-spectra corresponds under the standard identification of iterated pushout products to one on the form
$G_+\smsh_{G_\alpha}(k^{\square G_\alpha})$ with $k\in \Gr{}{}I_{\gml}$.
In this form, the pushout diagram~\ref{tagtest} is referred to as $\D_u$.
\end{enumerate}
\end{thm}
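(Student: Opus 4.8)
The plan is to combine the non-equivariant cellular filtration theorem for pushout products (Theorem~\ref{cellulartheorem}) with the bookkeeping of \(G\)-orbits on the indexing poset \(\lambda^X\), and then to unwind what the attaching maps look like. First I would set up the indexing. Since \(K \subseteq L\) is a relative \(\Gr{}{}I_{\gml}\)-cellular inclusion of \(\lie\)-spectra with cells indexed by \(\lambda\), iterating Theorem~\ref{cellulartheorem} over the \(|X|\) smash factors shows that \(K^{\wedge X} \subseteq L^{\wedge X}\) is relative \(\Gr{}{}I_{\gml}^{\square X}\)-cellular with the product poset \(\lambda^X\) indexing the cells: for \(\beta = \{\beta_x\}_{x \in X} \in \lambda^X\) the subspectrum \(L^{\wedge X}_\beta = \bigwedge_{x \in X} L_{\beta_x}\) is attached to \(\bigcup_{\gamma < \beta} L^{\wedge X}_\gamma\) along the \(\square\)-product \(\square_{x} k_{\beta_x}\) of the individual attaching maps, which by the computation in Remark after Theorem~\ref{cellulartheorem} (and Proposition~\ref{semi-free-smash}) is itself of the form \(\Gr{}{V}((S^{n-1}\to D^n)\times \faktor{(\prod_x \lie \times \Or V)}{\prod_x P_{\beta_x}})_+\) with \(V = \bigoplus_x V_{\beta_x}\). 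Here I use that a \(\square\)-product of maps out of inclusions of boundary spheres is again such, because \(\prod_x D^{n_x}\) has boundary \(\prod_x D^{n_x}\setminus \prod_x \mathring D^{n_x}\), i.e. \((S^{n-1}\to D^n)\) with \(n = \sum_x n_x\).

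Second, I would pass from \(\lambda^X\)-indexed cells to \((\lambda^X)_G\)-indexed cells. The group \(G\) acts on \(\lambda^X\) by \((g\beta)_x = \beta_{g^{-1}x}\), and correspondingly permutes the smash factors of \(L^{\wedge X}\) via the symmetry isomorphism of \(\catOS\); this makes \(K^{\wedge X}\subseteq L^{\wedge X}\) a relative cellular inclusion of \((G\ltimes \lie^X)\)-spectra, and the \(G\)-action on the poset \(\lambda^X\) is compatible. For an orbit \(u = [\alpha]\) I set \(L^{\wedge X}_u = \bigcup_{\beta \in u} L^{\wedge X}_\beta\). If some \(\alpha_x\) is a limit ordinal then, because the non-equivariant filtration takes unions at limit stages in each coordinate and \(G\) permutes coordinates, \(L^{\wedge X}_u = \bigcup_{v < u} L^{\wedge X}_v\), giving the limit-stage clause. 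Otherwise I need to assemble the \(G_\alpha\)-many cells in the orbit \(u\) (one for each of the \(|G/G_\alpha|\) elements \(\beta \in u\)) into a single \((G\ltimes \lie^X)\)-equivariant cell. The orbit \(L^{\wedge X}_u / \bigcup_{v<u} L^{\wedge X}_v\) is the \(G\)-orbit of the non-equivariant cell attached at \(\alpha\); since that cell is attached along \(\Gr{}{V}((S^{n-1}\to D^n)\times \faktor{(\lie^X\times\Or V)}{P_\alpha})_+\) where \(P_\alpha = \prod_x P_{\alpha_x}\), and \(G_\alpha\) is the stabilizer of \(\alpha\), inducing up along \(G_\alpha \subseteq G\) and using Lemma~\ref{inducingsmashcomp} (and its spectrum version Lemma~\ref{inducingsmashspeccomp}) together with Proposition~\ref{semifreesmashpower} produces a pushout square with attaching map \(\Gr{}{V}((S^{n-1}\to D^n)\times \faktor{((G\ltimes\lie^X)\times\Or V)}{P})_+\), where \(P = G_\alpha \ltimes P_\alpha \subseteq (G\ltimes\lie^X)\times\Or V\). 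By construction of \(\gml(G,X)\) this \(P\) lies in \(\gml(G,X)^V\) (this is precisely the defining property of the families \(\hml(G,X)\) and \(\gml(G,X)\) spelled out just before the theorem), so the square is a genuine \(\Gr{}{}I_{\gml(G,X)}\)-cell attachment. Extending \((\lambda^X)_G\) to a total order as in Remark~\ref{partialthuslinearindexing} then exhibits \(K^{\wedge X}\subseteq L^{\wedge X}\) as a relative \(\Gr{}{}I_{\gml(G,X)}\)-cell complex.

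For part (i): the \(\square\)-product \((K \subseteq L)^{\square X}\) is by definition the inclusion into \(L^{\wedge X}\) of the sub-object on which at least one smash factor lies in \(K\); in the \(\lambda^X\)-indexed picture this is \(\bigcup\{L^{\wedge X}_\beta : \beta_x = 0 \text{ for some }x\}\), which is \(G\)-invariant and hence equals \(\bigcup_u L^{\wedge X}_u\) over orbits \(u\) with a representative taking the value \(0\) somewhere. The two inclusions \(K^{\wedge X}\subseteq \bigcup_u L^{\wedge X}_u\subseteq L^{\wedge X}\) are then sub-cell-complexes of the cell complex just built, hence relative \(\Gr{}{}I_{\gml(G,X)}\)-cell complexes. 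For part (ii): when \(X\) is \(G\)-free, for \(\alpha\) with stabilizer \(G_\alpha\) the orbit \(G\alpha \subseteq X\) meets \(X\) freely and \(\alpha\) is determined by its values on a set of \(G_\alpha\)-orbit representatives; choosing such representatives identifies the product attaching map \(\square_{x \in X} k_{\beta_x}\) restricted to one orbit with \((k^{\square G_\alpha})\) for a single \(k \in \Gr{}{}I_{\gml}\) (the generating cofibration attached at that coordinate), and then the induction-up isomorphism of the previous paragraph reads \(k_\alpha \cong G_+ \wedge_{G_\alpha}(k^{\square G_\alpha})\), using Lemma~\ref{inducesmashpresscof}.

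\textbf{Main obstacle.} The routine part is the non-equivariant filtration and the union/limit-ordinal bookkeeping. The genuinely delicate step is the identification of the \(G\)-orbit of a non-equivariant cell with a single induced \((G\ltimes\lie^X)\)-equivariant semi-free cell of the asserted form — i.e. verifying that the permutation action of \(G\) on the smash factors interacts with the semi-free presentations (Propositions~\ref{semi-free-smash}, \ref{semifreesmashpower}, and the twist-compatibility Corollary~\ref{Grtwisttwo}) so that the stabilizer subgroup that appears is exactly \(G_\alpha \ltimes \prod_x P_{\alpha_x}\) sitting inside \((G\ltimes\lie^X)\times\Or V\), and that this group is a member of \(\gml(G,X)^V\). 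Getting the semi-direct-product structure and the \(\Or V\)-coordinates to line up correctly (so that \(G\) acts by block permutation on \(V = \bigoplus_x V_{\alpha_x}\) while \(\prod_x P_{\alpha_x}\) acts blockwise and the twists are absorbed into the presentation) is where the care is needed; this is exactly the kind of computation that Corollary~\ref{Grtwisttwo} and Proposition~\ref{semifreesmashpower} are designed to handle, so the argument reduces to citing them after setting up notation carefully.
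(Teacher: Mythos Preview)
Your overall architecture is right and matches the paper: build the non-equivariant filtration by \(\lambda^X\) via iterated \(\square\)-products (Theorem~\ref{cellulartheorem}), then collapse to \((\lambda^X)_G\) by inducing up from the stabilizer \(G_\alpha\). The limit-ordinal clause and part (i) are handled correctly.

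However, there is a genuine gap in your identification of the orbit cell. You claim that inducing up produces an attaching map of the form
\[
\Gr{}{V}\bigl((S^{n-1}\to D^n)\times ((G\ltimes\lie^X)\times\Or V)/P\bigr)_+,\qquad P=G_\alpha\ltimes P_\alpha,
\]
and that this is directly a generating \(\Gr{}{}I_{\gml(G,X)}\)-cell. It is not. In the induced cell \(G_+\wedge_{G_\alpha}\Gr{}{V_\alpha}\bigl((S^{n_\alpha-1}\to D^{n_\alpha})\times(\lie^X\times\Or{V_\alpha})/P_\alpha\bigr)_+\), the group \(G_\alpha\) acts \emph{nontrivially} on the disk \(D^{n_\alpha}=\prod_{x}D^{n_{\alpha_x}}\) by permuting the block coordinates. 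A generating cell in \(\Gr{}{}I_{\gml(G,X)}\) has the group acting trivially on \(S^{m-1}\subseteq D^m\); the map you write down would require \(G_\alpha\) to act trivially on the disk, which only happens when \(G_\alpha\) is trivial. So your asserted isomorphism fails, and the ``main obstacle'' you name (sorting out the semi-direct product bookkeeping via \ref{Grtwisttwo} and \ref{semifreesmashpower}) is not the real one.

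The missing step, which the paper supplies, is to first give \(S^{n_\alpha-1}\subseteq D^{n_\alpha}\) a genuine \(G_\alpha\)-CW structure (e.g.\ via the cone on the barycentric subdivision, or Illman's theorem~\ref{illtriangmfd}). This decomposes the induced map \(k_\alpha\) as a relative cell complex whose cells have the form
\[
\Gr{}{V_\alpha}\bigl((S^{m-1}\to D^m)\times G/H\times(\lie^X\times\Or{V_\alpha})/P_\alpha\bigr)_+
\]
for various \(H\subseteq G_\alpha\), and only \emph{then} does one get the isomorphism with \(\Gr{}{V_\alpha}\bigl((S^{m-1}\to D^m)\times((G\ltimes\lie^X)\times\Or{V_\alpha})/(H\ltimes P_\alpha)\bigr)_+\), now with trivial action on \(D^m\). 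The relevant isotropy groups are thus \(H\ltimes P_\alpha\) for all \(H\subseteq G_\alpha\), not just \(G_\alpha\ltimes P_\alpha\); this is why \(\gml(G,X)\) is defined to contain \(H\ltimes\prod_xP_x\) for \emph{every} subgroup \(H\) of \(G\). Without this decomposition you have not exhibited \(K^{\wedge X}\subseteq L^{\wedge X}\) as a relative \(\Gr{}{}I_{\gml(G,X)}\)-cell complex.
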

\begin{proof}
Let $K=L_0 \subseteq L_1 \subseteq \ldots \subseteq L$ be the
$\lambda$-sequence exhibiting $L$ as \(\Gr{}{}I_{\gml}\)-cellular, using the
names $V_a$, $P_a$, $n_a$, $i_a$ and $\D_a$ for each $a\in \lambda$ as
in Definition~\ref{dfn:smashcells}.

Let \(\alpha \in \lambda^X\). 
If $\alpha_x$ is a limit ordinal for any $x \in X$, then 
$${L^{\smash \X}_{u}} = {\bigcup\limits_{v < u}L^{\smash \X}_{v}}.$$
Suppose now that none of the \(\alpha_x\) are limit ordinals.
 Taking the row-wise
$\square$-product over $x\in X$ of all the diagrams $\D_{\alpha_x}$ as
displayed just before the start of the theorem, we
get by Lemma~\ref{levelwisesquare} a pushout diagram of
\({G_\alpha \ltimes \lie^{X}}\)-spectra whose top map is
$\square_{x\in X}i_{\alpha_x}$, which we by  Proposition~\ref{semi-free-smash} may
identify with the following pushout diagram $\D_{\alpha}$ 
\[
\xymatrix{
{\Gr{}{V_{\alpha}}(S^{n_\alpha-1} \times \faktor{(\lie^X \times \Or{V_\alpha})}{P_\alpha})_+}\ar[d]\ar[r]^-{i_\alpha}_-{\subseteq}&
{\Gr{}{V_\alpha}(D^{n_\alpha} \times \faktor{(\lie^X \times \Or{V_\alpha})}{P_\alpha})_+}\ar[d]^{e_\alpha}\\
{\bigcup\limits_{\gamma < \alpha}L^{\wedge X}_{\gamma}}\ar[r]^-{\subseteq}&
{L^{\wedge X}_{\alpha},}\pushout}\]
where $V_\alpha = \bigoplus\limits_{x\in\X}\!V_{\alpha_x}$, $n_\alpha
= \sum\limits_{x\in\X}n_{\alpha_x}$ and $P_\alpha =
\pr{x\in\X}{P_{\alpha_x}}$ considered as a subgroup of \({\lie^{X}}
\times \Or{V_\alpha}\) via the inclusion 
\[
({\lie^{X}}
\times  \pr{x\in \X} \Or{V_{\alpha_x}}) \subseteq {\lie^{X}}
\times \Or{V_\alpha}  ,
\]
and where $G_\alpha$ acts by
permuting the $X$-coordinates on each of these ingredients of $\D_\alpha$.
Via the shear isomorphism \(G_\alpha \ltimes \Or {V_\alpha} \cong
G_\alpha \times \Or {V_\alpha}\) we change the \(G_\alpha\)-action so
that \(G_\alpha\) acts trivially on \(\Or {V_\alpha}\), and so that
the above diagram is a pushout of \((G_\alpha \ltimes Q^X) \times \Or
{V_\alpha}\)-spaces.

If $\alpha\in\lambda^X$ and $g\in G$,
then conjugation yields an isomorphism $c_g\colon G_\alpha\cong G_{g\alpha}$,
$c_g(h)= ghg^{-1}$.  
Furthermore, $\alpha\mapsto\D_\alpha$ is natural in the sense that acting by $g$ (\ie permuting the coordinates in each of
the vertices of the square) defines
an isomorphism between $\D_\alpha$ and $\D_{g\alpha}$, and even a
$G_\alpha$-isomorphism $\D_g\colon\D_\alpha\cong c^*_g\D_{g\alpha}$.

Fixing the orbit $u$ of \(\alpha\), we write $\D^u_\alpha$ for what we just called
$\D_\alpha$, and for $\gamma \in \lambda^{X}$ with $[\gamma]< [\alpha]
= u$ we let $\D^u_{\gamma}$ be the pushout diagram
\[
\xymatrix{
pt \ar[d]\ar[r]& pt \ar[d]\\
L^{\wedge X}_\gamma \ar[r]& L^{\wedge X}_\gamma.
\pushout
}\]
The assignment $\gamma \mapsto \D_\gamma^u$ is functorial on the
partially ordered set consisting of those $\gamma \in \lambda^{X}$
with $[\gamma] \le u$. Taking the colimit of the pushout diagrams
$\D_\gamma^u$ we obtain a pushout diagram 
 \[
 \xymatrix{
 {\bigvee_{\beta \in u} 
  \left[\Gr{}{V_\beta}(S^{n_\beta-1}\times 
\faktor{(\lie^X \times \Or{V_\beta} )}{P_\beta})_+\right]}
\ar[d]\ar[r]^{\vee i_\beta}&
 {\bigvee_{\beta \in u} \left[\Gr{}{V_\beta}(D^{n_\beta} \times      
\faktor{(\lie^X \times \Or{V_\beta})}{P_\beta})_+\right]}\ar[d]^{\sum
   e_\beta}\\
 {\bigcup\limits_{v<u}L^{\smash \X}_{v}}\ar[r]&{L^{\smash
       \X}_{u}}
 \pushout
}\]
of $G \ltimes \lie^X$-spectra where $g \in G$ acts by taking factors of the
form \(S^{n_\beta - 1}\) and \(D^{n_\beta}\) to 
\(S^{n_{g \beta} - 1}\) and \(D^{n_{g \beta}}\) respectively and by
the permutation action on \(Q^X\).

Fixing one $\alpha$ in the orbit $u$ and using the isomorphisms
translating between the different $i_\beta$s and our $i_\alpha$ we can
rewrite this diagram on the form 
\[\xymatrix{
{G}_+ \smash_{{ G_\alpha}}{\left[\Gr{{G_\alpha}}{V_\alpha}(S^{n_\alpha-1}
    \times \faktor{(\lie^X \times \Or{V_\alpha})}{P_\alpha})_+\right]}\ar[r]^{k_\alpha}\ar[d]&
{G_+ \smash_{G_\alpha}\left[\Gr{G_\alpha}{V_\alpha}(D^{n_\alpha} \times \faktor{(\lie^X \times \Or{V_\alpha})}{P_\alpha})_+\right]}\ar[d]\\
{\bigcup\limits_{v < u}L^{\smash \X}_{v}}\ar[r]&
{L^{\smash \X}_{u},}\pushout}\]
where the top map is $k_\alpha=G_+\smsh_{G_\alpha}i_\alpha$.

We claim that the inclusion $k_\alpha$ is a \(\gml(G,X)\)-cofibration of
${G \ltimes \lie^X}$-spectra.  To see this consider the inclusion
$S^{n_\alpha-1}\subseteq D^{n_\alpha}$ of $G_\alpha$-spaces, where
\(G_\alpha\) acts by permutation of coordinates.  This is
a genuine $G_\alpha$-cofibration (as can be seen by viewing
$D^{n_\alpha}$ as a cone on the barycentric subdivision of an
$(n_\alpha-1)$-simplex), and so \(i_\alpha\) is a \relative cell
complex with cells of the form
$${\Gr{}{V_{\alpha}}((S^{m-1} \times G_\alpha/H_\alpha)\times
  \faktor{(\lie^X \times \Or{V_\alpha})}{P_\alpha})_+}\subseteq
{\Gr{}{V_\alpha}((D^{m}\times G_\alpha/H_\alpha)
 \times \faktor{(\lie^X \times \Or{V_\alpha})}{P_\alpha})_+}$$
for \(H_\alpha\) a subgroup of \(G_\alpha\), and thus \(k_\alpha\) is a
\relative cell complex with cells of the form 
$${\Gr{}{V_{\alpha}}((S^{m-1} \times G/H)\times
  \faktor{(\lie^X \times \Or{V_\alpha})}{P_\alpha})_+}\subseteq
{\Gr{}{V_\alpha}((D^{m}\times G/H)
 \times \faktor{(\lie^X \times \Or{V_\alpha})}{P_\alpha})_+}$$
for \(H\) a subgroup of \(G_\alpha\). Thus it suffices to show that this 
is a \({\Gr{}{} I_{\gml(G,X)}}\)-cell.  This is seen by first noting
that since \(P_\alpha\) is a \(G_\alpha\)-invariant subgroup of \(Q^X
\times \Or{V_\alpha}\),
there is an isomorphism 
$$G_\alpha/H\times\faktor{(\lie^X \times \Or{V_\alpha})}{P_\alpha}
\to 
\faktor{(G_\alpha \ltimes ({\lie^{X}} \times \Or{V_\alpha}))}{(H
  \ltimes P_\alpha)}
$$
of \((G_\alpha \ltimes Q^X) \times \Or {V_{\alpha}}\)-spaces
sending $(gH,(A,q)P_\alpha) $ to $(g,q,A)(H \ltimes P_\alpha)$. Here
we use 
that since \(G_\alpha\) acts trivially on \(\Or {V_\alpha}\), the groups
\((G_\alpha \ltimes Q^X) \times \Or {V_{\alpha}}\) and  \(G_\alpha
\ltimes (Q^X \times \Or {V_{\alpha}})\) are identical.
The projection 
\((G \ltimes Q^X) \times \Or {V_{\alpha}} \to G\)
induces an isomorphism between
\begin{displaymath}
  ((G \ltimes Q^X) \times \Or {V_{\alpha}}) \times_{(G_\alpha \ltimes Q^X)
    \times \Or {V_{\alpha}}} 
  (G_\alpha/H\times\faktor{(\lie^X \times \Or{V_\alpha})}{P_\alpha})
\end{displaymath}
and
\begin{displaymath}
  G \times_{G_\alpha} 
(G_\alpha/H\times\faktor{(\lie^X \times \Or{V_\alpha})}{P_\alpha})
 \cong 
G/H\times\faktor{(\lie^X \times \Or{V_\alpha})}{P_\alpha}
\end{displaymath}
of \((G \ltimes Q^X) \times \Or {V_{\alpha}}\)-spaces.
Thus inducing up the action of \((G_\alpha \ltimes Q^X)
    \times \Or {V_{\alpha}}\) to an action of \((G \ltimes Q^X)
    \times \Or {V_{\alpha}}\)
we obtain an isomorphism
$$G/H\times\faktor{(\lie^X \times \Or{V_\alpha})}{P_\alpha}
\to 
\faktor{(G \ltimes ({\lie^{X}} \times \Or{V_\alpha}))}{(H
  \ltimes P_\alpha)}
$$
of \((G \ltimes Q^X) \times \Or {V_{\alpha}}\)-spaces.
By design, the subgroup \(H \ltimes P_\alpha\) is in
\(\gml(G,X)\). We conclude that \(k_\alpha\) is a
\relative \(\Gr {}{} I_{\gml(G,X)}\)-cell complex.

The statement about the $\square$-product is immediate from the construction.

Lastly, we treat the special case when $X$ is free as a $G$-set.
Recall that the inclusion $i_\alpha$ in the diagram $\D_{\alpha}$ 
was given by the iterated $\square$-product 
$\square_{x\in X}i_{\alpha_x}$, with the $G_\alpha$-action permuting
$\square$-factors that are identical. Since $G$ acts freely on $X$, so
does $G_\alpha$. Therefore, choosing any system of representatives $R$
for the $G_\alpha$-orbits of $X$ and letting $k\in \Gr{}{} I_\gml$ be
the inclusion 
$$\Gr{\lie}{V_{R}}(S^{n_R-1} \times 
\faktor{(\lie \times \Or{V_R})}{P_R})_+\subseteq
{\Gr{\lie}{V_R}(D^{n_R} \times 
\faktor{(\lie \times \Or{V_R})}{P_R})_+},$$
where $V_R= \bigoplus\limits_{r\in R}\!V_r$, 
$n_R= \sum\limits_{r\in  R} n_r$ and $P_R= \pr{r\in R}{P_r}$, we 
 get a $G_\alpha$-equivariant
 isomorphism $i_\alpha\cong k^{\square G_\alpha}$.
\end{proof}

\begin{example}\label{dfn.smash-inducedcell}
Consider the case where $G=\Sigma_n$ acts on $X=\{1,\dots,n\}$ and
$K\subseteq L$ is obtained by attaching a single $i\colon s\to t\in
\Gr{}{} I_\gml$. 
Then we can think of the map $\star \rightarrow K$ as another single
cell as in Corollary~\ref{relativecells}, \ie think of
$\lambda=\{0<1<2\}$ with $L_0\subseteq L_1\subseteq L_2$
being $\ast\subseteq K\subseteq L$. 
Then all $\square$-product summands containing $*$ are trivial, so we
consider the $\alpha$s with only $1$ and $2$ as values. The
stabilizer group $G_\alpha$ is isomorphic to
$\Sigma_m\times\Sigma_{n-m}$ where $m=|\alpha^{-1}(2)|$, and we get a
pushout diagram of the form
\[
\xymatrix{
A\smsh K^{\smsh\alpha^{-1}(1)}\ar[rrr]^-{i^{\square\alpha^{-1}(2)}\smsh K^{\smsh\alpha^{-1}(1)}}\ar[d]&&&L^{\smsh\alpha^{-1}(2)}\smsh K^{\smsh\alpha^{-1}(1)}\ar[d]\\
{\bigcup\limits_{\gamma < \alpha}L^{\wedge X}_{\gamma}}\ar[rrr]^-{\subseteq}&&&
{L^{\wedge X}_{\alpha},}\pushout}\]
where $A$ is the source of $i^{\square\alpha^{-1}(2)}$.
Hence, the 
the cell
attached to reach $L^{\smash X}_{[\alpha]}$ is of the form
\[(\Sigma_n)_+ \smash_{\Sigma_m \times
  \Sigma_{n-m}}i^{\square m} \smash K^{\smash n-m}.\]
Varying $m$ from $1$ to $n$ we run through all the conjugacy
classes $[\alpha]$ of such $\alpha$s and get the $n$ cells needed to build $L^{\smsh X}$ from $K^{\smsh X}$.
\end{example}

Together with Theorem~\ref{geomfixedofinduced}, we can now calculate the geometric fixed points of smash powers of $\Sp$-cofibrant spectra. 
\begin{theorem}\label{geomScofib}
  Let
\[1 \rightarrow N \rightarrow \G \stackrel{\epsilon}{\rightarrow} \J \rightarrow 1,\]
be a short exact sequence of finite discrete groups.
 If $L$ is an $\Sp$-cofibrant orthogonal $Q$-spectrum, with $Q$ a compact Lie group
 and $X$ a finite
  free $G$-set, then the geometric diagonal map
  from Definition~\ref{definitionofgeomfixedpoints} is an 
  isomorphism of $(G\ltimes Q^X)/N$-spectra
  \[\Delta \colon L^{\wedge X_N} \xto \cong \Phi^{N}(L^{\wedge X}).\]
\end{theorem}
\begin{proof}
This is again quite similar in spirit to 
\cite[3.10.7]{Kro}. It suffices to look at $\Sp I$-cellular $L$, since
retracts are preserved by any functor. We keep the notation from the
proof of Theorem~\ref{cellular} as far as possible. We let \(\sigma\)
be the set of \(G\)-orbits of \(\lambda^{\times X}\) and we let
$\sigma'$ be the $J$-orbits of $\lambda^{\times X_N}$. The projection
  $X\to  X_N$  induces a ``diagonal'' \(G\)-map  
\begin{eqnarray*}\epsilon^* \colon \,\,\,\, \lambda^{\times X_N}
  &\rightarrow& \lambda^{\times X},\\ 
\{\kappa_{[x]}\}_{[x]\in X_N}&\mapsto& \{\kappa_{[x]}\}_{x\in X}
\end{eqnarray*}
which descends to orbits to give a map $\epsilon^*\colon \sigma'
\rightarrow \sigma$. We show that, in the cell structure for $L^{\smash X}$, the cells that are not indexed by $\epsilon^*[\sigma']$ do not
contribute to the geometric fixed points. By induction over the
cellular filtration, assume that for all $[\beta] < [\alpha]$ in
$\sigma$ we have  
\[\Phi^N({\bigcup_{{{[\delta] \leq [\beta]}}} L^{\smash X}_{[\delta]}}) \cong \Phi^N({\bigcup_{[\epsilon^* \kappa]\leq \beta} L^{\smash X}_{[\epsilon^* \kappa]}}).\]
Then we claim that the same is true for $\beta$ replaced by $\alpha$:
In the case $\alpha \in \epsilon^*\sigma'$ there is nothing to do,
otherwise note that for $[\alpha] \not\in \epsilon^*\sigma'$, the
group $N$ is not contained in the stabilizer group of $\alpha$, hence in the attaching
diagram $\D_{[\alpha]}$ (\ie diagram~\ref{tagtest} in the form of specified for free $G$-set $X$),
the top row has trivial geometric fixed points
by Theorem~\ref{geomfixedofinduced}.

Since taking geometric fixed points commutes with the cell-complex
construction, we therefore get colimit diagrams for $\Phi^N{L^{\smash
    X}}$ and $L^{\smash X_N}$ of exactly the same shape connected by the geometric diagonal.
By induction on \([\kappa] \in \lambda^{X_N}\) we show that the
geometric diagonal $\Delta\colon L^{\smash X_N}_{[\kappa]}\to\Phi^N(L^{\smash
  X}_{\epsilon^*{[\kappa]}})$ is an isomorphism of $(G\ltimes Q^X)/N$-spectra.  Letting, for typographical reasons, the inclusion 
$$Y=(S^{n-1}\times(Q\times\Or V)/P)_+\subseteq Z=(D^n\times(Q\times\Or V)/P)_+$$ induce the last cell, we get that the geometric diagonal $L^{\smash X_N}_{[\kappa]}\to\Phi^N(L^{\smash
  X}_{\epsilon^*{[\kappa]}})$ is is the pushout of the map of spans
$$\xymatrix{{\bigcup\limits_{[\gamma] < [\kappa]} (L^{\smash \X_N})_{[\gamma]}}\ar[d]&
{\J_+\smash_{\J_0} [\Gr{Q}{V}Y]^{\square{\J_0}}}\ar[l]\ar[r]\ar[d]&
{\J_+\smash_{\J_0} [\Gr{Q}{V}Z]^{\smsh{\J_0}}}\ar[d]\\
{\bigcup\limits_{[\gamma] < [\kappa]} \Phi^N(L^{\smash \X})_{[\epsilon^*\gamma]}}&
{\Phi^N\left(\G_+\smash_{\h}[\Gr{Q}{V}Y]^{\square\h} \right)}\ar[l]\ar[r]&
{\Phi^N\left(\G_+\smash_{\h}[\Gr{Q}{V}Z]^{\smsh\h} \right)}}
$$
also given by the geometric diagonal.  Since the leftmost vertical map is assumed to be an isomorphism by induction and the two others are isomorphism by Theorem~\ref{geomfixedofinduced} we are done.
\end{proof}

\begin{cor}\label{cor:phistrongmon}
 Let $L$ and $L'$ be $\Sp$-cofibrant orthogonal spectra, then the lax monoidal structure map
\[\alpha\colon \Phi^N(\bigwedge_{X}L) \smash \Phi^N(\bigwedge_{X} L') \to \Phi^N (\bigwedge_X L \smash \bigwedge_X L')\]
from Proposition~\ref{philaxmonoidal} is an isomorphism.
\end{cor}
\begin{proof}
Combine Lemma~\ref{lem:geomdiagismonoidal} and Theorem~\ref{geomScofib}.
\end{proof}
\begin{remark}
  Naturality of the isomorphism $\Delta$ for all
morphisms between $\Sp$-cofibrant orthogonal spectra and for all
isomorphisms of finite free $G$-sets will follow Proposition~\ref{diagzigzag}. 
\end{remark}

\section{Commutative Ring Spectra  
}\label{sec:eqcomconv}\label{eqexttocomring}

In this section, we define a convenient model structure on the category of commutative
orthogonal ring \(G\)-spectra. We  work in the \(\Sp\)-model
structure on \(\catGOS\) of Definition~\ref{Spmodelstructure} with $\Sp I$ ($\Sp J$) as the set of generating (acyclic) cofibrations. 

\begin{dfn}
  A map of commutative orthogonal ring $G$-spectra is a {\em stable equivalence}\index{stable!equivalence!of commutative orthogonal ring $G$-spectra} if the underlying map of orthogonal $G$-spectra is a stable equivalence.
\end{dfn}

\label{eqexttocommring}

Given \(i \ge 1\) we define
\(E_G \Sigma_i\)\index{EGSigma@$E_G \Sigma_i$} to be the equivariant universal space 
 for the family of
subgroups of \(G \times \Sigma_i\) intersecting \(\Sigma_i\) trivially (\ie given a subgroup $H\subseteq G\times\Sigma_i$, the fixed point space $(E_G\Sigma_i)^H$ is contractible if $H$ intersects  \(1\times\Sigma_i\) trivially, and otherwise the fixed point space is empty).
\begin{lemma}\label{eqfreeup}
 Let $Y$ be an orthogonal $G$-spectrum and consider the semi-free orthogonal \(G\)-spectrum
 $X=\Gr{}{V}
   \left(({G \times \Or V}/{P})_+ \smash K
   \right)$ from Definition~\ref{semifreespectra}, 
 for $K$ a based $CW$-complex, $V \ne 0$ in \(\catOr\) and \(P
 \subseteq G\times \Or V\).
 Then the map \[q \colon \left(E_G{\Sigma_i}_+
   \smash_{\Sigma_i} X^{\smash i} \right)\smash Y \rightarrow
 X^{\smash i}_{\Sigma_i}\smash Y\] 
 induced by \(E_G{\Sigma_i}_+ \to S^0\) is a $\pi_*$-isomorphism. Indeed,
 if \(W\) in \(\catOr\) is of dimension at least
 \(i\) times the dimension of \(V\), then the map \(q_W\) is a
 \(G\)-equivalence. 
\end{lemma}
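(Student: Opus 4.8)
I would prove the sharper pointwise assertion and deduce the rest from it: for $W$ in $\catOr$ with $\dim W\ge i\dim V$ the map $q_W$ is a $G$-equivalence, and consequently $q$ is a $\pi_\ast$-isomorphism. For the deduction recall that for each subgroup $H\le G$ the group $\pi^H_\ast$ of an orthogonal $G$-spectrum is a filtered colimit of homotopy groups of $H$-equivariant mapping spaces out of the finite $G$-CW complexes $S^{V_b}$ into the spaces $(-)_{V_b}$; the indices $b\in\N\{B\}$ with $\dim V_b\ge i\dim V$ are cofinal, and a $G$-equivalence induces a weak equivalence on these mapping spaces, so $q$ is a $\pi_\ast$-isomorphism. (If $\dim W<i\dim V$, Proposition~\ref{semi-free-smash-with-gen} shows that $(X^{\wedge i}\wedge Y)_W$, and hence each side of $q_W$, is a point.) Since $\Sigma_i$ acts trivially on $Y$, the source and target of $q$ are $(E\Sigma_i{}_+\wedge_{\Sigma_i}X^{\wedge i})\wedge Y$ and $X^{\wedge i}_{\Sigma_i}\wedge Y$; both $E\Sigma_i{}_+\wedge_{\Sigma_i}X^{\wedge i}$ and $X^{\wedge i}_{\Sigma_i}$ (the latter again semi-free by \ref{groupsemifree}) are cofibrant in the level $\Sp$-model structure, hence flat by Proposition~\ref{Gprespistar}, so by two-out-of-three the $\pi_\ast$-statement reduces to the case where $Y$ is cofibrant; this also makes the levels $Y_{W'}$ well based, as needed below.

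Next I would make the relevant level explicit. Put $C=\bigl({\scriptstyle\faktor{G\times\Or V}{P}}\bigr)_+\wedge K$. By Proposition~\ref{semifreesmashpower}, $X^{\wedge i}\cong\Gr{}{V^{\oplus i}}D$ as $G\times\Sigma_i$-spectra, where $D={\Or{V^{\oplus i}}}_+\wedge_{\prod_i\Or V}C^{\wedge i}$ and $\Sigma_i$ acts by permuting the smash factors of $C^{\wedge i}$ and by right multiplication by the block permutation $\sigma_\ast\in\Or{V^{\oplus i}}$. Choosing an isometric embedding $V^{\oplus i}\hookrightarrow W$ and writing $W=V^{\oplus i}\oplus W'$, Proposition~\ref{semi-free-smash-with-gen} gives a natural isomorphism of based $G\times\Sigma_i$-spaces
\[
  (X^{\wedge i}\wedge Y)_W\;\cong\;{\Or W}_+\wedge_{\prod_i\Or V\times\Or{W'}}\bigl(C^{\wedge i}\wedge Y_{W'}\bigr),
\]
in which $G$ acts only through the $C^{\wedge i}$- and $Y_{W'}$-coordinates, while $\Sigma_i$ acts by right multiplication by $\sigma_\ast\oplus\id_{W'}$ on $\Or W$, by permutation on $C^{\wedge i}$, and trivially on $Y_{W'}$.

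The heart of the matter is a point-set observation about $Z\defas(X^{\wedge i}\wedge Y)_W$. Because $V\ne 0$, the isometry $\sigma_\ast\oplus\id_{W'}$ lies in $\prod_i\Or V\times\Or{W'}$ only when $\sigma=e$; reading off the $\Or W$-coordinate in the balanced smash product shows (i) that $\Sigma_i$ acts freely on $Z$ away from the basepoint, and (ii) --- the point distinguishing the positive case from a general free $\Sigma_i$-space --- that every $\Sigma_i$-orbit of $Z\setminus\{\ast\}$ invariant under a subgroup $H\le G$ has \emph{trivial} isotropy type: projecting to the $G$-trivial quotient $\Or W/(\prod_i\Or V\times\Or{W'})$ would force the type $\phi\colon H\to\Sigma_i$ of such an orbit to act freely on a space on which $G$ acts trivially, hence $\phi=e$. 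Using that $(-)^H$ commutes with quotients along closed inclusions and preserves $h$-cofibrations (cf.~\ref{fixedprop} and \ref{fixprescof}), (i) and (ii) yield, for each $H\le G$, identifications $(Z/\Sigma_i)^H=Z^H/\Sigma_i$ and $(E\Sigma_i{}_+\wedge_{\Sigma_i}Z)^H=E\Sigma_i{}_+\wedge_{\Sigma_i}Z^H$, with $\Sigma_i$ still free on $Z^H$ away from the basepoint. Away from the basepoints $q_W^H$ is then the projection $E\Sigma_i\times_{\Sigma_i}(Z^H\setminus\{\ast\})\to(Z^H\setminus\{\ast\})/\Sigma_i$, which --- $\Sigma_i$ being finite and acting freely --- is a fiber bundle with contractible fiber $E\Sigma_i$ and hence a weak equivalence; since $Z^H$ is well based, $q_W^H$ itself is a weak equivalence. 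As $H$ is arbitrary, $q_W$ is a $G$-equivalence.

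I expect the main obstacle to be pinning down point (ii), i.e.\ that the $\Sigma_i$-action on $Z$ is not merely free away from the basepoint but is free ``over a $G$-trivial base'' --- the property that prevents the strict orbit $Z/\Sigma_i$ from acquiring $H$-fixed components not seen by $E\Sigma_i{}_+\wedge_{\Sigma_i}Z$ --- together with the slightly fussy point-set hygiene (well basedness of the levels, the reduction to cofibrant $Y$, and passing from the complement-of-basepoint statement to the based one and from level data to the $\pi_\ast$-isomorphism). The genuinely homotopy-theoretic input is light: Propositions~\ref{semifreesmashpower} and \ref{semi-free-smash-with-gen}, flatness of cofibrant spectra \ref{Gprespistar}, and the classical fact that $E\Sigma_i{}_+\wedge_{\Sigma_i}A\to A/\Sigma_i$ is a weak equivalence for a free based $\Sigma_i$-space $A$.
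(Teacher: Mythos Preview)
Your argument and the paper's share the same kernel --- the injectivity of $\Sigma_i\hookrightarrow\Or{V^{\oplus i}}$ when $V\ne 0$ --- but deploy it differently. The paper does \emph{not} analyse $H$-fixed points of $Z=(X^{\smash i}\smash Y)_W$ directly. Instead it rewrites source and target of $q_W$ in the form
\[
  (J\smash G^{\times i}\times\Or W)_+\wedge_{(\Sigma_i\ltimes P^{\times i})\times\Or{V'}}\bigl(K^{\smash i}\smash Y_{V'}\bigr),
\]
with $J=E\Sigma_{i+}$ resp.\ $S^0$, separating a smooth manifold $G^{\times i}\times\Or W$ from the ``arbitrary stuff'' $K^{\smash i}\smash Y_{V'}$. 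Illman's theorem makes this manifold a $\Gamma$-CW complex for $\Gamma=G\times(\Sigma_i\ltimes P^{\times i})\times\Or{V'}$, and freeness of the $\Sigma_i$-part forces all its isotropy into the family $\fml=\ker(\Gamma\to\Sigma_i)$. Hence $E\fml_+\smash(-)\to(-)$ is a $\Gamma$-\emph{homotopy} equivalence on the manifold; smashing with the arbitrary factor and taking orbits preserves this, giving $q_W$ a $G$-homotopy equivalence for \emph{every} $Y$.

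Your fixed-point computation (points (i) and (ii)) is correct --- (ii) is precisely the isotropy-in-$\fml$ statement, read off on the $\Or W$-coordinate --- and your deduction $(Z/\Sigma_i)^H=Z^H/\Sigma_i$, $(E\Sigma_{i+}\wedge_{\Sigma_i}Z)^H=E\Sigma_{i+}\wedge_{\Sigma_i}Z^H$ is sound. But the final step, passing from the fibre-bundle weak equivalence on $Z^H\setminus\{\ast\}$ to a weak equivalence on the based quotient, genuinely uses well-basedness of $Z^H$; without it the collapse of $B\Sigma_i$ need not be benign. Consequently you establish the levelwise $G$-equivalence only for (levelwise well-based, e.g.\ cofibrant) $Y$, whereas the lemma asserts it for arbitrary $Y$. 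Your flatness reduction recovers the $\pi_\ast$-isomorphism for general $Y$, but note that the cofibrancy of $X^{\smash i}_{\Sigma_i}$ you invoke is not immediate from its being semi-free: one must check that the generating $(G\times\Or{V^{\oplus i}})$-space is genuinely cofibrant, which again needs Illman (this is essentially Proposition~\ref{eqconven}). So your route is not more elementary than the paper's --- it uses Illman in a different place --- and it proves a bit less: a weak $G$-equivalence rather than a $G$-homotopy equivalence, and only after the reduction to cofibrant $Y$.
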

\begin{proof}
 We prove that $q$ is a $\G$-equivalence in all levels $W =
 \ds{i}{V}\oplus V'$. 
 Let
 $$\Gamma \defas{} \G \times
{(\Sigma_i \ltimes P^{\times i}) \times
  \Or {V'}}$$ and let \(p \colon \Gamma \to G \times \Sigma_i\) be the
projection onto the first and second factor. The space 
\(p^* E_G\Sigma_{i+}\)
is $\Gamma$-homotopy
equivalent to $E\fml$, where $\fml$ is the family of subgroups 
 of \(\Gamma\) whose image under \(p \colon \Gamma \to G \times \Sigma_i\) intersects
\(\Sigma_i\) trivially.
Since \(V \ne 0\), the homomorphism 
\({(\Sigma_i \ltimes \Or {V}^{\times i}) \to \Or {V^{\oplus i}}}\)
sending \((\sigma,(\alpha_1,\dots,\alpha_i))\) to \((\alpha_1
\oplus \dots \oplus \alpha_i)\sigma\) is
injective. We let
\(\Sigma_i \ltimes (G \times \Or {V})^{\times  i}\) 
act on \(G^{\times i} \times \Or {V^{\oplus i}}\) via this
homomorphism on the second factor and via the standard action of
\(\Sigma_i \ltimes 
G^{\times i}\) on \(G^{\times i}\) on the second factor. Observe that this action is free.
In particular \(G^{\times i} \times \Or W\) is a free \({(\Sigma_i \ltimes P^{\times i}) \times \Or
  {V'}}\)-space. This means that its \(\Gamma\)-isotropy groups
intersect \({(\Sigma_i \ltimes P^{\times i}) \times \Or
  {V'}}\) trivially, and the image under \(p \colon \Gamma \to G
\times \Sigma_i\) of these isotropy groups intersect \(\Sigma_i\)
trivially. Thus by Theorem~\ref{illtriangmfd} \(G^{\times i} \times \Or W\) is
an \(\fml\)-complex.

Evaluating at $W$ and spelling out the smash power of the semi-free spectrum, the map
 $q_W$ may be identified as the
 \(\Sigma_i \times \Or {V'}\)-orbits of  the \(\Sigma_i \times \Or {V'} \times G \times \Or W\)-map 
 \newcommand{\Jother}{(E_G\Sigma_{i+}\to S^0)}
\[\Jother \wedge {\Or{W}}_+ \smash _{\catOr_{V}^{\times i}}
    \left({\faktor {(G\times \Or V)}{P} }\right)_+^{\wedge i}
    \smash K^{\smash i} \smash Y_{V'}.\]
  The redistribution
  $ (\G^{\times i} \times \Or {W})_+ \wedge_{P ^{\times i}}K^{\smash i}\smash Y_{V'}\cong{\Or{W}}_+ \smash _{\catOr_{V}^{\times i}}\left({\faktor {(G\times \Or V)}{P} }\right)_+^{\wedge i}
     \smash K^{\smash i} \smash Y_{V'}$
allows us to further identify $q_W$ with the \(\Sigma_i \times \Or {V'}\)-orbits of 
  \[
\Jother{} \wedge(\G^{\times i} \times \Or {W})_+ \wedge_{P ^{\times i}}K^{\smash i}\smash Y_{V'}.
\] 
Using this identification, we can rewrite \(q_W\) as 
\[
p^*\Jother{} \wedge
(\G^{\times i} 
\times 
\Or {W})_+ 
\wedge_{(\Sigma_i \ltimes P^{\times i}) \times \Or {V'}}
K^{\smash i} \smash Y_{V'}.\]
Since
\(G^{\times i} \times \Or W\) is an \(\fml\)-complex, the map
\(E\fml_+ \wedge G^{\times i} \times \Or W \to S^0 \wedge G^{\times i}
\times \Or W\) is a \(\Gamma\)-homotopy equivalence, and thus \(q_W\)
is a \(G\)-homotopy equivalence.
\end{proof}

\begin{dfn}\label{def:freecom}
  The functor \(\Comm \colon \catGOS \to \Sp_{\catcAlg}\)\index{E@$\Comm$, free commutative algebra functor} from
  orthogonal \(G\)-spectra to commutative orthogonal \(G\)-ring
  spectra is left adjoint to the forgetful functor. Explicitly
  \(\Comm Y = \bigvee_{i} Y^{\smash i}_{\Sigma_i}\).
\end{dfn}
\begin{prop}\label{eqconven}
  If $i$ is a generating cofibration in the $\Sp$-model structure and $k>0$, then both the source and target of the \(\Sigma_k\)-orbits of the \(k\)-fold smash power $i^{\smsh k}_{\Sigma_k}$ are $\Sp$-cofibrant.  

In more detail, let \(V \ne 0\), \(k > 0\) and $Y = E_+ \wedge \Gr{}{V}((G \times \Or V/P)_+)$ where $E$ is either a disk or a sphere. Then the
\(\Sigma_k\)-orbit 
spectrum $Y^{\wedge k}_{\Sigma_k}$ of the \(k\)-fold smash power is
\(\Sp\)-cofibrant.  Consequently, the inclusion $\Sp \rightarrow \Comm Y$ is an $\Sp$-cofibration. 
\end{prop}
\begin{proof}
Considered as a \(\Sigma_k \times G\)-spectrum, the $k$-fold smash power
of $Y$ is isomorphic to the spectrum
\begin{displaymath}
  \Gr{}{V^{\oplus k}}((E^{\times k} \times G^{\times k} \times \Or{V^{\oplus k}}/
  P^{\times k})_+),
\end{displaymath}
where \(\Sigma_k \times G \times \Or{V^{\oplus k}}\) acts on the space  
\(K = E^{\times k} \times G^{\times k} \times \Or{V^{\oplus k}}/ 
P^{\times k}\) as follows:
The symmetric group \(\Sigma_k\) acts by permuting coordinates and
summands of \(V\). The group \(G\) acts
through the diagonal inclusion \(G \to
  G^{\times k}\) and 
\(\Or{V^{\oplus k}}\) acts by multiplication from the left.
Note that \(K\) is the \(P^{\times k}\)-orbit space of the 
\((\Sigma_k \ltimes P^{\times k}) \times G \times \Or{V^{\oplus
    k}}\)-manifold
\(M = E^{\times k} \times G^{\times k}
\times \Or{V^{\oplus k}}\).

By Illman's Theorem~\ref{illtriangmfd} the manifold \(M\)
is a 
\((\Sigma_k \ltimes G^{\times k}) \times G \times \Or{V^{\oplus k}}\)-CW-complex, and thus
  the orbit space  
  \(N = (E^{\times k} \times G^{\times k} \times \Or{V^{\oplus k}}/P^{\times k})/\Sigma_k\) is a 
  \(G \times \Or{V^{\oplus
      k}}\)-CW-complex.
Since \(V^{\oplus k} \ne 0\), the spectrum
\(Y^{\wedge k}_{\Sigma_k} \cong \Gr{}{V^{\oplus k}} N\) is
\(\Sp\)-cofibrant.
\end{proof}
\begin{cor}\label{eqfreeup2}
For every $\Sp$-cofibrant orthogonal spectrum \(X\) the quotient map 
\[q: E_G{\Sigma_i}_+ \smash_{\Sigma_i} X^{\smash i}  \rightarrow X^{\smash i}_{\Sigma_i}\]
is a $\pi_*$-isomorphism.
\end{cor}
\begin{proof}
We proceed by induction on $i$ and a $
G$-equivariant
cellular filtration of $X
$.  For the induction start $i=1$,
the statement is  trivially true. Hence let us assume it holds for all
$j < i$ and for 
an $\Sp$-cofibrant spectrum \(A\) such
that $X$ is built from $A$ by attaching a single cell
$\Gr{}{W}\left[(S^{n-1} \subseteq D^n)_+ \smash ({\scriptsize\faktor{G \times \Or{W}}{P}})_+ \right]$ with $W\neq 0$. Then, as explained in Remark~\ref{dfn.smash-inducedcell}, Theorem~\ref{cellular} states that $X^{\smash i}$ is built from $A^{\smash i}$ by attaching induced cells of the form 
$${\Sigma_{i+} \smash_{\Sigma_j \times \Sigma_{i-j}}
  \left(\Gr{}{W}(S^{n-1} \subseteq D^n)_+ \smash ({\scriptsize\faktor{G
        \times \Or{W}}{P}})_+ \right)^{\square j} \smash A^{\smash {(i-j)}},}
$$
with cofiber $\Sigma_{i+} \smash_{\Sigma_j \times \Sigma_{i-j}}
\left(\Gr{}{W}S^n_+ \smash ({\scriptsize\faktor{G \times \Or{W}}{P}})_+ \right)^{\smash j} \smash A^{\smash {(i-j)}}$. 
For ease of notation, we let \(Y =  S^n_+ \smash \Gr{}{W}
({\scriptsize\faktor{G \times \Or{W}}{P}})_+\). Thus,
for the induction step, it suffices to show that the projection
\[E_G\Sigma_{i+}
\smash_{\Sigma_j \times \Sigma_{i-j}} Y^{\smash j} \smash A^{\smash
  {(i-j)}} \rightarrow \left[Y^{\smash j} \smash A^{\smash
    {(i-j)}}\right]_{\Sigma_j \times \Sigma_{i-j}}\] is a
$\pi_*$-isomorphism. We use that $E_G\Sigma_i$ is $\Sigma_j \times
\Sigma_{i-j}$-equivariantly homotopy equivalent to $E_G\Sigma_j \times
E_G\Sigma_{i-j}$ to
reduce the problem to showing that the composite 
\[\xymatrix{
  {{E_G\Sigma_{j+} \smash_{\Sigma_j} Y^{\smash j} \smash
      E_G\Sigma_{i-j+}\smash_{\Sigma_{i-j}} A^{\smash
        i-j}}}\ar^-{{q}\smash \id}[d]\\
{{Y^{\smash j}_{\Sigma_j} \smash E_G\Sigma_{i-j+}\smash_{\Sigma_{i-j}} A^{\smash i-j}}}\ar^-{\id \smash {q}\smash \id}[d]\\{{Y^{\smash j}_{\Sigma_j} \smash A_{\Sigma_{i-j}}^{\smash i-j}}}}\]
(where the maps marked $q$ are induced by $E_G\Sigma_n\to*$) is a $\pi_*$-isomorphism.
Here the first map is a level homotopy equivalence by
Lemma~\ref{eqfreeup}. For \(j>0\) the second map is a
$\pi_*$-isomorphism by the induction hypothesis 
using that  $Y^{\smash j}_{\Sigma_j}$ is $\Sp$-cofibrant.
For \(j=0\) the second map is a $\pi_*$-isomorphism by assumption on \(A\).
\end{proof}
\begin{cor}
Let $V$ be a non-trivial Euclidean space, $K$ a $G$-manifold,  \(P\) a subgroup of \(G\times \Or V\)  and let  $Y = \Gr{}{V}((G \times \Or V/P)_+\smsh K_+)$.
Then the functor $\Comm Y \smash (-)$ on orthogonal spectra preserves
stable equivalences.
\end{cor}
\begin{proof}
Given a stable equivalence of orthogonal spectra \(f:Z\to Z'\) it is
enough to show that each summand \(Y^{\smsh i}_{\Sigma_i}\smsh f\) is a stable
equivalence.  By Lemma~\ref{eqfreeup} it is enough to show that
\(E_G{\Sigma_i}_+\smsh_{\Sigma_i}Y^{\smsh i}\smsh f\) is a stable
equivalence.  Since \(E_G\Sigma_i\) is a free \(\Sigma_i\)-cell complex it is
enough to show that  \(Y^{\smsh i}\smsh f\) is a stable equivalence, which
is a consequence of Proposition~\ref{Gprespistar2}. 
\end{proof}

\begin{cor}
The functor $\Comm \colon \catGOS \to \Sp_{\catcAlg}$ preserves stable equivalences between
$\Sp$-cofibrant orthogonal spectra. In particular, each map in $\Comm\Sp J$ is a stable equivalence.
\end{cor}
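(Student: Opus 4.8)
The plan is to reduce the statement about $\Comm$ to a statement about individual smash powers, and then invoke the two previous corollaries. First I would observe that $\Comm X = \bigvee_i X^{\smash i}_{\Sigma_i}$, and that by part (iii) of Theorem~\ref{MMSSthm} a wedge of stable equivalences is a stable equivalence; hence it suffices to show that for each $i \ge 0$ the functor $X \mapsto X^{\smash i}_{\Sigma_i}$ takes stable equivalences between $\Sp$-cofibrant orthogonal spectra to stable equivalences. For $i = 0$ this is the constant functor with value $\Sp$, and for $i = 1$ it is the identity, so both are trivial; the content is in $i \ge 2$.

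For $i \ge 2$ and a stable equivalence $f \colon X \to X'$ of $\Sp$-cofibrant spectra, I would factor the comparison through the homotopy orbits. By Lemma~\ref{eqfreeup2} together with Proposition~\ref{eqconven}, the quotient maps $q \colon E\Sigma_{i+} \smash_{\Sigma_i} X^{\smash i} \to X^{\smash i}_{\Sigma_i}$ and $q' \colon E\Sigma_{i+} \smash_{\Sigma_i} (X')^{\smash i} \to (X')^{\smash i}_{\Sigma_i}$ are $\pi_*$-isomorphisms (one checks the hypotheses of \ref{eqfreeup2} hold for $\Sp$-cofibrant $X$ and $X'$: by \ref{eqconven} the family $\gml$ of all closed subgroups of $G \times \Or V$ for $V \ne 0$, empty at $0$, satisfies the standing assumption). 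Hence it suffices to prove that $E\Sigma_{i+} \smash_{\Sigma_i} f^{\smash i}$ is a stable equivalence. Since $E\Sigma_i$ is a free $\Sigma_i$-CW complex, $E\Sigma_{i+} \smash_{\Sigma_i} (-)$ is built cellularly out of the functor $\Sigma_{i+} \smash_{\Sigma_i} (-) \cong (-)$ smashed with cells $D^n_+/S^{n-1}_+$; using that smashing with a cofibrant space preserves stable equivalences (part (i) of Theorem~\ref{MMSSthm}) and that cobase changes along $h$-cofibrations preserve stable equivalences (part (iv) of \ref{MMSSthm}) together with part (vi) for the colimit, the claim reduces to showing that $f^{\smash i} \colon X^{\smash i} \to (X')^{\smash i}$ is a stable equivalence.

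Finally I would deduce that $f^{\smash i}$ is a stable equivalence by induction on $i$, writing $f^{\smash i}$ as the composite $X^{\smash i} \xrightarrow{f \smash X^{\smash i-1}} X' \smash X^{\smash i-1} \xrightarrow{X' \smash f^{\smash i-1}} (X')^{\smash i}$. The first map is a stable equivalence because $X^{\smash i-1}$ is $\Sp$-cofibrant (smash powers of cofibrant spectra are cofibrant, e.g.\ by the pushout-product axiom for the monoidal $\Sp$-model structure) and hence flat by Proposition~\ref{Gprespistar}; the second is a stable equivalence by the inductive hypothesis together with the fact that $f^{\smash i-1}$ being a stable equivalence implies $X' \smash f^{\smash i-1}$ is one, again by flatness of the cofibrant spectrum $X'$ via \ref{Gprespistar}. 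This gives the general statement, and the assertion that each map in $\Comm \Sp J$ is a stable equivalence then follows since the generating acyclic cofibrations $\Sp J$ are in particular stable equivalences between $\Sp$-cofibrant spectra (the sources and targets of maps in $K = \Gr{}{}K$ are $\Sp$-cofibrant). The main obstacle is bookkeeping: verifying carefully that the hypotheses of Lemma~\ref{eqfreeup2} are met for all $\Sp$-cofibrant $X$ (not just cellular ones, which requires the retract argument) and that the cellular induction over $E\Sigma_i$ genuinely only uses the properties of stable equivalences listed in Theorem~\ref{MMSSthm}; the algebra of the factorization of $f^{\smash i}$ is routine once flatness is in hand.
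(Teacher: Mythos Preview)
Your proof is correct and follows the same overall architecture as the paper: split $\Comm$ into its wedge summands, use Lemma~\ref{eqfreeup2} (with Proposition~\ref{eqconven}) to replace strict orbits by homotopy orbits, observe that $E\Sigma_{i+}\smash_{\Sigma_i}(-)$ preserves stable equivalences, and reduce to showing $f^{\smash i}$ is a stable equivalence.

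The one genuine difference is in this last step. You argue by induction on $i$, factoring $f^{\smash i}$ and invoking flatness of cofibrant spectra (Proposition~\ref{Gprespistar}) twice. The paper instead applies Ken Brown's Lemma: since the class of $\Sp$-cofibrant spectra and stable equivalences underlies a model structure, it suffices to check the claim on \emph{acyclic cofibrations} between $\Sp$-cofibrant spectra, and then iterated use of the pushout-product axiom for the $\Sp$-model structure immediately gives that $f^{\smash i}$ is again an acyclic cofibration. This bypasses both your inductive factorization and the explicit cellular induction over $E\Sigma_i$, making the paper's argument shorter; your version has the minor advantage of being self-contained and not invoking Ken Brown, at the cost of more bookkeeping.
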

\begin{proof}
Iterated use of the pushout product axiom (cf.{} Definition~\ref{generalizedpushoutproductaxiom}) for the $\Sp$-model
structure of Definition~\ref{Spmodelstructure} implies that the $i$-fold smash power of an
acyclic cofibration between $\Sp$-cofibrant spectra is an acyclic
cofibration. Both ${E_G\Sigma_i}_+ \smash_{\Sigma_i} (-)$ and taking
wedges preserve $\pi_*$-isomorphisms, that is, stable equivalences. Hence Ken Brown's Lemma gives the result.
\end{proof}
We will need to calculate realizations of simplicial objects and some other specific colimits in the category of commutative orthogonal $G$-ring spectra. 
\begin{lemma}[{\cite[15.11]{MMSS}}]
  Let $R_0\to R_1\to R_1\to\dots $ 
  be a sequence of maps of commutative
orthogonal ring \(G\)-spectra that are Hurewicz cofibrations of orthogonal
\(G\)-spectra. Then the underlying orthogonal \(G\)-spectrum of the colimit
of the sequence in commutative orthogonal ring \(G\)-spectra is the colimit
of the sequence computed in the category of orthogonal \(G\)-spectra.  
\end{lemma}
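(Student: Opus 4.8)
The plan is to show that the colimit $R := \colim_i R_i$ formed in $\catGOS$ carries a canonical commutative multiplication and unit making it a colimit of the given diagram in the category of commutative orthogonal ring $G$-spectra; since its underlying spectrum is $\colim_i R_i$ by construction, this is exactly the assertion. The single structural input I would isolate first is that the smash product on $\catGOS$ is closed, hence $-\wedge Y$ preserves colimits for each $Y$. Iterating this in both variables and using cofinality of the diagonal $\N \to \N^{k}$ in the filtered indexing poset, one obtains for every $k \ge 1$ a natural isomorphism
\[
  R^{\wedge k} = \bigl(\colim_i R_i\bigr)^{\wedge k} \cong \colim_{(i_1,\dots,i_k)} R_{i_1} \wedge \dots \wedge R_{i_k} \cong \colim_i R_i^{\wedge k}.
\]
The hypothesis that the maps $R_i \to R_{i+1}$ are $h$-cofibrations of orthogonal $G$-spectra is the standard assumption under which this sequential colimit is moreover well behaved homotopically, and is the hypothesis available in the applications we have in mind (e.g.\ computing geometric realizations following \cite{MMSS}); for the point-set statement it ensures that passing to the colimit is compatible with the smash powers appearing above.

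Next I would assemble the ring structure on $R$. The units $\eta_i \colon \Sp \to R_i$ form a cocone over the diagram, so their colimit is a map $\eta \colon \Sp \to R$; the multiplications $\mu_i \colon R_i \wedge R_i \to R_i$ are compatible with the structure maps of the diagram, so under the identification $R \wedge R \cong \colim_i (R_i \wedge R_i)$ their colimit is a map $\mu \colon R \wedge R \to R$. Associativity, left and right unitality, and commutativity of $(R, \mu, \eta)$ are in each case the commutativity of a diagram assembled from smash powers of $R$ and the maps $\mu$ and $\eta$; rewriting each $R^{\wedge k}$ as $\colim_i R_i^{\wedge k}$ and using that a morphism out of a colimit is determined by its composites with the canonical maps, each such diagram commutes because it commutes at every stage $i$, where $(R_i, \mu_i, \eta_i)$ is a commutative orthogonal ring $G$-spectrum. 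The same bookkeeping shows that the canonical maps $R_i \to R$ are morphisms of commutative orthogonal ring $G$-spectra.

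Finally I would verify the universal property: given a commutative orthogonal ring $G$-spectrum $S$ together with morphisms $f_i \colon R_i \to S$ of commutative ring $G$-spectra compatible with the structure maps, the underlying maps of orthogonal $G$-spectra have a unique colimit $f \colon R \to S$ in $\catGOS$; that $f$ is compatible with $\eta$ and $\mu$ follows once more by identifying the relevant colimits and checking stagewise, and $f$ is the unique morphism of commutative ring $G$-spectra under the $R_i$ because it is already unique as a map of underlying spectra. Thus $R$, with the structure above, is a colimit of $\{R_i\}$ in commutative orthogonal ring $G$-spectra, and by construction its underlying orthogonal $G$-spectrum is the colimit computed in $\catGOS$. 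The only step that carries real weight is the first one — that finite smash powers commute with the sequential colimit in the point-set category $\catGOS$, which rests on the closedness of the smash product (and is what the $h$-cofibration hypothesis is there to keep homotopically sound); everything afterwards is a routine diagram chase.
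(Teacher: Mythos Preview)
The paper does not prove this lemma; it simply cites it as \cite[15.11]{MMSS}. Your argument is the standard direct verification and is correct: build a commutative ring structure on the spectrum-level colimit $R$ using that smash powers commute with filtered colimits (closedness of $\wedge$ plus cofinality of the diagonal $\N\to\N^k$), then check the universal property stagewise.

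One point worth sharpening: your proof never actually uses the $h$-cofibration hypothesis, and indeed you do not need it for the point-set statement. Filtered colimits of commutative orthogonal ring $G$-spectra are always created in $\catGOS$, because the free commutative algebra monad $\Comm X = \bigvee_{n\ge 0} X^{\wedge n}_{\Sigma_n}$ commutes with filtered colimits (each $(-)^{\wedge n}_{\Sigma_n}$ does, and so does the wedge), and algebras over a filtered-colimit-preserving monad have their filtered colimits computed in the underlying category. The $h$-cofibration hypothesis is present because it is what arises in the applications (the cofibration hypothesis~\ref{cofhyp} and the bar constructions in Proposition~\ref{barprop}) and because it guarantees the colimit is also homotopically meaningful; it plays no role in the categorical identification you wrote down. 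You can therefore drop the hedging sentence about it ``ensuring'' compatibility of smash powers with the colimit and simply state the cleaner general fact.
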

The following Proposition is inspired by Lemma 15.9 in \cite{MMSS}. It deals with the other part of the cofibration hypothesis for $\Comm \Sp I$ and brings us closer to the convenience property~\ref{convenient}.
\begin{prop}\label{barprop}
Let $f\colon X \rightarrow Y$ be a wedge of maps in $\Sp I$ and let
$\Comm X \rightarrow R$ be a map of commutative orthogonal ring
$G$-spectra. Then, considered as a map in \(\catGOS\), the cobase change $j\colon R \rightarrow R \smash_{\Comm X} \Comm Y$ is a Hurewicz cofibration. If in addition, smashing with $R$  preserves $\Sp$-cofibrations, $j$ is even an $\Sp$-cofibration.
\end{prop}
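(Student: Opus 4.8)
The plan is to imitate the proof of Lemma~15.9 in \cite{MMSS}: equip the cobase change $S:=R\smash_{\Comm X}\Comm Y$ with an exhaustive ``number of $Y$-factors'' filtration $R=F_0S\to F_1S\to F_2S\to\cdots$ with colimit $S$, show that each inclusion $F_{q-1}S\to F_qS$ is the pushout of a map that is an $h$-cofibration (and, under the extra hypothesis, an $\Sp$-cofibration), and then conclude, since both classes of maps are closed under pushout and under (transfinite) composition.

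First I would pin down the layers of this filtration. Writing $\Comm Y=\bigvee_{q\ge0}Y^{\smash q}_{\Sigma_q}$ and letting $Q_qf\subseteq Y^{\smash q}$ be the $\Sigma_q$-subspectrum of those smash tuples at least one coordinate of which lies in the image of $f$, the map $f^{\square q}\colon Q_qf\to Y^{\smash q}$ is the $q$-fold iterated pushout product of $f$ with itself, carrying its canonical $\Sigma_q$-action by permutation of smash factors. A diagram chase identical to the one in \cite{MMSS} then identifies $F_qS$ with the pushout \[ F_qS\;\cong\;F_{q-1}S\;\cup_{\,R\smash(Q_qf)_{\Sigma_q}}\;R\smash(Y^{\smash q})_{\Sigma_q}, \] where the left-hand leg $R\smash(Q_qf)_{\Sigma_q}\to F_{q-1}S$ is assembled from the $\Comm X$-algebra structure map $\Comm X\to R$ together with the inclusions of the lower filtration stages. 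Thus the whole statement reduces to showing that $R\smash(f^{\square q})_{\Sigma_q}$ is an $h$-cofibration, and an $\Sp$-cofibration whenever $R\smash(-)$ preserves $\Sp$-cofibrations.

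The heart of the matter is that $(f^{\square q})_{\Sigma_q}\colon(Q_qf)_{\Sigma_q}\to(Y^{\smash q})_{\Sigma_q}$ is itself an $\Sp$-cofibration. Since $f$ is a wedge of generating cofibrations it is a relative $\Sp I$-cell complex, so Theorem~\ref{relativeversion}, applied with the finite group $\Sigma_q$ acting on $\{1,\dots,q\}$, exhibits $X^{\smash q}\subseteq Q_qf\subseteq Y^{\smash q}$ as relative $\Gr{}{}I_{\gml(\Sigma_q,q)}$-cell complexes of $\Sigma_q$-spectra, and Example~\ref{dfn.smash-inducedcell} (or, in general, the corresponding elaboration of Theorem~\ref{relativeversion}) identifies the cells of $Y^{\smash q}$ attached over $X^{\smash q}$ as induced regular cells of the form $\Sigma_{q+}\smash_{\Sigma_m\times\Sigma_{q-m}}\bigl(i^{\square m}\smash Z\bigr)$ with $i\in\Sp I$, $1\le m\le q$, and $Z$ a smash power of a cell source of $f$. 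Taking $\Sigma_q$-orbits commutes with the cell-complex construction, and the orbit of such a cell is $(i^{\square m})_{\Sigma_m}\smash Z_{\Sigma_{q-m}}$; by Lemma~\ref{groupsemifree} together with Proposition~\ref{semi-free-smash} this is $\Gr{}{V^{\oplus m}}$ applied to a relative inclusion of $\Or{V^{\oplus m}}$-spaces which, by Illman's Theorem~\ref{illtriangmfd}, is a relative $CW$-inclusion whose isotropy groups lie in $\gml^{V^{\oplus m}}$; this is precisely the computation made in Proposition~\ref{eqconven} (respectively Lemma~\ref{eqfreeup2}), using that for the positive mixing pair $\gml^V$ consists of \emph{all} subgroups of $G\times\Or V$ when $V\ne0$. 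Smashing the resulting $\catM_{V^{\oplus m}}$-cofibration $(i^{\square m})_{\Sigma_m}$ with the $\Sp$-cofibrant spectrum $Z_{\Sigma_{q-m}}$ is again a cofibration by the pushout-product axiom for the $\Sp$-model structure. Hence $(f^{\square q})_{\Sigma_q}$ is a relative $\Gr{}{}I$-cell complex in $\catGOS$, i.e.\ an $\Sp$-cofibration, and in particular a levelwise $h$-cofibration because every cofibration in the level model structure is a Hurewicz cofibration by Definition~\ref{hmlmonoidal}(i).

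Finally, smashing with any orthogonal $G$-spectrum preserves levelwise $h$-cofibrations (a coend computation, as used in the proof of Proposition~\ref{Gprespistar}), so $R\smash(f^{\square q})_{\Sigma_q}$ is always an $h$-cofibration; therefore each $F_{q-1}S\to F_qS$ is a pushout of an $h$-cofibration, hence an $h$-cofibration, and $j\colon R\to S$, being the composite $F_0S\to F_1S\to\cdots$, is an $h$-cofibration. If in addition $R\smash(-)$ preserves $\Sp$-cofibrations, the identical argument with ``$\Sp$-cofibration'' replacing ``$h$-cofibration'' throughout shows $j$ is an $\Sp$-cofibration. I expect the main obstacle to be the bookkeeping in the first step --- setting up the ``number of $Y$-factors'' filtration on a pushout of commutative ring spectra and verifying the layer pushout formula, while simultaneously controlling the $\Sigma_q$-actions when $f$ is a (possibly infinite) wedge rather than a single generating cofibration, so that Theorem~\ref{relativeversion} and Example~\ref{dfn.smash-inducedcell} genuinely apply and the orbit spectra are seen to be $\Sp$-cofibrant.
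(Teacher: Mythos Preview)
Your argument is essentially correct, but it is \emph{not} the route taken by the paper, nor the one in \cite[15.9]{MMSS} that you claim to be imitating. Both the paper and \cite{MMSS} use a bar-construction trick rather than the ``number of $Y$-factors'' filtration: one identifies $D^{n+1}_+$ with the realization of the simplicial space $B_*(S^n_+,S^n_+,S^0)$ formed with respect to the wedge-sum monoid structure, so that $Y\cong|B_*(X,X,T)|$ where $T$ is the wedge of the $\Gr{}{V_i}(\faktor{G\times\Or{V_i}}{P_i})_+$. Since $\Comm$ sends wedges to smashes, fold maps to multiplications, and preserves geometric realization, this yields $\Comm Y\cong|B_*(\Comm X,\Comm X,\Comm T)|$ and hence $R\smash_{\Comm X}\Comm Y\cong|B_*(R,\Comm X,\Comm T)|$. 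One then checks that the latching map in simplicial degree $q$ is $R\smash(\Sp\to\Comm X)^{\square q}\smash\Comm T$, which is an $h$-cofibration (wedge-summand inclusion), and an $\Sp$-cofibration by Proposition~\ref{eqconven} and the pushout-product axiom when $R\smash(-)$ preserves $\Sp$-cofibrations; Proposition~\ref{cofproperfiltr} finishes.

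The practical difference is this: the paper's simplicial approach reduces everything to the single ingredient that $\Sp\to\Comm X$ is an $\Sp$-cofibration (Proposition~\ref{eqconven}), together with the pushout-product axiom, and never needs to analyse $\Sigma_q$-orbits of $f^{\square q}$ directly. Your filtration approach instead requires establishing that each $(f^{\square q})_{\Sigma_q}$ is an $\Sp$-cofibration, which you do by invoking the equivariant cell structure of Theorem~\ref{relativeversion}, passing to $\Sigma_q$-orbits, and applying Illman's theorem to the resulting pairs of spaces. This is a heavier argument: you need Illman for a \emph{relative} $CW$-structure (manifold with boundary, boundary as subcomplex), and when $f$ is a wedge of several generating cofibrations the cells indexed by Theorem~\ref{relativeversion} mix factors from different wedge summands, so the ``$Z_{\Sigma_{q-m}}$'' you write down are themselves orbit spectra of smash powers that again need Proposition~\ref{eqconven} to be recognized as $\Sp$-cofibrant. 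None of this is wrong, but it is the bookkeeping you yourself flag, and the bar-construction argument sidesteps all of it.
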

\begin{proof}
Recall the two-sided bar construction of Definition~\ref{barconstr}.  To distinguish the two cases, we use roman bold fonts $\B_*(-,-,-)$ for the bar with respect to the smash product of commutative orthogonal ring $G$-spectra and normal fonts $B_*(-,-,-)$ for the bar with respect to wedge sum of spaces.  We let $\B(-,-,-)=|\B_*(-,-,-)|$ be the realization in the category of commutative orthogonal ring $G$-spectra and likewise we set $B(-,-,-)=|B_*(-,-,-)|$.  Note that the symmetric monoidal structures are the coproduct in both cases, so all objects are symmetric monoids.

As in the proof of {\cite[15.9]{MMSS}}, we identify the inclusion
$S^n_+ \rightarrow D^{n+1}_+$ of spaces with the realization of the
inclusion of the \(S^n_+\) summand in the $0$-simplices of the
two-sided bar construction 
$B_*(S^n_+,S^n_+,S^0)$.
 Its $q$-simplices are
given by a wedge of $q + 1$ copies of $S^n_+$ with $S^0$, with
degeneracy maps the inclusions of wedge summands and face maps induced
from folding maps $S^n_+ \vee S^n_+ \rightarrow S^n_+$, respectively
the collapse map $S^n_+ \vee S^0  \rightarrow S^0$ for the last face
in each simplicial level. Note that both the smash product with an
$\Or{V}$-orbit and the semi-free functors $\Gr{}{V}$ preserve colimits
and tensors, hence the simplicial realization. Thus we can express $f$
analogously as the realization of the inclusion of a summand of the $0$-simplices of the simplicial
orthogonal spectrum $B_*(X,X,T)$, where $X= \bigvee_i
\Gr{}{V_i}{S^{n_i}_+ \smash
  \left({\scriptsize\faktor{G \times \Or{V_i}}{P_i}}\right)_+}$ and $T =
\bigvee_i \Gr{}{V_i}{S^{0} \smash
  \left({\scriptsize\faktor{G \times \Or{V_i}}{P_i}}\right)_+}$.

Applying $\Comm$ takes coproducts to smash products, fold maps to
multiplication maps, and inclusions of the base points to unit maps of
commutative orthogonal $G$-ring spectra. It also preserves tensors over
$\catU$, \ie sends $X \smash A_+$ to $X \otimes A$,
and so we get canonical isomorphisms
$$\Comm Y\cong \Comm B(X,X,T)\cong\B(\Comm X,\Comm X,\Comm T).
$$
Furthermore, since we can compute geometric realizations
in terms of the underlying spectra ({Proposition~\ref{realizeinspec}, whose proof clearly does not rely on the result we are now proving}), and since
smashing with $R$ commutes with this realization, we can identify  
\labeleq{barident}{R \smash_{\Comm X} \Comm Y \cong R \smash_{\Comm X} \B(\Comm X, \Comm X, \Comm T) \cong \B(R, \Comm X, \Comm T).}
We look at $\B_*(R, \Comm X, \Comm T)$ in more detail: $R$ includes
into the $0$-simplices $R \smash \Comm T$ as a wedge summand, \ie via
a Hurewicz cofibration. All the other wedge summands are of the form 
\[R
\smash \left(\Gr{}{V_i}{S^0 \smash
    ({\scriptsize\faktor{G \times \Or{V_i}}{P_i}})_+}\right)^{\smash k 
}_{\Sigma_k},\] 
hence they are $\Sp$-cofibrant if smashing with $R$
preserves $\Sp$-cofibrations by Proposition~\ref{eqconven}.
Then in particular $R\smash \Comm T$ is $\Sp$-cofibrant and the inclusion of $R$ is an
$\Sp$-cofibration.

The degeneracy maps are given by inclusions \[R
\smash (\Comm X)^{\smash q} \smash \Comm T = R \smash (\Comm
X)^{\smash r} \smash \Sp \smash (\Comm X)^{\smash q-r} \smash \Comm T
\longrightarrow R \smash (\Comm X)^{\smash q+1} \smash \Comm T.\] 
Therefore the inclusion of degenerate simplices (\ref{degsimplices}) is in each level $q$ given by the map
\[R \smash (\Sp \rightarrow \Comm X)^{\square q+1} \smash \Comm T,\]
which is a Hurewicz cofibration because $\Sp \rightarrow \Comm X$ is an inclusion of a wedge summand. Furthermore Proposition~\ref{eqconven} states that $\Sp \rightarrow \Comm X$ is an $\Sp$-cofibration. Hence by the pushout product axiom, the inclusion of degenerate simplices is an $\Sp$-cofibration if smashing with $R$ preserves $\Sp$-cofibrations. Hence the bar construction is Hurewicz proper 
(Definition~\ref{dfn:Cproper} of $C$-proper with $C$ being the class of Hurewicz cofibrations), 
and even $\Sp$-proper for the stronger assumption on $R$. The result then follows using Proposition~\ref{cofproperfiltr}.
\end{proof}
\begin{rem}
Note that the sphere spectrum $\Sp$ is an important specific example of a ring spectrum that preserves
$\Sp$-cofibrations under the smash product. 
\end{rem}
\begin{cor}
The set of maps of commutative orthogonal ring $G$-spectra $\Comm \Sp I$
satisfies the cofibration hypothesis~\ref{cofhyp}. Since it consists of $\Comm \Sp
I$-cell complexes, so does $\Comm\Sp J$. 
\end{cor}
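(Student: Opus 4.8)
The plan is to deduce both clauses of the corollary from Proposition \ref{barprop} together with the colimit lemma \cite[15.11]{MMSS} quoted just above. Recall that the cofibration hypothesis \ref{cofhyp} for a set of maps requires two things: that the cobase change of (a coproduct of) those maps along an arbitrary map is an $h$-cofibration of underlying orthogonal $G$-spectra — so that every relative cell complex built from them is an $h$-cofibration and colimits along such relative cell complexes are created by the forgetful functor to $\catGOS$ — and that the domains of the maps are compact relative to the resulting class of cell complexes.

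For the first clause I would first observe that, since $\Comm\colon\catGOS\to\Sp_{\catcAlg}$ is a left adjoint, it carries a wedge $\bigvee_i f_i$ of maps $f_i\in\Sp I$ to the coproduct $\coprod_i\Comm f_i$ in the category of commutative orthogonal ring $G$-spectra; hence a coproduct of maps in $\Comm\Sp I$ is of the form $\Comm f$ for $f\colon X\to Y$ a wedge of maps in $\Sp I$. Given any map $\Comm X\to R$ of commutative orthogonal ring $G$-spectra, the cobase change of $\Comm f$ along it is precisely the map $j\colon R\to R\wedge_{\Comm X}\Comm Y$ treated in Proposition \ref{barprop}, which is an $h$-cofibration of underlying orthogonal $G$-spectra. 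Since $h$-cofibrations in $\catGOS$ are closed under cobase change and, by \cite[15.11]{MMSS}, the underlying orthogonal $G$-spectrum of a transfinite composite of such maps is the colimit formed in $\catGOS$, it follows that every relative $\Comm\Sp I$-cell complex is an $h$-cofibration and that sequential colimits of relative $\Comm\Sp I$-cell complexes are created in $\catGOS$.

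For the compactness clause, let $s$ be a domain of a map in $\Sp I$, so that $s$ is a compact orthogonal $G$-spectrum, and let $R_0\to R_1\to\cdots$ be a relative $\Comm\Sp I$-cell complex with colimit $R$. Using the adjunction isomorphism $\Sp_{\catcAlg}(\Comm s,-)\cong\catGOS(s,-)$ and the fact, just established, that $R=\colim_\alpha R_\alpha$ is computed in $\catGOS$ along $h$-cofibrations, compactness of $s$ yields
\[
  \Sp_{\catcAlg}\bigl(\Comm s,\,\colim_\alpha R_\alpha\bigr)\;\cong\;\colim_\alpha\Sp_{\catcAlg}(\Comm s,\,R_\alpha).
\]
Thus $\Comm s$ is compact relative to $\Comm\Sp I$-cell complexes, and the cofibration hypothesis holds for $\Comm\Sp I$. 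Finally, every map in $\Sp J=\Gr{}{}K$ is a relative $\Sp I$-cell complex by the construction of $K$ (as in \cite{MMSS}), and since $\Comm$ preserves pushouts, coproducts and transfinite composites, $\Comm$ of such a map is a relative $\Comm\Sp I$-cell complex whose domain is of the form $\Comm s$ with $s$ a domain of a map in $\Sp I$; hence $\Comm\Sp J$ consists of relative $\Comm\Sp I$-cell complexes with compact domains and satisfies the cofibration hypothesis for the same reasons. The only genuine work here is Proposition \ref{barprop}; the remaining obstacle is purely bookkeeping — identifying the cobase changes in $\Sp_{\catcAlg}$ that occur in the small object argument with the maps $R\to R\wedge_{\Comm X}\Comm Y$ of that proposition, and verifying that the relevant colimits are genuinely taken along $h$-cofibrations so that \cite[15.11]{MMSS} applies.
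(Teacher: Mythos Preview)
Your argument is correct and follows the paper's intended route: the paper gives no explicit proof, but the corollary is meant to be immediate from Proposition~\ref{barprop} (giving part (i) of Condition~\ref{cofhyp}) together with the lemma \cite[15.11]{MMSS} quoted just above (giving part (ii)), and the observation that $\Comm$, as a left adjoint, carries relative $\Sp I$-cell complexes to relative $\Comm\Sp I$-cell complexes.

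One correction of exposition: you have misstated what the cofibration hypothesis is. In this paper's Condition~\ref{cofhyp} there are exactly two clauses --- cobase changes of coproducts become $h$-cofibrations under the forgetful functor, and the forgetful functor preserves sequential colimits along maps that become $h$-cofibrations. Compactness of domains is \emph{not} part of the cofibration hypothesis; it is a separate notion introduced afterward and only enters in the definition of a compactly generated model category. Your compactness argument is correct and will be needed later for Theorem~\ref{modelstruct}, but it is superfluous for this corollary as stated. Everything you wrote for clauses (i) and (ii) is exactly what is required.
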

\begin{lemma}
Let $i\colon R \rightarrow R'$ be an $\Sp$-cofibration of commutative
orthogonal ring $G$-spectra. Then the functor $(-)\smash_R R'$ on
commutative $R$-algebras preserves stable equivalences.
\end{lemma}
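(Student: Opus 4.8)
\textbf{Proof plan for Lemma \cite[15.12]{MMSS}.}

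The statement to prove is: if \(i \colon R \to R'\) is an \(\Sp\)-cofibration of commutative orthogonal ring \(G\)-spectra, then the functor \((-) \smash_R R'\) on commutative \(R\)-algebras preserves stable equivalences. The plan is to reduce, via a cell-induction argument, to the two building blocks that have already been analysed in this section: first, that \(\Comm\) behaves well with respect to stable equivalences between \(\Sp\)-cofibrant spectra (established in the corollary following Proposition \ref{eqconven}), and second, that \((-) \smash_{\Comm X} \Comm Y\) and more generally cobase changes along free-cell attachments are controlled by Proposition \ref{barprop} together with the flatness results \ref{Gprespistar} and \ref{Gprespistar2}.

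First I would reduce to the case where \(i\) is a relative \(\Comm \Sp I\)-cell complex: by the corollary after Proposition \ref{barprop} the set \(\Comm \Sp I\) satisfies the cofibration hypothesis, so any \(\Sp\)-cofibration of commutative ring spectra is a retract of such a cell complex, and retracts are preserved by \((-)\smash_R R'\) and detected on stable equivalences. Next, since smash products over \(R\) and the relevant colimits can be computed on underlying orthogonal \(G\)-spectra — by the lemma \cite[15.11]{MMSS} quoted above for sequential colimits along \(h\)-cofibrations and by Proposition \ref{barprop} which identifies each stage \(R \smash_{\Comm X}\Comm Y\) with a bar construction that is \(h\)-proper — it suffices by the long exact sequence \ref{MMSSthm}(vii) and the gluing/colimit statements \ref{MMSSthm}(iv)--(vi) to show that a \emph{single} cell attachment \(R \to R \smash_{\Comm X} \Comm Y\) with \(f \colon X \to Y\) a wedge of maps in \(\Sp I\) is \(\pi_*\)-compatible with stable equivalences \(A \to B\) of \(R\)-algebras under \((-)\smash_R R'\). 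Here one uses that \(A \smash_R (R \smash_{\Comm X}\Comm Y) \cong A \smash_{\Comm X}\Comm Y \cong \mathbb{B}(A,\Comm X,\Comm T)\) (the same identification \ref{barident} as in Proposition \ref{barprop}, now with \(A\) in place of \(R\)), so the comparison for \(A \to B\) reduces, degreewise in the bar construction, to comparing \(A \smash (\Comm X)^{\smash q}\smash \Comm T \to B \smash (\Comm X)^{\smash q}\smash \Comm T\).

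The degreewise comparison is where the earlier flatness input does the work: each \(\Comm X\) and \(\Comm T\) is built from wedge summands of the form \(\Gr{}{V_i}(S^{n_i}_+ \smash (\faktor{G\times\Or{V_i}}{P_i})_+)^{\smash k}_{\Sigma_k}\) with \(V_i \ne 0\), which by Proposition \ref{eqconven} are \(\gml\)-cofibrant, hence flat by Proposition \ref{Gprespistar}; smashing a stable equivalence \(A \to B\) with such a flat spectrum is again a stable equivalence, and by \ref{MMSSthm}(iii) a wedge of these is too. Then the realization of a levelwise stable equivalence of proper simplicial spectra is a stable equivalence (this is the standard argument combining \ref{MMSSthm}(iv)--(vi) with the skeletal filtration of the bar construction, exactly as used in Proposition \ref{barprop}), so \(\mathbb{B}(A,\Comm X,\Comm T) \to \mathbb{B}(B,\Comm X,\Comm T)\) is a stable equivalence, i.e.\ \(A \smash_R (R\smash_{\Comm X}\Comm Y) \to B \smash_R (R\smash_{\Comm X}\Comm Y)\) is a stable equivalence. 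Transfinite composition along the cell filtration of \(i\), using \ref{MMSSthm}(vi), then finishes the general case.

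The main obstacle is bookkeeping rather than a genuinely new idea: one must be careful that at each cell-attachment stage the \(R\)-algebra \(A\) (and its replacement \(B\)) need \emph{not} be cofibrant, so one cannot simply invoke the pushout-product axiom for \(R\)-algebras; instead one must fall back on the fact that \(\Comm X\)- and \(\Comm T\)-summands are \emph{absolutely} flat (flatness of \(\gml\)-cofibrant spectra via \ref{Gprespistar}), which is what makes the degreewise comparison work without cofibrancy hypotheses on \(A\). A secondary point of care is checking that the bar-construction identification \ref{barident} and the properness needed for the realization argument go through verbatim with \(A\) in place of \(R\) — this is immediate from the proof of Proposition \ref{barprop}, which only used that \(R\) is a commutative ring spectrum and never any cofibrancy of \(R\) for the \(h\)-propriety statement.
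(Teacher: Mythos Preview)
Your proposal is correct and follows essentially the same approach as the paper: reduce by cell induction to a single cobase change along \(\Comm X \to \Comm Y\), identify \((-)\smash_R R'\) with the bar construction \(\B(-,\Comm X,\Comm T)\) as in \ref{barident}, and then use that this bar construction is \(h\)-proper together with Proposition \ref{cofproperfiltreq} to conclude. The paper's proof is a three-line sketch invoking \ref{cofproperfiltreq} directly; you have unpacked the degreewise flatness check (via \ref{eqconven} and \ref{Gprespistar}) that is needed to verify the levelwise-equivalence hypothesis of \ref{cofproperfiltreq}, and you have correctly flagged that no cofibrancy on \(A,B\) is required because the flatness of the \(\Comm X\)- and \(\Comm T\)-factors does all the work.
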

\begin{proof}
We proceed as in {\cite[15.12]{MMSS}}.
Assume inductively that $i$ is a cobase change of a wedge of maps in
$\Comm \Sp I$. Then as in \ref{barident} we can identify
$(-)\smash_R R'$ with an appropriate $\B(-,\Comm X, \Comm T)$. This
functor preserves stable equivalences by Proposition~\ref{cofproperfiltreq},
since the bar construction is Hurewicz proper.    
\end{proof}
Finally we get the analog of {\cite[15.4]{MMSS}}, using the same proof as in the classical case (cf.~\cite[p. 490]{MMSS}):
\begin{prop}
Every \relative $\Comm \Sp J$-cell complex is a stable equivalence.
\end{prop}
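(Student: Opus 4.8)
The plan is to mirror the proof of the corresponding classical statement \cite[15.4]{MMSS} (cf.\ \cite[p.~490]{MMSS}): reduce to a single cell attachment, show each attachment is an $h$-cofibration of orthogonal $G$-spectra and a stable equivalence, and assemble the pieces with the colimit results already available. First I would unwind the definition. A relative $\Comm\Sp J$-cell complex $R\to R'$ is the transfinite composite of a sequence $R=R_0\to R_1\to\cdots\to R'=\colim_\beta R_\beta$ in which each $R_\beta\to R_{\beta+1}$ is a cobase change in commutative orthogonal ring $G$-spectra of a coproduct of maps of $\Comm\Sp J$. By the preceding corollary $\Comm\Sp J$ satisfies the cofibration hypothesis, so the domains are small and, by \ref{kappaseq}, we may take this to be a $\kappa$-sequence; moreover, since $\Comm$ is a left adjoint it commutes with coproducts, so such a coproduct has the form $\Comm(j_\beta)\colon\Comm A_\beta\to\Comm X_\beta$ for $j_\beta\colon A_\beta\to X_\beta$ a wedge of maps of $\Sp J=\Gr{}{}K$, hence an acyclic $\Sp$-cofibration between $\Sp$-cofibrant orthogonal $G$-spectra. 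By the corollary that $\Comm$ preserves stable equivalences between $\Sp$-cofibrant orthogonal spectra, $\Comm(j_\beta)$ is a stable equivalence, and since $\Comm\Sp J$ consists of relative $\Comm\Sp I$-cell complexes, $\Comm(j_\beta)$ is a relative $\Comm\Sp I$-cell complex.

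Next I would dispose of the cofibration bookkeeping. Filtering each $\Comm A_\beta\to\Comm X_\beta$ by its $\Comm\Sp I$-cell structure and applying Proposition \ref{barprop} at every step (with the successive commutative ring $G$-spectra playing the role of $R$) shows that each stage $R_\beta\to R_{\beta+1}=R_\beta\smash_{\Comm A_\beta}\Comm X_\beta$ — and likewise $\Comm(j_\beta)$ itself — is an $h$-cofibration of orthogonal $G$-spectra. Granting (see below) that every stage is also a stable equivalence, the proof concludes: by the lemma reproduced above from \cite[15.11]{MMSS}, the underlying orthogonal $G$-spectrum of $R'$ is the colimit along these $h$-cofibrations computed in $\catGOS$, so part (vi) of Theorem \ref{MMSSthm} gives that $R=R_0\to R'$ is a stable equivalence.

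The heart of the argument is to show that a single stage $R_\beta\to R_{\beta+1}=R_\beta\smash_{\Comm A_\beta}\Comm X_\beta$ is a stable equivalence, and here I would imitate the proof of the lemma that base change along an $\Sp$-cofibration of commutative ring spectra preserves stable equivalences. Identify $R_{\beta+1}$ with the realization of the two-sided bar construction $\B_\bullet(R_\beta,\Comm A_\beta,\Comm X_\beta)$, whose underlying orthogonal $G$-spectrum is computed levelwise in $\catGOS$ by Proposition \ref{realizeinspec} and which, by the analysis in the proof of Proposition \ref{barprop}, is $h$-proper; then $R_\beta=|\B_\bullet(R_\beta,\Comm A_\beta,\Comm A_\beta)|$ and the canonical map $R_\beta\to R_{\beta+1}$ is the realization of the levelwise map $\id_{R_\beta\smash(\Comm A_\beta)^{\smash q}}\smash\Comm(j_\beta)$. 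Since realization of a levelwise stable equivalence between $h$-proper simplicial orthogonal $G$-spectra is a stable equivalence (\ref{cofproperfiltreq}), it suffices to check that each such levelwise map is a stable equivalence. Now $\Comm(j_\beta)$ is a stable equivalence and an $h$-cofibration whose cofiber, apart from the trivial wedge summand $\Sp\xrightarrow{\id}\Sp$ coming from the $i=0$ term, is $C_\beta\defas\bigvee_{i\ge1}\bigl((X_\beta)^{\smash i}_{\Sigma_i}/(A_\beta)^{\smash i}_{\Sigma_i}\bigr)$, which is $\Sp$-cofibrant (each $(X_\beta)^{\smash i}_{\Sigma_i}$ being $\Sp$-cofibrant by \ref{eqconven} and the filtration Theorem \ref{cellular}) and stably acyclic. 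Smashing $\Comm(j_\beta)$ with $W=R_\beta\smash(\Comm A_\beta)^{\smash q}$ has cofiber $W\smash C_\beta$, which is stably acyclic because $C_\beta$ is both flat (Proposition \ref{Gprespistar}) and stably acyclic; so by the long exact sequence \ref{MMSSthm}(vii) the map $\id_W\smash\Comm(j_\beta)$ is a stable equivalence, as required.

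I expect the main obstacle to be not any single step but the cofibrancy/flatness bookkeeping in the last paragraph — keeping careful track of which smash factors occurring in the bar construction are $\Sp$-cofibrant (equivalently flat) and why the bar construction is $h$-proper. This is precisely where the positive model structure and Proposition \ref{eqconven} do their work, but all of it parallels the arguments already carried out for Proposition \ref{barprop} and for the base-change lemma, so no genuinely new ingredient is needed; the difficulty is purely in assembling the existing pieces correctly.
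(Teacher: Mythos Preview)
Your proposal is correct and takes essentially the same approach as the paper, which simply refers to the classical argument on \cite[p.~490]{MMSS}. One small remark: the paper's preparatory lemmas (Proposition~\ref{barprop} and the analogue of \cite[15.12]{MMSS}) are phrased in terms of the specific bar construction $B(R,\Comm X,\Comm T)$ arising from the cone description of the generating cells, whereas you use the generic two-sided bar $B(R,\Comm A_\beta,\Comm X_\beta)$ versus $B(R,\Comm A_\beta,\Comm A_\beta)$ and compare them via $\id\smash\Comm(j_\beta)$. Both routes work; yours avoids relying on the special shape of the maps in $\Sp J$ at the cost of the cofibrancy/flatness bookkeeping you flag, while the paper's route packages that bookkeeping into the preceding lemma \cite[15.12]{MMSS} once and for all. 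Either way, the substantive ingredients---$h$-properness of the bar constructions via the unit $\Sp\to\Comm A$ being a wedge-summand inclusion, the corollary that $\Comm$ preserves stable equivalences between $\Sp$-cofibrant spectra, and Proposition~\ref{Gprespistar}/\ref{Gprespistar2} for the levelwise step---are the same.
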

This once more allows us to use Lemma \cite[2.3]{SS}, and we obtain the $\Sp$-model structure for commutative orthogonal ring $G$-spectra:
\begin{theorem} \label{modelstruct}\label{eqmodelstruct}
The underlying $\Sp$-fibrations and stable equivalences give
a compactly generated proper topological model structure on the
category of commutative orthogonal ring \(G\)-spectra. The generating
(acyclic) cofibrations are given by the sets $\Comm \Sp I$ and
$\Comm\Sp J$, respectively.\\Again the identity functor gives a
Quillen equivalence to the classical model structure from
\cite[15.1]{MMSS}.  
\end{theorem}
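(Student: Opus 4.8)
The strategy is the standard one for lifting a model structure along the free-commutative-algebra functor $\Comm$, following Mandell--May--Schwede--Shipley \cite{MMSS} Section 15, applied to the $\Sp$-model structure on $\catGOS$ from Definition \ref{Spmodelstructure} rather than the classical stable structure. One invokes the lifting criterion of Schwede--Shipley \cite[Lemma 2.3]{SS}: the sets $\Comm \Sp I$ and $\Comm \Sp J$ will be the generating cofibrations and acyclic cofibrations of the desired model structure on commutative orthogonal ring $G$-spectra, provided (a) both sets permit the small object argument, i.e.\ satisfy the cofibration hypothesis, and (b) every relative $\Comm \Sp J$-cell complex is a stable equivalence. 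Condition (a) for $\Comm \Sp I$ is exactly the Corollary immediately preceding the statement, derived from Proposition \ref{barprop}, and it then passes to $\Comm \Sp J$ since $\Comm \Sp J$ consists of $\Comm \Sp I$-cell complexes; condition (b) is exactly the Proposition stated just above, whose proof copies \cite[15.4]{MMSS} verbatim once one has the preparatory lemmas on smash powers and bar constructions assembled earlier in the section.

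\textbf{Key steps, in order.} First I would recall that $\Sp$-fibrations and stable equivalences are already defined on $\catGOS$ (Definition \ref{Spmodelstructure}), and declare a map of commutative orthogonal ring $G$-spectra to be a fibration or weak equivalence exactly when it is so on underlying $G$-spectra. Second, I would check the hypotheses of \cite[Lemma 2.3]{SS}: the monoid axiom and the relevant flatness are Propositions \ref{Gprespistar} and \ref{Gprespistar2}, and the fact that $\Comm$ preserves stable equivalences between $\Sp$-cofibrant spectra (hence sends $\Comm \Sp J$ into stable equivalences) is one of the corollaries above, ultimately resting on Lemmas \ref{eqfreeup} and \ref{eqfreeup2}. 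Third, one feeds in the cofibration-hypothesis corollary (from Proposition \ref{barprop}) so that the small object argument runs for $\Comm \Sp I$ and $\Comm \Sp J$. Fourth, one concludes that the three classes are part of a cofibrantly generated model structure; compact generation (hence properness and the ``compactly generated'' strengthening) follows because all the domains involved are compact in the relevant sense, and right properness is inherited from $\catGOS$ since fibrations and weak equivalences are created there. Finally, for the Quillen equivalence with the classical model structure of \cite[15.1]{MMSS}: the identity functor on commutative orthogonal ring $G$-spectra is already a Quillen equivalence on the underlying categories by Theorem \ref{eqRmodel} (all the listed model structures being Quillen equivalent to the $(\overline\hml,\overline\hml)$-one via the identity), and since weak equivalences and fibrations of commutative ring spectra are detected on underlying spectra in both structures, the identity descends to a Quillen equivalence of the commutative-algebra model structures.

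\textbf{Main obstacle.} The genuinely substantive input is not the formal lifting argument but the verification, carried out in the lemmas leading up to the theorem, that $\Sigma_j$-orbits of smash powers of the generating $\Sp$-cofibrations remain $\gml$-cofibrant (Proposition \ref{eqconven}), so that $\Comm$ behaves homotopically well --- this is precisely the point where the positive family $\hml$ (with $\hml^0$ empty) is essential, since it forces $V\ne 0$ and makes the relevant $\Sigma_j$-actions free, via Illman's Theorem \ref{illtriangmfd}. Assuming those results, as the excerpt permits, the remaining work is bookkeeping: matching our sets $\Comm \Sp I$, $\Comm \Sp J$ to the hypotheses of \cite[2.3]{SS} and transcribing the bar-construction arguments of \cite[\S15]{MMSS}. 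I would therefore present the proof tersely, citing \cite[2.3]{SS} for the lift, the two preceding propositions and corollaries for its hypotheses, and Theorem \ref{eqRmodel} together with the fact that weak equivalences are created in $\catGOS$ for the final Quillen equivalence statement.
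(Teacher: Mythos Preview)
Your proposal is correct and follows essentially the same route as the paper: the paper's proof is just the sentence ``This once more allows us to use Lemma \cite[2.3]{SS}'' preceding the theorem, with the hypotheses supplied by the chain of results you cite (the cofibration-hypothesis corollary from Proposition~\ref{barprop} and the proposition that every relative $\Comm\Sp J$-cell complex is a stable equivalence). One small quibble: compact generation does not by itself give properness, so your parenthetical ``(hence properness\dots)'' is misleading; right properness is indeed inherited as you say, while left properness requires the cobase-change arguments already embedded in the $h$-cofibration analysis of Proposition~\ref{barprop}.
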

We call cofibrant objects in this model structure simply
$\Sp$-cofibrant, inspired by the following Theorem, which is implied
by the second statement of Proposition~\ref{barprop}, and provides the
main motivation for the constructions in this section: 
\begin{theorem} \label{convenient}\label{eqconvenient}
The $\Sp$-model structure on commutative orthogonal ring $G$-spectra is
``convenient''\index{convenient model structure}\index{model structure!convenient} in the sense that
\begin{quote}
  the forgetful map from commutative orthogonal ring $G$-spectra to orthogonal $G$-spectra preserves $\Sp$-cofibrations:
\end{quote}
if $A$ is a commutative orthogonal
ring \(G\)-spectrum that is $\Sp$-cofibrant, it is also 
$\Sp$-cofibrant as an orthogonal \(G\)-spectrum.  
\end{theorem}

Even slightly more is true:
\begin{theorem}
Let $f\colon R \rightarrow R'$ be a map of commutative orthogonal ring
\(G\)-spectra, that is a cofibration in the model structure of
Theorem~\ref{modelstruct}. If the smash product with $R$ preserves
$\Sp$-cofibrations of orthogonal \(G\)-spectra, then $f$ is an
$\Sp$-cofibration also as a map of orthogonal $G$-spectra. 
\end{theorem}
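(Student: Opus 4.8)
The strategy is to reduce to the case where $f$ is a cobase change of a wedge of generating cofibrations in $\Comm\Sp I$, just as in the proof of Theorem~\ref{convenient} (which rests on the second statement of Proposition~\ref{barprop}). Since every cofibration of commutative orthogonal ring $G$-spectra is a retract of a relative $\Comm\Sp I$-cell complex, and since being an $\Sp$-cofibration of orthogonal $G$-spectra is stable under retracts and transfinite composition, it suffices to treat a single attaching step: $R \to R' = R \smash_{\Comm X} \Comm Y$, where $f\colon X \to Y$ is a wedge of maps in $\Sp I$ and $\Comm X \to R$ is a map of commutative orthogonal ring $G$-spectra. For the transfinite composition step one also needs that each stage $R_\alpha$ is built from $R = R_0$ by a sequence of $h$-cofibrations of underlying orthogonal $G$-spectra, so that colimits of commutative ring spectra agree with colimits of underlying spectra along the way; this is exactly the content of the lemma \cite[15.11]{MMSS} recalled just before Proposition~\ref{barprop}, together with the fact that smashing with $R_\alpha$ preserves $\Sp$-cofibrations whenever smashing with $R_0 = R$ does (because each $R_\alpha \to R_{\alpha+1}$ is itself a cobase change of a map of the above type and hence an $\Sp$-cofibration by Proposition~\ref{barprop}, so $R_\alpha$ is $\Sp$-cofibrant as an $R$-module, and smashing over $\Sp$ with an $\Sp$-cofibrant $R$-module followed by an $\Sp$-cofibration argument keeps cofibrations; the needed statement is essentially \cite[15.12]{MMSS}, which already appears in the excerpt).

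Concretely, the single attaching step is handled by the identification in the proof of Proposition~\ref{barprop}: write
\[
  R \smash_{\Comm X} \Comm Y \cong \B(R, \Comm X, \Comm T),
\]
the geometric realization of the two-sided bar construction, where $X = \bigvee_i \Gr{}{V_i}(S^{n_i}_+ \smash (\faktor{G\times\Or{V_i}}{P_i})_+)$ and $T = \bigvee_i \Gr{}{V_i}(S^0 \smash (\faktor{G\times\Or{V_i}}{P_i})_+)$. The inclusion of $R$ as the wedge summand of the $0$-simplices $R\smash\Comm T$ is an $h$-cofibration always, and is an $\Sp$-cofibration precisely because each remaining wedge summand is of the form $R\smash(\Gr{}{V_i}(S^0\smash(\faktor{G\times\Or{V_i}}{P_i})_+))^{\smash k}_{\Sigma_k}$, which is $\Sp$-cofibrant once smashing with $R$ preserves $\Sp$-cofibrations (using Proposition~\ref{eqconven} that the $\Sigma_k$-orbits of such smash powers are $\Sp$-cofibrant). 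Moreover the inclusion of degenerate simplices in simplicial level $q$ is $R\smash(\Sp \to \Comm X)^{\square q+1}\smash\Comm T$, which is an $\Sp$-cofibration by the pushout-product axiom for the $\Sp$-model structure on $\catGOS$ (Theorem~\ref{Gsmodel}) together with the hypothesis on $R$ and the fact that $\Sp \to \Comm X$ is an $\Sp$-cofibration (again Proposition~\ref{eqconven}). Hence the bar construction is $\Sp$-proper, and Proposition~\ref{cofproperfiltr} (the assembling result for realizations of proper simplicial objects) yields that $R \to \B(R,\Comm X,\Comm T)$ is an $\Sp$-cofibration of underlying orthogonal $G$-spectra.

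The main obstacle I anticipate is bookkeeping in the transfinite induction rather than any single hard idea: one must verify that the hypothesis ``smashing with $R$ preserves $\Sp$-cofibrations'' propagates to all intermediate ring spectra $R_\alpha$ in a relative $\Comm\Sp I$-cell presentation of $f$, so that Proposition~\ref{barprop} applies at every stage. This follows from the observation that each $R_\alpha$ is $\Sp$-cofibrant as an $R$-module (being obtained from $R$ by iterated $\Sp$-cofibrations of $R$-modules, which is the module-level statement in Theorem~\ref{eqRmodel}(ii) combined with the flatness input of Proposition~\ref{Gprespistar}), and that $X \smash_R R_\alpha$ for $R_\alpha$ a cofibrant $R$-module preserves $\Sp$-cofibrations whenever $X\smash_\Sp(-)$ does; one then concludes by the usual ``smash of two cofibrations is a cofibration'' argument in the monoidal model category of $R$-modules. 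Once this propagation is in place, the transfinite composition of $\Sp$-cofibrations is an $\Sp$-cofibration, and passing to retracts completes the proof.
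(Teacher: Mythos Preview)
Your overall plan is correct and matches the paper's (terse) proof: reduce to a relative $\Comm\Sp I$-cell complex and apply the strong form of Proposition~\ref{barprop} at each attaching step, then pass to retracts. You are also right to flag the propagation of the hypothesis ``smashing with $R$ preserves $\Sp$-cofibrations'' to the intermediate stages $R_\alpha$ as the one point that requires care; the paper's two-line proof simply does not spell this out.

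However, your resolution of this propagation is muddled. Theorem~\ref{eqRmodel}(ii) requires $R$ itself to be cofibrant in $\catGOS$, which is not assumed here, and Proposition~\ref{Gprespistar} concerns preservation of \emph{stable equivalences}, not of cofibrations; neither is the right input. Routing the argument through the $R$-module model category is also unnecessary. A direct argument in $\catGOS$ is much cleaner: suppose inductively that $R \to R_\alpha$ is already an $\Sp$-cofibration, and let $i \colon A \to B$ be any $\Sp$-cofibration. Then $R_\alpha \wedge i$ factors as
\[
R_\alpha \wedge A \;\longrightarrow\; (R_\alpha \wedge A)\cup_{R \wedge A}(R \wedge B) \;\xrightarrow{\;(R\to R_\alpha)\,\Box\, i\;}\; R_\alpha \wedge B.
\]
The first map is a cobase change of $R \wedge i$, hence an $\Sp$-cofibration by the standing hypothesis on $R$; the second is an $\Sp$-cofibration by the pushout-product axiom for the $\Sp$-model structure (Theorem~\ref{Gsmodel}). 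Hence smashing with $R_\alpha$ preserves $\Sp$-cofibrations, the second statement of Proposition~\ref{barprop} applies at stage $\alpha$, and the induction goes through. With this repair your argument is complete and is exactly the intended proof.
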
 
\begin{proof}
Reduce to the case of a $\Comm \Sp I$-cell complex. Induction on the cellular filtration and the stronger second statement of Proposition~\ref{barprop} give the result.
\end{proof}
\begin{theorem} \label{eqRalg}\label{Ralg}
For a commutative orthogonal $G$-ring spectrum $R$, the $\Sp$-model structure induces a compactly generated proper topological model structure on commutative $R$-algebras. This $R$-model structure is convenient with respect to the $R$-model structure on $R$-modules from Theorem~\ref{eqRmodel}$(i)$.\\ The identity functor on commutative $R$-algebras induces a Quillen equivalence to the classical model structure one would get by applying \cite[3.10]{DS} to the structure from \cite[III.8.1]{MM}.
\end{theorem}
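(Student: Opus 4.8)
The strategy is to reduce the statement for commutative $R$-algebras to the already-established $\Sp$-model structure for commutative orthogonal $G$-ring-spectra from Theorem~\ref{modelstruct}, in exactly the same way that Theorem~\ref{eqRmodel}(iv) obtains a model structure on $R$-algebras from the underlying structure on orthogonal $G$-spectra. Concretely, commutative $R$-algebras are the same as commutative orthogonal ring $G$-spectra under $R$, i.e.\ the coslice category $R/\catcAlg_G$, and the free-forgetful adjunction $R \wedge \Comm(-) \colon \catGOS \rightleftarrows R/\catcAlg_G$ (with the evident unit) lets us lift generating sets. First I would verify that the sets $R \wedge \Comm \Sp I$ and $R \wedge \Comm \Sp J$ permit the small object argument: this is the cofibration hypothesis, and it follows from the corollary of Proposition~\ref{barprop} (the set $\Comm\Sp I$ satisfies the cofibration hypothesis) together with the fact that smashing with $R$ preserves $h$-cofibrations, so each cell attachment is again an $h$-cofibration and colimits along such are computed on underlying spectra by the cited Lemma~\cite[15.11]{MMSS}.

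Second I would check that every relative $R \wedge \Comm \Sp J$-cell complex is a stable equivalence. This is the analogue of the Proposition just before Theorem~\ref{modelstruct} stating that every relative $\Comm\Sp J$-cell complex is a stable equivalence, combined with the Lemma~\cite[15.12]{MMSS}-style statement that $(-)\wedge_R R'$ along an $\Sp$-cofibration of commutative ring spectra preserves stable equivalences; here one uses the bar-construction identification from \eqref{barident} and $h$-properness of the relevant bar constructions (Proposition~\ref{cofproperfiltr}). With these two inputs, the criterion \cite[2.3]{SS} (or equivalently Quillen's recognition theorem via the small object argument, as used for Theorem~\ref{modelstruct}) produces the model structure: weak equivalences the stable equivalences, fibrations the underlying $\Sp$-fibrations, and cofibrations generated by $R \wedge \Comm \Sp I$. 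Right properness is inherited from right properness of the $\Sp$-model structure on commutative $G$-ring-spectra (fibrations and weak equivalences are detected on underlying objects), and the topological/compactly generated structure is likewise inherited. The convenience statement---that a cofibrant commutative $R$-algebra is cofibrant as an $R$-module---follows from the last statement of Proposition~\ref{barprop}: since smashing with $R$ preserves $\Sp$-cofibrations when $R$ is $\Sp$-cofibrant (and more generally the proposition handles cobase changes of wedges of maps in $\Sp I$), the cobase change $R' \to R' \wedge_{\Comm X} \Comm Y$ is an $\Sp$-cofibration of $R$-modules, and an induction over the cellular filtration of a cofibrant $R$-algebra, using that the forgetful functor to $R$-modules preserves the relevant colimits, gives the claim; one then appeals to the convenience of the $R$-model structure on $R$-modules established in Theorem~\ref{eqRmodel}(i)--(ii).

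Finally, for the Quillen equivalence with the classical model structure obtained from \cite[III.8.1]{MM} via \cite[3.10]{DS}, I would argue exactly as in Theorem~\ref{modelstruct} and Theorem~\ref{eqRmodel}: the identity functor is the comparison. It is a left Quillen functor because every cofibration in the $\Sp$-based structure is a cofibration in the classical structure (the generating cofibrations $R \wedge \Comm \Sp I$ map to classical cofibrations, using that $\Sp I = \Gr{}{}I$ consists of maps that are cofibrations in the $(\overline\hml,\overline\hml)$-model structure), and the two structures have the same weak equivalences---the $\hml$-stable equivalences agree with the classical stable equivalences for the universe $\universeU = \colim_n \bigoplus_b V_b^n$ by the Corollary following Proposition~\ref{QisOmega}. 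A left Quillen functor that is the identity on objects and is a weak equivalence on all objects (here trivially, since it does not change the objects and preserves and reflects weak equivalences) is automatically a Quillen equivalence. The main obstacle I anticipate is not any single step but the bookkeeping in the convenience argument: one must track the cellular filtration of a cofibrant $R$-algebra through the $\Comm$ construction and the bar-construction manipulations of Proposition~\ref{barprop} carefully enough to see that each stage remains $\Sp$-cofibrant as an $R$-module, and in particular one must use the hypothesis ``smashing with $R$ preserves $\Sp$-cofibrations'' precisely where Proposition~\ref{barprop}'s stronger conclusion requires it. Everything else is a faithful transcription of the $R = \Sp$ case already carried out in this section.
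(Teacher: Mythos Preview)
Your approach is correct and arrives at the same destination, but it is considerably more laborious than the paper's. You recognise the key identification---commutative $R$-algebras are precisely commutative orthogonal $G$-ring spectra under $R$, i.e.\ the coslice $R \downarrow \catcAlg_G$---yet you then proceed to rebuild the model structure from scratch by lifting along the composite adjunction $R \wedge \Comm(-) \colon \catGOS \rightleftarrows R \downarrow \catcAlg_G$, re-verifying the cofibration hypothesis, acyclicity of $J$-cells, and so on. The paper instead exploits the coslice identification fully: since Theorem~\ref{modelstruct} has already produced a compactly generated proper topological model structure on $\catcAlg_G$, the standard result on under-categories (\cite[3.10]{DS}) immediately transfers this to $R \downarrow \catcAlg_G$ with no further checking required---weak equivalences and fibrations are created by the forgetful functor, and all the properties (compact generation, properness, topological enrichment) are inherited formally.

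For convenience the paper's argument is again a single line: the free commutative $R$-algebra functor satisfies $\Comm_R(-) \cong R \wedge \Comm(-)$, so the generating cofibrations $\Comm_R(\Sp I)$ are $R \wedge \Comm(\Sp I)$, and since Theorem~\ref{eqconvenient} already shows $\Comm(\Sp I)$-cell maps are underlying $\Sp$-cofibrations, smashing with $R$ makes them $R$-cofibrations. Your cellular-filtration argument via Proposition~\ref{barprop} is essentially this unwound. What your route buys is self-containment (you avoid relying on the general under-category result); what the paper's buys is brevity and a cleaner separation of concerns, since all the hard work was already done in establishing Theorem~\ref{modelstruct}.
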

\begin{proof}
We can use \cite[3.10]{DS}. An analog of Theorem~\ref{eqconvenient} is then immediate, since the free commutative $R$-algebra functor $\Comm_R$ satisfies $\Comm_R(-) \cong R\smash \Comm(-)$, and thus any cofibration of commutative $R$-algebras is an underlying $R$-cofibration.
\end{proof}


\section{Realization and Tensor in Commutative 
  Ring Spectra}\label{subsecttensors}
In this section we prove some standard facts pertaining to realizations and tensors.  We have already had the the occasion to use Proposition~\ref{realizeinspec}.

The functor category $\catGOS$ is tensored
  and cotensored over $G\catT$. Since the functor adding a disjoint
  base point is monoidal, it is also tensored and cotensored over the
  category \(G\catU\). 

From \cite[VII 2.10]{EKMM} and \cite[II.7.2]{EKMM} we get the following
\begin{prop}
  Let $R$ be an orthogonal ring spectrum and let \(G\) be a compact
  Lie group.
  \begin{enumerate}
  \item The category of $R$-modules with action of \(G\) is topologically bicomplete with limits, colimits, tensors and cotensors calculated in the category $\catGOS$.
  \item If $R$ is commutative, 
    then the category of
    (commutative) $R$-algebras with action of \(G\) is topologically
    bicomplete with limits and cotensors created in $R$-modules with
    action of \(G\).
  \end{enumerate}
\end{prop}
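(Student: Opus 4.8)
The plan is to prove both parts as consequences of general enriched category theory, reducing everything to the already-established fact that $\catGOS$ is topologically bicomplete (tensored and cotensored over $G\catT$, and hence over $G\catU$ after adding disjoint basepoints).

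First I would handle part (i). The category of $R$-modules with $G$-action is the category of modules over the orthogonal ring $G$-spectrum $R$ in $\catGOS$; equivalently, fixing the monoidal structure on $\catGOS$, it is the usual category of left $R$-modules internal to $\catGOS$. The key observation is that for any closed symmetric monoidal category $\catC$ that is bicomplete and topologically (here $G\catT$-) enriched, tensored and cotensored, and for any monoid $R$ in $\catC$, the category $R\text{-}\mathrm{Mod}$ inherits all of this: limits and cotensors are created by the forgetful functor $R\text{-}\mathrm{Mod} \to \catC$ (since the forgetful functor has a left adjoint $R \wedge -$, it preserves limits, and one checks directly that the $R$-action on a limit or cotensor is the evident one); colimits and tensors are computed as the appropriate coequalizers or quotients built from the colimits and tensors of $\catC$, using that $R \wedge -$ preserves colimits because $\catC$ is closed. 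This is precisely the content cited from \cite[VII.2.10]{EKMM} and \cite[5.1]{MMSS}; I would simply invoke that citation, noting that the argument there is purely formal and applies verbatim with $\catC = \catGOS$ and enrichment over $G\catT$ (or $G\catU$). The statement that limits, colimits, tensors and cotensors of $R$-modules with $G$-action are calculated in $\catGOS$ is then immediate from this description together with the fact, established earlier, that $\catGOS$ itself has all of these.

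For part (ii) I would argue similarly but now for (commutative) monoids. If $R$ is a commutative orthogonal ring $G$-spectrum, the category of $R$-algebras with $G$-action is the category of monoids in the symmetric monoidal category of $R$-modules with $G$-action, and the category of commutative $R$-algebras is the category of commutative monoids there. Again invoking \cite[II.7.2]{EKMM} and \cite[5.2]{MMSS}: for any cocomplete closed symmetric monoidal category $\catD$ which is topologically bicomplete (here $\catD = R\text{-}\mathrm{Mod}$ with $G$-action, which is such by part (i)), the category of (commutative) monoids in $\catD$ has all limits, created by the forgetful functor to $\catD$, and all colimits (built via reflexive coequalizers and the free monoid functor, which is a left adjoint), and it is tensored and cotensored over the same base; crucially the cotensors are created in $\catD$ while the tensors are not. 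Concretely, for a (commutative) $R$-algebra $A$ and a based $G$-space $K$, the cotensor $F(K,A)$ has underlying $R$-module $F(K,A)$ with the pointwise multiplication, and one checks this has the correct universal property; limits are analogous. Hence limits and cotensors of (commutative) $R$-algebras with $G$-action are created in $R$-modules with $G$-action, as asserted.

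The only real obstacle is bookkeeping: one must make sure the $G$-action and the $R$-module (resp.\ algebra) structure interact correctly, i.e.\ that the forgetful functors in question are $G\catT$-enriched and that the relevant adjoints ($R \wedge -$, the free monoid functor, etc.) are $G\catT$-enriched left adjoints, so that the formal arguments of \cite{EKMM} and \cite{MMSS} apply in the equivariant setting. But since $\catGOS$ was set up exactly as the category of $\catT$-functors $G_+ \to \catOS$, and all the structure maps are manifestly $G$-equivariant, this verification is routine and I would not grind through it; the substance of the proposition is the citation, and the plan is essentially to point to it and spell out the description of limits, colimits, tensors and cotensors that it yields.
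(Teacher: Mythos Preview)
Your proposal is correct and takes essentially the same approach as the paper: the paper does not give a proof of this proposition at all, but merely states it as a consequence of the cited results \cite[VII.2.10, II.7.2]{EKMM} and \cite[5.1, 5.2]{MMSS}, noting that the argument is ``entirely categorical.'' You invoke exactly the same citations and spell out slightly more of the formal content behind them, which is entirely in the spirit of what the paper intends.
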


For discrete spaces, tensors are easily computable:
\begin{prop}\label{disctensor}
Let $\catD$ be a category which is enriched and tensored over 
$G\catU$. Let $A$ be an object of $\catD$ and $X$ a {\em discrete} space in $G\catU$. Then the universal property of the coproduct induces a natural (in both $X$ and $A$) isomorphism
\[\coprod_X A\cong A\otimes X.\]
\end{prop}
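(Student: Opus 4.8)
The statement to prove is Proposition~\ref{disctensor}: for a category $\catD$ enriched and tensored over $G\catU$, an object $A$ of $\catD$, and a discrete space $X$ in $G\catU$, there is a continuous natural isomorphism $A \otimes X \cong \coprod_X A$. The plan is to exhibit both sides as representing the same functor $\catD(-,B)$ on $\catD$, using the defining adjunction for the tensor together with the fact that a discrete space is a coproduct of copies of the one-point space $*$.

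\textbf{First step: unwind the defining adjunction.} By definition of a tensor over $G\catU$, for every object $B$ of $\catD$ there is a natural isomorphism of spaces (or sets, on underlying points) $\catD(A \otimes X, B) \cong G\catU(X, \catD_G(A,B))$, where $\catD_G(A,B)$ denotes the $G$-space of morphisms. Here I would be a little careful about the enrichment: since $X$ is discrete, $G\catU(X, Z)$ for any $G$-space $Z$ is just the $G$-fixed points of the product $\prod_X Z$ — more precisely, a $G$-map out of a discrete $G$-space $X$ is the same as a function on $G$-orbits assigning to each orbit a point with the appropriate isotropy, but the cleanest formulation is simply $G\catU(X,Z) \cong \prod_{[x] \in X_G} Z^{\Stab_x}$, or without decomposing, the equalizer description. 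The key observation is that $X$, being a discrete $G$-space, is the coproduct $\coprod_{[x]} G/\Stab_x$ in $G\catU$, but for the isomorphism we actually only need the even simpler categorical fact that $X = \colim_X *$ in a suitable sense.

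\textbf{Second step: compare with the coproduct.} On the other side, $\catD(\coprod_X A, B) \cong \prod_X \catD(A,B)$ by the universal property of the coproduct in $\catD$ (which exists because $\catD$, being tensored and enriched over the bicomplete $G\catU$, has the relevant colimits — or one simply assumes $\catD$ has coproducts, which is implicit in writing $\coprod_X A$). Now both $\prod_X \catD(A,B)$ and $G\catU(X,\catD_G(A,B))$ should be identified: a point of $\prod_X \catD(A,B)$ is a function $X \to \catD(A,B)$, and a point of $G\catU(X,\catD_G(A,B))$ is a continuous $G$-equivariant map $X \to \catD_G(A,B)$; since $X$ is discrete these are the same data once one accounts for the $G$-action correctly, i.e. one restricts to $G$-equivariant functions on both sides (the coproduct $\coprod_X A$ in $\catGOS$-style categories carries the $G$-action permuting summands according to the action on $X$, so $\catD(\coprod_X A, B)$ in the $G$-equivariant category is $(\prod_X \catD(A,B))^G = G\catU(X,\catD_G(A,B))$). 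Chaining the natural isomorphisms
\[
  \catD(A \otimes X, B) \cong G\catU(X,\catD_G(A,B)) \cong \catD\Bigl(\coprod_X A, B\Bigr)
\]
and invoking the Yoneda lemma gives the desired natural isomorphism $A \otimes X \cong \coprod_X A$, and naturality in $A$ and continuity follow by tracking the isomorphisms, since each is an instance of an enriched adjunction and hence continuous and natural.

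\textbf{Expected main obstacle.} The only genuinely delicate point is bookkeeping of the $G$-action: one must make sure that ``discrete space in $G\catU$'' is handled so that $\coprod_X A$ gets the correct permutation $G$-action and that $G\catU(X,-)$ picks out exactly the equivariant maps, so that the two sides of the comparison really do have matching $G$-structure rather than merely matching underlying objects. This is precisely the subtlety the surrounding text flags when it notes the convenience of enriching over $G\catU$ rather than $G\catT$ (avoiding fuss about trivial/basepoint maps); I would state the $G$-equivariance explicitly at each step rather than leave it implicit. Everything else is a routine application of the defining universal properties of tensors and coproducts plus Yoneda, so the proof should be short.
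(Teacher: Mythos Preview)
Your proposal is correct and follows essentially the same route as the paper: chain the tensor adjunction $\catD(A\otimes X,B)\cong G\catU(X,\catD(A,B))$, identify this with $\prod_X\catD(A,B)\cong\catD(\coprod_X A,B)$ for discrete $X$, and conclude by the (enriched) Yoneda lemma. The only difference is that the paper's proof is a single displayed chain of isomorphisms with no further commentary, whereas you spend extra effort on the $G$-equivariance bookkeeping that the paper leaves implicit.
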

In the case we are most interested in, using Lemma~\ref{commcoprod}, we get
\begin{cor}
Let $A$ be a commutative orthogonal ring spectrum, $X$ a discrete
space in \(G\catU\). 
Then the universal properties induce a  natural isomorphism 
\[\bigwedge_X A\cong A \otimes X\] between the tensor of $A$ with $X$ and the $X$-fold smash power of $A$, \ie the $X$-fold smash product of $A$ with itself.
\end{cor}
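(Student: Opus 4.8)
The plan is to deduce this corollary directly from Proposition~\ref{disctensor} together with Lemma~\ref{commcoprod}. The statement is essentially a formal consequence of the preceding two results, so the proof should be short.

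First I would invoke Proposition~\ref{disctensor} with $\catD$ the category of commutative orthogonal ring spectra (over $\Sp$) with action of $G$, which by the earlier discussion is enriched and tensored over $G\catU$. Applied to the object $A$ and the discrete $G$-space $X$ this gives a continuous natural isomorphism $A \otimes X \cong \coprod_X A$, where the coproduct is taken in the category of commutative orthogonal ring $G$-spectra. Second I would use Lemma~\ref{commcoprod} — the statement that the coproduct of commutative ring spectra is computed by the smash product — to identify $\coprod_X A$ with the $X$-fold smash power $\bigwedge_X A$. Composing the two isomorphisms yields the desired natural isomorphism $A \otimes X \cong \bigwedge_X A$, and naturality and continuity are inherited from the two cited results.

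\begin{proof}
By Proposition~\ref{disctensor}, applied to the category of commutative orthogonal ring spectra with action of $G$, which is enriched and tensored over $G\catU$, there is a continuous natural isomorphism $A \otimes X \cong \coprod_X A$, where the coproduct is formed in the category of commutative orthogonal ring $G$-spectra. By Lemma~\ref{commcoprod} this coproduct is the $X$-fold smash power, so $\coprod_X A \cong \bigwedge_X A$. Composing gives the asserted continuous natural isomorphism.
\end{proof}

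The main (and only mild) obstacle is bookkeeping: making sure that the instance of $\catD$ used is exactly one for which the tensoring over $G\catU$ has been established, and that the coproduct appearing in Proposition~\ref{disctensor} is interpreted in that same category before Lemma~\ref{commcoprod} is applied. There is no hard content here — all the work has been done in establishing that commutative orthogonal ring $G$-spectra are tensored over $G\catU$ and in the identification of coproducts with smash powers.
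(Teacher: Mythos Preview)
Your proposal is correct and matches the paper's approach exactly: the paper does not even write out a separate proof, but simply introduces the corollary with the phrase ``using Lemma~\ref{commcoprod}, we get,'' immediately after Proposition~\ref{disctensor}. Your two-step argument (Proposition~\ref{disctensor} followed by Lemma~\ref{commcoprod}) is precisely what is intended.
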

This is the main point of motivation for the translation from Hochschild homology.
  
Next we treat geometric realization, c.f.~Definition~\ref{geometricrealiz} for general background.  In particular, $|-|_\catC$ denotes the realization in a given category $\catC$, but for the special case when $\catC$ is the category of orthogonal $G$-spectra we drop the decoration and simply write $|-|$.

The below results hold for $\catC$ being either one of the categories of \(G\)-objects in
$R$-modules, $R$-algebras or commutative $R$-algebras, but for concreteness we write out only the case we actually need.  The proofs in \cite[VII.3.2]{EKMM} and  {\cite[VII.3.3]{EKMM}} can be transported without any loss.  In the following statement we let $\catDelta$ and $\catDelta_\catC$ be the singular functors adjoint to the realizations as in Definition~\ref{ralizationadj}. 
\begin{prop}\label{EKMMVII32}
Let $\catC$ be the category of commutative orthogonal $G$-ring spectra, let $A,B\in\catC$ and let $X_*$ be a simplicial set.
Then the definitions of tensor and realization induce a string of isomorphisms 
\begin{eqnarray*}
\catC(A \otimes_\catC |X_*|, B)&\cong&\catU(|X_*|,\catC(A,B))\\
&\cong&s\catU(X_*, \catDelta \catC(A,B))\\
&\cong&s\catU(X_*, \catC(A,\catDelta_\catC B))\\
&\cong&s\catC((A \otimes_\catC X)_*, \catDelta_\catC B)\\
&\cong&\catC(|(A\otimes_\catC X)_*|_\catC,B),
\end{eqnarray*} 
and so a natural isomorphism
\[A \otimes_\catC \vert X_*\vert \cong \vert A\otimes_\catC X_*\vert_\catC.\]
\end{prop}

Let $U$ be the forgetful functor from the category $\catC$ of commutative orthogonal $G$-ring spectra to orthogonal $G$-spectra and let $\Comm{}$ be the left adjoint.  Explicitly, $\Comm{}X=\bigvee_{n\geq0}(X^{\smsh i})_{\Sigma_i}$.  %
Since, in addition to commuting with all colimits, realization in $\catT$ is strong monoidal, we have a natural (in the simplicial orthogonal $G$-spectrum $X_*$) isomorphism 
$$\epsilon_{X_*}\colon\Comm{}|X_*|\cong |\Comm{}X_*|_\catC.$$
If $A_*$ is a simplicial commutative orthogonal ring $G$-spectrum, we let $\mu_{A_*}\colon\Comm UA_*\to A_*$ be the structure map (adjoint to the identity on $UA_*$).  Composing we get a natural map
$|\mu_{A_*}|_\catC\,\epsilon_{UA_*}\colon\Comm{}|UA_*|\to |A_*|_\catC$,
whose adjoint
$$\eta_{A_*}\colon |UA_*|\to U|A_*|_\catC$$
we now show is a natural isomorphism.
\begin{prop}\label{realizeinspec}
Let  
$A_*$ be a simplicial object of $\catC$, the category of commutative orthogonal $G$-ring spectra. Then $\eta_{A_*}\colon |UA_*|\to U|A_*|_\catC$ is an isomorphism,
  endowing $|UA_*|$ with the structure of a commutative orthogonal ring $G$-spectrum and an isomorphism to $|A_*|_\catC$. 
\end{prop}\label{comeback181120}
\begin{proof}
  Consider the map of reflexive coequalizers
$$\xymatrix{
  |UA_*|\ar[d]^{\eta_{A_*}}&&
  |UEUA_*|\ar[ll]_{|U\mu_{A_*}|}\ar[d]^{\eta_{\Comm UA_*}}&&
  |UEUEUA_*|\ar[d]^{\eta_{\Comm U\Comm UA_*}}\ar@<-1ex>[ll]_{|U\mu_{\Comm UA_*}|} \ar@<1ex>[ll]^{|U\Comm U\mu_{A_*}|}\\
  U|A_*|_\catC&&U|EUA_*|_\catC\ar[ll]_{U|\mu_{A_*}|_\catC}&&U|EUEUA_*|_\catC\ar@<-1ex>[ll]_{U|\mu_{\Comm UA_*}|_\catC} \ar@<1ex>[ll]^{U|\Comm U\mu_{A_*}|_\catC}
}$$
(realization and the forgetful map preserve these reflexive coequalizers).  Each of the diagrams commute
and the two rightmost vertical maps are isomorphisms by construction and so $\eta_{A_*}$ is an isomorphism too.
\end{proof}

\begin{remark}\label{simplicialtensor}
  With the structure provided by Proposition~\ref{realizeinspec} on $|UA_*|$ we have a natural isomorphism to $\vert A_*\vert_\catC$ of commutative orthogonal $G$-ring spectra and we will drop the subscript $\catC$ from the notation, $|A_*|=|A_*|_\catC$, and write the isomorphism of Proposition~\ref{EKMMVII32} as
  \[A \otimes_\catC \vert X_*\vert \cong \vert A\otimes_\catC X_*\vert.\]
\end{remark}

\subsubsection{Hochschild Homology} In \cite{HH}, Hochschild defined a cohomology theory for bimodules of algebras over a field, mirroring the definition of group homology in terms of the bar complex.  Hochschild cohomology and its dual, Hochschild homology,  have proven valuable tools in a wide variety of mathematical disciplines, spanning from algebra and topology to mathematical physics and functional analysis. 

For future comparison, we will very briefly recall some basics, not touching on most of the rich theory that follows. For further details, the reader should consult Loday's book \cite{LCH} which provides a very readable and thorough introduction.

We shall focus on the commutative setting, and will always assume that the coefficients are the commutative algebra itself. Let for the whole section $R$ denote a commutative unital ground ring, and let $\otimes=\otimes_R$ denote the tensor product over $R$.
\begin{dfn}\label{algHH}
Let $A$ be a commutative $R$-algebra. \emph{Hochschild homology of
  $A$}\index{Hochschild homology} is the homology $HH(A)$ of the simplicial commutative
\(R\)-algebra \(Z(A)\)
\[\small\xymatrix@=.5cm@R=.4cm@C=.1cm{
{}&{}&{}&{\vdots}\ar[d]&{}&{}&{}\\
{}&{A}&{\otimes}&{A}\ar@<-2ex>[d] \ar[d] \ar@<2ex>[d]&{\otimes}&{A}&{}\\
{}&{}&{A}&{\otimes}\ar@<-1ex>[d] \ar@<1ex>[d]
\ar@<-1ex>[u] \ar@<1ex>[u]&{A}&{}&{}\\
{}&{}&{}&{A} \ar[u] &{}&{}&{}\\}\]
with \(Z(A)_n = A^{\otimes n+1}\) and with face maps
$b_i \colon Z(A)_n \to Z(A)_{n-1}$ given by \[b_i(a_0, \ldots, a_n)\defas{}(a_0,\ldots,
a_i.a_{i+1},\ldots, a_n),\] for \(i < n\) and
\[b_n (a_0 , \ldots, a_n) \defas{}(a_n.a_0, a_1\ldots, a_{n-1}).\]
The degeneracy operators \(s_i \colon Z(A)_n \to Z(A)_{n+1}\) are given
by
$$s_i(a_0,\ldots,a_n) = (a_0,\dots, a_i,1,a_{i+1},\dots,a_n).$$
\end{dfn}
Loday realized that this definition could be seen as a special case of a functorial construction on $R$-algebras (cf.~\cite[4.2]{Lpap}):
\begin{dfn}\label{algloday}
Let $\catFin$ be the category of finite sets and $\catRcalg$ the category of commutative $R$-algebras. Define the \emph{algebraic Loday functor}\index{algebraic Loday Functor}\index{Loday Functor!algebraic }
\[{\bigotimes}_{(-)}(-)\colon \catFin \times \catRcalg \rightarrow \catRcalg,\]
on objects as the iterated tensor product
\begin{eqnarray*}
  {\bigotimes}_{X}(A),
\end{eqnarray*}
that is, taking \(X,A)\) to the coproduct of \(X\) copies of \(A\) in the category \(\catRcalg\).
The Loday functor takes a morphism $(f,\phi): (X,A)\rightarrow (Y,B)$
to the \(A\)-linear map given by
\[(f,\phi)(a_x)_{x\in X} \defas{} \left(\prod_{f(x)=y}{\phi(a_x)}\right)_{y\in Y},\]
where the product over an empty indexing set is to be understood as the unit in $R$.
\end{dfn}
\begin{rem}\label{nonfinremark}
Note that we really use that $A$ is a \emph{commutative} $R$-algebra when defining the functor from $\catFin$. If we would restrict to the category of finite sets and isomorphisms, the same formulas would give a functor with input $R$-modules. Extending to injective maps requires a unit map of some sort, and non injective but at least monotonous maps between ordered sets would only require an associative multiplication.
\end{rem}
It is this formula that we emulate in the topological setting of commutative orthogonal ring spectra, where we construct a continuous analog to the Loday functor. One application of this topological Loday functor, is a convenient definition of topological Hochschild homology in the same spirit as the following definition:\\
The algebraic Loday functor gives an explicit example of functors for which we can define Hochschild homology:
\begin{dfn}
Let $F \colon \catFin \rightarrow \catRAlg$ be a functor. Define its {\em Hochschild homology}\index{Hochschild homology} $HH(F)$ as the homology of the simplicial algebra
\[\Delta^{\op} \stackrel{S^1}{\rightarrow} \catFin \stackrel{F}{\rightarrow}\catRAlg.\]
\end{dfn}
Then immediately, inspection of the defining chain complex yields that Hochschild homology of commutative algebras is the same as Hochschild homology of the Loday functor:
\labeleq{hochschildloday}{HH(A) = HH(\bigotimes_{(-)}A).}

\subsubsection{The Loday Functor}
\label{sectlodayfunct}
\newcommand{\lodayR}[2]{{\bigwedge_{#1}}^R{#2}}


The coproduct in the category of commutative monoids in a symmetric monoidal category $(\cC,\otimes,e)$ is nothing but $\otimes$ itself (c.f.~Lemma~\ref{commcoprod}).  In particular, with the considerations in Section~\ref{tensors} 
we get

\begin{cor}\label{4.3.6}
  Let $(\cC,\otimes,e)$ be a symmetric monoidal category.  Then the category of commutative monoids in $\cC$ is tensored over the category of finite sets,  sending a commutative monoid $M$ and a finite set $S$ to $\bigotimes_SM=M^{\otimes S}$.  
\end{cor}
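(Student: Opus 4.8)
The plan is to derive this corollary directly from the two ingredients cited: first, the fact (from \ref{commcoprod}) that the coproduct in the category of commutative monoids of a symmetric monoidal category $(\cC,\otimes,e)$ is the monoidal product $\otimes$ itself; and second, the general categorical observation about tensors over the category of finite sets (the ``tensors'' discussion referenced as \ref{tensors}), which identifies the tensor of an object with a finite set as the corresponding finite coproduct. Combining these gives the formula $M\otimes S = \coprod_S M = M^{\otimes S}$ immediately.

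Concretely, I would proceed as follows. First I would recall that the category $\comC$ of commutative monoids in $\cC$ has all finite coproducts, and that by \ref{commcoprod} the coproduct of a finite family $(M_x)_{x\in S}$ is $\bigotimes_{x\in S} M_x$, with the universal maps into it being the evident ``insertion of the unit in the other factors'' morphisms; in particular the $S$-fold coproduct of a single commutative monoid $M$ is $M^{\otimes S}$. Next I would invoke the general principle that any category $\catD$ with finite coproducts is canonically tensored over $\catFin$ (the category of finite sets), with $A\otimes S \defas \coprod_S A$; the required natural isomorphism $\catD(A\otimes S, B)\cong \catFin(S,\catD(A,B))=\prod_{x\in S}\catD(A,B)\cong\catD(\coprod_S A,B)$ is exactly the argument already used in Proposition \ref{disctensor} with $\catFin$ in place of $G\catU$ restricted to discrete spaces. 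Applying this to $\catD = \comC$ and substituting the identification from the first step yields $M\otimes S \cong M^{\otimes S} = \bigotimes_S M$, and naturality in both $M$ and $S$ is automatic since both sides are defined by universal properties. Finally I would note that the functoriality in $S$ along arbitrary maps of finite sets (not just isomorphisms) is precisely what the commutative monoid structure affords: a map $f\colon S\to T$ induces $\bigotimes_S M \to \bigotimes_T M$ using the multiplication of $M$ to contract fibers and the unit to fill empty fibers, which is the coproduct-functoriality of $\coprod_S M$ in $S$.

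This is essentially a bookkeeping corollary, so there is no serious obstacle; the only point requiring a moment of care is making sure the symmetry isomorphism of $\cC$ is used coherently so that $M^{\otimes S}$ is well-defined for an \emph{unordered} finite set $S$ (no chosen ordering), which is handled by the usual coherence theorem for symmetric monoidal categories — the same coherence that makes the finite coproduct $\bigotimes_{x\in S} M_x$ well-defined in the first place in \ref{commcoprod}. I would therefore keep the proof to a few lines, citing \ref{commcoprod} and the tensor discussion and recording the chain of natural isomorphisms above.
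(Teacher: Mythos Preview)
Your proposal is correct and follows essentially the same approach as the paper: the corollary is stated without a separate proof, being an immediate consequence of \ref{commcoprod} (coproducts in $\comC$ are given by $\otimes$) together with the general observation from \ref{tensors} that any category with finite coproducts is tensored over $\Fin$. The paper also spells out the functoriality in $S$ via the multiplication-on-fibers/unit-on-empty-fibers description you give.
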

As discussed in Section~\ref{tensors}, $M^{\otimes\{1,\dots,n\}}=(\dots(M\otimes M)\otimes\dots)\otimes M$, while
the functoriality in $S$ given by sending a function of finite sets $f\colon S\to T$ to the composite 
$$\bigotimes_SM\cong \bigotimes_{t\in T}\bigotimes_{f^{-1}(t)}M\to \bigotimes_TM,$$
where the isomorphism is a (uniquely given by $f$) structure isomorphism in $(\cC,\otimes,e)$ and the morphism is the tensor of the maps $\bigotimes_{f^{-1}(t)}M\to M$ induced by multiplication (the unit if $f^{-1}(t)$ is empty).

In particular, if $R$ is a commutative orthogonal ring spectrum (\ie a commutative $\Sp$-algebra in $\catOS$), $A$ a commutative $R$-algebra, and $X$ a finite set, considered as a discrete space. Then Corollary~\ref{4.3.6} gives a natural isomorphism
$$A \otimes_{\catRcalg} X\cong  A\smash_R \ldots \smash_R A$$
($|X|$-fold smash).  Recall from Section~\ref{tensors} that the right hand side is functorial in bijections even if $A$ is only an $R$-module, and in injections if $A$ is an $R$-module under $R$.

Since tensors commute with colimits we get that if $X$ is a set considered as a discrete space, then $A \otimes_{\catRcalg} X$ is the filtered colimit of $S\mapsto A \otimes_{\catRcalg} S$, where $S$ varies over the finite subsets of $X$. A priori, this colimit is in $\catRcalg$, but is created in $\catRmod$.  Note that, since the colimit is over inclusions, this makes no use of the multiplication, and applies to any $R$-module under $R$.  

\begin{dfn}\label{Lodayfunctor}\label{lodaysets}\label{colimloday}\label{discretelodayfunctor}
The {\em discrete Loday functor}\index{discrete Loday functor}\index{Loday functor!discrete} 
applied to an $R$-module $M$ under $R$ and set $X$ is given by
$$\lodayR XM= \colim_{U\subseteq X}\lodayR UM$$
(functorial in injections).  The {\em Loday functor} applied to a commutative $R$-algebra $A$ and space $X$ is given by the tensor
$$\lodayR XA=A\otimes_{\catRcalg} X.$$
In either case, $\bigwedge_X^RA$ is also referred to as the {\em $X$-fold smash power of $A$ (over $R$)}\index{smash power}
\end{dfn}
As commented above, if $A$ is a commutative $R$-algebra and $X$ a set, then the two potential interpretations of the smash powers 
as modules under $R$ are naturally isomorphic.

Finally note that Proposition~\ref{EKMMVII32} implies the following lemma, which makes it possible to realize $\lodayR{|X|}A$ concretely as a ``smash power indexed over $X$'' for simplicial sets $X$:
\begin{lem}\label{simplicialloday}
 Given a simplicial space $Y$ and $A$ a commutative $R$-algebra, there is a natural isomorphism (using the convention of Remark~\ref{simplicialtensor})
 \[\lodayR{{|Y|}}(A) \cong |\lodayR{Y} (A)|,\]
 of commutative $R$-algebras.
\end{lem}

We will mostly be interested in the case where $R$ is the sphere spectrum $\Sp$, and for simplicity of notation we restrict ourselves to that case in the following and omit $R$ from the notation.

A similar construction has been carried out in in \cite[Section 4]{BCD}. The construction we give here is much simpler than the one presented in op.~cit., so it will be crucial to study its properties in detail, so as to make sure, that we indeed capture all the desired information. In particular the equivariant homotopy type, when $X$ is equipped with an action of some (compact Lie-) group $\G$, will require some care.
We admit that the choice in \cite{BCD} of $\Gamma$-spaces as a model for connective spectra was not optimal, but with very minor rewriting the paper makes sense when based on symmetric spectra or simplicial functors.  This rewriting is necessary for applying \cite{BCD} to commutative ring spectra that do not have strictly commutative models in $\Gamma$-spaces \cite{Tyler}.

\begin{rem}
For a morphism $\varphi: R \rightarrow S$ of commutative orthogonal ring spectra, we get adjoint pairs of functors between the categories of $R$-modules and  $S$-modules, and also for algebras and for commutative algebras. All the left adjoints are given by \emph{induction}, \ie using $S \smash_R (-)$, which is strong monoidal and preserves tensors, hence commutes with smash powers.
The respective right adjoint functors do in general not exhibit similar properties.
\end{rem}
Note that since all group actions are through isomorphisms, a (continuous) action of a (topological) group $G$ on $X$ induces (continuous) actions on the smash powers
by precomposition as follows 
\[{G}\to{\catA(X,X)}\to{\catC(\lodayR XA, \lodayR XA)},\]
where ($\catA$, $\catC$) is any of the pairs of categories for which we defined the 
$A \in \catC$ and $(\catA$, $\catC)$ is any of the pairs of categories for which we defined the smash power. In this light, we can for each of the above definitions and any (topological) group $G$ consider equivariant analogs
\[\lodayR {(-)}(-) \colon [G,\catA] \times \catC \rightarrow [G,\catC].\]
As already discussed in the Introduction, it is crucial to investigate the equivariant properties of the smash power,
to make sure that they are usable for our applications. The next sections will be devoted to this topic.

\chapter{The Geometric Diagonal}
\label{ch:smashpow}
This chapter contains our main results: we give a vast generalization of the ``cyclotomic'' structure of topological Hochschild homology, see \eg Theorem~\ref{thm:diagisoisnaturalcom} for the case of finite groups and Theorem~\ref{gocompact} for the case when the Lie group in question is a torus. Under certain conditions -- which we prove for finite groups and for the torus when the base ring is the sphere spectrum --  this generalizes to other compact Lie groups $G$ in Theorem~\ref{generalliegroup} and to other base rings in Section~\ref{thm:geodiagoverR}.  

One -- derived -- way of phrasing the conclusion is that, when $X$ is a free $G$-space and $A$ is a commutative orthogonal ring spectrum, then
\begin{quote}
  the geometric diagonal of Definition~\ref{definitionofgeomfixedpoints} for a finite normal subgroup $N$ of $G$ induces a natural equivalence {\em  in the category of commutative orthogonal ring spectra}  between
  \begin{itemize}
  \item the $N$-orbits of $\bigwedge_XA$ and 
  \item the { geometric $N$-fixed points} $\bigwedge_XA$;
  \end{itemize} 
\end{quote}
since tensors commute with colimits  both constructions yield the smash power $\bigwedge_{X_N}A$ over the orbits.

{\em Natural} should be emphasized here.  The very definition of the equivalence in question
is dependent on getting this naturality sufficiently general for the special case of finite groups.  We spend Section~\ref{fixedofsmashsect} getting this exactly right with respect to the space $X$, culminating in Theorem~\ref{thm:diagisoisnaturalcom} and Proposition~\ref{diagsimplicial} which we couple with the crucial Proposition~\ref{diagrespres} which tells us how to move from one finite group to another.  With this in place we go from the finite to the compact Lie group case by simplicial approximation. 
 
This functoriality opens in Section~\ref{sec:complie} for an approach towards extending the geometric diagonal to compact Lie groups that are in an appropriate way possible to approximate by finite groups.  More precisely, we need a situation that is \orfi and also has compatible simplicial approximations.  We explain the conditions in more detail and prove that they are satisfied for tori.  We also explore to what extent it is possible to use other base rings than the sphere spectrum.

\section[Functoriality for Fixed Finite Group]{Functoriality for Fixed Finite Group%
              \sectionmark{Functoriality of the Geometric Diagonal}}
\sectionmark{The Geometric Diagonal for Finite Groups}
\label{fixedofsmashsect}\label{fixedpointsandlodaysubsect}
We now return to the geometric diagonal of Definition~\ref{definitionofgeomfixedpoints} for finite groups and show the relevant functoriality properties in the space variable.  This relies heavily on our work on filtrations of smash powers in Section~\ref{sec:cellfilt}.

In this section we consider a  fixed short exact sequence
$$E\colon\,1\to N\to G\to J\to 1$$
of finite groups.  We will consider what happens when we change $G$ in Section~\ref{sec:gpchange}.

For a finite free $G$-set $X$ and an $\Sp$-cofibrant orthogonal spectrum $L$, in Theorem~\ref{geomScofib} we defined an isomorphism between the geometric $N$-fixed points $\Phi^NL^{\smsh X}$ and the smash $L^{\smsh X_N}$ indexed over the orbits.  In this section we show that this construction is as functorial as one can expect (and perhaps a bit more). 

Proposition~\ref{diagzigzag} below initializes our study of the important naturality properties
which we need in order to generalize the correspondence between the geometric fixed points of smash powers and smash powers over orbits from finite sets to infinite and
non-discrete spaces. It is inspired by Kro's proposed ``diagonal map''
$L^{q} \rightarrow \Phi^{C_r}L^{rq}$ in the case of finite cyclic
groups. The write-up in \cite[3.10.4]{Kro}, however, appears to
mix up the left and right adjoints involved.  We
will instead define a natural zig-zag of maps, where the arrow in the
``wrong'' direction becomes an isomorphism for $\Sp$-cofibrant input.

Recall the categories 
\(\catO_J\) and $\catO_E$ from Section~\ref{secfixedpointspectra}. 

\begin{dfn}
  Let
$\catO^\reg_G \subset \catO_G$\index{Oregg@$\catO^\reg_G$} and $\catO^\reg_J\subset\catO_J$\index{OregJ@$\catO^\reg_J$} be the
full subcategories of objects of the form \(V^{\oplus X}\) and
$(V^{\oplus X})^N$ respectively for \(V\) an object of \(\catOr\). 
Via the diagonal isomorphism we will when convenient allow ourselves to write $V^{\oplus X_N}$ instead of $(V^{\oplus X})^N$ for the objects of $\catO^\reg_J$.
Let
$\catO_E^\reg$\index{Orege@$\catO^\reg_E$} be the $N$-fixed category of $\catO_G^\reg$,
and consider the commutative diagram
\[
\xymatrix{{\catO_E}\ar_-{\phi}[d]&
{\catO_E^\reg}\ar_-{i}[l]\ar^-{\phi^\reg}[d]\\
{\catO_J}&{\,\catO_J^\reg,}\ar^-{j}[l]}
\]  
where $i$ and $j$ are the inclusions and $\phi^\reg$\index{phi@$\phi,\,\phi^\reg$} is the restriction of $\phi$ (of Definition~\ref{dfnphiphi}) sending a regular
representation $V^{\oplus X}$ to $(V^{\oplus X})^N$ (think $V^{\oplus X_N}$). 
\end{dfn}

Note that $\phi^\reg$ is full and a bijection on objects.

\begin{dfn}
  Define the section of $\phi^\reg$
$$\Delta\colon\catO_J^\reg\to\catO_E^\reg$$ by $\Delta(V^{\oplus X_N})=V^{\oplus X}$ on objects and the diagonal injection 
$$\catO(V^{\oplus X_N},W^{\oplus X_N})\to\catO(V^{\oplus X},W^{\oplus X})^N$$ 
on morphisms (explicitly, an element $f\smsh y\in\catO(V^{\oplus X_N},W^{\oplus X_N})$ is sent to the pair $\Delta f\smsh\Delta y$, where the diagonal $\Delta f\colon V^{\oplus X}\to W^{\oplus X}$ has $(x,x')$-coordinate equal to the $([x],[x'])$-coordinate of $f$ and $\Delta y$ is the image of $y$ under the diagonal map $S^{W^{\oplus X_N}}\to S^{W^{\oplus X}}$).\end{dfn}

Using the convention that  $\FU$ signify precomposition and $\FP$ the left adjoint \index{P@$\FP_j,\, \FP_{\phi^\reg}, \, \FP_{\phi},\, \FP_i$}\index{UP@$\FU_j,\, \FU_{\phi^\reg}, \, \FU_{\phi},\, \FU_i$} we get diagrams
 \labeleq{iphiphii}{\xymatrix{{\catO_E\catT}\ar^-{\FU i}[r]&{\catO_E^\reg\catT}\\
 {\catO_J\catT}\ar^-{\FU \phi}[u]\ar_-{\FU j}[r]&
{\,\catO_J^\reg\catT}\ar_-{\FU\phi^\reg}[u]
}\qquad 
\xymatrix{{\catO_E\catT}\ar_-{\FP\phi}[d]&
{\catO_E^\reg\catT}\ar_-{\FP i}[l]\ar^-{\FP\phi^\reg}[d]\\
 {\catO_J\catT}&{\,\catO_J^\reg\catT,}\ar^-{\FP j}[l]
}}
commuting up to preferred natural isomorphisms.

Recall from Definition~\ref{def:FixN} the fixed point $J$-functor $\Fix^N\colon[\catO_G\catT]^N\to\catO_E\catT$ taking a \(G\)-spectrum \(E\) to \(V \mapsto (\Fix^NE)_V = (E_V)^N\), so that \(\FP_\phi \Fix^N\) is isomorphic to the geometric fixed point functor $\Phi^N$ (c.f.~Remark~\ref{notationFP}).
We construct a natural transformation
$$\xymatrix{
\catOT\ar[rr]^{{L\mapsto\bigwedge_{X_N}L}}\ar[d]_{{L\mapsto\bigwedge_{X}L}}&&
{\catO_J\catT}\ar[rr]^{\FU_j}\ar@{}_\Downarrow^{\tilde\Delta}[d]&&{\catO^\reg_J\catT}\\
{[\catO_G\catT]^N}\ar[rr]^{\Fix^N}&&{\catO_E\catT}\ar[rr]_{\FU_i}&&
{\catO_E^\reg\catT}\ar[u]^{\FU_\Delta}
}
$$ 
(all functors are $J$-functors, but since the source category is $J$-fixed we could just as well have restricted to the $J$-fixed categories everywhere) as follows.
For $V^{\oplus X_N}\in\catO^\reg_J$, let $\tilde\Delta$ be the natural map
$$\xymatrix{(\FU_j L^{\smash X_N})_{V^{\oplus X_N}}\ar@{=}[d]\ar[r]^-{\tilde\Delta}&\FU_\Delta \FU_i\Fix^N(L^{\smash X})_{V^{\oplus X_N}}\ar@{=}[d]\\
  {\int^{(V_{[x]})_{[x]}
    }\catO(\bigoplus_{[x]}V_{[x]},V^{\oplus X_N})\smsh\bigwedge_{[x]}L_{V_{[x]}}}\ar[r]&
  {\left[\,\, \int^{(W_x)_x
      }\catO(\bigoplus_xW_x,V^{\oplus X})\smsh\bigwedge_{x\in X} L_{W_x}\right]^N}}
$$
obtained by sending the $(V_{[x]})_{[x]}$-summand to the $(V_{[x]})_x$-summand (which is $N$-fixed) via the diagonal
$\catO(\bigoplus_{[x]}V_{[x]},V^{\oplus X_N})\smsh\bigwedge_{[x]}L_{V_{[x]}}\to
[\catO(\bigoplus_{x}V_{[x]},V^{\oplus X})\smsh\bigwedge_{x}L_{V_{[x]}}]^N$ of spaces.
Direct inspection gives the following naturality result.
\begin{lemma}\label{lem:firstnat}
  The map  of $J$-spectra 
$\tilde\Delta\colon \FU_j L^{\smash X_N}\to\FU_\Delta \FU_i\Fix^N(L^{\smash X})$ is natural \wrt maps of orthogonal spectra $L\to L'$ and \wrt isomorphism of finite free $G$-sets $X\cong X'$.
\end{lemma}

\begin{dfn}\label{dfn:tildedelta}
  We define the natural map $\tilde\Delta(X,L)$ of $J$-spectra by the diagram
$$\xymatrix{\FP_j\FU_j L^{\smash X_N}\ar[d]^{\FP_j\tilde\Delta}\ar[rr]^-{\tilde\Delta(X,L)} &&\FP_\phi\FP_i\FU_i\Fix^N(L^{\smash X})\\
  \FP_j\FU_\Delta \FU_i\Fix^N(L^{\smash X})\ar[r]^-\cong&
  \FP_j\FP_{\phi^\reg}\FP_\Delta\FU_\Delta \FU_i\Fix^N(L^{\smash X})\ar[r]^-\cong&
  \FP_\phi\FP_i\FP_\Delta\FU_\Delta \FU_i\Fix^N(L^{\smash X}),\ar[u]^{\epsilon_\Delta}
}$$
where the first isomorphism is induced by the identity $\phi^\reg\Delta=\id_{\catO^\reg_J}$, the second is that of~\ref{iphiphii} and $\epsilon_\Delta$ is the unit of adjunction \wrt $\Delta$.
\end{dfn}

\begin{prop}\label{diagzigzag}\label{functorialitydiag}
 For $G$ a finite group and $X$ a finite free $G$-space, and $L$ any
 orthogonal spectrum the solid arrows in 
 $$\xymatrix{L^{\smash   X_N}\ar@{.>}_-{\Delta(X,L)}[d]&\FP_j\FU_jL^{\smash X_N}\ar_-{\varepsilon_j}[l]\ar[d]_{\tilde\Delta(X,L)}\\
     \Phi^N(L^{\smash X})&\FP_\phi \FP_i \FU_i \Fix^N(L^{\smash X})\ar_-{\FP_\phi(\varepsilon_i)}[l]
   }$$
   (which we will refer to as the  \emph{diagonal zig-zag})
   are natural maps  of
 $J$-spectra (\wrt maps of orthogonal spectra $L\to L'$ and \wrt isomorphism of finite free $G$-sets $X\cong X'$),
   where $\varepsilon_j$ and $\varepsilon_i$ are the counits of the adjoint pair $(\FP_j,\FU_j)$, respectively $(\FP_i,\FU_i)$ and $\tilde\Delta(X,L)$ is given in Definition~\ref{dfn:tildedelta}.
   
For $\Sp$-cofibrant spectra $L$, $\epsilon_j$ is an isomorphism, and so the (indicated with dots) composite $\Delta(X,L)$ is uniquely defined.  In this case, $\Delta(X,L)$ coincides with the natural
isomorphism from Theorem~\ref{geomScofib}, which we therefore call \emph{the
  diagonal isomorphism}. \index{diagonal isomorphism, the} 
\end{prop}
\begin{proof}
  The naturality claim follows from Lemma~\ref{lem:firstnat}.
To see that the instance of $\varepsilon_j$ is an isomorphism for
$\Sp$-cofibrant $L$, note that smash-powers of semi-free orthogonal
spectra are semi-free $J$-spectra of the form $\Gr{}{V}{K}$ with $V$
regular and that \(\varepsilon_j\) is  an isomorphism for any
semi-free \(J\)-spectrum of this form. 
By Theorem~\ref{cellular}, a cell induction
then gives the result.

Similarly it suffices to show that for each
semi-free spectrum $L=\Gr{}{R}K=\catO(R,-)\smsh K$ the zig-zag yields the isomorphism from
Theorem~\ref{geomsmashsemifree}.
 We follow the zig-zag and observe that under the isomorphisms $L^{\smsh X}\cong\Gr{}{R^{\oplus X}}K^{\smsh X}=\catO(R^{\oplus X},-)\smsh_{\prod_{X} \Or{R}} K^{\smsh X}$  and  and $L^{\smsh X_N}\cong\Gr{}{R^{\oplus X_N}}K^{\smsh X_N}=\catO(R^{\oplus X_N},-)\smsh_{\prod_{X_N} \Or{R}} K^{\smsh X_N}$, analyzing the maps in the diagonal zig-zag
$$\xymatrix{\Gr{}{R^{\oplus X_N}}K^{\smsh X_N}\ar[d]^{\Delta(X,L)}&
  \int^{V^{\oplus X_N}\catO^\reg_J}\catO(V^{\oplus X_N},-)\smsh \Gr{}{R^{\oplus X_N}}K^{\smsh X_N}_{V^{\oplus X_N}}\ar[l]\ar[d]_{\tilde\Delta(X,L)}\\
   \int^{W\in\catO_E}\catO(W^N,-)\smsh[\Gr{}{R^{\oplus X}}K^{\smsh X}_W]^N&
   \int^{V^{\oplus X}\in\catO^\reg_E}\catO([V^{\oplus X}]^N,-)\smsh [\Gr{}{R^{\oplus X}}K^{\smsh X}_{V^{\oplus X}}]^N
   \ar[l]}$$
 we see that $\Delta(X,L)$ is given by the isomorphism
 $$\catO(R^{\oplus X_N},-)\smsh_{\prod_{X_N} \Or{R}} K^{\smsh X_N}\to
 \smallint^{W\in\catO_E}\catO(W^N,-)\smsh[\catO(R^{\oplus X},W)\underset{\prod_{X} \Or{R}}\smsh K^{\smsh X}]^N$$
 defined by mapping into the $W=R^{\oplus X}$ summand by the isomorphism $W^N\cong R^{\oplus X_N}$ and the diagonal.  Hence, $\Delta(X,L)$ coincides with the description of the 
isomorphism of Theorem~\eqref{geomsmashsemifree}. 
\end{proof}

\begin{cor}\label{orbitsplit}
The diagonal isomorphism respects decompositions of the finite free $G$-set $X$ into $G$-orbits, in the sense that for a choice of $G$-isomorphism $f\colon X\cong G\times X_G$ and $L$ an $\Sp$-cofibrant orthogonal spectrum, we get a commutative diagram of isomorphisms
$$\xymatrix{
(L^{\smsh G/N})^{\smsh X_G}\ar[rr]_\cong
\ar[d]^{\Delta(G,L)^{\smsh X_G}}&&
(L^{\smsh X_G})^{\smsh G/N}\ar[d]^{\Delta(G,L^{\smsh X_G})}\ar[r]_\cong&
L^{\smsh X_N}\ar[d]^{\Delta(X,L)}\\
(\Phi^N(L^{\smsh G}))^{\smsh X_G}\ar[r]^-\alpha&\Phi^N((L^{\smsh G})^{\smsh X_G})\ar[r]_\cong&
\Phi^N((L^{\smsh X_G})^{\smsh G})\ar[r]_\cong&\,\Phi^N(L^{\smsh X}),}
$$
where the horizontal isomorphisms are given by shuffle permutations of smash factors, the chosen isomorphism $f$ and the lax monoidal structure map $\alpha$ from Proposition~\ref{philaxmonoidal}.
\end{cor}
\begin{proof}
  This follows by Lemma~\ref{lem:geomdiagismonoidal}, strong monoidality of Corollary~\ref{cor:phistrongmon} and the the naturality of the geometric diagonal of Proposition~\ref{diagzigzag}.
\end{proof}

\subsubsection{Enhanced naturality of the geometric diagonal} \label{generalx}
In Section~\ref{sectlodayfunct}, we have defined several versions of smash powers, 
with the input categories varying between finite discrete sets together with general orthogonal spectra, to general spaces and commutative orthogonal ring spectra. In the above discussion we have restricted the first input further, to finite free $G$-sets, to study the equivariant structure induced on the output.
In view of the functoriality of Proposition~\ref{functorialitydiag}, we will from now view the diagonal isomorphism as a natural transformation of functors 
\[\Delta(\cdot,-)\colon\,\,\,\,\,\, \bigwedge_{(\cdot)_N}(-) \,\,\,\longrightarrow \,\,\,\Phi^N(\bigwedge_{(\cdot)}(-)),\]
and study how we can vary the input categories.
Until further notice, \(G\) is a fixed finite group.

We begin with checking naturality of the diagonal isomorphism with respect to injections of finite free $G$-sets. As usual, we have to adapt the cofibrancy condition.
\begin{dfn} If $\Sp\to L$ is an $\Sp$-cofibration of orthogonal spectra we say that $L$ is \emph{$\Sp$-cofibrant under $\Sp$}\index{Scofibrant under@$\Sp$-cofibrant under $\Sp$}.
\end{dfn}
\begin{ex}
Every $\Sp$-cofibrant commutative orthogonal ring spectrum is $\Sp$-cofibrant under $\Sp$ via its unit map by Proposition~\ref{barprop}, which in particular says that the unit maps for the commutative ring spectra appearing in the generating $\Sp$-cofibrations is an 
$\Sp$-cofibration. 
\end{ex}
\begin{lem}
The diagonal isomorphism is natural with respect to finite free $G$-sets and equivariant inclusions, and spectra $\Sp$-cofibrant under $\Sp$.
\end{lem}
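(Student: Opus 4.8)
The plan is to extend the diagonal zig-zag of Proposition \ref{diagzigzag} so that it becomes a natural transformation of functors defined on the category of finite free $G$-sets and equivariant injections, evaluated on orthogonal spectra $L$ that come equipped with an $\Sp$-cofibration $\Sp \to L$. First I would recall that for a finite set $X$ (with a group action), the smash power $\loday{X}{L}$ is only \emph{a priori} functorial in bijections, but that the choice of a basepoint $\Sp \to L$ allows one to extend it to injections: an injection $\iota \colon X \hookrightarrow Y$ induces $\loday{X}{L} \cong \loday{X}{L} \smash \loday{Y\setminus X}{\Sp} \to \loday{X}{L} \smash \loday{Y\setminus X}{L} \cong \loday{Y}{L}$, using the unit inclusions on the complement factors. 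This is exactly the functoriality used in Definition \ref{discretelodayfunctor} to form $\lodayR{X}{M}$ as a colimit over finite subsets; here we only need the finite, equivariant version. The key point to check is that when $\iota$ is $G$-equivariant, the induced map $\loday{X}{L} \to \loday{Y}{L}$ is $G$-equivariant, which is immediate since $G$ permutes the factors indexed by $X$ (resp.\ by $Y\setminus X$, since $X$ is a $G$-subset its complement is too) and the unit inclusions are $G$-fixed.

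Next I would verify that each of the three arrows making up the zig-zag in Proposition \ref{diagzigzag} is natural in $X$ with respect to such injections. The outer two arrows $\varepsilon_j$ and $\FP_\phi(\varepsilon_i)$ are counits of adjunctions applied to functors which are themselves natural in $X$, so naturality reduces to checking that the assignments $X \mapsto \FU_j L^{\smash X_N}$, $X \mapsto \FP_i\FU_i\Fix^N(L^{\smash X})$, etc., are functorial in equivariant injections; this follows from the fact that $(-)^N$, $\Fix^N$, and the change-of-category functors $\FP_i, \FU_i, \FP_\phi, \FP_j$ all preserve the relevant structure and commute (up to coherent isomorphism) with the factor-wise inclusions induced by $\iota$. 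For the middle map, which on regular levels is given by the diagonal formula \eqref{thediagmap}, I would follow the construction through the coends exactly as indicated in Remark \ref{functorialitydiag}: an injection $\iota \colon X \hookrightarrow Y$ induces on a regular level $U = V^{\oplus X_N} \hookrightarrow V^{\oplus Y_N} = U'$ a compatible map of coends, and the diagonal map $\bigwedge_{[x]\in X_N} L_{V_{[x]}} \to \bigwedge_{x\in X} L_{W_x}$ is visibly compatible with inserting unit elements on the complementary factors. The only mild subtlety is that passing from $X$ to $Y$ changes the preferred point of the first coend from $W = V^{\oplus X}$ to $W = V^{\oplus Y}$, but the coend identifications absorb precisely this, which is the reason the argument works only for $L$ that is $\Sp$-cofibrant under $\Sp$ (so that the inserted unit is a legitimate part of a semi-free cell structure).

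The main obstacle, and therefore where I would spend most of the effort, is reducing everything to the semi-free case as in the proof of Proposition \ref{diagzigzag}. As remarked there, for $\Sp$-cofibrant $L$ a map out of $L^{\smash X_N}$ is determined by its values on $X_N$-regular levels by the universal property of the semi-free spectra appearing in the cell decomposition of Theorem \ref{cellular}; I would upgrade this to the ``under $\Sp$'' setting, where the cells additionally include the unit cell, so that both the extended functoriality of $\loday{(-)}{L}$ in injections and the extended diagonal transformation are determined levelwise on regular levels. On those levels the compatibility is the explicit coend computation just sketched, and a cell induction over the filtration of Theorem \ref{cellular} (whose attaching maps are induced regular cells, hence behave well under both $\Phi^N$ by Theorem \ref{geomfixedofinduced} and under the factor-wise unit inclusions) then yields the naturality square. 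Thus the proof is: (1) set up the functoriality of $\loday{(-)}{L}$ in equivariant injections using $\Sp \to L$; (2) check arrow-by-arrow naturality of the zig-zag on regular levels, the middle arrow via the coend formula \eqref{thediagmap}; (3) conclude by cell induction using Theorem \ref{cellular} that naturality holds for all $\Sp$-cofibrant-under-$\Sp$ spectra $L$ and all equivariant injections of finite free $G$-sets.
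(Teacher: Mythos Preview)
Your approach is correct, but it is more elaborate than the paper's. You unpack the zig-zag of Proposition~\ref{diagzigzag} and verify naturality of each of its three constituent arrows separately, tracking the middle map through the coend formula~\eqref{thediagmap} on regular levels and then doing a cell induction. The paper instead bypasses the zig-zag entirely at this stage: since $\Sp \to L$ is an $\Sp$-cofibration, one can choose an $\Sp I$-cellular structure on $L$ in which this map is the inclusion of a subcomplex; then Theorem~\ref{cellular} makes $L^{\smash X} \to L^{\smash Y}$ an inclusion of equivariant subcomplexes (the extra cells all involve the $\Sp$-factors on $Y\setminus X$), and likewise for $L^{\smash X_N} \to L^{\smash Y_N}$. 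Because the diagonal isomorphism in Theorem~\ref{geomScofib} was itself built by induction over exactly this cell filtration, it automatically restricts to the subcomplex, giving naturality for free. The paper also records a second, orbit-decomposition argument via Corollary~\ref{orbitsplit} that rewrites $\Lambda_f L$ as a $G$-fold smash power of a single map of $\Sp$-cofibrant spectra, reducing naturality in $X$ to naturality in $L$ already established in Proposition~\ref{diagzigzag}. Your route has the virtue of being closer to first principles and making the role of the unit insertions explicit; the paper's route is shorter because it leverages the specific cell-by-cell construction of $\Delta(X,L)$ rather than re-examining the zig-zag.
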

\begin{proof}
The naturality in the spectrum direction is clear from the construction.  Hence it is enough to treat the case where $\Sp\to L$ is an $\Sp I$-cellular inclusion and $X \rightarrow Y$ is an
equivariant inclusion of finite free $G$-sets.

 Then $L^{\smash X}
\rightarrow L^{\smash Y}$ is an inclusion of an equivariant subcomplex
as in Theorem~\ref{cellular}. Similarly $L^{\smash X_N}$ is an
equivariant subcomplex of $L^{\smash Y_N}$ and it follows as from the
proof of Theorem~\ref{geomScofib} that the diagonal isomorphism respects the
inclusion of subcomplexes.\end{proof} 
Moving towards infinite free $G$-sets $X$, the smash power is defined in Definition~\ref{colimloday} as the colimit along inclusions of finite free $G$-subsets of $X$.\[\bigwedge_X(L) \defas{}\colim\limits_{F \subset X \text{ finite}} \bigwedge_F L.\] Note that by the proof of the previous lemma, this is a filtered colimit along Hurewicz cofibrations, so that it is preserved by $\Phi^N$. 
\begin{lem}\label{naturalinclusions}
The diagonal isomorphism exists and is natural with respect to free $G$-sets and equivariant inclusions, and spectra $\Sp$-cofibrant under $\Sp$.
\end{lem}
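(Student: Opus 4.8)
The plan is to reduce the statement about arbitrary free $G$-sets to the case of finite free $G$-sets, which has already been handled in the preceding lemma, by writing everything as a filtered colimit. First I would recall the definition $\Lambda_X(L) = \colim_{F \subseteq X} \Lambda_F L$, where $F$ ranges over the (filtered) poset of finite free $G$-subsets of $X$, and observe that the structure maps in this colimit are $h$-cofibrations: this was established in the proof of the previous lemma, since for $L$ that is $\Sp$-cofibrant under $\Sp$ one can choose an $\Sp I$-cellular structure in which $\Sp \to L$ is the inclusion of a subcomplex, and then by Theorem~\ref{cellular} the inclusions $L^{\wedge F} \to L^{\wedge F'}$ for $F \subseteq F'$ are inclusions of equivariant subcomplexes, hence levelwise closed inclusions, hence $h$-cofibrations.

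Next I would invoke Proposition~\ref{geomcolim}, which says that the geometric fixed point functor $\Phi^N = \Phi^N_{\hml}$ preserves sequential colimits along $h$-cofibrations (and more generally filtered colimits along $h$-cofibrations, since a filtered colimit along $h$-cofibrations can be rewritten as such — or one simply notes the same proof via Lemma~\ref{fixedprop} applies). Therefore
\[
\Phi^N(\Lambda_X L) = \Phi^N\bigl(\colim_{F \subseteq X} \Lambda_F L\bigr) \cong \colim_{F \subseteq X} \Phi^N(\Lambda_F L).
\]
On the other side, $(-)_N$ takes the filtered poset of finite free $G$-subsets of $X$ to the filtered poset of finite $N$-free-... more precisely, for each finite free $G$-subset $F$ the orbit set $F_N$ is a finite subset of $X_N$, and these exhaust $X_N$; moreover the smash power $\Lambda_{X_N}L = \colim_{F} \Lambda_{F_N} L$, again a filtered colimit along $h$-cofibrations by the same cellular argument. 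So both sides of the desired isomorphism are filtered colimits over the same indexing poset.

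Finally I would assemble the diagonal map. For each finite free $G$-subset $F \subseteq X$ the previous lemma gives a natural isomorphism $\Delta(F,L)\colon L^{\wedge F_N} \xto{\cong} \Phi^N(L^{\wedge F})$, and naturality with respect to the equivariant inclusions $F \hookrightarrow F'$ — which is exactly the content of the previous lemma — shows that the maps $\Delta(F,L)$ form a map of $F$-indexed diagrams. Passing to the colimit yields
\[
\Delta(X,L)\colon \Lambda_{X_N} L = \colim_F L^{\wedge F_N} \xrightarrow{\ \cong\ } \colim_F \Phi^N(L^{\wedge F}) \cong \Phi^N(\Lambda_X L),
\]
an isomorphism of $J$-spectra since a filtered colimit of isomorphisms is an isomorphism. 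Naturality in $L$ (for maps of spectra $\Sp$-cofibrant under $\Sp$ respecting the cofibration from $\Sp$) and in $X$ (for equivariant inclusions of free $G$-sets) follows from the corresponding naturality statements at each finite stage together with the universal property of the colimit. The main obstacle is purely bookkeeping: one must check that the two colimit systems are genuinely indexed by the same filtered category and that the comparison isomorphisms identifying $\Phi^N$ (resp. $(-)^{\wedge X_N}$) with the relevant colimits are compatible with the structure maps $\Delta(F,L)$ — i.e. that all the squares in sight commute — but this is exactly what the naturality clause of the preceding lemma provides, so no new homotopy-theoretic input is required.
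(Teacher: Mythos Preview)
Your proof is correct and follows essentially the same route as the paper's: both sides are expressed as filtered colimits over the poset of finite free $G$-subsets, the structure maps are $h$-cofibrations so that $\Phi^N$ commutes with the colimit (the paper notes this in the paragraph immediately preceding the lemma), and then the map on colimits is assembled from the finite-stage diagonal isomorphisms provided by the previous lemma. The paper's proof is more terse but the argument is the same.
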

\begin{proof}We begin with the existence. Let $L$ be cofibrant under $\Sp$, and let $X$ be a free $G$-sets. The finite subsets of $X_N$ are orbits of finite subsets of $X$ hence there is a natural map $\bigwedge_{X_N} L \rightarrow \Phi^N (\bigwedge_{X}L)$ which is the colimit of isomorphisms hence an isomorphism itself. Naturality follows since equivariant inclusions induce inclusions of indexing categories for the colimits.
\end{proof}
As an alternative proof, we can use Corollary~\ref{orbitsplit}: Let $f \colon X \rightarrow Y = X\cup Z$ be an equivariant inclusion of free $G$-sets. Then $f$ respects the orbit decomposition, \ie \[X \cong_G \bigcup_{X_G} G, \,\,\,\,\,\,\,\,\,\,\,\,\,\,\,\,\,\,\,Y\cong_G \bigcup_{X_G} G \cup \bigcup_{Z_G} G,\]
and $f$ corresponds to the obvious inclusion.
Hence $\bigwedge_f L$ is isomorphic to the map \[\bigwedge_G (\bigwedge_{X_G} L) \cong \bigwedge_G (\bigwedge_{X_G} \smash \bigwedge_{Z_G} \Sp) \rightarrow \bigwedge_{Y_G} (\bigwedge_{Y_G} L),\]
\ie it is the smash power of a map of $\Sp$-cofibrant spectra, so Proposition~\ref{diagzigzag} gives the result. 
In particular, we even get that the map $\bigwedge_{X_G} L \rightarrow \bigwedge_{Y_G}L$ is a (non equivariant) $\Sp$-cofibration, thus the induced map of smash powers is an $\Sp$-cofibration of $G$-spectra by Lemma~\ref{inducesmashpresscof}. 
To see this, filter $Y_G$ through finite sets $Y_i$ and let $X_i = f^{-1} Y_i$, $Z_i = Y_i \setminus f(X_i)$. 
As in Theorem~\ref{cellular}, each of the maps $L^{\smash X_i} \cong L^{\smash X_i}\smash \Sp^{\smash Z_i} \rightarrow L^{\smash Y_i}$ is an $\Sp$-cofibration, hence so is their colimit.\\
Finally we move towards $\Sp$-cofibrant commutative orthogonal ring spectra, where we want to work with the definition of the smash power 
in terms of the categorical tensor with spaces, \ie for $X$ a space and $A$ an $\Sp$-cofibrant commutative orthogonal ring spectrum,  $\bigwedge_X A = A \otimes X,$
where the tensor is in the category of commutative orthogonal ring spectra (cf.~\ref{subsecttensors}). As discussed in Proposition~\ref{disctensor}, the tensor specializes to the smash power for discrete inputs $X$, so all the results from above still apply. Note that we can now extend the naturality results to not necessarily injective maps:
\begin{thm}\label{thm:diagisoisnaturalcom}
The diagonal isomorphism from Lemma~\ref{naturalinclusions} is natural with respect to free $G$-sets and equivariant maps, and $\Sp$-cofibrant commutative orthogonal ring spectra.
\end{thm}
\begin{proof}
  First observe that both source, $A^{\smsh X_N}$ and target $\Phi^N(A^{\smsh X})$ are fully functorial in $A$ and $X$, and furthermore that the diagonal isomorphism is natural \wrt $A$.
  
  Let $f \colon X \rightarrow Y$ be an equivariant map between free $G$-sets. Similar to the above discussion, we filter $X$ and $Y$ by finite free $G$-sets $X_i$ and $Y_i$ and consider $f$ as a colimit of maps $f_i\colon X_i \rightarrow Y_i$, where the transformations $\bigwedge_{f_i} A \rightarrow \bigwedge_{f_j}A$ for $i \leq j$ are along $\Sp$-cofibrations.

  Thus it suffices to check naturality for 
equivariant maps between finite free $G$-sets.
  Any such map can be factored into isomorphisms, inclusions and surjections of the form $\id_G\times p\colon G\times S\to G\times T$, where $p$ is a surjection of finite sets.  Isomorphisms and inclusions having already been dealt with, we only need to consider the latter type, which can be reinterpreted as functoriality in $A$ as follows.  If $B=A^{\smsh{S}}$ and $C=A^{\smsh{T}}$, then $p$ induces a map $\tilde p\colon B\to C$ of $\Sp$-cofibrant orthogonal ring spectra.  This way, the diagram
  $$\xymatrix{
    A^{\smsh (G\times S)_N}\ar[rr]^{A^{\smsh(\id_G\times p)_N}}\ar[d]^{\Delta(G\times S,A)}&&
      A^{\smsh (G\times T)_N}\ar[d]^{\Delta(G\times T,A)}\\
      \Phi^NA^{\smsh (G\times S)}\ar[rr]^{\Phi^NA^{\smsh(\id_G\times p)}}&&\Phi^NA^{\smsh (G\times T)}
    }$$ is transformed into the commutative diagram
     $$\xymatrix{B^{\smsh G/N}\ar[r]^{\tilde p^{\smsh G/N}}\ar[d]^{\Delta(G,B)}&C^{\smsh G/N}\ar[d]^{\Delta(G,C)}\\
       \Phi^NB^{\smsh G}\ar[r]^{\Phi\tilde p^{\smsh G}}&\Phi^NC^{\smsh G}
     }$$
    representing naturality in $\Sp$-cofibrant commutative orthogonal ring spectra.
\end{proof}
We can finally move on towards non discrete $G$-spaces (however, at this stage, the group $G$ is still finite and discrete).
We begin with $G$-spaces that are geometric realizations of simplicial $G$-sets, since there we have Proposition~\ref{realizeinspec},
which makes computing the smash powers much easier and in particular allows the following extension of the diagonal isomorphism:

\begin{prop}\label{diagsimplicial}
The smash power and geometric fixed points commute with realization and so the diagonal isomorphism from Theorem~\ref{thm:diagisoisnaturalcom} yields a transformation of commutative orthogonal $G/N$-ring spectra  
$$\xymatrix{
\Delta_{|X|}\colon\bigwedge_{|X|_N} A\cong |\bigwedge_{X_N}A|\ar[rr]^{|\Delta_{X_N}|} &&
|\Phi^N(\bigwedge_{X}A) |\cong \Phi^N(\bigwedge_{|X|}A)}$$
which is natural with respect to free simplicial $G$-sets $X$ and $\Sp$-cofibrant commutative orthogonal ring spectra $A$.
\end{prop}
\begin{proof}
By Proposition~\ref{realizeinspec}, for a free $G$-simplicial set $X$, and an $\Sp$-cofibrant commutative orthogonal ring spectrum $A$, the tensor $A \otimes |X|$ is naturally isomorphic to the realization of the simplicial orthogonal spectrum 
$\{q \mapsto (A \otimes X_q) = \bigwedge_{X_q} A\}$.
By Proposition~\ref{skeletonfiltration}, the geometric realization of this spectrum is the colimit along the skeleton filtration, which is along levelwise Hurewicz cofibrations since the simplicial spectrum is Hurewicz good (cf.~Definition~\ref{dfngood} and~\ref{goodisproper} and Proposition~\ref{cofproperfiltr}): Every simplicial degeneracy map $s_i$ is an injection of free $G$-sets, hence as in the comment before Lemma~\ref{naturalinclusions}, it induces an $\Sp$- hence Hurewicz cofibration of smash powers.
 In particular, taking the geometric fixed points commutes with the geometric realization, since $\Fix^N$ does. Therefore the diagonal maps $\Delta(X_q,A)$ for each simplicial level induce an isomorphism on realizations. It is natural since maps of free $G$-simplicial sets are levelwise maps of free $G$-sets.
\end{proof}



\section{Change of Group}
\label{sec:gpchange}
This concludes our study of the case of a {\em fixed} finite group $G$. So far we have not touched upon functoriality of the geometric diagonal with respect to the ambient group,
 so let now $\phi\colon H \rightarrow G$ be a homomorphism of topological groups. As usual we can look at the restriction functor $\phi^*\colon \catGT \rightarrow H\catT$, and it is immediate, that it commutes with smash powers
in the sense that $\bigwedge_{\phi^* X} A = \phi^*\bigwedge_{X} A$ for orthogonal ring spectra $A$.
If $\phi^*$ is not injective it does not necessarily send
free $G$-sets to free $H$-sets and we have in general less control over the geometric diagonal. 
\begin{prop}\label{diagrespres}
  Let $N\subseteq H\subseteq G$ be finite groups with $N$ normal in $G$, and $X$ a finite free $G$-set.  Let $i\colon H\to G$ and $i_1\colon H/N\to G/N$ be the inclusions.  
  The restriction functor preserves the geometric diagonal of Proposition~\ref{diagzigzag} in the sense that if $A$ is an orthogonal spectrum, then the diagram
$$\xymatrix {
    i_1^*\bigwedge_{X_N}A\ar@{=}[d]& \ar_-{\varepsilon_j}[l]{i^*_1\FP_j\FU_j\bigwedge_{X_N} A}\ar^-{}[r]\ar[d]&{i^*_1\PhiNH{}\bigwedge_XA}\ar[d]^\cong\\
  \bigwedge_{(i^*X)_N}A& 
\ar_-{\varepsilon_{j_1}}[l]{\FP_{j_1}\FU_{j_1} \bigwedge_{(i^*X)_N}A}\ar^-{}[r]&
{\Phi^N\bigwedge_{i^*X}A}}$$
commutes, where $j_1\colon\catO_{H/N}^\reg\subseteq\catO_{H/N}$ is the inclusion, the horizontal maps are those of Proposition~\ref{diagzigzag} and the vertical maps are induced by restriction.  The rightmost restriction is an isomorphism.

If $A$ is an $\Sp$-cofibrant commutative orthogonal ring spectrum and $X$ is the realization of a free simplicial $G$-set, then the geometric diagonal isomorphism $\Delta(i^*X,A)$ for $A$ \wrt $H$ and $i^*X$ is equal to the the restriction to $H/N$ of the geometric diagonal isomorphism $\Delta(X,A)$ for $A$ \wrt $G$ and $X$ composed with the $H/N$-isomorphism  
$i^*_1\PhiNH{}(\bigwedge_{X}A)\cong \Phi^N(\bigwedge_{i^*X}A)$.
\end{prop}
\begin{proof}
  For a finite free $G$-set $X$, the commutativity of the diagram follows directly from the definition of the maps in Proposition~\ref{diagzigzag}.  Since $G$ is finite we have by Lemma~\ref{cond1lem} that $N\subseteq G$ is \orfi and so Proposition~\ref{geomfixedrestrict} gives that the rightmost restriction is an isomorphism. 

  Letting $X$ be the realization of a free simplicial $G$-set and $A$ an $\Sp$-cofibrant commutative orthogonal ring spectrum, the restriction functor preserves the colimit along the inclusions of finite subsets as well as the geometric realization which we used to extend the geometric diagonal in Proposition~\ref{diagsimplicial}.  Hence the last statement follows.
\end{proof}
\begin{rem}\label{multiplicativenormscomment}
  The relation between smash powers and the induction of an $H$-set $X$ along the inclusion $i \colon H \rightarrow G$ is more subtle. 
Intuitively smash powers should be viewed as the $\smash$-induction of a spectrum with action of the trivial group to a spectrum with $G$-action.
  If one wants to start with an $H$-spectrum instead, this generalizes the study of multiplicative norm constructions,\index{multiplicative norm} where $X$ is assumed to be a discrete subgroup $H$ of $G$. These norm functors have been famously put to use in the proof of the Kervaire-invariant problem by Hill, Hopkins and Ravenel. An introduction can be found in \cite[A.3,4]{HHR}, or \cite[8,9]{Schworth}, both of which only became available very shortly before the third author's thesis (which very much is at the core of this paper) was finished. The third author learned about the interplay from Stefan Schwede, during a visit in Bonn in November 2010, where he presented the results of his thesis. As the study of multiplicative norms of course has to address some of the same questions we discussed here, we point out some similarities and differences to \cite{HHR}. Due to the fact that the works are independent, the notation and viewpoint are rather different. First of all one should note the difference in model structures.
  We worked with the $\Sp$-model structure instead of the classical $q$-model structure on commutative orthogonal ring spectra, in order to get around the $q$-cofibrant replacement. 
  The paper \cite{HHR} addresses this problem by proving that the symmetric powers appearing in the generating $q$-cofibrations are ``very flat'' (cf.~\cite[B.13,63]{HHR}), which allows them to construct a natural weak equivalence calculating geometric fixed points. Their method has the advantage that it is more easily applicable to the general multiplicative norm case they aim to study. Our method on the other hand allows us to recognize the diagonal map as a natural \emph{isomorphism}, strengthening their statement. Note that the ``Slice Cells'' discussed in \cite[4.1]{HHR} are special cases of generating $\Sp I_G$-cofibrations in our language, and from this viewpoint the filtration given in \cite[A.4.3]{HHR} and our Theorem~\ref{cellular} achieve similar goals -- an equivariant filtration of the smash power -- with different methods. Finally note that the change of the indexing of the smash power away from a non discrete set we worked for in this section, is only addressed in the side note \cite[A.35]{HHR}. Since all groups discussed there are finite, this is not a major point in \cite{HHR}, but as we are going to move towards tori and more general compact Lie groups now, the details become important.
\end{rem}

\section{The Geometric Diagonal for Compact Lie Groups}
\label{sec:complie}
We now leave the realm of finite groups and move back to the case of
compact Lie groups. 
In particular, to deal with (higher) topological Hochschild homology and cyclic homology, we are interested in the case where $G$ is a
torus. The first thing we should address is that this case is \orfi in the sense of Definition~\ref{cond1}.

\begin{lemma}\label{toruscond}
  Inclusions of kernels of isogenies of the $n$-torus are \orfi.
\end{lemma}
\begin{proof}
  An isogeny of the $n$-torus $\R^n/\Z^n$ is given by multiplication by an integral $n\times n$-matrix $A$ with nonzero determinant -- making $A$ invertible over the rationals.  The kernel $H$ of the isogeny is then $A^{-1}\Z^n/\Z^n\subseteq\R^n/\Z^n$.  The columns $c_1,\dots,c_n$  (mod $\Z^n$) of $A^{-1}$ generate $H$ and an action $\phi$ of $H$ on a Euclidean space $W$ is uniquely given by specifying commuting elements $\phi(c_1),\dots,\phi(c_n)\in\Or{W}$, so that $\phi(\sum_{i=1}^nn_kc_k)=\prod_{i=1}^n\phi(c_k)^{n_k}$ for $n_k\in\Z$.

  We must find a representation $V$ of the torus and an $H$-linear isometric embedding $W\to V$ inducing an isomorphism of fixed points $W^H\cong V^H$.  We may restrict to irreducible $W$ so that $W=\R$ or $W=\C$.  If $W=\C$ we see that none of the $\phi(c_k)$s can be reflections: if one were it'd split $\C$ in two eigen spaces which \emph{all} the $\phi(c_k)$s would have to respect, contradicting the irreducibility of $W$.  So,  in either case $\phi(c_k)$ is given by multiplication by a root $\zeta_k$ of unity: if $W=\R$ then $\zeta_k=\pm1$ and in general $\zeta_k=e^{2\pi i\frac{n_k}{\rho_k}}$ where $n_k$ is an integer and $\rho_k$ is the order of $c_k$.

  If $W=\R$ and $\phi$ is trivial, we let $V=W$ be the trivial representation of the torus.

  If $\phi$ is nontrivial we get that $W^H=0$. We then let $V=\C$ provided with the torus action obtained by letting $(x_1,\dots,x_n)\in\R^n$ act by multiplication by $\prod_{k=1}^n\zeta_k^{x_k}$. Note that there are no non-zero fixed point of this action either.   If $W=\R$ then the standard embedding $\R\subseteq\C$ is our desired extension and if $W=\C$ we can simply use the identity $\C=\C$. 
\end{proof}

 We will need one more fact about the torus $\T^n$, namely that it is possible to realize our theorems about finite group actions on sets to actual $\T^n$-equivariant maps.  We give a name to the property we will need.

  \begin{dfn}
    \label{def:simpap}
    Let $G$ be a topological group and $\mathcal F$ a class of finite subgroups.  We say that $G$ has {\em simplicial approximations with respect to $\mathcal F$}\index{simplicial approximation wrt.\ a class of subgroups} if for each $H\in\mathcal F$, there is a free simplicial $H$-set $\sa HG$ together with an $H$-equivariant homeomorphism $D_H\colon|\sa HG|\to G$ such that for any $K\in\mathcal F$ and normal subgroup $N$ of $G$ contained in $H\cap K$ the diagram
$$\xymatrix{
\bigwedge_{|\sa HG|_N}A\ar[d]^{\bigwedge_{(D_K^{-1}D_H)_N}A}&
|\bigwedge_{\sa HG_N}A|\ar[l]_\cong\ar[rr]^-{|\Delta_{\sa HG}|}
&&
 |\Phi^N\bigwedge_{\sa HG}A|\ar[r]^\cong& 
\Phi^N\bigwedge_{|\sa HG|}A\ar[d]^{\Phi^N\bigwedge_{D_K^{-1}D_H}A}
\\
\bigwedge_{|\sa KG|_N}A&|\bigwedge_{\sa KG_N}A|\ar[l]_\cong\ar[rr]^{|\Delta_{\sa KG}|}&&
 |\Phi^N\bigwedge_{\sa KG}A|\ar[r]^\cong& \Phi^N\bigwedge_{|\sa KG|}A
}
$$
commutes.  
  \end{dfn}

\begin{rem}
As we shall see below, the torus $\T^n$ \siap
with respect to all kernels of isogenies. To the knowledge of the
authors,  the general case when $G$ is a compact Lie group is unknown. Note that Illman's triangulation theorem~\ref{illtriangmfd} constructs an $H$-equivariant triangulation of the smooth $H$-manifold $G$, but since the usual methods that produce simplicial sets from simplicial complexes fail for Illman's equivariant simplices (\cite[\S 3]{Ill}), this is not enough.
\end{rem}

  \begin{lemma}\label{toruscond2}
    The torus $\T^n$ has simplicial approximations with respect to all kernels of isogenies.
  \end{lemma}
  \begin{proof}
\newcommand{\sd}{\mathrm{sd}}
    We do the case $n=1$ first.  Recall the edgewise subdivision $\sd_r$ and the homeomorphisms $D_r\colon|\sd_{rs}X|\to|\sd_sX|$ for cyclic spaces $X$ as in \cite[Section 1]{BHM}.  

If $S^1=\Delta[1]/\partial\Delta[1]$ is the standard simplicial circle, we claim that the edgewise subdivisions provide simplicial approximations for $\T^1$ with respect to the kernels of isogenies (\ie finite cyclic groups).

The $s$th edgewise subdivision $\sd_sX$ has a natural simplicial action by the cyclic group $C_s$ of order $s$, and with respect to this action the map $D_s\colon|\sd_sX|\cong|X|$ is $C_s$-equivariant (where $X$ has the $\T^1$-action coming from the cyclic structure). 
Also, $\sd_{rs}=\sd_r\sd_s$.   Now, the subdivision appears as the precomposition of a functor (also called $\sd_r$ in \cite{BHM}) and as such commutes with any functor; in particular, $\bigwedge_{\sd_{rs}S^1}A=\sd_r\bigwedge_{\sd_sS^1}A$ and for $N\subseteq C_s$ we have that $\Phi^N\sd_{rs}=\sd_r\Phi^N\sd_s$.
Hence, if $N\subseteq H=C_s\subseteq K=C_{rs}$ our claim follows by the decomposition
$$\xymatrix{
\bigwedge_{|\sd_sS^1|_N}A\ar[d]^{\bigwedge_{(D_{rs}^{-1}D_s)_N}A}&
|\bigwedge_{\sd_sS^1_N}A|\ar[l]_\cong\ar[rr]^-{|\Delta_{\sd_sS^1}|}\ar[d]^{D_{rs}^{-1}D_s}
&&
 |\Phi^N\bigwedge_{\sd_sS^1}A|\ar[r]^\cong\ar[d]^{D_{rs}^{-1}D_s}& 
\Phi^N\bigwedge_{|\sd_sS^1|}A\ar[d]^{\Phi^N\bigwedge_{D_{rs}^{-1}D_s}A}
\\
\bigwedge_{|\sd_{rs}S^1|_N}A&|\bigwedge_{\sd_{rs}S^1_N}A|\ar[l]_\cong\ar[rr]^{|\Delta_{\sd_{rs}S^1}|}&&
 |\Phi^N\bigwedge_{\sd_{rs}S^1}A|\ar[r]^\cong&\, \Phi^N\bigwedge_{|\sd_{rs}S^1|}A.
}
$$

When $n$ is greater than $1$ we combine this insight with the investigations in the proof of Lemma~\ref{toruscond} whose notation we use freely below. Let $\alpha\colon\T^n\to\T^n$ be an isogeny with kernel $H$ containing $N$ (which then is the kernel of another isogeny).  For the intuition that underlies the following it is best to think of $\R^n$ as the $n$-fold product of the infinite simplicial complex $\R$, which has vertices lying on the points in $\Z$, and edges between them. 
We identify the action of $H$ on this complex, and then produce a finer complex where the action is simplicial. As in the proof of Lemma~\ref{toruscond}, associate to the isogeny $\alpha$ a matrix $A \in M_n(\Z) \cap \GL_n(\Q)$. The action of an element of $H$ on an element $[r] \in \faktor{\R^n}{\Z^n}$ corresponds to adding a linear combination of columns of $A^{-1}$ to $r$. By Cramer's rule all the columns of $A^{-1}$ are in $\det(A)\cdot\Z^n$. Hence the difference of $r$ to its image under the action of any $h \in H$ is in $\det(A)\cdot\Z^N$ as well. Note also that even if $A$ is not uniquely given by $H$, the absolute value $|\det A|$ of the determinant is uniquely determined by $H$.
Thus we can choose our simplicial approximation to be 
$$\sa HG = \sd_{|\det A|}(S^1\times\dots S^1)=\sd_{|\det A|}S^1\times\dots\sd_{|\det A|}S^1,$$ where $S^1$ is, as before, the standard simplicial circle. The action of $H$ on the realization corresponds to a simplicial action on the resulting simplicial complex, hence $G^H$ is the desired $H$-simplicial set.  The compatibility now follows as for the one-dimensional case.

  \end{proof}

Hence we are in a position to use our construction of the geometric diagonal in the simplicial context to prove that, for $N$ the kernel of an isogeny, the geometric diagonal provides us with an isomorphism between $\bigwedge_{\T^n/N}A$ and $\Phi^N\bigwedge_{\T^n}A$ that is at least equivariant with respect to finite subgroups of $\T^n/N$.  The compatibility we have proved for extensions of groups will give us  full $\T^n/N$-equivariance, but before we state the result we make an observation making it possible to extend the range somewhat further.
\begin{lemma}
  \label{lem:Deltaforfree}
  Let $G$ be a compact Lie group, $N$ a normal subgroup and $J=G/N$.  Let $F$ be a lax symmetric monoidal functor from $G$-spaces to commutative orthogonal $J$-ring spectra and let $\Delta_G\colon\bigwedge_JA\to F(G)$ be a $J$-ring homomorphism commuting with the right $G$-action on $J$ and $G$.  Then there is a unique monoidal extension of $\Delta_G$ to a natural transformation
$$\Delta_{X}\colon\bigwedge_{X_N}A\to F(X)$$
where $X$ ranges over the category of free cofibrant $G$-spaces.
\end{lemma}
\begin{proof}
  This is formal, but let us spell out some details.  Let $\catC$ (resp.~$J\catC$) be the category of commutative orthogonal (resp.~$J$-) ring spectra.  If $T\in J\catC$ and $X\in J\catT$, then $J\catC(\bigwedge_YA,T)\cong J\catT(Y,\catC(A,T))$, so a $J$-ring homomorphism $\bigwedge_YA\to T$ is uniquely determined by the composites with the maps $\bigwedge_JA\to\bigwedge_YA$ induced by the inclusion of $J$-orbits of $Y$. 

As we demanded that $\Delta$ should be monoidal, we must have that if $K$ is a space, then $\Delta_{G\times K}$ must be the composite 
$$\xymatrix{\bigwedge_{J\times K}A\cong\bigwedge_K\bigwedge_JA\ar[rr]^{\bigwedge_K \Delta_G}&&\bigwedge_KF(G)\to F(K\times G)}$$
of the monoidal structure maps with $\bigwedge_K \Delta_G$.  Uniqueness then gives that we have defined $\Delta$ for the full subcategory of $G$-spaces of the type $G\times K$.  There is a slight subtlety here: a $G$-equivariant map $f\colon G\times K\to G\times L$ is of the form $f(g,k)=(g\cdot f_1k,f_2k)$, and it is the presence of $f_1k$ that forced our demand that $\Delta_G$ must commute with the right action of $G$.

We do induction on cells.  Assume we have defined $\Delta$ naturally also over some $G$-map $G\times S^{n-1}\to X$ of free $G$-spaces.  Then we again define $\Delta_Y$ for $Y=X\coprod_{G\times S^{n-1}}(G\times D^n)$ by the strong monoidality of $\bigwedge_?A$.
If $G\times S^{k-1}\to Y$ is any $G$-map, then any orbit in $G\times S^{k-1}$ lifts (not necessarily uniquely) to $X$ or to $G\times D^n$, and so since $\Delta_Y$ is built out of $\Delta_X$ and $\Delta_{G\times D^n}$, uniqueness gives that the crucial diagram
$$\xymatrix{\bigwedge_{J\times S^{k-1}}A\ar[r]\ar[d]^{\Delta_{G\times S^{k-1}}}&
  \bigwedge_{Y_N}A\ar[d]^{\Delta_Y} \\
F({G\times S^{k-1}})\ar[r]&F(Y)
}$$
commutes.  Concretely, given a $G$-orbit $T$ in $G\times S^{k-1}$ that maps to an orbit in $Y$ which lifts to $Z$ (where $Z$ is either $X$ or $G\times D^n$), then all the faces but possibly the front one in the cube 
$$\xymatrix{
&{\bigwedge_{T_N}A}\ar[rr]\ar'[d][dd]_-{\Delta_T}\ar_-{\bigwedge_{\text{incl}}A}[dl]&&
{\bigwedge_{Z_N}A}\ar_>>>>>>>>{\Delta_Z}[dd]\ar[dl]\\
{\bigwedge_{J\times S^{k-1}}A}\ar[rr]\ar_>>>>>>>>{\Delta_{G\times S^{k-1}}}[dd]&
{}\ar@{}[r]&{\bigwedge_{Y_N}A}\ar_>>>>>>>>{\Delta_Y}[dd]\\
&{F(T)}\ar@{}[r]\ar'[r][rr]\ar_-{F(\text{incl})}[dl]&&{F(Z)}\ar[dl]\\
{F({G\times S^{k-1}})}\ar[rr]&&{F(Y)}}
$$
commute.  Hence, the composites around the front square with $\bigwedge_{\text{incl}}A$ are equal.

Induction then gives that we can define $\Delta_Z$ for any cell presentation of a free $G$-space $Z$ and uniqueness also gives that $\Delta_Z$ does not depend on the presentation.  We extend to all cofibrant $G$-spaces by retraction and again see that naturality follows by uniqueness.
\end{proof}

\begin{thm}\label{gocompact}
Let $N$ be the kernel of an isogeny of the $n$-torus $\T^n$.
For an $\Sp$-cofibrant commutative orthogonal ring spectrum $A$, the geometric diagonal of Proposition~\ref{diagsimplicial} applied to the simplicial $N$-set $\sa H\T^n$ of Lemma~\ref{toruscond2} extends uniquely to a natural (in both $A$ and $X$) isomorphism 
\[\xymatrix{\Delta(X,A)\colon\bigwedge_{X_N}A \ar[r]& \Phi^N \bigwedge_XA}\]
of commutative orthogonal $\T^n/N$-ring spectra, where $X$ is a free cofibrant $\T^n$-space.
\end{thm}
\begin{proof}
Firstly, by Lemma~\ref{toruscond}, $N\subseteq\T^n$ is \orfi.  For a finite subgroup $H\subseteq \T^n$ with $N\subseteq H$, Lemma~\ref{toruscond2} gives us a simplicial approximation $\sa H\T^n$ and compatible $H$-equivariant homeomorphisms $D_H\colon|\sa H\T^n|\cong\T^n$ and so Proposition~\ref{diagsimplicial} produces a diagonal isomorphism 
$$\xymatrix{\Delta(\T^n,A)\colon\bigwedge_{\T^n_N}A \ar[r]& \Phi^N \bigwedge_{\T^n}A}$$
 of commutative orthogonal $\faktor{H}{N}$-ring spectra.  Apart from not giving the full $\T^n/N$-action, this is all the input that is needed to evoke Lemma~\ref{lem:Deltaforfree}, so we must show that the action can actually be improved on.  Note that, thanks to the functoriality of Proposition~\ref{diagsimplicial} and the commutative diagram of the very definition of being a simplicial approximation (c.f~Definition~\ref{def:simpap}), the geometric diagonal coming from the simplicial approximations attached to different finite subgroups containing $N$ define the same underlying map and are compatible on the intersection in the following sense.

If $H_1$ and $H_2$ are finite subgroups of the torus, both containing $N$, then 
the two geometric diagonals they define restrict to the same map of $\faktor{H_1 \cap H_2}{N}$-spectra, since the diagonal maps are preserved under restriction (Proposition~\ref{diagrespres}). In particular all of these maps restrict to the same underlying map of spectra. Note that for normal subgroups $N \subset G$ an element $[g]$ in the quotient group $\faktor{G}{N}$ has finite order if and only if the subgroup generated by $g$ intersects {$N$} non trivially, and in particular if and only if the subgroup generated by $\{g\} \cup N$ contains $N$ as a subgroup of finite index. 
This implies that we constructed a map between spectra with $G/N$-action, which is equivariant with respect to the action of all points in $G/N$ that have finite order. Since {$\faktor{G}{N}$} is isomorphic to $G$, we know that the points of finite order are exactly the rational points in $\faktor{G}{N}\cong \T^n$. Since the rational points are dense in $\T^n$, and the actions on $G/N$-spectra are continuous, the map is indeed $G/N$-equivariant.
\end{proof}

Since every element of a compact Lie group lies in a maximal torus,
and in particular contains points of finite order in every one of its
neighborhoods, this argument can be used in more general \orfi setting with simplicial approximations.

\begin{thm}\label{generalliegroup}
Let $G$ be a compact Lie group, and $N$ 
an \orfi normal subgroup. Assume $G$ has simplicial approximations (c.f.~Definition~\ref{def:simpap}) with respect to all finite subgroups containing $N$.
For an $\Sp$-cofibrant commutative orthogonal ring spectrum $A$
and a \naively cofibrant $G$-space $X$, there is an isomorphism of
$\faktor{\G}{N}$-spectra 
$$\Delta(X,A)\colon\xymatrix{\bigwedge_{X_N}A \ar[r]& \Phi^N\bigwedge_XA.}$$ 
The isomorphism is natural and restricts to the diagonal map under the restriction of the action to any finite subgroup of $\faktor{G}{N}$.
\end{thm}
A cute side effect of the geometrical diagonal often being an isomorphism is that we get good equivariance properties with weak assumptions.

\begin{cor}\label{lodaypreservespistar}
Let $G = \T^n$ be the $n$-torus and let $\aml$ be the family of kernels of isogenies and let $X$ be a free cofibrant $\T^n$-space. Then the 
$X$-fold smash power $\bigwedge_X$ sends $\pi_*$-isomorphisms between $\Sp$-cofibrant commutative orthogonal ring spectra to $\pi_*^\aml$-isomorphisms of commutative orthogonal $\T^n$-spectra in the sense of Definition~\ref{def:Mortenamliso}.
\end{cor}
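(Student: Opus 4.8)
The plan is to combine the characterization of $\pi_*^{\aml}$-equivalences from Proposition~\ref{fundcofiber} with the diagonal isomorphism established in Theorem~\ref{gocompact}. Given a $\pi_*$-isomorphism $f \colon A \to A'$ between $\Sp$-cofibrant commutative orthogonal ring spectra, I want to show that $\Lambda_X f \colon \Lambda_X A \to \Lambda_X A'$ is a $\pi_*^{\aml}$-equivalence of $\T^n$-spectra, where $\aml$ is the family of kernels of isogenies of $\T^n$. The first step is to reduce to checking geometric fixed points: by Proposition~\ref{fundcofiber}, once we know $\Lambda_X A$ and $\Lambda_X A'$ are $\Ind^\reg \cup \Fr{}{}I_G$-cellular, it suffices to show that $\Phi^N(\Lambda_X f)$ is a (non-equivariant) $\pi_*$-isomorphism for every $N \in \aml$.

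First I would address the cellularity hypothesis. A free cofibrant $\T^n$-space $X$ is built from free cells, and by the cellular filtration machinery of Theorem~\ref{cellular} together with Example~\ref{dfn.smash-inducedcell} and the discussion in Section~\ref{specialcellssubsect}, the smash power $\Lambda_X A$ of an $\Sp$-cofibrant $A$ (viewed as a colimit of smash powers over finite subsets, cf.~\ref{colimloday}) is built from induced regular cells and free cells along $h$-cofibrations. Hence $\Lambda_X A$ is $\Ind^\reg \cup \Fr{}{}I_G$-cellular, and likewise for $\Lambda_X A'$. This is precisely the class of spectra for which Proposition~\ref{fundcofiber} applies, so the reduction to geometric fixed points is legitimate.

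The heart of the argument is then the following: for $N$ the kernel of an isogeny, Theorem~\ref{gocompact} gives a natural isomorphism $\Lambda_{X_N} A \xto{\cong} \Phi^N(\Lambda_X A)$ of $\faktor{\T^n}{N}$-spectra, natural in $A$. Therefore $\Phi^N(\Lambda_X f)$ is isomorphic to $\Lambda_{X_N} f \colon \Lambda_{X_N} A \to \Lambda_{X_N} A'$. Now $X_N$ is an ordinary (non-equivariant) cofibrant space, so $\Lambda_{X_N}(-)$ is a smash power indexed over a cofibrant space; by Theorem~\ref{geomScofib}-style reasoning, or more directly by iterating the fact that $\Comm$ and smash powers of $\Sp$-cofibrant spectra preserve stable equivalences (the corollaries following Proposition~\ref{eqconven}, together with Theorem~\ref{MMSSthm}(i) for smashing with cofibrant spaces), the functor $\Lambda_{X_N}(-)$ carries $\pi_*$-isomorphisms of $\Sp$-cofibrant commutative ring spectra to $\pi_*$-isomorphisms. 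Hence $\Lambda_{X_N} f$, and therefore $\Phi^N(\Lambda_X f)$, is a $\pi_*$-isomorphism for every $N \in \aml$, and Proposition~\ref{fundcofiber} yields the claim.

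The main obstacle I anticipate is bookkeeping rather than conceptual: verifying carefully that $\Lambda_X A$ really lands in the class $\Ind^\reg \cup \Fr{}{}I_G$-cellular when $X$ is an arbitrary free cofibrant $\T^n$-space (as opposed to a free $G$-set or a realization of a simplicial set), since Theorem~\ref{cellular} is stated for finite $G$-sets and the extension to general spaces proceeds through the colimit and simplicial approximations of Section~\ref{generalx}; one must check these colimits are along $h$-cofibrations so that the cellular structure and $\Phi^N$ both behave well. The second point requiring care is that $\Lambda_{X_N}(-)$ preserves $\pi_*$-isomorphisms of $\Sp$-cofibrant commutative ring spectra for non-discrete $X_N$; this follows by the same skeletal-filtration and cell-induction argument used to prove Theorem~\ref{gocompact} and Lemma~\ref{diagcellular}, reducing to the discrete case where the corollaries after Proposition~\ref{eqconven} apply, but the reduction should be spelled out.
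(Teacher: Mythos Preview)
Your proposal is correct and follows the same overall architecture as the paper: reduce via Proposition~\ref{fundcofiber} to showing $\Phi^N(\Lambda_X f)$ is a $\pi_*$-isomorphism, then use the diagonal isomorphism of Theorem~\ref{gocompact} to rewrite this as $\Lambda_{X_N} f$, and finally verify the latter is a $\pi_*$-isomorphism.

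The one place where the paper is more efficient than you is the last step. You propose to argue that $\Lambda_{X_N}(-)$ preserves $\pi_*$-isomorphisms by a skeletal/cell induction reducing to the discrete case and then invoking the corollaries after Proposition~\ref{eqconven}. The paper instead dispatches this in one line: the $\Sp$-model structure on commutative orthogonal ring spectra is \emph{topological} (Theorem~\ref{eqmodelstruct}), so the categorical tensor $(-)\otimes X_N$ with a cofibrant space is a left Quillen functor and hence preserves weak equivalences between cofibrant objects by Ken Brown's lemma. This avoids all the filtration bookkeeping you anticipated as an obstacle. On the other hand, your explicit attention to the cellularity hypothesis for Proposition~\ref{fundcofiber} (that $\Lambda_X A$ is $\Ind^\reg \cup \Fr{}{}I_G$-cellular) is a point the paper's proof passes over in silence, so your version is in that respect more careful.
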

\begin{proof}
Since the $\Sp$-model structure on commutative orthogonal ring spectra is topological, we know that the tensor with a cofibrant space non equivariantly preserves $\pi_*$-isomorphisms between cofibrant commutative $\Sp$-cofibrations. By Theorem~\ref{gocompact}, it therefore also induces non equivariant $\pi_*$-isomorphisms in geometric fixed points with respect to all subgroups $H \in \aml$, so Proposition~\ref{fundcofiber} gives the result.
\end{proof}
Similarly for more general compact Lie groups, Theorem~\ref{generalliegroup} gives the following analog:
\begin{cor}\label{lodaypreservespistarG}
Let $G$ be a compact Lie group and let $\aml$ be a closed family of
normal subgroups that is closed under extensions of finite index, such
all inclusions $H \subseteq G$ with \(H \in \hml\)  are \orfi (c.f~Definition~\ref{cond1})  and 
$G$ \siap (c.f~Definition~\ref{def:simpap}) with respect
to all $H$. Let $X$ be a free cofibrant $G$-space.

Then the 
$X$-fold smash power $\bigwedge_X$ sends $\pi_*$-isomorphisms between
$\Sp$-cofibrant commutative orthogonal ring spectra to
$\pi_*^\aml$-equivalences of commutative orthogonal $G$-spectra. 
\end{cor}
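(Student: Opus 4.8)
The plan is to reduce the statement for a general compact Lie group $G$ to the homotopical input already assembled: the diagonal isomorphism $\Lambda_{X_N}A \cong \Phi^N(\Lambda_X A)$ of Theorem~\ref{generalliegroup}, and the characterization of $\pi_*^\aml$-isomorphisms of $\Ind^\reg$-cellular spectra via geometric fixed points in Proposition~\ref{fundcofiber}. First I would take a $\pi_*$-isomorphism $f \colon A \to A'$ between $\Sp$-cofibrant commutative orthogonal ring spectra and a free \naively cofibrant $G$-space $X$, and observe that $\Lambda_X(-) = (-) \otimes X$ is a left Quillen functor on the $\Sp$-model structure on commutative orthogonal ring spectra (this model structure is topological by Theorem~\ref{modelstruct} and satisfies the pushout-product axiom), so $\Lambda_X f$ is a $\pi_*$-isomorphism of the underlying (non-equivariant) orthogonal spectra by Ken Brown's lemma. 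This gives the claim for the trivial subgroup $H = \{e\} \in \aml$; the work is in the other members of $\aml$.

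Next I would fix $H \in \aml$ and apply the geometric fixed point functor $\Phi^H$. By Theorem~\ref{generalliegroup} (whose hypotheses are exactly the standing assumptions of the corollary: Condition~\ref{cond1} for all $K$ with $H \trianglelefteq K \le G$ of finite index, and Condition~\ref{cond2} for such $K$) there is a natural isomorphism $\Phi^H(\Lambda_X A) \cong \Lambda_{X_H}A$, natural in the ring spectrum variable. Hence $\Phi^H(\Lambda_X f)$ is identified with $\Lambda_{X_H}f$. Now $X_H$ is an \naively cofibrant space (the functor $(-)_H$ preserves cofibrant $G$-cell structures, since it takes $G$-cells $G/K_+ \wedge D^n$ to $(G/K)_{H,+}\wedge D^n$), and $\Lambda_{X_H}(-)$ is again the tensor with a cofibrant space in commutative orthogonal ring spectra, so by the same topological-model-category argument $\Lambda_{X_H}f$ is a $\pi_*$-isomorphism of underlying orthogonal spectra. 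Thus $\Phi^H(\Lambda_X f)$ is a non-equivariant $\pi_*$-isomorphism for every $H \in \aml$.

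Finally I would invoke Proposition~\ref{fundcofiber}: for $\Ind^\reg \cup \Fr{}{}I_G$-cellular orthogonal $G$-spectra, being a $\pi_*^\aml$-isomorphism is equivalent to $\Phi^H(-)$ being a non-equivariant $\pi_*$-isomorphism for all $H \in \aml$. To apply it I must check that $\Lambda_X A$ and $\Lambda_X A'$ (or cofibrant replacements thereof as $G$-spectra) are of this cellular type. This is where the cellular filtration theorem enters: by Theorem~\ref{cellular}, if $A \cong \Comm(K)$-type data built from $\Sp I_G$-cells --- which is the case for $\Sp$-cofibrant commutative ring spectra by Theorem~\ref{convenient} and Proposition~\ref{barprop} --- then $\Lambda_X A = A^{\wedge X}$ (for $X$ discrete free) is built from induced regular cells, and for simplicial and then cell-by-cell general $X$ one propagates this via the skeletal/colimit arguments of Section~\ref{generalx} along $h$-cofibrations, which $\Phi^H$ preserves. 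So $\Lambda_X A$ is $\Ind^\reg$-cellular (or at least stably equivalent to such, which suffices after replacing by a cofibrant $G$-spectrum using that $\Phi^H$ has the correct homotopy type on these by Proposition~\ref{fundcofindreg}).

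The main obstacle is precisely this last cellularity bookkeeping: verifying that the Loday functor applied to an $\Sp$-cofibrant commutative ring spectrum lands among $\Ind^\reg$-cellular (or $\Sp_\reg$-cellular) $G$-spectra, uniformly as $X$ ranges over general \naively cofibrant free $G$-spaces, so that Proposition~\ref{fundcofiber} genuinely applies. This requires combining the equivariant filtration Theorem~\ref{cellular} with the reduction-to-simplicial-then-to-cellular arguments of Subsection~\ref{generalx}, and checking that all the relevant colimits are along $h$-cofibrations so that both $\Phi^H$ and the homotopy-cofiber-sequence machinery of Proposition~\ref{fundcofindreg} commute with them. Everything else is a formal concatenation of Ken Brown's lemma, Theorem~\ref{generalliegroup}, and Proposition~\ref{fundcofiber}.
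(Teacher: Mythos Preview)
Your proposal is correct and follows essentially the same route as the paper: the paper's argument (stated for the torus in Corollary~\ref{lodaypreservespistar} and then invoked verbatim here via Theorem~\ref{generalliegroup}) is exactly Ken Brown on the topological $\Sp$-model structure for the non-equivariant step, the diagonal isomorphism $\Phi^H(\Lambda_X f)\cong\Lambda_{X_H}f$ for each $H\in\aml$, and then Proposition~\ref{fundcofiber}. You are more scrupulous than the paper about the $\Ind^\reg$-cellularity hypothesis needed to invoke Proposition~\ref{fundcofiber}; the paper leaves this implicit, relying on Theorem~\ref{cellular} and the propagation arguments of Subsection~\ref{generalx} already having established that $\Lambda_X A$ is built from induced regular cells along $h$-cofibrations.
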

\begin{rem}Analogous results hold for the cases of mere $\Sp$-cofibrant spectra and finite free $G$-sets $X$ as well as spectra $\Sp$-cofibrant under $\Sp$ and infinite free $G$-sets $X$ (cf.~Subsection~\ref{generalx}).
\end{rem}

\begin{remark}\label{comeback}
  An alternative inroad to these results is offered via Theorem~\ref{thm:vsbcd} by \cite{BCD},  which operates with a ``norm cofiber sequence'' \cite[Lemma 5.1.3]{BCD} which in our setup perhaps would be called an ``isotropy separation sequence'' and Corollary~\ref{lodaypreservespistarG} is parallel to \cite[Corollary 5.1.4]{BCD}.  
\end{remark}

\section[Other ground rings, actions on input and isovariance.]{Afterthoughts and generalizations: geometric diagonals for other ground rings, actions on input and the role of freeness.}\label{thm:geodiagoverR}
The existence of the geometric diagonal is very much dependent on special properties of the sphere spectrum.

We now explore what structure an $\Sp$-cofibrant commutative orthogonal ring spectrum $R$ needs to satisfy in order to support geometric diagonals for commutative $R$-algebras, and see that the only thing that is needed is that the $\Sp$-algebra $\Phi^NR$ of geometric fixed points of $R$ (with trivial $G$-action) inherits an $R$-algebra structure compatible with the usual geometric diagonal.
For the sphere spectrum, the $\Sp$-algebra structure comes from the lax monoidality of $\Phi^N$, and the unit map $\Sp\to\Phi^N\Sp$ is even an isomorphism.

For the remainder of the section we fix
\begin{itemize}
\item a compact Lie group $G$ with simplicial approximation, 
\item an orthogonally final normal subgroup $N$ and 
\item an $\Sp$-cofibrant commutative orthogonal ring spectrum $R$ equipped with the trivial action of $G$.
\end{itemize}

If $A$ is a commutative $R$-algebra, then $\bigwedge_X^RA=R\smsh_{\bigwedge_XR}\bigwedge_XA$ is the tensor $X\otimes A$ in the category of commutative $R$-algebras.  The category of commutative $R$-algebras has an ``$R$-model structure'' which is built exactly as the $\Sp$-model structure except that all the generating (acyclic) cofibrations are smashed with $R$.

\begin{dfn}
  A \emph{structure of $N$-geometric diagonals}\index{structure of geometric diagonals} for $R$ is a map of commutative orthogonal ring spectra
  $$\phi_N\colon R\to\Phi^NR$$ such that the diagram
  $$
  \xymatrix{
    \bigwedge_{G/N}R\ar[rr]^-{\Delta(G,R)}\ar[d]^{\text{mult.}}&&\Phi^N(\bigwedge_GR)\ar[d]^{\Phi^N\text{mult.}}\\
    R\ar[rr]^{\phi_N}&&\Phi^NR
  }
  $$
  commutes, where the vertical maps are induced by multiplication in $R$.
\end{dfn}
One could ask that the $\phi_N$ are compatible under restriction of subgroups, but since we will in the following only concern ourselves with the existence of a $R$-geometric diagonal for a given $N$ we do not bother imposing such extra structure.

  We now show how a structure of geometric diagonals $\phi_N\colon R\to\Phi^NR$ for $R$ gives rise to natural ``$R$-geometric diagonal'' maps
  $$\Delta^R(X,A)\colon\bigwedge^R_{X_N}A\to \Phi^N(\bigwedge^R_XA)$$
  of commutative $R$-algebras $A$.
  If $X$ is a free cofibrant $G$-space, then induction on a cell structure gives that
   $$
  \xymatrix{
    \bigwedge_{X_N}R\ar[rr]^-{\Delta(X,R)}\ar[d]^{\text{mult.}}&&\Phi^N(\bigwedge_XR)\ar[d]^{\Phi^N\text{mult.}}\\
    R\ar[rr]^{\phi_N}&&\Phi^NR
  }
  $$
  commutes, which ensures that the map $\phi_N\smsh \Delta(X,A)$ appearing in the following definition is well-defined.  
  \begin{dfn}\label{def:R-geodiag}
    If $\phi_N\colon R\to\Phi^NR$ is a structure of geometric diagonals for $R$, the associated \emph{$R$-geometric diagonal} $\Delta^R(X,A)$ is the natural (natural in the $R$-cofibrant commutative $R$-algebra $A$ and the free cofibrant $G$-space $X$) map defined as the composite
    $$\xymatrix{\bigwedge^R_{X_N}A\ar@{=}[d]\ar[rrr]^{\Delta^R(X,A)}&&&
      \Phi^N(\bigwedge^R_XA)\ar@{=}[d]
      \\
      R\smsh_{\bigwedge_{X_N}R}\bigwedge_{X_N}A\ar[rr]^-{\phi_N\smsh\Delta(X,A)}&&
      \Phi^NR\smsh_{\Phi^N(\bigwedge_XR)}\Phi^N(\bigwedge_XA)\ar[r]^-\alpha&\Phi^N(R\smsh_{\bigwedge_XR}\bigwedge_XA).
      }$$
    \end{dfn}

    \begin{remark}
      The $R$-geometric diagonal may or may not be an isomorphism or stable equivalence depending on properties of $R$ and $A$.
      The assumptions on $R$ and $A$ assure that the geometric diagonals $\Delta(X,R)$ and $\Delta(X,A)$ will be isomorphisms (and also that all the smash products involved are sensible homotopically), and so the question hinges on the properties of $\phi_N\colon R\to\Phi^NR$ and the associative structure $\alpha$ on the geometric fixed points.
      
      For $R=\Sp$ (which admittedly is not $\Sp$-cofibrant, a fact not affecting the argument) both $\phi_N$ and $\alpha$ are isomorphisms, reclaiming our result that the geometric diagonal for commutative $\Sp$-algebras is an isomorphism. 
    \end{remark}

\begin{remark}
  An avenue we feel guilty about not having explored (beyond that we set up the basic machinery for doing so in Section~\ref{subsectcellularfiltrations}) is the case when $A$ (and $R$?) already comes equipped with group actions.  This {\em is} important for some applications and it is mainly a result of fatigue that we have not rewritten everything to accommodate for this.  The pieces we have explored to full depth all turned out to generalize nicely.
\end{remark}

\begin{remark}
  Lastly, the above is about smash powers over free $G$-spaces. The action being free simplifies some matters considerably, and fits with the applications we have in mind.  As has been investigated by the second author, the statements we have made are for the most part false without this assumption, but can be extended to the isovariant case.
\end{remark}

\chapter{Category Theory}
\label{ch:cat}
We recall some of the basics of category theory. We assume that the
reader is familiar with the notions in this chapter, but the explicit
definitions allow for an easier transition to monoidal and enriched
categories. The canonical reference and source
for these definitions is Chapter I of \cite{McL}, though we have
allowed ourselves some reformulations for the sake of uniformity when
switching to the enriched setting. 

\label{catprod}
If $\catC$ and $\catD$ are categories with $\catC$ small, then the \emph{functor category} $\cat(\catC,\catD)$, has as objects the class of functors $\catC\to\catD$ and the morphisms are the natural transformations.

\section{Monoidal Categories}
We repeat the basic definitions as far as they will be used in the enriched setting in the next section. The definitions are only slight reformulations of the ones in \cite[VII]{McL}, adapted to our needs. 
\begin{dfn}
A \emph{monoidal category}\index{monoidal category} consists of the following data:
\begin{indentpar}{1 cm}
\begin{itemize}
\item An category $\catC$.
\item A bifunctor (\ie a functor out of the product category) $\otimes\colon\catC \times \catC \rightarrow \catC$, called the monoidal product. 
\item An object $\I$ of $\catC$, called the identity object.
\item Natural isomorphisms $\lambda\colon (\I\otimes \id) \rightarrow \id$ and $\rho\colon (\id\otimes\I) \rightarrow \id$ expressing that $\I$ is a left and right identity object for the monoidal product.
\item A natural isomorphism $a \colon [(\id \otimes \id) \otimes \id]
  \rightarrow [\id \otimes (\id \otimes \id)]$ expressing that the
  monoidal product is associative.  
\end{itemize}
The natural transformations have to satisfy the following two coherence conditions:\end{indentpar}
\begin{indentpar}{1 cm}
\begin{itemize}
\item For all objects $A$ and $B$ of $\catC$, the following diagram commutes:\[\xymatrix{{(A\otimes \I) \otimes B}\ar^-{a_{A,I,B}}[rr]\ar_-{\rho_A \otimes \id_B}[dr]&{}&{A \otimes (\I \otimes B)}\ar^-{\id_A \otimes \lambda_B}[dl]\\{}&{A \otimes B}}\]
\item For all objects $A$, $B$, $C$ and $D$ of $\catC$, the following pentagon commutes:
\[\tiny\xymatrix@1@=0pt@R=1cm{{}&{}&{(A\otimes (B\otimes C))\otimes D}\ar_-{a_{A,B\otimes C,D}}[drr]\\{((A\otimes B)\otimes C)\otimes D}\ar^-{a_{A,B,C}\otimes \id_D}[urr]\ar_-{a_{A\otimes B,C,D}}[dr]&{}&{}&{}&{A\otimes((B\otimes C)\otimes D)}\ar^{id_A\otimes a_{B,C,D}}[ld]\\{}&{(A\otimes B)\otimes (C\otimes D)}\ar_{a_{A,B,C\otimes D}}[rr]&{}&{A\otimes (B \otimes (C \otimes D))}}\]
\end{itemize}
\end{indentpar}
Instead of the tuple $(\catC,\otimes,\I,\lambda,\rho,a)$, we often just refer to the monoidal category as $(\catC,\otimes,\I)$ or even just to $\catC$, when it is clear which monoidal structure is meant.
\end{dfn}
\begin{dfn}
  \label{def:strict}\index{monoidal category!strict}
  A monoidal category $ (\catC,\otimes,\I,\lambda,\rho,a)$ is {\em strict} if $\lambda$, $\rho$ and $a$ are all identity morphisms.
\end{dfn}

\begin{dfn}
A \emph{lax monoidal functor}\index{monoidal!functor!lax}\index{functor!lax monoidal} $\FF \colon (\catC,\otimes,\I) \rightarrow (\catD,\times,\II)$ between monoidal categories consists of the following data:
\begin{indentpar}{1cm}
\begin{itemize}
\item A functor $\FF \colon \catC \rightarrow \catD$.
\item A natural transformation $\mu \colon [\FF \times \FF] \rightarrow \FF(- \otimes -)$.
\item A morphism $\iota \colon \II \rightarrow \FF(\I)$ in $\catD$.
\end{itemize}
\end{indentpar}
These have to satisfy the following coherence conditions:
\begin{indentpar}{1cm}
\begin{itemize}
\item For all objects $A$, $B$ and $C$ of $\catC$, the following diagram commutes in $\catD$:
\[\xymatrix{{(\FF(A)\times \FF(B))\times \FF(C)}\ar_-{\mu_{A,B}\times \id}[d]\ar^-{a_{\catD}}[r]&{\FF(A)\times (\FF(B)\times \FF(C))}\ar^-{\id \times \mu_{B,C}}[d]\\{\FF(A\otimes B)\times \FF(C)}\ar_-{\mu_{A\otimes B,C}}[d]&{\FF(A)\times (\FF(B\otimes C))}\ar^-{\mu_{A,B\otimes C}}[d]\\{\FF((A \otimes B)\otimes C)}\ar_-{\FF(a_\catC)}[r]&{\FF(A\otimes(B\otimes C))}}\]
\item For every object $A$ of $\catC$, the following diagrams commute in $\catD$:
\[\xymatrix{{\FF(A)\times \II}\ar^{\rho_\catD}[r]\ar_-{\id \times \iota_\catD}[d]&{\FF(A)}&{}&{\II \times \FF(A)}\ar^{\lambda_\catD}[r]\ar_-{\iota_\catD \times\id }[d]&{\FF(A)}\\{\FF(A)\times \FF(\I)}\ar_-{\mu_{A,\I}}[r]&{\FF(A \otimes \I)}\ar_-{\FF(\rho_\catC)}[u]&{}&{\FF(\I)\times \FF(A)}\ar_-{\mu_{\I,A}}[r]&{\FF(I \times A)}\ar_-{\FF(\lambda_\catC)}[u]}\]
\end{itemize}
\end{indentpar}
A lax monoidal functor $(\FF,\mu,\iota)$ is \emph{strong monoidal}\index{monoidal!functor!strong}\index{functor!strong monoidal} if $\mu$ and $\iota$ are (natural) isomorphisms, it is \emph{strict monoidal}\index{monoidal!functor!strict}\index{functor!strict monoidal} if they are the identity (transformation).\\
Again we often only refer to $\FF$ as the monoidal functor, suppressing $\mu$ and $\iota$ in the notation, where they are not critical to the discussion.
\end{dfn}
\begin{dfn}
A monoidal category $(\catC,\otimes,\I)$ is called {\em cartesian}\index{monoidal!category!cartesian}, if $\otimes$ is the categorical product and $\I$ is a terminal object.
\end{dfn}
Some monoidal categories have additional extra structure:
\begin{dfn}\label{homadj}
A monoidal category $(\catC,\otimes,\I)$ is called \emph{closed},\index{monoidal!category!closed} if for all objects $A$ of $\catC$, the functor $(- \otimes A)\colon \catC \rightarrow \catC$ has a right adjoint (\cite[IV.1]{McL}), denoted  by $\Hom(A,-)$. \\
Objects of $\catC$ the form $\Hom(A,B)$ are called {\em internal $\Hom$ objects},\index{internal $\Hom$ object} the counits of these adjunctions are usually called the evaluations $
\Hom(A,B)\otimes A \rightarrow B$.
\end{dfn}
\begin{lemma} \label{homhomadj}
If $(\catC,\otimes,\I)$ is closed monoidal, then there is a natural isomorphism:
\[\Hom(A\otimes B, C)\cong\Hom(A, \Hom(B,C)).\]
\end{lemma}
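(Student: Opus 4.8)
The plan is to deduce the claimed isomorphism from the defining adjunctions $(-\otimes A)\dashv\Hom(A,-)$ of Definition \ref{homadj} by a Yoneda argument. First I would fix objects $A$, $B$, $C$ of $\catC$ and, for an arbitrary test object $Z$, assemble the chain of bijections
\[\catC(Z,\Hom(A\otimes B,C))\cong\catC(Z\otimes(A\otimes B),C)\cong\catC((Z\otimes A)\otimes B,C)\cong\catC(Z\otimes A,\Hom(B,C))\cong\catC(Z,\Hom(A,\Hom(B,C))),\]
where the first, third and fourth isomorphisms are instances of the adjunction isomorphism of Definition \ref{homadj} (applied respectively to $(-\otimes(A\otimes B))\dashv\Hom(A\otimes B,-)$, to $(-\otimes B)\dashv\Hom(B,-)$ with first argument $Z\otimes A$, and to $(-\otimes A)\dashv\Hom(A,-)$), and the middle isomorphism is induced by the associator $a_{Z,A,B}$, which is a natural isomorphism by the definition of a monoidal category. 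Each of these bijections is natural in $Z$, so their composite is a natural isomorphism of functors $\catC(-,\Hom(A\otimes B,C))\cong\catC(-,\Hom(A,\Hom(B,C)))$ from $\catC^{\op}$ to $\catset$.

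Next I would invoke the Yoneda lemma: a natural isomorphism between representable functors $\catC(-,X)$ and $\catC(-,Y)$ is induced by a unique isomorphism $X\cong Y$ in $\catC$. Applying this with $X=\Hom(A\otimes B,C)$ and $Y=\Hom(A,\Hom(B,C))$ yields the desired isomorphism; concretely it is obtained by running the chain above with $Z=\Hom(A\otimes B,C)$ on the identity morphism.

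For naturality, I would note that every building block above is natural not only in $Z$ but also in the variables $A$, $B$, $C$ — the adjunction isomorphisms are natural in both arguments of $\Hom$, and the associator $a$ is natural in all three of its slots — so the composite bijection is natural in $A$, $B$, $C$ as well, and the naturality clause of the Yoneda lemma then transports this to naturality of $\Hom(A\otimes B,C)\cong\Hom(A,\Hom(B,C))$ in $A$, $B$, $C$. I do not expect a genuine obstacle here; the only point needing a little care is bookkeeping the variances (contravariant in $A$ and $B$, covariant in $C$) and checking that $a_{Z,A,B}$ is inserted with the correct orientation, which is in any case forced by matching sources and targets along the chain.
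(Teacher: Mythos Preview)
Your argument is correct and is the standard Yoneda proof of this statement. The paper itself states this lemma without proof, so there is nothing to compare against; your write-up supplies exactly the expected argument.
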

  \begin{dfn}\label{underlyset}
    If $(\catC,\otimes,\I)$ is a monoidal category, the {\em underlying set functor} is the set-valued functor $\catC(\I,-)\colon\catC \rightarrow \catset$\index{Set@$\catset$, sets}.  
With the cartesian monoidal structure on $\catset$, the underlying set functor is lax monoidal with the following structure: 
The unit morphism $\iota$ sends the terminal one-point set to the identity morphism of $\I$ and the natural transformation \[\mu \colon \catC(\I,A)\times \catC(\I,B)\stackrel{\otimes}{\longrightarrow} \catC(\I\otimes\I,A\otimes B) \cong \catC(\I,A\otimes B),\] uses the isomorphism $\lambda_\I\colon \I\otimes\I \rightarrow\I$. If $C$ is an  objects of $\catC$ we refer to $\catC(\I,C)$ as the \emph{underlying set} of $C$.
  \end{dfn}
If, in addition, $\catC$ is closed and $A$ and $B$ are objects of $\catC$, then the adjunction \[(-\otimes A)\colon\catC\rightleftarrows\catC\colon\Hom(A,-)\] gives the natural isomorphism
\[\catC(\I\otimes A,B)\cong\catC(\I,\Hom(A,B)).\]
Since $\I\otimes A$ is isomorphic to $A$ via $\lambda_A$, this implies that \[\catC(A,B) \cong \catC(\I,\Hom(A,B)),\] \ie the underlying set of the internal $\Hom$ object $\Hom(A,B)$ is naturally isomorphic to the morphism set $\catC(A,B)$.\\
Considerations in this spirit lead to the study of enriched categories. We will discuss these further in Section~\ref{enrichedcat}.

For any monoidal category, there are categories of \emph{monoids}\index{monoid} and (\emph{left} or \emph{right}) \emph{modules}\index{module} over such. Definitions can for example be found in \cite[VII.3,4]{McL}, and will be omitted here.\section{Symmetric Monoidal Categories}
\label{tensors}
In general, the functor $\twist\colon\catC\times\catC\to\catC\times\catC$ permutes the input.
\begin{dfn}
A \emph{symmetric monoidal category}\index{monoidal category!symmetric} is
\begin{itemize}
\item a monoidal category $(\catC, \otimes, \I, \lambda, \rho, a)$, together with 
\item a natural isomorphism $\tau$
\[\tau: \otimes \rightarrow \otimes \circ \twist, \qquad\tau_{A,B}\colon A\otimes B\cong B\otimes A\]
\end{itemize}
  satisfying the following conditions:
\begin{indentpar}{1cm}
\begin{itemize}
\item The composition of $\tau$ with itself is the identity, \ie \[\tau_{B,A} \circ \tau_{A,B} = \id_{A\otimes B},\] for all objects $A$ and $B$ of $\catC$.
\item Compatibility with the unit, \ie \[\rho_A = \lambda_A \circ \tau_{A,\I}.\]
\item For all objects $A$, $B$, and $C$ of $\catC$, the following hexagon commutes:
\[\xymatrix{{(A \otimes B) \otimes C}\ar^-{a}[r]\ar_-{\tau \otimes \id_C}[d]&{A \otimes (B \otimes C)}\ar^-{\tau}[r]&{(B \otimes C) \otimes A}\ar^-{a}[d]\\{(B \otimes A) \otimes C}\ar_-{a}[r]&{B \otimes (A \otimes C)}\ar_-{\id_B \otimes \tau}[r]&{B \otimes ( C\otimes A)}}\]
\end{itemize} 
\end{indentpar}
\end{dfn}
Note that all cartesian or cocartesian monoidal categories are
symmetric with monoidal product given by cartesian product and
coproduct respectively
  \begin{dfn}\label{def:permutative}\index{permutative category}
    A {\em permutative category} is a symmetric monoidal category whose underlying monoidal category is strict in the sense of Definition~\ref{def:strict}.
  \end{dfn}

\begin{dfn}
A \emph{commutative monoid}\index{monoid!commutative} in a symmetric monoidal category $(\catC, \otimes, \I, \tau)$ consists of the following data:
\begin{indentpar}{1cm}
\begin{itemize}
\item An object $M$ of $\catC$.
\item A morphism $\eta: \I \rightarrow M$ in $\catC$, called the unit of $M$.
\item A morphism $\mu: M \otimes M \rightarrow M$ in $\catC$, called the multiplication of $M$.
\end{itemize}
\end{indentpar}
such that the following diagrams are commutative:
\begin{indentpar}{1cm}
\begin{itemize}
\item \emph{unit:}\[\xymatrix{{\I \otimes M}\ar^-{\eta \otimes \id_M}[r]\ar_-{\lambda}[dr]&{M \otimes M}\ar_-{\mu}[d]&{M \otimes \I}\ar_-{\id_M \otimes \eta}[l]\ar^-{\rho}[dl]\\&{M}}\]
\item \emph{associativity:} \[\xymatrix{{(M \otimes M) \otimes M}\ar_-{\mu \otimes \id_M}[d]\ar^-{a}[r]&{M \otimes (M\otimes M)}\ar^-{\id_M \otimes \mu}[r]&{M \otimes M}\ar^-{\mu}[d]\\{M\otimes M}\ar_-{\mu}[rr]&{}&{M}}\]
\item \emph{commutativity:}
\[\xymatrix{{M \otimes M}\ar_-{\mu}[dr]\ar^-{\tau}[rr]&{}&{M\otimes M}\ar^-{\mu}[dl]\\{}&{M}}\]
\end{itemize}
\end{indentpar}
\end{dfn}
Like in the non-symmetric situation 
we can define categories of commutative monoids and module categories
of commutative categories. We work extensively with these in the case
of $\catC$ being the category $\catOS$ of orthogonal spectra. The
following lemmas are well known, but it seems hard to find explicit
references: 
\begin{lemma}
Let $M$ be a commutative monoid, then the categories of left $M$-modules and right $M$-modules are isomorphic.
\end{lemma}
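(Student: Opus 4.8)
The claim is that for a commutative monoid $M$ in a symmetric monoidal category $(\catC,\otimes,\I,\tau)$ the categories of left $M$-modules and of right $M$-modules are isomorphic. The plan is to construct a strict functor in each direction by precomposing (or postcomposing) the action map with the symmetry isomorphism $\tau$, and to check that these two functors are mutually inverse. First I would recall the data: a left $M$-module is an object $X$ together with $\alpha\colon M\otimes X\to X$ satisfying the unit axiom $\alpha\circ(\eta\otimes\id_X)=\lambda_X$ and the associativity axiom $\alpha\circ(\mu\otimes\id_X)=\alpha\circ(\id_M\otimes\alpha)\circ a_{M,M,X}$; a right $M$-module is an object $X$ with $\beta\colon X\otimes M\to X$ satisfying the mirror-image axioms.

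\textbf{Main construction.} Given a left module $(X,\alpha)$, define a right action by $\beta:=\alpha\circ\tau_{X,M}\colon X\otimes M\xrightarrow{\tau_{X,M}}M\otimes X\xrightarrow{\alpha}X$. On morphisms the functor is the identity: a map of left $M$-modules $f\colon X\to Y$ (i.e. $f\circ\alpha_X=\alpha_Y\circ(\id_M\otimes f)$) is also a map of the associated right modules, because $\tau$ is natural. Symmetrically, a right module $(X,\beta)$ gives a left module with action $\alpha':=\beta\circ\tau_{M,X}$. I would then verify the module axioms transfer: the unit axiom uses the compatibility $\rho=\lambda\circ\tau_{-,\I}$ together with $\tau_{M,X}\circ(\eta\otimes\id_X)=(\id_X\otimes\eta)\circ\tau_{X,\I}$-type naturality squares; the associativity axiom uses the commutativity of $\mu$ (so that $\mu\circ\tau_{M,M}=\mu$) and the hexagon coherence identity for $\tau$ to rearrange the triple tensor factors. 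These are diagram chases that are entirely routine once the right naturality squares and the hexagon are written down, so I would not carry them out in full.

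\textbf{Inverse.} Finally I would check that the two functors compose to the identity in both orders. Starting from $(X,\alpha)$, passing to the right module $(X,\alpha\circ\tau_{X,M})$ and back to the left module gives action $\alpha\circ\tau_{X,M}\circ\tau_{M,X}=\alpha$ by the involutivity axiom $\tau_{X,M}\circ\tau_{M,X}=\id_{M\otimes X}$; since both functors are the identity on morphisms, the composite is literally the identity functor, not merely naturally isomorphic to it. The same computation works in the other order. Hence the two categories are isomorphic.

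\textbf{Expected obstacle.} There is no deep obstacle; the only mildly delicate point is bookkeeping in the associativity-axiom verification, where one must invoke the hexagon axiom for $\tau$ and the associativity constraint $a$ in the correct order to match $\alpha\circ(\mu\otimes\id)$ with $\beta\circ(\id\otimes\mu)$ after inserting symmetries, and one must use commutativity of $\mu$ at exactly the right spot. I would present the proof at the level of "define the functors, note they are the identity on morphisms, invoke involutivity of $\tau$ for the inverse, and remark that the module axioms transfer by the coherence axioms," leaving the explicit pentagon/hexagon chase to the reader.
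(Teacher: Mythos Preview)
Your proposal is correct and matches the paper's proof essentially exactly: the paper also defines the functor by precomposing the action with the symmetry $\tau$, verifies the unit axiom via the compatibility $\rho=\lambda\circ\tau$ and naturality of $\tau$, verifies associativity via the hexagon axiom together with commutativity of $\mu$, and notes that the two functors are inverse because $\tau$ is an involution. The only cosmetic difference is that the paper starts from a right module and produces a left one, and it actually draws the diagrams rather than declaring the chase routine.
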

If $M$ is a commutative monoid in a symmetric monoidal category $\catC$ with coequalizers and $V$ and $W$ are $M$-modules, define $V \otimes_M W$ of as the coequalizer
\labeleq{smashcoeq}{{V \otimes M \otimes W} \rightrightarrows {V \otimes W}\rightarrow{V \otimes_M W,} }
where one of the arrows uses the action map on $V$, and the other the action on $W$ precomposed with the twist $V \otimes \tau_{M,W}$.
Similarly,  if $\catC$ is closed and has equalizers, the internal $\Hom$ object $\Hom_M(V,W)$ is the equalizer
\[{\Hom_M(V,W)}\rightarrow{\Hom(V,W)}\rightrightarrows{\Hom(V \otimes M, W),} \]
where one of the arrows is induced by the action map of $V$, and the other one is induced by the adjoint of the action map of $W$, using the isomorphism $\Hom(V \otimes M, W) \cong \Hom(V, \Hom(M,W))$ (cf.~\ref{homhomadj}).
\begin{lemma}\label{monoidalmodules}
Let $M$ be a commutative monoid in the closed symmetric monoidal category $(\catC,\otimes,\I,\tau)$. Assume that $\catC$ has equalizers and coequalizers. Then the category of (right) $M$-modules with $\otimes_R$ and $\Hom_M$ is a closed symmetric monoidal category.  
\end{lemma}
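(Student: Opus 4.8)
The statement to prove is Lemma~\ref{monoidalmodules}: if $M$ is a commutative monoid in a closed symmetric monoidal category $(\catC,\otimes,\I,\tau)$ with equalizers and coequalizers, then the category $M\text{-Mod}$ of (right) $M$-modules inherits a closed symmetric monoidal structure. The plan is to build the monoidal product as the usual ``tensor over $M$'' via a coequalizer, verify the structure maps and coherence, and then construct the internal hom via an equalizer and check the adjunction.

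\textbf{Step 1: the tensor product $\otimes_M$.} For right $M$-modules $V$ and $W$, using the previous lemma I may view $W$ as a two-sided $M$-module, so $V\otimes W$ carries two natural maps $V\otimes M\otimes W\rightrightarrows V\otimes W$ (act on the right on $V$, and act on the left on $W$). Define $V\otimes_M W$ to be their coequalizer in $\catC$, which exists by hypothesis. First I would check $V\otimes_M W$ is again an $M$-module: the right $M$-action on $W$ (which commutes with the left action used in the coequalizer, because $M$ is commutative and $W$ is a two-sided module) descends through the coequalizer; one uses that $-\otimes M$ preserves this particular coequalizer only insofar as is needed — more carefully, the action map $V\otimes_M W\otimes M\to V\otimes_M W$ is induced by universal property from $V\otimes W\otimes M\to V\otimes W\to V\otimes_M W$, after checking it coequalizes the relevant pair; this is a routine diagram chase using naturality of $\tau$ and associativity.

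\textbf{Step 2: unit, associativity, symmetry.} The unit is $M$ itself with its multiplication as action; the unit isomorphism $M\otimes_M V\cong V$ comes from the module axiom, which exhibits $V$ together with $\mu\otimes\id$ and $\id\otimes\nu$ as a coequalizer (a split coequalizer, split by $\eta$). Associativity $(V\otimes_M W)\otimes_M U\cong V\otimes_M(W\otimes_M U)$ follows by writing both sides as an iterated coequalizer / a joint coequalizer of the three-fold object $V\otimes M\otimes W\otimes M\otimes U$ and invoking the interchange of colimits, together with $a$ and $\tau$ from $\catC$; one must check the coherence pentagon, but it reduces to the pentagon in $\catC$ after passing to the defining coequalizers. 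Symmetry $\tau^M_{V,W}\colon V\otimes_M W\to W\otimes_M V$ is induced by $\tau_{V,W}\colon V\otimes W\to W\otimes V$ of $\catC$, which one checks intertwines the two pairs of maps being coequalized (again using commutativity of $M$); the hexagon and the involution identity descend from $\catC$. All of these verifications are ``routine diagram chases'' that I would indicate but not grind through.

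\textbf{Step 3: the internal hom and the adjunction — the main obstacle.} Define $\Hom_M(V,W)$ as the equalizer of the two maps $\Hom_\catC(V,W)\rightrightarrows \Hom_\catC(V\otimes M,W)$ (or equivalently $\Hom_\catC(V,W)\rightrightarrows\Hom_\catC(V,\Hom_\catC(M,W))$), one encoding precomposition with the $M$-action on $V$ and the other postcomposition with the action on $W$; this exists since $\catC$ has equalizers. One must give $\Hom_M(V,W)$ an $M$-module structure (via the action on $W$) and then prove the adjunction $M\text{-Mod}(U\otimes_M V,W)\cong M\text{-Mod}(U,\Hom_M(V,W))$. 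The hard part here is bookkeeping: one has to chase the defining coequalizer of $\otimes_M$ on the left against the defining equalizer of $\Hom_M$ on the right, and show that the $\catC$-level adjunction isomorphism $\catC(U\otimes V,W)\cong\catC(U,\Hom_\catC(V,W))$ restricts to the desired bijection, using Lemma~\ref{homhomadj} for the three-variable version. I would organize this as: (a) an $M$-module map $U\otimes_M V\to W$ is the same as a $\catC$-map $U\otimes V\to W$ that is $M$-balanced and $M$-linear in the outer variable; (b) under the $\catC$-adjunction this corresponds to a $\catC$-map $U\to\Hom_\catC(V,W)$ that is $M$-linear and lands in the equalizer $\Hom_M(V,W)$; (c) conclude. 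The genuine subtlety — which is why I expect this to be the main obstacle — is making sure the coequalizer defining $\otimes_M$ is preserved by the functors $-\otimes U$ and $\Hom_\catC(U,-)$ exactly where needed; since $\catC$ is \emph{closed}, $-\otimes U$ preserves all colimits, which is precisely what rescues the argument, so I would flag the use of closedness prominently. Everything else (naturality in $U,V,W$, compatibility with the module structures) is formal once the balanced/linear reformulation in (a)--(b) is set up, and I would omit those verifications.
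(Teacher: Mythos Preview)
Your proposal is correct and follows essentially the same approach as the paper: construct $\otimes_M$ as a coequalizer and $\Hom_M$ as an equalizer, then reduce all coherence and the adjunction to the corresponding facts in $\catC$ via universal properties and Lemma~\ref{homhomadj}. The paper's proof is much terser than yours and does not spell out the role of closedness in making $-\otimes U$ preserve the relevant coequalizers, so your Step~3 is if anything more careful than what the paper records.
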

\begin{lemma} \label{commcoprod}
Let $(\catC,\otimes,\I)$ be a symmetric monoidal category. Then $\otimes$ is the coproduct in the category of commutative monoids in $\catC$.
\end{lemma}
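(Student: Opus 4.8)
The statement is the classical fact that in a symmetric monoidal category $(\catC,\otimes,\I)$, the monoidal product $M\otimes N$ of two commutative monoids, equipped with suitable structure maps, is their coproduct in the category $\mathrm{Comm}(\catC)$ of commutative monoids. First I would make $M\otimes N$ into a commutative monoid: the unit is the composite $\I\cong\I\otimes\I\xrightarrow{\eta_M\otimes\eta_N}M\otimes N$, and the multiplication is the composite
\[
(M\otimes N)\otimes(M\otimes N)\xrightarrow{\;\cong\;}(M\otimes M)\otimes(N\otimes N)\xrightarrow{\mu_M\otimes\mu_N}M\otimes N,
\]
where the first isomorphism is the canonical middle-four-interchange built from $a$ and $\tau$. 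Checking that this satisfies the unit, associativity and commutativity axioms is a routine (if tedious) diagram chase using the coherence theorem for symmetric monoidal categories; I would state that it follows from coherence rather than write it out. I would also record the two coprojections $\iota_M\colon M\cong M\otimes\I\xrightarrow{\id\otimes\eta_N}M\otimes N$ and $\iota_N\colon N\cong\I\otimes M\to M\otimes N$, and note that they are morphisms of commutative monoids — again a coherence check.

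Next comes the universal property. Given a commutative monoid $P$ and monoid morphisms $f\colon M\to P$, $g\colon N\to P$, I would define $h\colon M\otimes N\to P$ as the composite
\[
M\otimes N\xrightarrow{f\otimes g}P\otimes P\xrightarrow{\mu_P}P.
\]
I would then verify (i) $h$ is a morphism of commutative monoids — this uses associativity and, crucially, commutativity of $\mu_P$ to interleave the two tensor factors, together with the unit axioms for $f$, $g$; (ii) $h\circ\iota_M=f$ and $h\circ\iota_N=g$ — these are immediate from the unit axiom for $P$ and the fact that $f,g$ preserve units; and (iii) uniqueness: if $h'$ also satisfies $h'\iota_M=f$, $h'\iota_N=g$, then writing any element of $M\otimes N$ through the multiplication as coming from $\iota_M\otimes\iota_N$ followed by $\mu_{M\otimes N}$, and using that $h'$ preserves multiplication, one gets $h'=\mu_P\circ(h'\iota_M\otimes h'\iota_N)=\mu_P\circ(f\otimes g)=h$. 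Concretely, uniqueness follows from the identity
\[
\id_{M\otimes N}=\mu_{M\otimes N}\circ(\iota_M\otimes\iota_N),
\]
which I would establish first (it is itself a short diagram chase from the unit axioms of $M$ and $N$).

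The only real subtlety — and the step I expect to require the most care — is that \emph{commutativity} of $P$ is genuinely needed in part (i): without it $h$ fails to be a ring map, which is exactly the reason the analogous statement is false for (noncommutative) monoids. I would therefore isolate the key computation that $\mu_P\circ(h\otimes h)=h\circ\mu_{M\otimes N}$ and display the commuting diagram in which the commutativity square for $\mu_P$ appears, built out of the middle-four-interchange isomorphism, naturality of $\tau$, and the associativity of $\mu_P$. Everything else is bookkeeping with the coherence isomorphisms $\lambda,\rho,a,\tau$, which by Mac Lane's coherence theorem can be suppressed; I would remark that all such diagrams commute by coherence and not belabor them. This completes the identification of $M\otimes N$, with coprojections $\iota_M,\iota_N$, as the coproduct $M\amalg N$ in $\mathrm{Comm}(\catC)$.
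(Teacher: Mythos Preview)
Your proposal is correct and follows essentially the same approach as the paper: both equip $M\otimes N$ with the evident unit and the multiplication via the middle-four interchange, define the induced map out as $\mu_P\circ(f\otimes g)$, and recover $f,g$ by precomposing with the units. The paper is terser---it simply asserts the two constructions are ``obviously inverse to each other'' without isolating the verification that $h$ is a monoid map or the uniqueness identity $\id_{M\otimes N}=\mu_{M\otimes N}\circ(\iota_M\otimes\iota_N)$---so your more careful bookkeeping, and in particular your explicit flagging of where commutativity of $P$ is used, is an improvement in rigor rather than a different argument.
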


We will discuss monoids, (commutative) algebras and modules over such in various (symmetric) monoidal categories $(\catC, \otimes, \I)$. Often we use that the forgetful functors to $\catC$ have left adjoints, and hence we recall how these adjoints are formed in general:
\begin{lemma}\label{freealg}Let $R$ be a monoid in $(\catC,\otimes,\I)$.  Then the functors
\begin{itemize}
\item  $-\otimes R$ from $\catC$ to right $R$-modules
\item  $R\otimes -$ from $\catC$ to left $R$-modules
\end{itemize}
are left adjoint to the forgetful functors.
If $R$ is commutative and the category of $R$-modules has coproducts, then  \begin{itemize}
\item the functor $\Ass \defas{} \coprod_{i \in \N} (-)^{\otimes_R i}$ is left adjoint to the forgetful functor from $R$-algebras to $R$-modules,  where $(-)^{\otimes_R i}$ is the $i$-fold tensor power over $R$, with the convention that $M^{\otimes_R0}=R$.
\end{itemize} If the category of $R$ modules is cocomplete, then \begin{itemize}
\item the functor $\Comm \defas{} \coprod_{i \in \N} [(-)^{\otimes_R i}]_{\Sigma_i}$ is left adjoint to the forgetful functor from commutative $R$-algebras to $R$-modules, where $[-]_{\Sigma_i}$ is the orbits of the action of $\Sigma_i$ that permutes tensor factors, \ie the action induces a functor from $\Sigma_i$ viewed as a one-object category (cf.~\ref{Gset}) and $[-]_{\Sigma_i}$ denotes its colimit. 
\end{itemize} In both cases multiplication is by 
concatenating coproduct factors and the unit map is the inclusion of $R$ as the factor indexed by $0$.
\end{lemma}
We could of course have given each of these functor in terms of the monads that the unit of the adjunction induces on $\catC$.

\subsubsection{Categories of Finite Sets}\label{sec:fin}

\begin{dfn}\label{def:fin}
  Let $\Fin$\index{Fin@$\Fin$, finite sets} be the category of finite sets, and consider the skeleton
$\skFin\subseteq\Fin$\index{Fin@$\skFin$, skeleton of $\Fin$} consisting of the finite sets
$\bn=\{1,2,\dots,n\}$\index{n@$\bn=\{1,2,\dots,n\}$} for $n\in\bN$. Disjoint union is modelled
by 
$$\bm+\bn=\{1,\dots,m+n\}$$
with structure maps $\bm\to\bm+\bn$ sending $i$ to $i$, whereas $\bn\to\bm+\bn$ sends $j$ to $n+j$, giving a permutative structure. 
\end{dfn}
Choose, once and for all,
an inverse strongly symmetric monoidal equivalence $\Fin\to\skFin$
(\ie choose an ordering for each finite set; the associated natural
isomorphisms are forced).  For convenience we choose this equivalence
so that finite sets of integers retain their order. 

\begin{dfn}
  Let $\Sigma\subseteq\bo\subseteq\skFin$\index{Sigma@$\Sigma\subseteq\skFin$}\index{J@$\bo\subseteq\skFin$} be the permutative subcategories of respectively bijections and injections. 
\end{dfn}
 Note that any injection $\phi\colon\bn\to\bm$ can uniquely be factored as the inclusion $\bn\subseteq\bm$ composed with a bijection $\bm\cong\bm$.  

Let $\cC$ be a symmetric monoidal category with monoidal product $\otimes$ and neutral element $e$. Let $e/\cC$ be the category of objects under the neutral element $e$ and $\comC$\index{comC@$\comC$} the category of symmetric monoids in $\cC$.

Coherence for $\cC$ amounts to saying that the assignment 
$$(\bn,c)\mapsto c^{\otimes\bn}=(\dots(c\otimes c)\otimes\dots c)\otimes c$$ ($n$ copies of $c$ with parentheses moved as far left as possible) defines functors that are strong symmetric monoidal in each variable
$$\Sigma\times\cC\to\cC,\quad \bo\times e/\cC\to e/\cC,\quad \Fin\times\comC\to\comC.$$

Since this is central to our discussion we spell out some of the details, but for clarity restrict ourselves to a permutative $\cC$ (so that the units and associators are identities).
\begin{dfn}
  If $\sigma\in\Sigma_n$ is a bijection, the commutator associated with $\sigma$ is an isomorphism $c^{\otimes\sigma}\colon c^{\otimes\bn}\cong c^{\otimes\bn}$ (remember that $\otimes$ is strictly associative).  This defines the functor $\Sigma\times\cC\to\cC$, which is strong symmetric monoidal in each factor via $c^{\otimes (\bm+\bn)}= c^{\otimes\bm}\otimes c^{\otimes\bn}$ and the shuffle commutator $(c\otimes d)^{\otimes\bn}\cong c^{\otimes\bn}\otimes d^{\otimes\bn}$.  

For the functor $\bo\times e/\cC\to e/\cC$, the object $(\bn,f\colon e\to c)$ in $\bo\times e/\cC$ is sent to the composite $f^{\otimes \bn}\colon e= e^{\otimes\bn}\to c^{\otimes\bn}$.  On morphisms in $\bo$ we only lack the (first) inclusions $\bm\subseteq\bm+\bn$ which is sent to the morphism under $e$
$$\id^{\otimes\bm}\otimes f^{\otimes\bn}\colon c^{\otimes\bm}=c^{\otimes\bm}\otimes e^{\otimes\bn} \to c^{\otimes\bm}\otimes c^{\otimes\bn}=c^{\otimes(\bm+\bn)}.$$

For the functor $\Fin\times\comC\to\comC$, we first define a $\skFin\times\comC\to\comC$ compatible with the first two definitions and then use our chosen equivalence $\Fin\to\skFin$ to
extend these functors to a functor defined on $\Fin$.
If $c$ is a symmetric monoid in $\cC$ with structure maps $\mu\colon c\otimes c\to c$ and $e\to c$, we define a morphism
$c^{\otimes f}\colon c^{\otimes \bm}\to c^{\otimes \bn}$ for any function $f\colon\bm\to \bn\in\skFin$ as the composite
$$c^{\otimes\bm}\cong c^{\otimes f^{-1}(1)}\otimes\dots\otimes c^{\otimes f^{-1}(n)}\to c^{\otimes\bn},$$
where the isomorphism is obtained by multiple applications of the
functor \(\tau\)  and the second map is the tensor of the
multiplications $\mu\colon c^{\otimes f^{-1}(j)}\to c$  (unit if
$f^{-1}(j)$ is empty).  Lastly, we extend to $\Fin$.
\end{dfn}

\begin{remark}
  Since small colimits can be chosen functorially, any category with finite coproducts is tensored (c.f.~Definition~\ref{tenscotens}) over $\Fin$.  In particular, the coproduct in $\comC$ is $\otimes$, which can easily cause a conflict of notation.  Hence we retain the notation $c^{\otimes S}$ (or $\bigotimes_S c$) for this coproduct indexed over the finite set $S$, and let this be our {\em choice} of tensor.  

In our applications, $\cC$ is cocomplete and closed.  Then we can extend $\Fin\times\comC\to\comC$ to a functor (strong monoidal in each variable)
$$\Ens\times\comC\to\comC, \qquad 
S\mapsto \bigotimes_Sc=\colim_{T\subseteq S}\bigotimes_Tc,$$
where $T$ varies over the finite subsets of the set $S$, which agrees with the cotesor of $\comC$ over $\Ens$. 
\end{remark}

\section{Enriched Category Theory} \label{enrichedcat}
Let $(\catV,\otimes,\I)$ be a monoidal category.
\begin{dfn}
A \emph{category $\catC$ enriched over \catV},\index{enriched!category}\index{category!enriched} or a {\em $\catV$-category}\index{Vcategory@$\catV$-category}\index{category!V@$\catV$-}
\cite[1.2]{Kel} amounts to the following structure:
\begin{indentpar}{1cm}
\begin{itemize}
\item A class $\Ob(\catC)$ of \emph{objects of $\catC$}.
\item For every two objects $A$ and $B$ of $\catC$ an object $\catC(A,B)$ of $\catV$ called the Hom-object of $A$ and $B$.
\item For every object $A$ of $\catC$, a distinguished morphism $\id_A\colon \I \rightarrow \catC(A,A)$ in $\catV$, called \emph{the identity of $A$}.
\item For every three objects $A$, $B$ and $C$ of $\catC$ a morphism $\gamma\colon\catC(B,C)\otimes\catC(A,B)\rightarrow \catC(A,C)$, called \emph{the composition in $\catC$}.
\end{itemize}
\end{indentpar}
This data has to satisfy the following two conditions:
\begin{indentpar}{1cm}
\begin{itemize}
\item For all objects $A$,$B$,$C$ and $D$ of $\catC$, the following diagram commutes in $\catV$:
\[\tiny\xymatrix{{(\catC(C,D)\otimes\catC(B,C))\otimes \catC(A,B)}\ar^-{a}[rr]\ar^-{\gamma \otimes \id}[d]&{}&{\catC(C,D)\otimes(\catC(B,C)\otimes\catC(A,B))}\ar_-{\id \otimes \gamma}[d]\\{\catC(B,D)\otimes\catC(A,B)}\ar_-{\gamma}[dr]&{}&{\catC(C,D)\otimes\catC(A,C)}\ar^-{\gamma}[dl]\\{}&{\catC(A,D)}}\]
\item For all objects $A$ and $B$ in $\catC$, the following diagram commutes:
\[\xymatrix{{\I\otimes \catC(A,B)}\ar_-{\id_B\otimes \id}[d]\ar^-{\lambda}[rd]&{}&{\catC(A,B)\otimes \I}\ar^{\id \otimes \id_A}[d]\ar_-{\rho}[dl]\\{\catC(B,B)\otimes \catC(A,B)}\ar_-{\gamma}[r]&{\catC(A,B)}&{\catC(A,B)\otimes \catC(A,A)}\ar^-{\gamma}[l]}\]
\end{itemize}
\end{indentpar}
\end{dfn}
\begin{example}\label{trivialVcat}
The \emph{trivial $\catV$-category} $\star$ has one object $C$, and the morphism object $\star(C,C) = \I$. 
\end{example}
\begin{dfn}
A \emph{functor enriched over $\catV$}\index{functor!enriched}\index{enriched!functor}\index{functor!V@$\catV$} from $\catD$ to $\catC$ consists of the following data:
\begin{indentpar}{1cm}
\begin{itemize}
\item A function $\FF\colon \Ob(\catD)\rightarrow \Ob(\catC)$.
\item For each object $A$ and $B$ of $\catD$, a morphism $\FF_{A,B}\colon \catD(A,B)\rightarrow \catC(\FF(A),\FF(B))$ in $\catV$.
\end{itemize}
\end{indentpar}
These have to satisfy the following coherence conditions:
\begin{indentpar}{1cm}
\begin{itemize}
\item \emph{(identity)} For all objects $A$ of $\catD$ the following diagram commutes in $\catV$: \[\xymatrix{{}&{\catD(A,A)}\ar^-{\FF_{A,A}}[dd]\\{\I}\ar_-{\id_{\FF(A)}}[dr]\ar^-{\id_A}[ur]&{}\\{}&{\catC(\FF(A),\FF(A))}}\]
\item \emph{(composition)} For all objects $A$, $B$ and $C$ of $\catD$ the following diagram commutes in $\catV$:
\[\xymatrix{{\catD(B,C)\otimes\catD(A,B)}\ar^-{\gamma}[r]\ar_-{\FF \otimes \FF}[d]&{\catD(A,C)}\ar^-{\FF}[d]\\{\catC(\FF(B),\FF(C))\otimes\catC(\FF(A),\FF(B))}\ar_-{\gamma}[r]&{\catC(\FF(A),\FF(C))}}\]
\end{itemize}
\end{indentpar}
\end{dfn}
\begin{dfn}
For two functors $\FF,\GG \colon \catD \rightarrow \catC$ enriched over $\catV$, an \emph{enriched natural transformation $\alpha\colon \FF \rightarrow \GG$}\index{enriched!natural transformation} consists of morphisms $\I \rightarrow \catC(\FF(A),\GG(A))$ in $\catV$ for all objects $A$ of $\catD$, such that the following coherence diagram commutes in $\catV$ for all objects $A$ and $B$ of $\catD$:
\labeleq{enrnattransdiag}{\xymatrix{{}&{\I\otimes\catD(A,B)}\ar^-{\alpha_B\otimes\FF}[r]&{\catC(\FF(B),\GG(B))\otimes\catC(\FF(A),\FF(B))}\ar^-{\gamma}[dr]&{}\\{\catD(A,B)}\ar^-{\lambda^{-1}}[ur]\ar_-{\rho^{-1}}[dr]&{}&{}&{\catC(\FF(A),\GG(B))}\\{}&{\catD(A,B)\otimes \I}\ar_-{\GG\otimes\alpha_A}[r]&{\catC(\GG(A),\GG(B))\otimes \catC(\FF(A),\GG(A))}\ar_-{\gamma}[ur]&{}}}
\end{dfn}
\begin{rem} \label{2-cat}
This definition gives the class $[\catD,\catC]_0(\FF,\GG)$ of functors enriched over $\catV$ the structure of a category (one checks that there is an identity transformation and that composition of natural transformations is associative). This makes the category $\catV$-$\cat$\index{VCat@$\catV$-$\cat$} of $\catV$-enriched categories and $\catV$-enriched functors into a $2$-category, \ie into a category enriched over $\cat$.
\end{rem}
\begin{rem}\label{enrfunctorcat}
Under more assumptions, one can also define a $\catV$-enriched functor
category $[\catD,\catC]$. Let $\catV$ be closed and complete and
$\catD$ be
a small \(\catV\)-category. Then for two enriched
functors $\FF$ and $\GG$, the following end exists and forms the
morphism $\catV$-space $[\catD,\catC](\FF,\GG)$: 
\[\int_{d \in \catD} \catC(\FF(d), \GG(d)) \rightarrow \prod_{d \in \catD} \catC(\FF(d), \GG(d)) \rightrightarrows \prod_{d,d' \in \catD}\Hom(\catD(d, d'),\catC(\FF(d), \GG(d'))).\]
As indicated it can be expressed as the equalizer along two maps adjoint to the two ways around diagram \eqref{enrnattransdiag} above. Composition and identities are then inherited from $\catC$ (cf.~\cite[2.1]{Kel}).
\end{rem}

\begin{con} \label{enrtransport}
Note that if we have a lax monoidal functor $(\FM,\mu,\iota)\colon(\catV,\otimes,\I)\rightarrow (\catW,\boxtimes,\II)$, any category $\catC$ enriched over $\catV$ gives a category enriched over $\catW$, by just applying $\FM$ to all the Hom objects. The identity morphisms are defined as the composites \[\id'_{A}\colon \II \stackrel{\iota}{\rightarrow} \FM[\I]\stackrel{\FM[\id_A]}{\longrightarrow}\FM[\catC(A,A)].\] The composition is given by \[\gamma' \colon \FM[\catC(B,C)]\boxtimes\FM[\catC(A,B)] \stackrel{\mu}{\rightarrow}\FM[\catC(B,C)\otimes\catC(A,B)] \stackrel{\FM[\gamma]}{\longrightarrow} \FM[\catC(A,C)].\] One checks that the coherence diagrams still commute.\\
Also, in the same way $\catV$-enriched functors give $\catW$-enriched
functors and $\catV$-enriched natural transformations give
$\catW$-enriched natural transformations via the lax monoidal functor $\FM$. (One checks that $\FM(\FF)$ still takes identities to identities and respects composition, and that the appropriate diagram for $\FM\alpha$ still commutes using the structure maps of $\FM$).
\end{con}
\begin{rem}
In the spirit of the above Remark~\ref{2-cat}, one can check that $\FM$ induces a $\cat$-enriched, or $2$-functor $\FM\colon\catV$-$\cat\rightarrow \catW$-$\cat$, \ie that $\FM$ takes the identity $\catV$-enriched natural transformations to the identity $\catW$-enriched natural transformations, and that it respects composition of enriched natural transformations.
\end{rem}
\begin{ex} \label{underlyingcat}
In this way, if $\catV$ is a locally small monoidal category, every category enriched over $\catV$ has a canonical underlying ``normal'' category, \ie one enriched over $\catset$, with the same objects. The morphism sets are obtained by using the monoidal functor $\catV(\I,-)$ from Definition~\ref{underlyset} in the way described above.
\end{ex}
\begin{rem}\label{underlfunctcat}
For $\catD$ and $\catC$ categories enriched over $\catV$ as in Remark~\ref{enrfunctorcat}, the underlying underlying $\catset$-category of the functor $\catV$-category $[\catD,\catC]$ is $[\catD,\catC]_0$, if the former exists.
\end{rem}
\begin{ex} \label{selfenriched}
Let $(\catV_0,\otimes,\I)$ be a closed monoidal category. Then there is a $\catV_0$-category $\catV$ that restricts to $\catV_0$ along the monoidal functor $\catV(\I,-)$.\\
Define $\catV$ as having the same objects as $\catV_0$ and for morphism objects set $\catV(A,B) = \Hom(A,B)$. Then composition is adjoint to iterated evaluation, and the axioms for an enriched category trivially hold. When discussing a specific category $\catV_0$, we will often identify $\catV$ and $\catV_0$ and therefore say that $\catV$ is enriched over itself, but there are also important cases where we explicitly keep the notation separate (\eg~\ref{gspaces}).
\end{ex}
\begin{rem}
When viewing $\catV$ as enriched over itself in this sense, Lemma~\ref{homhomadj} can be reformulated to state that the adjunctions between $-\otimes A$ and $\Hom(A,-)$ are actually enriched, \ie imply natural isomorphisms even on morphism objects.
\end{rem}
\begin{ex} \label{catT}
Let $\catV = \catTop$\index{Top@$\catTop$} the cartesian monoidal
category of topological spaces. Then a category $\catC$ enriched over
$\catTop$ is a usual category, with a choice of topology on each
morphism set, such that the composition law gives continuous
maps. More important for us is the closed monoidal variation
$\catU$,\index{U@$\catU$, compactly generated weak Hausdorff spaces}
containing only the \emph{compactly generated weak Hausdorff
  spaces.}\\ 
For another example let $\catV$ be the category \emph{$\catT$ of based
  compactly generated weak Hausdorff spaces},\index{T@$\catT=*/\catU$}
\ie objects of $\catU$ with a distinguished base point. We will usually
drop the extra adjectives and just call these \emph{spaces}.\\
Since
$\catT$ has products and coproducts, it is monoidal in several ways:
with the cartesian product $\times$ and unit a one point space
$\{*\}$, or, more importantly for us, with respect to the smash
product $\smash$ and unit $S^0$, the $0$-sphere. The latter choice
makes $\catT$ closed monoidal, and we will denote the internal $\Hom$
spaces merely as $\catT(-,-)$ in agreement with
Example~\ref{selfenriched}. The identity functor $(\catT,\smash,S^0)
\rightarrow (\catT,\times,\{*\})$ is lax monoidal, just as the functor
$(\catT,\smash,S^0) \rightarrow (\catU, \times, \{*\})$ that forgets
the base points. The monoidal 
structure maps are in both cases given by the projections $X\times Y \rightarrow X
\smash Y$ and the inclusion of $\{*\}$ as the non-base point of
$S^0$. These functors give us a canonical way to view a category
enriched over $(\catT,\smash,S^0)$ as one enriched over
$(\catT,\times,\{*\})$, or $\catU$. 
The forgetful functor from $\catU$ to $\catset$ preserves products and
is therefore strict monoidal, indeed it is isomorphic to the functor
described in Definition~\ref{underlyset}. Hence a category enriched over either
monoidal structure on $\catT$ (or $\catU$) is a category. In the other
directions, including sets as discrete topological spaces and adding
disjoint base points to spaces in $\catU$ give left adjoints to the
forgetful functors and are also (strong) monoidal. Hence together with
Example~\ref{selfenriched} we can view $\catU$ and $\catT$ as enriched over
either themselves or each other. Generally, categories enriched over
any of the above are called \emph{topological
  categories}.\index{topological!category}\index{category!topological}
Enriched functors between  $\catTop$-, $\catU$- or
$\catT$-categories are usually called {\em continuous
  functors}.\index{continuous!functor}\index{functor!continuous}\\ 
\end{ex}
\begin{dfn} \label{Gset}
  For $\G$ a group, the
  \emph{category associated to
  $\G$} has one object $\star$, and the set of morphisms
is given as the group $\G$. The neutral element
of the group is the identity morphism and the group multiplication
gives composition of morphisms. Often we use the group $\G$ and its
associated category synonymously.\end{dfn} 
If $\G$ is a topological group, its associated category is canonically
a topological category. If $\G$ is in $\catU$, its associated category
is canonically enriched over $\catU$, and, adding a disjoint
base point, enriched over $\catT$.  
\begin{dfn}We denote the category of functors $\G \rightarrow \catset$
  and natural transformations between them by $\catGset$ instead of
  $[G,\catset]_0$, its objects are called $\G$-sets. 
\end{dfn} 
Note that a
  $\G$-set is the same as a set with a (left) action of $\G$, and a
  morphism of $\G$-sets is a $\G$-equivariant map.

Just like $\catset$, the category $\catGset$\index{GSet@$\catGset$, $G$-sets} is a closed cartesian monoidal category with respect to the usual
cartesian product of sets, which is given the diagonal
$\G$-action. The unit object is the trivial $\G$-set consisting of
only one point, and the internal morphism object of maps from $X$ to $Y$ consists of all functions of sets $X\to Y$ equipped with the conjugation action.
Note that there are {\bf{two}} obvious monoidal
functors $\catGset \rightarrow \catset$. One is the forgetful functor,
which is obviously product preserving, but this is \emph{not} the
functor described in Definition~\ref{underlyset}. In fact, $\catGset(\star,X)$
assigns to a $\G$-set $X$ its set of $\G$-fixed points $X^G$, and this
gives the second monoidal functor. We distinguish this in language by
saying $X$ \emph{is} a set, but \emph{has} $X^G$ as its underlying set
(of $\G$-fixed points). 
\begin{dfn}
Let $\G$ be a group, a \emph{$\G$-category}\index{Gcategory@$\G$-category}\index{category!G@$G$-} is a category enriched over $\catGset$. We call the elements of the morphism $G$-sets \emph{morphisms},\index{morphism of $G$-sets} whereas the elements of the underlying $G$-fixed point sets are called \emph{$\G$-maps}.\index{Gmap@$\G$-map} As above, every $\G$-category is also a category, and has an underlying $\G$-fixed category.\\ A \emph{$\G$-functor}\index{functor!G@$\G$-} $F \colon \catD \rightarrow \catC$ between $\G$-categories is an enriched functor of enriched categories, \ie the induced maps on morphism $\G$-sets\[F\colon \catD(X,Y)\rightarrow\catC(F X,F Y)\] have to be $\G$-equivariant.\\ 
Two types of natural transformations are important for us: 
A \emph{natural $\G$-transformation}\index{natural Gtransformation@natural $\G$-transformation} $\alpha\colon F \rightarrow F'$
between two $\G$-functors from \(\catD\) to \(\catC\), is an enriched natural transformation of
enriched functors, \ie it consists of a $\G$-map 
$\alpha_X \in \catC(F
X,F' X)^G$ for every object $X$ of $\catD$ such that the
diagrams \[\xymatrix{{F X}\ar_-{\alpha_X}[d]\ar^-{F f}[r]&{F
    Y}\ar^-{\alpha_Y}[d]\\{F' X}\ar_-{F' f}[r]&{F' Y}},\] commute in
$\catC$ for all $f \in \catD(X,Y)$. These are the morphisms in the
functor category $[\catD,\catC]_0$.\\ 
On the other hand, there are the natural transformations, given as collections of maps $\alpha_X \in \catC(F X,F' X)$. On the set of these transformations $\G$ again acts by conjugation. Then, as indicated in Remark~\ref{underlfunctcat}, the $\G$-natural transformations are exactly the $\G$-fixed natural transformations, so that the functor $\G$-category $[\catD,\catC]$ has the functor category $[\catD,\catC]_0$ as its underlying ($\G$-fixed) category.
\end{dfn}

The following combination of the above definitions will be important in our studies of equivariant orthogonal spectra. Let $\G$ be a (compactly generated weak Hausdorff) topological group, respectively the associated one object $\catT$-category with morphism space $\G_+$
\begin{dfn} \label{gspaces}
The category $\catGT$\index{GT@$\catGT$, category of $G$-spaces} of $\G$-spaces consists of (continuous) functors $\G \rightarrow \catT$ and natural transformations between them. In particular, objects of $\catGT$ are spaces with a (left) action of $\G$ and morphisms are $\G$-equivariant continuous maps.\\
Giving smash products the diagonal $\G$-action, $\catGT$ inherits a closed symmetric monoidal structure from $\catT$. Again this allows us to view $\catGT$ as enriched over itself, and we shall use the notation $\catTG$\index{TG@$\catTG$} for the ensuing enriched category (\ref{selfenriched}), as well as $\catTG(-,-)$ for the internal $\Hom$-functor of $\catGT$. Then $\catTG$ has $\G$-spaces as objects, and morphisms are (not necessarily $\G$-equivariant) continuous maps.
\end{dfn}
\begin{dfn}
A category $\catCG$ is called a \emph{topological $\G$-category}\index{topological!G category@$\G$-category}\index{category!topological $\G$-} if it is enriched over $\catGT$. Such a $\catCG$ has a \emph{$\G$-fixed category}\index{category!Gfixed@$\G$-fixed} $\catGC$ that is obtained by applying the fixed point functor to the morphism $\G$-spaces.\\
The appropriate functors enriched over $\catGT$ are called \emph{continuous $\G$-functors}.\index{functor!continuous $G$-}\index{continuous!Gfunctor@$G$-functor} 
The appropriate enriched natural transformations are called
\emph{continuous natural $\G$-transformations} \index{continuous!natural Gtransformation@natural $G$-transformation} (\cite[p. 27]{MM} calls these natural $\G$-maps between functors). We will often drop the extra adjective ``continuous'' in the future.
\end{dfn}
\begin{rem}
Note that the fixed point functor $(-)^\G \colon \catGT \rightarrow
\catT$ has a left adjoint giving a space the trivial $\G$-action. This
left adjoint preserves (smash-) products and is therefore strict monoidal.
\end{rem}
Monoidal functors starting in $\catGT$ allow us to transport enrichments as in Construction~\ref{enrtransport}. Transportation along functors in the commutative diagram
\labeleq{enrdiag}{\xymatrix{{}&{\catT}\ar[r]&{\catU}\ar[r]&{\catset}\\{}\ar^-{\rm{forget.}}[u]\ar_-{(-)^G}[d]&{\catGT}\ar[u]\ar[r]\ar[d]&{\G\catU}\ar^-{}[u]\ar[r]\ar[d]&{\catGset}\ar^-{}[u]\ar[d]\\{}&{\catT}\ar[r]&{\catU}\ar[r]&{\catset,}}}
as well as their left adjoints, and even variations only using
subgroups of $\G$ (Lemma~\ref{H-fixed}) appears at various points when doing equivariant homotopy theory. 
\begin{ex}\label{gtmonoidal}
As it is defined, the $\catGT$-category $\catTG$ has the underlying $\G$-fixed $\catT$-category $\catGT$, which is closed symmetric monoidal. Also, $\catTG$ is closed symmetric monoidal itself, when viewing it as a mere category, using the same smash product and internal hom functor as in $\catT$. One choice of internal $\Hom$-functor for $\catTG$ is $\catTG(-,-)$, and we agree to use this choice.
\end{ex}

\section{Tensors and Cotensors}
Detailed treatment of the concepts of \emph{(indexed) limits and
  colimits} in $\catV$-enriched categories can be found in Chapter 3
of \cite{Kel}. We will mainly be concerned with the special case of
tensors and cotensors, and for convenience we repeat the definition.
\begin{dfn}\label{tenscotens}
Let $\catC$ be enriched over the closed symmetric monoidal category
$\catV$. Let $V$ be an object of $\catV$ and $A$ an object of
$\catC$. Then their \emph{tensor product}\index{tensor} $V \otimes A$
is an object of $\catC$, together with a
$\catV$-natural isomorphism
\[\catC(V \otimes A, B) \cong \Hom(V,
  \catC(A,B))\]
for $B$ in $\catC$.
Here $\Hom$ denotes the internal $\Hom$-object in $\catV$.\\
The \emph{cotensor product}\index{cotensor} $\catC(V, A)$ is an object of $\catC$,
together with a $\catV$-natural isomorphism
\[\catC(B, \catC(V,A)) \cong \Hom(V, \catC(B,A)).\]
If all such (co-)tensor products exist we call $\catC$ \emph{(co-)tensored}.\index{tensored (over $\catV$)}\index{cotensored (over $\catV$)} If we consider $\catC$ as enriched over different monoidal categories, we clarify the one used for (co-)tensors by saying that $\catC$ is \emph{(co-)tensored over \catV}.
\end{dfn}
\begin{rem}
Note that for $\catV = \catset$, being tensored and cotensored over $\catset$ is equivalent to having all small copowers $\coprod\limits_X{A}$. Dually, being cotensored over $\catset$ is equivalent to having all small powers $\prod\limits_X A$.
\end{rem}
\begin{ex}
Considering the closed symmetric monoidal category $\catV$ as enriched over itself (cf.~Example~\ref{selfenriched}), it is both tensored and cotensored over itself, by the defining adjunction of the internal $\Hom$-space, cf.~Definition~\ref{homadj}.
\end{ex}
\begin{ex}\label{GTtensor}
As mentioned in Remark~\ref{gtmonoidal}, the category $\catTG$ is enriched over $\catGT$, but also over itself, \ie $\Hom(X,Y) = \catTG(X,Y)$. This immediately implies that $\catTG$ is both tensored and cotensored over both itself and $\catGT$, where both are displayed by the same natural isomorphisms, considered either in $\catGT$ or $\catTG$:
\[\catTG(D,\catTG(A,B)) \cong \catTG(D \smash A, B) \cong \catTG(A, \catTG(D,B)).\]
Since $\catTG$ has $\catGT$ as its underlying $\G$-fixed category, this implies natural isomorphisms in $\catT$:
\[\catGT(S,\catTG(A,B)) \cong \catGT(S \smash A, B) \cong \catGT(A, \catTG(S,B)).\]
For $S$ any object of $\catT$, \ie with trivial $\G$-action, this reduces to:
\[\catT(S,\catGT(A,B)) \cong \catGT(S \smash A, B) \cong \catGT(A, \catTG(S,B)),\] which shows that $\catGT$ is tensored and cotensored over $\catT$.
\end{ex}
The following construction is important for the compatibility of an enrichment and the model structures on the involved categories, and also appears prominently in a lot of our constructions of cellular filtrations:
\begin{dfn}\label{pushoutproduct}
Let $(\catV, \smash, \I)$ be a closed symmetric monoidal category. Let $\catC$ be enriched and tensored over $\catV$, and have pushouts.
For $i: A\rightarrow B$ a morphism in $\catV$, $j:X\rightarrow  Y$ a
morphism in the underlying category $\catC_0$ of $\catC$, define the
\emph{pushout product $i\square j$}\index{pushout product!$i\square j$}\index{ij@$i\square j$, pushout product}  to be the dotted map in
\(\catC_0\) from the pushout in the diagram:
\[\xymatrix{{A\otimes X}\ar[r]^-{\id\otimes j}\ar[d]_-{i\otimes\id}&{A\otimes Y}\ar[d]\ar@(r,u)^-{i\otimes\id}[ddr]\\{B\otimes X}\ar[r]\ar@(d,l)_-{\id\otimes j}[drr]&{P}\pushout\ar@{.>}^{i\square j}[dr]\\&&{B\otimes Y}}\] 
\end{dfn}

The dual construction is the following:
\begin{dfn}\label{cotensorbox}
Let $(\catV, \smash, \I)$ be a closed symmetric monoidal category. Let $\catC$ be enriched and cotensored over $\catV$, and have pullbacks.
For $i\colon A\rightarrow B$ a morphism in $\catV$, $p\colon
E\rightarrow  F$ a morphism in the underlying category $\catC_0$ of
$\catC$, define the map \emph{\(\catC(i^*,p_*)\)}\index{Cip@\(\catC(i^*,p_*)\in \catC_0\)} in \(\catC_0\) to be the dotted map to the pullback in the diagram:
\[\xymatrix{{\catC(B,E)}\ar@(d,l)_-{p_*}[ddr]\ar@(r,u)^-{i^*}[drr]\ar@{.>}^{\catC(i^*,p_*)}[dr]&{}\\
{}&{Q}\pullback\ar[r]^-{}\ar[d]_-{}&{\catC(A,E)}\ar^-{p_*}[d]\\
&{\catC(B,F)}\ar_-{i^*}[r]&{\catC(A,F)}}\]
\end{dfn}
This again has an analog living in the category $\catV$:
\begin{dfn}\label{defliftchar}
Let $(\catV, \smash, \I)$ be a closed symmetric monoidal category
having pullbacks. Let $\catC$ be enriched over $\catV$. For $j:
X\rightarrow Y$ and $p:E\rightarrow  F$ be morphisms in the underlying
category $\catC_0$ of $\catC$, define the map $\catC(j^*,p_*)$\index{Cjp@\(\catC(j^*,p_*)\in \catV\)} in
\(\catV\) to be the dotted map to the pullback in the diagram in $\catV$:
\[\xymatrix{{\catC(Y,E)}\ar@(d,l)_-{p_*}[ddr]\ar@(r,u)^-{j^*}[drr]\ar@{.>}^{\catC(j^*,p_*)}[dr]&{}\\
{}&{R}\pullback\ar[r]^-{}\ar[d]_-{}&{\catC(X,E)}\ar^-{p_*}[d]\\
&{\catC(Y,F)}\ar_-{j^*}[r]&{\catC(X,F)}}\]
\end{dfn}
This construction can be used to characterize lifting properties in the enriched setting:
\begin{lemma}\label{liftingpprodadj}
  In the situation of Definition~\ref{defliftchar}, the pair $(j,p)$ has the lifting property in $\catC_0$, if and only if the map of sets $\catV(\I,\catC(j^*,p_*)))$ is surjective.\end{lemma}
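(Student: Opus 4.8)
The statement to prove is Lemma~\ref{liftingpprodadj}: in the situation of Definition~\ref{defliftchar}, the pair $(j,p)$ has the lifting property in $\catC_0$ if and only if the map of sets $\catV(\I,\catC(j^*,p_*))$ is surjective. The plan is to unwind all the definitions and reduce both sides to the same concrete data, namely commutative squares and diagonal fillers.

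First I would recall that, since $\catC$ is enriched over $\catV$ and $\catV$ is closed symmetric monoidal with unit $\I$, the underlying $\catset$-category $\catC_0$ has hom-sets $\catC_0(X,Y) = \catV(\I,\catC(X,Y))$ by Construction~\ref{underlyset} (applied to $\catV$ as enriched over itself) and Example~\ref{underlyingcat}. Hence the functor $\catV(\I,-)$ turns the pullback diagram in $\catV$ defining $\catC(j^*,p_*)$ into the corresponding pullback diagram of sets: since $\catV(\I,-)$ is representable it preserves all limits, so $\catV(\I,R)$ is the pullback of $\catC_0(X,E) \to \catC_0(X,F) \leftarrow \catC_0(Y,F)$, which is exactly the set of commutative squares from $j$ to $p$. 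Under this identification the set map $\catV(\I,\catC(j^*,p_*))$ is precisely the function sending a morphism $Y \to E$ in $\catC_0$ to the commutative square it determines. Surjectivity of this function says exactly that every commutative square admits a diagonal filler through the top-right corner, i.e. that $(j,p)$ has the lifting property. This is the whole content, so the proof is short.

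Concretely, the key steps in order are: (i) identify $\catC_0(X,Y) = \catV(\I,\catC(X,Y))$ and note that $\catV(\I,-)$ preserves pullbacks; (ii) apply this to the defining pullback square of $\catC(j^*,p_*)$ to get that $\catV(\I,R)$ is the set of commutative squares from $j$ to $p$, with $\catV(\I,\catC(j^*,p_*))$ the canonical map from $\catC_0(Y,E)$ realizing ``restrict a map $Y\to E$ along $j$ and postcompose with $p$''; (iii) observe that a lift in a square from $j$ to $p$ is by definition a map $Y\to E$ whose image under this canonical map is the given square; hence surjectivity $\iff$ every square has a lift.

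The main obstacle is really just bookkeeping: making sure that the identification of $\catV(\I,R)$ with the set of commutative squares is compatible with what ``lifting property'' means in $\catC_0$, and that no orientation of the square is accidentally reversed. There is no deep difficulty here — the only subtlety is that one must use that $\catV$ is \emph{closed} (so that the hom-objects $\catC(X,Y)$ land in $\catV$ and the underlying-set functor is corepresented by $\I$) and that limits in $\catC$-hom-objects are computed in $\catV$. I would state the argument in two short paragraphs: one establishing $\catV(\I,R) \cong \{\text{commutative squares } j \Rightarrow p\}$ and the map of sets, and one concluding the equivalence with the lifting property.
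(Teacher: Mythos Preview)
Your proposal is correct and follows essentially the same approach as the paper: identify $\catC_0(X,Y)$ with $\catV(\I,\catC(X,Y))$, use that $\catV(\I,-)$ preserves pullbacks to see that $\catV(\I,R)$ is the set of commutative squares from $j$ to $p$, and then observe that the map $\catV(\I,\catC(j^*,p_*))$ sends $f\colon Y\to E$ to the square with horizontals $f\circ j$ and $p\circ f$, so surjectivity is exactly the lifting property. The paper's proof is just a terser version of what you wrote.
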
 
\begin{proof}
Recall that morphisms $X \rightarrow Y$ in $\catC_0$ correspond to
elements of $\catV(\I,\catC(X,Y)) = \catC_0(X,Y)$ from Definition~\ref{underlyset}. Then the universal property of the pullback gives that elements of $\catV(\I,R)$ correspond exactly to commutative diagrams 
\[\xymatrix{{X}\ar[r]\ar_-{j}[d]&{E}\ar^-{p}[d]\\{Y}\ar[r]&{F}}\] in $\catC_0$. Then $\catV(\I,\catC(j^*,p_*)))$ sends maps $f\colon{} Y \rightarrow E$ in $\catC_0$ to the diagram with $f\circ j$ as the top and $p \circ f$ as the bottom horizontal arrow, so that surjectivity indeed corresponds exactly to the existence of the lift.
\end{proof}
Given that all of the three above constructions are defined, there is the following crucial relation between them:
\begin{lemma}\label{pushout.product.adjoints}
Let $(\catV, \smash, \I)$ be closed symmetric monoidal and have small limits. Let $\catC$ be enriched, tensored and cotensored over $\catV$ and have pullbacks and pushouts. Let $i: A \rightarrow B$ a morphism in $\catV$ and $j: X \rightarrow Y$ and $p: E\rightarrow F$ morphisms in the underlying category $\catC_0$ of $\catC$. Then the following maps in $\catV$ are naturally isomorphic:
\[\catC((i \square j)^*,p_*) \cong \catV(i^*,\catC(j^*,p_*)_*) \cong \catC(j^*,\catC(i^*,p_*))\]
\end{lemma}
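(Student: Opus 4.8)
The statement is a purely formal adjunction identity relating the pushout--product $\square$, the ``pullback--power'' $\catC(-^*,-_*)$ in $\catC_0$, and the analogous construction $\catV(-^*,-_*)$ in $\catV$. The guiding principle is that all three maps in question are built from finite limits and colimits together with the enriched tensor/cotensor adjunctions of Definition~\ref{tenscotens}, so the asserted isomorphisms should follow by exhibiting a common universal property. The cleanest route is to compute, for an arbitrary object $C$ of $\catV$, the set $\catV(C, -)$ applied to each of the three objects, show these sets are naturally isomorphic, and invoke the (enriched) Yoneda lemma; in fact, since $\catV$ is closed, it suffices to do this on underlying sets by \ref{underlyset}, and then upgrade to an isomorphism of $\catV$-objects by naturality.

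\textbf{Key steps.} First I would unwind $\catV(C,\catC((i\square j)^*,p_*))$. By Definition~\ref{defliftchar} this is the set of commutative squares in $\catC_0$ whose left edge is $i\square j \colon B\otimes X\cup_{A\otimes X}A\otimes Y \to B\otimes Y$ and whose right edge is $p$; equivalently, a map $B\otimes Y \to E$ together with a map $B\otimes X\cup_{A\otimes X}A\otimes Y \to F$ agreeing appropriately. Second, I would rewrite the source of $i\square j$ as a pushout and use that $\catC_0(-,E)$ and $\catC_0(-,F)$ send the pushout square defining $i\square j$ to a pullback square; combined with the tensor adjunction $\catC(V\otimes A,B)\cong\Hom(V,\catC(A,B))$ this repackages the data as a commutative square in $\catV$ with edges $i^*$ and $\catC(j^*,p_*)_*$, i.e. as an element of $\catV(C,\catV(i^*,\catC(j^*,p_*)_*))$. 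This gives the first isomorphism. Third, for the second isomorphism $\catV(i^*,\catC(j^*,p_*)_*)\cong\catC(j^*,\catC(i^*,p_*))$, I would run the same bookkeeping but iterating in the other order: use the cotensor adjunction $\catC(B,\catC(V,A))\cong\Hom(V,\catC(B,A))$ to show $\catC(i^*,p_*)$ is the cotensor-version of the pullback-power, and then observe that the ``square of edges $j^*$ and $\catC(i^*,p_*)_*$'' classifies exactly the same data, since in both presentations one is ultimately describing a compatible family of maps out of tensors $A\otimes X$, $A\otimes Y$, $B\otimes X$, $B\otimes Y$ into $E$ and $F$ subject to the obvious commutativity. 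Finally I would check naturality of all these bijections in $C$ (and in $i,j,p$), which is automatic since each step is an instance of a natural adjunction isomorphism or of the universal property of a limit/colimit, and conclude by Yoneda.

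\textbf{Main obstacle.} The conceptual content is light, but the bookkeeping is the real work: the three constructions are defined as pullbacks (Definitions~\ref{cotensorbox},~\ref{defliftchar}) and the pushout--product as a pushout (Definition~\ref{pushoutproduct}), and to interleave a pushout adjoint-transposed into pullbacks one must carefully track the compatibility conditions so that no data is lost or double-counted. The subtlety is that $\catC$ is enriched and tensored/cotensored over $\catV$, so the mixed expression $\catV(i^*,\catC(j^*,p_*)_*)$ lives in $\catV$ while being built from $\catC$--morphisms; one must be disciplined about which adjunction (tensor or cotensor) is used to move each variable across. The argument is essentially the $2$-variable generalization of the standard fact that $\square$ is left adjoint in each variable to the pullback-power, applied once for the $\catV$-variable $i$ and once for the $\catC_0$-variable $j$; the only thing to verify is that the two orders of applying these adjunctions agree, which is the Fubini-type statement for the double hom, and this follows from coherence of the enrichment. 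No step requires anything beyond the adjunction isomorphisms already recorded in the excerpt and the commutation of representable functors with limits, so I expect the proof to be routine once the diagram-chase is organized, with the indexing of the compatibility data being the only place where care is genuinely needed.
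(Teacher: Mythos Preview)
Your approach is correct and in the same spirit as the paper's, but the paper's argument is more direct. Rather than probing with an arbitrary test object $C$ and invoking Yoneda, the paper simply observes that all three maps are naturally isomorphic to a single concrete map in $\catV$: the map from $\catV(B,\catC(Y,E))$ to the limit of the evident cubical diagram built from $\catV(-,\catC(-,-))$ applied to the vertices $A,B$ and $X,Y$ and $E,F$. This identification uses exactly the enriched tensor/cotensor adjunctions of Definition~\ref{tenscotens} together with the universal properties of the pushout defining $i\square j$ and the pullbacks defining $\catC(i^*,p_*)$ and $\catC(j^*,p_*)$. Your Yoneda step is not wrong, but it is superfluous here: since the defining adjunctions are already $\catV$-enriched isomorphisms, the bookkeeping can be done once at the level of $\catV$-objects rather than repeated for every $C$. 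One small slip in your write-up: in your description of commutative squares with left edge $i\square j$ and right edge $p$, the top map should go from the source of $i\square j$ to $E$ and the bottom from $B\otimes Y$ to $F$, not the other way around; also, once you introduce the test object $C$ it should not vanish from the description---the squares you are classifying have left edge $C\otimes(i\square j)$, not $i\square j$ itself.
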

\begin{proof}Note that for the middle map we considered $\catV$ as enriched over itself as in Example~\ref{selfenriched}. By careful use of the universal properties of pushouts and pullbacks as well as the defining adjunctions for tensors and cotensors of Definition~\ref{tenscotens}, one observes that all three maps are naturally isomorphic to the map from $\catV(B, \catC(Y,E))$ to the limit of 
\[\xymatrix{{\catV(A,\catC(Y,E))}\ar[d]\ar[dr]&{\catV(B,\catC(Y,F))}\ar[dl]\ar[dr]&{\catV(B,\catC(X,E))}\ar[dl]\ar[d]\\
{\catV(A,\catC(Y,E))}\ar[dr]&{\catV(A,\catC(X,E))}\ar[d]&{\catV(B,\catC(X,F))}\ar[dl]\\
&{\catV(A,\catC(X,F))}}\]
\end{proof}
These two lemmas allow us to characterize lifting properties in $\catC_0$ in terms of those in $\catV$, which is of course of particular interest when $\catC_0$ and $\catV$ are model categories (cf.~\ref{monoidalmodelcat}). \\
\section{Kan Extensions}\label{Vkanext}
The discussion about enriched Kan extensions in \cite[4]{Kel} is, due to its generality rather technical. As in the case of enriched (co-) limits, extra care has to be taken in several places. Since we do not need the full generality, we state a slightly simpler definition and list only the explicit properties we make use of, without going into much detail. We concentrate on the case of left Kan extensions, since the dual notion will not appear outside of pure existence statements.\\
Let $\catV$ be closed symmetric monoidal and consider the solid arrow diagram of $\catV$-categories and $\catV$-functors:
\[\xymatrix{
{}&{\catC}\ar^-{\Lan{K}{G}}@{.>}[ddr]&{}\\
{\catA}\ar_{G}[drr]\ar^-{K}[ur]&{}&{}\\
{}&{}&{\catB,}
}\]
where $\catA$ is equivalent to a small $\catV$-category and $\catB$ is cotensored over $\catV$.
\begin{dfn}\label{lkanext}
In the above situation, a \emph{left Kan extension $\Lan{K}{G}$ of $G$ along $K$}\index{left Kan extension $\Lan{K}{G}$}\index{LanKG@$\Lan{K}{G}$, left Kan extension } is a $\catV$-functor $\catC \rightarrow \catB$, together with a $\catV$-natural isomorphism
\[[\catC,\catB](\Lan{K}{G},S) \cong [\catA,\catB](G,S\circ K).\]
The image of the identity transformation for $S = \Lan{K}{G}$ is a $\catV$-natural transformation $\phi: G \rightarrow \Lan{K}{G} \circ K$ and is called the \emph{unit} of $\Lan{K}{G}$.
\end{dfn}
It is important to note, that in a situation where $\catB$ is not
cotensored, this definition is not adequate, in that it does not
describe the left Kan extension in the sense of Kelly, but rather a
weaker notion. For counterexamples see the discussion after
\cite[4.43]{Kel}.

The following proposition will give us the existence of left Kan extensions in all the cases that we will consider:
\begin{prop}{\cite[4.33]{Kel}}\label{kanexist}
A \(\catV\)-category $\catB$ admits all left Kan extensions of the form
$\Lan{K}{G}$, where $K: \catA \rightarrow \catC$ and $G: \catA
\rightarrow \catB$ and $\catA$ is equivalent to a small $\catV$-category, if and only if $\catB$ is enriched cocomplete.
\end{prop}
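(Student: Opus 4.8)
The final statement to prove is Proposition~\ref{kanexist} (= \cite[4.33]{Kel}): a $\catV$-category $\catB$ admits all left Kan extensions $\Lan{K}{G}$ (with $K\colon\catA\to\catC$, $G\colon\catA\to\catB$, $\catA$ small up to equivalence) if and only if $\catB$ is enriched cocomplete.

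\medskip

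The plan is to reduce everything to weighted (indexed) colimits, since a left Kan extension is a pointwise weighted colimit when $\catB$ is enriched cocomplete. For the easy direction, suppose $\catB$ is enriched cocomplete. Given $K\colon\catA\to\catC$ and $G\colon\catA\to\catB$, I would define, for each object $c$ of $\catC$, the object $(\Lan{K}{G})(c)$ to be the colimit of $G$ weighted by the $\catV$-functor $\catC(K(-),c)\colon\catA^{\op}\to\catV$; this exists because $\catA$ is small (up to equivalence) and $\catB$ admits all small weighted colimits. Functoriality in $c$ comes from functoriality of weighted colimits in the weight, and one checks the defining $\catV$-natural isomorphism
\[
[\catC,\catB](\Lan{K}{G},S)\;\cong\;[\catA,\catB](G,S\circ K)
\]
of Definition~\ref{lkanext} by combining the universal property of the weighted colimit with the enriched Yoneda lemma (applied to $\catC(K(-),c)$ as a representable-in-$c$ weight), together with the end formula for the $\catV$-object of $\catV$-natural transformations from Remark~\ref{enrfunctorcat}. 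The unit $\phi\colon G\to (\Lan{K}{G})\circ K$ is then the image of the identity, as in the definition.

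\medskip

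For the converse, suppose $\catB$ admits all left Kan extensions of the indicated form. I would recover arbitrary small weighted colimits in $\catB$ as particular left Kan extensions. Given a small $\catV$-category $\catA$, a weight $W\colon\catA^{\op}\to\catV$, and a diagram $G\colon\catA\to\catB$, one takes $\catC$ to be the trivial $\catV$-category $\star$ of Example~\ref{trivialVcat} — or, more flexibly, a one-object $\catV$-category whose endomorphism object is chosen so that $\catV$-functors out of it into $\catB$ pick out objects of $\catB$ — and a $\catV$-functor $K\colon\catA\to\catC$ built from $W$, so that the weighted colimit $W\star G$ is exactly the value of $\Lan{K}{G}$ at the unique object. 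More precisely, one uses that $\catV$ itself is enriched cocomplete (it has all weighted colimits), and that tensors $V\otimes b$ in $\catB$ — which are the simplest weighted colimits — are obtained as left Kan extensions along the evident $\catV$-functor between one-object categories, giving tensoredness of $\catB$; coequalizers (and general conical colimits) are then obtained by Kan-extending along functors out of the relevant small shape categories. Since a $\catV$-category with tensors and all conical colimits has all weighted colimits, this yields enriched cocompleteness.

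\medskip

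The main obstacle is the converse direction: carefully rigging up the category $\catC$ and the functor $K$ so that a single left Kan extension computes a prescribed weighted colimit, and doing so honestly in the enriched (not merely ordinary) sense — this is exactly the subtlety flagged in the excerpt just before the proposition, namely that Definition~\ref{lkanext} is only adequate when the target is cotensored, so one must check at each stage that the targets involved are cotensored (which holds once we know $\catB$ has the relevant limits, or by passing to an auxiliary cotensored target and reflecting back). The cleanest route is probably to first extract tensors (the $V\otimes b$), then note $\catB$ becomes tensored hence in particular powered/cotensored enough for the general machinery, then extract conical colimits, and finally invoke the standard fact that tensors plus conical colimits give all weighted colimits. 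Given that this is \cite[4.33]{Kel}, I would in practice cite Kelly for the bookkeeping and only indicate the two reductions above.
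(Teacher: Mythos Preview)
The paper does not give its own proof of this proposition: it is stated with the citation \cite[4.33]{Kel} and immediately followed by the next statement, so there is nothing in the paper to compare your argument against. Your task here was essentially to reconstruct Kelly's proof.

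Your forward direction (enriched cocomplete $\Rightarrow$ all left Kan extensions exist) is the standard pointwise construction and is fine.

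The converse, however, has a genuine gap. You propose to recover an arbitrary weighted colimit $W\star G$ (for $W\colon\catA^{\op}\to\catV$, $G\colon\catA\to\catB$) by ``taking $\catC$ to be the trivial $\catV$-category $\star$\ldots and a $\catV$-functor $K\colon\catA\to\catC$ built from $W$''. But there is exactly one $\catV$-functor $\catA\to\star$, so no information about $W$ can be encoded in $K$ this way; the resulting Kan extension would compute the colimit of $G$ weighted by the constant functor at $\I$, not by $W$. The phrase ``more flexibly, a one-object $\catV$-category whose endomorphism object is chosen\ldots'' does not repair this either, since the weight $W$ is a functor on $\catA^{\op}$, not a single object of $\catV$. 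What is actually needed is a construction of $\catC$ that carries $W$ in its hom-objects: the standard device is the \emph{collage} (or cograph) of $W$, namely the $\catV$-category with $\Ob(\catC)=\Ob(\catA)\sqcup\{*\}$, $\catC(a,a')=\catA(a,a')$, $\catC(a,*)=W(a)$, $\catC(*,*)=\I$, $\catC(*,a)=0$. With $K\colon\catA\hookrightarrow\catC$ the inclusion, one has $(\Lan_K G)(*)\cong W\star G$. Your later remarks about extracting tensors first and then conical colimits are in the right spirit, but each of those steps is an instance of the same missing construction.

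You also flag a worry about the cotensoredness hypothesis in Definition~\ref{lkanext}; note that for the converse direction you are \emph{assuming} the Kan extensions exist in the sense of that definition, so this hypothesis is on your side rather than an obstacle.
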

To check the required cocompleteness, we will generally be able to use the following characterization, which is a combination of several statements in \cite{Kel}:
\begin{thm}Let $\catB$ be enriched over $\catV$:
\begin{enumerate}
\item $\catB$ is cocomplete in the enriched sense, if and only if is tensored and admits all small conical (enriched) colimits.
\item $\catB$ is complete in the enriched sense, if and only if is cotensored and admits all small conical (enriched) limits.
\item Assuming $\catB$ is cotensored, $\catB$ admits all small conical colimits, if and only if its underlying ordinary category $\catB_0$ is cocomplete.
\item Assuming $\catB$ is tensored, $\catB$ admits all small conical limits, if and only if its underlying ordinary category $\catB_0$ is complete.
\end{enumerate}
For tensored and cotensored $\catB$, the conical (co-)limits are the ones created in $\catB_0$.
\end{thm}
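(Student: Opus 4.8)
<br>

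The plan is to prove the four assertions by citing and assembling the corresponding facts from Kelly's book \cite{Kel}, to which the excerpt already points. The statement is a packaging of several standard results about enriched (co)completeness, and the work is essentially bookkeeping: matching each clause to the right theorem in the literature and checking the hypotheses. First I would fix notation: call an enriched category $\catB$ \emph{enriched cocomplete} if it admits all small indexed colimits, and recall that, by the theory of weighted colimits, every indexed colimit can be built from conical colimits and tensors (\cite[\S 3.3]{Kel}). This decomposition is the engine behind parts (i) and (ii), and I would state it explicitly as the first lemma.

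For part (i), I would argue as follows. If $\catB$ is enriched cocomplete then in particular it has all tensors (these are the colimits weighted by a representable into $\catV$) and all small conical colimits (these are colimits with the terminal weight), so the forward direction is immediate. Conversely, suppose $\catB$ is tensored and has all small conical colimits. Given a weight $W \colon \catA^{\mathrm{op}} \to \catV$ and a diagram $F \colon \catA \to \catB$ with $\catA$ small, one uses the standard formula expressing the weighted colimit $W \ast F$ as a conical colimit of tensors $W(a) \otimes F(a)$ over the category of elements of $W$ (or, more carefully, as a coequalizer of coproducts of such tensors, which is again a conical colimit). Since $\catB$ has all the ingredients, $W \ast F$ exists, so $\catB$ is enriched cocomplete. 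Part (ii) is formally dual, replacing tensors by cotensors and conical colimits by conical limits, and I would simply say ``by duality, applying part (i) to $\catB^{\mathrm{op}}$.''

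Parts (iii) and (iv) are the bridge between the enriched and the ordinary world, and these are where I would lean most directly on \cite{Kel}. The relevant fact is: if $\catB$ is cotensored, then a conical colimit of a diagram in $\catB$ exists in the enriched sense and agrees with the ordinary colimit in the underlying category $\catB_0$ precisely when that ordinary colimit exists — this is \cite[\S 3.8]{Kel}, the point being that the cotensor provides the natural isomorphism $\catB_0(\mathrm{colim}_a F a, B) \cong \lim_a \catB_0(Fa,B)$ needed to upgrade an underlying colimit to an enriched one, and to identify the two. So for (iii): assuming $\catB$ is cotensored, $\catB$ admits all small conical colimits iff $\catB_0$ is cocomplete, and in that case they coincide with the colimits in $\catB_0$; (iv) is the dual statement with ``tensored'' and ``complete/cocomplete'' swapped. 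The final ``in particular'' clause then just records that combining the equivalence in (iii)/(iv) with the agreement of the constructions gives the identification of conical (co)limits with those in $\catB_0$ whenever $\catB$ is both tensored and cotensored.

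The main obstacle is not conceptual but one of precision: making sure the hypotheses ``$\catA$ equivalent to a small category'' (which appears in the excerpt's Proposition \ref{kanexist}) are carried consistently, and being careful about the asymmetry the excerpt itself flags with its \verb|\oko| corrections — namely that ``admits small conical colimits'' should be paired with ``$\catB_0$ cocomplete'' in (iii) and ``admits small conical limits'' with ``$\catB_0$ complete'' in (iv). I would double-check the direction of each implication against \cite[\S 3.8]{Kel} rather than trusting the mnemonic, since it is easy to dualize one clause too many times. Beyond that, the proof is a short citation-and-assembly argument, and I would write it in three or four sentences per clause, referring to \cite{Kel} for the weighted-colimit decomposition and for the conical-colimit/underlying-colimit comparison, and deriving (ii) and (iv) from (i) and (iii) by passing to opposite categories.
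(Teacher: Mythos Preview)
Your proposal is correct and matches the paper's own approach: the paper's proof is nothing more than a list of precise citations to Kelly (\cite{Kel} 3.73 for (ii) with (i) by dualization, the discussion around 3.53--3.54 for conical (co)limits, and 3.33--3.34 for the link to ordinary (co)completeness), so your citation-and-assembly strategy is exactly what is wanted. Your references to \S3.3 and \S3.8 are in the right neighborhood but less pinpointed than the paper's; otherwise the content and the caveat about the \texttt{\textbackslash oko} asymmetry are both on target.
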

\begin{proof}
The precise references in $\cite{Kel}$ are: Theorem 3.73 for (ii), dualize for (i). The discussion between 3.53 and 3.54 for conical (co-)-limits in $\catB$ or $\catB_0$, and the discussion between 3.33 and 3.34 for the connection to classical (co-)completeness,
\end{proof}
Since it is not always the enriched functor category from Remark~\ref{enrfunctorcat} that is of interest for us, we would also like a characterization of the left Kan extension in terms of the underlying category of enriched functors and enriched transformations. Luckily our assumption that $\catB$ is cotensored allows us to use the following universal property from \cite[4.43]{Kel} and the discussion that follows it:
\begin{thm}
  If $\catB$ is cotensored, then a $\catV$-functor $L$ is a left Kan extension of $G$ along $K$ if and only if there is a natural bijection of sets
  \[[\catC,\catB]_0(L,S) \cong [\catA,\catB]_0(G,S\circ K).\]
  In particular a $\catV$-functor $L$ equipped with a $\catV$-natural transformation $\phi: G \rightarrow L\circ K$ is a left Kan extension of $G$ along $K$ if and only if for any $\catV$-natural transformation
 $\alpha: G \rightarrow S \circ K$ there exist a unique $\beta\colon L\to S$ such that $\alpha=(\beta\circ\id_K)\phi$.
\end{thm}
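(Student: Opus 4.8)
The plan is to obtain the statement from Definition~\ref{lkanext} by passing back and forth between the $\catV$-enriched hom-objects and their underlying sets, exploiting the fact that $\catB$ is cotensored to reconstruct the enriched data from the set-level data.

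\emph{Forward direction of the first biconditional.} Suppose $L=\Lan{K}{G}$ is a left Kan extension in the sense of Definition~\ref{lkanext}, so that there is a $\catV$-natural isomorphism $[\catC,\catB](L,S)\cong[\catA,\catB](G,S\circ K)$ of objects of $\catV$, natural in $S$. Apply the lax monoidal functor $\catV(\I,-)\colon\catV\to\catset$ of Construction~\ref{underlyset}. By that construction the underlying set of an internal hom-object is the corresponding morphism set, and by Remark~\ref{underlfunctcat} the underlying ordinary category of $[\catC,\catB]$ is $[\catC,\catB]_0$ (and likewise for $[\catA,\catB]$); hence one reads off the asserted natural bijection $[\catC,\catB]_0(L,S)\cong[\catA,\catB]_0(G,S\circ K)$ of sets. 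This uses only that the relevant hom-objects exist as ends, so the possible non-smallness of $\catC$ causes no problem.

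\emph{Converse.} Suppose $L$ is a $\catV$-functor together with bijections $\theta_S\colon[\catC,\catB]_0(L,S)\cong[\catA,\catB]_0(G,S\circ K)$, natural in $S\in[\catC,\catB]_0$. First I would recall that, because $\catB$ is cotensored over $\catV$, so is $[\catC,\catB]$, with cotensors computed pointwise: for $V$ in $\catV$ the cotensor $V\pitchfork S$ is the $\catV$-functor $c\mapsto\catB(V,S(c))$, and the cotensor adjunction $[\catC,\catB](T,V\pitchfork S)\cong\Hom(V,[\catC,\catB](T,S))$ follows from the end formula for enriched natural transformations (Remark~\ref{enrfunctorcat}) together with the corresponding adjunction in $\catB$. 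The same holds for $[\catA,\catB]$, and moreover $(V\pitchfork S)\circ K=V\pitchfork(S\circ K)$ since precomposition with $K$ is objectwise. Substituting $V\pitchfork S$ for $S$ in $\theta$ and chaining these adjunctions on underlying sets gives, for every object $V$ of $\catV$, a bijection
\[
\catV(V,[\catC,\catB](L,S))\;\cong\;[\catC,\catB]_0(L,V\pitchfork S)\;\cong\;[\catA,\catB]_0(G,V\pitchfork(S\circ K))\;\cong\;\catV(V,[\catA,\catB](G,S\circ K)),
\]
with middle isomorphism $\theta_{V\pitchfork S}$, natural in $V$ (and in $S$). By the Yoneda lemma in the category $\catV$ this is induced by a unique isomorphism $[\catC,\catB](L,S)\cong[\catA,\catB](G,S\circ K)$ in $\catV$, and naturality in $S$ at the level of underlying ordinary categories is immediate from that of $\theta$. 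To upgrade this to the $\catV$-naturality required by Definition~\ref{lkanext} one re-runs the cotensor-probing argument on the $\catV$-functoriality constraints of the two sides, viewed as $\catV$-functors $[\catC,\catB]\to\catV$, reducing the check to the set-level naturality already in hand; this is the one step that needs care. Once it is done, $L$ is a left Kan extension by Definition~\ref{lkanext}.

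\emph{The ``in particular''.} This is the standard Yoneda reformulation of a natural isomorphism of representable-type functors as a universal element. Given the natural bijection of the first biconditional, let $\phi\in[\catA,\catB]_0(G,L\circ K)$ be the image of $\id_L$ under $\theta_L$; then for every $S$ and every $\catV$-natural $\alpha\colon G\to S\circ K$ the morphism $\beta:=\theta_S^{-1}(\alpha)\colon L\to S$ is the unique one with $\alpha=(\beta K)\circ\phi$, where $\beta K\colon L\circ K\to S\circ K$ denotes the whiskering of $\beta$ with $K$ (this is the factorisation $\alpha=\beta\circ\phi$ of the statement). Conversely a universal $\phi$ with this factorisation property defines the natural bijection $\beta\mapsto(\beta K)\circ\phi$, so by the converse direction above $L$ is a left Kan extension. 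Finally $\phi$ agrees with the unit of Definition~\ref{lkanext}: under $\catV(\I,-)$ the unit is by construction the image of $\id_L$, so both descriptions of $L$ as a left Kan extension carry the same unit. The main obstacle throughout remains the $\catV$-naturality promotion in the converse; everything else is formal bookkeeping with the cotensor adjunctions and the ordinary Yoneda lemma in $\catV$.
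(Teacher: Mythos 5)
The paper does not actually prove this theorem: it is quoted verbatim from Kelly's book, \cite[4.43]{Kel} and the discussion following it, so there is no in-text argument to compare yours against. What you have written is, in outline, exactly the standard proof of Kelly's 4.43, and it is correct in structure: the forward direction is just an application of the underlying-set functor $\catV(\I,-)$ together with Remark~\ref{underlfunctcat}; the converse probes the hom-objects with cotensors $V\pitchfork S$ (computed pointwise in $[\catC,\catB]$, so that $(V\pitchfork S)\circ K = V\pitchfork(S\circ K)$) and invokes ordinary Yoneda in $\catV_0$; and the ``in particular'' is the usual universal-element reformulation, with your reading of the paper's sloppy ``$\beta\circ\phi$'' as the whiskering $(\beta K)\circ\phi$ being the intended one.

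The one place where your argument is not yet a proof is the step you yourself flag: promoting the ordinary naturality in $S$ of the Yoneda-induced isomorphism $[\catC,\catB](L,S)\cong[\catA,\catB](G,S\circ K)$ to the $\catV$-naturality demanded by Definition~\ref{lkanext}. This is not automatic and is precisely where cotensoredness earns its keep a second time. The clean way to close it is Kelly's criterion (Section 1.10 of \cite{Kel}, around 1.39): a family of maps between $\catV$-functors landing in $\catV$ whose domain category is cotensored is $\catV$-natural as soon as it is ordinary-natural and compatible with the cotensor comparison maps; both $S\mapsto[\catC,\catB](L,S)$ and $S\mapsto[\catA,\catB](G,S\circ K)$ preserve cotensors, and your isomorphism was constructed from $\theta_{V\pitchfork S}$ precisely so as to commute with them, so the criterion applies. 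With that step either carried out or cited, the proof is complete and agrees with the source the paper relies on.
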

Hence in the case of $\catB$ tensored, cotensored and cocomplete, the two characterizations together with the existence result Proposition~\ref{kanexist}, allow us to state the following:
\begin{prop}\label{kanisleftadj}
If $\catB$ is cotensored, then precomposition with $K\colon\catA\to\catC$ defines a $\catV$-functor $K^*\colon [\catC,\catB] \rightarrow [\catA,\catB]$ and the left Kan extension provides a left adjoint, both in the enriched sense, and on underlying ordinary categories.
\end{prop}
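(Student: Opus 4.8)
The plan is to assemble the statement from the two universal properties for left Kan extensions recorded just above, namely Definition~\ref{lkanext} (the enriched version) and the theorem characterising the Kan extension via the underlying category of $\catV$-functors and $\catV$-natural transformations. First I would observe that precomposition with $K$ is visibly functorial and $\catV$-enriched: given $\catV$-functors $S, S' \colon \catC \to \catB$ and the morphism $\catV$-object $[\catC,\catB](S,S')$ (which exists by Remark~\ref{enrfunctorcat} since $\catA$, and hence $\catC$ in our applications, is equivalent to a small $\catV$-category and $\catV$ is closed and complete), one gets a morphism $[\catC,\catB](S,S') \to [\catA,\catB](S\circ K, S'\circ K)$ in $\catV$ by restricting the defining end along the subcategory $K(\catA)$; compatibility with composition and units is immediate from the end description. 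This defines $K^* \colon [\catC,\catB] \to [\catA,\catB]$.

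Next I would invoke Proposition~\ref{kanexist}: since $\catB$ is enriched cocomplete (which is exactly the hypothesis under which Proposition~\ref{kanisleftadj} is to be applied — the statement says "in the case of $\catB$ tensored, cotensored and cocomplete"), every left Kan extension $\Lan{K}{G}$ exists for $G \colon \catA \to \catB$. Sending $G$ to $\Lan{K}{G}$ is then the object function of the candidate left adjoint $[\catA,\catB] \to [\catC,\catB]$; functoriality on morphisms (i.e. on $\catV$-natural transformations, resp.\ elements of the morphism $\catV$-objects) follows formally from the universal property, exactly as one deduces functoriality of any left adjoint from a pointwise adjunction. The adjunction isomorphism on underlying ordinary categories is precisely the bijection $[\catC,\catB]_0(\Lan{K}{G},S) \cong [\catA,\catB]_0(G, S\circ K)$ supplied by the cited theorem, and one must check it is natural in both $G$ and $S$; naturality in $S$ is built into the statement of Definition~\ref{lkanext}, and naturality in $G$ is obtained by the standard argument using the unit $\phi \colon G \to \Lan{K}{G}\circ K$ and the uniqueness of factorisations through $\phi$.

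To upgrade this ordinary adjunction to an enriched one, I would exhibit a $\catV$-natural isomorphism $[\catC,\catB](\Lan{K}{G},S) \cong [\catA,\catB](G, S\circ K)$ of morphism $\catV$-objects. The cleanest route is to use that a $\catV$-functor which is an ordinary left adjoint, between $\catV$-categories that are tensored and cotensored, is automatically a $\catV$-left-adjoint — this is the enriched adjoint functor recognition principle, and it applies here because $[\catC,\catB]$ and $[\catA,\catB]$ are both cotensored (cotensors in functor $\catV$-categories are computed pointwise in $\catB$, which is cotensored) and tensored (tensors likewise, using that $\catB$ is cocomplete hence tensored). Alternatively one can directly apply $\catV(\I,-)$ to the putative isomorphism and check it reduces to the ordinary adjunction bijection after cotensoring with an arbitrary $V \in \catV$, using the defining adjunctions of tensors and cotensors from Definition~\ref{tenscotens} together with Lemma~\ref{homhomadj}. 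I expect the main obstacle to be exactly this last point: verifying that the ordinary hom-set bijection is the underlying map of a genuine $\catV$-natural isomorphism of internal hom-objects, since this requires either citing the enriched adjoint functor theorem in the precise form available (Kelly's treatment) or carefully chasing the cotensor adjunctions; everything else is a formal consequence of the universal properties already set up in the excerpt.
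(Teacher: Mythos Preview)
Your proposal is correct and follows essentially the same route the paper indicates: the paper does not give a formal proof of this proposition but presents it as an immediate consequence of combining Definition~\ref{lkanext} (the enriched universal property), the subsequent theorem (the underlying-category universal property, valid when $\catB$ is cotensored), and the existence result Proposition~\ref{kanexist}. Your write-up fills in the details the paper leaves implicit---functoriality of $K^*$ and of $\Lan{K}{(-)}$, and the upgrade from ordinary to enriched adjunction via pointwise (co)tensors in the functor categories---and these elaborations are all sound.
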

Finally, the following property helps to compute the Kan extensions in a lot of interesting special cases:
\begin{prop}{\cite[4.23]{Kel}}
 In the situation of Proposition~\ref{kanisleftadj}, the $\catV$-functor $K\colon\catA\to\catC$ is fully faithful if and only if for all $\catB$ the unit $\id_{[\catA,\catB]} \rightarrow K^*\Lan{K}{-}$ of the adjunction is a natural isomorphism.
\end{prop}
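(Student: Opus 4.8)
The plan is to prove both implications using the pointwise construction of the left Kan extension together with the enriched co-Yoneda (density) lemma; the forward implication (``$K$ fully faithful $\Rightarrow$ unit invertible'') is the one actually needed elsewhere in the paper, and it is the easier one. Write $\phi\colon\id_{[\catA,\catB]}\to K^*\Lan{K}{(-)}$ for the unit of \ref{lkanext}. Since $\catB$ is enriched cocomplete — which is exactly what makes all the relevant Kan extensions exist, by \ref{kanexist} — the functor $\Lan{K}{G}$ is computed for $G\in[\catA,\catB]$ and an object $C$ of $\catC$ by the coend
\[
(\Lan{K}{G})(C)\;\cong\;\int^{A\in\catA}\catC(KA,C)\otimes GA ,
\]
and, evaluated at an object $A'$ of $\catA$, the unit $\phi_G$ is the canonical map
\[
GA'\;\cong\;\I\otimes GA'\;\longrightarrow\;\catC(KA',KA')\otimes GA'\;\longrightarrow\;\int^{A}\catC(KA,KA')\otimes GA
\]
built from $\id_{KA'}$ followed by the coprojection of the coend at $A=A'$.

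For the ``only if'' direction, assume $K$ is fully faithful, so that $K_{A,A'}\colon\catA(A,A')\to\catC(KA,KA')$ is an isomorphism in $\catV$ for all $A,A'$. Substituting into the coend formula gives natural isomorphisms
\[
(K^*\Lan{K}{G})(A')\;=\;(\Lan{K}{G})(KA')\;\cong\;\int^{A}\catA(A,A')\otimes GA\;\cong\;GA'
\]
by co-Yoneda, and a direct inspection of the description of $\phi_G$ above shows that under these identifications it becomes the identity of $GA'$; naturality in $G$ and in $A'$ is routine. Hence $\phi$ is a natural isomorphism.

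For the ``if'' direction I would first use the general fact about an adjunction $F\dashv U$ — provable by (enriched) Yoneda — that $F$ is fully faithful if and only if its unit $\id\to UF$ is invertible; applied to $\Lan{K}{(-)}\dashv K^*$ this reduces the claim to showing that full faithfulness of $\Lan{K}{(-)}$ forces full faithfulness of $K$. To see this, test $\Lan{K}{(-)}$ on tensored representables: for $A_0$ in $\catA$ and $b$ in $\catB$ the coend formula together with co-Yoneda gives $\Lan{K}{(\catA(A_0,-)\otimes b)}\cong\catC(KA_0,-)\otimes b$, and enriched Yoneda in $[\catA,\catB]$ and in $[\catC,\catB]$ turns the full-faithfulness isomorphism of hom-objects into the statement that, for all $b,b'$, the map $K_{A_1,A_0}\otimes\id_{b'}\colon\catA(A_1,A_0)\otimes b'\to\catC(KA_1,KA_0)\otimes b'$ induces an isomorphism on $\catB(b,-)$, hence is itself an isomorphism in $\catB$.

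The main obstacle is the final descent: deducing that $K_{A,A'}$ is invertible in $\catV$ for all $A,A'$ from the fact that $K_{A,A'}\otimes\id_{b'}$ is invertible in $\catB$ for every object $b'$. This step needs a tensor-faithful object: taking $\catB=\catV$ and $b'=\I$ makes $-\otimes b'$ the identity and the conclusion immediate, and in the cotensored cocomplete functor categories (valued in $\catT$ or $\catOS$) occurring in our applications a suitable constant functor plays the same role, so that $-\otimes b'$ reflects isomorphisms; absent any hypothesis on $\catB$ one should read the ``if'' direction with $\catB$ allowed to vary, the case $\catB=\catV$ already sufficing. With this descent in place both implications are complete.
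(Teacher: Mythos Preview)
The paper does not supply a proof of this proposition at all; it is simply quoted from \cite[4.23]{Kel}, so there is nothing to compare your argument against on the paper's side. Your forward direction (``$K$ fully faithful $\Rightarrow$ unit invertible'') is exactly Kelly's argument: the pointwise coend formula together with co-Yoneda. That part is fine and is the only direction the paper ever uses.

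Your hesitation about the converse is well founded, and in fact the gap you flag is genuine: for a \emph{fixed} $\catB$ the ``if'' direction is false without further hypotheses. Take $\catV=\catset$ and $\catB$ the terminal category (one object, one morphism); this $\catB$ is complete, cocomplete, tensored and cotensored, yet $[\catA,\catB]$ and $[\catC,\catB]$ are both trivial, so the unit is invertible for \emph{every} $K$, fully faithful or not. Thus the descent step you worry about---passing from ``$K_{A,A'}\otimes\id_b$ is invertible in $\catB$ for all $b$'' to ``$K_{A,A'}$ is invertible in $\catV$''---cannot be carried out in general, and your instinct to retreat to $\catB=\catV$ (where it is immediate) is the correct fix. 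Kelly's 4.23 in fact only asserts the forward implication; the biconditional in the paper should be read either with $\catB$ allowed to vary, or with the tacit understanding that only the forward direction is needed and claimed.
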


\section{Cofinality for coends}
\label{sec:coendcof}

\begin{dfn}\label{def:Fcofinal}
  Let \(i \colon \catD \to \catC\) be a full and faithful functor of
  \(\catT\)-categories and let \(F \colon \catC^{\op} \times 
  \catC \to \catT\) be a \(\catT\)-functor. We say that \(i\) is
  {\em \(F\)-cofinal}\index{cofinal, $F$-}\index{Fcofinal@$F$-cofinal} if for all morphisms
  \(\gamma \colon c_0
  \to c_1\) and \(f_0 \colon c_0 \to c_0'\) in \(\catC\) so that
  \(F(f_0,c)\) is a homeomorphism for every object \(c\) of \(\catC\),
  there  exists an object
  \(d\) of \(\catD\) and morphisms \(\delta \colon c_0' \to i(d)\) and
  \(f_1 \colon c_1 \to i(d)\) so that
  \begin{enumerate}
  \item 
    \(f_1 \gamma = \delta f_0\), that
  is, the following square commutes:
  \begin{displaymath}
    \xymatrix{
      c_0 \ar[r]^{\gamma} \ar[d]_{f_0} & c_1 \ar[d]^{f_1} \\
      c_0' \ar[r]^{\delta} &i(d).
    }
  \end{displaymath}
  \item The map \(F(f_1,c)\) is a homeomorphisms for
    every object \(c\) of \(\catC\). 
  \end{enumerate}
\end{dfn}
\begin{lem}\label{fromcond1tocofinal}
  Let \(i \colon \catD \to \catC\) be a full and faithful functor of
  \(\catT\)-categories and let \(F \colon \catC^{\op} \wedge 
  \catC \to \catT\) be a \(\catT\)-functor. Suppose that
  \begin{enumerate}
  \item for all morphisms
  \(\gamma \colon c_0
  \to c_1\) and \(f_0 \colon c_0 \to c_0'\) in \(\catC\) so that
  \(F(f_0,c)\) is a homeomorphism for every object \(c\) of \(\catC\),
  there  exists morphisms \(g' \colon c_0' \to c_1'\) and
  \(g_1 \colon c_1 \to c_1'\) in \(\catC\) so that \(g_1 \gamma =
  g' f_0\) and so that the map \(F(g_1,c)\) is a homeomorphisms for
    every object \(c\) of \(\catC\).
  \item For every object \(c_0\) of \(\catC\) there exists an object
    \(d\) of \(\catD\) and a morphism \(f \colon c_0 \to i(d)\) so that  
    the map \(F(f,c)\) is a homeomorphisms for
    every object \(c\) of \(\catC\).
  \end{enumerate}
  Then \(i\) is \(F\)-cofinal.
\end{lem}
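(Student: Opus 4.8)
\textbf{Plan for the proof of Lemma \ref{fromcond1tocofinal}.}

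The plan is to verify the defining property of \(F\)-cofinality directly, taking as input the two hypotheses of the lemma. So suppose we are given morphisms \(\gamma \colon c_0 \to c_1\) and \(f_0 \colon c_0 \to c_0'\) in \(\catC\) with the property that \(F(f_0,c)\) is a homeomorphism for every object \(c\) of \(\catC\). We must produce an object \(d\) of \(\catD\) together with morphisms \(\delta \colon c_0' \to i(d)\) and \(f_1 \colon c_1 \to i(d)\) satisfying \(f_1 \gamma = \delta f_0\) and such that \(F(f_1,c)\) is a homeomorphism for every \(c\).

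First I would apply hypothesis (i) to the pair \(\gamma, f_0\). This yields morphisms \(g' \colon c_0' \to c_1'\) and \(g_1 \colon c_1 \to c_1'\) in \(\catC\) with \(g_1 \gamma = g' f_0\) and with \(F(g_1,c)\) a homeomorphism for every object \(c\). Next I would apply hypothesis (ii) to the object \(c_1'\): this produces an object \(d\) of \(\catD\) and a morphism \(f \colon c_1' \to i(d)\) such that \(F(f,c)\) is a homeomorphism for every object \(c\). Now set \(\delta = f \circ g' \colon c_0' \to i(d)\) and \(f_1 = f \circ g_1 \colon c_1 \to i(d)\). Then
\[
  f_1 \gamma = f g_1 \gamma = f g' f_0 = \delta f_0,
\]
so the required square commutes. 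Finally, since \(\catT\) is closed symmetric monoidal and composition of homeomorphisms is a homeomorphism, the functoriality of \(F\) gives \(F(f_1,c) = F(g_1,c) \circ F(f,c)\) (or the opposite composite, depending on variance conventions — either way a composite of two homeomorphisms), hence \(F(f_1,c)\) is a homeomorphism for every object \(c\) of \(\catC\). This verifies the two conditions in the definition of \(F\)-cofinality, and we are done.

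This argument is essentially a two-step composition, so there is no serious obstacle; the only point requiring a little care is the bookkeeping of which composite of \(F\)-applied morphisms appears and checking that \(F\) indeed sends the composite \(f g_1\) to the composite of the images — but this is immediate from \(F\) being a \(\catT\)-functor, hence strictly functorial. One should also note at the outset that \(i\) being full and faithful is not used in the construction itself; it is part of the ambient setup inherited from the definition of \(F\)-cofinality (and is needed in the applications, e.g.\ in Lemma \ref{condition1lemma}, to identify the coend over \(i_1^*\catOr_E^{\hml}\) with the coend over \(\catOr_{E_0}^{i^*\hml}\)).
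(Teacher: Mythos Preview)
Your proposal is correct and follows exactly the same approach as the paper: apply (i) to obtain \(g',g_1\), then apply (ii) to the resulting object \(c_1'\) to obtain \(f\), and set \(\delta = f g'\), \(f_1 = f g_1\). The paper's proof is a single sentence stating precisely this; your additional remarks on functoriality of \(F\) and on the role of fullness and faithfulness are accurate but not strictly needed.
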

\begin{proof}
  This is a direct consequence of the definition by letting \(\delta =
  fg'\) and \(f_1 = fg_1\). 
\end{proof}

In the rest of this subsection we fix
a \(\catT\)-functor \(F \colon \catC^{\op}
  \wedge \catC \to \catT\) and a functor \(i \colon
  \catD\to \catC\).

Note that if \(i\) is \(F\)-cofinal, then taking all of the maps in
the  \(F\)-cofinality condition 
be the identity on an object \(c\) of \(C\), we can for each \(c\)
choose a morphism \(f \colon c \to i(d)\) so that \(F(f,c')\) is a
homeomorphism for every object \(c'\) of \(\catC\).
\begin{prop}\label{Fcofinaliso}
  If \(i\) is \(F\)-cofinal, then the canonical map
  \(i \colon \int^\catD i^*F \to \int^{\catC} F\) induced by the
  functor \(i\) is a homeomorphism. 
\end{prop}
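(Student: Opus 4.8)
The plan is to exhibit an explicit continuous two-sided inverse to the canonical map, using $F$-cofinality to ``push'' everything into the image of $i$. Recall that in $\catT$ the coend $\int^{c\in\catC}F$ is the coequalizer of the two evident maps $\bigvee_{c_0,c_1\in\Ob\catC}\catC(c_0,c_1)\wedge F(c_1,c_0)\rightrightarrows\bigvee_{c\in\Ob\catC}F(c,c)$, hence a quotient of $\bigvee_c F(c,c)$ in which, for each $\gamma\colon c_0\to c_1$ and $x\in F(c_1,c_0)$, the point $F(\gamma,c_0)(x)\in F(c_0,c_0)$ is identified with $F(c_1,\gamma)(x)\in F(c_1,c_1)$; likewise for $\int^{d\in\catD}i^*F$, and the canonical map $i$ sends the $d$-th summand $F(i(d),i(d))$ identically onto the $i(d)$-th summand. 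As noted right after the definition of $F$-cofinality, for each object $c$ of $\catC$ we may choose a morphism $f_c\colon c\to i(d_c)$ with $d_c\in\Ob\catD$ such that $F(f_c,c')$ is a homeomorphism for every $c'$; for $c=i(d)$ we take $f_c=\mathrm{id}_{i(d)}$, $d_c=d$. Define a continuous map out of the $c$-th summand by the composite $F(c,c)\xrightarrow{F(f_c,c)^{-1}}F(i(d_c),c)\xrightarrow{F(i(d_c),f_c)}F(i(d_c),i(d_c))\to\int^{d\in\catD}i^*F$.

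First I would show this summandwise assignment is independent of the choice of good $f_c$. Given a second good $f_c'\colon c\to i(d_c')$, apply the $F$-cofinality condition to the span consisting of $\gamma=f_c$ and $f_0=f_c'$; fullness of $i$ lets the resulting comparison morphisms $\delta,g$ be taken inside $\catD$, so the coequalizer relations defining $\int^{d}i^*F$ apply to them, and a diagram chase using bifunctoriality of $F$, the identity $gf_c=\delta f_c'$, and invertibility of $F(g,-)$ shows the two composites land at the same point. Next I would show the assignment descends along the coequalizer defining $\int^{c}F$, i.e.\ respects a generating relation attached to $\gamma\colon c_0\to c_1$ and $x\in F(c_1,c_0)$. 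Here I would apply $F$-cofinality to the span $(\gamma,f_{c_0})$ to obtain an object $e$ of $\catD$, a morphism $\delta\colon i(d_{c_0})\to i(e)$, and a \emph{good} morphism $\epsilon\colon c_1\to i(e)$ with $\epsilon\gamma=\delta f_{c_0}$; using the independence just proved, I may recompute the $c_1$-summand map via $\epsilon$ instead of $f_{c_1}$. The crucial point is that, $\epsilon$ being good, $F(\epsilon,c_0)$ is in particular surjective, so $x=F(\epsilon,c_0)(t)$ for some $t\in F(i(e),c_0)$; feeding this through the coend relation attached to $\delta$ (again inside $\catD$ by fullness) identifies the images of $F(\gamma,c_0)(x)$ and $F(c_1,\gamma)(x)$ in $\int^{d}i^*F$. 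Thus the assignment factors through a continuous map $j\colon\int^{c}F\to\int^{d}i^*F$, continuity being automatic from the universal property of the coequalizer.

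It then remains to check that $i$ and $j$ are mutually inverse, and both verifications are formal. For $i\circ j=\mathrm{id}$, on the $c$-summand the composite sends $x$ to the class of $F(i(d_c),f_c)(F(f_c,c)^{-1}(x))$ in $\int^{c}F$, and the coequalizer relation attached to $f_c$ and $y=F(f_c,c)^{-1}(x)$ identifies this with $F(f_c,c)(y)=x$; since $\bigvee_c F(c,c)\to\int^{c}F$ is epi, $i\circ j=\mathrm{id}$. For $j\circ i=\mathrm{id}$, on the $d$-summand one evaluates $j$ on the class of $x\in F(i(d),i(d))$ using the legitimate choice $f_{i(d)}=\mathrm{id}_{i(d)}$, which returns $x$ in the $d$-summand; again by epiness $j\circ i=\mathrm{id}$. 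Hence $i$ is a continuous bijection with continuous inverse, i.e.\ a homeomorphism.

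The main obstacle I expect is the bookkeeping in the two chases of the second paragraph: one must track carefully the two variances of the $\catT$-bifunctor $F$ (contravariant in the first slot, covariant in the second), invoke fullness of $i$ exactly where a relation of $\int^{d}i^*F$ rather than of $\int^{c}F$ is needed, and — most importantly — use the surjectivity half of ``$F(\epsilon,-)$ is a homeomorphism'' to realize an arbitrary element in the image of a morphism that lands in $\catD$, which is precisely what the span-shaped $F$-cofinality hypothesis is tailored to supply. Everything else — continuity via the quotient topology and the two ``composite is the identity'' checks — is routine once the inverse is known to be well defined.
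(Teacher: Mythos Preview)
Your proposal is correct and follows essentially the same approach as the paper: construct the inverse by sending $x\in F(c,c)$ to the class of $F(i(d_c),f_c)\circ F(f_c,c)^{-1}(x)$ for a chosen good $f_c\colon c\to i(d_c)$, prove independence of the choice via one application of $F$-cofinality, prove descent to the coend via another, and then verify the two composites are identities (using $f_{i(d)}=\mathrm{id}$ for one direction and the coend relation attached to $f_c$ for the other). The paper organizes the first two steps as separate lemmas and carries out the descent step by a direct diagram chase rather than your device of lifting $x$ through the surjection $F(\epsilon,c_0)$, but these are cosmetic differences.
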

The proof of the above result occupies the rest of this section.
We prove it by constructing an inverse to the map
\(i\). Since \(\int^{\catC} F\) can be described as the
coequalizer of
\begin{displaymath}
  \bigvee_{c_0,c_1} F(c_1,c_0) \wedge \catC(c_0,c_1) \rightrightarrows
  \bigvee_{c} F(c,c), 
\end{displaymath}
where the pointed sums are indexed over objects and pairs of objects
of \(\catC\) respectively. The above maps take an element
\((c_0,c_1,x,\gamma)\), consisting of objects \(c_0\) and \(c_1\) of
\(\catC\), a point \(x \in F(c_1,c_0)\) and a morphism \(\gamma \colon
c_0 \to c_1\) in \(\catC\), to \(F(\gamma,c_0)x\) and
\(F(c_1,\gamma)x\) respectively. 

In the rest of this section we suppose that \(i\) is \(F\)-cofinal. 
\begin{dfn}\label{defvarphic}
  For each object \(c\) of \(\catC\)
  we construct a continuous map 
  \(\varphi_c \colon F(c,c) \to \int^{\catD} i^*F\)\index{phicf@\(\varphi_c \colon F(c,c) \to \int^{\catD} i^*F\)} as follows:
  Use \(F\)-cofinality to 
  to choose a morphism \(f \colon c \to i(d)\) with \(d\) an object of
  \(\catD\) so that \(F(f,c')\) is a homeomorphism for every object
  \(c'\) of \(\catC\). Then \(\varphi_c(x) \in \int^{\catD} i^*F\) is
  defined to be
  the element represented by \((d,F(i(d),f)F(f,c)^{-1}
  x) \in F(i(d),i(d))\).
\end{dfn}
\begin{lem}\label{cofinalityindepofrep}
  The map \(\varphi_c\) of Definition~\ref{defvarphic} is independent
  of the chosen \(f\).
\end{lem}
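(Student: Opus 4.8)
We must show that the element of $\int^{\catD} i^*F$ produced in Definition~\ref{defvarphic} does not depend on the choice of the morphism $f \colon c \to i(d)$ satisfying that $F(f,c')$ is a homeomorphism for all objects $c'$ of $\catC$. So suppose we have two such morphisms $f_0 \colon c \to i(d_0)$ and $f_1 \colon c \to i(d_1)$, both with $F(f_0,c')$ and $F(f_1,c')$ homeomorphisms for every $c'$. The first step is to reduce to the case where one of them factors through the other. To do this, apply the $F$-cofinality condition to the pair of morphisms $\gamma = f_1 \colon c \to i(d_1)$ and $f_0 \colon c \to i(d_0)$: since $F(f_0,c')$ is a homeomorphism for every $c'$, $F$-cofinality supplies an object $d$ of $\catD$ and morphisms $\delta \colon i(d_0) \to i(d)$ and $g \colon i(d_1) \to i(d)$ in $\catC$ with $g f_1 = \delta f_0$ and with $F(g,c')$ a homeomorphism for every $c'$. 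Because $i$ is full and faithful, $\delta$ and $g$ come from (unique) morphisms of $\catD$, so we may regard them as morphisms in $\catD$.

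Now let $f = g f_1 = \delta f_0 \colon c \to i(d)$. Since $F(g,c')$ and $F(f_1,c')$ are homeomorphisms for every $c'$, so is $F(f,c')$; hence $f$ is another admissible choice. It therefore suffices to show that $f$ and $f_0$ produce the same element of $\int^{\catD} i^*F$, and likewise that $f$ and $f_1$ do; by symmetry we treat the pair $f$, $f_0$, where now $f = \delta f_0$ with $\delta$ a morphism of $\catD$. So the core computation is: the element represented by $(d_0, F(i(d_0),f_0) F(f_0,c)^{-1} x)$ equals the element represented by $(d, F(i(d),f) F(f,c)^{-1} x)$ in $\int^{\catD} i^*F$, the coequalizer described just before Definition~\ref{defvarphic} applied to the functor $i^*F$.

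The final step is to exhibit the coequalizer relation realizing this equality. The morphism $\delta \colon d_0 \to d$ of $\catD$ gives, in the presentation of $\int^{\catD} i^*F$ as a coequalizer of $\bigvee_{d',d''} F(i(d''),i(d')) \wedge \catD(d',d'') \rightrightarrows \bigvee_{d'} F(i(d'),i(d'))$, an identification of the classes of $F(\delta, i(d_0)) y$ and $F(i(d), \delta) y$ for any $y \in F(i(d), i(d_0))$. Take $y = F(i(d_0), f_0) F(f_0,c)^{-1} x \cdot$ — more precisely, take $y = F(i(d_0),f)F(f,c)^{-1}x$, viewed appropriately, and use functoriality of $F$ together with the identity $f = \delta f_0$ (so $F(f,c) = F(\delta,c)\circ$ — unwinding, $F(f_0,c)^{-1} = F(\delta,c) F(f,c)^{-1}$ after composing with $F(i(d_0),\delta)$ in the right variable). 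Chasing these equalities, the class of $(d_0, F(i(d_0),f_0)F(f_0,c)^{-1}x)$ is carried by the $\delta$-relation precisely to the class of $(d, F(i(d),f)F(f,c)^{-1}x)$. I expect the main obstacle to be bookkeeping: keeping straight which variable of the bifunctor $F$ each map acts on, and verifying that the two composites $F(i(d_0),f_0)F(f_0,c)^{-1}$ and $F(i(d),\delta)\circ F(i(d_0),f_0)F(f_0,c)^{-1}$ (or its inverse-twisted form) match under the single coequalizer relation coming from $\delta$ — this is a routine but fiddly diagram chase using only functoriality of $F$, fullness and faithfulness of $i$, and the relation $f = \delta f_0$.
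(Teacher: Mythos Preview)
Your strategy is correct and matches the paper's proof: both apply $F$-cofinality to the two given morphisms $f_0,f_1$ to produce a common target $i(d)$ via morphisms $\delta\colon i(d_0)\to i(d)$ and $g\colon i(d_1)\to i(d)$ in $\catD$ (using that $i$ is full and faithful), and then use the coequalizer relation in $\int^{\catD} i^*F$ coming from $\delta$ and $g$. Your two-step reduction (first to the case $f=\delta f_0$, then a single coequalizer relation) is a mild repackaging of the paper's direct computation, not a genuinely different argument.

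The only issue is in your final paragraph: the witnesses you propose for $y$ are not well-typed. The element $F(i(d_0),f)F(f,c)^{-1}x$ does not make sense, since $F(f,c)^{-1}x\in F(i(d),c)$ while $F(i(d_0),f)$ has domain $F(i(d_0),c)$. The correct witness for the $\delta$-relation is
\[
y \;=\; F(i(d),f_0)\,F(f,c)^{-1}x \;\in\; F(i(d),i(d_0)).
\]
Then, using bifunctoriality and the factorization $f=\delta f_0$ (so that $F(f,c)=F(f_0,c)F(\delta,c)$ and hence $F(\delta,c)F(f,c)^{-1}=F(f_0,c)^{-1}$), one checks
\[
F(\delta,i(d_0))\,y = F(i(d_0),f_0)F(f_0,c)^{-1}x,
\qquad
F(i(d),\delta)\,y = F(i(d),f)F(f,c)^{-1}x,
\]
which is exactly the identification you need. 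The case $f=g f_1$ is handled identically with $g$ in place of $\delta$. This is precisely the bookkeeping you anticipated; once $y$ is chosen correctly the chase is routine.
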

\begin{proof}
  Let \(f' \colon c \to i(d')\) be another morphism 
  with \(F(f',c')\) a homeomorphism for
  every object \(c'\) of \(\catC\). We need to explain why
  \begin{displaymath}
    (d,y) = (d,F(i(d),f)F(f,c)^{-1} x)
  \end{displaymath}
  and 
  \begin{displaymath}
    (d,y') = (d,F(i(d'),f')F(f',c)^{-1} x)
  \end{displaymath}
  represent the same point of \(\int^{\catD} i^*F\). Use
  \(F\)-cofinality to choose a commutative square of the form
  \begin{displaymath}
    \xymatrix{
      c \ar[r]^{f'} \ar[d]_{f} & i(d') \ar[d]^{g} \\
      i(d) \ar[r]^{g'} & i(d')'
    }
  \end{displaymath}
  where \(F(g,c')\) is a homeomorphism for all \(c'\) in \(\catC\).
  Since \(y = F(g',i(d)) F(g',i(d))^{-1}y\), the pairs \((d,y)\) and 
  \((d, F(i(d')',g')F(g',i(d))^{-1}y)\)  represent the same element in
  \(\int^{\catD} i^*F\). However,
  \begin{displaymath}
    F(i(d')',g')F(g',i(d))^{-1}y = F(i(d')',(g')f)F((g')f,c)^{-1} x,
  \end{displaymath}
  and since \(g'f = gf'\) this is equal to
  \begin{displaymath}
    F(i(d')',gf')F(gf',c)^{-1} x = F(i(d')',g)F(g,i(d'))^{-1} y'.
  \end{displaymath}
  Reasoning as above we see that this element
  represents the same element as \(y'\) in the coend \(\int^{\catD} i^*F\).
\end{proof}
 The maps \(\varphi_c\) assemble to a
continuous map \(\varphi \colon \bigvee_{c} F(c,c) \to \bigvee_{d}
F(i(d),i(d))\) with \(\varphi(c,x) = \varphi_c(x)\).
\begin{lem}\label{descendtocoend}
  Given a \(\catT\)-functor \(F \colon \catC^{\op} \wedge \catC \to
  \catT\) and an \(F\)-cofinal \(i \colon \catD \to \catC\), we have
  that, given \(x \in F(c_1,c_0)\) and \(\gamma \in \catC(c_0,c_1)\),
  the point
  \(\varphi(c_0,F(\gamma,c_0)x)\) is equal to the point
  \(\varphi(c_1,F(c_1,\gamma)x)\) 
  of \(\int^{\catD} i^*F\). 
\end{lem}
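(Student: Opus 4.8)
The plan is to produce, for the given $x$ and $\gamma$, a single morphism $\hat\delta$ of $\catD$ and a single point $z$ of $i^*F$ to which the defining coequalizer relation of $\int^{\catD} i^*F$ applies, and then to recognise the two sides of that relation as the two representatives in question. Throughout I would use that $\int^{\catD} i^*F$ is the coequalizer of $\bigvee_{d_0,d_1}(i^*F)(d_1,d_0)\wedge\catD(d_0,d_1)\rightrightarrows\bigvee_d(i^*F)(d,d)$, so that the class of $(d_0,F(i(\hat\delta),i(d_0))z)$ equals the class of $(d_1,F(i(d_1),i(\hat\delta))z)$ whenever $\hat\delta\colon d_0\to d_1$ is a morphism of $\catD$ and $z\in F(i(d_1),i(d_0))$.

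First I would fix the data. By the observation following the definition of $F$-cofinality there is a morphism $f_0\colon c_0\to i(d_0)$ with $d_0$ in $\catD$ such that $F(f_0,c)$ is a homeomorphism for every object $c$; by Lemma~\ref{cofinalityindepofrep} this $f_0$ may be used to evaluate $\varphi_{c_0}$, so $\varphi_{c_0}(F(\gamma,c_0)x)$ is represented by $(d_0,\,F(i(d_0),f_0)\,F(f_0,c_0)^{-1}\,F(\gamma,c_0)\,x)$. Next I would apply $F$-cofinality to $\gamma\colon c_0\to c_1$ together with $f_0\colon c_0\to i(d_0)$ (legitimate since $F(f_0,c)$ is a homeomorphism for all $c$): this yields an object $d_1$ of $\catD$, a morphism $\delta\colon i(d_0)\to i(d_1)$ and a morphism $f_1\colon c_1\to i(d_1)$ with $\delta f_0=f_1\gamma$ and $F(f_1,c)$ a homeomorphism for every $c$. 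Since $i$ is full and faithful, $\delta=i(\hat\delta)$ for a unique $\hat\delta\colon d_0\to d_1$ in $\catD$; and, again by Lemma~\ref{cofinalityindepofrep}, $f_1$ may be used to evaluate $\varphi_{c_1}$, so $\varphi_{c_1}(F(c_1,\gamma)x)$ is represented by $(d_1,\,F(i(d_1),f_1)\,F(f_1,c_1)^{-1}\,F(c_1,\gamma)\,x)$.

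The crux is then the choice
\[
  z \;=\; F(i(d_1),f_0)\,F(f_1,c_0)^{-1}\,x \;\in\; F(i(d_1),i(d_0)) = (i^*F)(d_1,d_0),
\]
which makes sense exactly because $F(f_1,c_0)$ is a homeomorphism. The remaining work is a diagram chase using only bifunctoriality of $F$, the relation $i(\hat\delta)f_0=f_1\gamma$, and the invertibility of $F(f_0,-)$ and $F(f_1,-)$ on all objects. On the $c_0$-side, factoring $F(i(\hat\delta),i(d_0))\circ F(i(d_1),f_0)=F(i(d_0),f_0)\circ F(i(\hat\delta),c_0)$ and then using contravariant functoriality of $F(-,c_0)$ with $i(\hat\delta)f_0=f_1\gamma$ gives $F(i(\hat\delta),i(d_0))\,z=F(i(d_0),f_0)\,F(f_0,c_0)^{-1}\,F(\gamma,c_0)\,x$, i.e.\ the chosen representative of $\varphi_{c_0}(F(\gamma,c_0)x)$. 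On the $c_1$-side, factoring $F(i(d_1),i(\hat\delta))\circ F(i(d_1),f_0)=F(i(d_1),f_1)\circ F(i(d_1),\gamma)$ and using the bifunctoriality identity $F(f_1,c_1)\circ F(i(d_1),\gamma)=F(c_1,\gamma)\circ F(f_1,c_0)$ gives $F(i(d_1),i(\hat\delta))\,z=F(i(d_1),f_1)\,F(f_1,c_1)^{-1}\,F(c_1,\gamma)\,x$, i.e.\ the chosen representative of $\varphi_{c_1}(F(c_1,\gamma)x)$.

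Finally I would invoke the coequalizer relation of $\int^{\catD} i^*F$ for $\hat\delta\colon d_0\to d_1$ and the point $z$: it identifies the classes of $(d_0,F(i(\hat\delta),i(d_0))z)$ and $(d_1,F(i(d_1),i(\hat\delta))z)$, which by the previous paragraph are $\varphi(c_0,F(\gamma,c_0)x)=\varphi_{c_0}(F(\gamma,c_0)x)$ and $\varphi(c_1,F(c_1,\gamma)x)=\varphi_{c_1}(F(c_1,\gamma)x)$, respectively; this is the assertion. I expect the main obstacle to be purely bookkeeping: correctly tracking the $\op$-variance in the first slot of $F$ while writing down $z$ and while splitting the bifunctor $F$ into its two one-variable actions. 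Once $z$ is written down each identity above is forced by functoriality, so there is no genuine difficulty beyond that.
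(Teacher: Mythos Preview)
Your proof is correct and follows essentially the same approach as the paper: choose $f_0$, then use $F$-cofinality on $\gamma$ and $f_0$ to obtain the square with $f_1$ and $\delta$, and then verify via bifunctoriality that the two representatives are linked by the coequalizer relation for $\hat\delta$. Your element $z=F(i(d_1),f_0)\,F(f_1,c_0)^{-1}x$ is exactly the entry in the middle column of the paper's three-column diagram, and your two computations reproduce the two halves of its algebraic manipulation; you are also slightly more explicit than the paper in invoking full faithfulness of $i$ to lift $\delta$ to $\hat\delta$ in $\catD$.
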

The above lemma says that
if \(c_0\) and \(c_1\) are objects of \(\catC\)
  and \(\gamma \in \catC(c_0,c_1)\),
  then the diagram
  \begin{displaymath}
    \xymatrix{
        F(c_1,c_0) \ar[r]^{F(c_1,\gamma)} \ar[d]_{F(f,c_0)} &
        F(c_1,c_1) \ar[d]^{\varphi_{c_1}}\\
  F(c_0,c_0) \ar[r]^{\varphi_{c_0}} & \int^{\catD} i^* F
    }
  \end{displaymath}
  commutes.
\begin{proof}
  First we use \(F\)-cofinality to obtain a morphism \(f_0 \colon
  c_0 \to i(d_0)\) with \(F(f_0,c)\) a
  homeomorphism for all objects \(c\) of \(\catC\). Next we use the
  \(F\)-cofinality condition to obtain a commutative square
  \begin{displaymath}
    \xymatrix{
      c_0 \ar[r]^{\gamma} \ar[d]_{f_0} & c_1 \ar[d]^{f_1} \\
      i(d_0) \ar[r]^{i \delta} & i(d_1)
    }
  \end{displaymath}
  with \(F(f_1,c)\) a homeomorphism for all objects \(c\) of
  \(\catC\). By Lemma~\ref{cofinalityindepofrep}
  \(\varphi(c_0,F(\gamma,c_0)x)\) is represented by
  \((d_0,F(i(d_0),f_0)F(f_0,c_0)^{-1}F(\gamma,c_0)x)\) and
  \(\varphi(c_1,F(c_1,\gamma)x)\) is represented by
  \((d_1,F(i(d_1),f_1)F(f_1,c_1)^{-1}F(c_1,\gamma)x)\).
  However the diagram
  \begin{displaymath}
    \xymatrix{
      F(c_1,c_1) \ar[d]^{F(i(d_1),f_1)F(f_1,c_1)^{-1}} && 
      \ar[d]^{F(i(d_1),f_0)F(f_1,c_0)^{-1}} \ar[ll]_{F(c_1,\gamma)} F(c_1,c_0)
      \ar[rr]^{F(\gamma,c_0)} 
      && \ar[d]^{F(i(d_0),f_0)F(f_0,c_0)^{-1}} F(c_0,c_0) \\
      F(i(d_1),i(d_1)) &&
      \ar[ll]_{F(i(d_1),i(\delta))} F(i(d_1),i(d_0))
      \ar[rr]^{F(i(\delta),i(d_0))} && F(i(d_0),i(d_0))
    }
  \end{displaymath}
  commutes.  That is,
  \begin{align*}
    F(i(d_0) ,f_0) F(f_0,c_0)^{-1}F(\gamma,c_0)x & = 
    F(i(d_0) ,f_0) F(f_0,c_0)^{-1}F(\gamma,c_0) F(f_1,c_0)F(f_1,c_0)^{-1}x \\
    & = 
    F(i(d_0),f_0) F(i\delta, c_0) F(f_1, c_0)^{-1} x\\
    & = 
    F(i\delta, i(d_0)) F(i(d_1),f_0)  F(f_1, c_0)^{-1} x,
  \end{align*}
  and this element represents the same element as
  \begin{align*}
    F(i(d_1),i\delta) F(i(d_1),f_0)  F(f_1, c_0)^{-1} x
    &=
    F(i(d_1),f_1)F(i(d_1),\gamma)F(f_1,c_0)^{-1} x \\
    &=F(f_1, i(d_1))^{-1} F(c_1 ,f_1)F(c_1 , \gamma) x \\
    &= F(i(d_1) ,f_1) F(f_1, c_1)^{-1} F(c_1 ,\gamma)x,
  \end{align*}
  that is, it represents \(\varphi(c_1,F(c_1,\gamma)x)\) in
  \(\int^{\catD} i^*F\).
\end{proof}

\begin{proof}[Proof of Proposition~\ref{Fcofinaliso}]
  By Lemma~\ref{descendtocoend} the continuous map \(\varphi \colon
  \bigvee_{c} F(c,c) \to \bigvee_{d} F(i(d),i(d))\) induces a unique map
  \(\overline \varphi \colon \int^{\catC} F \to \int^{\catD} i^*
  F\). Lemma~\ref{cofinalityindepofrep} implies that the composite \(\overline \varphi i\)
  is the identity on \(\int^{\catD} i^*F\) since if \(c = i(d)\), then
  we can choose \(f\) to be the identity on \(c\). Conversely, if
  \((c,x)\) represents a point in \(\int^{\catC} F\), then \(i
  \varphi(c,x)\) is represented by \((d,F(i(d),f)F(f,c)^{-1} x)\) and
  this element represents the same element as
  \((c,x)\) in \(\int^{\catC} F\). Thus \(i \overline \varphi\) is
  the identity on 
  \(\int^{C} F\).
\end{proof}


\section{Model Categories}\label{model_categories}
We assume that the reader is familiar with the basic theory of model categories, an introductory account can for example be found in \cite{DS}. A more exhaustive source is \cite{H} or \cite{Hir}.

Almost all of the model structures we will discuss are \emph{cofibrantly generated}, we recall the definition and state the main theorem we use to recognize such model structures from \cite[2.1.3]{H}: 
\begin{dfn}
Let $\catC$ be a model category. It is called \emph{cofibrantly generated}\index{cofibrantly generated model category}\index{model structure!cofibrantly generated} if there are sets $I$ and $J$ of maps, such that:
\begin{indentpar}{1cm}
\begin{enumerate}
\item The domains of the maps of $I$ are small with respect to $I\cell$,
\item The domains of the maps of $J$ are small with respect to $J\cell$,
\item The class of fibrations is $J\inj$,
\item The class of acyclic fibrations is $I\inj$.
\end{enumerate}
\end{indentpar}
\end{dfn}
\begin{thm}[Recognition Theorem {\cite[2.1.19]{H}}] \label{h2119}\index{recognition theorem}
Suppose $\catC$ is a category with all small colimits and limits. Suppose $W$ is a subcategory of $\catC$ and $I$ and $J$ are sets of maps of $\catC$. Then there is a cofibrantly generated model structure on $\catC$ with $I$ as the set of generating cofibrations, $J$ as the set of generating acyclic cofibrations, and $W$ as the subcategory of weak equivalences if and only if the following conditions are satisfied:
\begin{indentpar}{1cm}
\begin{enumerate}
\item The subcategory $W$ has the two out of three property and is closed under retracts.
\item The domains of $I$ are small relative to $I\cell$.
\item The domains of $J$ are small relative to $J\cell$.
\item $J\cell \subset W \cap I\cof$.
\item $I\inj \subset W \cap J\inj$.
\item Either $W\cap I\cof \subset J\cof$ or $W \cap J\inj \subset I\inj$.
\end{enumerate}
\end{indentpar}
\end{thm}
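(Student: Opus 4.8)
This theorem is a recollection: it is \cite[Theorem 2.1.19]{H}, and in the text we simply invoke it. For completeness, here is the shape of the argument one would give. The \emph{necessity} of the six conditions is routine: (1) is the two-out-of-three and retract axioms; (2) and (3) are part of the definition of a cofibrantly generated model category; for (4) one notes that in a cofibrantly generated model structure the cofibrations are exactly $I\cof$ and the acyclic cofibrations are closed under pushout and transfinite composition, so $J\cell$ (built from acyclic cofibrations) lies in $W\cap I\cof$; (5) holds because $I\inj$ is the class of acyclic fibrations and $J\inj$ the class of fibrations; and (6) holds since an acyclic cofibration lifts against every fibration, i.e.\ $W\cap I\cof\subseteq J\cof$.

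For \emph{sufficiency} the plan is to define the cofibrations to be $I\cof$, the fibrations to be $J\inj$, and the weak equivalences to be $W$, and then to check the model category axioms. The single technical input is the small object argument (\cite[Theorem 2.1.14]{H}), whose smallness hypotheses are precisely (2) and (3); it supplies functorial factorizations of any map as a map in $I\cell$ followed by a map in $I\inj$, and as a map in $J\cell$ followed by a map in $J\inj$. The first step I would carry out is to establish the two identifications
\[
  I\inj \;=\; W\cap J\inj, \qquad W\cap I\cof\;=\;J\cof .
\]
The inclusion $I\inj\subseteq W\cap J\inj$ is condition (5). For the reverse inclusion one takes $p\in W\cap J\inj$, factors $p=q\circ i$ with $i$ a relative $I\cell$ map (hence in $I\cof$) and $q\in I\inj$; by (5) $q\in W$, so $i\in W$ by two-out-of-three, hence $i\in W\cap I\cof$; then one alternative of (6) (or the other alternative together with the already-established inclusion and two-out-of-three) shows $i$ lifts against $p\in J\inj$, and the retract argument exhibits $p$ as a retract of $q\in I\inj$, so $p\in I\inj$. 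The identification $W\cap I\cof=J\cof$ is obtained by the symmetric argument, factoring an acyclic cofibration through a relative $J\cell$ map.

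With these in hand the axioms follow. MC1 is the hypothesis that $\catC$ is bicomplete; MC2 is the two-out-of-three half of (1); MC3 holds because $W$ is closed under retracts by (1), while $I\cof$ and $J\inj$, being defined by lifting properties, are automatically closed under retracts; MC5 is the small object argument together with (4), which makes the $J\cell$ factor an acyclic cofibration, and the first identification, which makes the $I\inj$ factor an acyclic fibration; and MC4 is then immediate from the definitions of the four distinguished classes once the two identifications are known. Finally one observes that the resulting model structure is cofibrantly generated with $I$ and $J$ as generating cofibrations and generating acyclic cofibrations, essentially by construction. The main obstacle is exactly the bookkeeping of this first step — the careful retract argument identifying $I\inj$ with $W\cap J\inj$ and $J\cof$ with $W\cap I\cof$, and in particular keeping track of which of the two alternatives in condition (6) is invoked and deriving the other from it. Everything else is formal, and since only the statement is needed in the sequel it suffices to refer to \cite[2.1.19]{H}.
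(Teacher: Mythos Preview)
Your proposal is correct and matches the paper's approach exactly: the paper does not prove this theorem but simply records it as a citation of \cite[2.1.19]{H}, and you have correctly identified this. Your additional sketch of Hovey's argument is accurate and a helpful supplement, but strictly more than what the paper itself provides.
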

The following lemmas are applicable in any model category. These are well known and often used without further mention in the literature, but since they lie at the heart of the homotopy theory we need, we recall the exact statements.
Recall the following definition from \cite[II 8.5]{GJ}):
\begin{dfn}\label{catcofobj}
A \emph{category of cofibrant objects}\index{category!of cofibrant objects} is a category $\catD$ with all finite coproducts, with two classes of maps, called weak equivalences and cofibrations, such that the following axioms are satisfied:
\begin{enumerate}
\item The weak equivalences satisfy the 2 out of 3 property, that is,
  if \(f\) and \(g\) are morphisms of \(\catD\) so that \(gf\) is
  defined and two of \(f\), \(g\) and \(gf\) are weak equivalences,
  then so is the third.
\item The composite of two cofibrations is a cofibration. Any isomorphism is a cofibration.
\item Pushouts along cofibrations exist. Cobase changes of cofibrations (that are weak equivalences) are cofibrations (and weak equivalences).
\item All maps from the initial object are cofibrations.
\item Any object $X$ has a cylinder object $\Cyl(X)$,\index{CylX@$\Cyl(X)$} \ie a factorization of the fold map $\nabla: X \coprod X \rightarrow X$ as \[X\coprod X \stackrel{i}{\rightarrow} \Cyl(X) \stackrel{\sigma}{\rightarrow} X,\] with $i$ a cofibration and $\sigma$ a weak equivalence.
\end{enumerate}
\end{dfn}
In any model category, the cofibrant objects form a \emph{category of cofibrant objects}. This lets us apply the following two important lemmas:
\begin{lemma}[{Generalized Cobase Change Lemma (cf.~\cite[II.8.5]{GJ})}] \label{gen.cobch}\index{cobase change lemma!generalized}
Let $\catC$ be a category of cofibrant objects. Suppose
\labeleq{cobasechange}{\xymatrix{{A}\ar_-{i}[d]\ar^-{f}[r]&{X}\ar[d]\\{Y}\ar_-{g}[r]&{P}\pushout}}
is a pushout diagram in $\catC$, such that $i$ is a cofibration and $f$ is a weak equivalence. Then $g$ is also a weak equivalence.
\end{lemma}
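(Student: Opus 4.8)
The plan is to use a mapping-cylinder factorization to replace the weak equivalence $f$ by an acyclic cofibration, after which the statement follows from the axioms for a category of cofibrant objects (Definition~\ref{catcofobj}) together with the pushout-pasting Lemma~\ref{composepushout} and one gluing comparison. Throughout I will freely use that a coproduct of cofibrations is a cofibration (each factor map is a cobase change of a coproduct inclusion, which is itself a cobase change of a map out of the initial object).

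First I would record the mapping-cylinder factorization available in any category of cofibrant objects. Fix a cylinder object $A\coprod A\xrightarrow{(i_0,i_1)}\Cyl(A)\xrightarrow{\sigma}A$; each of $i_0,i_1\colon A\to\Cyl(A)$ is an acyclic cofibration, since it is a cofibration (being the composite of the coproduct inclusion $A\to A\coprod A$ with the cofibration $(i_0,i_1)$) and a weak equivalence (by two-out-of-three applied to $\sigma\circ i_k=\id_A$). Form $M_f$ as the pushout of $(i_0,i_1)$ along $(\id_A,f)\colon A\coprod A\to A\coprod X$. Then the structure map $A\coprod X\to M_f$ is a cofibration, so $\bar f\colon A\hookrightarrow A\coprod X\to M_f$ (inclusion of the first summand) is a cofibration; and the map $q\colon M_f\to X$ given by $(f,\id_X)$ on $A\coprod X$ and $f\sigma$ on $\Cyl(A)$ is well defined, satisfies $q\bar f=f$, and has the structure map $X\hookrightarrow A\coprod X\to M_f$ as a section. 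A direct computation of the pushout shows that this section is a cobase change of the acyclic cofibration $i_1$, hence is itself an acyclic cofibration; therefore $q$ is a weak equivalence, and since $f=q\bar f$ is a weak equivalence, two-out-of-three forces $\bar f$ to be an \emph{acyclic} cofibration.

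Next I would run the comparison. Form the pushout $Q:=M_f\cup_A Y$ of $\bar f$ along $i$. By the cobase-change axiom the structure map $\beta\colon Y\to Q$ is an acyclic cofibration, in particular a weak equivalence, while the structure map $M_f\to Q$ is a cofibration (cobase change of $i$). Juxtaposing the defining square of $Q$ with the original square (legitimate because $q\bar f=f$) and applying Lemma~\ref{composepushout}(ii), the original pushout $P$ is identified with the pushout of $q\colon M_f\to X$ along the cofibration $M_f\to Q$, and under this identification $g=\tilde q\circ\beta$, where $\tilde q\colon Q\to P$ is the induced map. As $\beta$ is a weak equivalence, it remains only to prove that $\tilde q$ is a weak equivalence.

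The remaining step is the gluing statement: given a weak equivalence $q\colon M_f\to X$ admitting a section that is an acyclic cofibration, and a cofibration $M_f\to Q$, the cobase change $\tilde q$ of $q$ along $M_f\to Q$ is a weak equivalence. This is the one non-formal point: taken at face value it is again ``a pushout of a weak equivalence along a cofibration'', so the argument must genuinely use the section. The way I would break the apparent circularity is to push the section $s\colon X\to M_f$ out along the cofibration $X\to P$ (the cobase change of $M_f\to Q$); the resulting map out of $P$ is then a cobase change of the acyclic cofibration $s$, hence a weak equivalence, and a pushout-pasting computation identifies its target so that $\tilde q$ becomes comparable to it, reducing the claim — after iterating this device once more — to the cobase-change axiom for acyclic cofibrations. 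This is exactly the argument recalled in \cite[II.8.5]{GJ}. I expect making this last gluing/comparison bookkeeping airtight to be the main obstacle; everything preceding it is formal manipulation with the axioms of Definition~\ref{catcofobj} and Lemma~\ref{composepushout}.
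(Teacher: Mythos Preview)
The paper does not supply its own proof of this lemma; it merely records the statement and cites \cite[II.8.5]{GJ}. So there is no in-paper argument to compare against beyond the Goerss--Jardine proof itself.

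Your mapping-cylinder reduction is the standard opening move and is carried out correctly: the construction of $M_f$, the verification that $\bar f$ is an acyclic cofibration and that $s$ is an acyclic-cofibration section of $q$, and the pasting argument identifying $g=\tilde q\circ\beta$ are all fine. This is exactly how \cite{GJ} begins.

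The gap is in your final paragraph. Pushing $s$ out along the cofibration $X\to P$ does produce an acyclic cofibration $P\to R$, and pasting identifies the composite $Q\to P\to R$ with the cobase change of $sq\colon M_f\to M_f$ along $j'\colon M_f\to Q$. But $sq$ is only an \emph{idempotent} weak equivalence: it satisfies $(sq)(sq)=sq$ and has no acyclic-cofibration section (a section $t$ would need $sqt=\id_{M_f}$, which fails). ``Iterating the device once more'' therefore reproduces the cobase change of $sq$ along a further cofibration --- the same shape of problem --- rather than reaching the axiom for acyclic cofibrations. So the loop does not close the way you suggest. Your instinct that the section $s$ is what breaks the circularity is right, but the actual completion in \cite{GJ} uses the cylinder more directly (roughly, to exhibit a left homotopy rather than to iterate cobase changes); your outlined bookkeeping does not get there. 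Since the paper itself simply defers to \cite{GJ}, your write-up is already more detailed than the paper's treatment --- but the last step still genuinely needs the Goerss--Jardine argument, not the iteration you propose.
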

\begin{lemma}[{Generalized Cube Lemma (cf.~\cite[II.8.8]{GJ})}] \label{gen.cube}\index{cube lemma!generalized}
Let $\catC$ be a category of cofibrant objects. Suppose given a commutative cube 
\labeleq{cube}{
\xymatrix{
&{A_0}\ar[rr]\ar'[d][dd]_-{f_A}\ar_-{i_0}[dl]&&{X_0}\ar_>>>>>>>>{f_X}[dd]\ar[dl]\\
{Y_0}\ar[rr]\ar_>>>>>>>>{f_Y}[dd]&{}\ar@{}[r]&{P_0}\ar_>>>>>>>>{f_P}[dd]\\
&{A_1}\ar@{}[r]\ar'[r][rr]\ar_-{i_1}[dl]&&{X_1}\ar[dl]\\
{Y_1}\ar[rr]&&{P_1}}
}in $\catC$. Suppose further that the top and bottom faces are pushouts, that $i_0$ and $i_1$ are cofibrations and that the vertical maps $f_A$, $f_X$ and $f_Y$ are weak equivalences. Then the induced map of pushouts $f_P$ is also a weak equivalence.
\end{lemma}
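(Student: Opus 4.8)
The plan is to deduce the cube lemma from the cobase change lemma \ref{gen.cobch} and two-out-of-three, after a reduction that turns one of the maps $A_i\to X_i$ into a cofibration. The one extra tool I would establish first is the mapping cylinder: in a category of cofibrant objects every map $f\colon A\to X$ factors as $A\xrightarrow{c}M_f\xrightarrow{r}X$ with $c$ a cofibration and $r$ a weak equivalence. Build $M_f$ as the pushout of $X\xleftarrow{f}A\xrightarrow{j_1}\Cyl(A)$ for a cylinder object $A\coprod A\xrightarrow{(j_0,j_1)}\Cyl(A)\xrightarrow{\sigma}A$; each $j_k\colon A\to\Cyl(A)$ is an acyclic cofibration (it is the composite of the cofibration $A\to A\coprod A$ with $(j_0,j_1)$, and it is split by the weak equivalence $\sigma$), so $X\to M_f$ is an acyclic cofibration, $r$ is a weak equivalence by two-out-of-three, and $c$ (the composite $A\xrightarrow{j_0}\Cyl(A)\to M_f$) is a cofibration because $A\coprod X\to M_f$ is the cobase change of $(j_0,j_1)$ along $\id\coprod f$ and $A\to A\coprod X$ is a cofibration.

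Next I would reduce to the case where $A_0\to X_0$ is a cofibration. Factor it as $A_0\xrightarrow{c_0}M_0\xrightarrow{r_0}X_0$ and form the cube obtained from the given one by replacing the corner $X_0$ with $M_0$, the edge $A_0\to X_0$ with $c_0$, and the vertical $X_0\to X_1$ with $M_0\xrightarrow{r_0}X_0\xrightarrow{f_X}X_1$. All the hypotheses persist: $f_X r_0$ is a weak equivalence, $c_0$ is a cofibration, and the vertical faces commute since $f_X r_0 c_0=f_X\circ(A_0\to X_0)$. Its top pushout is $Q_0:=Y_0\cup_{A_0}M_0$, and by pasting pushouts $P_0=Q_0\cup_{M_0}X_0$, which exhibits $Q_0\to P_0$ as the pushout of the weak equivalence $r_0$ along $M_0\to Q_0$ — the cobase change of the cofibration $i_0$ — hence a weak equivalence by \ref{gen.cobch}. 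Since the comparison map of the new cube equals $f_P\circ(Q_0\to P_0)$, two-out-of-three shows it suffices to prove the lemma for the new cube; so I may assume $i:=(A_0\to X_0)$, $i_0$ and $i_1$ are all cofibrations.

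In that situation I would factor $f_P$ as a composite of two weak equivalences. First, $g_1\colon P_0=X_0\cup_{A_0}Y_0\to X_0\cup_{A_0}Y_1=:P'$ induced by $f_Y$: by pasting, $P'=P_0\cup_{Y_0}Y_1$, and $Y_0\to P_0$ is the cobase change of the cofibration $i$, so $g_1$ is the pushout of the weak equivalence $f_Y$ along a cofibration, hence a weak equivalence by \ref{gen.cobch}. Second, $g_2\colon P'\to P_1=X_1\cup_{A_1}Y_1$ induced by $f_X$ and $f_A$: set $\bar X_1:=X_0\cup_{A_0}A_1$, a pushout along the cofibration $i$, so $X_0\to\bar X_1$ is the cobase change of the weak equivalence $f_A$, hence a weak equivalence, and since it sits in a commuting triangle with $f_X$, the induced map $\bar X_1\to X_1$ is a weak equivalence by two-out-of-three. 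Pasting pushouts identifies $P'=X_0\cup_{A_0}Y_1$ with $\bar X_1\cup_{A_1}Y_1$, so $\bar X_1\to P'$ is the cobase change of the cofibration $i_1$, and $P_1=P'\cup_{\bar X_1}X_1$ exhibits $g_2$ as the pushout of the weak equivalence $\bar X_1\to X_1$ along a cofibration; hence $g_2$ is a weak equivalence by \ref{gen.cobch}. Finally $g_2\circ g_1$ and $f_P$ are both the map of pushouts induced by $(f_A,f_X,f_Y)$, so they agree and $f_P$ is a weak equivalence.

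The hard part is not any single step but keeping the bookkeeping honest: in a category of cofibrant objects pushouts only exist along cofibrations, so at every stage I must check that the pushout being formed has a cofibration on (at least) the relevant leg — both to know it exists and to apply \ref{gen.cobch} — and I must repeatedly invoke the pasting law for pushouts to rewrite each comparison map as a cobase change of a weak equivalence along a cofibration. I expect the subtlest verification to be the reduction step, confirming that the cube with $X_0$ replaced by $M_0$ genuinely satisfies all hypotheses and that its conclusion yields the original one; the iterated-pushout identifications in the third paragraph are formal and I would not spell them out in full.
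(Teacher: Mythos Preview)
Your argument is correct: the mapping-cylinder factorization, the reduction to the case where $A_0\to X_0$ is a cofibration, and the two-step factorization of $f_P$ through $P'=X_0\cup_{A_0}Y_1$ all go through as you describe, with each application of \ref{gen.cobch} justified by a genuine cofibration leg. The only places one might want a word more are (a) that $P'$ exists because $A_0\to X_0$ is now a cofibration (so the pushout $P_0\cup_{Y_0}Y_1$ makes sense even though $f_Y$ need not be a cofibration), and (b) that the retraction $r\colon M_f\to X$ is obtained from $\sigma\colon\Cyl(A)\to A$ and $\id_X$; you gesture at both, and they are routine.

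As to comparison: the paper does not supply a proof of this lemma at all --- it is recorded as a recollection with a reference to \cite[II.8.8]{GJ}. Your proof is essentially the standard Goerss--Jardine argument: factor via the mapping cylinder to gain an extra cofibration, then decompose the comparison of pushouts into cobase changes of weak equivalences along cofibrations. So you have filled in what the paper deliberately left to the literature, and there is nothing to contrast.
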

Note that not all examples of categories of cofibrant objects come
from model structures, in particular we will want to apply Lemma~\ref{gen.cube} to cases where the cofibrations and weak equivalences
come from different model structures on the same category in
\ref{rem_cofcat}. In the case of topological model categories
(cf.~Definition~\ref{monoidalmodelcat}), May and Sigurdsson propose a more
general treatment in \cite[5.4]{MS}, using so called well-grounded
categories of weak equivalences. 
We will handpick some of the statements of \cite{MS} in our Subsection~\ref{topmodelcat} on topological model categories.\\
\begin{dfn}\label{monoidalmodelcat}
Let $(\catV,\otimes,\I)$ closed symmetric monoidal category, that is also a model category. Let $\catC$ be a category enriched, tensored and cotensored over $\catV$. Further let the underlying category $\catC_0$ of $\catC$ (cf.~\ref{underlyingcat}) have a model structure. Then this model structure on $\catC$ is called \emph{enriched over \catV}, if the following two axioms hold:
\begin{indentpar}{1cm}
\begin{enumerate}
\item \emph{Pushout product axiom}\index{pushout product!axiom} Let, as in Definition~\ref{pushoutproduct}, $i$ be a cofibration in $\catV$ and let $j$ be a cofibration in $\catC_0$. Then the map $i\square j$ in $\catC_0$ is also a cofibration. If in addition either one of $i$ or $j$ is acyclic, so is $i \square j$.
\item \emph{Unit axiom}\index{unit axiom}  Let $q: \I^c \stackrel{\sim}{\rightarrow} \I$ be a cofibrant replacement of the unit object of $\catV$. Then for every cofibrant object $A$ in $\catC$, the morphism \[q\otimes \id: \I^c \otimes A \rightarrow \I \otimes A \cong A\] is a weak equivalence.
\end{enumerate}
\end{indentpar}
If $\catC$ is equal to $\catV$, \ie we consider $\catV$ as enriched over itself (cf.~\ref{selfenriched}), a model structure satisfying the above axioms is called \emph{monoidal}.\index{monoidal!model structure}\index{model structure!monoidal} 
\end{dfn}
Note that the unit axiom is redundant, if the unit object of $\catV$ is itself cofibrant, since it is then implied by the pushout product axiom. For monoidal model categories, there is an additional important axiom:
\begin{dfn}
A monoidal model category $(\catC,\otimes)$ satisfies the \emph{monoid axiom},\index{monoid!axiom} if every map in \[\{\text{acyclic cofibrations}\}\otimes \catC)\rm{-cell}\] is a weak equivalence.
\end{dfn}
The pushout product axiom has several adjoint formulations:
\begin{lemma}\label{adjointpushoutprod}
In the situation of Definition~\ref{monoidalmodelcat}, the pushout product axiom is equivalent to both of the following formulations:
\begin{itemize}
\item Let $p$ be a fibration in $\catC_0$ and let $j$ be a cofibration in $\catV$, then $\catC(j^*,p_*)$ is a fibration in $\catC_0$, which is acyclic if either of $p$ or $j$ was.
\item Let $p$ be a fibration in $\catC_0$ and let $i$ be a cofibration in $\catC_0$, then $\catC(i^*,p_*)$ is a fibration in $\catC$, which is acyclic if either of $p$ or $i$ was.
\end{itemize}
\end{lemma}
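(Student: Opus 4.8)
The plan is to reduce all three assertions to a single family of lifting problems and to invoke the adjunction isomorphisms of Lemma~\ref{pushout.product.adjoints} together with the lifting criterion of Lemma~\ref{liftingpprodadj}. I would first recall the standard fact, valid in any model category and hence in both $\catV$ and $\catC_0$, that a map is a cofibration (resp.\ an acyclic cofibration) exactly when it has the left lifting property against every acyclic fibration (resp.\ every fibration), and dually that a map is a fibration (resp.\ an acyclic fibration) exactly when it has the right lifting property against every acyclic cofibration (resp.\ every cofibration). Throughout I would fix notation $a\colon A\to B$ for a cofibration of $\catV$, $f$ for a cofibration of $\catC_0$, and $p$ for a fibration of $\catC_0$, so that $a\square f$ is a map of $\catC_0$, the construction $\catC(a^*,p_*)$ of Definition~\ref{cotensorbox} is a map of $\catC_0$, and the construction $\catC(f^*,p_*)$ of Definition~\ref{defliftchar} is a map of $\catV$.

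The key preliminary step is a ``triple adjunction'' of lifting problems: for any map $a$ of $\catV$ and any maps $f,p$ of $\catC_0$, the three lifting problems --- the problem $(a\square f,\,p)$ posed in $\catC_0$, the problem $(a,\,\catC(f^*,p_*))$ posed in $\catV$, and the problem $(f,\,\catC(a^*,p_*))$ posed in $\catC_0$ --- are simultaneously solvable or simultaneously unsolvable. This follows immediately by combining Lemma~\ref{liftingpprodadj} (applied in $\catC_0$, and also in $\catV$ viewed as enriched over itself), which rephrases each of these lifting properties as surjectivity of the set map obtained by applying $\catV(\I,-)$ to, respectively, $\catC((a\square f)^*,p_*)$, $\catV(a^*,\catC(f^*,p_*)_*)$, and $\catC(f^*,\catC(a^*,p_*))$, with the natural isomorphisms of Lemma~\ref{pushout.product.adjoints}, which identify these three objects of $\catV$. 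Hence the three surjectivity conditions coincide and the triple equivalence holds.

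With this in hand I would unwind each of the three formulations of the pushout product axiom into a family of lifting conditions and observe that they all become identical. Using the characterizations recalled above, the pushout product axiom of Definition~\ref{monoidalmodelcat} is equivalent to: for every cofibration $a$ of $\catV$, every cofibration $f$ of $\catC_0$, and every fibration $p$ of $\catC_0$ of which at least one is acyclic, the pair $(a\square f,\,p)$ admits a lift (the case ``$p$ acyclic'' encoding that $a\square f$ is a cofibration, the case ``$a$ or $f$ acyclic'' encoding that it is an acyclic cofibration). In exactly the same manner, the first bulleted statement, that $\catC(a^*,p_*)$ is a fibration of $\catC_0$ which is acyclic when $a$ or $p$ is, is equivalent to: for every such triple $(a,f,p)$ with at least one acyclic, the pair $(f,\,\catC(a^*,p_*))$ admits a lift (its being merely a fibration being tested against all acyclic cofibrations $f$, its being an acyclic fibration against all cofibrations $f$). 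Symmetrically, the second bulleted statement, that $\catC(f^*,p_*)$ is a fibration of $\catV$ which is acyclic when $f$ or $p$ is, is equivalent to: for every such triple with at least one acyclic, the pair $(a,\,\catC(f^*,p_*))$ admits a lift. By the triple equivalence of the previous step these three conditions are literally the same, so the pushout product axiom holds if and only if the first bullet holds if and only if the second bullet holds.

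The only step requiring genuine care --- and hence the main (mild) obstacle --- is the bookkeeping: one must check that ``$a\square f$ is a cofibration which becomes acyclic when $a$ or $f$ is'', ``$\catC(a^*,p_*)$ is a fibration of $\catC_0$ which becomes acyclic when $a$ or $p$ is'', and ``$\catC(f^*,p_*)$ is a fibration of $\catV$ which becomes acyclic when $f$ or $p$ is'' all unpack to the single clause ``at least one of $a,f,p$ is acyclic'', so that the three families of lifting problems are indexed by exactly the same set of triples, and one must keep straight, according to the conventions of Definitions~\ref{cotensorbox} and \ref{defliftchar}, which of the ambient categories $\catV$, $\catC_0$ each construction lives in (this is what makes the first bullet an assertion in $\catC_0$ and the second an assertion in $\catV$). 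Once this matching is carried out consistently the conclusion is immediate from the two lemmas cited, and there is nothing further to compute.
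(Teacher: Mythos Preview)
Your proof is correct and follows precisely the approach the paper indicates: the paper's own proof is the single sentence ``This is immediate from lemmas \ref{liftingpprodadj} and \ref{pushout.product.adjoints},'' and you have simply (and carefully) unpacked what ``immediate'' means here. Your bookkeeping with the ``at least one of $a,f,p$ is acyclic'' clause and your attention to which ambient category each construction lives in are exactly the details the paper leaves to the reader.
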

\begin{proof}
This is immediate from Lemmas~\ref{liftingpprodadj} and~\ref{pushout.product.adjoints}.
\end{proof}
\begin{ex}
Taking $\catV$ to be the categories of simplicial sets, spaces, symmetric spectra or $\G$-spaces, yields, under the choice of the usual model structures, the well known notions of simplicial, topological, spectral and $\G$-topological model categories. 
\end{ex}
In particular the example of topological and $G$-topological model categories will be very important for us. We discuss some of their distinct features in the following subsection.

\section{Topological Model Categories}\label{topmodelcat}
In this subsection we have to discuss two different categories of topological spaces. We distinguish between the category $\catU$ of compactly generated weak Hausdorff spaces, and the category $\catT$ of such spaces with a distinguished base point. Alternatively one can think of $\catT$ as the under-category $* \rightarrow \catU$ for $*$ any one-point object in $\catU$.\\
Let $I$ denote the unit interval in $\catU$, as usual it comes equipped with the two inclusions of the endpoints. For any category $\catC$ enriched and tensored over $\catU$, we can then form \emph{homotopies in $\catC_0$}\index{homotopy!in $\catC_0$} in terms of the tensor with $I$:
\[\xymatrix{{{\{0\}\otimes X}}\ar[d]\ar^-{\cong}[r]&{X}\ar^-{h_0}[dr]\\{I \otimes X}\ar^-{h}[rr]&{}&{Y}\\{{\{1\}\otimes X}}\ar_-{\cong}[r]\ar[u]&{X}\ar_-{h_1}[ur]}\] Analogously for $\catC$ enriched over $\catT$, we can add a disjoint base points and use the tensor with $I_+$ to define \emph{(based) homotopies}.\index{homotopy!in $\catC_0$, based}\\  
There are two classical model structures on $\catU$ that are important
for us, the Str\o m model structure and the Quillen- or $q$-model structure. Especially the cofibrations of the former have very favorable properties, the defining one being the homotopy extension property:
\begin{dfn}
\label{dfn:h-cof} Let $\catC$ be enriched and tensored over $\catU$. 
A map $f\colon X\to Y$  in $\catC_0$ is a {\em Hurewicz fibration}\index{Hurewicz fibration@\hur! fibration} if it has the right homotopy lifting property with respect to the inclusions $i_0\colon A\to A\times I$.

A map $i:A\rightarrow X$ in $\catC_0$ is a \emph{\hur cofibration
}\index{Hurewicz cofibration@\hur! cofibration}\index{hcofibration@$h$-cofibration! free} if it satisfies the \emph{free homotopy extension property}. That is, for every map $f\colon{}X\rightarrow Y$ and homotopy $h:I \otimes A \rightarrow Y$ such that $h_0=f \circ i$, there is a homotopy $H:I \otimes X \rightarrow Y$ such that $H_0 =f$ and $H\circ(i\otimes\id)=h$.
\end{dfn}
The universal test case for \hur cofibrations is the mapping cylinder $Y=Mi=X\cup_i(I \otimes A)$, with the obvious $f$ and $h$. The exact statement is the following lemma.
\begin{lemma}
\label{lem_h-cof-test} Let $\catC$ be enriched and tensored over $\catU$ and have pushouts.
A map $i:A\rightarrow X$ in $\catC_0$ is a Hurewicz cofibration
if and
only if the canonical map $Mi\rightarrow I\otimes X$ has a retraction. 
\end{lemma}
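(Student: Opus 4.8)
The statement to prove is Lemma~\ref{lem_h-cof-test}: a map $i \colon A \to X$ in $\catC_0$ (for $\catC$ enriched and tensored over $\catU$ with pushouts) is a free $h$-cofibration if and only if the canonical map $Mi \to I \otimes X$ admits a retraction. This is the classical characterization of Hurewicz cofibrations via a ``universal'' test case, and the plan is to reduce everything to the defining homotopy extension property (HEP) of Definition~\ref{dfn:h-cof}. First I would pin down the canonical map $r_0 \colon Mi = X \cup_i (I \otimes A) \to I \otimes X$: it is induced by the pushout universal property from the map $\{0\} \otimes X \cong X \hookrightarrow I \otimes X$ on the $X$-summand and from $\id_I \otimes i \colon I \otimes A \to I \otimes X$ on the cylinder summand; these agree on $A$ because both restrict to $\{0\} \otimes A \to I \otimes X$.

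\textbf{The ``only if'' direction.} Assume $i$ is a free $h$-cofibration. Apply the HEP to the specific test case $Y = I \otimes X$, with $f \colon X \to I \otimes X$ the inclusion $\{0\} \otimes X$, and with $h \colon I \otimes A \to I \otimes X$ the map $\id_I \otimes i$. The compatibility condition $h_0 = f \circ i$ holds by the description of $r_0$ above. The HEP then produces $H \colon I \otimes X \to I \otimes X$ with $H_0 = f$ and $H \circ (\id_I \otimes i) = h$. These two equations say precisely that $H$, viewed through the pushout, restricts correctly on both summands of $Mi$, hence $H$ factors through a map $Mi \to I \otimes X$ that is a section of $r_0$; equivalently $H$ itself (precomposed with $r_0$ on $Mi$, or rather corestricted) gives the desired retraction $I \otimes X \to Mi$. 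I would be slightly careful to check that $H$ determines a well-defined map $I\otimes X \to Mi$ and that composing with $r_0$ recovers $\id_{I\otimes X}$; this uses only the universal property of the pushout defining $Mi$ and the two equations just noted.

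\textbf{The ``if'' direction.} Conversely, suppose $\rho \colon I \otimes X \to Mi$ is a retraction of $r_0$. Given any $f \colon X \to Y$ and homotopy $h \colon I \otimes A \to Y$ with $h_0 = f \circ i$, the pair $(f, h)$ induces, by the pushout universal property of $Mi = X \cup_i (I \otimes A)$, a single map $g \colon Mi \to Y$. Then $H \defas g \circ \rho \colon I \otimes X \to Y$ is the required extension: one checks $H_0 = g \circ \rho \circ (\{0\}\otimes X \hookrightarrow I\otimes X)$, and since $\rho$ is a section of $r_0$ and $r_0$ sends $\{0\}\otimes X$ to the $X$-summand of $Mi$, this equals $g$ restricted to $X$, which is $f$; similarly $H \circ (\id_I \otimes i) = g \circ \rho \circ (\id_I \otimes i)$ and $\rho \circ (\id_I\otimes i)$ is the cylinder-summand inclusion $I\otimes A \to Mi$, so this equals $g$ restricted to $I \otimes A$, which is $h$. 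Hence $i$ satisfies the HEP.

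\textbf{Main obstacle.} There is no deep obstacle here — the content is entirely the bookkeeping of pushout universal properties in the enriched/tensored setting, together with the observation that the tensor $-\otimes(-)$ over $\catU$ commutes with the relevant pushouts (so that $Mi$ and $I\otimes X$ behave as expected and the map $\id_I\otimes i$ makes sense). The only point requiring a little care is making the identifications $\{0\}\otimes X \cong X$ and the compatibility of the two summand-maps on $A$ fully precise, and verifying that the section/retraction passes correctly through the coequalizer-style colimit; I would state these as routine consequences of the adjunction defining tensors (Definition~\ref{tenscotens}) and the universal property of pushouts, and not belabor the diagram chase.
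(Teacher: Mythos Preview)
Your ``if'' direction is correct and is exactly the standard argument.

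Your ``only if'' direction has a genuine gap. You apply the HEP with target $Y = I \otimes X$, obtaining $H \colon I \otimes X \to I \otimes X$, and then assert that $H$ ``factors through'' or can be ``corestricted'' to give a map $I \otimes X \to Mi$. In a general $\catC$ tensored over $\catU$ this step has no content: $r_0 \colon Mi \to I \otimes X$ need not be a monomorphism, there is no notion of ``image'' to corestrict to, and nothing in the data forces $H$ to factor as $r_0 \circ \rho$ for some $\rho$. (Your parenthetical ``precomposed with $r_0$ on $Mi$'' goes the wrong way, and your caveat that you would ``check that $H$ determines a well-defined map $I\otimes X \to Mi$'' is precisely the missing step, with no indication of how to do it.)

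The fix is the one the paper itself signals just before the lemma: the universal test case is $Y = Mi$, not $Y = I\otimes X$. Take $f \colon X \to Mi$ and $h \colon I \otimes A \to Mi$ to be the two structural maps into the pushout; the compatibility $h_0 = f \circ i$ is exactly the pushout gluing condition. The HEP then yields $H \colon I \otimes X \to Mi$ directly, and the two equations $H_0 = f$ and $H \circ (\id_I \otimes i) = h$ say precisely that $H \circ r_0$ agrees with the identity on both summands of $Mi$, hence $H \circ r_0 = \id_{Mi}$ by the universal property of the pushout. No factoring or corestriction is needed.
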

\begin{rem}\label{rem.hcofclos}
This implies a variety of closure properties of the class of Hurewicz cofibrations. In particular any functor that preserves pushouts and the tensor with the interval also preserves Hurewicz cofibrations, since any functor preserves retractions.
\end{rem}
\begin{thm}{\cite[Theorem 3]{Str}, cf.~\cite[4.4.4]{MS}}
The homotopy equivalences, Hurewicz fibrations and Hurewicz cofibrations give a proper model structure on $\catU$.
\end{thm}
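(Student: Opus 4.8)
The plan is to verify the Dwyer--Spalinski model-category axioms (see \cite{DS}) one by one, reducing the substantive content to Str\o m's classical characterizations of ``shrinkable'' maps and of strong deformation retracts, and then to read off properness for free. The formal axioms are disposed of quickly: $\catU$ is complete and cocomplete, so it is bicomplete; homotopy equivalences satisfy two-out-of-three trivially; Hurewicz fibrations are closed under retracts because the homotopy lifting property is; free $h$-cofibrations are closed under retracts by Lemma~\ref{lem_h-cof-test}, since a retract of a map whose mapping cylinder admits a retraction again admits one; and homotopy equivalences are visibly closed under retracts.

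The core of the argument is a pair of lifting characterizations. On one side, a free $h$-cofibration that is a homotopy equivalence is exactly a (closed) strong deformation retract inclusion, and such a map has the left lifting property against every Hurewicz fibration by combining the deformation retraction with the homotopy lifting property. On the other side, a Hurewicz fibration that is a homotopy equivalence is exactly a map with the right lifting property against every free $h$-cofibration; this is Str\o m's theorem on shrinkable maps, and it is the one genuinely non-formal input on the lifting side. Granting these two statements, the lifting axiom is immediate, since a lifting problem of an acyclic cofibration against a fibration, or of a cofibration against an acyclic fibration, is solved directly.

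For the factorization axiom I would use the two standard constructions. A map $f\colon X\to Y$ factors as $X\to Ef\to Y$ through the mapping path space $Ef=\{(x,\gamma)\in X\times Y^{I}:\gamma(0)=f(x)\}$: the inclusion $x\mapsto(x,\const_{f(x)})$ is a closed strong deformation retract inclusion, hence an acyclic free $h$-cofibration, while $(x,\gamma)\mapsto\gamma(1)$ is a Hurewicz fibration, as one checks against the maps $Z\to Z\times I$ that test the homotopy lifting property. The other factorization, of $f$ as a free $h$-cofibration followed by a Hurewicz fibration that is a homotopy equivalence, is the delicate half: here I would invoke Str\o m's explicit construction, carried out with the point-set care of May--Sigurdsson \cite[4.4.4]{MS} so as to stay inside $\catU$, gluing a mapping cylinder onto a mapping path space in a way that makes the first factor an $h$-cofibration and the second factor shrinkable. \textbf{This step is the main obstacle}: simultaneously verifying the homotopy extension property of the left factor and the shrinkability of the right factor, while controlling the relevant colimits in the category of compactly generated weak Hausdorff spaces, is where the real work lies, which is precisely why it is delegated to the cited references.

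Finally, properness is automatic once the model structure is in hand. The map $\emptyset\to X$ is a free $h$-cofibration for every $X$ (the homotopy extension condition is vacuous over $\emptyset$), so every object is cofibrant and the model structure is left proper; dually $X\to\ast$ is a Hurewicz fibration for every $X$, so every object is fibrant and the model structure is right proper. Hence the structure is proper, as claimed.
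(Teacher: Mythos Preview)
The paper gives no proof at all: it simply cites Str{\o}m and May--Sigurdsson, adding only the two remarks that Str{\o}m's factorizations stay inside $\catU$ when source and target do, and that properness follows because every object is both fibrant and cofibrant. Your sketch is consistent with this and your derivation of properness is exactly the paper's.

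There is, however, one slip in your ``easy'' factorization. The inclusion $X\to Ef$ into the mapping path space is certainly a closed strong deformation retract inclusion, but that alone does \emph{not} force it to be a free $h$-cofibration: a strong deformation retract inclusion need not satisfy the homotopy extension property, since the HEP requires an NDR structure (in particular a Urysohn-type function vanishing exactly on the subspace), and there is no continuous way to detect ``$\gamma$ is a constant path'' in $Ef$. Str{\o}m does not claim this either. In his argument the (acyclic cofibration, fibration) factorization is obtained by first passing to $Ef$ to get (homotopy equivalence, fibration), and \emph{then} applying the hard (cofibration, acyclic fibration) factorization to the map $X\to Ef$; two-out-of-three makes the resulting left factor an acyclic cofibration, and the composite of the two fibrations is the desired fibration. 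So both factorizations ultimately rest on the single technical lemma you already flag as the main obstacle, and neither is genuinely elementary.
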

Note that Str\o m originally works in the category of all topological spaces, but the intermediate objects for the factorizations he constructs are in $\catU$ if source and target were. Properness is not mentioned in the original article, but is implied by the fact that all objects are fibrant and cofibrant. 

\begin{dfn}
Let $f$ be a map in $\catU$. Then $f$ is a \emph{weak equivalence}\index{weak equivalence in $\catU$} if it induces isomorphisms on all homotopy groups. Call $f$ a \emph{$q$-cofibration}\index{qcofibration@$q$-cofibration} if it has the left lifting property with respect to all Serre fibrations that are weak equivalences.
\end{dfn}
\begin{rem}
Recall that every Hurewicz fibration is a Serre fibration and every homotopy equivalence is a weak equivalence. Hence in particular any $q$-cofibration is a Hurewicz cofibration.
\end{rem}
\begin{thm}{\cite[II.3.1]{Q}, cf.~\cite[2.4.25]{H}}
The weak equivalences, Serre fibrations and $q$-cofibrations give a proper model structure on $\catU$.\index{model structure!on $\catU$}
\end{thm}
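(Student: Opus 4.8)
This is the classical Quillen (or $q$-) model structure on $\catU$, and the plan is to obtain it from the Recognition Theorem~\ref{h2119} with the familiar generating sets
\[
  I = \{\, S^{n-1} \hookrightarrow D^n \mid n \ge 0 \,\},
  \qquad
  J = \{\, D^n \cong D^n \times \{0\} \hookrightarrow D^n \times I \mid n \ge 0 \,\},
\]
where $S^{-1} = \emptyset$. First I would record that $\catU$ is complete and cocomplete (colimits computed in $\catTop$ and then, where necessary, reflected into $\catU$; for the cell attachments occurring in $I$- and $J$-cell complexes this reflection is unnecessary, as such pushouts of compactly generated weak Hausdorff spaces are again compactly generated weak Hausdorff), and that the subcategory $W$ of weak equivalences — maps inducing isomorphisms on all homotopy groups at all basepoints — satisfies two-out-of-three and is closed under retracts, since these properties hold for isomorphisms of (families of) groups and sets. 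The domains $S^{n-1}$ and $D^n$ are compact, hence small relative to any class of maps that are built from closed inclusions, so conditions (ii) and (iii) of Theorem~\ref{h2119} hold.

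The two genuinely homotopy-theoretic inputs are the identifications of the lifting classes. By the usual adjunction/compactness argument, $I\text{-inj}$ is exactly the class of Serre fibrations that are weak equivalences (acyclic Serre fibrations), and $J\text{-inj}$ is exactly the class of Serre fibrations; I would cite the standard computation of these horn-type lifting problems. This immediately gives condition (v): $I\text{-inj} \subseteq W \cap J\text{-inj}$. For condition (iv), $J\text{-cell} \subseteq W \cap I\text{-cof}$: each generating map $D^n \hookrightarrow D^n \times I$ is the inclusion of a strong deformation retract and an $I$-cofibration (a relative CW inclusion), and these properties are preserved under coproducts, pushouts, and transfinite composition along closed inclusions (for the weak-equivalence part one uses that a sequential colimit of such inclusions of deformation retracts is again a weak equivalence, via compactness of spheres). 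Finally, for the "either/or" clause I would verify $W \cap J\text{-inj} \subseteq I\text{-inj}$: an acyclic Serre fibration $p\colon E \to B$ has the right lifting property against each $S^{n-1}\hookrightarrow D^n$. This is the main obstacle and the only place where real homotopy theory enters: given a commuting square, one uses the long exact sequence of the pair/fibration together with $\pi_*(E)\cong\pi_*(B)$ to see that the relevant relative homotopy obstruction vanishes, so the partial lift on $S^{n-1}$ extends over $D^n$ (a Whitehead-style argument); this is precisely \cite[II.3.1]{Q}. With conditions (i)--(vi) in hand, Theorem~\ref{h2119} produces the model structure, with $I$ generating cofibrations and $J$ generating acyclic cofibrations.

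It remains to check that the resulting model structure is proper. Right properness is immediate: every object of $\catU$ is fibrant (every map to a point is a Serre fibration), and in any model category pullbacks of weak equivalences along fibrations are weak equivalences whenever the base is fibrant — here automatic. For left properness one must show that the cobase change of a weak equivalence along a $q$-cofibration is a weak equivalence; I would argue this via the gluing lemma for the homotopy pushout, using that $q$-cofibrations are in particular free $h$-cofibrations (so the ordinary pushout computes the homotopy pushout) together with the fact that $q$-cofibrations are closed Hurewicz cofibrations. Assembling these pieces yields the stated proper model structure on $\catU$; the pointed analogue on $\catT$ then follows by passing to the under-category of a point, which is the form in which the result is used in the body of the paper.
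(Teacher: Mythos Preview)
Your proposal is correct and matches the paper's treatment, which is itself little more than a citation: the paper does not prove the existence of the model structure at all but simply refers to \cite[II.3.1]{Q} and \cite[2.4.25]{H}, remarking only that the passage from general spaces to $\catU$ is handled in Hovey. Your Recognition-Theorem sketch with the standard generating sets $I$ and $J$ is precisely the argument in \cite[2.4]{H}, so you have supplied what the paper leaves to the reference.

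For properness your argument also aligns with the paper. The paper says right properness follows because every object is fibrant, and left properness from Lemma~\ref{rem_cofcat}, namely that $\catU$ with $h$-cofibrations and weak equivalences is a category of cofibrant objects, so the generalized cobase change lemma~\ref{gen.cobch} applies; since every $q$-cofibration is an $h$-cofibration this gives left properness. That is exactly your route. One small caution: your intermediate claim that ``in any model category pullbacks of weak equivalences along fibrations are weak equivalences whenever the base is fibrant'' is not quite the correct general statement---the standard result (e.g.\ \cite[13.1.2]{Hir}) requires the weak equivalence to be between fibrant objects. Since you are invoking it only in the situation where \emph{every} object is fibrant, the conclusion stands, but you should rephrase that sentence to avoid stating a false lemma in passing.
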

Again note that Quillen also works with general topological spaces, the transition to $\catU$ is well documented in \cite[2.4]{H}. Properness is proved using that every object is fibrant as well as the following lemma:
\begin{lemma}\label{rem_cofcat} The category $\catU$ is a category of cofibrant objects (Definition~\ref{catcofobj}) with respect to the Hurewicz cofibrations and the weak equivalences. In particular the generalized cobase change (\ref{gen.cobch}) and cube lemma (\ref{gen.cube}) hold for these choices. 
\end{lemma}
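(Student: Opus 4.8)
The plan is to check, one by one, the five axioms of Definition~\ref{catcofobj} for the category $\catU$ equipped with the $h$-cofibrations of Definition~\ref{dfn:h-cof} as cofibrations and the weak homotopy equivalences as weak equivalences; the ``in particular'' clause then follows at once, since Lemmas~\ref{gen.cobch} and~\ref{gen.cube} are stated for an arbitrary category of cofibrant objects. Since $\catU$ is cocomplete it has all finite coproducts and all pushouts, so the existence hypotheses are automatic. Axiom (i), the two-out-of-three property for weak equivalences, is part of the $q$-model structure on $\catU$ recalled above. Axiom (iv): the initial object of $\catU$ is $\emptyset$, and for each $X$ the unique map $\emptyset\to X$ has mapping cylinder $X$, with the canonical map to $I\times X$ being the inclusion $X\cong\{0\}\times X\hookrightarrow I\times X$, retracted by the projection; Lemma~\ref{lem_h-cof-test} then identifies $\emptyset\to X$ as an $h$-cofibration. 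Axiom (v): for $X$ in $\catU$ the ordinary cylinder $I\times X$ works, since the fold map factors as $X\sqcup X=\{0,1\}\times X\hookrightarrow I\times X\xrightarrow{\proj}X$ with the first map an $h$-cofibration (the tensor of the $h$-cofibration $\{0,1\}\hookrightarrow I$ with $X$, hence an $h$-cofibration by Remark~\ref{rem.hcofclos}, the functor $X\times(-)$ preserving pushouts and tensors with $I$) and the second a homotopy equivalence, hence a weak equivalence.

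For axiom (ii): every isomorphism is an $h$-cofibration (immediate from Lemma~\ref{lem_h-cof-test}), and the composite of two maps with the homotopy extension property again has it, by the usual two-step extension argument (extend the given homotopy first across the inner, then across the outer cofibration). For the first half of axiom (iii), the cobase change $g\colon Y\to P$ of an $h$-cofibration $i\colon A\to X$ along any $f\colon A\to Y$ is again an $h$-cofibration: the functor $Y\cup_A(-)$ preserves pushouts and tensors with $I$, hence by Remark~\ref{rem.hcofclos} it carries a retraction of $Mi\to I\times X$ to a retraction of $Mg\to I\times Y$, and Lemma~\ref{lem_h-cof-test} applies.

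The one genuinely nontrivial point, and the step I expect to be the main obstacle, is the second half of axiom (iii): if the $h$-cofibration $i$ is in addition a weak equivalence, its cobase change $g$ is a weak equivalence. The approach is to observe that in $\catU$ a pushout along an $h$-cofibration is a homotopy pushout (the comparison map from the double mapping cylinder $Y\cup_A(A\times I)\cup_A X$ to $P$ is a homotopy equivalence, since $i$ is a closed Hurewicz cofibration in the weak Hausdorff category), and then to invoke the invariance of homotopy colimits under objectwise weak equivalences of diagrams. Concretely, the objectwise weak equivalence of spans $(A\xleftarrow{\mathrm{id}}A\xrightarrow{f}Y)\to(X\xleftarrow{i}A\xrightarrow{f}Y)$, given by $i$ on the left vertex and identities elsewhere, induces a weak equivalence on homotopy pushouts; its source is $Y$, its target is $P$, and the induced map is exactly $g$. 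This is the gluing lemma for weak equivalences along closed Hurewicz cofibrations; note that it is strictly stronger than the left properness of the $q$-model structure, which concerns only $q$-cofibrations, so it cannot simply be quoted from Quillen's theorem and must be argued separately. Everything else in the verification is formal, so once this gluing statement is in hand the lemma — and with it the generalized cobase change and cube lemmas for $h$-cofibrations and weak equivalences in $\catU$ — follows.
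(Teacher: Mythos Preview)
The paper states this lemma without proof; it is presented as a known fact used to deduce properness of the $q$-model structure. Your verification of the axioms of Definition~\ref{catcofobj} is correct and fills in what the paper omits. You rightly identify the only substantive point as the second half of axiom~(iii), the preservation of weak equivalences under cobase change along $h$-cofibrations, and your argument via the double mapping cylinder is the standard one: the collapse map from the double mapping cylinder to the strict pushout is a homotopy equivalence precisely because one leg is an $h$-cofibration (this is where the retraction of Lemma~\ref{lem_h-cof-test} is used), and the double mapping cylinder itself manifestly sends levelwise weak equivalences to weak equivalences, e.g.\ by excision or by writing it as a mapping cylinder over an $h$-cofibration. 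Your caveat that this is not merely left properness of the $q$-structure is well taken. The remaining axioms are checked correctly; in particular your use of Remark~\ref{rem.hcofclos} for axioms~(iii) first half and~(v) is valid since in $\catU$ both $(-)\times X$ and $Y\cup_A(-)$ are left adjoints and commute with $I\times(-)$.
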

Moving to the context of based spaces, we can for example follow the discussion after Remark 1.1.7 of \cite{H} to transport both model structures from $\catU$ to $\catT$. This proves satisfactory in case of the Quillen model structure:
\begin{thm}
  The category $\catT$ is a proper model category using those based maps that are $q$-cofibrations, Serre fibrations respectively weak equivalences in $\catU$, \ie when forgetting the base points.
  \index{model structure!on $\catT$}
\end{thm}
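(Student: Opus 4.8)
The plan is to recognize $\catT$ as the under-category $\ast/\catU$ of compactly generated weak Hausdorff spaces under a one-point space $\ast$, and to transport each of the two model structures on $\catU$ recalled above along the forgetful functor $U\colon \ast/\catU \to \catU$ that forgets the basepoint. This is exactly the standard transfer of a model structure to an under-category, treated in \cite{H} (the discussion following Remark 1.1.7): for any model category $\catM$ and object $Z$, the category $Z/\catM$ carries a model structure in which a morphism is a weak equivalence, cofibration, or fibration precisely when its image under the forgetful functor $Z/\catM \to \catM$ is one. Applying this with $\catM = \catU$ equipped with the Quillen model structure and $Z = \ast$ gives the first asserted model structure on $\catT$, and applying it with $\catM = \catU$ equipped with the Str\o m model structure gives the second.

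To make this self-contained I would verify the ingredients directly. First, $\catT$ is bicomplete: the forgetful functor $U$ has a left adjoint $X \mapsto X_+ = X \sqcup \ast$, so $U$ creates all small limits, and $U$ preserves connected colimits (in particular pushouts), with the remaining colimits built from these; limits and colimits in $\catT$ are thus computed in $\catU$ and then equipped with the evident basepoint. The $2$-out-of-$3$ property and closure under retracts for the weak equivalences are immediate, since weak equivalences in $\catT$ are detected by $U$ and $U$ preserves retracts. For the lifting axioms, a commuting square of based maps is in particular a commuting square in $\catU$; a lift exists there by the corresponding model structure on $\catU$, and since the square already records the basepoints of source and target, the lift is automatically basepoint-preserving. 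For the factorization axioms, applying a (functorial) factorization in $\catU$ to the underlying map $Uf\colon UX \to UY$ of a based map $f$ produces an intermediate object which inherits a basepoint from that of $X$, so the factorization lifts to one of based maps, the two factors having the prescribed lifting properties because those are also detected by $U$. These arguments are identical for the Quillen and the Str\o m structures.

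It then remains to check properness of both structures. Since $U$ is a right adjoint (with left adjoint $(-)_+ \dashv U$) it preserves pullbacks, so a pullback square in $\catT$ is a pullback square in $\catU$; hence right properness of the Quillen and Str\o m model structures on $\catU$ established above (via the fact that every space is fibrant) immediately yields right properness of the corresponding structures on $\catT$. Dually, a pushout diagram is a connected diagram and $U$ preserves connected colimits, so a pushout square in $\catT$ is a pushout square in $\catU$, and left properness on $\catU$ (Lemma~\ref{rem_cofcat}) gives left properness on $\catT$. Thus both model structures on $\catT$ are proper.

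The argument has no serious obstacle; the only points requiring a little care are (i) confirming that the functorial factorizations in $\catU$ genuinely descend to $\catT$, which hinges on the intermediate object carrying a canonical basepoint coming from the source, and (ii) confirming that lifts and the relevant limits and pushouts are computed in $\catU$, which follows from faithfulness of $U$ together with the adjunction $(-)_+ \dashv U$ and the preservation of connected colimits by $U$. Once these routine verifications are in place the theorem follows.
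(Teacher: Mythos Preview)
Your proposal is correct and follows exactly the approach the paper indicates: the paper does not give an explicit proof but simply refers to ``the discussion after Remark 1.1.7 of \cite{H}'' for transporting model structures to the under-category $\ast/\catU$, which is precisely what you do, with the additional details about lifting, factorization, and properness spelled out. Your verification that lifts and factorizations in $\catU$ are automatically based, and that properness transfers because $U$ preserves pushouts and pullbacks while all three classes are detected by $U$, is sound.
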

\begin{rem}\label{qgencoft}
We will often make use of the fact that the Quillen model structures on $\catU$ and $\catT$ are cofibrantly generated. Generating sets of cofibrations and acyclic cofibrations are given in the pointed case by:
\begin{align*}
  I\defas{}&\{i: S^{n-1}_+ \rightarrow D^{n}_+ , n\geq 0\}\index{I@$I=\{S^{n-1}_+ \subseteq D^{n}_+\}$}\\
J\defas{}&\{i_0: D^{n}_+\rightarrow (D^{n}\times [0,1])_+, n\geq 0.\}\index{J@$J=\{D^{n}_+\subseteq (D^{n}\times [0,1])_+\}$}
\end{align*}
\end{rem}
\begin{thm}\index{model structure!on $\catT$}
The category $\catT$ has a proper model structure, the {\em Hurewicz
  model structure},  with cofibrations,
fibrations and weak equivalences given as the maps whose underlying
maps in $\catU$ that are respectively Hurewicz cofibrations, Hurewicz
fibrations and homotopy 
equivalences. 
\end{thm}
\begin{rem}
Note that not all spaces in $\catT$ are cofibrant with respect to the
Hurewicz. In particular the theorem only implies pointed analogs to
the versions of the generalized cube and cobase change lemmas from
Lemma~\ref{rem_cofcat} above for so called well based spaces: 
\end{rem}
\begin{dfn}
An object $X$ of $\catT$ is called \emph{well based} or \emph{well pointed}\index{well based} if the inclusion of the base point is a \hur cofibration. 
\end{dfn}
We need a stronger version of the cube lemma when we work in the $\catT$-enriched setting:
\begin{dfn}
\label{dfn:h-cofb}Let $\catC$ be enriched and tensored over $\catT$. A map $i:A\rightarrow X$ in $\catC_0$ is a \emph{based $h$-cofibration}\index{hcofibration@$h$-cofibration!based} if it satisfies the \emph{based homotopy extension property}.\index{homotopy!based - extension property} That is, for every map $f\colon{}X\rightarrow Y$ and based homotopy $h:I_+\smash A \rightarrow Y$ such that $h_0=f \circ i$, there is a based homotopy $H:I_+\smash X \rightarrow Y$ such that $H_0 =f$ and $H\circ(\id\smash i)=h$. In cases where no confusion is possible, we will usually omit the adjective based.
\end{dfn}
\begin{rem}Again there is a recognition lemma analogous to Lemma~\ref{lem_h-cof-test} in terms of a reduced mapping cylinder, implying a similar closure property as in Lemma~\ref{lem_h-cof-test}. Also note that all (unbased or based) Hurewicz cofibrations are closed inclusions (cf.~\cite[$\S$ 6, Ex 1,]{Mayconcise}, \cite[5.2 ff.]{MMSS}).
\end{rem}
The following proposition is a combination of Proposition 9 in
\cite{Str} and the proposition on page 44 of \cite{Mayconcise}. Both are proved by explicitly constructing the required homotopies, respectively retractions.
\begin{prop}
Let $f\colon{} X \rightarrow Y$ be a map between well based spaces in
$\catT$. Then $f$ is a based homotopy equivalence if and only if it is
an (unbased) homotopy equivalence and it is a based Hurewicz cofibration if and only if it is a Hurewicz cofibration.
\end{prop}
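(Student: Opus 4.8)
The plan is to treat the two ``if and only if'' statements separately, reducing each to an explicit construction of homotopies, respectively of retractions, in which the well-basedness hypothesis enters only through the fact that the basepoint inclusions $\{*\}\hookrightarrow A$ and $\{*\}\hookrightarrow X$ are free $h$-cofibrations.

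\textbf{The equivalence for homotopy equivalences.} One direction is immediate: a based homotopy equivalence, together with all the homotopies witnessing it, remains valid after forgetting basepoints, so $f$ is in particular a free homotopy equivalence. For the converse, suppose $f$ has a free homotopy inverse $g\colon Y\to X$. First I would replace $g$ by a based map: since $fg$ is freely homotopic to $\id_Y$, the point $g(*_Y)$ lies in the path component of $*_X$, and choosing a path from $g(*_Y)$ to $*_X$ and applying the homotopy extension property for the cofibration $\{*_Y\}\hookrightarrow Y$ (valid because $Y$ is well based) yields a based map $g'\colon Y\to X$ freely homotopic to $g$, hence still a free homotopy inverse of $f$. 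Now $g'f\simeq\id_X$ and $fg'\simeq\id_Y$ freely, while $g'f$ and $fg'$ are based. It therefore suffices to establish the lemma that a based self-map $\phi$ of a well-based space $Z$ which is freely homotopic to $\id_Z$ is \emph{based}-homotopic to $\id_Z$: given a free homotopy $H\colon I\times Z\to Z$ from $\id_Z$ to $\phi$, its restriction $H|_{I\times\{*\}}$ is a loop at the basepoint, and using the free $h$-cofibration $\{*\}\hookrightarrow Z$ one deforms $H$, rel $\{0,1\}\times Z$, into a homotopy stationary on $I\times\{*\}$. Applying this lemma to $\phi=g'f$ and to $\phi=fg'$ shows that $f$ is a based homotopy equivalence with based inverse $g'$.

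\textbf{The equivalence for $h$-cofibrations.} By Lemma~\ref{lem_h-cof-test} the map $f$ is a free $h$-cofibration iff the canonical map $Mf=Y\cup_X(I\times X)\to I\times Y$ has a retraction, and by its reduced analogue (the recognition lemma following Definition~\ref{dfn:h-cofb}) $f$ is a based $h$-cofibration iff the reduced mapping-cylinder map $Y\cup_X(I_+\wedge X)\to I_+\wedge Y$ has a retraction. These two mapping cylinders are related by the quotient maps collapsing the $I$-coordinate over the basepoint, and those quotient maps are, up to cobase change, built from the basepoint inclusions $\{*\}\hookrightarrow X$ and $\{*\}\hookrightarrow Y$, which are free $h$-cofibrations; products and pushout-products of free $h$-cofibrations are again free $h$-cofibrations (Remark~\ref{rem.hcofclos} together with the pushout-product construction). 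In each direction one is then handed a retraction of one of the two cylinder maps and must modify it so that it is compatible with the basepoint collapse; I would perform this by passing to Strøm-type deformation-retraction data for the pairs involved, where well-basedness supplies the extra data on $(X,\{*\})$ and $(A,\{*\})$ needed to combine the free retraction of $(X,A)$ with the basepoint fibre into a based retraction, and conversely to split off a free retraction from a based one, via an explicit interpolation along a suitable neighbourhood of the basepoint.

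\textbf{Main obstacle.} The genuine work is the bookkeeping in the two modification steps: absorbing the boundary loop $H|_{I\times\{*\}}$ in the homotopy-equivalence case, and reconciling a retraction with the basepoint collapse in the $h$-cofibration case. Both are routine once well-basedness is invoked but require care with the explicit formulas; everything surrounding these two steps is formal.
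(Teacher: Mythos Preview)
Your reduction for the homotopy-equivalence half rests on a lemma that is false: a based self-map $\phi$ of a well-based space $Z$ which is freely homotopic to $\id_Z$ need \emph{not} be based-homotopic to $\id_Z$. For a counterexample take $Z=S^1\vee S^2$, let $\gamma$ generate $\pi_1(Z)$, and use the homotopy extension property for $\{*\}\hookrightarrow Z$ to extend the pair $(\id_Z,\gamma)$ to a free homotopy $K\colon I\times Z\to Z$ with $K_0=\id_Z$ and $K(t,*)=\gamma(t)$; set $\phi=K_1$. Then $\phi$ is based and $K$ exhibits $\phi\simeq\id_Z$ freely, but $\phi_*$ on $\pi_2(Z,*)\cong\Z[t,t^{-1}]$ is multiplication by $t$ (this is precisely the action of $\gamma\in\pi_1$ on $\pi_2$), so $\phi\not\simeq_*\id_Z$. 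In particular your proposed deformation of $H$ rel $\{0,1\}\times Z$ to a based homotopy cannot exist in general, since that would force the boundary loop $H|_{I\times\{*\}}$ to be null.

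The paper does not argue this out; it cites Str{\o}m and p.~44 of May's \emph{Concise Course}. The argument there is more indirect. After arranging a based $g'$ one does \emph{not} try to show $g'f\simeq_*\id_X$ directly. One instead proves the weaker statement that any based self-map $\zeta$ freely homotopic to the identity admits a based \emph{left} homotopy inverse $\xi$ (so $\xi\zeta\simeq_*\id$), built by one further HEP application that absorbs the boundary loop into $\xi$ rather than into the homotopy. Applying this to $\zeta=g'f$ gives $f$ a based left inverse $\xi g'$; repeating and doing the usual left-inverse/right-inverse comparison then yields a genuine based homotopy inverse for $f$, which in general is $\xi g'$ rather than $g'$. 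Your outline for the $h$-cofibration half is headed the right way but is only a plan at the one step that carries content; note that neither implication is formal, since a free extension need not fix the basepoint track and the based HEP is only tested against based targets.
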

Note that being a weak equivalence in $\catT$ and $\catU$ is always equivalent, so we have the following corollary:
\begin{cor}\label{hcofcubespaces}
If all involved spaces are well based, then the generalized cube lemma and the generalized cobase change lemma hold for based Hurewicz cofibrations and homotopy equivalences. Also, they hold for Hurewicz cofibrations and weak equivalences if all the spaces $A_i$ and $Y_i$ in the diagrams~\ref{cobasechange} and~\ref{cube} are well based. 
\end{cor}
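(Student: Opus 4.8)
The plan is to reduce both assertions to the corresponding statements in $\catU$, which are already available. By Lemma~\ref{rem_cofcat}, $\catU$ is a category of cofibrant objects with respect to $h$-cofibrations and weak equivalences, so the generalized cobase change lemma~\ref{gen.cobch} and the generalized cube lemma~\ref{gen.cube} hold there for that pair of classes. Moreover, in the Str{\o}m model structure on $\catU$ every object is both fibrant and cofibrant, and the cofibrant objects of any model category form a category of cofibrant objects; hence $\catU$ is also a category of cofibrant objects with respect to free $h$-cofibrations and homotopy equivalences, so \ref{gen.cobch} and \ref{gen.cube} hold there for that pair as well.

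First I would record two elementary facts. (a) For based spaces and based maps, pushouts in $\catT$ agree with pushouts in $\catU$: if $A$ is based and $A\to X$, $A\to Y$ are based maps, the wedge $X\vee Y$ already identifies the two basepoints, so the $\catT$-pushout $(X\vee Y)/(f(a)\sim g(a))$ coincides with the $\catU$-pushout $(X\sqcup Y)/(f(a)\sim g(a))$; consequently a square, resp. cube, in $\catT$ whose faces are pushouts has underlying square, resp. cube, in $\catU$ with the same property. (b) A based map is a weak equivalence in $\catT$ if and only if its underlying map is a weak equivalence in $\catU$. Together with the Proposition preceding the corollary — a map between well based spaces is a based $h$-cofibration, resp. based homotopy equivalence, exactly when it is a free $h$-cofibration, resp. free homotopy equivalence — these let me pass freely between $\catT$ and $\catU$.

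For the first assertion, assume all spaces occurring in diagram~\ref{cobasechange}, resp.~\ref{cube}, are well based. The Proposition turns the given based $h$-cofibrations and based homotopy equivalences into free ones between the same (well based) spaces; by fact (a) the pushout faces remain pushouts in $\catU$. Thus the hypotheses of \ref{gen.cobch}, resp.~\ref{gen.cube}, hold in $\catU$ for the pair (free $h$-cofibrations, homotopy equivalences), giving the conclusion that $g$, resp.~$f_P$, is a free homotopy equivalence in $\catU$; since its target ($P$, resp.~$P_0,P_1$) is well based by assumption, the Proposition returns a based homotopy equivalence in $\catT$. For the second assertion, assume only the $A_i$ and $Y_i$ are well based. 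The cofibrations in play — $i\colon A\to Y$ in \ref{cobasechange} and $i_0\colon A_0\to Y_0$, $i_1\colon A_1\to Y_1$ in \ref{cube} — are based $h$-cofibrations between well based spaces, hence free $h$-cofibrations, and the maps required to be weak equivalences are weak equivalences in $\catU$ by fact (b). Applying \ref{gen.cobch}, resp.~\ref{gen.cube}, in $\catU$ for the pair ($h$-cofibrations, weak equivalences) shows $g$, resp.~$f_P$, is a weak equivalence in $\catU$, hence in $\catT$.

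There is no substantial obstacle here; the only point demanding care is the bookkeeping of which spaces are assumed well based in each of the two assertions, so that the Proposition is invoked precisely where its hypothesis is met, together with the observation that colimits of based spaces along based maps are computed on underlying spaces.
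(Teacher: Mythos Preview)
Your proof is correct and follows the approach implicit in the paper's presentation (the corollary is stated without proof, as an immediate consequence of the preceding Proposition together with Lemma~\ref{rem_cofcat} and the Str{\o}m model structure on $\catU$). The reduction to $\catU$ via the observation that pushouts in $\catT$ are computed in $\catU$, and that the Proposition lets you pass between based and free $h$-cofibrations/homotopy equivalences on well based spaces, is exactly the intended argument.

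One minor remark: in the first assertion you invoke well-basedness of the pushouts $P$ (resp.~$P_0,P_1$) ``by assumption''. Depending on how one reads ``all involved spaces'', this may need a one-line justification: the pushout $P$ is well based because $*\to P$ factors as $*\to X\to P$, where $*\to X$ is a free $h$-cofibration (since $X$ is well based) and $X\to P$ is a cobase change of the free $h$-cofibration $A\to Y$. This is harmless but worth making explicit.
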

Finally we record the following property from \cite[6.8(v)]{MMSS}:
\begin{lemma}
Transfinite composition of Hurewicz cofibrations that are weak equivalences are weak equivalences.
\end{lemma}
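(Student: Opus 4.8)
The statement to prove is: transfinite compositions of $h$-cofibrations that are weak equivalences are weak equivalences.

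\medskip

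The plan is to reduce the transfinite case to the case of a sequential ($\omega$-indexed) colimit, and then invoke the homotopical properties of $h$-cofibrations already collected in the excerpt. First I would observe that it suffices to treat a $\lambda$-sequence $X_0 \to X_1 \to \cdots$ for an arbitrary ordinal $\lambda$, where each map $X_\alpha \to X_{\alpha+1}$ is an $h$-cofibration that is a weak equivalence, and each limit stage is the colimit of the earlier terms; we must show $X_0 \to \colim_{\alpha<\lambda} X_\alpha$ is a weak equivalence. I would prove this by transfinite induction on $\lambda$. The successor step is clear: if $X_0 \to X_\alpha$ is a weak equivalence and $X_\alpha \to X_{\alpha+1}$ is a weak equivalence (being an $h$-cofibration that is one), then two-out-of-three gives that $X_0 \to X_{\alpha+1}$ is a weak equivalence. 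Note also that a composite of $h$-cofibrations is an $h$-cofibration by the closure properties recorded after Lemma~\ref{lem_h-cof-test} (retracts of the mapping-cylinder retraction data compose), so at each successor stage $X_0 \to X_{\alpha+1}$ is again an $h$-cofibration that is a weak equivalence.

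\medskip

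The limit step is the substantive one. Suppose $\beta \le \lambda$ is a limit ordinal and the claim holds for all $\alpha < \beta$. Then $X_\beta = \colim_{\alpha<\beta} X_\alpha$, and by the inductive hypothesis each $X_0 \to X_\alpha$ is a weak equivalence and an $h$-cofibration for $\alpha < \beta$; in particular the maps $X_\alpha \to X_{\alpha'}$ for $\alpha \le \alpha' < \beta$ are $h$-cofibrations (again using the composition-closure of $h$-cofibrations, together with the fact that an $h$-cofibration followed by a retraction of weak equivalences stays in the class; more directly, each $X_\alpha \to X_{\alpha+1}$ is an $h$-cofibration so each $X_\alpha \to X_{\alpha'}$ is a transfinite composite of $h$-cofibrations, hence an $h$-cofibration). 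Choose a cofinal subsequence of $\beta$ of order type equal to the cofinality $\mathrm{cf}(\beta)$; since colimits can be computed over cofinal subdiagrams, $X_\beta$ is the colimit of this cofinal chain. If $\mathrm{cf}(\beta) = \omega$ we are in the sequential situation directly; if $\mathrm{cf}(\beta) > \omega$ we again apply the transfinite induction to the cofinal chain, whose length is strictly less than $\beta$ only when $\beta$ itself is not of cofinality equal to its own length — so to avoid circularity I would instead argue uniformly: a transfinite composition is, up to cofinality, iterated sequential colimits, and it is enough to know the result for $\omega$-sequences, proved next.

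\medskip

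So the heart of the matter is: if $X_0 \to X_1 \to X_2 \to \cdots$ is an $\omega$-sequence of $h$-cofibrations each of which is a weak equivalence, then $X_0 \to \colim_n X_n$ is a weak equivalence. This is precisely the last lemma cited in the excerpt from \cite[6.8(v)]{MMSS}, whose proof I would recall: $h$-cofibrations are closed inclusions, so the colimit is computed as an increasing union with the weak (colimit) topology; homotopy groups commute with such sequential colimits along closed inclusions by the usual compactness argument (any map of a sphere, and any homotopy of such, has compact image landing in some finite stage); and each $\pi_k(X_n) \to \pi_k(X_{n+1})$ is an isomorphism, so $\pi_k(X_0) \to \colim_n \pi_k(X_n) \cong \pi_k(\colim_n X_n)$ is an isomorphism. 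Combining this sequential case with the successor and limit-step bookkeeping above completes the induction.

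\medskip

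The main obstacle I anticipate is purely organizational rather than mathematical: making the transfinite induction airtight at limit ordinals of uncountable cofinality, where one must be careful that "colimit over a cofinal subchain" really does reduce the problem to a strictly shorter transfinite composition or to the sequential case, and that the class of $h$-cofibrations is stable under the composites that appear. Once one grants the closure properties of $h$-cofibrations (composition, transfinite composition, being closed inclusions) from Lemma~\ref{lem_h-cof-test} and its remarks, and the sequential-colimit statement \cite[6.8(v)]{MMSS}, everything else is two-out-of-three applied stagewise. I would therefore present the proof as: (1) reduce to $\lambda$-sequences; (2) transfinite induction, successor step by two-out-of-three, limit step by passing to a cofinal subchain and invoking the inductive hypothesis together with the $\omega$-case; (3) the $\omega$-case by the compactness/colimit-of-homotopy-groups argument. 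No new machinery is needed beyond what the excerpt already provides.
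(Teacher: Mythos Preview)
The paper does not actually prove this lemma; it merely records it as a citation of \cite[6.8(v)]{MMSS}. So there is no ``paper's own proof'' to compare against, and any correct argument you supply goes beyond what the paper does.

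That said, your proposal has a genuine gap at the limit step. You recognize the circularity problem when $\beta$ is a limit ordinal with $\mathrm{cf}(\beta) = \beta$ (e.g.\ $\beta = \omega_1$), and you attempt to dissolve it with the claim that ``a transfinite composition is, up to cofinality, iterated sequential colimits.'' This is not correct: a colimit indexed by $\omega_1$ cannot be rewritten as an iterated $\omega$-colimit in any way that makes your induction go through, precisely because $\omega_1$ has no countable cofinal subset. The reduction-to-$\omega$ strategy genuinely fails here.

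The fix is that the reduction is unnecessary. The compactness argument you give for the $\omega$-case works verbatim for an arbitrary ordinal $\lambda$: $h$-cofibrations are closed inclusions, so $X = \colim_{\alpha<\lambda} X_\alpha$ carries the weak topology; if a compact set $K$ mapped to $X$ without factoring through any $X_\alpha$, one could choose points $x_n$ in the image with $x_n \in X_{\alpha_{n+1}} \setminus X_{\alpha_n}$ for a strictly increasing sequence $\alpha_0 < \alpha_1 < \cdots$, and the set $\{x_n\}$ would then be an infinite closed discrete subspace of a compact set, a contradiction. Hence $\pi_k$ commutes with the transfinite colimit directly, and since each $\pi_k(X_\alpha) \to \pi_k(X_{\alpha+1})$ is an isomorphism, so is $\pi_k(X_0) \to \pi_k(X)$. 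Drop the cofinality manoeuvre and run this argument once for all $\lambda$.
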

The following condition on sets of maps in a topological category has
proven very helpful in several contexts. We use the formulation from
\cite[5.3]{MMSS}, and hence use $\catT$ for the enrichment. Let
$\catA$ and $\catC$ be categories enriched over $\catT$ that are
(enriched) bicomplete and in particular tensored and cotensored. Let
$\catA$ be equipped with a continuous and faithful functor $F \colon \catA \rightarrow \catC$. 
\begin{cond2}{(Cofibration Hypothesis)}\label{cofhyp}
Let $I$ be a set of maps in $\catA$. We say that $I$ satisfies the \emph{cofibration hypothesis}\index{cofibration hypothesis} if it satisfies the following two conditions.
\begin{enumerate}
\item Let $i: A \rightarrow B$ be a coproduct of maps in $I$. Then
  \(F\) takes any cobase change of $i$ in $\catA$ to a Hurewicz cofibration in $\catC$.
\item The colimit of every sequence \(\catA\) that \(F\) takes to a
  sequence of Hurewicz cofibrations in \(\catC\) is preserved by \(F\).
\end{enumerate}
\end{cond2}
\begin{rem}
In particular \(F\) takes $I$-cell complexes in $\catA$ to sequential colimits along Hurewicz cofibrations in $\catC$.
\end{rem}
The smallness conditions in the definition of a cofibrantly generated model category are as lax as possible. In many of the topological examples, we can actually be more strict, in order to get around having to deal with transfinite inductions as much as possible. A convenient condition is the following, again taken from \cite[5.6, ff.]{MMSS}, with $\catA$ and $\catC$ as above:
\begin{dfn}
An object $X$ of $\catA$ is \emph{compact}\index{compact object} if \[\catA(X,\colim Y_n) \cong \colim \catA(X,Y_n),\] whenever $Y_n \rightarrow Y_{n+1}$ is a sequence of maps in $\catA$ that are $h$-cofibrations in $\catC$.
\end{dfn}
\begin{dfn}
Let $\catA$ be a model category. Then $\catA$ is \emph{compactly generated},\index{compactly generated} if it is cofibrantly generated with generating sets of (acyclic) cofibrations $I$ and $J$, such that the domains of all maps in $I$ or $J$ are compact, and $I$ and $J$ both satisfy the cofibration hypothesis~\ref{cofhyp}.
\end{dfn}
\section{Simplicial Objects in Topological Categories}\label{subsectsimplobj}
In this section, we recall some basic simplicial techniques. A convenient reference for a lot of the following discussion is \cite[VII.3]{GJ}, but we need some rather specific technical lemmas which to the author's knowledge have not been formulated similarly before. We start by reminding the reader of the basic definitions:
\begin{dfn}
 The \emph{simplicial category $\catDelta$}\index{simplicial!category, the}\index{Delta@$\catDelta$, the simplicial category} has the finite ordinal numbers as objects and order preserving maps as morphisms between them. 
\end{dfn}
To be more specific, we will denote objects of $\catDelta$ by $\obindelta{n}$, \ie \[{\obindelta{n}} \defas{} \{0 < 1 < \ldots < n\}.\]
Recall the generating morphisms $s_i$ and $d_i$ in $\catDelta$ and the relations between them from \cite[I.1.2]{GJ}.
\begin{dfn}\label{simplobject} Let $\catC$ be a category. The \emph{category $s\catC$ of simplicial objects in $\catC$}\index{simplicial!object in a category} is the functor category $[\catDelta^{\op},\catC]$. 
\end{dfn}
Let from now on $\catC$ be enriched, cocomplete and tensored over the category of simplicial sets.
\begin{dfn}\label{geometricrealiz}
 The \emph{geometric realization $\vert X \vert_\catC$}\index{geometric!realization} of a simplicial object $X \in s\catC$ is the coend
\[{\vert X \vert_\catC} \defas{} \int^{{\bf k} \in \Delta^{\op}} X_k \otimes \Delta^k,\] where $\Delta^k$ is the simplicial $n$-simplex given by $\Delta^k_n = \catDelta(\bf n,k)$. With the obvious extension on morphisms, this defines a functor $|\cdot|_\catC\colon s\catC \rightarrow \catC$.
\end{dfn}
We will often drop the subscript from $\vert \cdot \vert_\catC$ when the category is clear. Note that any functor $\catC \rightarrow \catC'$ that preserves colimits and tensors preserves the geometric realization.
\begin{dfn}\label{ralizationadj}
 If $\catC$ is also cotensored over simplicial sets, the geometric realization has a right adjoint given by the functor that assigns to an object $Y$ of $\catC$ the simplicial object $\catDelta_\catC Y=\catC(\catDelta,Y)$ which is given in level $\obindelta k$ by
\[(\catDelta_\catC Y)_k=\catC(\catDelta,Y)_k \defas{} \catC(\Delta^k, Y).\]
\end{dfn}
\begin{rem}
The most important special case for our applications will be when the category $\catC$ is actually enriched and tensored over $\catT$. In this case, we can first transport the enrichment to $\catU$ along the forgetful functor and then to simplicial sets via the singular set functor as in Construction~\ref{enrtransport} since both of these are (lax) monoidal. Then the defining adjunctions immediately give an isomorphism \[X_k \otimes_{s\catset} \Delta^k \cong X_k \otimes_\catU \vert\Delta^k\vert \cong \X_k \otimes_\catT \vert\Delta^k\vert_+,\] where the $\vert\Delta^k\vert$ denotes the topological $k$-simplex (with a disjoint base point on the right).\\
Classical realization of simplicial sets is then a special case of the above by viewing sets as discrete objects of $\catT$.
\end{rem}
We want to filter the geometric realization, in analogy to the classical case where the geometric realization admits a filtration via its structure as a $CW$-complex. For this purpose consider for a natural number $n$ the full subcategory $\catDelta_n$ of $\catDelta$ consisting of all objects $\obindelta{k}$ with $k \leq n$. 
\begin{dfn}
 For $X \in s\catC$ a simplicial object, define the \emph{$n$-skeleton}\index{skeleton} $\sk_n\vert X \vert_{\catC}$\index{sknX@$\sk_n\vert X \vert_{\catC}$, $n$-skeleton} as the coend
\[{\sk_n\vert X \vert_\catC \defas{} \int}^{{\bf k} \in \Delta_n^{\op}} X_k \otimes \Delta^k.\] 
\end{dfn}
Again, we will often simplify notation and just write $\sk X$ instead of $\sk \vert X \vert_\catC$ when the context does not allow confusion.
\begin{lemma}
The inclusions of categories $\catDelta_n \rightarrow \catDelta_{n+1} \rightarrow \ldots \rightarrow \catDelta$ induce morphisms of coends and we get \[\colim_n \sk_n  X \cong \vert X \vert.\]
\end{lemma}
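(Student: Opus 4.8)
The statement asserts that the natural maps $\sk_n|X|_\catC \to \sk_{n+1}|X|_\catC$ induced by the inclusions $\catDelta_n \hookrightarrow \catDelta_{n+1}$ exhibit $|X|_\catC$ as their colimit. The plan is to reduce this to a statement about colimits of coends, and then to the elementary observation that the indexing category $\catDelta^{\op}$ is the (filtered) union of its full subcategories $\catDelta_n^{\op}$. First I would recall that a coend $\int^{\mathbf{k}\in\catDelta^{\op}} X_k \otimes \Delta^k$ is itself a particular colimit: it is the colimit over the twisted arrow category (or equivalently a coequalizer of the form $\coprod_{\phi\colon\mathbf{m}\to\mathbf{n}} X_n\otimes\Delta^m \rightrightarrows \coprod_{\mathbf{k}} X_k\otimes\Delta^k$). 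Since $\catC$ is cocomplete, all these colimits exist, and tensoring with a fixed simplicial set preserves colimits in the $\catC$-variable because $\catC$ is tensored over simplicial sets (so $(-)\otimes\Delta^k$ is a left adjoint).

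The key step is the interchange of colimits. Consider the functor $F\colon \catDelta^{\op}\times\catDelta^{\op}\to\catC$ (or rather its restriction to the twisted arrow category) whose coend computes $|X|_\catC$. Each $\sk_n|X|_\catC$ is the coend of the restriction of $F$ to $\catDelta_n^{\op}$. I would then use that $\catDelta^{\op} = \colim_n \catDelta_n^{\op}$ as a sequential colimit of full subcategory inclusions, together with the general fact that coends commute with filtered (in particular sequential) colimits over the indexing category: if $\mathcal{I} = \colim_n \mathcal{I}_n$ is such a colimit of full subcategories and $G\colon \mathcal{I}^{\op}\times\mathcal{I}\to\catC$ is a functor, then $\int^{\mathcal{I}} G \cong \colim_n \int^{\mathcal{I}_n} G|_{\mathcal{I}_n}$. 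This is a routine Fubini-type interchange of colimits, valid because colimits commute with colimits. Applying this with $\mathcal{I}=\catDelta^{\op}$, $\mathcal{I}_n=\catDelta_n^{\op}$, and $G$ the bifunctor $(\mathbf{j},\mathbf{k})\mapsto X_j\otimes\Delta^k$ yields exactly
\[
\colim_n \sk_n|X|_\catC = \colim_n \int^{\mathbf{k}\in\catDelta_n^{\op}} X_k\otimes\Delta^k \cong \int^{\mathbf{k}\in\catDelta^{\op}} X_k\otimes\Delta^k = |X|_\catC,
\]
and one checks that under this identification the colimit cocone maps are precisely the ones induced by the inclusions $\catDelta_n\hookrightarrow\catDelta$, which gives the naturality.

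The main obstacle — really the only point requiring care — is verifying the interchange-of-colimits claim at the level of the coend presentation rather than hand-waving it. Concretely, one must check that the coequalizer diagram defining $\int^{\catDelta^{\op}}$ is the colimit (over $n$) of the coequalizer diagrams defining the $\int^{\catDelta_n^{\op}}$; this follows because both $\coprod_{\mathbf{k}\in\catDelta^{\op}} X_k\otimes\Delta^k$ and the corresponding coproduct over pairs $\phi\colon\mathbf{m}\to\mathbf{n}$ of objects of $\catDelta$ are sequential colimits (over $n$) of their truncations to $\catDelta_n$, using that $\catDelta_n\hookrightarrow\catDelta_{n+1}$ is full and every object/morphism of $\catDelta$ lies in some $\catDelta_n$, and because colimits commute with colimits (so in particular $\colim_n \mathrm{coeq}(\cdots) = \mathrm{coeq}(\colim_n\cdots)$). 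All of this is elementary category theory and I would present it compactly, citing the cocompleteness of $\catC$ and the fact that $(-)\otimes S$ preserves colimits for each simplicial set $S$.
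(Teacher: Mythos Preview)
Your proposal is correct; the argument via the coequalizer presentation of coends and interchange of colimits is exactly the standard one, and your care in verifying that both terms of the coequalizer diagram are themselves sequential colimits of their truncations is the right point to check. The paper itself states this lemma without proof, treating it as an elementary fact about coends over a filtered union of full subcategories, so there is no approach to compare against.
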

We define an analog to the degenerate simplices, or rather the latching spaces in the classical setting:
\begin{dfn}\label{degsimplices}
Let $X \in s\catC$ be a simplicial object. The \emph{latching object $L_n X$}\index{latching object $L_n X$}\index{lnX@$L_n X$, latching object } comes together with a distinct map $L_n X \rightarrow X_n$ and is defined inductively as follows: Let $L_0 X$ be the initial object of $\catC$
. Assume that $L_{n} X$ and $L_{n} X \rightarrow X_n$ are already
defined, and that the following diagram commutes:
\labeleq{latching}{\xymatrix{
{L_n X}\ar[rrrr]\ar[rrrd]\ar[dddr]&{}&{}&{}&{X_n}\ar@{.>}^-{s_0}[dddr]\\
{}&{\cdots}&{}&{X_n}\ar@{.>}^-{s_1}_-{}[ddrr]&{}&{}\\
{}&{}&{}&{\cdots}&{}&{}\\
&{X_n}\ar@{.>}^-{s_n}[rrrr]&{}&{}&{}&{X_{n+1}}
}}
We let $L_{n+1} X$ be the colimit of the following solid part of the
above diagram.
The map $L_{n+1}X \rightarrow X_{n+1}$ is induced by the simplicial
degeneracy maps $s_i: X_n \rightarrow X_{n+1}$. 
\end{dfn}
The importance of the latching objects lies in the following proposition:
\begin{prop}{\cite[VII.3.8]{GJ}}\label{skeletonfiltration}
Let $X \in s\catC$ be a simplicial object. Then for all $n \geq 0$ there is a pushout diagram in $\catC$:
\[\xymatrix{
{X_n \otimes \partial\Delta^n \cup_{L_n X \otimes \partial\Delta^n} L_n X \otimes \Delta^n}\ar[d]\ar[r]&{\sk_{n-1} X}\ar[d]\\{X_n \otimes \Delta^n}\ar[r]&{\sk_n X,}\pushout
}\]
where the left vertical map is the pushout product of $L_n X \rightarrow X_n$ with the inclusion of the boundary $\partial\Delta^n \rightarrow \Delta^n$.
\end{prop}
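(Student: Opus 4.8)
The plan is to deduce the Proposition from the classical skeletal decomposition of a simplicial \emph{set} by a cocontinuity-plus-density argument, in the spirit of \cite[VII.3.8]{GJ}. Fix $n\ge 0$. For $c\in\catC$ and a simplicial set $K$, write $c\odot K\in s\catC$ for the simplicial object with $(c\odot K)_k=\coprod_{K_k}c$, the structure maps induced from those of $K$. Since $\catC$ is tensored over simplicial sets, $\catC(c\otimes K,d)\cong s\catset(K,\catC(c,d))$; read in the $c$-variable this shows (by the Yoneda lemma) that $c\mapsto c\otimes K$ is cocontinuous, and read in the $K$-variable it shows $K\mapsto c\otimes K$ is cocontinuous and preserves coproducts. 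From this, together with the co-Yoneda presentation $K\cong\int^{\mathbf k}\bigl(\coprod_{K_k}\Delta^k\bigr)$, one obtains the identities $|c\odot K|_\catC\cong c\otimes K$ and, compatibly, $\sk_j|c\odot K|_\catC\cong c\otimes\sk_jK$, $(c\odot K)_n=\coprod_{K_n}c$, and $L_n(c\odot K)\cong\coprod_{L_nK}c$; the last of these holds because the latching construction of Definition~\ref{degsimplices} is built from the $(c\odot K)_k$ by iterated finite colimits and $\coprod_{(-)}c$ is cocontinuous.

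First I would check that both sides of the asserted square are cocontinuous functors $s\catC\to\catC$ and that there is a canonical comparison map between them. The functor $X\mapsto\sk_j|X|_\catC$ is a coend, hence cocontinuous; $X\mapsto X_n$ is an evaluation; $X\mapsto L_nX$ is cocontinuous, being built from evaluations by iterated finite colimits (finite colimits commute with all colimits); tensoring with the fixed simplicial sets $\partial\Delta^n$, $\Delta^n$ is cocontinuous in $\catC$; and pushouts are cocontinuous. Hence the corner $X_n\otimes\partial\Delta^n\cup_{L_nX\otimes\partial\Delta^n}(L_nX\otimes\Delta^n)$, together with its pushout against $X_n\otimes\Delta^n$ over $\sk_{n-1}|X|_\catC$, is a cocontinuous functor of $X$. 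One then verifies that the evident structure maps are well defined: the map from the corner to $\sk_{n-1}|X|_\catC$ (every simplex occurring there lies in dimension $\le n-1$ or is degenerate), and the equality on the corner of the two composites into $\sk_n|X|_\catC$, which is exactly the coend relation attached to adjoining the object $\mathbf n$ to the indexing category $\catDelta_n$. The universal property of the pushout then yields a comparison map, natural in $X$, from the asserted pushout to $\sk_n|X|_\catC$.

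Next I would reduce to simplicial sets. By the co-Yoneda formula $X\cong\int^{\mathbf m}X_m\odot\Delta^m$ in $s\catC$, every $X$ is a canonical colimit of objects of the form $c\odot\Delta^m$ with $c\in\catC$, and both functors above preserve this colimit, so it suffices to prove the comparison map is an isomorphism when $X=c\odot\Delta^m$. For such $X$, the identities of the first paragraph show that the whole asserted square is obtained by applying the cocontinuous functor $c\otimes(-)$ to the square of simplicial sets
\begin{gather*}
\Bigl(\coprod_{\Delta^m_n}\partial\Delta^n\Bigr)\cup_{\coprod_{L_n\Delta^m}\partial\Delta^n}\Bigl(\coprod_{L_n\Delta^m}\Delta^n\Bigr)\ \longrightarrow\ \sk_{n-1}\Delta^m,\\
\coprod_{\Delta^m_n}\Delta^n\ \longrightarrow\ \sk_n\Delta^m .
\end{gather*}
Since $c\otimes(-)$ preserves pushouts, it is enough to show this square of simplicial sets is a pushout, and in fact it is so with any simplicial set $K$ in place of $\Delta^m$.

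Finally, the simplicial-set statement is the familiar one. By the Eilenberg--Zilber lemma $K_n=K_n^{\mathrm{nd}}\sqcup L_nK$, so the corner is $\coprod_{K_n^{\mathrm{nd}}}\partial\Delta^n\ \sqcup\ \coprod_{L_nK}\Delta^n$; the summand $\coprod_{L_nK}\Delta^n$ maps by the identity into $\coprod_{K_n}\Delta^n$ and is absorbed into $\sk_{n-1}K$ in the pushout, leaving exactly the classical cell-attachment pushout $\sk_nK\cong\sk_{n-1}K\cup_{\coprod_{K_n^{\mathrm{nd}}}\partial\Delta^n}\coprod_{K_n^{\mathrm{nd}}}\Delta^n$, which is checked degreewise. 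The case $n=0$ is covered without change (there $L_0=\varnothing$, $\partial\Delta^0=\varnothing$, $\sk_{-1}=\varnothing$), so no separate base case is needed. I expect the only real friction to be the bookkeeping of the second paragraph — pinning down the structure maps out of the corner and confirming that the two maps into $\sk_n|X|_\catC$ coincide — since once the comparison map is in hand everything else is formal, modulo the standard Eilenberg--Zilber input.
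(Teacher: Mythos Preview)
The paper does not supply its own proof of this proposition; it simply cites \cite[VII.3.8]{GJ} and moves on. Your argument is a complete and correct proof. The density-and-cocontinuity reduction you carry out is not the route Goerss--Jardine take: their argument works directly with the coend defining $\sk_n|X|$, analysing what is gained when the object $\mathbf n$ is adjoined to the indexing category $\catDelta_{n-1}$ and identifying the gluing data with the latching object and the boundary by hand. Your approach trades that combinatorics for the formal machinery of cocontinuous functors and the co-Yoneda density of the $c\odot\Delta^m$, pushing all the actual work into the classical Eilenberg--Zilber statement for simplicial \emph{sets}. This buys you a cleaner and more conceptual proof once the infrastructure is in place, at the cost of having to verify several cocontinuity claims (notably that $X\mapsto L_nX$ and $c\mapsto c\otimes K$ are cocontinuous, which in the paper's setup follow from $\catC$ being enriched cocomplete so that conical colimits are enriched colimits). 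One small point worth making explicit: the identification $L_n(c\odot K)\cong\coprod_{L_nK}c$ also needs that the paper's inductive Definition~\ref{degsimplices} of $L_n$, when applied to a simplicial set $K$, really does produce the set of degenerate $n$-simplices; this is true but is itself a small induction on $n$ using the simplicial identities.
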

\begin{dfn}\label{dfn:Cproper}
Let \(C\) be a class of morphisms in $\catC$. We say that a simplicial
object $X \in s\catC$ is {\em $C$-proper},\index{Cproper@$C$-proper}\index{proper} if all the maps $L_n X
\rightarrow X_n$ are in $C$.
\end{dfn}
We finally turn to the case of $\catC$ being a topological model category, \ie a model category enriched over $\catU$ in the sense of Definition~\ref{monoidalmodelcat}.
\begin{prop}\label{cofproperfiltr}
Let $\catC$ be enriched and tensored over $\catU$ and let \(C\) be a
class of morphisms in \(\catI\). Suppose that \(C\) is closed under
cobase change 
and satisfies the pushout product axiom with respect to the Quillen
model structure on $\catU$ (\eg if \(C\) is the class of cofibrations in a model category
enriched over $\catU$ in the sense of Definition~\ref{monoidalmodelcat}). Then
for any $C$-proper simplicial object $X$ in $s\catC$, the skeleton
filtration of $\vert X \vert_\catC$ consists morphisms in \(C\).
\[\xymatrix{
{X_n \otimes \vert\partial\Delta^n\vert \cup_{L_n X \otimes \vert\partial\Delta^n\vert} L_n X \otimes \vert\Delta^n\vert}\ar[d]\ar[r]&{\sk_{n-1} X}\ar^-{C\cof}[d]\\{X_n \otimes \vert\Delta^n\vert}\ar[r]&{\sk_n X,}\pushout
}\]
\end{prop}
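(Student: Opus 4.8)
The plan is to reduce the statement directly to Proposition~\ref{skeletonfiltration} together with the two stated closure properties of $C$. Although Proposition~\ref{skeletonfiltration} is phrased for a category tensored over simplicial sets, a category $\catC$ enriched and tensored over $\catU$ becomes tensored over simplicial sets by setting $Y \otimes K \defas Y \otimes_\catU |K|$ (the unbased analogue of the construction around Definition~\ref{ralizationadj}), and with this identification the geometric realization and its skeleta $\sk_n X$ are exactly the objects $|X|_\catC$, $\sk_n X$ appearing in the statement, while the simplicial tensors $X_n \otimes \Delta^n$, $X_n \otimes \partial\Delta^n$ become the $\catU$-tensors $X_n \otimes_\catU |\Delta^n|$, $X_n \otimes_\catU |\partial\Delta^n|$. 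Under this dictionary Proposition~\ref{skeletonfiltration} supplies, for each $n \ge 0$, precisely the pushout square displayed in the present statement: its right-hand vertical map is $\sk_{n-1}X \to \sk_n X$, and its left-hand vertical map is the pushout product
\[
\bigl(L_n X \to X_n\bigr)\ \square\ \bigl(|\partial\Delta^n| \hookrightarrow |\Delta^n|\bigr).
\]

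Next I would observe that, as a map of spaces in $\catU$, the inclusion $|\partial\Delta^n| \hookrightarrow |\Delta^n|$ is homeomorphic to the standard inclusion $S^{n-1} \hookrightarrow D^n$, that is, to (the unbased version of) a generating cofibration of the Quillen model structure on $\catU$ recalled in Remark~\ref{qgencoft}; in particular it is a $q$-cofibration. Since $X$ is $C$-proper, the latching map $L_n X \to X_n$ lies in $C$. The hypothesis that $C$ satisfies the pushout product axiom with respect to the Quillen model structure on $\catU$ (in the sense of the first item of Definition~\ref{monoidalmodelcat}, with $C$ playing the role of the cofibrations in $\catC_0$ and the $q$-cofibrations of $\catU$ playing the role of the cofibrations in $\catV$) then gives that the pushout product
\[
\bigl(L_n X \to X_n\bigr)\ \square\ \bigl(|\partial\Delta^n| \hookrightarrow |\Delta^n|\bigr)
\]
lies in $C$.

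Finally, by the pushout square recalled in the first paragraph, $\sk_{n-1}X \to \sk_n X$ is a cobase change of this pushout product, and $C$ is closed under cobase change, so $\sk_{n-1}X \to \sk_n X$ lies in $C$ for every $n$; this is exactly the assertion of the proposition. (When $C$ is the class of cofibrations of a model structure on $\catC$ enriched over $\catU$ in the sense of Definition~\ref{monoidalmodelcat}, both hypotheses — closure under cobase change and the pushout product axiom — hold automatically, which gives the parenthetical case.) I do not expect any genuine obstacle here: the only point requiring care is the bookkeeping in the first paragraph, namely checking that the pushout product over $s\catset$ with $\partial\Delta^n \hookrightarrow \Delta^n$ really does become, after geometric realization, the pushout product over $\catU$ with $|\partial\Delta^n| \hookrightarrow |\Delta^n|$. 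This is immediate once one uses that geometric realization preserves finite products and all colimits, so that $\emph{the tensor}\ Y \otimes_\catU |K|$ commutes with the colimits defining the latching objects and the skeleta; there is no substantive difficulty.
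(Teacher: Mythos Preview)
Your proposal is correct and follows exactly the argument the paper intends: the paper gives no explicit proof but displays the pushout square from Proposition~\ref{skeletonfiltration} with the right vertical map labeled ``$C\cof$'', which is precisely your argument --- the left vertical map is the pushout product of the latching map (in $C$ by $C$-properness) with the generating $q$-cofibration $|\partial\Delta^n|\hookrightarrow|\Delta^n|$, hence lies in $C$ by the pushout product axiom, and then cobase change gives the conclusion.
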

The next proposition concerns interactions of simplicial objects with weak equivalences. 
\begin{prop}\label{cofproperfiltreq}
Let $\catC$ be enriched and tensored over $\catU$, with a
class \(C\) of cofibrations and a class of weak equivalences, such
that the cofibrant 
objects form a category of cofibrant objects in the sense of
Definition~\ref{catcofobj}. Assume that the cofibrations and weak equivalences
are compatible with the enrichment in the sense that the pushout
product axiom~\ref{monoidalmodelcat}$(i)$ is satisfied. 
Let $X$ and $Y$ in $s\catC$ be \(C\)-proper simplicial
objects such that $X_0$ and $Y_0$ are cofibrant. If $f\colon{} X
\rightarrow Y$ is a morphism of simplicial objects that is a weak
equivalence in each simplicial degree, then the induced map of
realizations  
\[\vert f\vert_\catC: \vert X \vert_\catC \rightarrow \vert Y \vert_\catC\]
is a weak equivalence.
\end{prop}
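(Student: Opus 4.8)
The plan is to argue by induction on the skeleta of the geometric realizations, using that $|f|_\catC=\colim_n\sk_nf$ and that, by Proposition~\ref{skeletonfiltration} (cf.\ also Proposition~\ref{cofproperfiltr}), $\sk_nX$ is the pushout of $X_n\otimes|\Delta^n|$ and $\sk_{n-1}X$ along the upper-left corner object $L_nX\otimes|\Delta^n|\cup_{L_nX\otimes|\partial\Delta^n|}X_n\otimes|\partial\Delta^n|$, whose inclusion into $X_n\otimes|\Delta^n|$ is the pushout-product map $(L_nX\to X_n)\square(|\partial\Delta^n|\to|\Delta^n|)$. Because $X$ and $Y$ are $C$-proper with $X_0,Y_0$ cofibrant, a standard Reedy-type induction shows that all $X_n$ and all latching objects $L_nX$ (and likewise for $Y$) are cofibrant: $L_0X=\emptyset$, $L_1X=X_0$, and in general $L_nX$ is built, as in \ref{latching}, from the $X_k$ and lower latching objects by pushouts along cofibrations, after which $L_nX\to X_n$ being a $C$-cofibration makes $X_n$ cofibrant; the same input together with the pushout-product axiom shows that the legs of all the skeleton pushout squares are cofibrations. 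Thus the whole argument takes place inside the category of cofibrant objects of \ref{catcofobj}, where the generalized cube lemma \ref{gen.cube} and the cobase-change lemma \ref{gen.cobch} are available.

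Two auxiliary facts are needed. First, for any $K$ cofibrant in $\catU$ (e.g.\ $|\Delta^n|$ or $|\partial\Delta^n|=S^{n-1}$) the functor $-\otimes K\colon\catC\to\catC$ is left Quillen: it preserves cofibrations and acyclic cofibrations by the pushout-product axiom applied with $\emptyset\to K$, so by Ken Brown's lemma it carries weak equivalences between cofibrant objects to weak equivalences; in particular $f_n\otimes|\Delta^n|$ and $f_n\otimes|\partial\Delta^n|$ are weak equivalences. Second, the latching map $L_nf\colon L_nX\to L_nY$ is a weak equivalence of cofibrant objects. This second fact is the crux, and I would prove it by induction on $n$: $L_0f$ is the identity of $\emptyset$ and $L_1f=f_0$, while for the step one unwinds $L_nX$ via the finite filtration of \ref{latching}, an iterated sequence of pushouts along cofibrations of copies of $X_{n-1}$ and of lower latching objects, applying \ref{gen.cube} and \ref{gen.cobch} at each stage and using that $f_{n-1}$ is a weak equivalence and that the lower latching maps $L_kf$, $k<n$, are, by the inner induction hypothesis.

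Granting these, the main induction runs as follows. For $n=0$ one has $\sk_0X\cong X_0$ naturally (tensoring with the point $|\Delta^0|$, resp.\ with $S^0$ when $\catC$ is enriched over $\catT$, is the identity), so $\sk_0f\cong f_0$ is a weak equivalence. For the inductive step, compare the skeleton pushout squares for $X$ and $Y$ in degree $n$, arranged so that the cofibration "corner${}\to X_n\otimes|\Delta^n|$" plays the role of the distinguished leg of \ref{gen.cube}. The three comparison maps other than $\sk_nf$ are: $\sk_{n-1}f$, a weak equivalence by the inductive hypothesis; $f_n\otimes|\Delta^n|$, a weak equivalence by the first auxiliary fact; and the induced map on corner objects $L_nX\otimes|\Delta^n|\cup_{L_nX\otimes|\partial\Delta^n|}X_n\otimes|\partial\Delta^n|\to(\text{same for }Y)$, which is the map between two pushout squares whose legs are cofibrations (again the pushout-product axiom) and whose comparison maps $f_n\otimes|\partial\Delta^n|$, $L_nf\otimes|\partial\Delta^n|$, $L_nf\otimes|\Delta^n|$ are all weak equivalences by the two auxiliary facts, hence a weak equivalence by a further application of \ref{gen.cube}. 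The generalized cube lemma then yields that $\sk_nf$ is a weak equivalence.

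Finally, $|f|_\catC=\colim_n\sk_nf$ is the colimit of a map of sequences in which every $\sk_{n-1}X\to\sk_nX$ (and likewise for $Y$) is a cofibration between cofibrant objects and every $\sk_nf$ is a weak equivalence; a standard mapping-telescope argument — once more an iteration of \ref{gen.cube} and \ref{gen.cobch} — shows such a colimit is a weak equivalence, completing the proof. The main obstacle is the second auxiliary fact: one must make the iterated-pushout presentation of the latching object $L_nX$ explicit and verify at each stage that the pushout is along a cofibration between cofibrant objects, i.e.\ supply the Reedy-cofibrancy bookkeeping that makes the cube lemma applicable; everything else is a routine double induction on skeleta.
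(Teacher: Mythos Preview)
Your proposal is correct and follows essentially the same approach as the paper: both argue by induction on the skeleton filtration, reduce via the generalized cube lemma to showing that $L_nf$ is a weak equivalence between cofibrant objects, and prove the latter by a further induction on the iterated-pushout description of the latching object. You are more explicit than the paper in two places---invoking Ken Brown for the fact that $-\otimes K$ preserves weak equivalences between cofibrant objects (the paper cites \cite[II.8.4]{GJ}), and spelling out the passage to the sequential colimit at the end, which the paper leaves implicit.
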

\begin{proof}
 We begin with showing that all the $X_n$, $Y_n$ and $L_n X$ and $L_n Y$ are cofibrant. $L_1 X = X_0$ is cofibrant by hypothesis, so assume inductively that $L_{n-1} X$ is cofibrant. Since $X$ is \(C\)-proper, the solid arrow part of diagram~\ref{latching} consists only of cofibrations, hence in particular $X_n$ is cofibrant. Since $L_{n+1} X$ is an iterated pushout of $X_n$ along cofibrations it is cofibrant itself. We continue by induction on the skeleton filtration of Proposition~\ref{cofproperfiltr} to show that the maps $sk_n X \rightarrow \sk_n Y$ are weak equivalences. Note that the tensor with a cofibrant space preserves weak equivalences between cofibrant objects by \cite[II.8.4]{GJ}. Hence by the generalized cube lemma we only need to show that the maps
\[X_n \otimes \vert\partial\Delta^n\vert \cup_{L_n X \otimes \vert\partial\Delta^n\vert} L_n X \otimes \vert\Delta^n\vert \longrightarrow Y_n \otimes \vert\partial\Delta^n\vert \cup_{L_n Y \otimes \vert\partial\Delta^n\vert} L_n Y \otimes \vert\Delta^n\vert\]
are weak equivalences between cofibrant objects. Again using the generalized cube lemma on the defining diagram for the pushout product of $L_n X \rightarrow X$ and $\partial \Delta^n \rightarrow \Delta^n$, this reduces to showing that $L_n X \rightarrow L_n Y$ is a weak equivalence. As above this is proven inductively, by comparing the diagrams~\ref{latching} for $X$ and $Y$ and applying the generalized cube lemma to each of the iterated pushouts.
\end{proof}
\begin{rem}
A very obvious example for categories $\catC$ which satisfy the requirements of the above proposition is given by a model category enriched over $\catU$ in the sense of Definition~\ref{monoidalmodelcat}. However, we will in particular want to apply the proposition to (levelwise) Hurewicz cofibrations and $\pi_*$-isomorphisms of orthogonal spectra, so the more general formulation is necessary.
\end{rem}
It can be hard to verify the properness of a simplicial object. Sometimes the following is easier to check:
\begin{dfn}\label{dfngood}
Fix a class \(C\) of morphisms called in $\catC$. We call a simplicial
object $X \in s\catC$ \emph{$C$-good},\index{Cgood@$C$-good} if  for all $n$ all the
degeneracy maps $s_i\colon X_n \rightarrow X_{n+1}$ are in $C$.
\end{dfn}
In particular in $\catT$ and $\catU$, there is Lillig's Union Theorem \cite{Lil}, which implies the following helpful statement:
\begin{lemma} \label{goodisproper}For simplicial objects in the categories $\catT$ or $\catU$, Hurewicz proper and Hurewicz good are equivalent notions. Since colimits and tensors are computed levelwise, the same is true for levelwise Hurewicz cofibrations of (equivariant) orthogonal spectra.
\end{lemma}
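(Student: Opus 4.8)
The statement to be proved is Lemma~\ref{goodisproper}: for simplicial objects in $\catT$ or $\catU$, being $h$-good (all degeneracies $s_i$ are $h$-cofibrations) is equivalent to being $h$-proper (all latching maps $L_nX \to X_n$ are $h$-cofibrations), and the same holds levelwise for orthogonal spectra. The plan is to treat the space-level statement first and then transfer it to spectra by working in each spectrum level, using that colimits and tensors in $\catOS$ are computed levelwise.

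\textbf{Key steps for the space-level statement.} First I would observe that $h$-proper implies $h$-good is essentially immediate from the recursive construction of the latching objects in Definition~\ref{degsimplices}: each $s_i\colon X_n\to X_{n+1}$ factors through the latching map $L_{n+1}X\to X_{n+1}$ via the structure map $X_n\to L_{n+1}X$ in the colimit cone of diagram~\eqref{latching}, and since each $s_i$ has a simplicial section (some $d_j s_i=\id$), the map $X_n\to L_{n+1}X$ is a split monomorphism, hence (being a map of well-behaved spaces) an $h$-cofibration; composing with the $h$-cofibration $L_{n+1}X\to X_{n+1}$ gives that $s_i$ is an $h$-cofibration. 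Actually the cleaner argument here is that $s_i$ is a retract of the $h$-cofibration $L_{n+1}X\to X_{n+1}$ in the arrow category, so I would phrase it that way. The harder direction, $h$-good implies $h$-proper, is where Lillig's Union Theorem \cite{Lil} enters: one shows by induction on $n$ that $L_{n+1}X$, built as the iterated colimit in diagram~\eqref{latching}, receives an $h$-cofibration $L_{n+1}X\to X_{n+1}$. The point is that $L_{n+1}X$ is a colimit of the subobjects $s_i(X_n)\subseteq X_{n+1}$ glued along their intersections (which by the simplicial identities are the images of lower-dimensional latching objects), and each $s_i(X_n)\to X_{n+1}$ is a closed inclusion that is an $h$-cofibration because $s_i$ is; Lillig's theorem says that a finite union of subcomplexes each of whose inclusions is an $h$-cofibration, with all pairwise intersections also $h$-cofibration inclusions, has its total inclusion an $h$-cofibration. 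An inductive bookkeeping over the number of degeneracies then yields that $L_{n+1}X\to X_{n+1}$ is an $h$-cofibration. I would cite the standard treatment (e.g.\ the argument is classical; Lillig's union theorem is exactly designed for this) rather than reproving it.

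\textbf{Transfer to orthogonal spectra.} For a simplicial orthogonal $G$-spectrum $X_\bullet$, Definition~\ref{degsimplices} and the skeleton filtration are formed levelwise, since colimits and tensors in $\catOS$ (and in $\catGOS$) are computed in each level $V$; this is recalled explicitly in the excerpt around the definition of the smash product and of geometric realization. Hence the latching object $(L_nX_\bullet)_V = L_n((X_\bullet)_V)$ for every Euclidean space $V$, and a map of orthogonal spectra is a levelwise $h$-cofibration precisely when each of its levels is an $h$-cofibration of spaces (this is how levelwise $h$-cofibrations are defined in the excerpt). Therefore $X_\bullet$ is levelwise $h$-good iff each $(X_\bullet)_V$ is $h$-good in $\catT$, iff (by the space-level statement just proved) each $(X_\bullet)_V$ is $h$-proper, iff $X_\bullet$ is levelwise $h$-proper. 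No further work is needed beyond citing the levelwise computation of colimits.

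\textbf{Expected main obstacle.} The only genuine content is the $h$-good $\Rightarrow$ $h$-proper implication at the level of spaces, and specifically the combinatorial organization of $L_{n+1}X$ as an iterated union of the images $s_i(X_n)$ whose intersections are controlled by the simplicial identities $s_is_j = s_{j+1}s_i$ for $i\le j$; one must check that these intersections are again latching-type objects so that Lillig's theorem applies inductively. This is routine but fiddly bookkeeping with the simplicial identities; I would present it as an induction on $n$ with the inductive hypothesis packaging both that $L_{n}X\to X_{n}$ is an $h$-cofibration and that the relevant intersection inclusions are $h$-cofibrations, and then invoke Lillig's Union Theorem to finish the step. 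Everything else — the easy direction and the passage to spectra — is formal.
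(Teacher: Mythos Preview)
Your approach matches the paper's, which gives no proof beyond the sentence introducing the lemma as a consequence of Lillig's Union Theorem; your identification of Lillig as the engine for the direction $h$-good $\Rightarrow$ $h$-proper, and the levelwise transfer to orthogonal spectra, is exactly what is intended.

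There is, however, a small but genuine slip in your treatment of the easy direction $h$-proper $\Rightarrow$ $h$-good. A split monomorphism of spaces is \emph{not} automatically an $h$-cofibration, so the sentence ``the map $X_n\to L_{n+1}X$ is a split monomorphism, hence \ldots\ an $h$-cofibration'' does not go through. Your alternative ``$s_i$ is a retract of the latching map in the arrow category'' also fails: for this you would need a map $r\colon L_{n+1}X\to X_n$ with $r\circ\iota_i=\id$ and $(\text{latching})=s_i\circ r$, which forces every $s_j$ to factor through $s_i$, and that is false in general. The correct (and equally elementary) fix is inductive: assuming $L_nX\to X_n$ is an $h$-cofibration, build the wide pushout in diagram~\eqref{latching} one leg at a time; each coprojection $X_n\to L_{n+1}X$ is then a finite composite of cobase changes of $L_nX\to X_n$, hence an $h$-cofibration, and $s_i$ is the composite of this coprojection with the latching map. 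With that repair, your plan is complete and in line with the paper.
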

%
{The following is an important example of a simplicial object, and in particular comes up in the proofs of the convenience property for the $\Sp$-model structures:
\begin{dfn}
  \label{barconstr}
Let $(\catC,\smash,\I)$ be symmetric monoidal, let $(R, \phi, \eta)$ be a monoid in $\catC$, $(M,\mu)$ a right $R$-module, $(N,\nu)$ a left $R$-module. Define \emph{the bar construction} $\B_*(M,R,N) \in s\catC$ by setting
\[\B_p(M,R,N) = M \smash R^{\smash p} \smash N,\] where $R^{\smash 0} = \I$. The face and degeneracy operators on $\B_p(M,R,N)$ are 
\[d_i = \begin{cases}\mu \smash \id_R^{\smash (p-1)} \smash \id_N & \text{if } i = 0\\
\id_m \smash \id_R^{\smash (i-1)}\smash \phi \smash \id_R^{\smash (p-i-1)} \smash \id_N& \text{if } 0 < i < p\\
\id_M \smash \id_R^{\smash (p-1)} \smash \nu& \text{if } i = p\end{cases}\]
and $s_i = \id_m \smash \id_R^{\smash i} \smash \eta \smash \id_R^{p-i} \smash \id_N$ if $0 \leq i \leq p$.\\ Note that if $M$ was an $(R',R)$-bimodule, then $\B_*(M,R,N)$ is a simplicial $R'$-module.\\
In the case that $\catC$ is enriched and tensored over simplicial sets, so that geometric realization makes sense, we will usually denote the realization by
\[\B(M,R,N) \defas{} \vert \B_*(M,R,N)\vert.\]14
\end{dfn}
}

\section{Assembling Model Structures}\label{assemblingsect}

Given a model structure on a category $\catC$, one often wants to give corresponding structures to categories of functors $\catD \rightarrow \catC$ for some diagram category $\catD$. Theorems on the possibility and methods to do this are well studied in many cases, examples can be found in \cite[14.2.1]{Hir} for cases of cofibrantly generated structures on $\catC$, in \cite[Chapter 5]{H} for the case of $\catD$ a Reedy category. More recently Angeltveit has studied the Reedy approach in an enriched setting (\cite{A}). The result of this section is more in the direction of the former, in particular as a special case we will get an enriched version of Hirschhorn's Theorem \cite[11.6.1]{Hir}. However, the significant difference in our approach is, that we lift not just a single model structure on the target category, but rather assemble a new model structure from several given ones.

Hirschhorn's method uses the evaluation functors that any diagram category is equipped with; we give a short recollection:

Let $\catD$ be a small category. Consider the trivial category $\catTri$ with one
object $*$, and only one (identity) morphism. For each object $d$ of
$\catD$, there is an embedding \(\inc_d \colon * \to \catD\) sending
the object $*$ to $d$. Then the evaluation functor $\fev{}{d}$\index{evd@$\fev{}{d}$} assigns
to a functor $X\colon \catD \rightarrow \catC$ the precomposition
\(\fev{}d X = X \circ \inc_d\) with the inclusion \(\inc_d\).
We have adjoint pairs:
\[\Fr{}{d}: \catC \cong [\catTri,\catC] \leftrightarrows [\catD,\catC]
: \fev{}{d},\]
where $\Fr{}{d}(-)$\index{Fd@$\Fr{}{d}$} is the left Kan extension.
Then, given a cofibrantly generated model structure on $\catC$ with generating sets of (acyclic) cofibrations $I$ and $J$, we can form the sets \[\Fr{}{}I \defas{} \bigcup_{d\in\catD} \Fr{}{d}I
\quad\text{ and }\quad
\Fr{}{}J \defas{} \bigcup_{d\in\catD} \Fr{}{d}J.\]
\begin{thm}{\cite[11.6.1]{Hir}}\label{hirsch}
Let $\catD$ be a small category, and let $\catC$ be a cofibrantly generated model category with generating cofibrations $I$ and generating acyclic cofibrations $J$. Then the category $[\catD,\catC]=[\catD,\catC]_0$ of $\catD$-diagrams in $\catC$ is a cofibrantly generated model category in which a map $f\colon{} X \rightarrow Y$ is 
\begin{itemize}
\item a weak equivalence if $\fev{}{d} (f): X_d \rightarrow Y_d$ is a weak equivalence in $\catC$ for every object $d \in \catD$,
\item a fibration if $\fev{}{d} (f): X_d \rightarrow Y_d$ is a fibration in $\catC$ for every object $d \in \catD$, and
\item an (acyclic) cofibration if it is a retract of a transfinite composition of cobase changes of maps in $\Fr{}{}I$ ($\Fr{}{}J$).
\end{itemize}
\end{thm}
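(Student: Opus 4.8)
The plan is to deduce the statement from the Recognition Theorem~\ref{h2119}, applied to the bicomplete category $[\catD,\catC]$ with $W$ the class of objectwise weak equivalences, with $I$ the set $\Fr{}{}I = \bigcup_{d} \Fr{}{d}I$, and with $J$ the set $\Fr{}{}J = \bigcup_d \Fr{}{d}J$. Two preliminary facts do most of the work. First, limits and colimits in $[\catD,\catC]$ are computed objectwise, so in particular every evaluation functor $\fev{}{d}$ preserves all colimits. Second, for each $d$ we have the adjunction $[\catD,\catC](\Fr{}{d}A,Y)\cong\catC(A,Y_d)$ together with the explicit formula $(\Fr{}{d}A)_{d'}\cong\coprod_{\catD(d,d')}A$ for the value of the left Kan extension. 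From the adjunction one reads off immediately that a map $p$ of diagrams lies in $\Fr{}{}J\inj$ exactly when each $p_d$ lies in $J\inj$ in $\catC$, i.e. exactly when $p$ is an objectwise fibration, and that $\Fr{}{}I\inj$ is precisely the class of objectwise acyclic fibrations. This identifies in advance the fibrations and acyclic fibrations of the model structure we are trying to build.

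Next I would verify the six hypotheses of~\ref{h2119}. The two-out-of-three property and closure under retracts for $W$ hold because they hold objectwise in $\catC$. For the smallness conditions (ii) and (iii): using $\fev{}{d}(\Fr{}{d'}A)\cong\coprod_{\catD(d',d)}A$ and the fact that $\fev{}{d}$ preserves the cellular constructions, one checks that $\fev{}{d}$ carries an $\Fr{}{}I$-relative cell complex to an $I$-relative cell complex (and similarly for $J$); combined with the colimit-preservation of $\fev{}{d}$ and the adjunction, the smallness of the domains of maps in $\Fr{}{d}I$ (resp. $\Fr{}{d}J$) relative to $\Fr{}{}I\cell$ (resp. $\Fr{}{}J\cell$) reduces to the corresponding smallness of the domains in $\catC$. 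For hypothesis (iv), $\Fr{}{}J\cell\subseteq W\cap\Fr{}{}I\cof$: each map of $\Fr{}{}J$ is, at every object $d$, a coproduct of maps of $J$, hence an acyclic cofibration in $\catC$, so by the previous remark a $\Fr{}{}J$-cell complex is objectwise a $J$-cell complex, hence an objectwise acyclic cofibration and in particular an objectwise weak equivalence; moreover $\Fr{}{d}j\in\Fr{}{}I\cof$ because, by adjunction, $\Fr{}{d}j$ lifts against any objectwise acyclic fibration whenever the cofibration $j$ does, and $\Fr{}{}I\cof$ is saturated (closed under coproducts, cobase change, transfinite composition and retracts). For (v), $\Fr{}{}I\inj\subseteq W\cap\Fr{}{}J\inj$, since objectwise acyclic fibrations are objectwise weak equivalences and objectwise fibrations. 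Finally (vi) holds in its second form: $W\cap\Fr{}{}J\inj$ consists of the maps that are objectwise weak equivalences and objectwise fibrations, i.e. objectwise acyclic fibrations, which is exactly $\Fr{}{}I\inj$.

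With these checks in place, Theorem~\ref{h2119} yields the cofibrantly generated model structure on $[\catD,\catC]$ with $\Fr{}{}I$ and $\Fr{}{}J$ as the generating sets, and the general description of such a structure identifies the weak equivalences and fibrations as the objectwise ones, and the (acyclic) cofibrations as the retracts of transfinite compositions of cobase changes of maps in $\Fr{}{}I$ (resp.\ $\Fr{}{}J$), which is the final clause of the statement.

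The step I expect to be the main obstacle is the smallness bookkeeping in hypotheses (ii) and (iii): one must be careful that evaluating an $\Fr{}{}I$-relative cell complex really produces a relative $I$-cell complex in $\catC$ — so that it is legitimate to invoke the smallness hypothesis of $\catC$ — and that the adjunction isomorphism $\Fr{}{d}\dashv\fev{}{d}$ is compatible with the transfinite colimits appearing in the definition of smallness. Everything else is a routine transcription of that adjunction together with the objectwise computation of colimits in $[\catD,\catC]$.
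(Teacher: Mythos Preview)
The paper does not give its own proof of this theorem; it simply cites it from Hirschhorn \cite[11.6.1]{Hir}. Your approach via the Recognition Theorem~\ref{h2119} is the standard one and is correct, and it is exactly the method the paper employs for the more general Assembling Theorem~\ref{puzzling} (whose proof the paper does spell out, and which specializes to the present statement when all the $\catM_d$ come from a single model structure on $\catC$ via the free/evaluation adjunction).
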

Let us now move to an enriched setting.
Let $(\catV, \smash, \I)$ be a
complete closed symmetric monoidal category, and let $\catC$ and
$\catD$ be enriched over $\catV$, such that $\catD$ is
$\catV$-equivalent to a small category, hence the enriched
functor category $[\catD, \catC]$ exists
(\ref{enrfunctorcat}). Consider $\catTri$ as the trivial
$\catV$-category, \ie as the $\catV$-category with one
object $*$ such that the morphism object $\catTri(*,*)$ is unit in
$\catD$. Then analogous to the discussion above, the inclusion of
$\catTri$ at any object of $\catD$ yields evaluation functors by
precomposition. Under favorable conditions on $\catC$, these have left
adjoints which we again denote by $\Fr{}{d}$ (\eg if $\catC$ is
tensored, cotensored and enriched cocomplete,
cf.~Proposition~\ref{kanexist}). However this time, we want to consider an
intermediate functor category: Given an object $d \in \catD$, denote
by $\End_d$ the \emph{full} subcategory containing only that object. Then the
inclusion of $\catTri$ at $d \in \catD$ factors in the following way 
\labeleq{incfact}{\xymatrix{{\catTri}\ar_-{\inc}[d]\ar@{.>}^-{\fev{}{\!d}X}[ddrr]\\{\End_d}\ar_-{\inc_d}[d]\ar_-{\fev{'}{d}X}[drr]\\{\catD}\ar_-{X}[rr]&{}&{\catC,}}}
and hence we have a factorization of evaluation functors
\labeleq{evfact}{\xymatrix{{\catC}& \ar_-{\cong}[l] {[\catTri,\catC]}&{[\End_d,\catC]}\ar_-{}[l]&{[\catD,\catC]\ar_-{\fev{'}{d}}[l]\ar@(u,u)_-{\fev{}{d}}[ll]}}.}
Each of the functors in this factorization has an (enriched) left adjoint if and only if the appropriate left Kan extensions exist (Proposition~\ref{kanexist}), and in that case we denote them in the following way:
\labeleq{semfreefact}{\xymatrix{{\catC} \ar^-{\cong}[r]& {[\catTri,\catC]}\ar@(u,u)^-{\Fr{}{d}}[rr]\ar^-{\End_d\otimes -}[r]&{[\End_d,\catC]}\ar^-{\Gr{}{d}}[r]&{[\catD,\catC]}}.}
We call objects of the form \emph{$\Gr{}{d}X$ semi-free}, in analogy to the term \emph{free} for objects $\Fr{}{d}Y$. Note that the notation $\End_d \otimes -$ is not accidental, as it is in fact given by the categorical tensor with the endomorphism $\catD$-object of $d$ if it exists.

Assume that for each $d \in \catD$, there is a cofibrantly generated model structure $\catM_d$ on the underlying ordinary category $[\End_d,\catC]_0$ of $[\End_d,\catC]$, with generating (acyclic) cofibrations $I_d$ and $J_d$, and classes of weak equivalences $\catW_d$, respectively. Assume further that each $[\End_d,\catC]$ is tensored and cotensored over $\catV$, so that the semi-free functors $\Gr{}{d}$ all exist. Define the sets of maps $\Gr{}{}I$ and $\Gr{}{}J$ in $[\catD,\catC]_0$ as
\labeleq{GIGJ}{\Gr{}{}I \defas{} \bigcup_{d\in\catD}\Gr{}{d}I_d\;\;\;\;\;\;\;\;\;\;\;\;\;\; \Gr{}{}J \defas{} \bigcup_{d\in\catD}\Gr{}{d}J_d.}
Define the class $\catW$ of maps in $[\catD,\catC]_0$ as 
\labeleq{catW}{\catW \defas{} \{f \in [\catD,\catC]_0,\;\; {\rm{s.t. }}\;\; \fev{'}{d}(f) \,\in\, \catW_d \;\forall\; d \in \catD\}.}
Then the Assembling Theorem is the following
\begin{thm}\label{puzzling}
Let $\catV$ be a complete closed symmetric monoidal category and let $\catC$ and $\catD$ be enriched over $\catV$ such that $\catD$ is equivalent to a small subcategory. Assume that each of the functor categories $[\End_d,\catC]$ is tensored and cotensored over $\catV$ and that we have a family of cofibrantly generated model structures $\{\catM_d\}
$ as above.

Assume that the domains of the maps in $\Gr{}{}I$ are small relative to $\Gr{}{}I$-cell, the domains of the maps in $\Gr{}{}J$ are small with respect to $\Gr{}{}J$-cell and that $\Gr{}{}J$-cell $\subset \catW$.

Then the underlying category $[\catD,\catC]_0$ of $[\catD,\catC]$ is a cofibrantly generated model category where a map $f\colon{} X \rightarrow Y$ is
\begin{itemize}
\item a fibration if and only if each $\fev{'}{d}f$ is a fibration in the model structure $\catM_d$ on $[\End_d,\catC]_0$, and 
\item a weak equivalence if and only if each $\fev{'}{d}f$ is a weak equivalence in the model structure $\catM_d$ on $[\End_d,\catC]_0$ (\ie if and only if $f$ is in $\catW$). 
\end{itemize}
The generating cofibrations are given by $\Gr{}{}I$ and the generating acyclic cofibrations are given by $\Gr{}{}J$.
\end{thm}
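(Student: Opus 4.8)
The plan is to obtain the model structure as a direct application of the Recognition Theorem~\ref{h2119}, with $\Gr{}{}I$ as the proposed set of generating cofibrations, $\Gr{}{}J$ as the proposed set of generating acyclic cofibrations, and $\catW$ as the proposed class of weak equivalences. The category $[\catD,\catC]_0$ is bicomplete, with limits and colimits formed objectwise, and by Proposition~\ref{kanisleftadj} the semi-free functors $\Gr{}{d}$ occurring in the hypotheses are left adjoint to the evaluation functors $\fev{'}{d}\colon[\catD,\catC]_0\to[\End_d,\catC]_0$. Using these adjunctions I would first identify the classes $\Gr{}{}J\inj$ and $\Gr{}{}I\inj$: a map $g$ lies in $\Gr{}{}J\inj$ exactly when, for each object $d$ of $\catD$, $\fev{'}{d}(g)$ has the right lifting property against every map of $J_d$, i.e.\ exactly when $\fev{'}{d}(g)$ is a fibration in $\catM_d$; likewise $g\in\Gr{}{}I\inj$ exactly when each $\fev{'}{d}(g)$ is an acyclic fibration in $\catM_d$. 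Thus $\Gr{}{}J\inj$ is already the class of maps the statement calls fibrations, so once Theorem~\ref{h2119} applies, its conclusion delivers precisely the asserted descriptions of fibrations, of weak equivalences (which are $\catW$ by construction), and of cofibrations (which are $\Gr{}{}I\cof$).

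It then remains to check the six hypotheses of Theorem~\ref{h2119}. Condition~(i), that $\catW$ has the two-out-of-three property and is closed under retracts, follows by applying each $\fev{'}{d}$ and using the corresponding properties of $\catW_d$ in $\catM_d$. Conditions~(ii) and~(iii), the smallness of the domains of $\Gr{}{}I$ and $\Gr{}{}J$ relative to the respective cell classes, are among the hypotheses of the present theorem. For~(iv) one needs $\Gr{}{}J\cell\subseteq\catW\cap\Gr{}{}I\cof$: the inclusion into $\catW$ is itself a hypothesis, while $\Gr{}{}J\cell\subseteq\Gr{}{}I\cof$ holds because $\Gr{}{d}$, being a left adjoint, preserves coproducts, cobase changes, transfinite compositions and retracts, hence carries each $j\in J_d$ --- which, being an acyclic cofibration in $\catM_d$, lies in $I_d\cof$ --- into $\Gr{}{d}(I_d)\cof\subseteq\Gr{}{}I\cof$; therefore $\Gr{}{}J\subseteq\Gr{}{}I\cof$, and since the latter class is closed under the cell construction, $\Gr{}{}J\cell\subseteq\Gr{}{}I\cof$. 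Conditions~(v) and~(vi) are immediate from the objectwise identifications above: a levelwise acyclic fibration is objectwise both a weak equivalence and a fibration, giving $\Gr{}{}I\inj\subseteq\catW\cap\Gr{}{}J\inj$ (condition~(v)); and conversely a map which is objectwise both in $\catW_d$ and a fibration in $\catM_d$ is objectwise an acyclic fibration, giving $\catW\cap\Gr{}{}J\inj\subseteq\Gr{}{}I\inj$, the second alternative in condition~(vi).

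The genuinely substantial ingredient is the hypothesis $\Gr{}{}J\cell\subseteq\catW$; in the intended applications, such as Theorem~\ref{catMlevel}, this is essentially the point at which one verifies, for generating acyclic cofibrations $j$, that the maps $(\Gr{}{d}j)$ are objectwise weak equivalences --- condition~(ii) of Definition~\ref{hmlmonoidal} --- from which, together with the homotopical behaviour of the Hurewicz cofibrations furnished by condition~(i) of that definition, the full inclusion follows. Within the abstract theorem it is assumed, so the only step in the proof above requiring any care beyond routine objectwise bookkeeping is the observation that the left adjoints $\Gr{}{d}$ preserve cofibrancy, which is what makes the generating acyclic cofibrations cofibrations in the assembled structure; everything else reduces to transporting the corresponding facts in the fixed model categories $\catM_d$ back and forth along the adjunctions $\Gr{}{d}\dashv\fev{'}{d}$.
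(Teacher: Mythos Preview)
Your proposal is correct and follows essentially the same route as the paper: both apply the Recognition Theorem~\ref{h2119}, use the adjunctions $\Gr{}{d}\dashv\fev{'}{d}$ to identify $\Gr{}{}I\inj$ and $\Gr{}{}J\inj$ as the levelwise acyclic fibrations and levelwise fibrations respectively, and deduce $\Gr{}{}J\cell\subseteq\Gr{}{}I\cof$ from the fact that $\Gr{}{d}$ preserves retracts and cell complexes. Your write-up is in fact more explicit than the paper's terse version in separating out conditions~(v) and~(vi), and your closing remark on where the real content lies (the hypothesis $\Gr{}{}J\cell\subseteq\catW$) is a useful gloss, but there is no genuine difference in strategy.
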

\begin{proof}
We check the conditions from the recognition theorem \cite[2.1.19]{H}. First of all, enriched limits and colimits in $[\catD,\catC]$ are calculated pointwise by \cite[3.3]{Kel}, \ie the {(co)}limit of a diagram exists if and only if it does so after evaluating to the $[\End_d,\catC]$ or equivalently to $[\catTri,\catC]$. Since all the $[\End_d,\catC]$ had model structures, they were in particular bicomplete. As they were also tensored and cotensored, they were enriched bicomplete hence so is $[\catD,\catC]$. The class $\catW$ is a subcategory satisfying the 2 out of 3 axiom since it is defined by a levelwise property. By assumption, $\Gr{}{}J$-cell is in $\catW$, and since as a left adjoint $\Gr{}{d}$ preserves retracts and cell complexes, $\Gr{}{d}J_d$-cell $\subset \Gr{}{d}I_d$-cof, hence $\Gr{}{}J$-cell $\subset \Gr{}{}I$-cof. Since $\Gr{}{d}$ is left adjoint to $\fev{'}{d}$, a map has the right lifting property with respect to $\Gr{}{}I$ if and only if for each $d$ its evaluation is an acyclic fibration, in particular if and only if it is in $\catW$ and has the lifting property with respect to $\Gr{}{}J$. 
\end{proof}
\begin{rem}
Similarly to the argument for the bicompleteness of $[\catD,\catC]$, \cite[3.3]{Kel} implies that the assumption, that each of the $[\End_d,\catC]$ is tensored and cotensored, is immediately satisfied if $\catC$ was so itself.
\end{rem}
\begin{prop}\label{assemblenriched}
In the situation of the Assembling Theorem~\ref{puzzling}, assume in addition that $[\catD,\catC]$ is itself tensored and cotensored over $\catV$. If each of the model structures $\catM_d$ satisfies the pushout product axiom (\ref{monoidalmodelcat}(i))
, then so does the assembled model structure on $[\catD,\catC]_0$.
\end{prop}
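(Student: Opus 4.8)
The claim is that if each $\catM_d$ satisfies the pushout-product axiom, then so does the assembled model structure on $[\catD,\catC]_0$ from Theorem~\ref{puzzling}. The key structural fact is that the generating (acyclic) cofibrations of the assembled structure are $\Gr{}{}I = \bigcup_d \Gr{}{d}I_d$ and $\Gr{}{}J = \bigcup_d \Gr{}{d}J_d$, and that the semi-free functors $\Gr{}{d}$ are left adjoints, hence (enriched) left Quillen functors in each variable. The plan is therefore to reduce the pushout-product axiom to a statement about $\smash$ with the generating maps, and then to push the $\smash$ past the semi-free functors using the fact that $\Gr{}{d}$, being a $\catV$-left adjoint, commutes with tensoring by objects of $\catV$.

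First I would recall the standard reduction (see e.g.\ the argument behind Lemma~\ref{adjointpushoutprod} and \cite[4.2.5]{H}): for a cofibrantly generated model category, the pushout-product axiom holds for all cofibrations and all cofibrations provided it holds when both maps are generating cofibrations, and the acyclic half holds provided $i \square j$ is an acyclic cofibration whenever $i$ is a generating cofibration and $j$ a generating acyclic cofibration (and symmetrically). So it suffices to analyze $i \square j$ for $i \in \Gr{}{}I$ and $j \in \Gr{}{}I \cup \Gr{}{}J$, i.e.\ for $i = \Gr{}{c} i'$ with $i' \in I_c$ and $j = \Gr{}{d} j'$ with $j' \in I_d$ or $J_d$. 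The second step is the computation of $\Gr{}{c}i' \square \Gr{}{d}j'$. Since $[\catD,\catC]$ is tensored and cotensored over $\catV$ by hypothesis, and $\Gr{}{d} \colon [\End_d,\catC] \to [\catD,\catC]$ is a $\catV$-left adjoint, it preserves the tensor: $V \smash \Gr{}{d}X \cong \Gr{}{d}(V \smash X)$ for $V$ in $\catV$ and $X$ in $[\End_d,\catC]$. The pushout-product of a map in $\catV$ with a map in a $\catV$-category is built from tensors and pushouts, both of which $\Gr{}{d}$ preserves, so $V\text{-map} \square \Gr{}{d}(\text{map}) \cong \Gr{}{d}(V\text{-map} \square \text{map})$.

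The genuinely new point is that $i' \in I_c$ and $j' \in I_d$ live over possibly \emph{different} objects $c \ne d$ of $\catD$, so the factor $\Gr{}{c}i'$ is not of the form ``object of $\catV$ smash something over $d$''. Here I would use that $\Gr{}{c}i'$, being a map of $\catD$-diagrams, can still be fed into $\Gr{}{d}$-preservation by first passing to its value as a tensor: write $\Gr{}{c}i' = \catD(c,-) \otimes_{\End_c} i'$ and observe that smashing the whole diagram $\Gr{}{c}i'$ with the semi-free diagram $\Gr{}{d}j'$ can be computed in $[\catD,\catC]$ as a colimit built from the external smash, and $\Gr{}{d}$ (applied to $\ev'_d$ of the relevant diagram smashed with $i'$) still commutes with it because $\Gr{}{d}$ preserves colimits and tensors. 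Concretely, I expect $\Gr{}{c}i' \square \Gr{}{d}j' \cong \Gr{}{d}\big( (\ev'_d \Gr{}{c}i') \square j' \big)$ up to a symmetry, where $\ev'_d \Gr{}{c}i' = \catD(c,d) \otimes_{\End_c} i'$ is a map in $[\End_d,\catC]_0$; the evaluation $\ev'_d \Gr{}{c}i'$ is obtained from the generating cofibration $i'$ by tensoring with a fixed object $\catD(c,d)$ of $\catV$, hence is a cofibration in a $\catV$-tensored category only if $i'$ is a cofibration tensored with $\catD(c,d)$ --- but $i'$ need not stay a cofibration after such a tensor. This is the main obstacle: one must know that $\catD(c,d) \otimes_{\End_c} i'$ is still a cofibration (resp.\ acyclic cofibration) in $\catM_d$. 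I would handle it by invoking the hypothesis that $\Gr{}{}J\text{-cell} \subset \catW$ together with the fact that $\ev'_d$ is right Quillen (its left adjoint is $\Gr{}{d}$), so $\ev'_d$ of a $\Gr{}{}I$-cell complex is a cofibration in $\catM_d$; applying this to the $\Gr{}{}I$-cell complex $\Gr{}{c}i'$ (pushout of a single generating cofibration from the initial diagram) gives that $\ev'_d \Gr{}{c}i'$ is an $I_d$-cofibration, and the acyclic version follows identically using $\Gr{}{}J$. With that in hand, the pushout-product axiom in $\catM_d$ applied to $\ev'_d\Gr{}{c}i'$ and $j'$ gives that $(\ev'_d\Gr{}{c}i') \square j'$ is an ($I_d$-, resp.\ acyclic $I_d$-) cofibration, and applying the left Quillen functor $\Gr{}{d}$ finishes the proof, since $\Gr{}{d}$ takes (acyclic) cofibrations to (acyclic) cofibrations in the assembled structure. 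Finally I would note the symmetry: the roles of the two semi-free factors can be interchanged, so the general pushout-product $f \square g$ of arbitrary cofibrations follows by the standard reduction argument already invoked.
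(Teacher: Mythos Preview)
You have misread the statement. In this paper, the ``pushout-product axiom'' referenced in Definition~\ref{monoidalmodelcat}(i) is the axiom for a model structure \emph{enriched over} $\catV$: one takes $i$ a cofibration in $\catV$ and $j$ a cofibration in the enriched category, and asks that $i \square j$ (formed via the tensor) be a cofibration. It is \emph{not} the axiom for a monoidal model structure on $[\catD,\catC]_0$ itself, where both maps would live in $[\catD,\catC]_0$. The proposition asserts only that the assembled structure is a $\catV$-model category (e.g.\ ``$G$-topological'' in the application to Theorem~\ref{catMlevel}); there is no Day-convolution-type monoidal product in sight, and your entire discussion of $\Gr{}{c}i' \square \Gr{}{d}j'$ with $c \ne d$, and of whether $\catD(c,d) \otimes_{\End_c} i'$ remains a cofibration, is addressing a different and harder question that the proposition does not claim.

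Once the statement is read correctly the proof is short. One reduces (via the adjoint formulation in Lemma~\ref{adjointpushoutprod}, as in \cite[5.3.4]{HSS}) to the case where the map in $[\catD,\catC]_0$ is a generating cofibration $\Gr{}{d}i$ with $i \in I_d$, and $j$ is an arbitrary cofibration in $\catV$. Since $\Gr{}{d}$ is a $\catV$-enriched left adjoint it preserves tensors and pushouts, giving $j \square \Gr{}{d}i \cong \Gr{}{d}(j \square i)$. The hypothesis on $\catM_d$ makes $j \square i$ a cofibration there, and $\Gr{}{d}$ takes $\catM_d$-cofibrations to cofibrations in the assembled structure (it preserves cell complexes and retracts). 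The acyclic case is identical. You did identify the key mechanism---that $\Gr{}{d}$ commutes with $\square$ against a map in $\catV$---but applied it to the wrong pair of maps.
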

\begin{proof}
As in \cite[Prop. 5.3.4]{HSS}, by the adjoint formulations in Lemma~\ref{adjointpushoutprod}, it suffices to check the pushout product axiom for $i$ a generating cofibration. But $\Gr{}{d}$ commutes with tensors and pushouts, hence $j \square \Gr{}{d}i \cong \Gr{}{d}{(j \square i)}$. Since $\Gr{}{d}$ also preserves cell complexes and retracts, \(j \square \Gr{}{d}i\) is indeed a cofibration. The case of $i$ or $j$ being acyclic is exactly the same. 
\end{proof}
\begin{rem}
Hence if we can guarantee the analogous proposition for the Unit axiom, a family $\{M_d\}$ of enriched model assembles puzzles together to an enriched model structure on $[\catD,\catC]$. In particular if the unit object of $\catV$ is cofibrant this is trivial. A common other way to ensure this is demanding some sort of \emph{cofibration hypothesis}, cf.~\ref{cofhyp} and a sufficiently general version of the cube lemma~\ref{gen.cube}.
\end{rem}
Depending on the setting, the condition $\Gr{}{}J$-cell $\subset \catW$ in can be hard to verify. A way around this is using Schwede and Shipley's lifting lemma \cite[2.3]{SS} instead of the recognition Theorem \cite[2.1.19]{H}. However, for that result to be applicable in our case, we require another layer of constructions:\\
In the situation of Theorem~\ref{puzzling}, consider the subcategory $\End_\catD$ of $\catD$, consisting of all objects but only the endomorphisms.
More precisely, define $\End_\catD(d,d) \defas{} \catD(d,d)$ but let $\End(d,e)$ be initial in $\catD$ for $d \neq e$. 
Then the inclusions of~\ref{incfact} factor through $\End_\catD$ and hence we get further factorizations of the evaluation functors from~\ref{evfact}
\[\xymatrix{{\catC}&\ar_-{ \cong}[l] {[\catTri,\catC]}&{[\End_d,\catC]}\ar_-{}[l]&{[\End_\catD,\catC]}\ar^-{}[l]&{[\catD,\catC]\ar_-{\fev{''}{}}[l]\ar@(d,d)^-{\fev{'}{d}}[ll]\ar@(u,u)_-{\fev{}{d}}[lll]}}.\]
Let
\[\xymatrix{{\catC}\ar^-{ \cong}[r]&{
    [\catTri,\catC]}\ar@(u,u)^-{\Fr{}{d}}[rrr]\ar^-{\End_d\otimes
    -}[r]&{[\End_d,\catC]}\ar@(d,d)_-{\Gr{}{d}}[rr]\ar^-{\Gr{\End}{d}}[r]&{[\End_\catD,\catC]}\ar^-{}[r]^-{}&{[\catD,\catC]}}\]
be the corresponding diagram of left adjoints~\ref{semfreefact}.
The induced functor pair $ [\catD,\catC]_0 \leftrightarrows [\End_\catD,\catC]_0 $ induces a monad $T$ on $[\End_\catD,\catC]_0$ (cf.~\cite[IV.1]{McL}) and we claim that the associated category of $T$-algebras is isomorphic to $[\catD,\catC]_0$. To prove this we check the prerequisites of Beck's Theorem in its weak form from \cite[Theorem 1]{B} (cf.~\cite[Ex. VI.7. 1-3]{McL}). Indeed, since $[\catD,\catC]$ is enriched cocomplete with colimits calculated pointwise by \cite[3.3]{Kel}, $[\catD,\catC]_0$ has all coequalizers and they are preserved under evaluation to $[\End_\catD,\catC]_0$. Furthermore the evaluation \emph{reflects} isomorphisms, since a $\catV$-natural transformation $\{\alpha_d\}_{d\in\catD}$ is an isomorphism if and only if each $\alpha_d$ is.\\
Further note that $[\End_\catD,\catC]_0 \cong \prod_{d\in\catD} [\End_d,\catC]_0$ since the $\catV$-naturality condition~\ref{enrnattransdiag} is void when $\End_{\catD}(d,e)$ is initial. Hence given the family $\{\catM_d\}_{d\in\catD}$ we get the product model structure on $[\End_\catD,\catC]_0$:
\begin{prop}In the situation of Theorem~\ref{puzzling}, there is a cofibrantly generated model structure on $[\End_\catD,\catC]_0$, where a map is a fibration, cofibration or weak equivalence if and only if it is one in $\catM_d$, for all $d \in \catD$. The generating sets of cofibrations and acyclic cofibrations are given by the sets $\Gr{\End}{}I$ and $\Gr{\End}{}J$, respectively, which are defined analogous to~\ref{GIGJ}.
\end{prop}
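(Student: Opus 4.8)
The plan is to apply the Recognition Theorem \ref{h2119} to the category $[\End_\catD,\catC]_0 \cong \prod_{d\in\catD}[\End_d,\catC]_0$ using the obvious candidate data: a map is a weak equivalence (resp. fibration, resp. cofibration) if and only if each of its components is so in the corresponding model structure $\catM_d$, and the candidate generating sets are $\Gr{\End}{}I = \bigcup_d \Gr{\End}{d}I_d$ and $\Gr{\End}{}J = \bigcup_d \Gr{\End}{d}J_d$. Since the product category is just the product of the categories $[\End_d,\catC]_0$ with no cross-naturality conditions — as observed in the paragraph preceding the statement, the $\catV$-naturality diagram \eqref{enrnattransdiag} is vacuous because $\End_\catD(d,e)$ is initial for $d\ne e$ — everything decomposes componentwise. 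This is really just the assertion that a finite (or, with care, arbitrary indexed) product of cofibrantly generated model categories is again cofibrantly generated, with the product model structure.

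First I would note that $[\End_d,\catC]_0$ is bicomplete: each $[\End_d,\catC]$ is tensored and cotensored over $\catV$ by hypothesis of Theorem \ref{puzzling}, and it already carries a model structure $\catM_d$ so in particular it has all small limits and colimits; a product of bicomplete categories is bicomplete, with limits and colimits formed componentwise. Next, the functor $\Gr{\End}{d} \colon [\End_d,\catC]_0 \to [\End_\catD,\catC]_0$ is the left Kan extension along the inclusion $\End_d \to \End_\catD$, which under the product identification is simply the functor sending an object $X$ to the tuple that equals $X$ in spot $d$ and equals the initial object of $[\End_e,\catC]_0$ in every other spot $e$. It is left adjoint to evaluation (restriction) $\fev{}{d}$ by Proposition~\ref{kanisleftadj}. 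Being a left adjoint, $\Gr{\End}{d}$ preserves colimits, hence transfinite compositions, pushouts, coproducts, and retracts, so it takes relative $I_d$-cell complexes to relative $\Gr{\End}{d}I_d$-cell complexes and similarly for $J_d$.

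The verification of the six conditions of \ref{h2119} is then routine and done componentwise. (i): $\catW = \prod_d \catW_d$ has the two-out-of-three property and is closed under retracts because each $\catW_d$ does. (ii) and (iii): the domains of $\Gr{\End}{d}I_d$ are $\Gr{\End}{d}$ applied to domains of $I_d$, and since $\Gr{\End}{d}$ places the initial object in all other coordinates and $\Gr{\End}{}I$-cell complexes are built componentwise (only the $d$-coordinate changes when attaching a $\Gr{\End}{d}I_d$-cell), smallness of the domains of $I_d$ relative to $I_d$-cell gives smallness of the domains of $\Gr{\End}{d}I_d$ relative to $\Gr{\End}{}I$-cell; likewise for $J$. (iv): $\Gr{\End}{}J$-cell consists of maps that in each coordinate are (transfinite composites of pushouts of coproducts of) maps in $J_d$-cell, hence are acyclic cofibrations in $\catM_d$, hence lie in $\catW \cap \Gr{\End}{}I$-cof. (v): a map has the right lifting property against $\Gr{\End}{}I$ iff in each coordinate it lifts against $I_d$ (by adjunction $\Gr{\End}{d}\dashv\fev{}{d}$), i.e. is a trivial fibration in $\catM_d$, which forces it to lie in $\catW_d \cap J_d$-inj in each coordinate. (vi): $\catW \cap \Gr{\End}{}I$-cof $\subseteq \Gr{\End}{}J$-cof follows since in each coordinate an acyclic cofibration has the left lifting property against fibrations, and fibrations against $\Gr{\End}{}J$ are exactly componentwise $\catM_d$-fibrations. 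The main (and only mild) obstacle is bookkeeping: making precise that an $\Gr{\End}{}I$-cell complex, built by attaching cells one at a time from various $\Gr{\End}{d}I_d$, is coordinatewise an $I_d$-cell complex in each spot $d$ — which is immediate from the product structure since attaching a $\Gr{\End}{d}I_d$-cell only alters the $d$-th coordinate and leaves all others fixed. Given this, the six conditions follow by applying the corresponding facts for each $\catM_d$, and the identification of the fibrations, cofibrations and weak equivalences as the componentwise ones is built into the construction. Finally one records that the generating sets are $\Gr{\End}{}I$ and $\Gr{\End}{}J$, as claimed in the statement.
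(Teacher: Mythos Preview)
Your proposal is correct and follows exactly the line the paper intends: the paper does not give an explicit proof of this proposition, instead deducing it from the observation immediately preceding it that $[\End_\catD,\catC]_0 \cong \prod_{d\in\catD}[\End_d,\catC]_0$ and calling the result the ``product model structure''. Your proof simply spells out the verification of the conditions of the Recognition Theorem~\ref{h2119} that this appeal to the product leaves implicit, and does so correctly.
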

\begin{dfn}
In the situation of Theorem~\ref{puzzling}, an object $P$ of $[\catD,\catC]$ is a \emph{path object}\index{path object} of an object $X$ of $[\catD,\catC]$ if there is a factorization of the diagonal map \[\xymatrix{{X}\ar@(u,u)^-{\Delta}[rr]\ar^-{w}[r]&{P}\ar^-{p}[r]&{X \coprod X,}}\]
with $w \in \catW$ and $p$ a pointwise fibration, \ie a fibration in $\catM_d$ after evaluating to $[\End_d,\catC]_0$ for all $d\in\catD$.\end{dfn}
Then hypothesis $(2)$ of \cite[2.3]{SS} allows the following variation of Theorem~\ref{puzzling}
\begin{thm}
The assembling Theorem~\ref{puzzling} still holds if we replace the assumption $\Gr{}{}J$-cell $\subset \catW$, with the following:\\
In each of the model structures $\catM_d$, every object is fibrant and every object of $X \in [\catD,\catC]$ has a path object.
\end{thm}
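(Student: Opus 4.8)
The plan is to run the proof of Theorem~\ref{puzzling} essentially verbatim, but to produce the model structure on $[\catD,\catC]_0$ not by a direct appeal to the recognition theorem \ref{h2119}, but by lifting the product model structure on $[\End_\catD,\catC]_0$ along the monad $T$ via Schwede--Shipley's transfer lemma \cite[2.3]{SS}; the two new hypotheses are tailored precisely to verify condition~(2) of that lemma in place of the assumption $\Gr{}{}J$-cell $\subset\catW$. All the necessary preliminary reductions are already in place in the excerpt: $[\catD,\catC]_0$ is enriched bicomplete with (co)limits computed pointwise (hence cocomplete as an ordinary category); by the weak form of Beck's theorem the forgetful functor $[\catD,\catC]_0\to[\End_\catD,\catC]_0$ is monadic, identifying $[\catD,\catC]_0$ with the category of $T$-algebras; and $[\End_\catD,\catC]_0\cong\prod_{d\in\catD}[\End_d,\catC]_0$ carries the product of the model structures $\catM_d$, which is cofibrantly generated by $\Gr{\End}{}I$ and $\Gr{\End}{}J$. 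Finally, from the factorisation $F\circ\Gr{\End}{d}=\Gr{}{d}$ of left adjoints one reads off that the images of these generating sets under the free-algebra functor $F$ are exactly $\Gr{}{}I$ and $\Gr{}{}J$.

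Next I would check the hypotheses of \cite[2.3]{SS}. The smallness requirements --- that the domains of $\Gr{}{}I$ (resp.\ $\Gr{}{}J$) be small relative to $\Gr{}{}I$-cell (resp.\ $\Gr{}{}J$-cell) --- are carried over verbatim from the assumptions of Theorem~\ref{puzzling} that this variant retains, and these are all that is needed to run the small-object argument in the category of $T$-algebras. It remains to verify condition~(2) of \cite[2.3]{SS}: a fibrant replacement functor and a functorial path object. Since by assumption every object of each $\catM_d$ is fibrant, every object of the product model structure on $[\End_\catD,\catC]_0$ is fibrant, so the identity functor is a fibrant replacement. For the path object, given $X\in[\catD,\catC]_0$ the assumed factorisation of its diagonal through a map in $\catW$ followed by a pointwise fibration becomes, after applying the forgetful functor to $[\End_\catD,\catC]_0$, a factorisation through a weak equivalence (a map in $\catW$ is levelwise a weak equivalence in $\catM_d$, hence a weak equivalence in $\prod_d\catM_d$) followed by a fibration (a pointwise fibration is exactly a fibration of $\prod_d\catM_d$); moreover the middle term is itself a $T$-algebra. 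Fixing a functorial such choice then supplies the functorial path object demanded by \cite[2.3]{SS}.

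With these inputs \cite[2.3]{SS} yields that $[\catD,\catC]_0$, viewed as the category of $T$-algebras over $[\End_\catD,\catC]_0$, is a cofibrantly generated model category whose generating cofibrations are $\Gr{}{}I$, whose generating acyclic cofibrations are $\Gr{}{}J$, and whose fibrations and weak equivalences are created by the forgetful functor --- that is, $f$ is a fibration precisely when each $\fev{'}{d}f$ is a fibration in $\catM_d$, and $f$ is a weak equivalence precisely when $f\in\catW$. This is exactly the model structure asserted in Theorem~\ref{puzzling}, so the statement follows; the bicompleteness and any enriched considerations are unchanged from the original proof.

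The main obstacle I anticipate is the bookkeeping needed to see that the path-object hypothesis as phrased --- a path object lying in the algebra category $[\catD,\catC]$ --- really does feed condition~(2) of \cite[2.3]{SS}, which on the face of it concerns path objects of fibrant objects of the base category $[\End_\catD,\catC]_0$. The point is that, because every object of $[\End_\catD,\catC]_0$ is fibrant, the objects that actually occur in the transfer argument are underlying objects of $T$-algebras (free algebras $\Gr{}{d}A$ and cell complexes built from them), all of which lie in $[\catD,\catC]$; for these a path object in $[\catD,\catC]$ forgotten to $[\End_\catD,\catC]_0$ is exactly what the lemma uses, and the extra $T$-algebra structure is harmless. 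Making this compatibility precise --- together with arranging the functoriality of the chosen path objects (and of the trivial fibrant replacement) that \cite[2.3]{SS} requires --- is where the genuine content of the proof sits; everything else is a repetition of the argument for Theorem~\ref{puzzling}.
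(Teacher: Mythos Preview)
Your proposal is correct and follows exactly the approach the paper intends: the paper gives essentially no proof of this theorem beyond the preceding setup (the monadic identification of $[\catD,\catC]_0$ as $T$-algebras over the product model category $[\End_\catD,\catC]_0$) and the sentence ``hypothesis $(2)$ of \cite[2.3]{SS} allows the following variation of Theorem~\ref{puzzling}.'' Your write-up fills in precisely the verification of that hypothesis; your residual worry about functoriality and the location of the path object is overcautious, since the paper's path-object definition is already phrased in $[\catD,\catC]$ (the $T$-algebra category) and \cite[2.3]{SS} does not require functoriality of the path object, only existence for each object.
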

As promised we study an enriched version of Theorem~\ref{hirsch}:
\begin{prop} \label{enrhirsch}
Theorem~\ref{hirsch} holds in the case of categories enriched over $\catV$, if we additionally assume that the domains of the maps in $\Fr{}{}I$ are small relative to $\Fr{}{}I$-cell, the domains of the maps in $\Fr{}{}J$ are small with respect to $\Fr{}{}J$-cell and that $\Fr{}{}J$-cell consists of maps that are level weak equivalences.
\end{prop}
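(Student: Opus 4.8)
The plan is to deduce Proposition~\ref{enrhirsch} from the Assembling Theorem~\ref{puzzling}, exactly as Theorem~\ref{hirsch} is deduced from it in the non-enriched case, by recognising the projective model structure on $[\catD,\catC]$ as an assembled model structure for a particularly simple choice of the family $\{\catM_d\}$. First I would take, for each object $d$ of $\catD$, the category $[\End_d,\catC]$. Since $\catC$ is a cofibrantly generated model category and $[\End_d,\catC]$ is a category of diagrams over the one-object $\catV$-category $\End_d$, I would need to observe that $[\End_d,\catC]_0$ inherits a cofibrantly generated model structure $\catM_d$ in which weak equivalences and fibrations are detected by the functor $[\End_d,\catC]_0 \to \catC$ given by evaluating at the unique object; this is the step that forces the extra smallness hypotheses in the statement. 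The generating (acyclic) cofibrations $I_d$, $J_d$ are the images $\Fr{}{d}I$, $\Fr{}{d}J$ of the generating (acyclic) cofibrations of $\catC$ under the left adjoint $\End_d \otimes - \colon \catC \cong [\catTri,\catC] \to [\End_d,\catC]$ from the factorisation~\eqref{semfreefact}.

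Next I would feed this family $\{\catM_d\}_{d \in \catD}$ into Theorem~\ref{puzzling}. With this choice the composite left adjoints $\Gr{}{d} = \Gr{}{d} \circ (\End_d \otimes -)$ appearing in~\eqref{semfreefact} are precisely the functors $\Fr{}{d} \colon \catC \to [\catD,\catC]$, so the sets $\Gr{}{}I$ and $\Gr{}{}J$ from~\eqref{GIGJ} coincide with $\Fr{}{}I$ and $\Fr{}{}J$, and the class $\catW$ from~\eqref{catW} — maps $f$ with $\fev{'}{d}(f)$ a weak equivalence in $\catM_d$ for all $d$ — coincides, by the definition of $\catM_d$, with the class of maps $f$ such that $\fev{}{d}(f) = X_d \to Y_d$ is a weak equivalence in $\catC$ for every $d$, since $\fev{}{d}$ factors through $\fev{'}{d}$ as in~\eqref{evfact}. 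The hypotheses of Theorem~\ref{puzzling} that must be checked are then exactly: the domains of the maps in $\Gr{}{}I = \Fr{}{}I$ are small relative to $\Gr{}{}I$-cell, the domains of the maps in $\Gr{}{}J = \Fr{}{}J$ are small relative to $\Gr{}{}J$-cell, and $\Gr{}{}J$-cell $\subseteq \catW$. The first two are precisely the smallness hypotheses added to the statement of Proposition~\ref{enrhirsch}; for the third, a transfinite composition of cobase changes of maps in $\Fr{}{}J$ is a level weak equivalence by the added hypothesis that $\Fr{}{}J$-cell consists of level weak equivalences, hence lies in $\catW$.

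Theorem~\ref{puzzling} then yields a cofibrantly generated model structure on $[\catD,\catC]_0$ with generating cofibrations $\Fr{}{}I$, generating acyclic cofibrations $\Fr{}{}J$, weak equivalences $\catW$, and with $f$ a fibration iff each $\fev{'}{d}f$ is a fibration in $\catM_d$; unwinding the definition of $\catM_d$, this says $f$ is a fibration iff $\fev{}{d}f$ is a fibration in $\catC$ for every $d$. The cofibrations and acyclic cofibrations are, by the recognition theorem~\ref{h2119} used inside the proof of Theorem~\ref{puzzling}, exactly the retracts of relative $\Fr{}{}I$-cell (resp.\ $\Fr{}{}J$-cell) complexes, which is the description asserted in Theorem~\ref{hirsch}. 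This matches the three bullet points of the statement verbatim.

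I expect the main obstacle to be the verification that $[\End_d,\catC]_0$ carries the cofibrantly generated model structure $\catM_d$ with the stated generating sets and that $\Fr{}{d}$ is a left Quillen functor from $\catC$: unlike the $\catset$-enriched case, where $[\End_d,\catC]_0$ is a category of $G$-objects for the monoid $G = \catD(d,d)$ and one can invoke the standard diagram-model-structure machinery, here $\End_d(d,d)$ is an object of the enriching category $\catV$ rather than a set, and one must know that the left Kan extension $\End_d \otimes -$ exists — which follows from the enriched cocompleteness of $\catC$ via Proposition~\ref{kanexist}, noting $\End_d$ is equivalent to a small $\catV$-category — and that the resulting transferred structure satisfies the hypotheses of the recognition theorem. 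This is precisely why the proposition must assume the smallness conditions rather than deriving them: in the enriched setting one cannot in general guarantee the relevant domains are small without an explicit hypothesis, since the objects $\End_d \otimes X$ need not be compact or small even when $X$ is. Once these bookkeeping points are in place, the remainder is a direct application of Theorem~\ref{puzzling} with no further work.
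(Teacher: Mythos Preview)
Your approach is more roundabout than the paper's and leaves a real step unfinished. The paper does \emph{not} deduce Proposition~\ref{enrhirsch} from Theorem~\ref{puzzling}; it simply reruns the proof of \ref{puzzling} with the adjunctions $(\Fr{}{d},\fev{}{d})$ between $\catC$ and $[\catD,\catC]$ in place of $(\Gr{}{d},\fev{'}{d})$, so that the single given model structure on $\catC$ plays the role of every $\catM_d$. One then checks the hypotheses of the recognition theorem~\ref{h2119} directly: the smallness conditions are assumed, $\Fr{}{}J\text{-cell}\subset\catW$ is assumed, $\Fr{}{}J\text{-cell}\subset\Fr{}{}I\text{-cof}$ because each $\Fr{}{d}$ is a left adjoint, and the lifting conditions follow from the adjunctions $(\Fr{}{d},\fev{}{d})$ exactly as in the proof of~\ref{puzzling}. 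No intermediate categories $[\End_d,\catC]$ enter at all.

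Your route instead requires you to \emph{first} establish a cofibrantly generated model structure $\catM_d$ on each $[\End_d,\catC]_0$, and you correctly flag this as the main obstacle---but you do not resolve it. Establishing $\catM_d$ is itself an instance of the very proposition you are proving, for the one-object $\catV$-category $\End_d$, and the hypotheses you would need (smallness of domains of $\End_d\otimes I$ relative to $(\End_d\otimes I)$-cell, and $(\End_d\otimes J)$-cell consisting of underlying weak equivalences) are stated for $[\catD,\catC]$, not for $[\End_d,\catC]$. One can in fact transfer them, using that $\Gr{}{d}$ is a fully faithful left adjoint (so smallness and the level-equivalence condition pull back along it), but you do not supply this argument. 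Once you do, you have essentially carried out the recognition-theorem verification anyway---so the detour through $\catM_d$ buys nothing over the paper's direct parallel argument.
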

The proof works entirely analogous to the one of Theorem~\ref{puzzling}. Note that we can reformulate the extra assumptions slightly in the following way:
\begin{lemma}
If tensoring with morphism objects of $\catD$ preserves cofibrations and acyclic cofibrations, the extra assumptions in Proposition~\ref{enrhirsch} are satisfied. In particular this is true if there is a model structure on $\catV$, such that all the morphism objects of $\catD$ are cofibrant, and the model structure on $\catC_0$ satisfies the pushout product axiom.
\end{lemma}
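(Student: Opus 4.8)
The plan is to deduce the three extra hypotheses of Proposition~\ref{enrhirsch} from a single pointwise observation. Recall that $\Fr{}{d}$ is the left Kan extension along the inclusion $\catTri\to\catD$ picking out $d$, so for $X$ in $\catC$ one has $(\Fr{}{d}X)_e\cong\catD(d,e)\otimes X$ — the tensor of $\catC$ over $\catV$ of \ref{tenscotens} — and hence $\fev{}{e}\Fr{}{d}(f)\cong\catD(d,e)\otimes f$ for every morphism $f$ of $\catC_0$. As a left adjoint, $\Fr{}{d}$ preserves colimits, and colimits in $[\catD,\catC]$ are computed pointwise \cite[3.3]{Kel}, so each evaluation $\fev{}{e}$ preserves colimits. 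Therefore applying $\fev{}{e}$ to a relative $\Fr{}{}I$-cell complex (respectively a relative $\Fr{}{}J$-cell complex) yields a transfinite composition of cobase changes of coproducts of maps $\catD(d,e)\otimes i$ with $i\in I$ (respectively $\catD(d,e)\otimes j$ with $j\in J$), $d$ ranging over the objects of $\catD$.

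Granting the pointwise description, the level-weak-equivalence hypothesis is immediate: by assumption $\catD(d,e)\otimes(-)$ sends acyclic cofibrations of $\catC_0$ to acyclic cofibrations, so each $\catD(d,e)\otimes j$ with $j\in J$ is an acyclic cofibration, and since acyclic cofibrations are closed under coproducts, cobase change and transfinite composition, $\fev{}{e}(f)$ is an acyclic cofibration — in particular a weak equivalence — for every $f\in\Fr{}{}J\cell$ and every object $e$ of $\catD$. Hence $\Fr{}{}J\cell$ consists of level weak equivalences.

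For the smallness conditions, the domains of $\Fr{}{}I$ are the objects $\Fr{}{d}A$ with $A$ a domain of a map of $I$. Since $\catC_0$ is cofibrantly generated, $A$ is small relative to $I\cell$, hence — by the standard permanence properties of smallness (cf.~\cite[Ch.~10]{Hir}) — small relative to the class $I\cof$ of all cofibrations of $\catC_0$. The adjunction gives a natural isomorphism $[\catD,\catC](\Fr{}{d}A,-)\cong\catC(A,\fev{}{d}(-))$, and by the first paragraph $\fev{}{d}$ carries a relative $\Fr{}{}I$-cell complex to a transfinite composition of maps each of which is a cobase change of a coproduct of the cofibrations $\catD(d',d)\otimes i$ (using that $\catD(d',d)\otimes(-)$ preserves cofibrations), hence to a transfinite composition of cofibrations of $\catC_0$. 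Smallness of $A$ relative to $I\cof$ therefore makes $\catC(A,-)$ commute with the colimit of any such sequence, and transporting this across the natural isomorphism shows $\Fr{}{d}A$ is small relative to $\Fr{}{}I\cell$. The identical argument with $J$ in place of $I$ handles the domains of $\Fr{}{}J$. Thus all three hypotheses of \ref{enrhirsch} are met, and Theorem~\ref{hirsch} applies. I expect this permanence-of-smallness step to be the only genuinely delicate point; the remaining verifications are routine.

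Finally, for the ``in particular'' clause, suppose $\catV$ carries a model structure in which every $\catD(d,e)$ is cofibrant and the tensor of $\catC_0$ over $\catV$ satisfies the pushout--product axiom of \ref{monoidalmodelcat}. Let $u\colon\emptyset_\catV\to\catD(d,e)$ be the cofibration of $\catV$ expressing that $\catD(d,e)$ is cofibrant, and let $i\colon X\to Y$ be a cofibration of $\catC_0$. By the pushout--product axiom, $u\square i$ (in the sense of \ref{pushoutproduct}) is a cofibration of $\catC_0$. Since the tensor preserves colimits in the $\catV$-variable, $\emptyset_\catV\otimes Z$ is the initial object of $\catC_0$ for every object $Z$, so the pushout defining $u\square i$ reduces to $\catD(d,e)\otimes X$ and $u\square i$ is identified with $\catD(d,e)\otimes i$. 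When $i$ is moreover acyclic, the same axiom makes $\catD(d,e)\otimes i$ an acyclic cofibration. Hence $\catD(d,e)\otimes(-)$ preserves cofibrations and acyclic cofibrations, and the hypothesis of the first part is satisfied.
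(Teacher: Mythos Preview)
Your proof is correct and follows essentially the same approach as the paper: both begin with the identification $\fev{}{e}\circ\Fr{}{d}\cong\catD(d,e)\otimes(-)$ and use that colimits in $[\catD,\catC]$ are computed pointwise, so that levelwise the $\Fr{}{}I$- and $\Fr{}{}J$-cell complexes are built from cofibrations and acyclic cofibrations respectively. The paper's proof is a terse sketch (``maps in $\Fr{}{}J$-cell are levelwise retracts of $J$-cell complexes\dots\ then all three extra assumptions follow immediately from the axioms of a cofibrantly generated model category''), while you unpack the smallness argument via the adjunction $[\catD,\catC](\Fr{}{d}A,-)\cong\catC(A,\fev{}{d}(-))$ together with the passage from smallness relative to $I\cell$ to smallness relative to $I\cof$, and you also supply the verification of the ``in particular'' clause via $u\square i\cong\catD(d,e)\otimes i$, which the paper leaves entirely implicit. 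Your identification of the smallness step as the one point requiring genuine care is well placed; the paper glosses over exactly this.
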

\begin{proof}
This is immediate once one checks that for objects $d$ and $e$ in $\catD$, the composition $\fev{}{e}\circ \Fr{}{d}$ is isomorphic to tensoring with $\catD(d,e)$. Since colimits in $[\catD,\catC]$ are calculated pointwise (\cite[3.3]{Kel}), maps in $\Fr{}{}J$ cell are levelwise retracts of $J$-cell complexes, and the same for $I$. Then all three extra assumptions follow immediately from the axioms of a cofibrantly generated model category.
\end{proof}
\begin{cor}
Since in every model structure cofibrations and acyclic cofibrations are preserved under coproducts, in the case $\catV = \catset$ Proposition~\ref{enrhirsch} reduces to Theorem~\ref{hirsch}. 
\end{cor}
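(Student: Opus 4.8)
The plan is to check that, when $\catV=\catset$, Theorem~\ref{enrhirsch} is literally Theorem~\ref{hirsch}: all of the enriched data degenerates to the classical data, and the three extra hypotheses appearing in Theorem~\ref{enrhirsch} hold automatically. First I would record the degenerations. A category enriched over $(\catset,\times,\{*\})$ is just an ordinary (locally small) category, so $\catD$ and $\catC$ are ordinary categories, the trivial $\catset$-category $\catTri$ is the one-object, one-morphism category, the enriched functor category $[\catD,\catC]$ is the ordinary functor category $[\catD,\catC]_0$, and for an object $d$ of $\catD$ the semi-free/free left adjoint $\Fr{}{d}$ to the evaluation $\fev{}{d}$ is precisely Hirschhorn's functor $F_d$ (the left Kan extension along the inclusion of $\catTri$ at $d$). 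Consequently the generating sets $\Fr{}{}I$, $\Fr{}{}J$, the class of level weak equivalences, and the level fibrations all coincide with those appearing in Theorem~\ref{hirsch}, so it only remains to see that the hypotheses of Theorem~\ref{enrhirsch} beyond those of Theorem~\ref{hirsch} are vacuous in this case.

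For this I would invoke the Lemma immediately preceding this corollary, whose hypothesis is that tensoring with the morphism objects of $\catD$ preserves cofibrations and acyclic cofibrations. Over $\catset$, the tensor $X\otimes S$ of an object $X$ of $\catC$ with a set $S$ is the $S$-indexed coproduct $\coprod_S X$ (the identification of set-tensors with copowers recorded after Definition~\ref{tenscotens}). In any model category a coproduct of cofibrations is a cofibration and a coproduct of acyclic cofibrations is an acyclic cofibration; taking $S=\catD(d,e)$ for $d,e$ in $\catD$ this says exactly that tensoring with the morphism sets preserves cofibrations and acyclic cofibrations. Hence the preceding Lemma applies and delivers all three extra assumptions of Theorem~\ref{enrhirsch}: the smallness of the domains of $\Fr{}{}I$, respectively $\Fr{}{}J$, relative to \(\Fr{}{}I\)-cell, respectively \(\Fr{}{}J\)-cell, follows by transporting smallness in $\catC$ along the adjunction $[\catD,\catC](\Fr{}{d}A,-)\cong\catC(A,\fev{}{d}(-))$ together with the fact that $\fev{}{e}$ carries a \(\Fr{}{}I\)-cell map to a levelwise retract of an $I$-cell complex; and every map in \(\Fr{}{}J\)-cell is a level weak equivalence, because evaluating at $e$ (using $\fev{}{e}\circ\Fr{}{d}\cong\catD(d,e)\otimes(-)$ and that evaluation preserves colimits) turns it into a transfinite composite of cobase changes of coproducts of maps of the form $\coprod_{\catD(d,e)}j$ with $j\in J$, hence into an acyclic cofibration in $\catC$.

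Putting these together, Theorem~\ref{enrhirsch} holds under exactly the hypotheses of Theorem~\ref{hirsch} when $\catV=\catset$, and since the model structure it produces on $[\catD,\catC]_0$ is described by the same weak equivalences, fibrations, and generating (acyclic) cofibrations, the two statements coincide. The one point that needs a little care — and the place I would be most careful in writing this out — is that the argument must not use ``a coproduct of weak equivalences is a weak equivalence,'' which fails in a general model category; what is actually used is closure of cofibrations and \emph{acyclic} cofibrations under coproducts (and then under cobase change and transfinite composition), which is precisely what makes the maps in \(\Fr{}{}J\)-cell level weak equivalences. Beyond that the proof is bookkeeping, so I would keep it to a few lines rather than spelling out the adjunction and smallness manipulations in full.
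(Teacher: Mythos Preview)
Your proposal is correct and follows the same route as the paper: both deduce the corollary from the preceding Lemma by observing that over $\catset$ the tensor with a morphism object is a coproduct. The paper gives no separate proof; the reasoning is folded into the statement itself.

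You are in fact more careful than the paper. The corollary as stated invokes that ``cofibrations and weak equivalences are preserved under coproducts,'' but as you note, closure of weak equivalences under coproducts is not a feature of arbitrary model categories. What the preceding Lemma actually requires is that tensoring with $\catD(d,e)$ preserve cofibrations and \emph{acyclic} cofibrations, and coproducts do preserve these in any model category (they are characterized by left lifting properties, and the right adjoint diagonal preserves fibrations and acyclic fibrations). Your write-up isolates exactly this point and avoids the imprecision in the paper's phrasing; that is a genuine improvement, not just bookkeeping.
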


\addcontentsline{toc}{chapter}{Index}
\printindex
\end{document}